\newtheorem{thm}{Theorem}[section]
\newtheorem{lem}[thm]{Lemma}
\newtheorem{prop}[thm]{Proposition}
\newtheorem{cor}[thm]{Corollary}
\newtheorem{defn}[thm]{Definition}
\theoremstyle{definition}
\newtheorem{rem}[thm]{Remark}
\newtheorem{exam}[thm]{Example}
\newtheorem{nota}[thm]{Notation}
\newtheorem{thmht}[thm]{Proof of Theorems \ref{diff-ht} and \ref{diff-ht-infty}}
\newtheorem{thmred}[thm]{Proof of the Theorem \ref{ht-n-reduced}}
\newtheorem{thmlaz}[thm]{Proof of the Theorem \ref{refined-lazard}}
\numberwithin{equation}{section}
\numberwithin{figure}{section}
\def\Alg{\operatorname{Alg}}
\def\Aut{\operatorname{Aut}}
\def\cart{{{\mathrm{cart}}}}
\def\AA{{{\mathbb{A}}}}
\def\EE{{{\mathbb{E}}}}
\def\ZZ{{{\mathbb{Z}}}}
\def\QQ{{{\mathbb{Q}}}}
\def\bS{{{\mathbf{S}}}}
\def\SS{\mathbb{S}}
\def\cA{{\cal A}}
\def\cB{{\cal B}}
\def\cC{{\cal C}}
\def\cE{{\cal E}}
\def\cF{{\cal F}}
\def\cG{{\cal G}}
\def\cH{{\cal H}}
\def\cI{{\cal I}}
\def\cJ{{\cal J}}
\def\cL{{\cal L}}
\def\cM{{\cal M}}
\def\cN{{\cal N}}
\def\cO{{\cal O}}
\def\cP{{\cal P}}
\def\cU{{\cal U}}
\def\cV{{\cal V}}
\def\cZ{{\cal Z}}
\def\La{{\Lambda}}
\def\la{{\lambda}}
\def\colim{\operatorname{colim}}
\def\holim{\operatorname{holim}}
\def\Der{\operatorname{Der}}
\def\Ext{{{\mathrm{Ext}}}}
\def\Ga{\Gamma}
\def\Hom{\mathrm{Hom}}
\def\Iso{{{\mathrm{Iso}}}}
\def\Mod{\mathbf{Mod}}
\def\map{\operatorname{map}}
\def\Sets{\mathbf{Sets}}
\def\Tor{\operatorname{Tor}}
\def\longr{{\longrightarrow}}
\def\defeq{\overset{\mathrm{def}}=}
\def\mm{{{\mathfrak{m}}}}
\def\pp{{{\mathfrak{p}}}}
\def\Spec{{{\mathrm{Spec}}}}
\def\GG{{{\mathbb{G}}}}
\def\FF{{{\mathbb{F}}}}
\def\bFF{{{\bar{\FF}_p}}}
\def\Alg{{{\mathbf{Alg}}}}
\def\pre{{{\mathbf{Pre}}}}
\newcommand{\brackets}[4]{{{\left\{ \begin{array}{ll}
				{{#1}}&{{#2}}\\
				\\
				{{#3}}&{{#4}}\end{array}\right.}}}
\def\fg{{\mathbf{fg}}}
\def\fgl{{\mathbf{fgl}}}
\def\tan{{\cal{T}\mathrm{an}}}
\def\Qmod{{{\mathbf{Qmod}}}}
\def\cmpl{{\wedge}}
\def\Aff{{{\mathbf{Aff}}}}
\def\cI{{{\mathcal{I}}}}
\def\cN{{{\mathcal{N}}}}
\def\Spf{{{\mathrm{Spf}}}}
\def\coord{{{\mathrm{Coord}}}}
\def\lie{{{\mathrm{Lie}}}}
\def\shder{{{\mathcal{D}er}}}
\def\VV{{\mathbb{V}}}
\def\red{{{\mathrm{red}}}}
\def\Nil{{{\mathcal{N}il}}}
\def\layer#1{{{\cH(#1)}}}
\def\pnty#1{{{\langle{#1}\rangle}}}
\def\hatlayer#1{{{\widehat{\cH}(#1)}}}
\def\isofgl{{{\mathbf{Isofgl}}}}
\def\crd{{{\mathbf{coord}}}}
\def\pnty#1{{{\langle{#1}\rangle}}}
\def\htimes{{{\hat{\times}}}}
\def\Sym{{{\mathrm{Sym}}}}
\def\rege{{{\epsilon}}}
\def\vare{{{\varepsilon}}}
\def\Gal{{{\mathrm{Gal}}}}
\def\Inf{{{\mathrm{Inf}}}}
\def\gr{{{\mathrm{gr}}}}
\def\lf{{{\mathbf{flv}}}}
\def\Def{{{\mathbf{Def}}}}
\def\Defalt{{{\mathbf{\Def_\ast}}}}
\def\fGa{{{\bar{\FF}_p,\Ga}}}
\def\Art{{{\mathbf{Art}}}}
\def\ga{{{\gamma}}}
\date{}
\begin{document}

\title{Quasi-coherent sheaves on the Moduli Stack\\
of Formal Groups}
\author{Paul \ G.\ Goerss\footnote{The
author was partially supported by the National Science
Foundation (USA).}}
\maketitle
\begin{abstract}The central aim of this monograph is to provide
decomposition results for quasi-coherent sheaves on the moduli
stack of formal groups. These results will be based on the geometry
of the stack itself, particularly the height filtration and an analysis of
the formal neighborhoods of the geometric points. The main theorems
are algebraic chromatic convergence results and fracture square
decompositions. There is a major technical hurdle in this story, as the
moduli stack of formal groups does not have the finitness properties required
of an algebraic stack as usually defined. This is not a conceptual problem,
but in order to be clear on this point and to write down a self-contained narrative,
I have included a great deal of discussion of the geometry of the
stack itself, giving various equivalent descriptions.
\end{abstract}
\bigskip

For years I have been echoing my betters, especially Mike Hopkins,
and telling anyone who would listen that the chromatic picture
of stable homotopy theory is dictated and controlled by the
geometry of the moduli stack $\cM_\fg$ of smooth, one-dimensional
formal groups. Specifically, I would say that the
height filtration of $\cM_\fg$ dictates a canonical and natural 
decomposition of a quasi-coherent sheaf on $\cM_\fg$, and
this decomposition predicts and controls the chromatic decomposition
of a finite spectrum.  This sounds well, and is even true, but  there
is no single place in the literature where I could send
anyone in order for him or her to get a clear, detailed, unified, and
linear rendition of this story. This document is an attempt to set that right.

Before going on to state in detail what I actually hope to accomplish
here, I should quickly acknowledge that the opening sentences of
this introduction and, indeed, this whole point of view is not original with
me. I have already mentioned Mike Hopkins, and just about everything
I'm going to say here is encapsulated in the table in section 2 of
\cite{GrossHop} and can be gleaned from the notes of various
courses Mike gave at MIT. See, for example, \cite{hop-notes}. Further
back, the  intellectual journey begins, for myself as a homotopy theorist, 
with Quillen's fundamental
insight linking formal groups, complex orientable cohomology
theories, and complex cobordism -- the basic papers are \cite{Q1} and
\cite{Q2}. But the theory of formal groups predates Quillen's work
connecting algebraic topology and the algebraic geometry of
formal groups: there was a rich literature already in place at the
time he wrote his papers. Lazard did fundamental work
in '50s (see \cite{Laz1}), and there was work of Cartier \cite{Cart}
on what happens when you work localized at a prime, and even a thorough
treatment of the deformation theory given by Lubin and Tate \cite{LT}.
In short, Quillen's work opened the door for the importation of a mature
theory in geometry into homotopy theory.

It was Jack Morava, I think, who really had the vision of how this should
go, but the 1970s saw a broad eruption of applications of formal groups to
homotopy theory. The twin towers here are the paper of Miller, Ravenel,
and Wilson \cite{MRW} giving deep computations in the Adams-Novikov
Spectral Sequence and Ravenel's nilpotence conjectures \cite{RAmerJ},
later largely proved by Devinatz, Hopkins, and Smith in \cite{DHS} and
\cite{HS}. This period fundamentally changed stable homotopy theory.
Morava himself wrote a number of papers, most notably \cite{Mor}
(see also Doug Ravenel's Math Review of this paper in \cite{RavMR}),
but there are rumors of a highly-realized and lengthy manuscript on
formal groups and their applications to homotopy theory. If so, it is a loss
that Jack never thought this manuscript ready for prime-time
viewing.\footnote{My standard joke is that if you see this manuscript on eBay
or somewhere, you should let me know. But, of course, it's not a joke.}

Let me begin the account of what you can find here with some indication
of how  stacks come into the narrative. One simple observation,
due originally (I think) to Neil Strickland is that stacks can calculate
homology groups. Specifically, if $E_\ast$ and $F_\ast$ are two
$2$-periodic Landweber exact homology theories and if $G$
and $H$ are the formal groups over $E_0$ and $F_0$ respectively,
then there is a $2$-category pull-back square
$$
\xymatrix{
\Spec(E_0F) \rto \dto & \Spec(F_0) \dto^H\\
\Spec(E_0) \rto_G & \cM_\fg.
}
$$
This can be seen in Lemma \ref{pull-back-for-coord-1} below. I heard
Mike, in his lectures at M\"unster, noting this fact as piquing his interest
in stacks.  Beyond this simple calculation, Strickland should certainly
get a lot of credit for all of this: while the reference \cite{fpfp} never
actually uses the word ``stack'', the point of view is clear and, in fact,
much of what I say here can be found there in different -- and sometimes
not so different -- language.

For computations, especially
with the Adams-Novikov Spectral Sequence, homotopy theorists
worked with the cohomology of comodules over  Hopf algebroids.
A succinct way to define such objects is to say that a Hopf algebroid 
represents an affine groupoid scheme; in particular, Quillen's theorem
mentioned above amounts to the statement that the affine groupoid
scheme arising from the Hopf algebroid of complex cobordism is none
other than the groupoid scheme which assigns to each commutative ring
$A$ the groupoid of formal group laws and their strict isomorphisms over $A$.
Hopf algebroids were and are a powerful computational tool -- as far as
I know, the calculations of \cite{MRW} remain, for the combination
of beauty and technical prowess, in a class with Secretariat's
run at the Belmont Stakes -- but an early and fundamental result was
``Morava's Change of Rings Theorem'', which, in summary, says that
if two Hopf algebroids represent equivalent (not isomorphic) groupoid
schemes, then they have isomorphic cohomology. A  more subtle
observation is that the change of rings results holds under weaker
hypotheses:  the  groupoid schemes need only be equivalent ``locally in
the flat topology''; that is, the presheaves $\pi_0$ of components
and $\pi_1$ of automorphisms induces isomorphic sheaves in the
$fpqc$ topology. (See \cite{HovAmerJ} and \cite{hollander} for
discussions of this result.) In modern language, we prove this
result by combining the following three observations:
\begin{itemize}

\item the category of comodules over a Hopf algebroid is equivalent to the
category of quasi-coherent sheaves on the associated stack;

\item two groupoid schemes locally equivalent in the flat topology have
equivalent associated stacks; and

\item equivalent stacks have equivalent categories of quasi-coherent sheaves.
\end{itemize}
Note that in the end, we have a much stronger result than simply an isomorphism
of cohomology groups -- we have an entire equivalence of categories.

Once we've established an equivalence between the category of
comodules and the category of quasi-coherent sheaves (see Equation \ref{ANSS-E2-2}) we
can rewrite the cohomology of comodules as coherent cohomology
of quasi-coherent sheaves; for example,
$$
\Ext_{MU_\ast MU}^s(\Sigma^{2t}MU_\ast,MU_\ast) \cong
H^s(\cM_\fg,\omega^{\otimes t})
$$
where $\omega$ is the invertible sheaf on $\cM_\fg$ which
assigns to each flat morphism $g:\Spec(R) \to \cM_\fg$ the invariant
differentials $\omega_G$ of the formal group classified by $G$.
Thus, one of our most sensitive algebraic approximations to the
stable homotopy groups of spheres can be computed as the
cohomology of the moduli stack $\cM_\fg$.

There are other reasons for wanting to pass from comodules over
 Hopf algebroids to quasi-coherent sheaves. For example, there are
 naturally occuring stacks which are not canonically equivalent, even
in the local sense mentioned above, to an affine groupoid scheme.
The most immediate example is the moduli stack $\cU(n)$ of formal
groups of height less than or equal to some fixed integer $n \geq 0$.
These stacks have affine presentations, but not canonically; the
canonical presentation is a non-affine 
open subscheme of $\Spec(L)$, where
$L$ is the Lazard ring. Thus the quasi-coherent sheaves on $\cU(n)$
are equivalent to many categories of comodules, but no particular such
category is preferred (except by tradition -- this is one role for the
Johnson-Wilson homology theories $E(n)_\ast$)  and the
quasi-coherent sheaves themselves remain the basic object of study.
The point is taken up in \cite{HS2} and \cite{Nau}.

Here is what I hope to accomplish in these notes.
\begin{itemize}

\item Give a definition of formal group which evidently satisfies
the effective descent condition necessary to produce a moduli
stack. See Proposition \ref{prestack}.
This can be done in a number of ways, but the I have
chosen to use the notion of formal Lie varieties, a concept
developed by Grothendeick to give a conceptual formulation
of smoothness in the formal setting. 

\item A formal group law is equivalent  to a formal group with
a chosen coordinate. The scheme of all coordinates for
a formal group $G$ over a base scheme $S$ is a torsor
$\coord_G$ over $S$ for the group scheme $\La$ which
assigns to each ring $R$ the group of power series invertible
under composition. Using coordinates we can identify
$\cM_\fg$ as the quotient stack of the scheme of formal groups
by the algebraic group $\La$. See Proposition \ref{fg-homotopy-orbit}.
This makes transparent the
fact that $\cM_\fg$ is an algebraic stack (of a suitable sort)
and it makes transparent the equivalence between comodules
and quasi-coherent sheaves.

\item The stack $\cM_\fg$ is not an algebraic stack in the sense
of the standard literature (for example, \cite{Laumon}) because
it does not have a presentation by a scheme locally of finite type -- the
Lazard ring is a polynomial ring on infinitely many generators.
It is, however, pro-algebraic: it can be written as 2-category
(i.e., homotopy) inverse limit of the algebraic stacks $\cM_\fg\pnty{n}$
of $n$-buds of formal groups. This result is inherent in Lazard's original
work -- it is the essence of the $2$-cocycle lemma -- but I learned it
from Mike Hopkins and
it has been worked out in detail by Brian Smithling \cite{smith}.
An important point is that any {\it finitely presented} quasi-coherent
sheaf on $\cM_\fg$ is actually the pull-back of  a quasi-coherent sheaf
on $\cM_\fg\pnty{n}$ for some $n$. See Theorem \ref{fp-modules}.

\item Give a coordinate-free definition of height and the height
filtration. Working over $\ZZ_{(p)}$, the height filtration is
a filtration by closed, reduced substacks
$$
\cdots \subseteq \cM(n) \subseteq \cM(n-1) \subseteq \cdots\subseteq
 \cM(1) \subseteq
\cM_\fg
$$
so that inclusion $\cM(n) \subseteq \cM(n-1)$ is the effective
Cartier divisor defined by a global section $v_n$ of the invertible
sheaf $\omega^{\otimes (p^n-1)}$ over $\cM(n-1)$. This implies,
among other things, that $\cM(n) \subseteq \cM_\fg$ is regularly
embedded, a key ingredient in Landweber Exact Functor
Theorem and chromatic convergence. The height filtration
is essentially unique: working over $\ZZ_{(p)}$,
any closed, reduced substack of $\cM_\fg$ is either
$\cM_\fg$ itself, $\cM(n)$ for some $n$, or $\cM(\infty) =
\cap \cM(n)$. See Theorem \ref{ht-n-reduced}.
This is the geometric content of the Landweber's invariant
prime ideal theorem. The stack $\cM(\infty)$  is not empty
 as the morphism classifying
the additive formal group over $\FF_p$ factors through $\cM(\infty)$.
This point and the next can also be found in Smithling's thesis
\cite{smith}. Some of this material is also in the work
of Hollander \cite{hollander2}.

\item Identity $\layer{n} = \cM(n) - \cM(n+1)$, the moduli stack
of formal groups of exact height $n$, as the neutral gerbe determined
by the automorphism scheme of any height $n$ formal group $\Gamma_n$
over $\FF_p$. See Theorem \ref{diff-ht}.
This automorphism scheme is affine and, if
we choose $\Gamma_n$ to be the Honda formal group of height
$n$, well known to homotopy theorists -- its ring of functions is
the Morava stabilizer algebra (see \cite{Rav}, Chapter 6) and its group of $\FF_{p^n}$ points 
is the Morava stabilizer group. This is all a restatement of 
Lazard's uniqueness theorem for height $n$ formal groups in modern
language; indeed, the key step in the argument is the proof,
essentially due to Lazard, that given any two formal groups $G_1$
and $G_2$ over an $\FF_p$-scheme $S$, then the scheme
$\Iso_S(G_1,G_2)$ of isomorphisms from $G_1$ to $G_2$ is
either empty (if they have different heights) or pro-\'etale and
surjective over $S$ (if they have the same height). See
Theorem \ref{refined-lazard}; we give essentially Lazard's proof,
but similar results with nearly identical statements appear in \cite{Katz}.

\item Describe the formal neighborhood $\hatlayer{n}$ of $\layer{n}$
inside the open substack $\cU(n)$ of $\cM_\fg$ of formal groups
of height at most $n$. Given a choice of $\Ga_n$ of
formal group of height $n$ over the algebraic closure $\bFF$ of
$\FF_p$ the morphism
$$
\Def(\bFF,\Ga_n) \longr \hatlayer{n}
$$
from the Lubin-Tate deformation space to the formal neighborhood
is pro-Galois with Galois group $\GG(\bFF,\Ga_n)$ of the pair
$(\bFF,\Ga_n)$. Lubin-Tate theory identifies $\Def(\bFF,\Ga_n)$
as the formal spectrum of a power series ring; since a power
series ring can have no finite \'etale extensions, we may say
$\Def(\bFF,\Ga_n)$ is the universal cover of $\hatlayer{n}$.
If $\Ga_n$ is actually
defined over $\FF_p$, then $\GG(\bFF,\Ga_n)$ is known to 
homotopy theorists as the big Morava stabilizer group:
$$
\GG(\bFF,\Ga_n) \cong \Gal(\bFF/\FF_p) \rtimes \Aut_{\bFF}(\Ga_n).
$$
From this theory, it is possible to describe what it means to be
a module on the formal neighborhood of a height $n$ point;
that is, to give a definition of the category of ``Morava modules''.
See Remark \ref{qc-hatlayer}.

\item If $\cN \to \cM_\fg$ is a representable, separated, and flat morphism
of algebraic stacks, then the induced height filtration
$$
\cdots \subseteq \cN(n) \subseteq \cN(n-1) \subseteq \cdots 
\subseteq \cN(1) \subseteq
\cN
$$
with $\cN(n) = \cM(n) \times_{\cM_\fg} \cN$
automatically has that the inclusions $\cN(n) \subseteq \cN(n-1)$
are effective Cartier divisors. The Landweber Exact Functor Theorem
(LEFT) 
is a partial converse to this statement. Here I wrote down a proof
due to Mike Hopkins (\cite{hop-notes}) of this fact. Other proofs
abound -- besides the original \cite{landweber}, there's one
due to Haynes Miller \cite{LEFT}, and Sharon Hollander has
an argument as well \cite{hollander2}. The morphism from
the moduli stack of elliptic curves to $\cM_\fg$ which assigns to each
elliptic curve its associated formal group is an example.
It is worth emphasizing that this is a special fact about the moduli
stack of formal groups -- the proof uses that $\cH(n)$ has a 
unique geometric point.

\item Give proofs of the algebraic analogs of the topological 
chromatic convergence and fracture square results for spectra.
Work over $\ZZ_{(p)}$ and let $i_n:\cU(n) \to \cM_\fg$ be
the open inclusion of the moduli stack of formal groups of height
at most $n$. If $\cF$ is a quasi-coherent sheaf on $\cM_\fg$,
we can form the derived push-forward of the pull-back
$R(i_n)_\ast i_n^\ast \cF$. As $n$ varies, these assemble into
a tower of cochain complexes of quasi-coherent sheaves on $\cM_\fg$
and there is a natural map
$$
\cF \longr \holim R(i_n)_\ast i_n^\ast \cF.
$$
Chromatic convergence then says that if $\cF$ is finitely presented,
this is morphism is an equivalence. The result has teeth as the
$\cU(n)$ do not exhaust $\cM_\fg$. To examine the transitions
in this tower, we note that the inclusion
$\cM(n) = \cM_\fg - \cU(n-1) \subseteq \cM_\fg$
is defined by the vanishing of a sheaf of ideals $\cI_n$ 
which is locally generated by  regular sequence.
Then for any quasi-coherent sheaf on $\cM_\fg$ there is
a homotopy Cartesian square (the fracture square)
$$
\xymatrix{
\cF \rto \dto & L(\cF)^\cmpl_{\cI(n)} \dto\\
R(i_{n-1})_\ast i_{n-1}^\ast \cF \rto &R(i_{n-1})_\ast i_{n-1}^\ast(L(\cF)^\cmpl_{\cI(n)})
}
$$
where $L(\cF)^\cmpl_{\cI(n)}$ is the total left derived functor of the
completion of $\cF$. Both proofs use the homotopy
fiber of
$$
\cF \longr  R(i_n)_\ast i_n^\ast \cF
$$
which is the total local cohomology sheaf $R\Gamma_{\cM(n)}\cF$. This can be
analyzed using the fact that $\cM(n) \subseteq \cM_\fg$ is
a regular embedding and Greenlees-May duality \cite{GM};
the requisite arguments can be lifted nearly verbatim from
\cite{all}, but see also \cite{DwyerG} -- the fracture square appears
in exactly this form in this last citation. Chromatic convergence
is less general -- the proof I give here uses that any finitely
presented sheaf can be obtained as a pull-back from the
stack of $n$-buds $\cM_\fg\pnty{m}$ for some $m$. This allows
one to show that the transition map 
$$
R\Gamma_{\cM(n+1)}\cF \to R\Gamma_{\cM(n)}\cF
$$
between the various total
local cohomology sheaves in zero in cohomology for large $n$.
\end{itemize}

This document begins with a compressed introduction to some
of the algebraic geometry we will need. While I can bluff my way
through a lot of algebraic geometry, I am not a geometer either
by inclination or training. There are bound to be minor errors,
but I hope there's nothing egregious. Corrections would be appreciated.
\bigskip

{\bf Acknowledgements:} I hope I've made clear my debt to Mike Hopkins;
in not, let me emphasize it again. Various people have listened
to me talk on this subject in the past few years; in particular, 
Rick Jardine has twice offered me extended forums for this work,
once at the University of Western Ontario, once at the Fields Institute.
Some of my students have listened to me at length as well. Two of them
-- Ethan Pribble \cite{pribble} and Valentina Joukhovitski \cite{valya} --
have written theses on various aspects of the theory.
 
\vfill\eject
\tableofcontents

\section{Schemes and formal schemes}

This section is devoted entirely to a review of the algebraic
geometry we need for the rest of the paper. It can -- and
perhaps should -- be skipped by anyone knowledgeable
in these matters.

\subsection{Schemes and sheaves}

We first recall some basic definitions about schemes and morphisms
of schemes, then enlarge the category slightly to sheaves in the 
$fpqc$-topology. This is necessary as formal schemes and formal
groups are not really schemes.

Fix a commutative ring $R$.
Schemes over $R$ can be thought of as functors from $\Alg_R$
to the category of sets. We briefly review this material --
mostly to establish language. 

The basic schemes over $R$ are the affine schemes $\Spec(B)$,
where $B$ is an $R$-algebra. As a functor\index{affine scheme}
$$
\Spec(B):\Alg_R \longr \Sets
$$
is the representable functor determined by $R$; that is,
$$
\Spec(B)(A) = \Alg_R(B,A).
$$
If $I \subseteq B$ is an ideal
we have the open subfunctor $U_I \subseteq \Spec(B)$ with
\index{open subfunctor, of an affine scheme}
$$
U_I(A) =\{\ f:B \to A\ |\ f(I)A = A\ \} \subseteq \Spec(B).
$$
This defines the Zariski topology on $\Spec(B)$.
The complement of $U_I$ is defined to be
the closed subfunctor $Z_I = \Spec(B/I)$;
thus,
$$
Z_I(A) =\{\ f:B \to A\ |\ f(I)A = 0\ \} \subseteq \Spec(B).
$$
Note that we can guarantee that
$$
U_I(A) \cup Z_I(A) = \Spec(B)(A)
$$
only if $A$ is a field.

If $X: \Alg_R \to \Sets$ is any functor, we define a subfunctor 
\index{open subfunctor, of an $R$-functor}
$U \subseteq X$ to be {\it open} if the subfunctor
$$
U \times_X \Spec(B) \subseteq \Spec(B)
$$
is open for for all morphisms of functors $\Spec(B) \to X$. Such morphisms
are in one-to-one correspondence with $X(B)$, by the Yoneda Lemma.
A collection of subfunctors $U_i \subseteq X$ is called a cover
if the morphism $\sqcup U_i(\FF) \to X(\FF)$ is onto for all
{\it fields} $\FF$.

As a matter of language, a functor $X:\Alg_R \to \Sets$ will be called an
{\it $R$-functor}\index{$R$-functor}.

\begin{defn}\label{scheme}\index{scheme}An $R$-functor $X$ is a {\it scheme over
$R$} if
it satisfies the following two conditions:
\begin{enumerate}

\item $X$ is a sheaf in the Zariski topology; that is, if $A$ is
an $R$-algebra and $a_1,\ldots,a_n \in A$ are elements so that
$a_1 + \cdots + a_n = 1$, then
$$
\xymatrix{
X(A) \rto & \prod X(A[a_i^{-1}]) \ar@<.5ex>[r] \ar@<-.5ex>[r] &
\prod X(A[a_i^{-1}a_j^{-1}])
}
$$
is an equalizer diagram; and

\item $X$ has an open cover by affine schemes $\Spec(B)$ where
each $B$ is an $R$-algebra

\end{enumerate}
A morphism $X \to Y$ of schemes over $R$ is a natural transformation
of $R$-functors.
\end{defn}

An open subfunctor $U$ of scheme $X$ is itself a scheme; the collection of all 
open subfunctors defines the {\it Zariski topology} on $X$.\index{Zariski topology}

\begin{rem}[{\bf Module sheaves and quasi-coherent sheaves}]\label{mod-shvs}
\index{module sheaves}
There is
an obvious sheaf of rings $\cO_X$ in this topology on $X$ called
the {\it structure sheaf} of $X$. If $U = \Spec(B) \subseteq X$ is an affine
open, then $\cO_X(U) = B$; this defintion extends to other open
subsets by the sheaf condition. A sheaf $\cF$ of $\cO_X$-{\it modules}
on $X$ is a sheaf so that 
\begin{enumerate}

\item for all open $U \subseteq X$, $\cF(U)$ is an $\cO_X(U)$-module;

\item for all inclusions $V \to U$, the restriction map $\cF(U) \to \cF(V)$
is a morphism of $\cO_X(U)$-modules.
\end{enumerate}
We now list some special classes of $\cO_X$-module sheaves.
The following definitions are all in \cite{EGA}, \S 0.5.
Let $X$ be a scheme. For any set $I$ write $\cO_X^{(I)}$ for the
coproduct of $\cO_X$ with itself $I$ times.  This coproduct is
the sheaf associated to the direct sum presheaf.
\begin{enumerate}

\item[QC.] A module sheaf $\cF$ is {\it quasi-coherent} if there is a cover
of $X$ by open subschemes $U_i$ so that for each $i$ there is
an exact sequence of $\cO_{U_i}$-sheaves
$$
\cO_{U_i}^{(J)} \to \cO_{U_i}^{(I)} \to \cF\vert_{U_i} \to 0.
$$
\index{sheaf, quasi-coherent}

\item[LF.]A quasi-coherent sheaf is {\it locally free} if the set $J$ can
be taken to be empty.
\index{sheaf, locally free}

\item[FP.] A quasi-coherent sheaf $\cF$ is {\it finitely presented} if the sets
$I$ and $J$ can be taken to be finite.
\index{sheaf, finitely presented}

\item[FT.] A module sheaf $\cF$ is of {\it finite type} if there is an open cover by subschemes
$U_i$ and, for each $i$, a surjection
$$
 \cO_{U_i}^{(I)} \to \cF\vert_{U_i} \to 0.
$$
with $I$ finite.
\index{sheaf, finite type}

\item[C.] A module sheaf $\cF$ is {\it coherent} if is of finite type and
for all open subschemes $U$ of $X$ and all morphisms
$$
f:\cO_U^n \longr \cF\vert_U
$$
of sheaves, the kernel of $f$ is of finite type.
\index{sheaf, coherent}
\end{enumerate}

There are examples of sheaves of finite type which are not quasi-coherent.
Every coherent sheaf is finitely presented and, hence, quasi-coherent; however,
a finitely presented module sheaf is coherent only if
$\cO_X$ itself is coherent. For affine schemes $\Spec(A)$, this
is equivalent to $A$ being a coherent ring -- every finitely generated
ideal is finitely presented.\index{coherent ring} This will happen if $A$ is a filtered
colimit of Noetherian rings; for example, the Lazard ring $L$.
\end{rem}

If $X_1 \to  Y  \leftarrow X_2$ is a diagram of schemes, the evident
fiber product $X_1 \times_Y X_2$ of functors is again a scheme; furthermore,
if $U =X_1 \to Y$ is an open subscheme, then $U \times_Y  X_2 \to X_2$ 
is also an open subscheme. Thus, if $f: X \to Y$ is a morphism of schemes,
and $\cF$ is a sheaf in the Zariski topology on $X$, we get sheaf a
{\it push-forward} sheaf \index{push-forward sheaf, $f_\ast$} $f_\ast \cF$
on $Y$ with
$$
[f_\ast \cF(U)] = \cF(U \times_Y X).
$$
In particular, $f_\ast \cO_X$ is a sheaf of $\cO_Y$-algebras and
if $\cF$ is an $\cO_X$-module sheaf, $f_\ast \cF$ becomes a
$\cO_Y$-module sheaf. Extra hypotheses are needed 
for $f_\ast(-)$ to send quasi-coherent sheaves to quasi-coherent
sheaves. See Proposition \ref{push-forward-qc} below.

The functor $f_\ast$ from $\cO_X$-modules to $\cO_Y$-modules has
a left adjoint, of course. If $f:X \to Y$ is a morphism of schemes and
$\cF$ is any sheaf on $Y$, define a sheaf $f^{-1}\cF$ on $X$ by
$$
[f^{-1}\cF](U) = \colim \cF(V)
$$
where the colimit is taken over all diagrams of the form
$$
\xymatrix{
U \rto \dto & X \dto\\
V \rto &Y
}
$$
with $V$ open in $Y$. If $\cF$ is an $\cO_Y$-module sheaf, then
$f^{-1}\cF$ is an $f^{-1}\cO_Y$-module sheaf and the {\it pull-back} sheaf
is\index{pull-back sheaf, $f^\ast$}
$$
f^\ast \cF= \cO_X \otimes_{f^{-1}\cO_Y} f^{-1}\cF.
$$
Thus we have an adjoint pair
\begin{equation}\label{star-adjoint}
\xymatrix{
f^\ast:\Mod_Y \ar@<.5ex>[r] &  \ar@<.5ex>[l] \Mod_X:f_\ast.
}
\end{equation}
Here and always, the left adjoint is written on top and from left to right.
If $\cF$ is quasi-coherent, so if $f^\ast \cF$; if $\cO_X$ is
coherent and $\cF$ is coherent, then $f^\ast \cF$ is coherent.

\begin{rem}[{\bf The geometric space of a scheme}]\label{espace-geom}
\index{geometric space of a scheme}
Usually, we define a scheme to be a locally
ringed space with an open cover by prime ideal spectra. This is equivalent
to the definition here, which is essentially that of Demazure and Gabriel.
Since both notions are useful -- even essential -- we show how to
pass from one to the other.

If $X$ is a functor from commutative rings to sets, we define the associated
{\it geometric space} $|X|$ as follows. A point in $|X|$ -- also known as
a {\it geometric point}\index{geometric point of a scheme}of $X$ -- is an equivalence class of morphisms
$f:\Spec(\FF) \to X$ with $\FF$ a field. The morphism $f$ is equivalent
to $f':\Spec(\FF') \to X$ is they agree after some common extension. 
This becomes a topological space with open sets $|U|$ where $U \subseteq
X$ is an open subfunctor.

If $X = \Spec(B)$, then a geometric point of $X$ is an equivalence class of
homomorphisms of commutative rings $g:B \to \FF$; this equivalence
class is determined by the kernel of $g$, which must be a prime ideal.
Furthermore, the open subsets of $|\Spec(B)|$ are exactly the subsets
$D(I)$ where $I \subseteq R$ is an ideal: $D(I)$ is complement of
the closed set $V(I)$ of prime ideals contained in $I$.
Thus $|\Spec(B)|$ is the usual
prime ideal spectrum of $B$. 

If $X = \Spec(B)$, then $|X|$ becomes a locally ringed space, with structure
sheaf $\cO$ the sheaf associated to the presheaf which assigns to each
$D(I)$ the ring $S_I^{-1}R$ where
$$
S_I = \{\ a \in B\ |\ a + \pp \ne \pp\ \hbox{for all}\ \pp \in D(I)\ \}.
$$
The stalk $\cO_x$ of $\cO$ at the point $x$ specified by the prime ideal
$\pp$ is exactly $B_\pp$. If $X$ is a general functor, then there is a
homeomorphism of topological spaces
$$
\colim |\Spec(B)| \mathop{\longr}^{\cong} |X|
$$
where the colimit is over the category of all morphisms $\Spec(B) \to X$.
This equivalence specifies the structure sheaf on $|X|$ as well. Indeed,
if $U \subseteq X$ is an open subfunctor, then, by definition $U \times_X \Spec(B)$
is open in $\Spec(B)$ for all $\Spec(B) \to X$ and $\cO(|U|)$ is determined
by the sheaf condition.

If $|X|$ is a scheme, then $|X|$ has an open cover by open subsets
of the form $V_i = |\Spec(B_i)|$ and, in addition,
$$
(\cO_{|X|})\vert_{V_i} \cong \cO_{|\Spec(B_i)|}.
$$
Whenever a locally ringed space $(Y,\cO)$ has such a cover, we will say that
$Y$ has a cover by prime ideal spectra.

The geometric space functor $|-|$ from $\ZZ$-functors to locally ringed spaces
has a right adjoint $\bS(-)$: if $Y$ is a geometric space, then the $R$-points
of $\bS(Y)$ is the set of morphisms of locally ringed spaces
$$
|\Spec(B)| \longr Y.
$$
The following two statements are the content of the Comparison Theorem
of \S I.1.4.4 of \cite{DG}.
\begin{enumerate}

\item Let $(Y,\cO)$ be a locally ringed space with an open cover 
$V_i$ by prime ideal spectra. Then the adjunction morphism
$|\bS(Y)| \to Y$ is an isomorphism of locally ringed spaces,

\item If $X$ be a functor from commutative rings to sets. Then $|X|$ has
an open cover by prime ideal spectra if and only if $X$ is a scheme and,
in that case, $X \to \bS|X|$ is an isomorphism.
\end{enumerate}
Together these statements imply that adjoint pair $|-|$ and
$\bS(-)$ induce an equivalence of categories between schemes and
locally ringed spaces with an open cover by prime ideal spectra. For
this reason and from now on we use on or the other notion as is
convenient.
\end{rem}

\begin{rem}\label{stalks-morphisms}If $X$ is a scheme and $x$ a geometric 
point of $X$ represented by $f:\Spec(\FF) \to X$, then the stalk $\cO_{X,x}$ of the structure sheaf at
$X$ can be calculated as
$$
\cO_{X,x} \cong \mathop{\colim}_{U \subseteq X} \cO_X(U).
$$
where $U$ runs over all open subshemes so that $f$ factors through $U$.
This is the global sections of $f^{-1}\cO_X$.
If $f$ factors as $\Spec(\FF) \to \Spec(B) \subseteq X$ with $\Spec(B)$
open in $X$, then there is an isomorphism
$$
\cO_{X,x} \cong B_\pp
$$
where $\pp$ is kernel of $R \to \FF$. It is easy to check this is independent
of the choice of $f$. 

If $X \to Y$ is a morphism of schemes, then we have a morphism of
sheaves $f^{-1}\cO_Y \to \cO_X$. If $x \in X$ is a geometric point,
we get an induced morphism of local rings $\cO_{Y,f(x)} \to \cO_{X,x}$.
\end{rem}

\begin{rem}\label{stand-props-schemes-1} We use this paragraph to give
some standard definitions of properties of morphisms of schemes.

1.) A morphism $f:X \to Y$ of
schemes is {\it flat} if for all geometric points of $X$
geometric space, the induced morphism of local rings
$$
\cO_{Y,f(x)} \to \cO_{X,x}
$$
is flat. \index{morphism, flat}The morphism $f$ is {\it faithfully flat}
if it is flat and surjective. Here surjective means $X(\FF) \to Y(\FF)$
is onto for all fields or, equivalently, the induced morphism
of geometric spaces $|X| \to |Y|$ is surjective.\index{morphism, faithfully flat}

2.) A scheme $X$ is called {\it quasi-compact} if every cover by open
subschemes $U_i \subseteq X$ has a finite subcover. A morphism of
schemes $X \to Y$ is quasi-compact if for every quasi-compact
open $V \subseteq Y$, the scheme $V \times_Y X$ is quasi-compact.
\index{morphism, quasi-compact}

3.) A morphism $f:X\to Y$ of schemes is called
{\it quasi-separated} if the diagonal morphism 
$X \to X \times_Y X$ is quasi-compact.
\index{morphism, separated}

4.) A morphism $f:X \to Y$
of schemes is {\it finitely presented} if for all open $U \subseteq Y$,
$f_\ast \cO_X(U)$ is a finitely presented $\cO_Y(U)$-algebra;
that is, $f_\ast \cO_X(U)$ is a quotient of $\cO_Y(U)[x_1,\ldots,x_n]$
by a finitely generated ideal.
\index{morphism, finitely presented}
\end{rem}

Any affine scheme $\Spec(B)$ is quasi-compact as the
subschemes $\Spec(B[1/f])$ form a basis for the Zariski topology.
It follows that every morphism of affine schemes is quasi-compact
and quasi-separated.

The following is in \cite{DG}, Proposition I.2.2.4.

\begin{prop}\label{push-forward-qc}Let $f:X \to Y$ be
a quasi-compact and quasi-separated morphism of schemes.
If $\cF$ is a quasi-coherent $\cO_X$-module sheaf, then
$f_\ast \cF$ is a quasi-coherent $\cO_Y$ module sheaf.
\end{prop}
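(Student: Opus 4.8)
\section*{Proof proposal}

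The plan is to reduce at once to the case $Y = \Spec(A)$ affine, since quasi-coherence can be checked locally on $Y$: cover $Y$ by affine opens $V$, note that $f_\ast\cF$ restricted to $V$ is $(f|_{f^{-1}(V)})_\ast(\cF|_{f^{-1}(V)})$, and observe that the restricted morphism $f^{-1}(V) \to V$ is again quasi-compact and quasi-separated. With $Y = \Spec(A)$, hence quasi-compact, the quasi-compactness of $f$ forces $X = f^{-1}(Y)$ to be quasi-compact, so I may fix a \emph{finite} affine open cover $X = U_1 \cup \cdots \cup U_n$ with $U_i = \Spec(B_i)$. Quasi-separatedness is used to control the overlaps: $U_i \times_Y U_j = \Spec(B_i \otimes_A B_j)$ is an affine --- in particular quasi-compact --- open of $X \times_Y X$, and its preimage under the quasi-compact diagonal $X \to X \times_Y X$ is exactly $U_i \cap U_j$; hence each $U_i \cap U_j$ is quasi-compact and carries a finite affine open cover $U_i \cap U_j = U_{ij1} \cup \cdots \cup U_{ijm_{ij}}$.

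Next I would feed these covers into the sheaf axiom for $\cF$. For an open $V \subseteq Y$ the opens $U_i \times_Y V$ cover $f^{-1}(V)$ and the $U_{ijk} \times_Y V$ refine the pairwise intersections, and all of these are affine when $V$ is. The equalizer form of the sheaf condition for $\cF$ --- applied on $f^{-1}(V)$, and once more on each $U_i \cap U_j$ --- identifies, compatibly with restriction in $V$,
$$(f_\ast\cF)(V) \;=\; \cF(f^{-1}(V)) \;=\; \ker\!\Big(\prod_i \cF(U_i \times_Y V) \longrightarrow \prod_{i,j,k} \cF(U_{ijk}\times_Y V)\Big).$$
Writing $f_i : U_i \to Y$ and $f_{ijk} : U_{ijk} \to Y$ for the (affine) restrictions of $f$, this exhibits $f_\ast\cF$ as a kernel of a map of $\cO_Y$-module sheaves,
$$f_\ast\cF \;=\; \ker\!\Big(\textstyle\prod_i (f_i)_\ast(\cF|_{U_i}) \longrightarrow \prod_{i,j,k} (f_{ijk})_\ast(\cF|_{U_{ijk}})\Big).$$
Each $\cF|_{U_i}$ corresponds to a $B_i$-module $M_i$, and a one-line check on basic opens $D(a) \subseteq \Spec(A)$ gives $(f_i)_\ast(\cF|_{U_i})(D(a)) = M_i[1/a]$, so $(f_i)_\ast(\cF|_{U_i}) = \widetilde{M_i}$ with $M_i$ regarded as an $A$-module (and similarly for the $U_{ijk}$). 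Since quasi-coherent sheaves on an affine scheme form an abelian category with $M \mapsto \widetilde M$ exact, kernels and \emph{finite} products of quasi-coherent sheaves are again quasi-coherent; therefore $f_\ast\cF$ is quasi-coherent.

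The substance of the argument --- and the only place the hypotheses on $f$ are really used --- is securing the two finiteness facts: a finite affine cover of $X$ and finite affine covers of the overlaps $U_i \cap U_j$. This is not cosmetic, since on an affine scheme an \emph{infinite} product of quasi-coherent sheaves need not be quasi-coherent (localization fails to commute with infinite products), so the displayed kernel description would be worthless with infinite index sets. Once quasi-compactness of $f$ (to cover $X$) and quasi-separatedness via the quasi-compact diagonal (to cover the overlaps) provide the finite covers, what remains is the routine identification of a push-forward along a morphism of affine schemes with the sheaf attached to the underlying module, together with exactness of $M \mapsto \widetilde M$.
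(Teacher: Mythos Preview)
Your argument is correct and is essentially the standard proof (reduce to affine base, use quasi-compactness of $f$ for a finite affine cover of $X$, quasi-compactness of the diagonal for finite affine covers of the overlaps, then express $f_\ast\cF$ as a kernel of a map between finite products of quasi-coherent sheaves). The paper itself gives no proof at all: it simply cites \cite{DG}, Proposition I.2.2.4, and the argument you have written is the one found there. One small quibble of phrasing: you say $U_i \times_Y U_j = \Spec(B_i \otimes_A B_j)$ is an ``affine open of $X \times_Y X$'', but openness is not needed and not asserted --- what matters is only that it is affine (hence quasi-compact), so that its preimage under the quasi-compact diagonal is quasi-compact. Your conclusion from that point on is unaffected.
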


Thus, if $f$ is quasi-compact and quasi-separated, Equation
\ref{star-adjoint} yields an adjoint pair
pair
$$
\xymatrix{
f^\ast:\Qmod_Y \ar@<.5ex>[r] &  \ar@<.5ex>[l] \Qmod_X:f_\ast .
}
$$

\begin{rem}[{\bf Faithfully flat descent}]\label{ff-descent}\index{faithfully flat
descent} Let $f:X \to Y$ be an morphism of schemes and let 
$$
X^n = X \times_Y \cdots \times_Y X
$$
where the product is taken $n$ times. If $\phi:[m] \to [n]$ is any morphism
in the ordinal number category, define $\phi^\ast :X^n \to X^m$ by the 
pointwise formula
$$
\phi(x_0,\ldots,x_n) = (x_{\phi(0)},\cdots,x_{\phi(n)}).
$$
In this way we obtain a simplicial $R$-functor $X_\bullet$ augmented to $Y$.
This is the {\it coskeleton} of $f$.\index{coskeleton of a morphism}

A {\it descent problem} for $f$ is a pair $(\cF,\psi)$ where $\cF$ is a 
sheaf\index{descent problem} on
$X$ and $\psi:d_1^\ast \cF \to d_0^\ast \cF$ is an isomorphism of  sheaves
on $X \times_Y X$ subject to the cocycle condition
$$
d_1^\ast \psi = d_2^\ast \psi d_0^\ast \psi
$$
over $X \times_Y X \times_Y X$. A solution to a descent problem is a  sheaf
$\cE$ over $Y$ and an isomorphism $\phi_0:f^\ast \cE \to \cF$ over $X$
so that the following diagram commutes
$$
\xymatrix{
d_1^\ast f^\ast \cE \rto^{c} \dto_{d_1^\ast \psi_0}
& d_0^\ast f^\ast \cE \dto^{d_0^\ast \psi_0}\\
d_1^\ast \cF \rto_{\psi} & d_0^\ast \cF
}
$$
where $c$ is the canonical isomorphism obtained from the equation $fd_1 = fd_0$.
If $f:X \to Y$ is flat and $(\cF,\psi)$ is a descent problem with $\cF$ quasi-coherent,
then there is at most one solution with $\cE$ quasi-coherent. If $f$ is faithfully
flat, there is exactly one solution and we get an evident equivalence of categories.
This has many refinements; for example, one could concentrate on algebra sheaves
instead of module sheaves. See Proposition \ref{ff-affine} below.
\end{rem}

\begin{nota}\label{rep-functor}Let $\cC$ be a category, $\cC_0$
a sub-category, and $X \in \cC$. Let $\pre(\cC_0)$ the category
of presheaves (i.e., contravariant functors) from $\cC_0$ to
sets.\footnote{The category of presheaves as defined here is not
a category as it might not have small $\Hom$-sets. There are several
ways to handle this difficulty, one being to bound the cardinality of
all objects in question at a large enough cardinal that all objects of
interest are included. The issues are routine, so will ignore
this problem. The same remark applies to category of $R$-functors.} Then the assignment
$$
X \mapsto \Hom_{\cC_0}(-,X)
$$
defines a functor $\cC \to \pre(\cC_0)$ and we will write $X$ equally
for the object $X$ and the associated representable presheaf. 
In our main examples, $\cC$ will be $R$-functors and $\cC_0$
will be affine schemes or schemes over $R$. In this case, the 
$\cC \to \pre(\cC_0)$ is an embedding; if $\cC$ is $R$-functors
and $\cC_0 = \Aff/R$, it is an equivalence.
\end{nota}

\begin{rem}[{\bf Topologies}]\label{top} In \cite{SGA3-1} Expos\'e IV,
topologies on schemes over a fixed base ring $R$ are defined as follows.

First, if  $X$ is an $R$-functor, then a {\it sieve}\index{sieve} on $X$ is a subfunctor
$F$ of the functor on $R$-functors over $X$ represented by $X$
itself; thus for every
$Y \to X$, $F(Y)$ is either
either empty or one point. The collection $E(F)$ of $Y \to X$ so
that $F(Y) \ne \phi$ has the property that if $Y \in E(F)$ and
$Z \to Y$, then $Z \in E(F)$. The collection $E(F)$ determines 
$F$; conversely any such collection $E$ determines a sieve $F$
with $E(F) = E$.

Next, let $f_i:X_i \to X$ be a collection of morphisms $R$ functors. This
determines a sieve by taking $E$ to be the set of $Y \to X$
which factor through some $f_i$. This collection of morphisms
is the base of resulting sieve; any sieve has at least one base,
for example $E(F)$. Thus, it may be convenient to specify sieves
by families of morphisms.

In \cite{SGA3-1} IV.4.2.,  a topology\index{topology} on the category of $R$-functors
is an assignment, to each $R$-functor, a set of {\it covering sieves}
$J(X)$ subject to the following axioms:
\begin{enumerate}

\item $X \in J(X)$;

\item if $F$ is a sieve for $X$  and $\Spec(B) \times_X F \in
J(\Spec(B))$ for all morphisms of $R$-functors $\Spec(B) \to X$
with affine source, then $F \in J(X)$;

\item (Base change) if $F \in J(X)$ and $Y \to X$ is a morphism of
$R$-functors then $F \times_X Y \in J(Y)$;

\item (Composition) if $F \in J(X)$ and $G \in J(F)$, then $G \in J(X)$;

\item (Saturation) if $F$ is a sieve for $X$, $G$ a sieve for 
$F$ and $G \in J(X)$, then $F \in J(X)$;
\end{enumerate}

\noindent These axioms together imply 
\begin{enumerate}

\item[6.] (Local) If $F_1$ and $F_2$ are in $J(X)$, so is
$F_1 \cap F_2$.
\end{enumerate}

As in \cite{SGA3-1} IV.4.2.3,
these axioms can be reformulated in terms of families of
morphisms: it is equivalent to assign to each $R$-functor
 $X$ a collection $C(X)$ of sets of {\it covering families}
 \index{covering family} of morphisms
$\{ X_i \to X\}$ of $R$-functors with the following
properties:

\begin{enumerate}

\item If $Y \to X$ is an isomorphism, then $\{ Y \to X\} \in C(X)$;

\item If $\{ X_i \to X\}$ is a set of morphisms so that
$\{ \Spec(B) \times_X X_i \to \Spec(B)\} \in C(\Spec(B))$
whenever $\Spec(B) \to X$ is a
morphism from
an affine scheme, then $\{ X_i \to X\} \in C(X)$;

\item (Base change) If $Y \to X$ is a morphism of schemes
and $\{ X_i \to X\} \in
J(X)$, then $\{ Y \times_X X_i \to Y\} \in C(Y)$;

\item (Composition) If $\{ X_i \to X\}\in C(X)$ and $\{ X_{ij} 
\to X_i\}\in C(X_i)$, then
$\{ X_{ij} \to X\}\in C(X)$;

\item (Saturation) If $\{ X_i \to X\}\in J(X)$ and $\{ Y_j \to X\}$ is a set
of morphisms of $R$-functors so that for $i$ there is a $j$ and
a factoring $X_i \to Y_j \to X$ of $X_i \to X$, then $\{ Y_j \to X\}
\in C(X)$.
\end{enumerate}

\noindent These conditions imply
\begin{enumerate}

\item[6.] (Local) If $\{ Y_j \to X\}$ is a set of morphisms so that there exists
a set $\{ X_i \to X \} \in C(X)$ with $\{ Y_j \times_X X_i \to X_i\} \in C(Y_j)$
for all $j$, then $\{ Y_j \to X\} \in C(X)$.
\end{enumerate}
 
 If we are simply given, for each $X$, a collection of morphisms
$J_0(X)$ satisfying the axioms (1), (3), and (4), then we have
a {\it pretopology}; the full topology can  be obtained by completing
in the evident manner using axioms (2) and (5).\index{pretopology}

Notice that while the preceding discussion defines a topology
on $R$-functors we can restrict to a topology on schemes
by simply considering only those $R$-functors which are schemes.
Note, however, that the covering sieves $J(X)$ may contain
$R$-functors which are not schemes.

A category of schemes $\cC$ with a collection $J(X)$ of covering
sieves is called a {\it site}.\index{site} 
For example, if $X$ is a scheme, the {\it Zariski site}\index{site, Zariski} on $X$ is
has base category $\cC$ the set of
open immersions $U \to X$. A covering family for $U$ is a finite
set of open immersions $U_i \to U$ so that $\sqcup U_i \to U$
is surjective. This is a pretopology, and we get the topology
by extending as above. The {\it small \'etale site} on $X$
has base category the \'etale morphisms $U \to X$; a covering
family is a finite family of \'etale maps $U_i \to U$ so that
$\sqcup U_i \to U$ is surjective. The big \'etale topology has
as its underlying category the category of all schemes over
$X$. The covering families remain the same.\index{site, \'etale}

This examples can be produced in another way. Let $P$ be a
fixed property of schemes closed under base change and composition, and
with the property that open immersions have property $P$. We then
define a $P$-cover of an affine scheme $U$ to be a finite
collection of morphisms $U_i \to U$ with affine source and
satisfying property $P$. For a general scheme $X$ a $P$-cover
is a finite collection of morphisms $V_i \to X$ so that for all
affine open subschemes $U$ of $X$, the morphisms $V_i \times_X U \to U$
become a $P$-cover. This is a pretopology and, from this, we get the
$P$-topology. If $P$ is the class of \'etale maps
we recover the \'etale sites; if $P$ is the class open immersions, the
small $P$-site is the Zariski site.

We write $J_\cP(X)$ for the resulting covering sieves. 
These topologies and the $fpqc$ topology about to be defined
are {\it sub-canonical}; \index{topology, sub-canonical}that is, the presheaf of sets represented by
a scheme $X$ is a sheaf; see \cite{SGA3-1} IV. 6.3.1.iii.
\end{rem}

\begin{defn}[{\bf The $fpqc$-topology}]\label{fpqc}The $fpqc$-{\bf topology}
\index{topology, $fpqc$}\index{site, $fpqc$}
on schemes is the topology obtained by taking the class $P$ of
morphisms of schemes to be flat maps. Thus a $fpqc$-cover
of an affine scheme $U$ is a finite collection $U_i \to U$ of
flat morphisms so that $\sqcup U_i \to U$ is surjective and
$fpqc$-cover of an arbitrary scheme $X$ is a finite collection
of morphisms $V_i \to X$ so that $V_i \times_X U \to U$ is
a cover for all affine open $U \subseteq X$. The $fpqc$-site
on $X$ is the category of all schemes over $X$ with the
$fqpc$-topology.
\end{defn} 

A related topology, which we won't use, is the $fppf$-topology
\index{topology, $fppf$}\index{site, $fppf$}
for which we take the class $P$ to be the class of all flat,
finitely presented, and quasi-finite maps.
The acronym $fppf$ stands for ``fid\`element plat de pr\'esentation
finie''; this is self-explanatory.
The acronym $fpqc$ stands for ``fid\`element plat quasi-compact''.  The name
derives from the following result; see \cite{SGA3-1} IV. 6.3.1.v.

\begin{prop}\label{why-fpqc}Let $X$ be a scheme and let $X_i \to X$
be a finite collection of flat, quasi-compact morphisms with the property
that
$$
\sqcup\ X_i \longr X
$$
is surjective. Then $\{ X_i \to X\}$ is a cover for the $fpqc$-topology.
In particular, any flat, surjective, quasi-compact morphism is a cover
for the $fpqc$-topology. 
\end{prop}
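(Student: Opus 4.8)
The plan is to reduce to the case where $X$ is affine and then invoke the saturation axiom for the generating $fpqc$-pretopology. By the definition of a covering family for a general scheme (Definition \ref{fpqc}), it suffices to check that for every affine open $U \subseteq X$ the base-changed family $\{X_i \times_X U \to U\}$ is an $fpqc$-cover of $U$. Here flatness and quasi-compactness of $X_i \times_X U \to U$ hold because both properties are stable under base change, and surjectivity of $\sqcup_i (X_i \times_X U) \to U$ is immediate from the functor-of-points description of surjectivity, since $(X_i \times_X U)(\FF) = X_i(\FF) \times_{X(\FF)} U(\FF)$ for every field $\FF$. So I may and do assume $X = \Spec(A)$; in particular $X$ is quasi-compact.

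Next, since each morphism $X_i \to X$ is quasi-compact and $X$ is a quasi-compact open of itself, each $X_i = X \times_X X_i$ is a quasi-compact scheme, hence admits a \emph{finite} affine open cover $X_i = \bigcup_\alpha \Spec(B_{i\alpha})$. Each composite $\Spec(B_{i\alpha}) \hookrightarrow X_i \to X$ is flat, being the composite of an open immersion (flat, since it induces isomorphisms on local rings) with the flat morphism $X_i \to X$. Thus $\{\Spec(B_{i\alpha}) \to X\}_{i,\alpha}$ is a finite family of flat morphisms with affine source whose union $\sqcup_{i,\alpha}\Spec(B_{i\alpha}) \to X$ is surjective, since it is the composite of the surjection $\sqcup_{i,\alpha}\Spec(B_{i\alpha}) \to \sqcup_i X_i$ with the surjection $\sqcup_i X_i \to X$. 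Hence $\{\Spec(B_{i\alpha}) \to X\}$ is a cover in the generating $fpqc$-pretopology on the affine scheme $X$.

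Finally, each $\Spec(B_{i\alpha}) \to X$ factors through $X_i \to X$, so the saturation axiom (axiom (5) for covering families in Remark \ref{top}), applied with the cover $\{\Spec(B_{i\alpha}) \to X\}$ as the known cover that refines $\{X_i \to X\}$, gives $\{X_i \to X\} \in C(X)$; that is, $\{X_i \to X\}$ is an $fpqc$-cover. The last assertion of the Proposition is the special case $n = 1$: a single flat, quasi-compact, surjective morphism $Y \to X$ is the one-element family $\{Y \to X\}$, which is therefore a cover.

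I expect no serious obstacle here, only a point requiring care: one must keep the refined family $\{\Spec(B_{i\alpha})\}$ \emph{finite}, which is exactly where the hypothesis that the morphisms $X_i \to X$ be quasi-compact is used --- without it, $X_i$ over the affine base could require infinitely many affine charts, whereas the pretopology admits only finite covering families (this is the ``q'' of ``$fpqc$''). The only other thing to keep straight is the distinction between the generating pretopology, whose affine-scheme covers have affine source, and the completed $fpqc$-topology; it is the completion step, here encapsulated by the saturation axiom, that promotes $\{X_i \to X\}$ itself to a cover.
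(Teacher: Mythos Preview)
Your argument is correct. The reduction to affine $X$ via Definition~\ref{fpqc}, the use of quasi-compactness of $X_i \to X$ to extract a \emph{finite} affine refinement $\{\Spec(B_{i\alpha}) \to X\}$, and the appeal to the saturation axiom are exactly the right moves; your closing remarks pinpoint precisely where each hypothesis enters.

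As for comparison: the paper does not actually prove this proposition. It simply records the statement and cites \cite{SGA3-1} IV.6.3.1.v. So you have supplied a self-contained proof where the paper defers to the literature. Your argument is essentially the standard one (and is presumably what lies behind the cited reference), so there is no substantive divergence in method --- only in that you have written out what the paper leaves to a citation.
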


\begin{rem}[{\bf Sheaves}]\label{sheaves} Continuing
of synopsis of \cite{SGA3-1} Expos\'e IV, we define and discuss
sheaves. If $X$ is an $R$-functor and $F$ is a sieve on $R$,
then $F$ become a contravariant functor on the category
$\Aff/X$ of affines over $X$. Given a topology on schemes defined
by covering sieves $J(-)$, a {\it sheaf}\index{sheaf} on $X$ is a contravariant functor
$\cF$ on $\Aff/X$ so that for all affines $U \to X$ over $X$ and
all $G \in J(U)$, the evident morphism
$$
\cF(U) \cong \Hom(U,\cF) \longr \Hom(G,\cF)
$$
is an isomorphism. Here $\Hom$ means natural transformations
of contravariant functors. If $G$ is defined by a covering
family $U_i \to U$ of affines, then $\Hom(G,\cF)$ is the equalizer
of 
$$
\xymatrix{
\prod \cF(U_i) \ar@<.5ex>[r] \ar@<-.5ex>[r] & \cF(U_i \times_U U_j)
}
$$
and we recover the more standard definition of a sheaf.
By \cite{SGA3-1} IV.4.3.5, if the topology is generated by
a class of covering families closed under base change, it
is sufficient to check the sheaf condition on those families.

We will be considering only those topologies defined at the end of
Remark \ref{top}; thus all our sieves
will be be obtained by saturation from a class of morphisms
$V \to U$ on affine
schemes which are closed under base change and composition and
contains open immersions. For such topologies, we can restrict
the domain of definition of presheaves to appropriate subcategories
of $\Aff/X$.
\end{rem}

Before proceeding, we need to isolate the following concept.

\begin{defn}\label{super-cartesian}Let $I$ be an indexing category
and let $X = X_\bullet$ be an $I$-diagram of schemes. A {\bf cartesian}
\index{sheaf, cartesian}
$\cO_X$-module sheaf consists of
\begin{enumerate}

\item for each $i\in I$ a quasi-coherent sheaf $\cF_i$ on $X_i$;

\item for each morphism $f:X_i \to X_j$ in the diagram, an
isomorphism
$$
\theta_{f}:f^\ast \cF_j \to \cF_i
$$
of quasi-coherent sheaves
\end{enumerate}
subject to the following compatibility condition:
\begin{enumerate}

\item[{\ }]Given composable arrows $\xymatrix{X_i \rto^f & X_j \rto^g & X_k}$
in the diagram, then we have a commutative diagram
$$
\xymatrix{
f^\ast g^\ast \cF_k \rto^{f^\ast\theta_g} \dto_\cong &
f^\ast \cF_j \dto^{\theta_f}\\
(gf)^\ast \cF_k \rto_{\theta_{gf}} & \cF_i
}
$$
\end{enumerate}
\end{defn}

\begin{rem}[{\bf Quasi-coherent sheaves in other topologies}]\label{mod-sheaves} Let
$X$ be a scheme and consider the topology defined by some
class of morphisms $P$ closed under base change, composition,
and containing open inclusions. We assume further that covering
families are finite and faithfully flat. This includes the Zaraski, \'etale,
and $fpqc$ topologies. Then there is a structure sheaf
$\cO^\cP_X$ on a site with this topology determined by
$$
\cO^\cP_X(\Spec(B) \to X) = B.
$$
Notice that, by the sheaf condition, it is only necessary to specify
$\cO^\cP_X$ on affines.
This a sheaf of rings and we write $\Mod^\cP_X$ for the category
of module sheaves over this sheaf. If $\Mod_X$ is the category
of module sheaves over $\cO_X$ in the Zariski topology
(see \ref{mod-shvs}), then there is an adjoint pair
\begin{equation}\label{epsilon-is-cartesian}
\xymatrix{
\Mod_X \ar@<.5ex>[r]^{\epsilon^\ast} &\ar@<.5ex>[l]^{\epsilon_\ast}
\Mod^\cP_X
}
\end{equation}
with $\epsilon^\ast$ defined by pull-back. The right adjoint $\epsilon_\ast$
is defined by restricting the affine open inclusions $U \to X$ and then
extending by the sheaf condition.

This theory extends well to quasi-coherent sheaves. Define a
module sheaf $\cF \in \Mod_X^\cP$ to be {\it cartesian} if
it is cartesian (as in Definition \ref{super-cartesian}) for the
category of affines over $X$; that is,
given any morphism
$$
\xymatrix@R=5pt{
\Spec(B) \ar[dr] \ar[dd]\\
&X\\
\Spec(A) \ar[ur]
}
$$
in $\Aff/X$, the induced map
$$
B \otimes_A \cF(\Spec(A) \to X) \to \cF(\Spec(B) \to X)
$$
is an isomorphism of $R$-modules. If $\cE \in \Mod_X$ is
quasi-coherent, then $\epsilon^\ast \cE$ is cartesian;
conversely if $\cF$ is cartesian, then $\epsilon_\ast \cF$ is
quasi-coherent. Thus the adjoint pair Equation \ref{epsilon-is-cartesian}
descends to an adjoint pair
\begin{equation}\label{epsilon-is-cartesian-1}
\xymatrix{
\Qmod_X \ar@<.5ex>[r]^{\epsilon^\ast} &\ar@<.5ex>[l]^{\epsilon_\ast}
\Mod^{\cP,\cart}_X.
}
\end{equation}
This is an equivalence of categories; therefore, we drop the 
clumsy notation $\Mod^{\cP,\cart}_X$ and confuse the notion
of a quasi-coherent sheaf with that of cartesian sheaf.
\index{cartesian vs. quasi-coherent}

There are important sheaves in $\Mod^\cP_X$ which are not
cartesian; for example, the sheaf $\Omega_{(-)/X}$ of
differentials over $X$ is quasi-coherent for the Zariski topology,
cartesian for the \'etale topology, but not cartesian
for the $fpqc$ topology.
\end{rem}

We finish this section with a review of an important class of morphisms.

\begin{defn}\label{affine-morphisms}1.) A morphism $f:X \to Y$
of schemes over $R$ is called {\bf affine} if for all morphisms
$\Spec(B) \to Y$, the $R$-functor $\Spec(B) \times_Y X$
is isomorphic to an affine scheme. 
\index{morphism, affine}

2.) A morphism $f:X \to Y$ of schemes
is a {\bf closed embedding} if it is affine and for all flat morphisms
$\Spec(B) \to Y$, the induced morphisms of rings
$$
B = \cO_Y(B) \longr f_\ast \cO_X(B) = \cO_X(\Spec(B) \times_Y X 
\to X)
$$
is surjective.
\index{closed embedding}\index{morphism, closed}

3.) A morphism $f:X \to Y$ of schemes is {\bf separated} if the
diagonal morphism $X \to X \times_Y X$ is a closed embedding.
A scheme over a commutative $R$ is separated if the morphism
$X \to \Spec(R)$ is separated.
\index{morphism, separated}
\end{defn}

If $f:X \to Y$ is an affine morphism of schemes, then the $\cO_Y$ algebra
sheaf $f_\ast \cO_X$ is quasi-coherent.\index{algebra sheaf, quasi-coherent} Conversely, if
$\cB$ is quasi-coherent $\cO_Y$-algebra sheaf, define
a $R$-functor $\Spec_Y(\cB)$ over $Y$ by
$$
\Spec_Y(\cB)(A) = \coprod_{\Spec(A) \to Y} \Alg_A(\cB(\Spec(A) \to Y),A).
$$
Then $q:\Spec_Y(\cB) \to Y$ is an affine morphism of schemes
and $q_\ast \cO_{\Spec_Y(\cB)} \cong \cB$. This gives an equivalence
between the category of quasi-coherent $\cO_Y$-algebras and
the category of affine morphisms over $Y$. Restricting this
equivalence gives a one-to-one correspondence between
closed embeddings $X \to Y$ and ideal sheaves $\cI \subseteq \cO_Y$.

An analogous result with an analogous construction
holds for quasi-coherent sheaves.

\begin{prop}\label{qc-affine}Let $f:X \to Y$ be an affine morphism
of schemes. Then the push-forward functor $f_\ast$ defines
an equivalence of categories between quasi-coherent
sheaves on $X$ and quasi-coherent $f_\ast \cO_X$-module
sheaves on $Y$. In particular, $f_\ast$ is exact.
\end{prop}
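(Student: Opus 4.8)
The plan is to reduce to the case where $Y$ is affine, and then to invoke the standard dictionary between quasi-coherent sheaves on an affine scheme and modules over its ring of functions.

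First I would record the formal facts that make the statement meaningful. An affine morphism $f:X\to Y$ is separated: restricted over an affine open $\Spec(A)\subseteq Y$, the diagonal becomes a morphism of affine schemes corresponding to the surjection $B\otimes_A B\to B$, hence a closed embedding, and being a closed embedding is local on $Y$. Likewise $f$ is quasi-compact, since the preimage of a quasi-compact open is a finite union of affines. Therefore Proposition \ref{push-forward-qc} applies and $f_\ast$ does land in quasi-coherent $\cO_Y$-modules; in particular $f_\ast\cO_X$ is a quasi-coherent $\cO_Y$-algebra, and $f$ is identified with $q:\Spec_Y(f_\ast\cO_X)\to Y$ under the equivalence recalled just before the statement. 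I would also note that $f_\ast$ is compatible with restriction to open subschemes: if $V\subseteq Y$ is open, with $f_V:V\times_Y X\to V$ the restriction of $f$, then directly from the definitions $(f_\ast\cF)|_V\cong (f_V)_\ast(\cF|_{V\times_Y X})$. Consequently the source and target of $f_\ast$, and the functor itself, are all determined by their restrictions to an affine open cover of $Y$, so it suffices to treat $Y=\Spec(A)$.

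In that case, writing $\cB=f_\ast\cO_X$ and $B=\cB(\Spec(A)\to Y)$, the morphism being affine forces $X\cong\Spec(B)$ for this $A$-algebra $B$. Under the standard equivalence $\Qmod_{\Spec(B)}\simeq\{B\text{-modules}\}$, $M\mapsto\widetilde M$ (see \cite{DG}), together with the analogous identification of quasi-coherent $\cB$-module sheaves on $Y$ with $B$-modules, the functor $f_\ast$ is carried to the identity functor on $B$-modules: $f_\ast\widetilde M$ is the $\cO_Y$-module obtained by restricting scalars to $A$, but equipped with its $\cB=\widetilde B$-action, which is precisely $M$ regarded as a $B$-module. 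Hence $f_\ast$ is an equivalence in the affine case, with quasi-inverse $\cG\mapsto\cG^\sharp$ sending a quasi-coherent $\cB$-module $\cG$ with $\cG(\Spec(A)\to Y)=M$ to $\widetilde M$ on $X=\Spec(B)$; this is the ``analogous construction'' alluded to before the statement, built just like $\Spec_Y(\cB)$ but at the level of modules. Exactness of $f_\ast$ follows at once, as restriction of scalars along $A\to B$ is exact.

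To globalize, I would glue: over the members $V_i$ of an affine open cover of $Y$ the local quasi-inverses $\cG\mapsto\cG^\sharp$ agree on overlaps, because the affine description of quasi-coherent sheaves is compatible with localization, so they patch to a global functor which is inverse to $f_\ast$ up to the unit and counit isomorphisms, the latter being isomorphisms of quasi-coherent sheaves and so checkable on an affine cover. The hard part will be exactly this compatibility-with-restriction bookkeeping --- verifying that $f_\ast$, the $\Spec_Y(-)$-type construction on modules, and the standard affine equivalence all commute with passing to open subschemes of $Y$ --- since once that is in place the purely affine computation transports verbatim to arbitrary $Y$ and everything else is formal.
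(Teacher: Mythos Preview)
The paper states this proposition without proof; it is quoted as a standard fact (immediately after the text ``An analogous result with an analogous construction holds for quasi-coherent sheaves'') and the narrative moves on directly to the next statement. So there is no argument in the paper to compare against.

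Your proposal is the standard proof and is correct. The reduction to affine $Y$ via compatibility of $f_\ast$ with restriction to opens, followed by the dictionary $\Qmod_{\Spec(B)}\simeq B\text{-}\mathrm{Mod}$, is exactly how this is done; the observation that affine morphisms are separated and quasi-compact so that Proposition~\ref{push-forward-qc} applies is the right preliminary. One small stylistic remark: you need only quasi-separatedness for Proposition~\ref{push-forward-qc}, and for that it suffices that the diagonal of an affine morphism is affine (hence quasi-compact); your stronger claim that it is a closed embedding is true but more than required.
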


If $f:T \to S$ is a morphism of schemes and $X \to S$ is an affine morphism,
the $f^\ast X = T \times_S X$ is also affine. If $f$ is faithfully flat,
we have the following result.

\begin{prop}\label{ff-affine}Let $f:T \to S$ be a faithfully flat morphism
of schemes. The $f^\ast(-)$ defines and equivalence of categories
from the category of schemes affine over $S$ to the category of 
descent problems in schemes affine over $T$.
\end{prop}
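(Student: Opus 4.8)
The plan is to reduce everything to the affine case, where faithfully flat descent is classical, and then patch. Recall that by Proposition \ref{ff-affine}'s companion discussion, an affine morphism $X \to S$ is the same data as a quasi-coherent $\cO_S$-algebra sheaf $\cB = f_\ast\cO_X$, and pulling back along $g : T \to S$ corresponds to the pull-back of algebra sheaves $g^\ast\cB$. So the statement is equivalent to the assertion that the category of quasi-coherent $\cO_S$-algebra sheaves is equivalent, via $g^\ast$, to the category of descent problems for quasi-coherent $\cO_T$-algebra sheaves relative to the cosimplicial cobar construction on $g$. This in turn follows from faithfully flat descent for quasi-coherent module sheaves (Remark \ref{ff-descent}), since an algebra structure is extra structure carried along by the module-level equivalence: a descent datum for algebras is a descent datum for the underlying modules together with multiplication maps compatible with the gluing isomorphism $\psi$, and these glue to a multiplication on the descended module because $g$ is faithfully flat and tensor products commute with the relevant limits.

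First I would treat the case where $S = \Spec(R')$ and $T = \Spec(R'')$ are both affine. Here $g$ corresponds to a faithfully flat ring map $R' \to R''$, and a quasi-coherent algebra sheaf on $S$ is just an $R'$-algebra; the claim becomes the statement that $R'\text{-Alg} \to \text{Desc}(R'' / R')$ is an equivalence. This is the standard faithfully flat descent theorem of Grothendieck: full faithfulness is the exactness of the Amitsur complex $0 \to R' \to R'' \to R'' \otimes_{R'} R''$, and essential surjectivity constructs the descended algebra as the equalizer of $d_0^\ast, d_1^\ast : A \rightrightarrows A \otimes_{R''}(R''\otimes_{R'}R'')$, with the algebra structure inherited because the equalizer of ring maps is a ring. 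I would simply cite this, or Proposition \ref{ff-affine} as stated is really this plus the translation to schemes affine over the base.

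Next, for general $S$ (but still $T$ arbitrary faithfully flat over $S$), I would choose an affine open cover $\{\Spec(R'_\alpha)\}$ of $S$ and set $T_\alpha = T \times_S \Spec(R'_\alpha)$, which is faithfully flat over $\Spec(R'_\alpha)$ but possibly not affine; however, since affineness of a morphism is local on the target and can be checked after the faithfully flat base change $T \to S$ (a scheme affine over a base is detected by its structure sheaf being the relative Spec of a quasi-coherent algebra, and quasi-coherence and the algebra identity descend), I can further refine so that each $T_\alpha$ is covered by affines mapping to $\Spec(R'_\alpha)$ and reduce to the affine–affine case on overlaps. The point is that the formation of the descended affine scheme $\Spec_Y(\cB)$ from a quasi-coherent algebra sheaf $\cB$ (as in the paragraph after Definition \ref{affine-morphisms}) is itself Zariski-local on $Y$, so the affine-case equivalences glue, and the cocycle condition on the descent datum guarantees the local pieces agree on triple overlaps. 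Functoriality and the two natural transformations (unit and counit) are built locally and glue by the same uniqueness.

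The main obstacle, and the step I would spend the most care on, is the reduction showing that \emph{affineness} of the would-be descended scheme can be checked after base change along $g$ — equivalently, that a descent datum of \emph{affine} schemes over $T$ descends to an \emph{affine} scheme over $S$ rather than merely to some scheme or algebraic space over $S$. For quasi-coherent module sheaves the descent is automatic (Remark \ref{ff-descent}); the content here is that the relative-Spec construction is compatible with descent, which comes down to: (i) $f_\ast\cO_X$ is quasi-coherent when $f$ is affine, hence has a descent datum; (ii) the descended quasi-coherent algebra sheaf $\cB$ on $S$ has $g^\ast(f_\ast\cO_X) \cong \cB \otimes_{\cO_S}\cO_T \cong q_\ast\cO_{T\times_S\Spec_S(\cB)}$; and (iii) faithful flatness of $g$ then forces the natural map $T \times_S \Spec_S(\cB) \to X$, already an isomorphism after applying $f_\ast$ over $T$, to be an isomorphism, and descends the identification to $\Spec_S(\cB) \xrightarrow{\ \sim\ } (\text{the descended scheme})$. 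Everything else is bookkeeping with the cobar cosimplicial object and the cocycle identity.
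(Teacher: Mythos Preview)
The paper does not give a proof of this proposition; it is stated as a standard result (classical faithfully flat descent for affine morphisms, going back to Grothendieck) and used without argument. Your approach --- translate affine morphisms into quasi-coherent algebra sheaves via $X \mapsto f_\ast\cO_X$ and $\cB \mapsto \Spec_S(\cB)$, invoke faithfully flat descent for quasi-coherent modules (Remark~\ref{ff-descent}), and note that the algebra structure is carried along because multiplication is a module map and equalizers of ring maps are rings --- is exactly the standard proof and is correct.

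One small comment on exposition: your last paragraph overcomplicates the essential surjectivity. You do not need to worry about whether ``affineness descends'' as a separate step. Given a descent datum $(X,\psi)$ in schemes affine over $T$, just take the quasi-coherent $\cO_T$-algebra $\cA = q_\ast\cO_X$ with its induced descent datum, descend it to a quasi-coherent $\cO_S$-algebra $\cB$ using Remark~\ref{ff-descent}, and set the solution to be $\Spec_S(\cB)$; this is affine over $S$ by construction. Then $f^\ast\Spec_S(\cB) \cong \Spec_T(f^\ast\cB) \cong \Spec_T(\cA) \cong X$ compatibly with $\psi$, since relative Spec commutes with base change. That is cleaner than the route through point (iii).
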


\subsection{The tangent scheme}

If $A$ is a commutative ring, let $A(\epsilon) =
A[x]/(x^2)$ be the $A$-algebra of dual numbers. Here we have written
$\epsilon = x + (x^2)$. There is an augmentation $q:A(\epsilon) \to A$ given
by $\epsilon \mapsto 0$. 

Let $R$ be a commutative ring and let $X$ be $R$-functor.
Define the {\it tangent functor}\index{tangent functor}
$\tan_X \to X$ over $X$ to be the functor
$$
\tan_X(A) = X(A(\epsilon))
$$
with the projection induced by the augmentation $q:A(\epsilon) \to A$.
There is a {\it zero section} $s:X \to \tan_X$ induced by the unit
map $A \to A(\epsilon)$. If $X \to S$
is a morphism of $R$-functors, then the relative tangent functor $\tan_{X/S}$
is defined by the pull-back diagram
$$
\xymatrix{
\tan_{X/S} \rto \dto & \tan_X\dto\\
S \rto_-s & \tan_S\\
}
$$
If we let $A(\epsilon_1,\epsilon_2) =
A[x,y]/(x^2,xy,y^2)$, then the
natural $A$-algebra homomorphism $A(\epsilon) \to 
A(\epsilon_1,\epsilon_2)$ given by $\epsilon
\mapsto \epsilon_1 + \epsilon_2$ defines a multiplication over $X$
$$
\tan_{X/S} \times_X \tan_{X/S} \to \tan_{X/S}
$$
so that $\tan_{X/S}$ is an abelian group $R$-functor over
$X$.

If $X \to S$ is a morphism of schemes, then $\tan_{X/S}$ is
an affine scheme over $X$. See Proposition \ref{tan-is-affine}.
We will see this once we have discussed
the connection between the $\tan_{X/S}$ and the sheaf of
differentials $\Omega_{X/S}$. 

Let $X$ be an $R$-functor for some commutative ring $R$.
Define the $\cO_X$-module
presheaf of differential $\Omega_{X/R}$ by the formula
$$
\Omega_{X/R}(\Spec(B) \to X) = \Omega_{B/R}.
$$
This became a quasi-coherent sheaf in the Zariski topology. If
$f:X \to Y$ is a morphism of $R$-functors, define $\Omega_{X/Y}$
by the exact sequence of $\cO_X$-modules (in the Zariski topology)
$$
f^\ast \Omega_{Y/R} \to \cO_{X/R} \to \Omega_{X/Y} \to 0.
$$
Since $\Omega_{B/R} = J(B)/J(B)^2$ where $J(B)$ is the kernel
of the multiplication map
$$
B \otimes_R B \longr B
$$  
this definition can be reformulated as follows. A proof can be found in 
\cite{DG} \S I.4.2.

\begin{lem}\label{sheaf-of-diff}\index{differentials} Let $X \to S$ be a separated morphism of
schemes, so that  diagonal
morphism $:\Delta:X \to X \times_S X$ is a closed embedding. 
Then there is a natural isomorphism $\Omega_{X/S}$
of quasi-coherent sheaves on $X$
$$
\Omega_{X/S} \cong \Delta^\ast \cJ/\cJ^2
$$
where $\cJ$ is the module of the closed embedding $\Delta$.
\end{lem}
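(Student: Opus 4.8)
The plan is to reduce the statement to a purely affine computation, where it is standard, and then glue. First I would invoke the hypothesis that $X \to S$ is separated: by Definition \ref{affine-morphisms}(3), separated means precisely that $\Delta: X \to X\times_S X$ is a closed embedding, so by the correspondence recorded after Proposition \ref{ff-affine} there is a well-defined quasi-coherent ideal sheaf $\cJ \subseteq \cO_{X\times_S X}$ cutting out $X$, and the pull-back $\Delta^\ast(\cJ/\cJ^2)$ is a quasi-coherent sheaf on $X$. So the object on the right-hand side makes sense; it remains to produce a natural isomorphism to $\Omega_{X/S}$.

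The key step is the affine case. Work Zariski-locally: choose an affine open $\Spec(A) \subseteq S$ and an affine open $\Spec(B) \subseteq X$ mapping into it, so $B$ is an $A$-algebra. On this piece, $\Omega_{X/S}$ is by definition the sheaf associated to $\Omega_{B/A}$. On the other side, $\Delta$ restricted here is the closed embedding $\Spec(B) \to \Spec(B\otimes_A B)$ corresponding to the multiplication map $\mu: B\otimes_A B \to B$, whose kernel is $J(B) = \ker\mu$; thus $\cJ$ restricts to the ideal $J(B)$, and $\Delta^\ast(\cJ/\cJ^2)$ restricts to $B \otimes_{B\otimes_A B} (J(B)/J(B)^2) = J(B)/J(B)^2$, where the tensor is along $\mu$ — but $J(B)/J(B)^2$ is already a $B$-module via either factor (the two agree modulo $J(B)$), so no further base change is needed. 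The identification $\Omega_{B/A} \cong J(B)/J(B)^2$ sending $db$ to the class of $b\otimes 1 - 1\otimes b$ is the classical one, cited from \cite{DG} \S I.4.2; one checks it is an isomorphism of $B$-modules, the inverse being induced by the universal derivation.

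The final step is to check that these local isomorphisms are natural and hence glue to a global isomorphism of quasi-coherent sheaves. Naturality in $B$ (for $A$-algebra maps $B \to B'$ compatible with the structure maps from $S$) is immediate from the explicit formula $db \mapsto [b\otimes 1 - 1\otimes b]$, since both the formation of $\Omega_{(-)/A}$ and the formation of $J(-)/J(-)^2$ are functorial and the comparison map is given by the same symbolic expression. Because both $\Omega_{X/S}$ and $\Delta^\ast(\cJ/\cJ^2)$ are quasi-coherent, and a morphism of quasi-coherent sheaves is determined by its restrictions to an affine open cover (Remark \ref{mod-shvs}, Proposition \ref{qc-affine}), the locally defined isomorphisms patch to a unique global isomorphism. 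I expect the only real subtlety — more bookkeeping than obstacle — to be confirming that the two $B$-module structures on $J(B)/J(B)^2$ coincide and that one genuinely does not need a nontrivial base change along $\mu$; once that is pinned down, the rest is formal.
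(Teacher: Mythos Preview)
Your proposal is correct and is exactly the standard argument the paper is gesturing at: the paper does not prove this lemma in-line but simply cites \cite{DG} \S I.4.2, after noting the affine identity $\Omega_{B/R} = J(B)/J(B)^2$ in the sentence immediately preceding the statement. Your reduction to that affine computation and subsequent gluing via naturality is precisely the expected proof.
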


If $X$ is not separated, we can still identify the differentials by a 
variation on this method:  if we factor the diagonal
map as a closed embedding followed by an open inclusion
$$
\xymatrix{
X \rto^j & V \rto & X \times_S X
}
$$
then $\Omega_{X/S} \cong i^\ast \cJ/\cJ^2$ where $\cJ$ is the
ideal defining $j$.

Needless to say, there is a close connection between differentials
and derivations. If $R$ is a commutative ring and $M$ is an
$R$-module, the {\it square-zero extension} of $R$ by $M$ is
the $R$-algebra $R \rtimes M$ which is $R \times M$ as an $R$-module
and multiplication
$$
(a,x)(b,y) = (ab, ay + bx).
$$
This has an extension to sheaves.\index{derivations}\index{square-zero
extension}

\begin{defn}\label{derivations} Let $\cF$ be a quasi-coherent
sheaf on a scheme $X$. Then we define the $\cO_X$-algebra
sheaf $\cO_X \rtimes \cF$ on $X$ to be the 
square-zero extension of $\cO_X$. Then a derivation of $X$
with coefficients in $\cF$
is a diagram of sheaves of commutative rings
$$
\xymatrix@C=7pt{
\cO_X \ar[rr]^-f \ar[dr]_{=} &&\cO_X \rtimes \cF\ar[dl]^{p_1}\\
&\cO_X.
}
$$
If $q:X \to S$ is a scheme over $S$, then an $S$-derivation
of $X$ with coefficients in $\cF$ is a derivation of $X$ with
coefficients in $\cF$ so that
$$
q_\ast f: q_\ast \cO_X \longr q_\ast \cO_X \rtimes q_\ast \cF
$$
is a morphism of $\cO_S$-algebra sheaves.
We will write $\Der_S(X,\cF)$ for the set of all $S$-derivations of $X$ with
coefficients in $\cF$.
\end{defn}

\begin{exam}\label{universal-derivation} Suppose $X \to S$
is a separated morphism of schemes. Then, by definition,
$\Delta:X \to X\times_SX$ is a closed embedding; let $\cJ$ be
the ideal of this embedding. 
Write $(X \times_S X)_1 \subseteq X \times_S X$ for the
subscheme defined by the vanishing of $\cJ^2$.
Then the splitting provided by the first projection
$p_1:X \times_S X \to X$ defines an isomorphism
$$
\Delta^\ast \cO_{(X \times_S X)_1} \cong \cO_X\rtimes \Omega_{X/S}.
$$
Then the second projection defines an $S$-derivation of $X$
$$
f_u: \cO_X \longr \cO_X \rtimes \Omega_{X/S}$$
The morphisms $f_u$ or the
resulting morphism $d:\cO_X \to \Omega_{X/S}$ is
called the {\it universal derivation}.\index{derivation, universal}
\end{exam}

The module of $S$-derivations is the global sections of the sheaf
$\shder_S(X,\cF)$ which assigns to each Zariski open $U \subseteq X$ the
module of derivations 
$$
\Der_S(U,\cF\vert_U).
$$
This is an $\cO_X$-module sheaf, although not necessarily quasi-coherent.

\begin{prop}\label{der-and-cotangent} There is a natural isomorphism
of $\cO_X$-module sheaves
$$
\hom_{\cO_X}(\Omega_{X/S},\cF) \longr \shder_S(X,\cF)
$$
given by composing with the universal derivation.
\end{prop}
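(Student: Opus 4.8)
The plan is to define the map directly on affine opens, check it is well-defined, natural in the open set, and then invoke the universal property of $\Omega_{B/R}$ to see it is a bijection on sections. Since both $\hom_{\cO_X}(\Omega_{X/S},\cF)$ and $\shder_S(X,\cF)$ are $\cO_X$-module sheaves defined by their values on Zariski opens, it suffices to produce, for each affine open $U = \Spec(B)$ mapping to an affine open $\Spec(A)$ of $S$ (with $R \to A$ the structure map), a natural isomorphism of $B$-modules
$$
\Hom_B(\Omega_{B/A},\cF(U)) \longrightarrow \Der_A(B,\cF(U))
$$
and then glue. The glueing is automatic once naturality is established, because both sides are sheafified from the same presheaf data; so the real content is affine and local.

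The key steps, in order. First, recall from Example \ref{universal-derivation} (restricted to $U$) that the universal derivation $d: \cO_X \to \Omega_{X/S}$ restricts on $U$ to the universal $A$-derivation $d_{B/A}: B \to \Omega_{B/A}$; concretely it comes from the second projection on the first-order neighborhood of the diagonal, with the square-zero structure $\cO_X \rtimes \Omega_{X/S}$ as in Definition \ref{derivations}. Second, given a $B$-linear map $\varphi: \Omega_{B/A} \to \cF(U)$, define a derivation by composition: $\delta_\varphi = \varphi \circ d_{B/A}: B \to \cF(U)$. That this lands in $\Der_A(B,\cF(U))$ — i.e. is $A$-linear and satisfies the Leibniz rule — is immediate since $d_{B/A}$ has these properties and $\varphi$ is $B$-linear; packaging it as a diagram of square-zero extensions as in Definition \ref{derivations} is routine. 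Third, construct the inverse: given an $A$-derivation $\delta: B \to \cF(U)$ (equivalently, a section $f: B \to B \rtimes \cF(U)$ of $p_1$ over $A$), use the universal property of $(\Omega_{B/A}, d_{B/A})$ — every $A$-derivation of $B$ into a $B$-module factors uniquely through $d_{B/A}$ via a $B$-linear map — to obtain a unique $\varphi_\delta: \Omega_{B/A} \to \cF(U)$ with $\varphi_\delta \circ d_{B/A} = \delta$. Fourth, check $\varphi \mapsto \delta_\varphi$ and $\delta \mapsto \varphi_\delta$ are mutually inverse ($B$-linear) bijections — this is exactly the universal property again — and that the construction is compatible with restriction to smaller affine opens $U' \subseteq U$, which follows from the base-change isomorphism $\Omega_{B'/A} \cong B' \otimes_B \Omega_{B/A}$ for a localization $B \to B'$ together with the cartesian (quasi-coherent) property of $\cF$ from Remark \ref{mod-sheaves}. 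Finally, note $\cF$ quasi-coherent and $B$-linearity let us identify $\Hom_B(\Omega_{B/A},\cF(U))$ with the sections over $U$ of the internal-hom sheaf $\hom_{\cO_X}(\Omega_{X/S},\cF)$, and sheafify both sides.

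The only mildly delicate point — and the place I expect the bookkeeping to concentrate — is the non-separated case: the identification of $d$ as the universal derivation in Example \ref{universal-derivation} was stated for $X \to S$ separated, using $\Delta^\ast\cO_{(X\times_S X)_1} \cong \cO_X \rtimes \Omega_{X/S}$. For general $X$ one uses the factorization of the diagonal through an open $V$ as in the remark following Lemma \ref{sheaf-of-diff} to get $\Omega_{X/S} \cong i^\ast \cJ/\cJ^2$ locally; but the whole statement is local on $X$ and $S$, and every affine morphism is separated, so working affine-locally as above sidesteps the issue entirely — the universal property of $\Omega_{B/A}$ for $B$ an $A$-algebra holds unconditionally. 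Thus the proof reduces, with no real obstacle, to the standard affine statement $\Der_A(B,M) \cong \Hom_B(\Omega_{B/A},M)$ plus a naturality check, sheafified.
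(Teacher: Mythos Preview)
Your proof is correct but takes a different route from the paper. You reduce to affine opens and invoke the classical ring-theoretic universal property $\Hom_B(\Omega_{B/A},M)\cong\Der_A(B,M)$, then check naturality under localization and sheafify. The paper instead constructs the inverse globally by a lifting argument: given a derivation $f:\cO_X\to\cO_X\rtimes\cF$, one pairs it with the zero derivation to get a map $\Spec_X(\cO_X\rtimes\cF)\to X\times_S X$, and since $\cO_X\rtimes\cF$ is square-zero this lifts uniquely through the first infinitesimal neighborhood $(X\times_S X)_1$, producing the desired map $\cO_X\rtimes\Omega_{X/S}\to\cO_X\rtimes\cF$. Your approach is more elementary and makes the dependence on the affine statement explicit; the paper's approach stays scheme-theoretic throughout and exhibits the inverse as coming from the universal property of the diagonal's infinitesimal neighborhood, which is conceptually cleaner and avoids the glueing bookkeeping entirely.
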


\begin{proof} The inverse to this this morphism is given as follows.
Let  $f:\cO_X \to \cO_X \rtimes \cF$ be any derivation and let
$f_0$ be the zero derivation; that is, inclusion into the first factor.
Also let $p: \cO_X \rtimes \cF \to \cO_X$ be the projection.
Consider the lifting problem
$$
\xymatrix{
X \dto_p \rto& (X \times_S X)_1 \dto^{\subseteq}\\
\Spec_X( \cO_X \rtimes \cF) \ar@{-->}[ur]\rto_-{f_0 \times f} & X \times_S X.
}
$$
Here we have written $h$ for a morphism when we mean $\Spec_X(h)$. Since
$ \cO_X \rtimes \cF$ is a square-zero extension, this lifting
problem has a unique solution $g$ and that $g$ yields a morphism
$$
\Delta^\ast \cO_{(X\times_S X)_1} \cong \cO_X \rtimes \Omega_{X/S}
\to  \cO_X \rtimes \cF
$$
of $\cO_X$-algebra sheaves over $\cO_X$ as needed.
\end{proof}

The following result follows immediately from the previous proposition
upon taking global sections. Note that if $X = \Spec(B) \to \Spec(R)$,
this amounts to the usual isomorphism
$$
\Mod_B(\Omega_{B/R},M) \cong \Der_R(B,M).
$$

\begin{cor}\label{global-der-cotan}This is a natural isomorphism
of modules over the global sections over $X$
$$
\Mod_X(\Omega_{X/S},\cF)\cong \Der_S(X,\cF)
$$
given by composing with the universal derivation.
\end{cor}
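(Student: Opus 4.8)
The plan is to obtain this statement by applying the global sections functor $\Gamma(X,-)$ to the sheaf isomorphism of Proposition \ref{der-and-cotangent}. First I would recall that for any two $\cO_X$-module sheaves $\cG$ and $\cF$ there is a natural identification $\Gamma\bigl(X,\hom_{\cO_X}(\cG,\cF)\bigr)=\Mod_X(\cG,\cF)$: a global section of the internal hom sheaf is, by the sheaf axioms, exactly a compatible family of $\cO_X(U)$-linear maps $\cG(U)\to\cF(U)$ for $U$ ranging over a Zariski cover, and this is the same datum as a morphism $\cG\to\cF$ of $\cO_X$-module sheaves. Applied with $\cG=\Omega_{X/S}$, this identifies the left-hand side of the displayed isomorphism with $\Gamma\bigl(X,\hom_{\cO_X}(\Omega_{X/S},\cF)\bigr)$.

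Next I would identify the right-hand side. The sheaf $\shder_S(X,\cF)$ was defined so that $\shder_S(X,\cF)(U)=\Der_S(U,\cF\vert_U)$, and the module $\Der_S(X,\cF)$ of $S$-derivations was defined, in the paragraph preceding Proposition \ref{der-and-cotangent}, precisely as the group of global sections $\Gamma\bigl(X,\shder_S(X,\cF)\bigr)$. Thus applying $\Gamma(X,-)$ to the isomorphism $\hom_{\cO_X}(\Omega_{X/S},\cF)\longr\shder_S(X,\cF)$ of Proposition \ref{der-and-cotangent} yields an isomorphism $\Mod_X(\Omega_{X/S},\cF)\cong\Der_S(X,\cF)$ of modules over $\Gamma(X,\cO_X)$. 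Since the sheaf isomorphism is given by composition with the universal derivation $d:\cO_X\to\Omega_{X/S}$ of Example \ref{universal-derivation}, and since $\Gamma(X,-)$ is a functor, the induced map on global sections is again ``composition with the universal derivation'', as asserted; naturality in $\cF$ is likewise inherited from the sheaf-level statement.

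I do not expect any genuine obstacle here — the corollary is essentially a restatement of Proposition \ref{der-and-cotangent} on global sections. The only points deserving a line of justification are the identification $\Gamma(X,\hom_{\cO_X}(\cG,\cF))=\Mod_X(\cG,\cF)$ and the observation that the description ``compose with the universal derivation'' is compatible with passing to global sections, both of which are immediate from the definitions and from the lifting-problem construction carried out in the proof of Proposition \ref{der-and-cotangent}. Finally, the parenthetical remark in the statement is the special case $X=\Spec(B)$, $S=\Spec(R)$, $\cF=\widetilde{M}$: over an affine scheme $\Gamma$ recovers the associated module, so the general isomorphism specializes to the familiar $\Mod_B(\Omega_{B/R},M)\cong\Der_R(B,M)$.
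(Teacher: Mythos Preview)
Your proposal is correct and takes essentially the same approach as the paper: the paper simply states that the corollary follows immediately from Proposition \ref{der-and-cotangent} upon taking global sections, which is exactly what you do, with the added (and welcome) justification of the identifications $\Gamma(X,\hom_{\cO_X}(\cG,\cF))=\Mod_X(\cG,\cF)$ and $\Gamma(X,\shder_S(X,\cF))=\Der_S(X,\cF)$.
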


If $\cF$ is a quasi-coherent sheaf of $\cO_X$-modules on a scheme $X$,
we can form the symmetric algebra
$\mathrm{Sym}_{\cO_X}(\cF)$; this is a sheaf
of quasi-coherent $\cO_X$-algebras on $X$ and we denote by
$$
\VV(\cF) \longr X
$$
the resulting affine morphism. \index{symmetric algebra sheaf, $\VV(-)$}If $A$ is an $R$-algebra,
then
$$
\VV(\cF)(A) = \coprod_{\Spec(A) \to X}\Mod_A(\cF(A),A).
$$
The diagonal map
$\cF \to \cF \oplus \cF$ gives $\VV(\cF)$ the structure of
an abelian group scheme over $X$.

Proposition \ref{der-and-cotangent}
implies the following result -- in the latter 
proposition set $\cF=\cO_X$ and note that
$$
\cO_X(\epsilon) = \cO_X \rtimes \cO_X.
$$

\begin{prop}\label{tan-is-affine}\index{tangent scheme}If $X \to S$ is a separated morphism
of schemes, there is a natural isomorphism of
abelian group schemes over $X$
$$
\VV(\Omega_{X/S}) \cong \tan_{X/S}.
$$
\end{prop}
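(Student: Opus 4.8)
The plan is to deduce Proposition \ref{tan-is-affine} from Proposition \ref{der-and-cotangent} (equivalently Corollary \ref{global-der-cotan}) by the usual functor-of-points / Yoneda argument, checking that both $\VV(\Omega_{X/S})$ and $\tan_{X/S}$ represent the same functor on $\Alg_R$-algebras over $X$, and then verifying that the isomorphism so obtained respects the zero-sections and the abelian group structures.

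First I would unwind what each side assigns to a test algebra. For an $R$-algebra $A$, a point of $\tan_{X/S}(A)$ over a given point $\Spec(A) \to X$ is, by definition of $\tan_X(A) = X(A(\epsilon))$ together with the pull-back defining the relative version, a lift of $\Spec(A) \to X$ along $q\colon A(\epsilon) \to A$ whose composite to $S$ is the structure map $\Spec(A(\epsilon)) \to \Spec(A) \to S$. Writing this out as a square-zero situation, such a lift is the same as an $S$-derivation of $\cO_X$ with coefficients in the quasi-coherent sheaf associated to the $A$-module $A$ viewed along $\Spec(A) \to X$; this is exactly the observation recorded just before the statement, that $\cO_X(\epsilon) = \cO_X \rtimes \cO_X$, applied after base change along $\Spec(A) \to X$. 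On the other side, by the displayed formula for $\VV(-)$,
$$
\VV(\Omega_{X/S})(A) = \coprod_{\Spec(A) \to X} \Mod_A(\Omega_{X/S}(A), A),
$$
and by Corollary \ref{global-der-cotan} applied over the base $\Spec(A)$ (pulling $X$ back along $\Spec(A)\to X$, or equivalently using the naturality of Proposition \ref{der-and-cotangent}) this $\Hom$-set is naturally identified with $\Der_S(\Spec(A)\to X,\ \cdot\,)$, i.e. with the fiber of $\tan_{X/S}$. The identification is natural in $A$, so Yoneda over $\Aff/R$ produces an isomorphism of $R$-functors $\VV(\Omega_{X/S}) \cong \tan_{X/S}$ over $X$; since both are already known to be (affine) schemes over $X$ — the left by construction as $\Spec_X \Sym_{\cO_X}(\Omega_{X/S})$, the right by the discussion preceding the statement — this is an isomorphism of schemes over $X$.

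It then remains to check the two structures. The zero section of $\VV(\cF)$ corresponds to the zero homomorphism $\cF(A) \to A$, and the zero section $s\colon X \to \tan_{X/S}$ is induced by the unit $A \to A(\epsilon)$, which corresponds precisely to the zero (trivial) derivation $f_0$ appearing in the proof of Proposition \ref{der-and-cotangent}; so the bijection carries zero to zero. For the group law, the abelian group structure on $\VV(\cF)$ comes from the diagonal $\cF \to \cF \oplus \cF$, i.e. addition of module homomorphisms, while the group structure on $\tan_{X/S}$ comes from $A(\epsilon) \to A(\epsilon_1,\epsilon_2)$, $\epsilon \mapsto \epsilon_1 + \epsilon_2$; under the dictionary ``derivation $\leftrightarrow$ square-zero lift'' addition of derivations corresponds exactly to this Fox-calculus-style addition of first-order deformations, which is a direct computation with the ring $A[x,y]/(x^2,xy,y^2)$. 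Hence the Yoneda isomorphism is one of abelian group schemes over $X$.

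The main obstacle, such as it is, is bookkeeping rather than mathematics: one must be careful that Proposition \ref{der-and-cotangent}, stated for $\shder_S(X,\cF)$ as a sheaf on a fixed $X$, is applied with enough naturality to cover all the varying base points $\Spec(A) \to X$ at once — i.e. that the isomorphism $\hom_{\cO_X}(\Omega_{X/S},\cF) \cong \shder_S(X,\cF)$ is compatible with arbitrary base change $\Spec(A) \to X$ — so that assembling the fiberwise bijections actually yields a natural transformation of functors on $\Alg_R$. This follows because all the ingredients ($\Omega_{-/S}$, $\Sym$, $\VV$, the dual numbers construction, square-zero extensions) commute with pullback along morphisms of schemes, the separatedness hypothesis being used only to have a clean description of $\Omega_{X/S}$ via the diagonal in the first place; but it is the point that needs the most care in a fully written-out proof.
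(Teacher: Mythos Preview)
Your proposal is correct and follows essentially the same approach as the paper. The paper's proof is a one-line pointer: apply Proposition~\ref{der-and-cotangent} with $\cF=\cO_X$ and use $\cO_X(\epsilon)=\cO_X\rtimes\cO_X$; you have simply unwound this into the functor-of-points comparison and checked the compatibility with the zero sections and group laws that the paper leaves implicit.
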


The following standard fact is useful for calculations.

\begin{lem}\label{fund-seq}Let $i:X \to Y$ be a closed embedding of
separated schemes over $S$ defined by an ideal $\cI \subseteq \cO_Y$.
Then there is an exact sequence of sheaves on $X$
$$
\xymatrix{
i^\ast \cI/\cI^2 \rto^{d} & i^\ast \Omega_{Y/S}  \rto &\Omega_{X/S} \rto &0
}
$$
where $d$ is induced by the restriction of the universal derivation.
\end{lem}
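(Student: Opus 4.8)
### Proof proposal for Lemma \ref{fund-seq}

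The plan is to reduce the statement to the already-established isomorphism between derivations and maps out of the cotangent sheaf (Proposition \ref{der-and-cotangent}), and to deduce exactness of the three-term sequence by testing it against an arbitrary quasi-coherent sheaf $\cF$ on $X$ and applying $\hom_{\cO_X}(-,\cF)$. Concretely, I would first observe that since $X$ and $Y$ are separated over $S$, both $\Omega_{X/S}$ and $\Omega_{Y/S}$ are quasi-coherent, and $i^\ast\cI/\cI^2$ is quasi-coherent as well (it is the pull-back of the quasi-coherent ideal sheaf $\cI$, modulo its square). The map $d$ is the restriction of the universal derivation of Example \ref{universal-derivation}: an element of $\cI$, viewed as a function on $Y$ vanishing on $X$, is sent to its differential $d(f)\in\Omega_{Y/S}$, and this factors through $\cI/\cI^2$ and pulls back to $X$. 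Right-exactness at $\Omega_{X/S}$ is essentially the definition of $\Omega_{X/S}$ combined with the surjectivity of $i^\ast\Omega_{Y/S}\to\Omega_{X/S}$, so the only real content is exactness in the middle.

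For the middle exactness I would argue locally and dually. Because all three sheaves are quasi-coherent and exactness of a sequence of quasi-coherent sheaves can be checked on an affine open cover, it suffices to treat the affine case $Y=\Spec(C)$, $X=\Spec(C/I)$, $S=\Spec(R)$. There the claim becomes the classical conormal/cotangent exact sequence
$$
I/I^2 \longr \Omega_{C/R}\otimes_C C/I \longr \Omega_{(C/I)/R}\longr 0,
$$
and I would prove it is exact by showing that for every $C/I$-module $M$ the sequence
$$
0\longr \Mod_{C/I}(\Omega_{(C/I)/R},M)\longr \Mod_{C}(\Omega_{C/R},M)\longr \Mod_{C/I}(I/I^2,M)
$$
obtained by applying $\Hom_{C/I}(-,M)$ is exact, and then invoking right-exactness of the original sequence together with the fact that a right-exact sequence of modules which becomes left-exact after $\Hom(-,M)$ for all $M$ is exact. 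Via Corollary \ref{global-der-cotan} this dualized sequence identifies with
$$
0\longr \Der_R(C/I,M)\longr \Der_R(C,M)\longr \Hom_{C/I}(I/I^2,M),
$$
where the first map is restriction of a derivation of $C$ to a derivation of $C$ that kills $I$ (hence descends to $C/I$), and the second map sends a derivation $\partial$ to its restriction to $I$, which factors through $I/I^2$ because $\partial(I^2)\subseteq IM=0$. Injectivity of the first map and exactness in the middle of this derivation sequence are immediate: a derivation of $C$ which vanishes on $I$ and is zero as a derivation of $C/I$ is zero, and a derivation of $C$ whose restriction to $I$ is zero is exactly one that factors through $C/I$.

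The main obstacle — really the only subtle point — is the passage from the affine, module-level statement back to the global statement for (possibly non-separated, but here separated) schemes, i.e. checking that the locally defined maps $d$ and $i^\ast\Omega_{Y/S}\to\Omega_{X/S}$ glue to the global ones and that exactness, being a local property for quasi-coherent sheaves, then follows. This is handled by the functorial description of $d$ as coming from the universal derivation (Example \ref{universal-derivation}), which is manifestly global and compatible with restriction to affine opens, so no genuine gluing computation is needed; the separatedness hypothesis is exactly what guarantees $\Omega_{X/S}$ and $\Omega_{Y/S}$ are given by the conormal-sheaf description of Lemma \ref{sheaf-of-diff} and hence are honestly quasi-coherent, which is what licenses the local check. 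I would close by noting that the sequence is in general not left-exact (the conormal map $I/I^2\to \Omega_{C/R}\otimes C/I$ need not be injective unless, e.g., $i$ is a regular embedding and $Y$ is smooth over $S$), so the three-term form is the best one can state at this level of generality.
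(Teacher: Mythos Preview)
Your proof is correct and follows essentially the same approach as the paper: both dualize by applying $\hom_{\cO_X}(-,\cF)$, identify the resulting sequence with a sequence of derivation sheaves via Proposition \ref{der-and-cotangent}, and observe that exactness of the latter is immediate. The only cosmetic difference is that the paper works directly with the sheaves $\shder_S(X,\cF)\to\shder_S(Y,i_\ast\cF)\to\hom_{\cO_Y}(\cI/\cI^2,i_\ast\cF)$ rather than first reducing to the affine case, but this is the same argument packaged globally.
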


\begin{proof} Let $\cF$ be a sheaf of $\cO_X$-modules on $X$. The
statement of the lemma is equivalent to the exactness of the 
sequence
$$
0 \to \shder_S(X,\cF) \to \shder_S(Y,i_\ast \cF) \to
\hom_{\cO_Y}(\cI/\cI^2,i_\ast \cF)
$$
which is easily checked.
\end{proof}

\subsection{Formal Lie varieties}

We next review the notion of a formal Lie variety, which can be
interpreted as a notion of a smooth formal scheme affine over 
a base scheme $S$ with a preferred section. The first concept
(which appeared implicitly in Lemma \ref{fund-seq}) is important
in its own right.

\begin{defn}\label{embed-mod}\index{conormal sheaf}Let
$i:X \to Y$ be
a closed embedding of schemes defined by 
an ideal $\cI \subseteq \cO_Y$. Then the quasi-coherent $\cO_X$-module
$$
\omega_i \defeq i^\ast \cI/\cI^2
$$
is called {\bf conormal sheaf} or the module of the embedding $i$.
\end{defn}

Note that the canonical map $\cI/\cI^2 \to i_\ast \omega_i$ of quasi-coherent
sheaves on $Y$ is an isomorphism.

\begin{defn}\label{inf-nghds}\index{infinitesimal neighborhood} If $i:X \to Y$ is a closed embedding of schemes defined by 
an ideal $\cI$, define  the $n$th {\bf infinitesimal neighborhood}
$$
Y_n = \Inf^n_X(Y) \subseteq Y
$$
of $X$ in $Y$ to be the closed subscheme of $Y$ defined by the ideal
$\cI^{n+1}$.

More generally, suppose that $X \to Y$ is an injection of $fpqc$ sheaves
over some base scheme $S$. Define $\Inf^n_X(Y) \subseteq Y$ to
be the subsheaf with the following sections. If $U \to S$
is quasi-compact, then $[\Inf^n_X(Y)](U)$ is the set of all $a \in Y(U)$
which satisfy the following condition: there is an $fpqc$ cover
$V \to U$ and a closed subscheme $V' \subseteq V$ defined by
an ideal with vanishing $(n+1)$st power so that
$$
a\vert_{V} \in X(V').
$$
\end{defn}

\begin{rem}\label{nat-inf} The proof that the notion of infinitesimal neighborhoods for sheaves generalizes that for schemes is Lemma II.1.02 of \cite{messing}.
This lemma is stated for the $fppf$-topology, but uses only properties
of faithfully flat maps of affine schemes, so applies equally well
to the $fpqc$-topology.
In the same reference, Lemma II.1.03, Messing shows that infinitesimal
neighborhoods behave well with respect to base change. Specifically,
if $X \subseteq Y$ is an embedding of $fpqc$-sheaves over a scheme
$S$ and $f:T \to S$ is a morphism of schemes, then
\begin{equation}\label{inf-pb}
\Inf^n_{f^\ast X}(f^\ast Y) \cong f^\ast \Inf_X^n(Y).
\end{equation}
\end{rem}

If $X \to Y$ is a closed embedding of schemes, we get an ascending chain of closed subschemes
$$
X = Y_0 \subseteq Y_1 \subseteq Y_2 \subseteq \cdots \subseteq Y.
$$
The conormal sheaves $X \to Y_n$ are all canonically isomorphic; hence
this module depends only on $Y_1$. To get an invariant which
depends on $Y_n$, filter $\cO_Y$ by the powers of the ideal
$\cI$ to get a graded $\cO_Y/\cI$-algebra sheaf on $Y$. By Proposition
\ref{qc-affine} this determines  a unique graded $\cO_X$-algebra sheaf 
$\gr_\ast(Y)$ on $X$ with
$$
\gr_k(Y) = i^\ast (\cI^k/\cI^{k+1})
$$
In particular, $\gr_1(Y) = \omega_i$. We immediately have that
$$
\gr_k(Y_n) = \brackets{\gr_k(Y),}{k \leq n;}{0,}{k > n.}
$$

Now suppose $Y$ is a scheme
over $S$ and $e:S \to Y$ is a closed
inclusion and a section of the projection $Y \to S$. Let
us define 
\begin{equation}\label{f-vanish-at-zero}
\cO_Y(e) \subseteq \cO_Y
\end{equation}
to be the ideal sheaf defining this inclusion. It can be thought of as the
sheaf of functions vanishing at $e$.\index{ideal sheaf of a point,
$\cO_Y(e)$} In this case the natural
map of Lemma \ref{fund-seq}
$$
d:\omega_e = e^\ast \cO_Y(e)/\cO_Y(e)^2 \longr e^\ast \Omega_{Y/S}
$$
becomes an isomorphism; indeed, the exact sequence of the proof
collapses to an isomorphism.

\begin{rem}\label{lie-pres} Let $S$ be a scheme, $X$ a sheaf in the $fpqc$-topology over
$S$ and $e:S \to X$ a section of the structure map $X \to S$.
Then we can make the following definitions and constructions.
\begin{enumerate}

\item Let $X_n = \Inf^n_S(X)$. We say $X$ is {\it ind-infinitesmal} if the natural map
$$
\colim \Inf^n_S(X) \to X
$$
is an isomorphism of sheaves.

\item Suppose each of the $X_n$ is representable. Then $\omega_e$
can be defined as the conormal sheaf of any of the embeddings
$S \to X_n$; furthermore $\omega_e \cong e^\ast \Omega_{X_n/S}$
for all $n$.

\item More generally, if each of the $X_n$ is representable define the
graded ring $\gr_\ast (X) = \lim \gr_\ast (X_n)$; then if $k \leq n$
$$
\gr_k(X) \cong \gr_k(X_n).
$$
\end{enumerate}
\end{rem}

\begin{defn}[{\bf Formal Lie variety}]\label{frml-var}\index{formal Lie variety}Let $S$ be a scheme, $X$
a sheaf in the $fpqc$-topology over $S$ and $e:S \to X$ a section
of the structure map $X \to S$. Then $(X,e)$ is a {\bf formal Lie variety}
if
\begin{enumerate}

\item $X$ is ind-infinitesmal and $X_n = \Inf^n_S(X)$ is representable
and affine over $S$ for all $n \geq 0$;

\item the quasi-coherent sheaf $\omega_e$ is locally free of finite rank on $S$;

\item the natural map of graded rings $\Sym_\ast (\omega_e) \to
\gr_\ast(X)$ is an isomorphism.
\end{enumerate}
A morphism $f:(X,e) \to (X',e')$ of formal Lie varieties is morphism
of sheaves which preserves the sections.
\end{defn}

\begin{rem}\label{liev-pb} By Remark \ref{nat-inf} and Equation \ref{inf-pb} formal Lie varieties behave well
under base change. If $(X,e)$ is a formal Lie variety over a base
scheme $S$ and $T \to S$ is a morphism of schemes, then $f^\ast X \to
T$ has an induced section $f^\ast e$ and
$$
f^\ast(X,e) \defeq (f^\ast X,f^\ast e)
$$
is a formal Lie variety. We will often drop the section $e$ from  the
notation.
\end{rem}.

\begin{rem}\label{pwr-series-is-lie} We show that, locally in the Zariski
topology, every formal Lie variety is isomorphic to the formal spectrum
of a power series ring.

1.) Let $S = \Spec(A)$ and let $X$ be the
formal scheme $\Spf(A[[x_1,\cdots,x_t]])$. Thus for an $A$-algebra
$B$,
$$
X(B) = \{\ (b_1,\ldots,b_t)\ |\ \hbox{$b_i$ is nilpotent}\ \} \subseteq
B^n
$$
Let $e:S \to X$ be the zero section, then
$$
X_n = \Spec(A[x_1,\cdots,x_t]/(x_1,\ldots,x_t)^{n+1})
$$
and $\omega_e$ is the sheaf obtained from the free $A$-module of
rank $t$ generated by the residue classes of $x_1,\cdots,x_t$.
It  follows that $(X,e)$ is a formal Lie variety.

2.) Conversely, suppose that $S = \Spec(A)$ and that the global
sections of $\omega_e$ on $S$ is a free $A$-module with
a chosen basis $x_1,\ldots,x_t$. There is an exact sequence
of quasi-coherent $\cO_{X_n}$-sheaves
$$
0 \to \cO_{X_n}(e) \to \cO_{X_n} \to e_\ast \cO_S \to 0
$$
whence an  exact sequences of quasi-coherent
sheaves on $\cO_S$
$$
0 \to q_\ast \cO_{X_n}(e) \to q_\ast \cO_{X_n} \to \cO_S \to 0.
$$
Here we are writing $q:X_n \to S$ for the projection.
Since $q_\ast \cO_{X_n}(e) \to q_\ast \cO_{X_{n-1}}(e)$ is onto for all $n$ and
$q_\ast \cO_{X_1}(e) \cong \omega_e$ we may choose compatible  (in
$n$) splittings $\omega_e \to q_\ast \cO_{X_n}(e)$ and get compatible
maps
$$
\Sym_S(\omega_e) \to q_\ast \cO_{X_n}
$$
which, by Definition \ref{frml-var}.3, induce isomorphisms
$$
\Sym_S(\omega_e)/\cJ^{n+1} \to q_\ast\cO_{X_n}
$$
where $\cJ = \oplus_{k>0}\Sym_k(\omega_e)$ is the augmentation
ideal.
Since the global sections of $\Sym_S(\omega_e)/\cJ^{n+1}$ are
isomorphic to $A[x_1,\cdots,x_t]/(x_1,\cdots,x_t)^{n+1}$ we say
that the choice of the basis for the global sections of $\omega_e$
and the choice of compatible splittings yield an isomorphism
$X \cong \Spf(A[[x_1,\cdots,x_t]])$ of formal Lie varieties. This
isomorphism is very non-canonical, however.

3.) Finally, for a general base scheme $S$, choose an open cover
by affines $U_i =  \Spec(A_i)$ so that the sections of $\omega_e$ over $U_i$
is free. Then, after making suitable choices, we get an isomorphism
$$
U_i \times_S X \cong \Spf(A_i[[x_1,\cdots,x_t]]).
$$
\end{rem}

\begin{rem}[{\bf The tangent scheme of a formal Lie variety}]\label{tan-for}
\index{tangent scheme, formal}
Let $(X,e)$ be a formal Lie variety over a scheme $S$. Then $\tan_{X/S}$
is not necessarily scheme, but only a sheaf in the $fpqc$ topology. We'd
like to give a description of $\tan_{X/S}$ as a formal Lie variety.
Define $\lie_{X/S}$ as the pull-back
$$
\xymatrix{
\lie_{X/S} \rto^\vare \dto & \tan_{X/S} \dto\\
S \rto_e & X.
}
$$
More generally, define $(\tan_{X/S})_n$ by the pull-back diagram
$$
\xymatrix{
(\tan_{X/S})_n \rto \dto & \tan_{X/S} \dto\\
X_n \rto & X,
}
$$
so that $(\tan_{X/S})_1 = \lie_{X/S}$. There are natural maps
$$
\tan_{X_n/S} \to (\tan_{X/S})_n
$$
but these are not in general isomorphisms; however, we do have that
$$
\colim \tan_{X_n/S} \to \colim (\tan_{X/S})_n \to \tan_{X/S}
$$
are all isomorphisms. 

To analyze the sheaves $(\tan_{X/S})_n$
let us write $j_k:X_n \to X_{n+k}$
for the inclusion. Then Lemma \ref{fund-seq} shows that for all $k > 0$,
the natural map
$$
j_{k+1}^\ast \Omega_{X_{n+k+1}/S} \to j_k^\ast \Omega_{X_{n+k}/S}
$$
is an isomorphism of locally free sheaves on $X_n$. Write $(\Omega_{X/S})_n$
for this sheaf.
Again Lemma \ref{fund-seq} shows that there is a surjection
$$
(\Omega_{X/S})_n \longr \Omega_{X_n/S}
$$
but the source is a locally free sheaf and the target is not in general. For example,
if $n=1$, $(\Omega_{X/S})_1 = \omega_e$ but $\Omega_{X_1/S} = 0$.
Now we check, using that $X = \colim X_n$ and Lemma \ref{tan-is-affine} that
there is a natural isomorphism of abelian sheaves over $X_n$
$$
\VV_{X_n}((\Omega_{X/S})_n) \cong (\tan_{X/S})_n.
$$
In particular
$$
\VV_S(\omega_e) \cong \lie_{X/S}.
$$
The natural map $\omega_e = q_\ast e_\ast \omega_e \to q_\ast 
(\Omega_{X/S})_n$ of quasi-coherent sheaves on $S$ defines
a coherent sequence of projections
$$
 (\tan_{X/S})_n \longr \lie_{X/S}
 $$
 and $\vare:\lie_{X/S} \to  (\tan_{X/S})_n$ is a section of this
 projection.
 Local calculations, using Remark  \ref{pwr-series-is-lie},
 now imply that the map $(\tan_{X/S},\vare)$
 is a formal Lie variety over $\lie_{X/S}$; the scheme $(\tan_{X/S})_n$
 is the $n$th infinitesimal neighborhood of $\lie_{X/S}$ in
 $\tan_{X/S}$.

The local calculations are instructive. 
If $S = \Spec(A)$ and suppose $X = \Spf(A[[x_1,\ldots,x_t]]$ with $e:S \to X$
defined by the ideal $I = (x_1,\ldots,x_n)$, then
$$
(\tan_{X/S})_n \cong \Spec((A[[x_1,\ldots,x_]]/I^{n})[dx_1,\ldots,dx_t])
$$
In particular
$$
\lie_{X/S} \cong \Spec(A[dx_1,\ldots,dx_t]).
$$
The projection $(\tan_{X/S})_n \to \lie_{X/S}$ is induced by the
natural inclusion of $A$  into $A[[x_1,\ldots,x_t]]/I^{n}$.

It is worth noting that in the case where $t=1$,
$$
\tan_{X_n/S} = \Spec(A[x,dx]/(x^n,nx^{n-1}dx)).
$$
\end{rem}

\begin{rem}\label{lie-morphisms} Let $f:(X,e_x) \to (Y,e_Y)$ be a morphism
of formal Lie varieties over a fixed base scheme $S$. Then $f$ determines
a sequence of morphims of schemes affine over $S$
$$
f_n:X_n = \Inf_S^n(X) \to \Inf_S^n(Y) = Y_n
$$
with the property $\Inf_S^n(f_k) = f_n$ when $k \geq n$. Conversely,
given any such sequence of morphisms define $f:X \to Y$ by
$f = \colim f_n$; then $f$ is a morphism of formal Lie varieties. Thus
we have a one-to-one correspondence between morphisms of formal Lie varieties
and compatible sequences of morphisms on infinitesimal neighborhoods.
This is the key to following results.
\end{rem}

\begin{lem}\label{iso-sheaf-lie}Let $X$ and $Y$ be two formal Lie varieties
over a scheme $S$ and define the presheaf $\Iso_S(X,Y)$ to the functor
which assigns to each morphism $i:U \to S$ of schemes the set 
of isomorphisms $i^\ast X \to i^\ast Y$ of formal Lie varieties. Then 
$\Iso_S(X,Y)$ is a sheaf in the $fpqc$-topology.
\end{lem}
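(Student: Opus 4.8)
\subsection*{Plan of proof}

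The plan is to reduce the $fpqc$-sheaf condition for $\Iso_S(X,Y)$ to faithfully flat descent for schemes affine over $S$, carried out one infinitesimal neighborhood at a time. By Remark \ref{sheaves} it is enough to work with presheaves on $\Aff/S$ and to check the sheaf axiom on the generating covers. First I would record the identification of $\Iso_S(X,Y)$ as an inverse limit. Write $X_n=\Inf^n_S(X)$ and $Y_n=\Inf^n_S(Y)$, which by Definition \ref{frml-var} are schemes affine over $S$. For any $i:U\to S$ the pullback $i^\ast X$ is a formal Lie variety over $U$ (Remark \ref{liev-pb}) with $\Inf^n_U(i^\ast X)\cong i^\ast X_n$ by \eqref{inf-pb}, and likewise for $Y$; combining this with Remark \ref{lie-morphisms} (an isomorphism of formal Lie varieties is precisely a compatible tower of isomorphisms on infinitesimal neighborhoods) gives a natural identification of presheaves on $\Aff/S$
\[
\Iso_S(X,Y)\;\cong\;\lim_n\,\Iso_S(X_n,Y_n),
\]
the transition maps being ``restrict to the $n$th infinitesimal neighborhood'' (legitimate since $\Inf^n_S(X_{n+1})=X_n$ and $\Inf^n_S(-)$ carries isomorphisms to isomorphisms). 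As limits of sheaves are again sheaves, it then suffices to show each $\Iso_S(X_n,Y_n)$ is an $fpqc$-sheaf.

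Second I would treat the affine case: for schemes $Z,W$ affine over $S$, the presheaf $U\mapsto\Iso_U(i^\ast Z,i^\ast W)$ is an $fpqc$-sheaf on $S$. By Proposition \ref{why-fpqc} every $fpqc$-cover of an affine scheme $U$ is, for the purpose of the sheaf condition, a single faithfully flat quasi-compact morphism $f:T\to U$ (replace a finite covering family by the disjoint union of its affine members), and for such an $f$ Proposition \ref{ff-affine} identifies affine $U$-schemes with descent data of affine $T$-schemes; reading this equivalence off on morphism sets shows that an isomorphism $f^\ast(i^\ast Z)\to f^\ast(i^\ast W)$ agreeing on $T\times_U T$ descends uniquely to an isomorphism over $U$, and that the descent of the inverse is an inverse, so one genuinely descends $\Iso$ and not merely $\Hom$. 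This is exactly the sheaf axiom on generating covers. If one wishes to keep track of the sections, section-compatibility only passes to a subpresheaf of $\Iso_S(Z,W)$ cut out as the equalizer of two maps into the (representable, hence $fpqc$-) sheaf $W$, so it remains a sheaf. Taking $(Z,W)=(X_n,Y_n)$ finishes the argument.

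I expect essentially all the content to sit in the bookkeeping of the first step: verifying that a compatible tower of $fpqc$-locally defined isomorphisms on the $X_n$ genuinely reassembles, via Remark \ref{lie-morphisms}, into an isomorphism $i^\ast X\to i^\ast Y$, and that uniqueness in the descent forces such a tower to be compatible in $n$. The descent input itself is entirely off the shelf, and the one mildly subtle point --- that ``is an isomorphism'' is an $fpqc$-local condition on a morphism of affine $S$-schemes, which is what upgrades the $\Hom$-sheaf statement to an $\Iso$-sheaf statement --- is already packaged into Proposition \ref{ff-affine}.
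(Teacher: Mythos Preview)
Your proposal is correct and follows essentially the same approach as the paper: reduce to the infinitesimal neighborhoods $X_n$, $Y_n$ and invoke faithfully flat descent for affine $S$-schemes (Proposition~\ref{ff-affine}) at each level, with uniqueness of descent forcing the compatibility in $n$. The only difference is organizational: you package the argument as ``$\Iso_S(X,Y)\cong\lim_n\Iso_S(X_n,Y_n)$ and a limit of sheaves is a sheaf,'' while the paper unwinds this by hand --- descending a given $\phi$ level-by-level to $\psi_n$, using uniqueness to get $\Inf^n_S(\psi_k)=\psi_n$, and setting $\psi=\colim\psi_n$.
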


\begin{proof}Suppose $f:T \to S$ is a quasi-compact and faithfully
flat morphism of schemes and suppose $\phi:f^\ast X \to f^\ast Y$ is
an isomorphism of formal Lie varieties so that $d_1^\ast \phi =
d_0^\ast \phi$ over $T \times_S T$. Then 
$$
\phi_n: f^\ast X_n \longr f^\ast Y_n
$$
also satisfies the sheaf condition. Thus, by faithfully flat descent for
affine schemes (Proposition \ref{ff-affine}), there is a unique
isomorphism of affine schemes $\psi_n:X_n \to Y_n$ so that
$f^\ast \psi_n = \phi_n$. By uniqueness $\Inf_S^n(\phi_k) = \phi_n$.
Set $\psi = \colim \psi_n$. Then $f^\ast \psi = \phi$ as needed.
This argument extends to the entire site by replacing $S$ by
$U$ and $X$ and $Y$ by $i^\ast X$ and $i^\ast Y$ respectively.
\end{proof}

The notion of descent problem was defined in Remark \ref{ff-descent}.
The following result can be upgraded to an equivalence of categories,
as in Proposition \ref{ff-affine}.

\begin{lem}\label{eff-desc-lie}Let $f:T \to S$ be a faithfully flat quasi-compact 
morphism of schemes. Let $(X,\phi)$ be a descent problem in
formal Lie varieties over $T$. Then there is a unique (up to isomorphism)
solution in
formal Lie varieties over $S$.
\end{lem}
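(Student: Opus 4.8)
The plan is to descend one infinitesimal neighborhood at a time, using that the $\Inf^n_T$ of a formal Lie variety are affine over $T$ (Definition \ref{frml-var}) and then applying faithfully flat descent for affine schemes (Proposition \ref{ff-affine}). So, given the descent problem $(X,\phi)$, set $X_n = \Inf^n_T(X)$. Because $\Inf^n$ commutes with base change (Remark \ref{nat-inf}, Equation \ref{inf-pb}) and is functorial, the isomorphism $\phi$ restricts to an isomorphism $\phi_n\colon d_1^\ast X_n \to d_0^\ast X_n$ over $T\times_S T$ satisfying the cocycle condition, i.e.\ $(X_n,\phi_n)$ is a descent problem in schemes affine over $T$. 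By Proposition \ref{ff-affine} there is a unique affine $S$-scheme $Y_n$ with an isomorphism $f^\ast Y_n \cong X_n$ compatible with $\phi_n$. The closed embeddings $X_n\hookrightarrow X_{n+1}$ are compatible with the $\phi_\bullet$, so by uniqueness in Proposition \ref{ff-affine} they descend to morphisms $Y_n\to Y_{n+1}$; since being a closed embedding is an $fpqc$-local condition on the target, each $Y_n\to Y_{n+1}$ is a closed embedding, and the section $e\colon S\to X_0=S$ descends to the identity section $S\to Y_0$. Put $Y \defeq \colim_n Y_n$, a sheaf in the $fpqc$-topology over $S$.

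Next I would check that $Y$ solves the descent problem. The pullback functor $f^\ast$ is a left adjoint, hence commutes with the colimit, so $f^\ast Y \cong \colim_n f^\ast Y_n \cong \colim_n X_n \cong X$, and by construction this identification is compatible with $\phi$ and with the descent datum on $X$. It remains to verify that $Y$ is a formal Lie variety over $S$. For the ind-infinitesimal condition: applying $f^\ast$ and using Equation \ref{inf-pb} gives $f^\ast\bigl(\Inf^n_S(Y)\bigr) \cong \Inf^n_T(X) = X_n \cong f^\ast Y_n$ as subsheaves of $f^\ast Y\cong X$; since $f$ is faithfully flat it reflects equalities of subsheaves, so $\Inf^n_S(Y) = Y_n$, which is affine over $S$, and $\colim_n \Inf^n_S(Y) = \colim_n Y_n = Y$. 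For condition (2) of Definition \ref{frml-var}: the conormal sheaf $\omega_e$ commutes with the flat base change $f$, so $f^\ast\omega_e \cong \omega_{e_T}$ is locally free of finite rank on $T$, and local freeness of finite rank descends along the faithfully flat $f$. For condition (3): the graded pieces $\gr_k(Y) = e^\ast(\cI^k/\cI^{k+1})$ and the map $\Sym_\ast(\omega_e)\to\gr_\ast(Y)$ commute with the flat base change $f$, and $f^\ast$ of this map is the isomorphism $\Sym_\ast(\omega_{e_T})\to\gr_\ast(X)$ coming from the hypothesis on $X$; since a morphism of quasi-coherent sheaves that becomes an isomorphism after faithfully flat base change is an isomorphism, condition (3) holds.

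For uniqueness, suppose $Y'$ over $S$ also solves the descent problem. Then $\Inf^n_S(Y')$ is affine over $S$ and $f^\ast\Inf^n_S(Y') \cong \Inf^n_T(X) = X_n$ compatibly with $\phi_n$, so by the uniqueness clause of Proposition \ref{ff-affine} there is a unique isomorphism $\Inf^n_S(Y')\cong Y_n$; these are compatible as $n$ varies by functoriality of $\Inf^n$ and the same uniqueness, and since $Y'$ is ind-infinitesimal, $Y' = \colim_n \Inf^n_S(Y') \cong \colim_n Y_n = Y$. (As remarked, this argument also yields the upgrade to an equivalence of categories, by the same reasoning applied to morphisms via Remark \ref{lie-morphisms}.)

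\emph{Expected main obstacle.} The delicate point is the ind-infinitesimal verification, i.e.\ showing the descended object $Y=\colim Y_n$ genuinely has $\Inf^n_S(Y)=Y_n$ rather than something larger after sheafifying the colimit. This rests on knowing that forming $\Inf^n$ commutes with the flat base change $f$ (Remark \ref{nat-inf}) and that $f$ faithfully flat reflects equality of subsheaves; once these are in hand the remaining checks (conditions (2), (3), uniqueness) are routine applications of faithfully flat descent for quasi-coherent sheaves and for affine morphisms.
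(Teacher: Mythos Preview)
Your proposal is correct and follows essentially the same approach as the paper: descend each $X_n = \Inf^n_T(X)$ via Proposition \ref{ff-affine}, set $Y=\colim Y_n$, and verify conditions (2) and (3) of Definition \ref{frml-var} by faithfully flat descent of the relevant properties of quasi-coherent sheaves. In fact you are more careful than the paper in two respects: the paper omits verification of the ind-infinitesimal condition (1) and does not spell out the uniqueness argument, whereas you handle both explicitly (and correctly identify the $\Inf^n_S(Y)=Y_n$ step as the one requiring the most care).
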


\begin{proof}This again follows from faithfully flat descent. We begin
by using Proposition \ref{ff-affine} to get unique (up to isomorphism)
schemes $Y_n$
affine over $S$ and isomorphisms $\phi_n:f^\ast Y_n \to X_n$ solving the
descent problem for $X_n$. Uniqueness
implies that there are unique isomorphisms $S \cong Y_0 \cong S$ and
$\inf_S^n(Y_k) \cong Y_n$. Thus $Y = \colim Y_n$ is the candidate
for the solution to the descent problem. We must verify points
(2) and (3) of Definition \ref{frml-var}.

For (2) we have  that $f^\ast \omega_{e_Y} =
\omega_{e_X}$. Since a quasi-coherent 
sheaf $\cF$ over $Y$ is locally free and finitely
generated if and only if $f^\ast \cF$ is locally free and finitely
generated. (See  \cite{EGAIV}\S 2.6.)
For (3), the map (with $e = e_Y$)
$$
\Sym_\ast (\omega_e) \to \gr_\ast(Y)
$$
is an isomorphism because it becomes an isomorphism after
applying $f^\ast(-)$. Thus point (3) is covered.
\end{proof}

The notion of a category fibered in groupoids is defined in D\'efinition
2.1 of \cite{Laumon}. The associated notion of stack is defined in
D\'efinition 3.1 of the same reference.

Define a category $\cM_\lf$ fibered in groupoids over schemes
as follows. The objects of $\cM_\lf$ are pairs $(S,X)$ where
$S$ a scheme and $X \to S$ is a formal Lie variety over $S$.
A morphism $(T,Y) \to (S,X)$ in $\cM_\lf$ is a pair $(f,\phi)$ where
$f:T \to S$ is a morphism of schemes and $\phi:Y \to f^\ast X$
is an isomorphism of formal Lie varieties over $T$.

\begin{prop}\label{lie-is-stack}\index{stack, formal Lie varieties}The category $\cM_\lf$ fibered in groupoids
is a stack in the $fpqc$-topology.
\end{prop}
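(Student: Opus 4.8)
The plan is to verify the two conditions defining a stack in D\'efinition 3.1 of \cite{Laumon}: that $\cM_\lf$ is a prestack, i.e.\ that for any two formal Lie varieties over a base scheme $S$ the presheaf of isomorphisms between them is an $fpqc$ sheaf on $S$, and that every $fpqc$ descent datum in $\cM_\lf$ is effective. Essentially all of the substance is already in hand, so the proof is mostly a matter of organization. The one genuine preliminary is to pass from arbitrary covering families to single morphisms: by Definition \ref{fpqc} an $fpqc$ cover of a scheme $X$ is a finite family $\{V_i \to X\}$ whose restriction to every affine open of $X$ is an $fpqc$ cover, so each $V_i \to X$ is flat and quasi-compact and hence $T = \sqcup_i V_i \to X$ is faithfully flat and quasi-compact. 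Because the coskeleton of $T \to X$ is, in each degree, the coproduct over multi-indices of the iterated fibre products $V_{i_0}\times_X\cdots\times_X V_{i_k}$ that make up the descent diagram of the family, a descent datum for $\{V_i \to X\}$ is the same as a descent datum for the single morphism $T \to X$, and likewise for the sheaf condition. It therefore suffices to treat covers of the form $T \to S$ with $T \to S$ faithfully flat and quasi-compact.

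Granting that reduction, the prestack condition is exactly Lemma \ref{iso-sheaf-lie}: for formal Lie varieties $X$ and $Y$ over $S$ the functor $\Iso_S(X,Y)$ assigning to $i:U\to S$ the set of isomorphisms $i^\ast X \to i^\ast Y$ is an $fpqc$ sheaf, and this functor is precisely the isomorphism presheaf of $\cM_\lf$ over $S$. Effectivity of descent along a faithfully flat quasi-compact morphism $T\to S$ is exactly Lemma \ref{eff-desc-lie}: a descent datum in $\cM_\lf$ along $T\to S$ is a descent problem in formal Lie varieties in the sense of Remark \ref{ff-descent}, and Lemma \ref{eff-desc-lie} supplies a solution over $S$, unique up to isomorphism; moreover the notion of \emph{solution} in Remark \ref{ff-descent} already includes the isomorphism $f^\ast\cE\cong X$ intertwining the descent data, which is what effectivity demands.

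The real content was therefore absorbed into Lemmas \ref{iso-sheaf-lie} and \ref{eff-desc-lie}, which in turn rest on faithfully flat descent for affine schemes (Proposition \ref{ff-affine}) together with the fact from Remark \ref{lie-morphisms} that a formal Lie variety is reconstructed as $\colim_n \Inf^n_S(-)$ from its infinitesimal neighborhoods, each of which is affine over $S$, so that the descent is carried out one infinitesimal neighborhood at a time and then assembled. Within this proposition the only points that need attention are a routine unwinding to match the cocycle condition of Remark \ref{ff-descent} with the compatibility built into the definition of a stack, and the observation that Lemma \ref{eff-desc-lie} is already stated over an arbitrary base, so that no further local-to-global gluing over an affine cover of $S$ is required. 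That last point is where I would expect any friction: had Lemma \ref{eff-desc-lie} been proved only over affine bases, one would glue the affine-local solutions using the prestack property just established, and would need its uniqueness clause to be strong enough to make that gluing canonical.
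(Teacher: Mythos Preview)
Your proof is correct and follows exactly the same route as the paper: invoke Lemma \ref{iso-sheaf-lie} for the sheaf condition on isomorphisms and Lemma \ref{eff-desc-lie} for effective descent. The paper's proof is a one-line citation of these two lemmas; your version adds the (routine but worthwhile) reduction from a finite $fpqc$ covering family to a single faithfully flat quasi-compact morphism, and some explanatory commentary on what underlies those lemmas, but there is no substantive difference in approach.
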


\begin{proof} For a category fibered in groupoids to be a stack, isomorphisms
must form a sheaf (Lemma \ref{iso-sheaf-lie}) and the groupoids must
satisfy effective descent (Lemma \ref{eff-desc-lie}). 
\end{proof}

\section{Formal groups and coordinates}

In the section, we introduce formal groups and the moduli stack
$\cM_\fg$ of formal groups -- these are the basic objects of study of this
monograph. Except on extremely rare occasions, ``formal
group'' will mean a commutative
group object in formal Lie varieties of relative dimensions 1 over $S$,
as in Definition \ref{formal-group}. Thus may think of $G$ as
affine and smooth of dimension 1 over $S$.

We will begin with a definition of formal group which does
not depend on a theory of coordinates for formal groups;
however, that theory is important, and we will spend part of
the section working out the details. Specifically, we note that
choices of coordinates amount to sections of scheme
over $S$ and we explore the geometry of that scheme.
The main result is Theorem \ref{coord-affine}, which shows
we are dealing with particularly simple scheme.

Part of this section also explores formal group laws, which are 
particulary familiar to homotopy theorists.

\subsection{Formal groups}

We first note
that the category of formal Lie varieties has products. If $X$ and $Y$
are formal Lie varieties over a scheme $S$, let $X \times_S Y$
be the product sheaf in the $fpqc$ topology. We have
that
$$
X \times_S Y = \colim (X_n \times_S Y_n).
$$

\begin{lem}\label{prod-fl}The sheaf $X \times_S Y$ is a formal
Lie variety and the product of $X$ and $Y$ in the category
for formal Lie varieties.
\end{lem}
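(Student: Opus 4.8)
The plan is to give $X \times_S Y$ the section $e = (e_X, e_Y)\colon S \to X \times_S Y$ induced by the two given sections, to check that $(X \times_S Y, e)$ satisfies the three conditions of Definition~\ref{frml-var}, and then to observe that the product property in the category of formal Lie varieties is a formality once the first point is known.

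For the first point I would work locally on $S$. Each of the three conditions of Definition~\ref{frml-var} is local on $S$ in the Zariski topology: representability and affineness over $S$ are local, local freeness of finite rank is local, and being an isomorphism of sheaves (as needed for ind-infinitesimality and for $\Sym_\ast(\omega_e) \to \gr_\ast(X\times_S Y)$) is local; moreover $\Inf^n_S(-)$ commutes with base change along an open immersion $U \hookrightarrow S$ by Equation~\ref{inf-pb}, so the infinitesimal neighbourhoods involved may also be formed locally. Choose an affine open cover $\{U_i = \Spec A_i\}$ of $S$ over which both $\omega_{e_X}$ and $\omega_{e_Y}$ are free. By part (3) of Remark~\ref{pwr-series-is-lie} there are isomorphisms of formal Lie varieties over $U_i$
$$
U_i \times_S X \cong \Spf(A_i[[x_1,\dots,x_s]]), \qquad U_i \times_S Y \cong \Spf(A_i[[y_1,\dots,y_t]]).
$$
Since the product of the corresponding functors on an $A_i$-algebra $B$ is the set of nilpotent $(s+t)$-tuples in $B$, we get
$$
U_i \times_S (X \times_S Y) \cong (U_i\times_S X)\times_{U_i}(U_i \times_S Y) \cong \Spf(A_i[[x_1,\dots,x_s,y_1,\dots,y_t]]),
$$
which is a formal Lie variety over $U_i$ by part (1) of Remark~\ref{pwr-series-is-lie}. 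By the locality remarks above, $(X\times_S Y, e)$ is therefore a formal Lie variety over $S$.

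For the product property, recall from Definition~\ref{frml-var} that a morphism of formal Lie varieties over $S$ is exactly a morphism of $fpqc$-sheaves over $S$ preserving the sections. The projections $p_X$ and $p_Y$ preserve the sections by construction of $e$, so they are morphisms of formal Lie varieties. If $Z$ is a formal Lie variety over $S$ and $f\colon Z \to X$, $g\colon Z \to Y$ are section-preserving sheaf maps over $S$, then the unique sheaf map $(f,g)\colon Z \to X\times_S Y$ over $S$ satisfies $(f,g)\circ e_Z = (f e_Z, g e_Z) = (e_X, e_Y) = e$, hence is a morphism of formal Lie varieties; and it is the unique factorisation through $(p_X, p_Y)$ precisely because it is already unique as a map of sheaves. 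Thus $X\times_S Y$ with its projections is the product of $X$ and $Y$ in the category of formal Lie varieties over $S$.

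The only delicate point is buried in the reduction to the affine case — or, in a coordinate-free variant of the argument, in the identification of $\Inf^n_S(X\times_S Y)$, which is \emph{not} $X_n \times_S Y_n$ on the nose. Writing $\cI_X$, $\cI_Y$ for the ideals of the two sections, one has instead the sandwich $\Inf^n_S(X\times_S Y) \subseteq X_n \times_S Y_n \subseteq \Inf^{2n}_S(X\times_S Y)$, coming from the elementary inclusion $(\cI_X + \cI_Y)^{2n+1} \subseteq \cI_X^{n+1} + \cI_Y^{n+1}$ (a product of $2n+1$ factors, each in $\cI_X$ or $\cI_Y$, has at least $n+1$ of one kind). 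This sandwich, together with the identity $X \times_S Y = \colim(X_n \times_S Y_n)$ recorded just before the lemma, is exactly what shows $X \times_S Y$ is ind-infinitesimal and that its infinitesimal neighbourhoods are affine over $S$ (being closed subschemes of the affine-over-$S$ scheme $X_m \times_S Y_m$ for $m$ large); the remaining two conditions then reduce to $\gr_\ast(X\times_S Y) \cong \gr_\ast(X)\otimes_{\cO_S}\gr_\ast(Y)$, $\omega_e \cong \omega_{e_X}\oplus\omega_{e_Y}$, and the standard isomorphism $\Sym(M\oplus N)\cong \Sym(M)\otimes\Sym(N)$. The local approach above sidesteps all of this.
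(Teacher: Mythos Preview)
Your argument is correct. The paper's own proof is little more than two hints left as an exercise: the identity
\[
\Inf_S^n(X \times_S Y) = \bigcup_{p+q=n} \Inf_S^p(X) \times_S \Inf_S^q(Y)
\]
and the splitting $\omega_{(e_X,e_Y)} \cong \omega_{e_X} \oplus \omega_{e_Y}$. You recover the second of these exactly, but your main line of argument avoids the first by working Zariski-locally and invoking Remark~\ref{pwr-series-is-lie} to reduce to an explicit product of power-series formal schemes. That is a perfectly good route and arguably the path of least resistance. Your closing paragraph sketches the coordinate-free alternative, but there you only obtain the sandwich $\Inf^n_S \subseteq X_n \times_S Y_n \subseteq \Inf^{2n}_S$ rather than the paper's sharper equality; the sandwich suffices for the lemma, though the exact formula is cleaner if one wants to track the filtration precisely. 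Either approach does the job.
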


\begin{proof}We leave most of this as exercise. The key observations
are that
$$
\Inf_S^n(X \times_S Y) = \cup_{p+q =n}\ \Inf_S^p(X) \times
\Inf_S^q(Y)
$$
and that
$$
\omega_{(e_X,e_Y)} = \omega_{e_X} \oplus \omega_{e_Y}.
$$
\end{proof}

This product has a simple description Zariski locally. (Compare
\ref{pwr-series-is-lie}.)
If we choose an affine open $U = \Spec(A) \to S$ over which
the global sections of $\omega_{e_X}$ and $\omega_{e_Y}$
are free with bases $\{x_1,\cdots,x_m\}$ and $\{y_1,\cdots, y_n\}$
respectively, then there is an isomorphism of formal Lie varieties
$$
(X \times_S Y)\vert_U \cong \Spf(A[[x_1,\cdots,x_m,y_1,\cdots,y_n]]).
$$

\begin{defn}\label{formal-group}\index{formal group}Let $S$ be a scheme. A {\bf formal group}
over $S$ is an abelian group object $(G,e)$ in the category of formal
Lie varieties over $S$ with the property that
$$
\omega_G \defeq \omega_e
$$
is locally free of {\bf rank 1}. A homomorphism of formal groups
is a morphism of group objects.
\end{defn}

If $f:T \to S$ is a morphism of schemes and $G$ is a formal group
over $S$, then $f^\ast G = T \times_S G$ is a formal group over
$T$. If $i:U \to S$ is a Zariski open, we write $G\vert_U$ for $i^\ast G$.

\begin{exam}[{\bf Formal group laws}]\label{fg-local}\index{formal group law}A
formal group $(G,e)$ defines
and is defined by a formal group law Zariski locally. 
This is an expansion of Remark \ref{pwr-series-is-lie}.

In more detail, if
$A$ is a commutative ring, a commutative {\it formal group law} of
dimension $1$ is a power series
$$
F(x_1,x_2) = x_1 +_F x_2 \in A[[x_1,x_2]]
$$
so that 
\begin{enumerate}

\item $0 +_F x = x +_F 0 = x$;

\item $x_1 +_F x_2 = x_2 +_F x_1$;

\item $(x_1 +_F x_2) +_F x_3 = x_1 +_F (x_2 +_F x_3)$.
\end{enumerate}

If we think of a formal group law $F$ as the homomorphism
$F:A[[x]] \to A[[x_1,x_2]]$ sending $x$ to $F(x_1,x_2)$, then
$F$ defines a formal group
$G$ over $S= \Spec(A)$ by setting $G = \Spf(A[[x]])$ with
multiplication
$$
\xymatrix@C=50pt{
G \times_S G \cong  \Spf(A[[x_1,x_2]]) \rto^-{\Spf(F)} &
\Spf(A[[x]]) = G.
}
$$

Conversely, if $G$ is a formal group choose a cover 
$U_i= \Spec(A_i) \to S$ by affines so that for each $i$,
the sections of $\omega_G$ is free of rank 1. A choice of
generator $x$ for these sections defines an isomorphism
$$
G\vert_{U_i} \cong \Spf(A_i[[x]])
$$
and the multiplication on $G$ defines a continuous morphism
of power series
$$
\Delta: A_i[[x]] \longr A_i[[x_1,x_2]].
$$
Then
$$
F_i(x_1,x_2) \defeq \Delta(x)
$$
is a formal group law.
\end{exam}

\begin{exam}[{\bf Homomorphisms}]\label{fgl-local-hom}
\index{formal group law, homomorphism} Homomorphisms
of formal groups are determined by power series, at least
Zariski locally. A {\bf homomorphism} $\phi:F \to F'$ of formal group laws over $R$ is
a power series $\phi(x) \in xA[[x]]$ so that
\begin{equation}\label{fgl-hom}
\phi(x_1 +_{F} x_2) = \phi(x_1) +_{F'} f(x_2).
\end{equation}
A homomorphism is an {\bf isomorphism} if it is invertible
under composition; that is, if $\phi'(0)$ is a unit in $A$.
Any homomorphism of formal group laws induces a homomorphism
of the formal groups defined by the formal group laws.

Conversely, let $\psi:G \to G'$ be a homomorphism of formal groups over
$S$ and choose a cover $U_i = \Spec(A_i)$ so that the global
sections of both
$\omega_{G}$ and $\omega_{G'}$ are free over $A_i$. Choose
a generator $x$ and $y$ for these global sections and let
$F$ and $F'$ be the associated formal group laws over $A_i$. Then
we get a commutative diagram induced by $\psi$
$$
\xymatrix{
\Spf(A_i[[x]]) \rto^\psi \dto_{\Delta_G} & \Spf(A_i[[y]])\dto_{\Delta_G'}\\
\Spf(A_i[[x_1,x_2]] \rto_{\psi \times \psi}& \Spf(A_i[[y_1,y_2]])
}
$$
If we let $\phi_i(x) = \psi^\ast(y) \in A_i[[x]]$, this diagram implies
$\phi_i:F \to F'$ is a homomorphism of formal group laws
\end{exam}

We now introduce the moduli stack $\cM_\fg$ of 
formal groups -- meaning formal Lie groups of dimension 1.
This stack will be algebraic, although not in the sense of
\cite{Laumon}. See Theorem \ref{fg-algebraic} below.

\begin{defn}\label{stack-of-fgs}\index{stack, formal groups}The {\bf moduli stack of
formal groups} $\cM_\fg$ is the following category fibered in groupoids
over schemes. The objects in $\cM_\fg$ are pairs $(S,G)$ where
$S$ is a scheme and $G \to S$ is a (commutative, 1-dimensional)
formal group over $S$. A morphism $(S,G) \to (T,H)$ is
a pair $(f,\phi)$ where $f:S \to T$ is a morphism of schemes
and $\phi:H \to f^\ast G$ is an isomorphism of formal groups.
\end{defn}

Of course, we still must prove the following result.

\begin{prop}\label{prestack}The category $\cM_\fg$ fibered in
groupoids over schemes is a stack in the fpqc topology.
\end{prop}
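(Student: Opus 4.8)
The plan is to bootstrap off Proposition \ref{lie-is-stack}: a formal group over $S$ is a formal Lie variety over $S$ equipped with the extra structure of an abelian group object with $\omega_G$ locally free of rank $1$, and all of this extra structure descends. Following the template of the proof of Proposition \ref{lie-is-stack}, I would verify the two conditions for a category fibered in groupoids to be a stack: that isomorphisms form an $fpqc$-sheaf, and that descent data on objects is effective.

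For the sheaf condition, fix formal groups $G$ and $H$ over a scheme $S$. The presheaf on schemes over $S$ assigning to $i:U \to S$ the set of isomorphisms $i^\ast G \to i^\ast H$ \emph{of formal groups} is the subpresheaf of the presheaf of isomorphisms of the underlying formal Lie varieties --- an $fpqc$-sheaf by Lemma \ref{iso-sheaf-lie} --- consisting of those isomorphisms $\phi$ that commute with the multiplications. Commuting with the multiplications is an equalizer condition on two morphisms of formal Lie varieties out of $i^\ast(G \times_S G)$; since this product is the $fpqc$-sheaf product (Lemma \ref{prod-fl}) and morphisms of formal Lie varieties already form a sheaf, the condition is $fpqc$-local, so the presheaf of isomorphisms of formal groups is an equalizer of sheaves, hence itself a sheaf.

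For effective descent, let $f:T \to S$ be faithfully flat and quasi-compact and $(G,\psi)$ a descent problem in $\cM_\fg$ over $T$. Applying Lemma \ref{eff-desc-lie} to the underlying formal Lie variety gives $\bar G$ over $S$ solving the underlying descent problem, with $f^\ast\bar G \cong G$; I would then check that $\bar G$ inherits a formal group structure. Because descent for formal Lie varieties is functorial and compatible with finite products (Lemma \ref{prod-fl} together with the behaviour of infinitesimal neighbourhoods under products and base change), the formal Lie variety descending $G \times_T G$ is $\bar G \times_S \bar G$; hence the multiplication, unit, and inverse of $G$ descend to morphisms $\bar G \times_S \bar G \to \bar G$, $S \to \bar G$, and $\bar G \to \bar G$ over $S$. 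These satisfy the associativity, commutativity, unit, and inverse identities because they do after the faithfully flat pull-back $f^\ast$ and because morphisms of formal Lie varieties form a sheaf; and $\omega_{\bar G}$ is locally free of rank $1$ since $f^\ast\omega_{\bar G} \cong \omega_G$ is, using faithfully flat descent of local freeness and rank exactly as in the proof of Lemma \ref{eff-desc-lie} (cf. \cite{EGAIV} \S 2.6). Thus $\bar G$ is a formal group in the sense of Definition \ref{formal-group}, solving the original descent problem.

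I do not anticipate a genuine obstacle: the argument is entirely parasitic on Proposition \ref{lie-is-stack} and its ingredients. The only step needing care is the claim that $\bar G \times_S \bar G$ really descends $G \times_T G$, so that the group operations descend --- but this is just compatibility of formal-Lie-variety descent with finite products together with $fpqc$-local checking of equalities of morphisms, both routine given the machinery in place.
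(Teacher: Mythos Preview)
Your proposal is correct and follows essentially the same approach as the paper. The paper's proof is a single sentence: it says the argument is exactly as in Proposition~\ref{lie-is-stack}, noting that the proofs of Lemmas~\ref{iso-sheaf-lie} and~\ref{eff-desc-lie} immediately apply; you have simply spelled out in detail how those lemmas carry the extra group structure and the rank-$1$ condition along, which is precisely what the paper's terse remark is asking the reader to supply.
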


\begin{proof} The argument exactly as in Proposition
\ref{lie-is-stack}, once we note that the proofs of
Lemmas \ref{iso-sheaf-lie} and \ref{eff-desc-lie}
immediately apply to this case.
\end{proof}

\subsection{Formal group laws}

Here we review some of the classical literature on formal group
laws.

\begin{thm}[{\bf Lazard}]\label{lazard-rep}1.) Let $\fgl$ denote the functor
from commutative rings to sets which assigns to each ring $A$ the set
of formal group laws over $A$. Then $\fgl$ is an affine scheme; indeed,
if $L \cong \ZZ[x_1,x_2,\cdots]$ is the Lazard ring\index{Lazard ring, $L$}, then
$$
\fgl \cong \Spec(L).
$$

2.) Let $\isofgl$ be the functor which assigns to each commutative
ring $A$ the set of isomorphisms $f:F \to F'$ of formal
group laws over $A$. Then $\isofgl$ is an affine scheme; indeed,
if $W \cong L[a_0^{\pm 1},a_1,a_2,\cdots]$, then
$$
\isofgl \cong \Spec(W).
$$
\end{thm}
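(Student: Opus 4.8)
The strategy is to build the representing rings directly from the defining identities, using the standard fact that a functor defined by polynomial/power-series data subject to polynomial relations is affine. For part 1, I would proceed as follows. A formal group law over $A$ is a power series $F(x_1,x_2)=\sum_{i,j} a_{ij}x_1^ix_2^j\in A[[x_1,x_2]]$; the unit axiom (1) forces $a_{00}=0$, $a_{i0}=a_{0i}=0$ for $i\ge 2$, and $a_{10}=a_{01}=1$, so the remaining data is the collection $\{a_{ij}:i,j\ge 1\}$, i.e.\ a point of $\Spec(\ZZ[a_{ij}:i,j\ge 1])$. Commutativity (2) imposes the linear relations $a_{ij}=a_{ji}$, and associativity (3) imposes a (countable) family of polynomial relations among the $a_{ij}$ obtained by comparing coefficients of the two iterated sums. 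Letting $L$ be the quotient of $\ZZ[a_{ij}:i,j\ge 1]$ by the ideal generated by all these relations, one checks that for any ring $A$ a ring map $L\to A$ is exactly the data of a commutative formal group law over $A$, so $\fgl\cong\Spec(L)$ as functors. The identification $L\cong\ZZ[x_1,x_2,\dots]$ as a polynomial ring on countably many generators (one in each positive degree, with $|x_n|$ corresponding to weight $n$) is Lazard's theorem; I would cite \cite{Laz1} rather than reprove it, since only the affineness is needed here and the polynomiality is invoked separately later (e.g.\ in the discussion of coherence of $L$).

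For part 2, I would do the analogous bookkeeping one level up. An isomorphism $f\colon F\to F'$ over $A$ is a triple $(F,F',f)$ where $F,F'$ are formal group laws and $f(x)=\sum_{k\ge 0} b_k x^{k+1}\in A[[x]]$ is an invertible power series, i.e.\ $b_0\in A^\times$, satisfying the compatibility identity $f(x_1+_F x_2)=f(x_1)+_{F'}f(x_2)$ from Equation \ref{fgl-hom}. The source formal group law $F$ is classified by a copy of $L$; the power series $f$ contributes polynomial generators $b_0,b_1,b_2,\dots$ together with inversion of $b_0$ (accounting for the $a_0^{\pm 1}$ in the statement, after renaming $b_k\leftrightarrow a_k$); and crucially, once $F$ and $f$ are specified, $F'$ is \emph{determined} by $F'(y_1,y_2)=f\big(f^{-1}(y_1)+_F f^{-1}(y_2)\big)$, so $F'$ contributes no new free generators but rather its coefficients become specified polynomials in the $a_{ij}$ and $b_k$ (with denominators only powers of $b_0$, which are invertible). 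Thus the functor $\isofgl$ is represented by $W=L[b_0^{\pm1},b_1,b_2,\dots]$: a ring map $W\to A$ is precisely a choice of $F$ (the map on $L$), a choice of invertible $f$ (the $b_k$), and then $F'$ is forced, recovering exactly an isomorphism of formal group laws over $A$. One verifies the compatibility identity is automatically satisfied by this construction, so no further relations are imposed — this is why $W$ is a polynomial (Laurent in $b_0$) extension of $L$ with no quotient. I would phrase the $W$-module structure map $L\to W$ as classifying the source $F$, matching the Hopf-algebroid left unit familiar to homotopy theorists.

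The main obstacle — or rather the only real subtlety — is the verification that in part 2 the target formal group law $F'$ is genuinely recovered as a formal group law over $A$ (not just a power series pair) and that its coefficients lie in $L[b_0^{\pm1},b_1,\dots]$ with no new relations needed, i.e.\ that specifying $(F,f)$ with $f$ invertible and \emph{defining} $F'$ by conjugation always yields a valid commutative formal group law and a valid isomorphism. This is a direct but slightly fussy power-series computation: commutativity and associativity of $F'$ follow formally from those of $F$ by transport of structure along the invertible substitution $f$, and the homomorphism identity holds by construction. Once that is in hand, the Yoneda-style identification of the representing object is immediate. I would keep the explicit comparison of associativity coefficients (part 1) and the conjugation computation (part 2) at the level of "one checks," citing \cite{Laz1} for the polynomiality of $L$, since grinding through the coefficient recursions adds nothing essential.
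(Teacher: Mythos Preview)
Your approach is correct and is the standard one. Note, however, that the paper does not actually supply a proof of this theorem: it is stated as a classical result attributed to Lazard, and the subsequent Remark~\ref{sym-2-cocyle} merely comments on the structure of the argument (the symmetric $2$-cocycle lemma as the hard step for the polynomiality of $L$, and the observation that the description of $W$ ``depends only on the usual coordinates on power series''). Your plan matches exactly what that remark sketches: for part 1 you reduce affineness to coefficient bookkeeping and cite \cite{Laz1} for polynomiality, and for part 2 you observe that the target law $F'$ is determined by $(F,f)$ via conjugation, so $W$ is free over $L$ on the coefficients of $f$. There is nothing to correct.
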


Put another way, the functor which assigns to any commutative
ring $A$ the groupoid of formal group laws over $A$ and their
isomorphisms is an affine groupoid scheme; that is,
the pair $(L,W)$ is a {\it Hopf algebroid}.

\begin{rem}\label{sym-2-cocyle}\index{symmetric $2$-cocyle}
It is worth noting that the isomorphism $L \cong \ZZ[x_1,x_2,\ldots]$
is not canonical. The difficult part of Lazard's argument is
the symmetric 2-cocycle lemma (\cite{Rav} A.2.12), which we now revisit.
Let 
$$
C_n(x,y) = \frac{1}{d}[(x+y)^n - x^n - y^n]
$$
where $d = p$ if $n$ is a power of $p$ and $d=1$ otherwise.
This is the $n$th homogeneous symmetric $2$-cocycle. Then
Lazard proves that if $F(x_1,x_2)$
is a formal group law over a ring $A$, then there are elements
$b_1,b_2,\ldots$ in $A$ so that
$$
F(x_1,x_2)  \equiv x_1 + x_2 + b_1 C_2(x_1,x_2) + 
b_2 C_3(x_1,x_2) + \cdots
$$
modulo $(b_1,b_2,\ldots)^2$.

The isomorphism $W = L[a_0^{\pm 1},a_1,\ldots]$ depends
only on the usual coordinates on power series.
\end{rem}

We now introduce the prestack $\cM_{\fgl}$ of formal group
laws. It will not be a stack as it does not satisfy effective descent.

Let $\Aff_\ZZ$ be the category of affine schemes over $\Spec(\ZZ)$.
Recall from \cite{Laumon}, Definition 3.1, that a {\it prestack} $\cM$
over $\Aff_\ZZ$
is a category fibered in groupoids over $\Aff_\ZZ$ 
so that isomorphisms between objects form a sheaf in the
$fpqc$ topology.

If $F(x_1,x_2)$ is a formal group law over a ring $A$ and $f:A \to B$
is a ring homomorphism, let $f^\ast F(x_1,x_2)$ be the formal
group law over $B$ obtained by pushing forward the coefficients.
The resulting formal group over $\Spec(B)$ is the pull-back
of the formal group over $\Spec(A)$ defined by $F$; hence, we will
refer to $f^\ast F$ as the pull-back of $F$ along $f$.

\begin{defn}\label{prestack-of-fgls} Define a category
$\cM_{\fgl}$ fibered in groupoids over $\Aff_\ZZ$ as follows. 
The objects are the pairs $(Spec(A),F)$ where $A$ is a commutative
ring and $F$ is a formal group law over $A$. A morphism 
$(\Spec(A),F) \to (\Spec(B),F)$ is a pair $(f,\phi)$ where
$f:B \to A$ is a homomorphism of commutative rings and
$\phi:F \to f^\ast F'$ is an isomorphism of formal group
laws.\index{prestack, formal group laws}
\end{defn}

\begin{lem}\label{mfgl-prestack}The category $\cM_\fgl$ fibered
in groupoids over $\Aff_\ZZ$ is a prestack.
\end{lem}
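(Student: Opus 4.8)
The plan is to verify the two requirements in the definition of prestack recalled just above: that $\cM_\fgl$ is a category fibered in groupoids over $\Aff_\ZZ$, and that for any affine scheme $S$ and any two objects of $\cM_\fgl$ over $S$ the presheaf of isomorphisms between them is a sheaf for the $fpqc$ topology. The first is a formal matter -- cartesian arrows are given by pushing forward coefficients: over a ring homomorphism $g:B\to A$ (that is, a morphism $\Spec(A)\to\Spec(B)$), the arrow $(g,\mathrm{id}):(\Spec(A),g^\ast F')\to(\Spec(B),F')$, where $g^\ast F'$ is $F'$ with coefficients pushed along $g$, is a cartesian lift of $(\Spec(B),F')$; one checks cartesianness and compatibility with composition in the routine way, and any arrow of $\cM_\fgl$ over an identity has the form $(\mathrm{id},\phi)$ with $\phi$ an isomorphism of formal group laws, hence invertible under composition. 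So $\cM_\fgl\to\Aff_\ZZ$ is fibered in groupoids, and all the content is in the second requirement, which I intend to reduce to the representability statement of Theorem \ref{lazard-rep}.

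Fix $S=\Spec(A)$ and formal group laws $F,F'$ over $A$, and let $\Iso_S(F,F')$ be the presheaf on $\Aff/S$ whose value on an $A$-algebra $B$ is the set of isomorphisms $F_B\to F'_B$ of formal group laws, where $F_B,F'_B$ denote $F,F'$ pushed forward to $B$. The key step is to recognize this presheaf as a fiber product of representable functors. There is a morphism of functors $\isofgl\to\fgl\times\fgl$ sending an isomorphism of formal group laws to the pair (source, target), and the pair $(F,F')$ determines a morphism $S\to\fgl\times\fgl$; unwinding the definitions gives a natural isomorphism
$$
\Iso_S(F,F')\;\cong\;\isofgl\times_{\fgl\times\fgl}S .
$$
By Theorem \ref{lazard-rep}, $\isofgl\cong\Spec(W)$ and $\fgl\times\fgl\cong\Spec(L\otimes_\ZZ L)$ are affine schemes, in particular representable functors, hence sheaves in the $fpqc$ topology because that topology is subcanonical (Remark \ref{top}, \cite{SGA3-1} IV.6.3.1.iii); $S$ is an $fpqc$ sheaf for the same reason. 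Since sheaves form a full subcategory of presheaves closed under limits, the fiber product above is an $fpqc$ sheaf, and therefore so is $\Iso_S(F,F')$. (This in fact identifies $\Iso_S(F,F')$ with the affine $S$-scheme $\Spec(W\otimes_{L\otimes_\ZZ L}A)$, but only the sheaf property is needed.)

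I do not expect a genuine obstacle here: the one substantive ingredient, representability of $\isofgl$, is already available as Theorem \ref{lazard-rep}, and the remainder is bookkeeping. The only point needing a little care is the naturality in $B$ of the identification with the fiber product -- equivalently, that ``take source and target'' really defines a morphism of functors $\isofgl\to\fgl\times\fgl$. If one preferred to avoid Lazard's representability at this point, the alternative is to rerun the faithfully flat descent argument of Lemma \ref{iso-sheaf-lie} coefficientwise: an isomorphism is a power series $\phi(x)=a_0x+a_1x^2+\cdots$ with $a_0$ a unit, the homomorphism identity $\phi(x_1+_F x_2)=\phi(x_1)+_{F'}\phi(x_2)$ amounts in each total degree to a polynomial relation among finitely many of the $a_i$, faithfully flat descent glues the coefficients $a_i$, and polynomial relations over $\ZZ$ together with the unit condition on $a_0$ are insensitive to faithfully flat base change.
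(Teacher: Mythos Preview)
Your proof is correct and follows essentially the same approach as the paper: identify $\Iso_S(F,F')$ as an affine scheme using Theorem \ref{lazard-rep}, then invoke subcanonicality of the $fpqc$ topology. Your fiber-product presentation $\isofgl\times_{\fgl\times\fgl}S\cong\Spec(W\otimes_{L\otimes_\ZZ L}A)$ is in fact a bit cleaner than the paper's shorthand, and your preliminary check that $\cM_\fgl$ is fibered in groupoids is something the paper simply takes as read.
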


\begin{proof} Let $S = \Spec(A)$ and let $F$ and $F'$ be
two formal group laws over $S$. We are asking that the functor
which assigns to each morphism $\Spec(f):U=\Spec(R) \to S$ the set
of isomorphisms
$$
\phi:(f^\ast F) \to (f^\ast F')
$$
be a sheaf in the $fpqc$-topology. 
Theorem \ref{lazard-rep}.2 gives that this functor is the
the affine scheme $\Spec(A \otimes_L W \otimes_L A)$.  The
assertion follows from the fact the the $fpqc$ topology
is sub-canonical. See the end of Remark \ref{top}.
\end{proof}

The functor which assigns to each formal group law $F$
over a ring $A$ the associated formal group $G_F$ over
the affine scheme $\Spec(A)$ defines a morphism
$$
\cM_\fgl \longr \cM_\fg
$$
of prestacks over $\Aff_\ZZ$. This is not an equivalence,
but we will see that this
morphism identifies $\cM_\fg$ as the stack associated to the
prestack $\cM_\fgl$. See Theorem \ref{fgl-stack-is-fg}.

The next  result, which I learned from Neil Strickland, is an
indication that stacks have a place in stable homotopy theory.

\begin{lem}\label{pull-back-for-coord-1} Suppose $F_i$, $i=1,2$
are formal group laws over commutative rings $A_i$ respectively.
Let
$$
G_{i} \to S_i = \Spec(A_i),\qquad i =1,2
$$
be the corresponding formal groups. Then the two category
pull-back $S_1 \times_{\cM_\fg} S_2$ is an affine scheme.
Specifically, if $L \to A_i$ classifies $F_i$, then there
is an isomorphism
$$
S_1 \times_{\cM_\fg} S_2 \cong \Spec(A_1 \otimes_L W
\otimes_L A_2).
$$
\end{lem}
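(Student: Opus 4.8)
The plan is to compute the functor of points of the $2$-categorical fibre product directly and recognise it as the functor of points of $\Spec(A_1 \otimes_L W \otimes_L A_2)$.

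First I would unwind the definition of the $2$-category pull-back. For a scheme $T = \Spec(R)$, an object of the fibre category $(S_1 \times_{\cM_\fg} S_2)(T)$ is a triple $(g_1,g_2,\alpha)$ consisting of morphisms $g_i : T \to S_i$ together with an isomorphism $\alpha$ in the groupoid $\cM_\fg(T)$ between the formal groups classified by the composites $T \to S_i \to \cM_\fg$. Precomposing the classifying map of $(S_i,G_i)$ with $g_i$ yields the object $(T, g_i^\ast G_i)$, and by the discussion preceding Definition \ref{prestack-of-fgls} the formal group $g_i^\ast G_i$ is the formal group $G_{g_i^\ast F_i}$ of the formal group law obtained by pushing the coefficients of $F_i$ along $A_i = \cO(S_i) \to R$. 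A morphism of such triples is a pair of morphisms in the hom-sets $\Hom(T,S_i)$ compatible with the $\alpha$'s; since these hom-categories are discrete, the only such morphisms are identities. Hence the fibre category over each $T$ is a set, so $S_1 \times_{\cM_\fg} S_2$ is (equivalent to) an $fpqc$-sheaf whose $R$-points are the triples $(g_1, g_2, \alpha)$ with $\alpha : g_1^\ast G_1 \to g_2^\ast G_2$ an isomorphism of formal groups over $\Spec(R)$.

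Next I would use coordinates to replace $\alpha$ by an isomorphism of formal group laws. Since each $G_i$ is the formal group of a formal group law, $g_i^\ast G_i = G_{g_i^\ast F_i} = \Spf(R[[x]])$ carries its tautological coordinate, and by Example \ref{fgl-local-hom} a homomorphism of formal groups $G_{g_1^\ast F_1} \to G_{g_2^\ast F_2}$ over $\Spec(R)$ is exactly a homomorphism of formal group laws $g_1^\ast F_1 \to g_2^\ast F_2$, which is an isomorphism on one side iff it is on the other. Thus the set of triples above is identified, naturally in $R$, with the set of triples $(g_1, g_2, \psi)$ where $\psi : g_1^\ast F_1 \to g_2^\ast F_2$ is an isomorphism of formal group laws over $R$. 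Now I invoke Lazard's Theorem \ref{lazard-rep}.2: the functor $\isofgl$ is represented by $\Spec(W)$, fibred over $\fgl \times \fgl = \Spec(L)\times\Spec(L)$ by (source, target) via the two $L$-algebra structures $\eta_L,\eta_R$ on $W$, while the chosen classifying maps $L \to A_i$ exhibit the $S_i \to \fgl$. A ring homomorphism $A_1 \otimes_L W \otimes_L A_2 \to R$ is precisely a triple of ring homomorphisms $A_1 \to R$, $W \to R$, $A_2 \to R$ whose restrictions to $L$ agree along $A_1$ and $\eta_L$, and along $A_2$ and $\eta_R$; unwinding, this is exactly a triple $(g_1,g_2,\psi)$ as above. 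So the two functors of points coincide, and since $\Spec(A_1 \otimes_L W \otimes_L A_2)$ is affine the Yoneda lemma gives the asserted isomorphism.

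The only genuinely delicate point is bookkeeping: one must keep straight the direction of the isomorphisms built into the definition of morphisms in $\cM_\fg$ (the convention $\phi : H \to f^\ast G$ of Definition \ref{stack-of-fgs}) and into the $2$-fibre product, and match the left and right $L$-module structures on $W$ with the two legs $S_1$ and $S_2$ respectively. I expect this, rather than any conceptual issue, to be the main thing to get right; everything else is a formal consequence of Example \ref{fgl-local-hom} and Theorem \ref{lazard-rep}.
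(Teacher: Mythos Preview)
Your proof is correct and follows essentially the same approach as the paper. The paper packages the reduction from isomorphisms of formal groups to isomorphisms of formal group laws slightly more abstractly, by observing that the morphism $\cM_\fgl \to \cM_\fg$ is full and faithful (which is exactly the content of Example~\ref{fgl-local-hom}) and hence that $S_1 \times_{\cM_\fgl} S_2 \to S_1 \times_{\cM_\fg} S_2$ is an equivalence; it then computes the former fibre product over the prestack of formal group laws directly and applies Theorem~\ref{lazard-rep}.2, just as you do.
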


\begin{proof}By construction we have a factoring
$$
\xymatrix@C25pt{
S_i \rto^-{F_i} & \cM_\fgl \rto & \cM_\fg.
}
$$
of the morphism classifying $G_i$. By Remark \ref{fgl-local-hom},
the reduction map $\cM_\fgl \to \cM_\fg$ is full and
faithful; hence, the natural map
$$
S_1 \times_{\cM_\fgl} S_2 \to S_1 \times_{\cM_\fg} S_2
$$
is an isomorphism. If $R$ is any commutative ring
$(S_1 \times_{\cM_\fgl} S_2)(R)$ is the trivial groupoid
with object set the triples
$$
(f_1,f_2,\phi:f^\ast_1F_1 \mathop{\longr}^{\cong}
f^\ast_2F_1)
$$
where $f_i:A_i \to R$ are ring homomorphisms. Applying Theorem
\ref{lazard-rep}.2 now implies the result.
\end{proof}

If $G_1$ and $G_2$ are two formal groups over a scheme $S$,
\index{isomorphism sheaf, $\Iso_S(G_1,G_2)$}
let  $\Iso_S(G_1,G_2)$ be the presheaf of sets which assigns to any morphims
$f:U \to S$ with affine source the set of isomorphisms $f^\ast G_1
\to f^\ast G_2$. There is a pull-back diagram
$$
\xymatrix{
\Iso_S(G_1,G_2) \rto \dto & S \times_{\cM_\fg} S\dto\\
S \rto_-\Delta & S \times S.
}
$$
Proposition \ref{prestack} implies that 
$\Iso_S(G_1,G_2)$ is actually a sheaf.
Lemma \ref{pull-back-for-coord-1} immediately implies the following.

\begin{lem}\label{pull-back-for-coord-2}Suppose $F_i$, $i=1,2$
are formal group laws over a single commutative ring $A$ and
let
$$
G_{i} \to S = \Spec(A),\qquad i =1,2
$$
be the corresponding formal groups. Then the sheaf $\Iso_S(G_1,G_2)$
is an affine scheme over $S$. 
Specifically, if $L \to A$ classifies $F_i$, then there
is an isomorphism
$$
\Iso_S(G_1,G_2)  \cong \Spec(A \otimes_{A \otimes A}(A \otimes_L W
\otimes_L A)).
$$
\end{lem}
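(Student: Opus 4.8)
The plan is to obtain Lemma~\ref{pull-back-for-coord-2} as a direct consequence of Lemma~\ref{pull-back-for-coord-1} by contemplating the pull-back square that defines $\Iso_S(G_1,G_2)$. First I would recall the square
$$
\xymatrix{
\Iso_S(G_1,G_2) \rto \dto & S \times_{\cM_\fg} S \dto\\
S \rto_-\Delta & S \times S
}
$$
displayed just before the statement, which expresses $\Iso_S(G_1,G_2)$ as the fiber of the diagonal $\Delta:S \to S\times S$ against the ``source-target'' map $S\times_{\cM_\fg}S \to S\times S$. The key point is that all four corners are affine schemes: $S = \Spec(A)$ by hypothesis, $S\times S = \Spec(A\otimes_\ZZ A)$, the diagonal $\Delta$ is $\Spec$ of the multiplication $A\otimes A \to A$, and $S\times_{\cM_\fg}S \cong \Spec(A\otimes_L W\otimes_L A)$ by Lemma~\ref{pull-back-for-coord-1} (applied with $A_1 = A_2 = A$ and both classifying maps $L\to A$ equal to the one classifying the chosen $F_i$; note the Lemma only needs the two formal group laws to be defined over commutative rings, here the common ring $A$).

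Second, I would simply read off the fiber product of affine schemes as $\Spec$ of the corresponding tensor product of rings. Since the fiber product $\Iso_S(G_1,G_2) = (S\times_{\cM_\fg}S)\times_{S\times S} S$ is a fiber product of affine schemes, it is again affine, and
$$
\Iso_S(G_1,G_2) \cong \Spec\bigl((A\otimes_L W\otimes_L A)\otimes_{A\otimes_\ZZ A} A\bigr),
$$
which is exactly the asserted formula $\Spec(A\otimes_{A\otimes A}(A\otimes_L W\otimes_L A))$ once one rewrites the tensor product symmetrically. The ring map $A\otimes A \to A\otimes_L W\otimes_L A$ used here is the one sending the two copies of $A$ to the outer two factors (the source and target of an isomorphism of formal group laws), and the map $A\otimes A\to A$ is multiplication; this matches the geometric description of the two maps $S\times_{\cM_\fg}S\to S$ as source and target, so the square really does commute and the identification is the correct one.

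I do not expect a serious obstacle: the only thing to be careful about is bookkeeping -- making sure the two arrows $S\times_{\cM_\fg}S\to S\times S$ and $\Delta:S\to S\times S$ are the ones claimed, so that the pull-back of rings is taken along the correct maps. The substance has already been done in Lemma~\ref{pull-back-for-coord-1} (which in turn rests on the affineness in Theorem~\ref{lazard-rep}.2 and on $\cM_\fgl\to\cM_\fg$ being full and faithful), so the proof here is genuinely a one-line corollary: take the fiber product of affine schemes along the diagonal and translate to rings.
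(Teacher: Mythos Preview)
Your proposal is correct and matches the paper's approach exactly: the paper states that Lemma~\ref{pull-back-for-coord-1} ``immediately implies'' this result, via the pull-back square you cite. Your write-up simply spells out the one-line tensor-product computation the paper leaves implicit.
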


\subsection{Coordinates}

We now begin to discuss when a formal group can arise
from a formal group law. In the following definition, the
base scheme need not be affine. The sheaves $\cO_{G_n}(e)$
where defined in Equation \ref{f-vanish-at-zero} as the kernel
of the map $\cO_{G_n} \to e_\ast \cO_S$. If $G$ is
a formal group with $n$th infinitesimal neighborhood $G_n$,
then there is an exact sequence 
$$
0 \to q_\ast \cO_{G_n}(e) \to q_\ast \cO_{G_n} \to \cO_S \to 0
$$
of sheaves on $S$. If $S = \Spec(A)$ and $\omega_e$ has 
a generating local section then
$$
H^0(S,q_\ast \cO_{G_n}(e)) \cong xA[x]/(x^{n+1}).
$$

\begin{defn}\label{fgls}  Let $S$ be a scheme and $q:G\to S$ a formal
group over $S$ with conormal sheaf $\omega_e$. Then a {\bf coordinate}
\index{coordinate}for $G$ is a global section
$$
x \in \lim H^0(S,q_\ast \cO_{G_n}(e))
$$
so for all affine morphisms $f:U = \Spec(A) \to S$, $x\vert_U$ generates
the global sections if $(\omega_e)\vert_U$.
\end{defn}

Every formal group has coordinates locally, as in Example \ref{fg-local};
this definition asks for a global coordinate.
\def\CP{{{\mathbb{C}\mathrm{P}}}}

\begin{rem}\label{compare-top}If $E^\ast(-)$ is a complex oriented
$2$-periodic homology theory, the associated formal group is
$\Spf(E^0(\CP^\infty)$ over $\Spec(E^0)$. A coordinate is then
a class $x \in \tilde{E}^0\CP^\infty$ which reduces to 
generator of $\tilde{E}^0\CP^1$. This is the usual
topological definition. See \cite{Rav}, Definition 4.1.1.
\end{rem}

\begin{rem}\label{fgls-rems}1.) Let $(G,x)$ be a formal group law over $S$
with a coordinate $x$. 
Since $x$ provides a global trivialization of the locally free sheaf
$\omega_e$, Definition
\ref{frml-var}.3 allows us to conclude that
\begin{equation}\label{nat-fgl-to-coord}
G_n \cong \Spec_S(\cO_S[x]/(x^{n+1})).
\end{equation}
Equivalently, we have $q_\ast \cO_{G_n} = \cO_S[x]/(x^{n+1})$. 
In particular,
$$
\lim H^0(S,q_\ast\cO_{G_n}) \cong H^0(S,\cO_S)[[x]].
$$

2.) Suppose $F$ is a formal group law over a commutative
ring $A$ and $G_F$ is the associated formal group over
$\Spec(A)$, as in Example \ref{fg-local}. Then, as in
Remark \ref{pwr-series-is-lie}.1, $G_F$
has a preferred coordinate $x$ defined by the definition
$$
G_F = \Spf(A[[x]]).
$$
Then there is an equality of formal group laws\index{coordinates,
and formal group laws}
$$
x_1 +_{(G_F,x)} x_2 = x_1 +_F x_2.
$$
Conversely, if $G$ is a formal group over $\Spec(A)$ with
a coordinate $x$, then Equation \ref{nat-fgl-to-coord}
provides a natural isomorphism ({\it not} an equality) of formal groups over
$\Spec(A)$
$$
G_{F} \mathop{\longr}^{\cong} G.
$$

3.) If $f:(G,x) \to (H,y)$ is a homomorphism of formal groups 
over $S$ with chosen coordinates, then $f$ is defined by a
morphism of $\cO_S$-algebra sheaves
$$
\phi:q_\ast \cO_{H_n} \cong \cO_S[[y]]/(y^{n+1}) \to
\cO_S[[x]]/(x^{n+1}) \cong p_\ast \cO_{G_n}
$$
and, thus, is defined by the power series 
$$
\phi(y) = f(x) \in H^0(S,(\cO_S)[[x]]
$$
which is a homomorphism of formal group laws:
$$
f(x_1) +_{F_H} f(x_2) = f(x_1 +_{F_G} x_2).
$$
Conversely, any such power series defines a homomorphism
of formal group laws.

4.) Suppose we are given a $2$-commuting diagram 
$$
\xymatrix@R=5pt{
T \ar[dr]^-H\ar[dd]_g &\\
&\cM_\fg\\
S \ar[ur]_-G
}
$$
and a coordinate for $x$ for $G$ over $S$. Then there is an induced
coordinate for $H$ over $T$. Let $\phi:H \to g^\ast G$ be the
given isomorphism. Then the coordinate for $H$ is the image of $x$ under
the homomorphisms
$$
\xymatrix{
H^0(S,q_\ast \cO_{G_n}(e)) \rto & H^0(T,g^\ast q_\ast \cO_{G_n}(e)) &
\lto^-{\cong}_-{\phi^\ast} H^0(T,q_\ast \cO_{H_n}(e)).
}
$$
\end{rem} 

The next step is to examine the
geometry of the scheme of possible coordinates for $G$.
We begin with the following result.

\begin{lem}\label{coords-after-ff}Let $q:G \to S$ be a formal
group over a quasi-compact and quasi-separated scheme $S$.
Then there is a quasi-compact and quasi-separated
scheme $T$ and a faithfully flat and quasi-compact
morphism $f:T \to S$ so that
$f^\ast G$ has a coordinate. 
\end{lem}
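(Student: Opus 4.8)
The plan is to build the cover $T$ explicitly as a finite disjoint union of affine opens of $S$ over which the conormal sheaf $\omega_G$ trivializes, so that $f^\ast G$ acquires a coordinate piece by piece. Since $\omega_G$ is locally free of rank one, $S$ admits an open cover by affine schemes on which $\omega_G$ is free (refine any affine cover by basic opens). As $S$ is quasi-compact, finitely many such opens $V_1 = \Spec(B_1),\ldots,V_m = \Spec(B_m)$ already cover $S$. I would then set $T = \coprod_{j=1}^{m} V_j$ and let $f\colon T \to S$ be the morphism induced by the open immersions $V_j \hookrightarrow S$.

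Next I would verify the required properties of $f$. Being a finite disjoint union of affine schemes, $T$ is itself affine, hence quasi-compact and separated, in particular quasi-separated. The morphism $f$ is surjective because $\{V_j\}$ covers $S$, and it is flat because each $V_j \to S$ is an open immersion and flatness is tested on local rings (Remark \ref{stand-props-schemes-1}); so $f$ is faithfully flat. For quasi-compactness, given a quasi-compact open $W \subseteq S$ one has $f^{-1}(W) = \coprod_j (V_j \cap W)$, and each $V_j \cap W$ is an intersection of two quasi-compact open subschemes of $S$, hence quasi-compact precisely because $S$ is quasi-separated; a finite disjoint union of quasi-compact schemes is quasi-compact. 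This is the only place the hypotheses on $S$ enter.

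It remains to exhibit a coordinate on $f^\ast G$. Because the disjoint union is finite, $f^\ast G = T \times_S G = \coprod_{j=1}^{m} G|_{V_j}$ and, for every $n$, $\Inf^n_T(f^\ast G) = \coprod_j \Inf^n_{V_j}(G|_{V_j})$ by the base-change property of infinitesimal neighborhoods (Remark \ref{nat-inf}). Over $V_j$, a choice of generator of the free rank-one $B_j$-module $\omega_G(V_j)$ produces, by Example \ref{fg-local} (equivalently Remark \ref{pwr-series-is-lie}) together with the computation preceding Definition \ref{fgls}, an isomorphism $G|_{V_j} \cong \Spf(B_j[[x_j]])$ exhibiting $x_j$ as a coordinate for $G|_{V_j}$. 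Since global sections over $T$ form the finite product of the global sections over the $V_j$, and inverse limits commute with finite products, the tuple $x \defeq (x_1,\ldots,x_m)$ defines a global section over $T$ of the inverse limit appearing in Definition \ref{fgls}. To check the generating condition, take any affine $U = \Spec(A) \to T$; the decomposition of $T$ forces $U = \coprod_j \Spec(A_j)$ with $A = \prod_j A_j$, and on the $j$-th summand $x$ restricts to the pullback of the generator $x_j$ of $\omega_G(V_j)$, which remains a generator. Hence $x$ is a coordinate for $f^\ast G$.

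The argument is elementary; I expect the only delicate points to be the appeal to quasi-separatedness of $S$ in order to keep $f$ quasi-compact, and the fact that Definition \ref{fgls} must be checked over \emph{every} affine scheme over the disjoint union $T$, not merely over the chosen opens $V_j$ — both handled by the componentwise decomposition above. One could alternatively take $T$ to be the torsor of coordinates $\coord_G \to S$ equipped with its tautological coordinate, but its affineness and faithful flatness are established only later (Theorem \ref{coord-affine}), so the hands-on construction is preferable here.
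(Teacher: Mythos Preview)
Your proof is correct and follows the same approach as the paper: take a finite cover by affine opens on which $\omega_G$ is free, let $T$ be their disjoint union, and assemble a coordinate from the chosen generators. You supply more detail than the paper does---in particular your explicit use of quasi-separatedness of $S$ to verify that $f$ is quasi-compact, and your componentwise check of Definition~\ref{fgls}---but the construction and the key idea are identical.
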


\begin{proof} Choose a finite cover $U_i \to S$ by affine
open subschemes so that the global
section of $(\omega_e)\vert_{U_i}$ are free. Set
$f:T = \sqcup\ U_i \to S$ to be the evident map.
Then $f$ is faifthully flat, quasi-compact 
and $f^\ast \omega_e$ is 
isomorphic to $\cO_{T}$. Since $T$ is a coproduct
of affines, the map
$$
\lim H^0(T,\cO_{f^\ast G_n}(e)) \to H^0(T,
f^\ast \omega_e)
$$
is onto, and we choose as our coordinate any preimage of
a generator. 
\end{proof}

\begin{defn}\label{prestack-of-coords} Define a category
$\cM_{\crd}$ fibered in groupoids over schemes as follows. The objects
of $\cM_{\crd}$ are pairs $(q:G \to S, x)$ where $G$ is
a formal group over a scheme $S$ and $x$ is a
coordinate for $G$. A morphism in $\cM_{\crd}$ 
$$
(q:G \to S, x) \longr (q':G' \to S', x')
$$
is a morphism of schemes $f:S \to S'$ and an isomorphim
of formal groups $\phi:G \to f^\ast G'$.\index{prestack, coordinate}
\end{defn}
 
 By forgetting the coordinate we get a projection map
 $\cM_\crd \to \cM_\fg$; if we consider this as morphism
 of categories fibered in groupoids
 over affine schemes,  Remark \ref{fgls-rems}.3
 factors this projection as the composite
 $$
 \xymatrix{
 \cM_\crd \rto^i & \cM_\fgl \rto & \cM_\fg.
 }
 $$
 
 \begin{prop}\label{coord-equiv-fgl}The morphism $i:\cM_\crd \to
 \cM_\fgl$ of categories fibered in groupoids is an equivalence
 over affine schemes; that is,
 for all commutative rings $A$, the morphism of groupoids
 $$
 \cM_\crd(A) \longr \cM_\fgl(A)
 $$
 is an equivalence of groupoids.
 \end{prop}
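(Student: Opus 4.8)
The plan is to fix a commutative ring $A$ and prove directly that the functor of groupoids $i_A\colon \cM_\crd(A) \to \cM_\fgl(A)$ is essentially surjective and fully faithful; it is standard that a functor between groupoids is an equivalence precisely when it has these two properties, so this suffices. The only inputs are the dictionary between ``formal group with a coordinate'' and ``formal group law'' recorded in Example \ref{fg-local}, Example \ref{fgl-local-hom} and Remark \ref{fgls-rems}, together with Remark \ref{lie-morphisms}, which says that a morphism of formal Lie varieties over $S$ is the same datum as a compatible family of morphisms of infinitesimal neighbourhoods.

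For essential surjectivity I would observe that $i_A$ is in fact strictly surjective on objects: given a formal group law $F$ over $A$, Example \ref{fg-local} produces $G_F = \Spf(A[[x]])$ with its tautological coordinate $x$, and Remark \ref{fgls-rems}.2 gives the \emph{equality} of formal group laws $x_1 +_{(G_F,x)} x_2 = x_1 +_F x_2$, that is, $i_A(G_F,x) = F$.

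For full faithfulness, fix objects $(G,x)$ and $(G',x')$ of $\cM_\crd(A)$. By Definition \ref{prestack-of-coords}, a morphism between them in $\cM_\crd(A)$ is simply an isomorphism $\phi\colon G \to G'$ of formal groups over $\Spec(A)$ --- \emph{no} compatibility with the coordinates is imposed. Using the coordinates, Remark \ref{fgls-rems}.1 identifies $\cO(G_n) = A[x]/(x^{n+1})$ and $\cO(G'_n) = A[x']/(x'^{n+1})$, so by Remark \ref{lie-morphisms} the datum of $\phi$ is exactly a compatible family of $A$-algebra maps $A[x']/(x'^{n+1}) \to A[x]/(x^{n+1})$; each such map is determined by the image of $x'$, and the family is an isomorphism of formal groups preserving the zero sections precisely when the resulting power series $g(x) \defeq \phi^\ast(x') \in A[[x]]$ lies in $xA[[x]]$ and has $g'(0) \in A^\times$. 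By the construction of the factorization $\cM_\crd \to \cM_\fgl$ (Remark \ref{fgls-rems}.3) the functor $i_A$ sends $\phi$ to this $g$. Faithfulness is then immediate, since $\phi$ is recovered from the compatible images $\phi^\ast_n(x')$, which assemble to $g$. For fullness, start from an isomorphism of formal group laws $g\colon F_{(G,x)} \to F_{(G',x')}$, i.e.\ a power series $g(x) \in xA[[x]]$ with $g'(0) \in A^\times$ and $g(x_1 +_{F_{(G,x)}} x_2) = g(x_1) +_{F_{(G',x')}} g(x_2)$. Letting $\phi_n\colon G_n \to G'_n$ be $\Spec$ of the $A$-algebra map $x' \mapsto g(x) \bmod x^{n+1}$ gives a compatible family, hence a morphism of formal Lie varieties $\phi = \colim \phi_n\colon G \to G'$; it preserves the zero sections because $g$ has zero constant term, and is an isomorphism because $g$ is invertible under composition; and the homomorphism identity for $g$ is word for word the assertion that $\phi$ commutes with the two multiplications on $G$ and $G'$, which after using the coordinates are $\Spf$ of $x \mapsto x_1 +_F x_2$. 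Thus $\phi$ is an isomorphism of formal groups with $i_A(\phi) = g$, so $i_A$ is full.

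I do not anticipate a real obstacle here: the statement merely says that the ``formal group plus coordinate'' and ``formal group law'' pictures carry the same information, and each step above is the corresponding unwinding. The one point that needs care, rather than cleverness, is honest bookkeeping --- remembering that morphisms in $\cM_\crd$ are isomorphisms of the underlying formal groups and not coordinate-preserving maps, and passing cleanly between the pro-system $\{G_n\}$ and power series in $A[[x]]$ rather than hand-waving ``$G = \Spf A[[x]]$''. Once that is in place, matching the group-compatibility of $\phi$ with the homomorphism identity for $g$ is the only substantive content, and it is a one-line verification.
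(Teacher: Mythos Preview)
Your proof is correct and takes essentially the same approach as the paper: both verify essential surjectivity and full faithfulness via the dictionary between formal groups with coordinate and formal group laws. The paper's proof is in fact a single sentence citing Remark~\ref{fgls-rems}.3 and~\ref{fgls-rems}.4 as containing the entire content; you have simply unpacked that dictionary explicitly, and your bookkeeping (in particular, the observation that morphisms in $\cM_\crd$ impose no coordinate compatibility) is accurate.
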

 
 \begin{proof}This is a restatement of Remark \ref{fgls-rems}.3 and
 Remark \ref{fgls-rems}.4.
 \end{proof}

\begin{cor}\label{coord-prestack}The category $\cM_\crd$ fibered in
groupoids over schemes is a prestack.
\end{cor}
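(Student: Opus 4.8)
The plan is to verify the two ingredients in the definition of a prestack: that $\cM_\crd$ is fibered in groupoids over schemes, and that its presheaves of isomorphisms are sheaves for the $fpqc$ topology. For the first, given a morphism of schemes $f\colon T\to S$ and an object $(q\colon G\to S,x)$ of $\cM_\crd$ over $S$, I would take as cartesian lift the pair $(f^\ast G\to T, f^\ast x)$, where $f^\ast x$ is the coordinate on $f^\ast G$ induced by $x$ exactly as in Remark \ref{fgls-rems}.4. The only point that is not bookkeeping is that $f^\ast x$ is again a coordinate in the sense of Definition \ref{fgls}, i.e.\ that its restriction over every affine scheme over $T$ generates the conormal sheaf of $f^\ast G$ there. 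This holds because being a generator of a locally free rank-one sheaf is stable under base change, and because, by Equation \ref{inf-pb}, the formation of the infinitesimal neighborhoods $G_n$ and of the sheaves $q_\ast\cO_{G_n}(e)$ commutes with pullback. The composition axioms for cartesian morphisms are then formal, and the fibre over each scheme is a groupoid since every morphism of $\cM_\crd$ lying over an identity is an isomorphism of formal groups; so $\cM_\crd$ is a category fibered in groupoids.

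For the sheaf condition, fix a scheme $S$ and two objects $(G,x)$, $(G',x')$ of $\cM_\crd$ over $S$. By Definition \ref{prestack-of-coords}, a morphism of $\cM_\crd$ whose underlying morphism of schemes is the identity of $S$ is simply an isomorphism $G\to G'$ of the underlying formal groups, with no condition imposed on the coordinates. Hence the presheaf on $\Aff/S$ sending $f\colon U\to S$ to the set of isomorphisms $f^\ast(G,x)\to f^\ast(G',x')$ in $\cM_\crd$ coincides with the isomorphism sheaf $\Iso_S(G,G')$ of the underlying formal groups, which is an $fpqc$ sheaf by Proposition \ref{prestack}. Equivalently --- and this is the sense in which the statement is a corollary of what immediately precedes it --- over affine bases Proposition \ref{coord-equiv-fgl} identifies these isomorphism presheaves with the corresponding ones of $\cM_\fgl$, which are $fpqc$ sheaves by Lemma \ref{mfgl-prestack}; since an $fpqc$ sheaf on schemes is determined by its restriction to affines (Remark \ref{sheaves}), this suffices.

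The only step with any real content is the verification that the pullback of a coordinate is again a coordinate, and even that is a short base-change argument; everything else is formal transport of structure along Proposition \ref{coord-equiv-fgl}, or directly along the already-established fact that $\cM_\fg$ is a stack. I therefore do not anticipate a genuine obstacle --- the corollary is essentially a repackaging of Proposition \ref{coord-equiv-fgl} together with Lemma \ref{mfgl-prestack} and Proposition \ref{prestack}.
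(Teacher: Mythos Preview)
Your proposal is correct and follows essentially the same approach as the paper: the key observation is that morphisms in $\cM_\crd$ over a fixed base are precisely isomorphisms of the underlying formal groups, so $\Iso_S((G,x),(H,y)) = \Iso_S(G,H)$, which is an $fpqc$ sheaf by Proposition \ref{prestack}. The paper's proof is exactly this one sentence; you supply additional (correct but unnecessary) detail on the fibered-in-groupoids verification and an alternative route through $\cM_\fgl$, both of which the paper omits as understood.
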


\begin{proof}This requires only that if $(G,x)$ and $(H,y)$ are
two objects over a scheme $S$, the the isomorphisms
$\Iso_S((G,x),(H,y))$ form a sheaf. But
$$
\Iso_S((G,x),(H,y)) = \Iso_S(G,H)
$$
where $\Iso_S(G,H)$ is the the sheaf (by Proposition \ref{prestack})
of isomorphisms of formal groups.
\end{proof}

We now give extensions of Lemmas \ref{pull-back-for-coord-2}
and \ref{pull-back-for-coord-1}, in that order.

\begin{prop}\label{diagonal} Let $G_1$ and $G_2$ be two formal
groups over a quasi-compact and quasi-separated scheme $S$.
Then
$$
\Iso_S(G_1,G_2) \to S
$$
is an affine morphism of schemes.
\end{prop}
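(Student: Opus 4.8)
The plan is to verify affineness of $\Iso_S(G_1,G_2)\to S$ $fpqc$-locally on $S$: after a faithfully flat base change we land in the situation already handled in Lemma \ref{pull-back-for-coord-2}, and the affine morphism property then descends.

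First I would record two facts used throughout. By Proposition \ref{prestack}, $\Iso_S(G_1,G_2)$ is a sheaf in the $fpqc$-topology; and directly from its definition as a presheaf, $\Iso$ is compatible with base change: for any $g:S'\to S$ one has a canonical isomorphism $g^\ast\Iso_S(G_1,G_2)\cong\Iso_{S'}(g^\ast G_1,g^\ast G_2)$. Next, since $S$ is quasi-compact and quasi-separated, I would choose a finite affine open cover $\{U_i=\Spec(A_i)\}$ of $S$ on which the sections of both $\omega_{G_1}$ and $\omega_{G_2}$ are free (a mild strengthening of Lemma \ref{coords-after-ff}, or apply that lemma to $G_1$ and then to the pullback of $G_2$), and set $f:T=\sqcup_i U_i\to S$. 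Then $f$ is faithfully flat and quasi-compact, $T$ is quasi-compact and quasi-separated, $f^\ast G_1$ and $f^\ast G_2$ both acquire coordinates, and by Proposition \ref{why-fpqc} the singleton $\{T\to S\}$ is an $fpqc$-cover.

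Over each $\Spec(A_i)$, choosing coordinates presents $f^\ast G_j|_{U_i}$ as the formal group of a formal group law over $A_i$ (Example \ref{fg-local}, Remark \ref{fgls-rems}.2), so Lemma \ref{pull-back-for-coord-2} shows that $\Iso_{\Spec(A_i)}(f^\ast G_1|_{U_i},f^\ast G_2|_{U_i})$ is an affine scheme over $\Spec(A_i)$. Since $\Iso$ commutes with base change, $f^\ast\Iso_S(G_1,G_2)\cong\Iso_T(f^\ast G_1,f^\ast G_2)$, and over the finite coproduct $T=\sqcup_i\Spec(A_i)$ this is the coproduct of the $\Iso_{\Spec(A_i)}(\cdots)$, hence a scheme affine over $T$. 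Now $\Iso_S(G_1,G_2)$ is an $fpqc$-sheaf over $S$ whose pullback along the $fpqc$-cover $f$ is an affine $T$-scheme, carrying its canonical descent datum relative to $f$ (coming either from the sheaf property or, equivalently, from compatibility of $\Iso$ with base change along the two projections $T\times_S T\to T$). Proposition \ref{ff-affine} says that $f^\ast(-)$ is an equivalence between schemes affine over $S$ and descent problems in schemes affine over $T$; applying it to this descent problem produces a scheme $\cI$ affine over $S$ with $f^\ast\cI$ isomorphic, compatibly with descent data, to $\Iso_T(f^\ast G_1,f^\ast G_2)$. Since $\cI$ and $\Iso_S(G_1,G_2)$ are both $fpqc$-sheaves on $S$ agreeing (with descent data) on the cover, they are isomorphic, so $\Iso_S(G_1,G_2)\to S$ is an affine morphism of schemes.

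The step I expect to require the most care is the last one: matching up the canonical descent datum on $\Iso_T(f^\ast G_1,f^\ast G_2)$ viewed as an affine $T$-scheme with the descent datum it inherits as the restriction of the $fpqc$-sheaf $\Iso_S(G_1,G_2)$, so that Proposition \ref{ff-affine} genuinely recovers $\Iso_S(G_1,G_2)$ and not merely some affine $S$-scheme. Equivalently, one should phrase the conclusion as: being a scheme affine over the base is an $fpqc$-local property of an $fpqc$-sheaf on $S$, which is exactly the content of Proposition \ref{ff-affine} together with the sheaf property from Proposition \ref{prestack} (cf. also the discussion of descent in Remark \ref{ff-descent}). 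Once that bookkeeping is in place the rest is routine, resting entirely on Lemmas \ref{coords-after-ff} and \ref{pull-back-for-coord-2}.
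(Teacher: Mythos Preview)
Your proof is correct and follows essentially the same strategy as the paper: reduce to the coordinatized case via Lemma \ref{pull-back-for-coord-2}, produce an $fpqc$-cover on which both formal groups acquire coordinates (Lemma \ref{coords-after-ff}), and then descend. The only cosmetic difference is that the paper packages the descent in terms of a quasi-coherent $\cO_S$-algebra sheaf $\cA(G_1,G_2)$ with $\Spec_S(\cA(G_1,G_2))\cong\Iso_S(G_1,G_2)$, whereas you descend the affine scheme directly via Proposition \ref{ff-affine}; these are equivalent, and your identification of the ``matching descent data'' step as the one requiring care mirrors the paper's appeal to uniqueness of descent.
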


\begin{proof} We prove this by appealing to Lemma
\ref{pull-back-for-coord-2} and faithfully flat descent.

First, suppose $G_1$ and $G_2$ can be each given a coordinate. Then,
for a fixed choice of coordinate for $G_1$ and $G_2$ and
for any morphism of schemes $f:U= \Spec(A) \to S$, the formal
groups $f^\ast G_i$ over $U$ has an induced coordinate,
and Lemma \ref{pull-back-for-coord-2} shows
$$
f^\ast \Iso_S(G_1,G_2) = \Iso_U(f^\ast G_1,f^\ast G_2) \to U
$$
is affine over $U$; indeed,
\begin{equation}\label{affine-for-iso1}
\Iso_U(f^\ast G_1,f^\ast G_2) \cong \Spec(A \otimes_{A \otimes A}
(A \otimes_L W \otimes_L A)).
\end{equation}
Expanding this thought,  define a presheaf $\cA(G_1,G_2)$ 
of $\cO_S$ algebras by
$$
\cA(G_1,G_2)(f:U \to S) = H^0(U,\Iso_U(f^\ast G_1,f^\ast G_2))
$$
where $f:U \to S$ runs over all flat morphisms with
affine source.
Then Equation \ref{affine-for-iso1} implies that $\cA(G_1,G_2)$ is a
quasi-coherent sheaf of $\cO_S$-algebras. We then have
$$
\Spec_S(\cA(G_1,G_2)) \cong \Iso_S(G_1,G_2)
$$
over $S$. If $f:T \to S$ is any morphism of schemes, then $f^\ast G_i$
also can be given a coordinate, by Remark \ref{fgls-rems}.4, and then Lemma
\ref{pull-back-for-coord-2} implies that 
\begin{equation}\label{iso-pull-coord-1}
f^\ast \cA(G_1,G_2) \cong \cA(f^\ast G_1,f^\ast G_2)
\end{equation}
as quasi-coherent $\cO_T$-algebra sheaves. This is equivalent to the
statement that
\begin{equation}\label{iso-pull-coord-2}
T \times_S \Iso_S(G_1,G_2) \cong \Iso_T(f^\ast G_1,f^\ast G_2).
\end{equation}

For the general case, we appeal to Lemma \ref{coords-after-ff} to choose
an $fpqc$-cover $f:T \to S$ so that $f^\ast G_i$ can each be given 
a coordinate. Then $\Iso_T(f^\ast G_1,f^\ast G_2) \cong
\Spec_T(\cA(f^\ast G_1,f^\ast G_2))$
and Equation \ref{iso-pull-coord-1} (or Equation \ref{iso-pull-coord-2}) yields
an isomorphism of quasi-coherent $\cO_{T \times_S T}$-algebra sheaves
$$
\phi:p_1^\ast \cA(f^\ast G_1,f^\ast G_2) \longr p_2^\ast\cA(f^\ast G_1,f^\ast G_2).
$$
We check that this isomorphism satisfies the cocycle condition and we
get, by faithfully flat descent, a quasi-coherent $\cO_S$-algebra sheaf
$\cA(G_1,G_2)$. Uniqueness of descent and Equation \ref{iso-pull-coord-2}
imply that $\Spec_S(\cA(G_1,G_2) \cong \Iso_S(G_1,G_2)$ 
over $S$.
\end{proof}

\begin{cor}\label{pull-backs-affine-scheme}Let $G \to S$ and
$H \to T$ be formal groups over quasi-compact and quasi-separated
schemes. Then the projection morphism
$$
S \times_{\cM_\fg} T \longr S \times T
$$
is an affine morphism of schemes. In particular $S \times_{\cM_\fg} T$
is a scheme over $S$ and it is an affine scheme over $S$ if
$T$ is an affine scheme.
\end{cor}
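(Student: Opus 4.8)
The plan is to reduce the statement to Proposition \ref{diagonal} by recognizing $S\times_{\cM_\fg}T$ as a scheme of isomorphisms over the product $S\times T$. Write $p_S\colon S\times T\to S$ and $p_T\colon S\times T\to T$ for the two projections and form the pulled-back formal groups $p_S^\ast G$ and $p_T^\ast H$ over $S\times T$. The first step is to produce a natural isomorphism over $S\times T$
$$
S\times_{\cM_\fg}T\;\cong\;\Iso_{S\times T}(p_S^\ast G,\ p_T^\ast H).
$$
Since $S$ and $T$ are schemes, hence sheaves of sets, and $\cM_\fg$ is a stack (Proposition \ref{prestack}), the two-categorical fibre product $S\times_{\cM_\fg}T$ is again a sheaf of sets whose value on a commutative ring $R'$ is the set of triples $(a,b,\phi)$ with $a\colon\Spec(R')\to S$, $b\colon\Spec(R')\to T$, and $\phi\colon a^\ast G\to b^\ast H$ an isomorphism of formal groups; a morphism $\Spec(R')\to S\times T$ is exactly such a pair $(a,b)$, so sending $(a,b,\phi)$ to $\phi$ identifies $S\times_{\cM_\fg}T$, as a scheme over $S\times T$, with $\Iso_{S\times T}(p_S^\ast G,p_T^\ast H)$. (Alternatively, one obtains this by pulling back the square displayed just before Lemma \ref{pull-back-for-coord-2} along the diagonal of $S\times T$.)

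Next I would record that $S\times T$ is again quasi-compact and quasi-separated: finite affine open covers $\{U_i\}$ of $S$ and $\{V_j\}$ of $T$ yield the finite affine open cover $\{U_i\times V_j\}$ of $S\times T$, and the diagonal of $S\times T$ is the product of the diagonals of $S$ and $T$, hence quasi-compact since quasi-compact morphisms are stable under base change and composition. Applying Proposition \ref{diagonal} to the formal groups $p_S^\ast G$ and $p_T^\ast H$ over the quasi-compact quasi-separated scheme $S\times T$ shows that $\Iso_{S\times T}(p_S^\ast G,p_T^\ast H)\to S\times T$ is an affine morphism; by the displayed isomorphism, $S\times_{\cM_\fg}T\to S\times T$ is affine. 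In particular $S\times_{\cM_\fg}T$ is a scheme, and composing with the projection $S\times T\to S$ exhibits it as a scheme over $S$. If moreover $T$ is affine, then the projection $S\times T\to S$ is affine (locally on $S$ it is a product of affine schemes, hence affine by Definition \ref{affine-morphisms}), and the composite $S\times_{\cM_\fg}T\to S\times T\to S$ of two affine morphisms is affine; that is, $S\times_{\cM_\fg}T$ is affine over $S$.

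I do not expect a genuine obstacle here: all of the substance is already contained in Proposition \ref{diagonal}. The only points requiring a little care are setting up the identification $S\times_{\cM_\fg}T\cong\Iso_{S\times T}(p_S^\ast G,p_T^\ast H)$ correctly — in particular keeping straight which formal group is pulled back along which projection — and the routine verification that quasi-compactness and quasi-separatedness are inherited by $S\times T$, which is exactly what licenses the appeal to Proposition \ref{diagonal}.
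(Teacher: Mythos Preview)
Your proof is correct and follows exactly the paper's approach: identify $S\times_{\cM_\fg}T$ with $\Iso_{S\times T}(p_S^\ast G,p_T^\ast H)$ and invoke Proposition \ref{diagonal}. You have simply been more careful than the paper in verifying that $S\times T$ inherits the quasi-compact and quasi-separated hypotheses and in spelling out the ``in particular'' clause.
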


\begin{proof}One easily checks that there is an isomorphism
$$
S \times_{\cM_\fg} T \cong \Iso_{S \times T}(p_1^\ast G, p_2^\ast H).
$$
Now we use Proposition \ref{diagonal}.
\end{proof}

In the following definition we are going to have a functor
$F$ on affine schemes over a scheme $S$. We'll write
$F\vert_U$ for $F(U)$ to in order to avoid too many 
parentheses. 

\begin{defn}\label{coord-scheme}1.) Let $G$ be a
formal group over a scheme $S$. Define a functor $\coord(G/S)$ from affine
schemes over $S$ to groupoids as follows.
If $i:U \to S$ is any affine morphism, then the
objects of $\coord(G/S)\vert_U$ are pairs $(i^\ast G,x)$ where  $x$
is a coordinate for $i^\ast G$. The morphisms $f:(i^\ast G,x) \to (i^\ast G,y)$
of $\coord(G/S)\vert_U$ are those morphisms of formal group laws
so that the underlying morphism of formal groups 
$f_0=1:i^\ast G \to i^\ast G$ is the identity.

2.) Let us write $\coord_G \to S$ for the functor of objects 
of the groupoid functor $\coord(G/S)$

3.) By \ref{fgls-rems}.4, a morphism $f:(G,x) \to (G,y)$
so that the underlying morphism of formal groups is the identity amounts to
writing the coordinate $y$ as a power series in $x$. We will call this a
change of coordinates.
\end{defn}

In the following result, note that we have an isomorphism, not
simply an equivalence.

\begin{lem}\label{coord-pull-fgl}Let $G \to S$ be a formal group
over a scheme $S$ and let $S \to \cM_\fg$ classify $G$. Then
there is an isomorphism of groupoids over $S$
$$
\lambda:\coord(G/S) \longr S \times_{\cM_\fg} \cM_\fgl.
$$
\end{lem}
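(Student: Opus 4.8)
The plan is to build $\lambda$ and its inverse explicitly over each affine $i\colon U=\Spec(A)\to S$, and then to observe that both constructions commute with restriction. First I would unwind the two sides. Over $U$ an object of $\coord(G/S)|_U$ is simply a coordinate $x$ for $i^\ast G$ (the formal group $i^\ast G$ being determined by the structure map $i$), and a morphism $(i^\ast G,x)\to(i^\ast G,y)$ is the unique change of coordinates, i.e.\ the homomorphism of formal groups whose underlying map is $\mathrm{id}_{i^\ast G}$ expressed in the coordinate $x$ on the source and $y$ on the target (Definition~\ref{coord-scheme}). On the other side, since $S$ is a scheme its fibre groupoid over $U$ is discrete, so an object of $(S\times_{\cM_\fg}\cM_\fgl)|_U$ is a pair $(F,\phi)$ with $F$ a formal group law over $A$ and $\phi\colon i^\ast G\xrightarrow{\ \cong\ }G_F$ an isomorphism of formal groups, while a morphism $(F,\phi)\to(F',\phi')$ is an isomorphism $\psi\colon F\to F'$ of formal group laws with $G_\psi\circ\phi=\phi'$. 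Using Example~\ref{fgl-local-hom} one sees that both of these groupoids have exactly one morphism between any two objects, so the substance of the lemma is a natural bijection of object sets.

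Next I would define $\lambda$ on objects by $x\mapsto(F_x,\phi_x)$: the coordinate $x$ trivialises $\omega_e$, so by Example~\ref{fg-local} and Remark~\ref{fgls-rems}.1 it determines a formal group law $F_x$ over $A$ together with the normal forms for the infinitesimal neighbourhoods of Equation~\ref{nat-fgl-to-coord}, and Remark~\ref{fgls-rems}.2 supplies the canonical comparison isomorphism $\phi_x\colon i^\ast G\to G_{F_x}$, which carries $x$ to the tautological coordinate of $G_{F_x}$. On morphisms, Remark~\ref{fgls-rems}.3 says that a change of coordinates $f\colon(i^\ast G,x)\to(i^\ast G,y)$ is exactly an invertible power series which is an isomorphism of formal group laws $F_x\to F_y$, and a short diagram chase identifies the constraint ``underlying map $=\mathrm{id}_{i^\ast G}$'' with the constraint $G_{\lambda(f)}\circ\phi_x=\phi_y$, so $\lambda$ is a functor on each fibre. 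Finally, Remark~\ref{fgls-rems}.4 (pull-back of a coordinate along $T\to U$) together with the evident functoriality of the assignments $x\mapsto F_x$ and $x\mapsto\phi_x$ shows that $\lambda$ commutes with restriction, hence defines a morphism of categories fibred in groupoids over $S$.

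The inverse is produced the same way in reverse: given $(F,\phi)$, the formal group $G_F=\Spf(A[[t]])$ carries its tautological coordinate $t$ (Remark~\ref{pwr-series-is-lie}.1), and pulling $t$ back along $\phi$ by the recipe of Remark~\ref{fgls-rems}.4 produces a coordinate $\mu(F,\phi)$ for $i^\ast G$, and similarly on morphisms. That $\mu\circ\lambda$ and $\lambda\circ\mu$ are the respective identities is then essentially tautological: $\phi_x$ was defined precisely as the isomorphism sending the tautological coordinate of $G_{F_x}$ to $x$, so $\mu(F_x,\phi_x)=x$; and conversely transporting the group structure of $i^\ast G$ along $\phi$ returns the group structure of $G_F$, which read off in the pulled-back coordinate is $F$ itself, so $F_{\mu(F,\phi)}=F$ and $\phi_{\mu(F,\phi)}=\phi$. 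I expect the only real friction to be the bookkeeping needed to match the ``underlying morphism of formal groups is the identity'' condition of Definition~\ref{coord-scheme} with the $2$-categorical compatibility condition defining morphisms in the fibre product, and to be careful that $\lambda$ is a \emph{bijection} on objects rather than merely essentially surjective — but the latter is exactly the assertion that a coordinate for $i^\ast G$ is the same datum as an isomorphism of $i^\ast G$ with some $G_F$, which is immediate from Definition~\ref{fgls} and Remark~\ref{fgls-rems}.2.
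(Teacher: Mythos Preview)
Your proposal is correct and follows essentially the same approach as the paper: define $\lambda$ by sending a coordinate $x$ to the formal group law $F_x$ it determines together with the comparison isomorphism of Remark~\ref{fgls-rems}.2, and define the inverse by transporting the tautological coordinate along $\phi$ via Remark~\ref{fgls-rems}.4. The paper's proof is much terser (it omits the discussion of morphisms and naturality), and it orients the comparison isomorphism as $\phi\colon G_F\to f^\ast G$ rather than your $\phi_x\colon i^\ast G\to G_{F_x}$, but since these are isomorphisms in a groupoid this is immaterial.
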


\begin{proof} First we define the morphism.
Let $f:U = \Spec(A) \to S$ be a morphism out of
an affine scheme and let $(f^\ast G,x) \in \coord(G/S)\vert_U$.
Define $\lambda(f^\ast G,x)$ to the the triple
$$
(f:U \to S, F, \phi:G_F \to f^\ast G)
$$
where $F$ is the formal group law determined by $x$ (Remark
\ref{fgls-rems}.2) and $\phi$ is the natural isomorphism from the formal
group determined by $F$ (Example \ref{fgl-local-hom}) to $f^\ast G$.
The inverse of $\lambda$ sends $(f,F,\phi)$
to the pair $(f^\ast G,x)$ where $x$ is the coordinate defined
by $\phi$ (Remark \ref{fgls-rems}.4).
\end{proof}

The next result follows immediately from  Proposition \ref{coord-equiv-fgl}.
Notice only have an equivalence in this case.

\begin{cor}\label{coord-pull-crd}Let $G \to S$ be a formal group
over a scheme $S$ and let $S \to \cM_\fg$ classify $G$. Then
there is an equivalence of groupoids over $S$
$$
\lambda:\coord(G/S) \longr S \times_{\cM_\fg} \cM_\crd.
$$
\end{cor}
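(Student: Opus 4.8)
The plan is to deduce this immediately from Lemma \ref{coord-pull-fgl} and Proposition \ref{coord-equiv-fgl} by base change. Lemma \ref{coord-pull-fgl} already supplies an isomorphism of groupoid-valued functors $\lambda' : \coord(G/S) \xrightarrow{\ \cong\ } S \times_{\cM_\fg} \cM_\fgl$ over affine schemes mapping to $S$, so it suffices to produce a natural equivalence $S \times_{\cM_\fg} \cM_\crd \to S \times_{\cM_\fg} \cM_\fgl$ over $S$ and then set $\lambda$ to be the composite of $\lambda'$ with a chosen pseudo-inverse of this equivalence.

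First I would recall how these fiber products are computed fiberwise. For an affine scheme $i:U \to S$, the groupoid $(S \times_{\cM_\fg} \cM_\crd)(U)$ is the $2$-categorical (homotopy) fiber product of $S(U)$ and $\cM_\crd(U)$ over $\cM_\fg(U)$: its objects are triples $(s,c,\psi)$ with $s \in S(U)$, $c$ an object of $\cM_\crd(U)$, and $\psi$ an isomorphism in $\cM_\fg(U)$ between $i^\ast G$ (the formal group classified by $S \to \cM_\fg$ restricted along $s$) and the formal group underlying $c$; similarly for $\cM_\fgl$ in place of $\cM_\crd$. The forgetful morphism $i:\cM_\crd \to \cM_\fgl$ of Proposition \ref{coord-equiv-fgl} is a morphism over $\cM_\fg$ (it sits in the factorization $\cM_\crd \to \cM_\fgl \to \cM_\fg$ displayed just before Proposition \ref{coord-equiv-fgl}), so it induces $(s,c,\psi) \mapsto (s,i(c),\psi)$, naturally in $U$; this is the candidate morphism of categories fibered in groupoids.

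The key step is then: since $i:\cM_\crd(U) \to \cM_\fgl(U)$ is an equivalence of groupoids for every affine $U$ by Proposition \ref{coord-equiv-fgl}, and the $2$-fiber product of groupoids carries an equivalence in one variable to an equivalence (equivalently, $2$-fibered products are invariant under equivalence), the induced map $(S \times_{\cM_\fg} \cM_\crd)(U) \to (S \times_{\cM_\fg} \cM_\fgl)(U)$ is an equivalence of groupoids, naturally in $U$. Composing with $\lambda'$ and a pseudo-inverse yields the asserted equivalence $\lambda : \coord(G/S) \to S \times_{\cM_\fg} \cM_\crd$ over $S$.

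The main obstacle — such as it is — is purely bookkeeping: checking that the homotopy fiber product of groupoids sends fiberwise equivalences to fiberwise equivalences, and keeping track of naturality in $U$ so that one obtains a morphism of categories fibered in groupoids rather than a mere objectwise equivalence. There is no genuine formal-group content here beyond what Lemma \ref{coord-pull-fgl} and Proposition \ref{coord-equiv-fgl} already encapsulate; in particular, one obtains only an equivalence (and not, as in Lemma \ref{coord-pull-fgl}, an isomorphism) precisely because $i$ is an equivalence but not an isomorphism of groupoids.
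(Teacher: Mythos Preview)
Your proposal is correct and is exactly the paper's intended argument, just spelled out in more detail: the paper simply says the corollary follows immediately from Proposition \ref{coord-equiv-fgl} (implicitly combined with Lemma \ref{coord-pull-fgl} just proved), and even remarks that one gets only an equivalence rather than an isomorphism. Your elaboration of why pulling back the pointwise equivalence $i:\cM_\crd \to \cM_\fgl$ along $S \to \cM_\fg$ yields a fiberwise equivalence is the straightforward unpacking of that one-line deduction.
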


In the following result, we will call a groupoid scheme $\cG$
over $S$ affine over $S$ if both the projection maps
$\mathrm{ob}\cG \to S$ and $\mathrm{mor}\cG \to S$
are affine morphisms.

\begin{thm}\label{coord-affine}1.) Let $G\to S$ be a formal group
over a quasi-compact and quasi-separated scheme $S$.
Then $\coord(G/S) \to S$ is a groupoid
scheme affine over $S$. 

2.) For all morphisms $f:T \to S$ of schemes, the groupoid
$\coord(G/S)\vert_T$ is either empty or contractible.\index{coordinate
scheme}

3.) The objects $\coord_G \to S$ of $\coord(G/S)$ form
an affine scheme over $S$.
\end{thm}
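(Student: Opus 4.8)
The plan is to deduce (1) and (3) formally from Lemma~\ref{coord-pull-fgl} together with Corollary~\ref{pull-backs-affine-scheme}, and to verify (2) directly from the explicit description of coordinates in Remark~\ref{fgls-rems}. Note first that (3) is a special case of (1): by the definition of ``affine over $S$'' for a groupoid scheme recalled just above, part (1) already asserts that $\coord_G = \mathrm{ob}\,\coord(G/S) \to S$ is an affine morphism of schemes.

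For (1), I would begin with the isomorphism of groupoids $\coord(G/S) \cong S \times_{\cM_\fg} \cM_\fgl$ supplied by Lemma~\ref{coord-pull-fgl}. Since a scheme, regarded as a category fibered in groupoids, has only identity morphisms in each fiber, unwinding the $2$-categorical fiber product identifies its object scheme and its morphism scheme with ordinary fiber products over $\cM_\fg$:
$$
\coord_G \;=\; \mathrm{ob}\,\coord(G/S) \;\cong\; S \times_{\cM_\fg} \Spec(L),
\qquad
\mathrm{mor}\,\coord(G/S) \;\cong\; S \times_{\cM_\fg} \Spec(W),
$$
where $\Spec(L) = \fgl \to \cM_\fg$ classifies the universal formal group law (Theorem~\ref{lazard-rep}.1) and $\Spec(W) = \isofgl \to \cM_\fg$ is the composite of the source projection $\isofgl \to \fgl$ with that map, i.e.\ it classifies the source of the universal isomorphism of formal group laws (Theorem~\ref{lazard-rep}.2). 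Both $L$ and $W$ are rings, so $\Spec(L)$ and $\Spec(W)$ are affine and hence quasi-compact and quasi-separated; since $S$ is assumed quasi-compact and quasi-separated, Corollary~\ref{pull-backs-affine-scheme} applies with $T = \Spec(L)$ and with $T = \Spec(W)$ and shows that $\coord_G \to S$ and $\mathrm{mor}\,\coord(G/S) \to S$ are affine morphisms of schemes. This proves (1), and hence (3).

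For (2), fix a morphism $f : T \to S$. If $f^\ast G$ admits no coordinate over $T$, then $\coord(G/S)\vert_T$ has no objects and is empty. Otherwise choose a coordinate $x_0$ for $f^\ast G$; by Remark~\ref{fgls-rems}.1 (applied to $f^\ast G \to T$), every coordinate for $f^\ast G$ is a power series $x = a_1 x_0 + a_2 x_0^2 + \cdots$ with coefficients in $H^0(T,\cO_T)$ whose leading coefficient $a_1$ is a global unit. Given two coordinates $x$ and $y$, there is a unique power series $\eta$, necessarily with $\eta'(0)$ a unit, for which $y = \eta(x)$; by Definition~\ref{coord-scheme}.3 this is precisely the unique morphism $(f^\ast G, x) \to (f^\ast G, y)$ in $\coord(G/S)\vert_T$, and it is an isomorphism because $\eta'(0)$ is a unit. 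Hence $\coord(G/S)\vert_T$ has exactly one morphism between any ordered pair of objects, so it is contractible whenever it is nonempty.

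The step I expect to require the most care is the identification of the object and morphism schemes of the $2$-categorical fiber product $S \times_{\cM_\fg} \cM_\fgl$ with the naive fiber products with $\Spec(L)$ and $\Spec(W)$; in particular one must pin down that the map $\Spec(W) \to \cM_\fg$ relevant to the morphism scheme is the \emph{source} classifying map, so that Corollary~\ref{pull-backs-affine-scheme} is invoked with the correct formal group on $\Spec(W)$. Once this dictionary is established, everything else is a routine translation of the definitions already in place.
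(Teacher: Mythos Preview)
Your proposal is correct and follows essentially the same route as the paper: identify the object and morphism schemes of $\coord(G/S)$ with $S \times_{\cM_\fg} \Spec(L)$ and $S \times_{\cM_\fg} \Spec(W)$ via Lemma~\ref{coord-pull-fgl} and Theorem~\ref{lazard-rep}, invoke Corollary~\ref{pull-backs-affine-scheme} for (1) and hence (3), and argue (2) directly from the uniqueness of the change-of-coordinate power series (the paper cites Remark~\ref{fgls-rems}.3 for this last step, which packages exactly the observation you spell out). Your caution about pinning down the map $\Spec(W)\to\cM_\fg$ is well placed but ultimately inessential here, since either the source or target classifying map yields an affine fiber product over $S$.
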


\begin{proof} Lemma \ref{coord-pull-fgl} and Theorem \ref{lazard-rep}
imply together that the objects and morphisms of $\coord(G/S)$
are, respectively,
$$
S \times_{\cM_\fg} \Spec(L)
$$
and
$$
S \times_{\cM_\fg} \Spec(W).
$$
Part (1) of the theorem follows from Corollary
\ref{pull-backs-affine-scheme}. Since $\coord_G$ is the scheme
of objects in $\coord(G/S)$, part (3) follows from part (1).

For part (2), if $f^\ast G$ has no coordinate, then $\coord(G/S)\vert_T$
is empty. If, however, $f^\ast G$ has a coordinate, then any two coordinates
are connected by a unique isomorphism, by Remark \ref{fgls-rems}.3,
and the groupoid is contractible.
\end{proof}

We remark that we have shown that the
scheme $\coord_G$ of objects is actually a torsor for an appropriate
group scheme. See Lemma \ref{coord-is-torsor}.

\begin{rem}\label{example-to-clarify}Since the proof of Theorem
\ref{coord-affine} is at the end of a logical thread which winds in
way through most of this section, it might be worthwhile to
consider the example where $S = \Spec(B)$ is affine and
$G$ is a formal group which can be given a coordinate $y$.
Then if $f:\Spec(A) = U \to S$ is any morphism from an affine
scheme, and $(f^\ast G,x) \in \coord(G/S)\vert_U$ is any
coordinate for $G$ over $U$, then $x$ can be written in terms
of $y$; that is,
$$
x = a_0y + a_1y^2 + a_2y^3 + \cdots \defeq a(y)
$$
where $a_i \in A$ and $a_0$ is invertible. From this we see that
the choice of the coordinate $y$ defines an isomorphism
of schemes
$$
\coord_G \cong \Spec(B[a_0^{\pm 1},a_1,a_2,\cdots])
\cong \Spec(B \otimes_L W)
\cong S \times_{\cM_\fg} \Spec(L).
$$
An isomorphism $\phi:(f^\ast G,x_0) \to (f^\ast G,x_1)$ 
in $\coord(G/S)\vert_U$ is determined by a power series
$$
x_1 = \lambda_0x_0 +  \lambda_1 x_0^2 +  \lambda_2 x_0^3 + \cdots
=  \lambda(x_0)
$$
and $x_1 =  \lambda(a(y))$. This shows the choice of the coordinate
$y$ defines an isomorphism of schemes from the morphisms of 
$\coord(G/S)$ to 
$$
\Spec(B[a_0^{\pm 1},a_1,\ldots, \lambda_0^{\pm 1},  \lambda_1,
\ldots]) \cong \Spec(B \otimes_L W \otimes_L W).
$$
\end{rem}

\subsection{$\cM_\fg$ is an $fpqc$-algebraic stack}

We recall the notion of a representable morphism of stacks and
what is means for such a morphism to have geometric properties.
All our stacks are categories fibered in groupoids over affine schemes.

\begin{defn}\label{stacks-over-flat}1.) A morphism $\cN \to \cM$
of stacks is {\bf representable} if for all morphisms $U \to \cM$
with affine source, the $2$-category pull-back $U \times_\cM \cN$
is a scheme.

2.) Let $P$ be some property of morphisms of schemes closed under base change
and let $f:\cN \to \cM$ be a representable morphism of stacks,
then $f$ has property $P$ if the induced morphism 
$$
U \times_\cM \cN \longr U
$$
has property $P$ for all morphisms $U \to \cM$ with affine
source.
\end{defn}

In the situations which arise here, there are times when we only
have to check the property $P$ once. This will happen, for
example, with flat maps. The results is the following.

\begin{lem}\label{check-once} Let $P$ be some property of
morphisms of schemes closed under base change, and suppose
$P$ has the following further property:
\begin{enumerate}

\item[$\bullet$] Let $f:X \to Y$ be a morphism of schemes and let
$g:Z \to Y$ be a faifthfully flat morphism of schemes. Then $f$ has property $P$ if
and only if $Z \times_Y X \to Z$ has property $P$.
\end{enumerate}

Then if $X \to \cM$ is a presentation of $\cM$ by an affine
scheme, a representable morphism of stacks $\cN \to \cM$
has property $P$ if and only if $X \times_\cM \cN \to X$
has property $P$.
\end{lem}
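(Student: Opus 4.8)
The plan is to verify the two directions of the biconditional separately, using that a presentation $X \to \cM$ is by definition a faithfully flat (indeed representable, faithfully flat) morphism from an affine scheme, together with the assumption that $\cN \to \cM$ is representable.

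First, the easy direction. Suppose the representable morphism $\cN \to \cM$ has property $P$. By Definition \ref{stacks-over-flat}.2, this means that for \emph{every} morphism $U \to \cM$ with affine source, the base change $U \times_\cM \cN \to U$ has property $P$. In particular, applying this to the morphism $X \to \cM$ (whose source $X$ is affine), we conclude that $X \times_\cM \cN \to X$ has property $P$. No hypotheses beyond the definitions are needed here.

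For the converse, suppose $X \times_\cM \cN \to X$ has property $P$; we must show that for an arbitrary morphism $U \to \cM$ with $U$ affine, the base change $U \times_\cM \cN \to U$ has property $P$. The idea is to pull back further along $X$ and use the descent-type hypothesis on $P$. Form the affine scheme $U \times_\cM X$ — it is a scheme by representability of $X \to \cM$, and in fact affine over $U$ since $X \to \cM$ is a presentation and the relevant fiber products of the form $U \times_\cM X$ are affine (this is exactly the kind of statement established by Corollary \ref{pull-backs-affine-scheme} in the case $\cM = \cM_\fg$; in the general setting one builds it into the notion of presentation, so $U \times_\cM X$ is affine, hence the map $g: U \times_\cM X \to U$ is a morphism of schemes). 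Moreover $g$ is faithfully flat, being the base change of the faithfully flat presentation $X \to \cM$. Now there is a canonical isomorphism
$$
(U \times_\cM X) \times_U (U \times_\cM \cN) \;\cong\; (U \times_\cM X) \times_\cM \cN \;\cong\; (U\times_\cM X) \times_X (X \times_\cM \cN),
$$
so the base change of $U \times_\cM \cN \to U$ along $g$ is identified with the base change of $X \times_\cM \cN \to X$ along the map $U \times_\cM X \to X$. Since $X \times_\cM \cN \to X$ has property $P$ and $P$ is closed under base change, the composite $(U \times_\cM X) \times_U (U \times_\cM \cN) \to U \times_\cM X$ has property $P$. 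Then the descent hypothesis on $P$ — applied to $f: U \times_\cM \cN \to U$ and the faithfully flat $g: U \times_\cM X \to U$ — lets us conclude that $f$ itself has property $P$, as desired.

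**The main obstacle** is the bookkeeping that $U \times_\cM X$ is actually an affine (or at least honest) scheme and that $g$ is genuinely faithfully flat, so that the descent hypothesis is applicable; this rests on $X \to \cM$ being a presentation in the appropriate sense, and in the body of the paper this is where one invokes the affineness results of the previous subsection (Corollary \ref{pull-backs-affine-scheme}) rather than anything deep. Once the fiber-product identities above are written down carefully — purely formal manipulations of $2$-categorical pullbacks — the result is immediate from the stated property of $P$.
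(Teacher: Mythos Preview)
The paper states this lemma without proof, so there is nothing to compare against directly. Your argument is correct and is the standard one: the forward direction is immediate from Definition~\ref{stacks-over-flat}.2 applied to $U = X$, and for the converse you base-change an arbitrary $U \times_\cM \cN \to U$ along the faithfully flat $g: U \times_\cM X \to U$, identify the result with a base change of $X \times_\cM \cN \to X$ via the fiber-product juggle, and then invoke the descent hypothesis on $P$.

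One clarification worth making: the discussion of whether $U \times_\cM X$ is \emph{affine} is unnecessary. The bullet hypothesis is stated for morphisms of \emph{schemes}, not affine schemes, so you only need $U \times_\cM X$ to be a scheme. That follows from representability of the diagonal (part~(1) of Definition~\ref{alg-stack}), since $U \times_\cM X \cong (U \times X) \times_{\cM \times \cM} \cM$. Your appeal to Corollary~\ref{pull-backs-affine-scheme} is both unneeded and too narrow: that corollary is specific to $\cM_\fg$, whereas the present lemma is stated for a general algebraic stack, and Definition~\ref{alg-stack} only requires the diagonal to be representable, separated, and quasi-compact --- not affine. Removing the affineness detour makes the proof cleaner and applicable at the stated level of generality.
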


Now we define the notion of algebraic stack used in this monograph.

\begin{defn}\label{alg-stack}Let $Y$ be a scheme and $\cM$ any
stack over $Y$. Then $\cM$ is
an {\bf algebraic stack} in the $fpqc$-topology or more succinctly an
$fpqc$-{\bf stack} if\index{stack, $fpqc$}
\begin{enumerate}

\item the diagonal morphism $\cM \to \cM \times_Y \cM$ is
representable, separated, and quasi-compact; and

\item there a scheme $X$ and a surjective, flat, and quasi-compact morphism
$X \to \cM$. The morphism $X \to \cM$ is called a {\bf presentation} of $\cM$.
\end{enumerate}
\end{defn}

This is a relaxation of the usual definition of algebraic stack (as in \cite{Laumon},
D\'efinition 4.1) where the presentation $X \to \cM$ is required to be
smooth, so in particular flat and locally of finite type. It turns out that
$\cM_\fg$ can be approximated by such stacks, as we see in the
next chapter.

The following result is obtained by combining Propositions
\ref{diagonal-for-mfg} and \ref{lazard-covers} below.

\begin{thm}\label{fg-algebraic}The moduli stack $\cM_\fg$
is an algebraic stack over $\Spec(\ZZ)$ in the $fpqc$-topology. 
Let $\fgl = \Spec(L)$\index{$\fgl$} be
the affine scheme of formal group laws and let $G_F \to \fgl$
be the formal group arising from the universal formal group law.
Then
$$
G_F: \fgl \longr \cM_\fg
$$
is a presentation for $\cM_\fg$.
\end{thm}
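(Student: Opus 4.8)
The plan is to verify, for the base $Y=\Spec(\ZZ)$, the two conditions of Definition \ref{alg-stack}: first that the diagonal $\Delta\colon \cM_\fg \to \cM_\fg \times_{\Spec(\ZZ)} \cM_\fg$ is representable, separated, and quasi-compact; and second that $G_F\colon \fgl \to \cM_\fg$ is a surjective, flat, quasi-compact morphism from a scheme. Essentially all the substantive work has already been assembled in the previous subsections, so the task is organization: I would record the first condition as Proposition \ref{diagonal-for-mfg} and the second as Proposition \ref{lazard-covers}, then combine them.

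For the diagonal, a morphism from an affine scheme $V=\Spec(A)$ to $\cM_\fg \times_{\Spec(\ZZ)} \cM_\fg$ is the same as a pair $(G_1,G_2)$ of formal groups over $V$, and the $2$-categorical fibre product $V \times_{\cM_\fg \times_{\Spec(\ZZ)} \cM_\fg} \cM_\fg$ is canonically $\Iso_V(G_1,G_2)$. By Proposition \ref{diagonal} this is an affine scheme over $V$. An affine morphism is representable, quasi-compact, and separated (its diagonal is a closed embedding), so $\Delta$ has all three properties. Lemma \ref{check-once} shows one could check flatness-type properties after a single presentation, but here the direct computation of the pull-back is just as quick.

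For the presentation, take $X=\fgl=\Spec(L)$ with $G_F$ classifying the universal formal group law. Given any $\Spec(A)\to\cM_\fg$ classifying a formal group $G$, Lemma \ref{pull-back-for-coord-1} (equivalently, Lemma \ref{coord-pull-fgl} together with Theorem \ref{lazard-rep}) identifies $\Spec(A)\times_{\cM_\fg}\fgl$ with $\Spec(A\otimes_L W)\cong\coord_G$, an affine — hence quasi-compact — scheme over $\Spec(A)$; this yields representability and quasi-compactness simultaneously. Flatness holds because $W\cong L[a_0^{\pm 1},a_1,a_2,\dots]$ is a free $L$-module by Theorem \ref{lazard-rep}(2), so $A\otimes_L W$ is a free, hence flat, $A$-module; invariantly, $\coord_G$ is a torsor over $\Spec(A)$ under the flat affine group scheme $\La$, and flatness can also be obtained by trivializing the torsor over a faithfully flat cover (Lemma \ref{coords-after-ff}) and descending. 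Surjectivity is immediate over fields: for any field $\FF$ and any $\Spec(\FF)\to\cM_\fg$, the classified formal group over $\FF$ is isomorphic to $\Spf(\FF[[x]])$, hence has a coordinate and therefore comes from a formal group law, so $\bigl(\Spec(\FF)\times_{\cM_\fg}\fgl\bigr)(\FF)\ne\varnothing$.

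The main obstacle — the only step that is not a one-line invocation of earlier results — is the flatness of $G_F$; representability, quasi-compactness, and separatedness all reduce to the affineness assertions of Proposition \ref{diagonal} and Corollary \ref{pull-backs-affine-scheme}, and surjectivity is trivial over fields. It is worth stressing that the argument uses the relaxed notion of algebraic stack of Definition \ref{alg-stack}: the presentation $\fgl\to\cM_\fg$ is flat and quasi-compact but not locally of finite type, since $L$ is a polynomial ring on infinitely many generators, so $\cM_\fg$ is \emph{not} algebraic in the sense of \cite{Laumon}.
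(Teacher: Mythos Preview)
Your proposal is correct and follows exactly the paper's approach: the paper proves Theorem \ref{fg-algebraic} by combining Propositions \ref{diagonal-for-mfg} and \ref{lazard-covers}, whose proofs match yours point for point. One small caution: the direct identification $\Spec(A)\times_{\cM_\fg}\fgl \cong \Spec(A\otimes_L W)$ via Lemma \ref{pull-back-for-coord-1} only applies when $G$ already has a coordinate, so the faithfully flat descent step you mention (and which the paper also uses) is not optional but essential for the general case.
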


Let $\cM$ be a stack and $x_1,x_2:S \to \cM$
be two $1$-morphisms. Then the $2$-category pull-back of
$$
\xymatrix@C=20pt{
&&\cM\dto^\Delta\\
S \ar[rr]^-{(x_1,x_2)} && \cM \times_Y \cM
}
$$
is equivalent to the $fpqc$-sheaf $\Iso_S(x_1,x_2)$ which assigns to
each affine scheme $U \to S$ over $S$ the isomorphisms
$\Iso_{U}(f^\ast x_1,f^\ast x_2)$.

\begin{prop}\label{diagonal-for-mfg}The diagonal morphism
$$
\cM_\fg \longr \cM_\fg \times \cM_\fg
$$
is representable, quasi-compact, and separated.
\end{prop}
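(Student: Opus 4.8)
The plan is to verify the three required properties of the diagonal morphism $\cM_\fg \to \cM_\fg \times \cM_\fg$ by testing, as usual, on morphisms $S \to \cM_\fg \times \cM_\fg$ with affine source. Such a morphism amounts to a pair of formal groups $G_1, G_2$ over $S = \Spec(A)$, and by the displayed description just above the proposition, the $2$-category pull-back is the $fpqc$-sheaf $\Iso_S(G_1,G_2)$. So the statement to prove is precisely that $\Iso_S(G_1,G_2) \to S$ is representable (i.e.\ a scheme), quasi-compact, and separated for every such pair over an affine — indeed we may as well record it for $S$ merely quasi-compact and quasi-separated.

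For \textbf{representability} there is essentially nothing to do: Proposition \ref{diagonal} already tells us that $\Iso_S(G_1,G_2) \to S$ is an affine morphism of schemes, so in particular it is representable. The remaining two properties are then statements about this affine morphism. For \textbf{quasi-compactness}: any affine morphism is quasi-compact, since the preimage of a quasi-compact open $\Spec(B) \subseteq S$ is $\Spec(B) \times_S \Iso_S(G_1,G_2)$, which is affine (hence quasi-compact) because affine morphisms are stable under base change. For \textbf{separatedness}: again, any affine morphism is separated. One way to see this concretely is that $\Iso_S(G_1,G_2)$ is a closed subscheme of $\Iso_S(G_1, G_1) \times_S \Iso_S(G_2,G_2)$-type affine schemes — but more simply, for $f: X \to S$ affine the diagonal $X \to X \times_S X$ is a closed embedding: it suffices to check after pulling back to an affine open $\Spec(B) \subseteq S$, where it becomes the diagonal of $\Spec(C) \to \Spec(B)$ for $C = \cO_X(\Spec(B)\times_S X)$, i.e.\ $\Spec(C \otimes_C C) \cong \Spec(C) \to \Spec(C \otimes_B C)$, which corresponds to the surjection $C \otimes_B C \to C$; this is a closed embedding in the sense of Definition \ref{affine-morphisms}.2 because surjectivity of $C\otimes_B C \to C$ is preserved under any base change $B \to B'$. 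Hence $f$ is separated in the sense of Definition \ref{affine-morphisms}.3.

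The only real content here is Proposition \ref{diagonal}, which is already proved; everything else is the routine observation that affine morphisms are quasi-compact and separated. So the \textbf{main obstacle}, such as it is, is purely bookkeeping: making sure that the identification of the $2$-pull-back of the diagonal with $\Iso_S(G_1,G_2)$ is in place (it is, in the paragraph preceding the proposition, and ultimately rests on Proposition \ref{prestack} guaranteeing that $\Iso_S$ is a sheaf), and then invoking Proposition \ref{diagonal} together with the stability of ``affine'' under base change. I would write the proof in three short sentences, one per property, citing Proposition \ref{diagonal} for the first and deducing the other two from affineness.

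\begin{proof}
Let $S = \Spec(A)$ and let $(x_1,x_2): S \to \cM_\fg \times \cM_\fg$ classify a pair of formal groups $G_1, G_2$ over $S$. As noted above, the $2$-category pull-back of $(x_1,x_2)$ along the diagonal is the sheaf $\Iso_S(G_1,G_2)$, which by Proposition \ref{diagonal} is affine over $S$; in particular it is a scheme, so the diagonal is representable. Since any affine morphism is quasi-compact (the preimage of a quasi-compact open is again affine, as affine morphisms are stable under base change) and separated (for $f:X \to S$ affine and $\Spec(B) \subseteq S$ affine open, the diagonal restricts to the map of affine schemes induced by the surjection $C \otimes_B C \to C$ with $C = \cO_X(\Spec(B)\times_S X)$, which is a closed embedding because this surjectivity persists after any base change on $B$), the diagonal morphism $\cM_\fg \to \cM_\fg \times \cM_\fg$ is quasi-compact and separated as well.
\end{proof}
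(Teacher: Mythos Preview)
Your proof is correct and follows essentially the same route as the paper: reduce to showing $\Iso_S(G_1,G_2)\to S$ is affine via Proposition~\ref{diagonal}, then deduce representable, quasi-compact, and separated from affineness. The paper simply cites \cite{Laumon}, 3.10 and 3.13 for these last implications, whereas you write out the arguments directly; otherwise the proofs are the same.
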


\begin{proof} We use Proposition \ref{diagonal}:
for any affine scheme $S$ and any two formal groups
$G_1$ and $G_2$, the morphism
$$
\Iso_S(G_1,G_2) \to S
$$
is an affine morphism of schemes. Hence the diagonal is representable
(\cite{Laumon}, 3.13), quasi-compact, and separated 
(\cite{Laumon}, 3.10).
\end{proof}

\begin{prop}\label{lazard-covers}Let $\fgl = \Spec(L)$ be
the affine scheme of formal group laws and let $G_F \to \fgl$
be the formal group arising from the universal formal group law.
Then
$$
G_F: \fgl \longr \cM_\fg
$$
is surjective, flat, and quasi-compact.
\end{prop}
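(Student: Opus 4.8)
The plan is to verify the three conditions — surjective, flat, quasi-compact — essentially by reduction to the affine, faithfully-flat-descent situation already developed in the preceding subsections. Since $\fgl = \Spec(L)$ is affine and $\cM_\fg$ is a stack, the right tool is Lemma \ref{check-once}: a representable morphism $X \to \cM_\fg$ out of an affine scheme, once pulled back along the presentation $G_F:\fgl \to \cM_\fg$ itself, has the relevant property iff the pulled-back morphism of \emph{schemes} does. But more directly, \textbf{the key point} is that for \emph{any} affine scheme $S$ with a map $G:S \to \cM_\fg$ classifying a formal group, Corollary \ref{pull-backs-affine-scheme} identifies the $2$-category pull-back $S \times_{\cM_\fg} \fgl$ with an affine scheme over $S$, and in fact Remark \ref{example-to-clarify} (or Lemma \ref{coord-pull-fgl} combined with Theorem \ref{lazard-rep}) shows that whenever $G$ admits a coordinate $y$ over $S = \Spec(B)$, this pull-back is
$$
S \times_{\cM_\fg} \fgl \;\cong\; \coord_G \;\cong\; \Spec(B[a_0^{\pm 1}, a_1, a_2, \ldots]) \;\cong\; \Spec(B \otimes_L W).
$$
So the first step is to reduce to the case where $G$ has a coordinate, using Lemma \ref{coords-after-ff}: any formal group over a quasi-compact quasi-separated $S$ acquires a coordinate after a faithfully flat quasi-compact base change, and all three properties in question descend along faithfully flat quasi-compact maps.

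\textbf{Second}, having reduced to $S = \Spec(B)$ with $G$ coordinatized, I would check the three properties of the morphism $\coord_G \to S$, i.e.\ of the ring map $B \to B[a_0^{\pm 1}, a_1, a_2, \ldots]$ (equivalently $B \to B \otimes_L W$). Flatness is immediate: $B[a_0^{\pm 1}, a_1, \ldots]$ is a localization of a polynomial ring over $B$, hence free, hence flat over $B$. Quasi-compactness is automatic since this is a morphism of affine schemes. Surjectivity requires producing, for any field $\FF$ and any point $\Spec(\FF) \to S$, a lift to $\coord_G$; but over a field the formal group $\FF \otimes_B G$ certainly has a coordinate (choose any generator of the rank-one free module $\omega_e$, as in Example \ref{fg-local}), which gives the desired $\FF$-point of $\coord_G$ — indeed surjectivity is equivalent to saying every formal group over a field arises from a formal group law, which is clear. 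This establishes the claim for coordinatizable $G$, and then descent along the cover $T \to S$ of Lemma \ref{coords-after-ff} finishes the general quasi-compact quasi-separated case.

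\textbf{Finally}, to conclude that $G_F:\fgl \to \cM_\fg$ itself is surjective, flat, and quasi-compact as a representable morphism of stacks, I invoke Definition \ref{stacks-over-flat}(2): I must check that $U \times_{\cM_\fg} \fgl \to U$ has these properties for \emph{every} $U \to \cM_\fg$ with affine source, and that is exactly what the previous paragraph delivered (an affine $U$ is in particular quasi-compact and quasi-separated). Representability is already covered by Corollary \ref{pull-backs-affine-scheme}. \textbf{The main obstacle} I anticipate is bookkeeping rather than mathematics: one must be careful that the descent argument for surjectivity and flatness is legitimate — these are precisely the properties satisfying the hypothesis of Lemma \ref{check-once} (both are fpqc-local on the target for morphisms of schemes), so the reduction via Lemma \ref{coords-after-ff} is valid — and one must match the two descriptions of the pull-back ($\coord_G$ versus $S \times_{\cM_\fg}\cM_\fgl$ versus $S \times_{\cM_\fg}\fgl$) carefully, but all of this is assembled from Lemma \ref{coord-pull-fgl}, Theorem \ref{coord-affine}, and Corollary \ref{pull-backs-affine-scheme}, which are already in hand.
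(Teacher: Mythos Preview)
Your proposal is correct and follows essentially the same approach as the paper: reduce via faithfully flat base change (Lemma \ref{coords-after-ff}) to the coordinatized case, identify the pull-back as $\Spec(B \otimes_L W) \cong \Spec(B[a_0^{\pm 1},a_1,\ldots])$, and read off flatness, quasi-compactness (affine), and surjectivity (every formal group over a field has a coordinate). The paper's proof is slightly more terse but organized identically.
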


\begin{proof} The morphism $G_F$ is surjective because every
formal group over a field can be given a coordinate, and hence
arises from a formal group law. To check that it is quasi-compact
and flat, we need to check that for all morphisms
$$
G: \Spec(A) \longr \cM_\fg
$$
with affine, source, the resulting map
$$
\Spec(A) \times_{\cM_\fg} \fgl \to \Spec(A)
$$
is quasi-compact and flat. It is quasi-compact because it is
affine (by Proposition \ref{pull-backs-affine-scheme}). To check
that is flat, we choose an faithfully flat extension $A \to B$
and check that
$$
\Spec(B) \times_{\cM_\fg} \fgl \cong
\Spec(B) \times_{\Spec(A)} \Spec(A) \times_{\cM_\fg} \fgl
\to \Spec(B)
$$
is flat. Put another way, we may assume $G$ has a coordinate and
arises from a formal group law. Then, by Lemma \ref{pull-back-for-coord-1}
$$
\Spec(A) \times_{\cM_\fg} \fgl \cong \Spec(A \otimes_L W)
$$
and $A \to A \otimes_L W \cong A[a_0^{\pm 1},a_1,\ldots]$ is
certainly faithfully flat.
\end{proof}

\begin{thm}\label{coord-stack-is-fg}The $1$-morphism of prestacks
$$
\cM_\crd \longr \cM_\fg
$$
identifies $\cM_\fg$ as the stack associated to the prestack
$\cM_\crd$.
\end{thm}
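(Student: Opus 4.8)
The plan is to apply the standard recognition criterion for stackifications. Recall --- this is elementary, and is in any case in \cite{Laumon} --- that for a prestack $\cP$ the associated stack $\cP^a$ is characterised, up to equivalence, by the existence of a $1$-morphism $\cP \to \cP^a$ to a stack which is \emph{fully faithful on fibres}, meaning $\Hom_{\cP(U)}(a,b) \to \Hom_{\cP^a(U)}(a,b)$ is bijective for every scheme $U$ and all objects $a,b$ of $\cP(U)$, and \emph{locally essentially surjective for the $fpqc$-topology}, meaning every object of $\cP^a(U)$ becomes isomorphic to one coming from $\cP$ after pullback along some $fpqc$-cover of $U$. So it suffices to check these two properties for the forgetful $1$-morphism $F\colon\cM_\crd\to\cM_\fg$: the universal property of the stackification then yields a $1$-morphism $\cM_\crd^a\to\cM_\fg$, and a morphism of stacks that is fully faithful on fibres and locally essentially surjective is automatically an equivalence.

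I would verify fiberwise full faithfulness first. By Definition \ref{prestack-of-coords}, a morphism of $\cM_\crd$ lying over $\mathrm{id}_U$ from $(G,x)$ to $(G',x')$ is just an isomorphism $\phi\colon G\to G'$ of the underlying formal groups --- the coordinates impose no condition on morphisms --- so the functor $F\colon\cM_\crd(U)\to\cM_\fg(U)$ is the forgetful functor $(G,x)\mapsto G$, which is a bijection on $\Hom$-sets; compare the identity $\Iso_S((G,x),(H,y))=\Iso_S(G,H)$ used in the proof of Corollary \ref{coord-prestack}. Local essential surjectivity is then exactly Lemma \ref{coords-after-ff}: if $G$ is a formal group over an affine (hence quasi-compact and quasi-separated) scheme $U$, there is a faithfully flat quasi-compact morphism $f\colon T\to U$, with $T$ a finite disjoint union of affines, such that $f^\ast G$ admits a coordinate $x$; then $(f^\ast G,x)\in\cM_\crd(T)$ maps to $f^\ast G$, so every object of $\cM_\fg(U)$ is locally in the image of $F$. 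Granting the criterion, the theorem follows.

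The main obstacle is thus the criterion itself, i.e.\ the descent bookkeeping behind it, should one prove rather than cite it. Given a stack $\cX$ and a $1$-morphism $\cM_\crd\to\cX$, one builds the required factorisation through $\cM_\fg$ by sending $\xi\in\cM_\fg(U)$ to the object obtained as follows: choose an $fpqc$-cover $\{U_i\to U\}$ together with lifts $\eta_i\in\cM_\crd(U_i)$ and isomorphisms $F(\eta_i)\cong\xi|_{U_i}$; transport the resulting identifications over $U_i\times_U U_j$, using the full faithfulness of $F$ over $U_i\times_U U_j$, to canonical gluing isomorphisms $\psi_{ij}$ in $\cM_\crd$; observe that faithfulness forces the cocycle identity on the $\psi_{ij}$; and glue the resulting descent datum in the stack $\cX$. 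One then checks independence of all choices up to canonical isomorphism, $2$-functoriality, and uniqueness of the factorisation --- the last because the image of $F$ generates $\cM_\fg$ locally and $\cX$ satisfies descent. None of this is deep, but it is where the content lies; everything specific to formal groups has already been done in Definition \ref{prestack-of-coords} and Lemma \ref{coords-after-ff}. As an alternative more in the spirit of this section, one can instead use Proposition \ref{coord-equiv-fgl} to identify $\cM_\crd$, over affine schemes, with the prestack quotient of the groupoid scheme $(\fgl,\isofgl)=(\Spec L,\Spec W)$, note that $\fgl\to\cM_\fg$ is a presentation by Theorem \ref{fg-algebraic} and that $\fgl\times_{\cM_\fg}\fgl\cong\Spec W=\isofgl$ by Lemma \ref{pull-back-for-coord-1}, and then invoke the general fact that an algebraic stack admitting a presentation $X\to\cN$ is the stack associated to the prestack quotient $X\times_\cN X\rightrightarrows X$.
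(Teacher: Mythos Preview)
Your proof is correct and uses the same essential inputs as the paper --- that morphisms in $\cM_\crd$ are by definition just isomorphisms of the underlying formal groups (full faithfulness), and Lemma~\ref{coords-after-ff} (local essential surjectivity) --- but the packaging differs. You invoke the abstract recognition criterion for stackification directly, whereas the paper proceeds more concretely: it builds an explicit intermediate category $\tilde\cM_\crd$ whose objects are pairs $(G,[x])$ with $[x]$ an \emph{equivalence class} of coordinates (two coordinates on $fpqc$ covers being equivalent if they agree on a common refinement), then checks by hand that $\cM_\crd \to \tilde\cM_\crd$ has the universal property of stackification, and finally shows $\tilde\cM_\crd \to \cM_\fg$ is an equivalence by exhibiting it as a fibration of groupoids with fibers that are contractible (being colimits of the groupoids $\coord(G/T)$, which Theorem~\ref{coord-affine}.2 shows are contractible once nonempty). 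Your route is cleaner and more portable; the paper's route has the minor advantage of making the stackification tangible as a category one can name, and of avoiding any appeal to a black-box criterion. Your alternative via the groupoid presentation $(\Spec L,\Spec W)$ is also valid and is closer in spirit to the homotopy-orbit description of $\cM_\fg$ given later in the paper.
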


\def\tM{{{\tilde{\cM}_\crd}}}

\begin{proof} We begin by giving a formal description of the
stack $\tilde{\cM}_\crd$ associated to $\cM_{\crd}$. Then we prove
that there is an appropriate equivalence $\tilde{\cM}_\crd \to \cM_\fg$.

First, we define an equivalence class of coordinates for a formal
group $G$ over $S$ as follows. A representative of this equivalence
class will be a coordinate $x$ for $f^\ast G = T \times_S G \to T$ where 
$f:T \to S$ is a faithfully flat and quasi-compact morphism. 
If $x_1$ and $x_2$ are coordinates for $T_1 \times_S G$ and
$T_2 \times_S G$ respectively, then we say they are equivalent
if $p_1^\ast x_1 = p_2^\ast x_2$ as coordinates for
$(T_1 \times_S T_2) \times_S G$. That this is an equivalence
relation follows from the fact that $\coord_G$ is a sheaf
in the $fpqc$ topology.

Now define the category $\tM$ fibered in groupoids over 
$\Aff_\ZZ$ as follows. 
The objects are pairs $(G \to S,[x])$ where $G$ is a formal
group over $S$ and $[x]$ is an equivalence class of coordinates,
as in the previous paragraph. A morphism $f:(G,[x]) \to (H,[y])$
is given by a morphism $f_0:G \to H$ in $\cM_\fg$. That
$\tM$ is a stack is proved exactly as in Lemma \ref{prestack}.
The projection map $\cM_\coord\to \cM_\fg$ has
an evident factorization
$$
\cM_\crd \longr \tM \longr \cM_\fg.
$$
We will prove that the first map has the universal property
necessary for the associated stack, and we will show the
second map is an equivalence of stacks.

First, we must show that any factorization problem
$$
\xymatrix{
\cM_\crd\dto \rto^\lambda & \cN\\
\tM \ar@{-->}[ur]
}
$$
has a solution $\bar{\lambda}:\tM \to \cN$ so that the triangle $2$-commutes.
To do this,
let $(G\to S,[x])$ be an object in $\tM$ and choose an
$fpqc$ cover $d:T \to S$ so that $d^\ast G$ has a coordinate $x$
representing $[x]$. If we apply $\lambda$ to the effective descent data
$$
\phi:(d_1^\ast d^\ast G,d_1^\ast x) \to (d_0^\ast d^\ast G,d_0^\ast x)
$$
we obtain a object $w \in \cN(S)$ and a unique isomorphism
\begin{equation}\label{unique-iso}
d^\ast w \cong \lambda(d^\ast G,x).
\end{equation}
Set $\bar{\lambda}(G,[x]) = w$. In like manner, $\bar{\lambda}$ can
be defined on morphisms. The unique isomorphisms of Equation
\ref{unique-iso} shows that the resulting diagram 
$2$-commutes.

Second, to show that $\tM \to \cM_\fg$ is an equivalence,
note that for all schemes $S$, the morphism of groupoids
$$
\tM(S) \longr \cM_\fg(S)
$$
is a fibration.That is, given any object $(H,[y])$ in $\tM(S)$
and any morphims $\phi: G \to H$ in $\cM_\fg(S)$, there
is a morphism $\phi:(G:[x]) \to (H,[y])$ in $\tM(S)$ whose underlying
morphism is $\psi$. This follows from Remark \ref{fgls-rems}.5.
If $G \in \cM_\fg(S)$ is a fixed formal group, then the
fiber at $G$ is 
$$
\mathop{\colim}_{T \to S}\coord(G/T)
$$
where $T$ runs over all $fpqc$ covers of $S$. Combining
Theorem \ref{coord-affine}.2 and Lemma \ref{coords-after-ff}
we see that this fiber is contractible.
\end{proof}

The following is an immediate consequence of the previous result
and Proposition \ref{coord-equiv-fgl}.

\begin{thm}\label{fgl-stack-is-fg}The $1$-morphism of prestacks
$$
\cM_\fgl \longr \cM_\fg
$$
identifies $\cM_\fg$ as the stack associated to the prestack
$\cM_\fgl$.
\end{thm}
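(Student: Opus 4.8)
The plan is to deduce this formally from Theorem \ref{coord-stack-is-fg} together with Proposition \ref{coord-equiv-fgl}, using that the passage from a prestack to its associated stack is a $2$-functor which carries equivalences of prestacks to equivalences of stacks. First I would recall the factorization recorded just before Proposition \ref{coord-equiv-fgl}: the projection $\cM_\crd \to \cM_\fg$ is the composite
$$
\cM_\crd \xrightarrow{\,i\,} \cM_\fgl \longr \cM_\fg,
$$
so the morphism in the statement sits in a $2$-commuting triangle with the morphism $\cM_\crd \to \cM_\fg$ treated in Theorem \ref{coord-stack-is-fg}. By Proposition \ref{coord-equiv-fgl}, $i$ is an equivalence of categories fibered in groupoids over $\Aff_\ZZ$; since all the prestacks here are categories fibered in groupoids over $\Aff_\ZZ$ and stackification is computed there, $i$ induces an equivalence $\tilde{i}:\tilde{\cM}_\crd \to \tilde{\cM}_\fgl$ of associated stacks, compatibly with the canonical maps from $\cM_\crd$ and $\cM_\fgl$ (by the uniqueness part of the universal property of stackification).

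Next I would invoke Theorem \ref{coord-stack-is-fg}, which says exactly that the canonical map $\cM_\crd \to \cM_\fg$ exhibits $\cM_\fg$ as $\tilde{\cM}_\crd$. Composing with $\tilde{i}^{-1}$ then exhibits $\cM_\fg$ as $\tilde{\cM}_\fgl$, and unwinding the triangle above shows that the resulting equivalence $\tilde{\cM}_\fgl \simeq \cM_\fg$ is induced by the map $\cM_\fgl \to \cM_\fg$ in the statement. Equivalently, and perhaps more transparently: for any stack $\cN$, precomposition with $i$ gives a bijection between $1$-morphisms $\cM_\fgl \to \cN$ and $1$-morphisms $\cM_\crd \to \cN$ (and similarly on $2$-morphisms), and under this bijection factorizations through $\cM_\fg$ correspond to factorizations through $\cM_\fg$; hence the universal property established for $\cM_\crd \to \cM_\fg$ transports verbatim to $\cM_\fgl \to \cM_\fg$. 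Since $\cM_\fg$ is already known to be a stack (Proposition \ref{prestack}), this is all that is required.

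The only real obstacle is $2$-categorical bookkeeping: one must check that the equivalence $i$, the two structure morphisms to $\cM_\fg$, and the stackification $2$-functor assemble into a genuinely $2$-commuting diagram, so that the universal property really does transport. This rests on the standard fact that an equivalence of prestacks induces an equivalence of associated stacks — a fact already used implicitly in the construction of $\tilde{\cM}_\crd$ in the proof of Theorem \ref{coord-stack-is-fg} — and requires no new geometric input. This is why the result can fairly be called an immediate consequence of what precedes it.
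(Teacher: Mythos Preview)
Your proof is correct and takes essentially the same approach as the paper: the result is stated there as ``an immediate consequence of the previous result [Theorem \ref{coord-stack-is-fg}] and Proposition \ref{coord-equiv-fgl}.'' You have simply spelled out the $2$-categorical bookkeeping that the paper leaves implicit in the word ``immediate.''
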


\subsection{Quasi-coherent sheaves}

Here we define the notion of a quasi-coherent sheaf on
an $fpqc$-algebraic stack and give some preliminary examples
for the moduli stack of formal groups. We then recall
the connection between  quasi-coherent sheaves and comodules over 
a Hopf algebroid and relate the cohomology
of a quasi-coherent sheaf to $\Ext$ in the category of
comodules.

In \ref{mod-sheaves} we noted that if $X$ is scheme, then the category of
quasi-coherent sheaves over $X$ is equivalent to the category
of cartesian $\cO_X$-module sheaves in the $fpqc$-topology.
We will take the latter notion as the {\it definition} of a quasi-coherent
sheaf on an $fpqc$-stack.

The $fpqc$-topology and $fpqc$-site were defined in
Definition \ref{fpqc} and Remark \ref{sheaves} respectively.

\begin{defn}\label{stack-site-fpqc}Let $\cM$ be an $fpqc$-algebraic
stack. We define the $fpqc$ {\bf site} on $\cM$ to have
\index{site, $fpqc$ of a stack}
\begin{enumerate}

\item an underlying category with objects all schemes $U \to \cM$ over
$\cM$ and, as morphisms, all 2-commuting diagram over $\cM$; and

\item for all morphisms $U \to \cM$ in this category we assign
the the $fpqc$-topology on $U$.
\end{enumerate} 
\end{defn}

We often specify sheaves only on affine morphisms $\Spec(A) \to \cM$,
extending as necessary to other morphisms by the sheaf condition.

The structure sheaf on $\cO = \cO_\cM$ is defined by
$$
\cO(\Spec(A) \to \cM) = B.
$$
This is a sheaf of rings and has a corresponding category of 
module sheaves, which we will write as $\Mod_\cM$ or, perhaps,
$\Mod_\fg$ is we have some stack such as the moduli stack of
formal groups.

The notion of a cartesian sheaf can be found 
in Definition \ref{super-cartesian}. 

\begin{defn}\label{qc-mod-on-stack} Let $\cM$ be an $fpqc$-algebraic
stack. A {\bf quasi-coherent} sheaf $\cF$ on $\cM$ is a cartesian
$\cO_\cM$-module sheaf for the category of affines over $\cM$.  In detail we have\index{sheaf, quasi-coherent, on a stack}
\begin{enumerate}

\item for each morphism $u:\Spec(A) \to \cM$ an $A$-module 
$\cF(u)$; 

\item for each $2$-commuting diagram
$$
\xymatrix@R=10pt{
\Spec(B) \ar[dr]^v \ar[dd]\\
&\cM\\
\Spec(A) \ar[ur]_u
}
$$
a morphism of $A$-modules $\cF(u) \to \cF(v)$
so that the induced map
$$
B \otimes_A \cF(u) \to \cF(v)
$$
is an isomorphism.
\end{enumerate}
\end{defn}

\begin{exam}\label{naive-inv-differentials}Here I give an ad hoc construction
of the sheaf of invariant differentials on $\cM_\fg$. A more intrinsic definition
will be given later. See Section 4.2. 

Because of faithfully flat descent, we can define an $\cO_\fg$-module
sheaf $\cF$ on $\cM_\fg$ be specifying 
$$
\cF(G) = \cF(G:\Spec(A) \to \cM_\fg)
$$
for those formal groups for which we can choose a coordinate. Given
such a coordinate $x$ for $G$, we define the invariant differentials
$$
f(x)dx \in A[[x]]dx
$$
to be those continuous differentials which satisfy the identity
$$
f(x+_Fy)d(x+_Fy) = f(x)dx + f(y)dy
$$
where $x+_Fy$ is the formal group law of $G$ with coordinate $x$. The
$A$-module $\omega_G$ of invariant differentials is free of rank 
1 and independent of the choice of coordinate. See Example \ref{dlog}.
Given a $2$-commuting diagram
$$
\xymatrix@R=10pt{
\Spec(B) \ar[dr]^H \ar[dd]_f\\
&\cM_\fg\\
\Spec(A) \ar[ur]_G
}
$$
with isomorphism $\phi:H \to f^\ast G$. Then we have an induced isomorphism
$$
d\phi:f^\ast \omega_G=B \otimes_A \omega_G \to \omega_H.
$$
See Example \ref{calculate-lie} for an explicit formula. Thus we have 
a quasi-coherent sheaf $\omega$ on $\cM_\fg$. This sheaf is locally
free of rank $1$ and we have also have all its tensor powers
$$
\omega^{\otimes n} \defeq \omega^n,\qquad n \in \ZZ.
$$
\end{exam}

\begin{rem}[{\bf Quasi-coherent sheaves and comodules}]\label{comod-qc-sheaves}
Suppose\index{comodules}
$\cM$ is an $fpqc$-algebraic stack with an affine presentation
$\Spec(A) \to \cM$ with the property that
$$
\Spec(A) \times_\cM \Spec(A) \cong \Spec(\Ga)
$$
is also affine. Then we get induced isomorphisms
$$
\Spec(A) \times_\cM \cdots \times_\cM \Spec(A) 
\cong \Spec(\Ga \otimes_A \cdots \otimes_A \Ga)
$$
where the product has $n\geq 2$ factors and the tensor product has
$(n-1)$-factors. The \v Cech nerve of the cover $\Spec(A) \to \cM$
then becomes\index{cobar complex} the diagram of affine
schemes associated to the cobar complex
\begin{equation}\label{cobar2}
\xymatrix{
\cdots\ \Spec(\Ga \otimes_A \Ga) \ar@<1.5ex>[r] \ar[r] \ar@<-1.5ex>[r]& 
\ar@<.75ex>[l] \ar@<-.75ex>[l] \Spec(\Ga) \ar@<.75ex>[r] \ar@<-.75ex>[r] & \lto \Spec(A) \rto & \cM.
}
\end{equation}
The pair $(A,\Ga)$, with all these induced arrows, becomes a {\it Hopf
 algebroid.}\index{Hopf algebroid}
If we set $M = \cF(\Spec(A) \to \cM)$, then one of the arrows
$\Spec(\Ga) \to \Spec(A)$ defines an isomorphism
$$
\Ga \otimes_A M \cong \cF(\Spec(\Ga) \to \cM)
$$
and the other defines a morphism $M \to \Ga\otimes_A M$ which gives the
module $M$ the structure of an $(A,\Ga)$-comodule. This defines a functor
from quasi-coherent sheaves on $\cM$ to $(A,\Ga)$-comodules. This
is an equivalence of categories. See \cite{HovAmerJ} and
\cite{Laumon} Proposition 12.8.2. The proof in the latter case is
carried out in a different topology, but goes through unchanged for the
$fpqc$ topology, as it
is an application of faithfully flat descent. 

A pair $f:\Spec(A) \to \cM$ where $\cM$ is an $fpqc$-stack and $f$ is a presentation so that
$\Spec(A) \times_\cM \Spec(A)$ is itself affine is called a {\it rigidified} stack.
\index{stack, rigidified} Such a
choice leads to the equivalence of categories in the previous paragraph, but any stack
$\cM$ may have many (or no) rigidifications and the Hopf algebroid $(A,\La)$ may
not be in any sense canonical. An example is the moduli stack $\cU(n)$ of formal
groups of height at most $n$. Rigidified stacks are discussed in \cite{pribble} and
\cite{hollander3}.

For the moduli stack $\cM_\fg$, the universal formal group law gives a cover
$\Spec(L) \to \cM_fg$ and we conclude that the category of quasi-coherent sheaves
on $\cM_\fg$ is equivalent to the category of $(L,W)$ comodules, where 
$$
W = L[a_0^{\pm 1},a_1,a_2,\ldots]
$$
as in Remark \ref{sym-2-cocyle}. The structure sheaf $\cO_\fg$ corresponds to the
$L$ with its standard comodule structure given the by the right unit $\eta_R:L \to W$;
the powers of the sheaf of invariant differentials $\omega^n$ correspond to the comodule
$L[n]$ where $\psi:L[n] \to W \otimes_L L[n]$ is given by
$$
\psi(x) = a_0^n\eta_R(x).
$$
\end{rem}

\begin{rem}[{\bf Cohomology}]\label{coh22}
If $\cM$ is an $fpqc$-algebraic stack and $\cF$ is a quasi-coherent sheaf, then the cohomology
$H^\ast (\cM,\cF)$ is obtained by taking derived functors of global sections. If 
$\Spec(A) \to \cM$ is a rigidified stack with corresponding Hopf
algebroid $(A,\Ga)$, then the equivalence
of categories between quasi-coherent sheaves and comodules yields an isomorphism\index{cohomology and comodule Ext}
\begin{equation}\label{coh-is-ext}
H^s(\cM,\cF) \cong \Ext^s_\La(A,M)
\end{equation}
where $M = \cF(\Spec(A) \to \cM)$ is the comodule corresponding to $\cF$. The 
\v Cech nerve of Equation \ref{cobar2} yields the usual cobar complex for computing
Hopf algebroid $\Ext$.
\end{rem}

\def\fglp{{{\mathbf{pfgl}}}}
\def\coordp{{{\mathbf{pcoord}}}}

\subsection{At a prime: $p$-typical coordinates}

When making calculations, especially with the Adams-Novikov spectral
sequence, it is often very convenient to use $p$-typical formal group
laws instead of arbitrary formal group laws. We delve a little into that
theory here. A point to be made is that it is not a formal group which
is $p$-typical, but a formal group law or, equivalently, a coordinate for a
formal group.

If $F$ is a formal group over a ring $A$ in which an integer
$n$ is invertible, the power series
$$
[n](x) \defeq x +_F \cdots +_F x
$$
with the sum taken $n$-times has a unit as its leading coefficient;
hence, it has composition inverse $[1/n](x)$.

Let $A$ be a commutative ring over $\ZZ_{(p)}$ and let
$F$ be a formal group law over $A$. Then, given any integer $n$
prime to $p$ and a primitive $n$th root of unity $\zeta$, we can
form the power series
\begin{equation}\label{p-typ-testers}
f_n(x) = [\frac{1}{n}]_F(x +_F \zeta x +_F \cdots +_F \zeta^{n-1} x).
\end{equation}
Note that this is a power series over $A$. 

More generally, if $S$ is a scheme over $\ZZ_{(p)}$,  $G$ a formal group
over $S$, and  $x$  a coordinate for $G$, then we have a formal
group law $x_1 +_{(F,x)} x_2$ over $H^0(S,\cO_S)$ and we can form 
the power series $f_n(x)$ over $H^0(S,\cO_S)$.

\begin{defn}\label{p-typical}1.) A $p$-{\bf typical formal group law}
\index{coordinate, $p$-typical} $F$
over a commutative ring $A$ is a formal group law $F$ over $A$ so that
$$
f_\ell(x) = 0
$$
for all primes $\ell \ne p$. A homomorphism of $p$-typical formal
group laws is simply a homomorphism of formal groups.

2.) Let $G$ be a formal group over a scheme $S$ over $\ZZ_{(p)}$.
Then a coordinate $x$ for $G$ is $p$-{\bf typical} if 
the associated formal group law over $H^0(S,\cO_S)$ is
$p$-typical.
A morphism $\phi:(G,x) \to (H,y)$
of formal groups with $p$-typical coordinates is simply a homomorphism
of the underlying formal groups.
\end{defn}

The symmetry condition $f_\ell(x)=0$ arises naturally when considering the
theory of Dieudonn\'e modules associated to formal groups. See \cite{Cart}.

\begin{rem}[{\bf Properties of $p$-typical formal group laws}]\label{p-typ-prop}Let 
us record some of the standard properties of $p$-typical coordinates.
A reference, with references to references, can be found in
\cite{Rav}, Appendix 2.
\begin{enumerate}

\item Let $G$ be a formal group over a $\ZZ_{(p)}$-algebra $A$
with a $p$-typical coordinate $x$. Then there are elements
$u_i \in A$ so that
$$
[p]_G(x) = px +_G u_1x^p +_G u_2x^{p^2} +_G \cdots.
$$
Furthermore, the elements $u_i$ determine the $p$-typical
formal group law. However,
the elements $u_i$ depend on the pair $(G,x)$, hence are not
invariant under changes of coordinate. Nonetheless, if $f:A \to B$
is a homomorphism of $\ZZ_{(p)}$-algebras, then 
$$
[p]_{f^\ast G}(x) = px +_G f(u_1)x^p +_G f(u_2)x^{p^2} +_G \cdots.
$$
Thus, this presentation of $[p]_G(x)$ extends to schemes: given
a $p$-typical formal group law $(G,x)$ over a $\ZZ_{(p)}$ scheme,
there are elements $u_i \in H^0(S,\cO_S)$ so that $[p]_G(x)$
can be written as above.

\item Let us write $\fglp$ for the functor which assigns to each
$\ZZ_{(p)}$-algebra $A$ the set of $p$-typical formal group laws
over $A$. Then $\fglp$ is an affine scheme. Indeed, if we write
$V = \ZZ_{(p)}[u_1,u_2,\ldots]$ there is a $p$-typical formal
group law $F$ over $V$  so that
$$
[p]_{F}(y) = py +_{F} u_1y^p +_{F} u_2y^{p^2} +_{F} \cdots.
$$
The evident morphism of schemes $\Spec(V) \to \fglp$ is
an isomorphism.

\item Let $\phi:(G,x) \to (H,y)$ be an isomorphism of formal
groups with $p$-typical coordinates and let $f(x) \in 
R[[x]]$ be the power series determined by $\phi$. Then
there are elements $t_i \in R$ so that
$$
f^{-1}(x) = t_0x +_G t_1x^{p} +_G t_2^{p^2} +_G \cdots.
$$
More is true. If $x$ is a $p$-typical coordinate, then $y$ is
$p$-typical if and only if $f^{-1}(x)$ has this form.
\end{enumerate}
\end{rem}

As in Definitions \ref{prestack-of-fgls} and \ref{prestack-of-coords} and
Lemmas \ref{mfgl-prestack} and \ref{coord-prestack}, we have
prestacks $\cM_\fglp$ of $p$-typical formal group laws
\index{prestack, $p$-typical formal group laws}
and $\cM_\coordp$ of formal groups with $p$-typical coordinates.
We also have the analog of Proposition \ref{coord-equiv-fgl}:
\index{prestack, $p$-typical coordinate}

\begin{prop}\label{coordp-equiv-fglp}The canonical morphism
of prestacks
$$
\cM_\coordp \longr \cM_\fglp
$$
is an equivalence.
\end{prop}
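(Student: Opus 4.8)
The plan is to deduce this from the non-$p$-typical statement, Proposition \ref{coord-equiv-fgl}, by observing that the morphism $\cM_\coordp \to \cM_\fglp$ is simply the restriction of $\cM_\crd \to \cM_\fgl$ to full subcategories fibered in groupoids. Indeed, for a commutative $\ZZ_{(p)}$-algebra $A$, a morphism in $\cM_\crd(A)$ (resp. $\cM_\fgl(A)$) is an isomorphism of the underlying formal groups (resp. an isomorphism of formal group laws, which by Remark \ref{fgls-rems}.3 is the same datum once one uses the preferred coordinates), so $\cM_\coordp(A) \hookrightarrow \cM_\crd(A)$ and $\cM_\fglp(A) \hookrightarrow \cM_\fgl(A)$ are inclusions of full subcategories, and $\cM_\coordp \to \cM_\fglp$ is the corresponding restriction.

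Given this, I would reduce the argument to two matching-up checks. First, by Definition \ref{p-typical} an object $(G,x)$ of $\cM_\crd(A)$ lies in $\cM_\coordp(A)$ if and only if the formal group law it determines (Remark \ref{fgls-rems}.2) is $p$-typical, i.e. if and only if its image in $\cM_\fgl(A)$ lies in $\cM_\fglp(A)$; so the preimage of $\cM_\fglp$ under the equivalence of Proposition \ref{coord-equiv-fgl} is exactly $\cM_\coordp$. Second, a restriction of a fully faithful functor to full subcategories is fully faithful, and an essentially surjective functor restricts to an essentially surjective functor between a full subcategory and its (essential) preimage; hence the restriction $\cM_\coordp(A) \to \cM_\fglp(A)$ is an equivalence of groupoids for every $A$, which is exactly the assertion.

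The one point that genuinely needs care — and the step I would treat as the main obstacle — is the first check above: that $p$-typicality is the same condition on the coordinate side and on the formal group law side, and in particular that it is transported correctly by the constructions passing between them. This is built into Definition \ref{p-typical}, but the underlying fact being used is that the testing power series $f_\ell(x)$ of Equation \ref{p-typ-testers} are assembled from the formal group operations $+_F$, $[1/\ell]_F$, and roots of unity by expressions natural in the ground ring, so that the vanishing conditions $f_\ell(x) = 0$ are stable under pushforward along any homomorphism of $\ZZ_{(p)}$-algebras (compare Remark \ref{p-typ-prop}.1). Once this naturality is pinned down, everything else in the proof is formal and identical to the proof of Proposition \ref{coord-equiv-fgl}.
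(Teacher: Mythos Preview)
Your proposal is correct and matches the paper's intent: the paper gives no explicit proof, presenting the statement as ``the analog of Proposition \ref{coord-equiv-fgl}'', and your argument makes this precise by observing that $\cM_\coordp \subset \cM_\crd$ and $\cM_\fglp \subset \cM_\fgl$ are full subcategories with the former being exactly the preimage of the latter under the equivalence of Proposition \ref{coord-equiv-fgl}. The careful point you isolate---that $p$-typicality of a coordinate is \emph{defined} as $p$-typicality of the associated formal group law---is exactly what makes the restriction argument go through without further work.
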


A much deeper result is the following.  If $X$ is a sheaf over $\Spec(R)$
in the $fpqc$-topology and $R \to S$ is a ring homomorphism, we
will write
$$
X \otimes_R S \defeq X \times_{\Spec(R)} \Spec(S).
$$

\begin{thm}[{\bf Cartier's idempotent}]\label{quillen-idem}The canonical
$1$-morphism of categories fibered in groupoids over $\Aff_{\ZZ_{(p)}}$
$$
\cM_\fglp \longr \cM_\fgl \otimes \ZZ_{(p)}
$$
is an equivalence.\index{Cartier idempotent}
\end{thm}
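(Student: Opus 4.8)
The plan is to show the functor is fully faithful and essentially surjective fiberwise; since both $\cM_\fglp$ and $\cM_\fgl\otimes\ZZ_{(p)}$ are categories fibered in groupoids over $\Aff_{\ZZ_{(p)}}$, the stated $1$-morphism is an equivalence as soon as it induces an equivalence of groupoids over each $\ZZ_{(p)}$-algebra $A$. Full faithfulness is immediate from the definitions: by Definition \ref{p-typical} a morphism of formal groups with $p$-typical coordinates is just an isomorphism of the underlying formal group laws, so for each $A$ the functor $\cM_\fglp(A)\to(\cM_\fgl\otimes\ZZ_{(p)})(A)$ is exactly the inclusion of the full subgroupoid spanned by those formal group laws that happen to be $p$-typical. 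The whole theorem thus reduces to a single statement: over every $\ZZ_{(p)}$-algebra $A$, every formal group law $F$ is isomorphic, by a power series over $A$, to a $p$-typical one.

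For this I would run Cartier's construction through the group of curves. For a formal group law $F$ over a ring $B$, let $\mathcal C(F)$ be the abelian group, under $+_F$, of curves $\gamma(t)\in tB[[t]]$; it carries the homotheties $\langle b\rangle$, the Verschiebung operators $V_n$ with $(V_n\gamma)(t)=\gamma(t^n)$, and the Frobenius operators $\mathfrak f_n$, subject to the usual relations — in particular $\mathfrak f_mV_n=V_n\mathfrak f_m$ for $\gcd(m,n)=1$, together with the identities used below in verifying idempotency. Over $\ZZ_{(p)}$-algebras the operator
$$
\varepsilon_p=\sum_{\gcd(n,p)=1}\frac{\mu(n)}{n}\,V_n\mathfrak f_n
$$
is well defined on $\mathcal C(F)$: in each degree $t^k$ only the finitely many $n\le k$ contribute, and every denominator $n$ is prime to $p$, hence invertible in $\ZZ_{(p)}$. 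The classical computation then gives $\varepsilon_p^2=\varepsilon_p$ and identifies its image with the curves $\gamma$ satisfying $\mathfrak f_\ell\gamma=0$ for all primes $\ell\neq p$.

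Applying $\varepsilon_p$ to the tautological curve $\gamma_0(t)=t$ on $(\Spf B[[x]],F)$, the $n=1$ summand contributes $\gamma_0$ itself while every $n\ge 2$ summand has order $\ge 2$, so $\gamma_p:=\varepsilon_p(\gamma_0)$ is of the form $t+O(t^2)$ and is therefore an invertible reparametrization, i.e. a change of coordinate on the formal group. Transporting $F$ along $\gamma_p$ produces a formal group law $F^{(p)}$ for which $\gamma_p$ itself is an isomorphism $F^{(p)}\to F$ of formal group laws over $B$, and the statement that $\gamma_p$ lies in the image of $\varepsilon_p$ — i.e. is a $p$-typical curve — is precisely the statement that $F^{(p)}$ is $p$-typical in the sense of Definition \ref{p-typical}. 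Since this construction is natural in $(B,F)$, performing it once over the universal formal group law of Theorem \ref{lazard-rep} already yields the required isomorphism for every $F$ over every $A$, completing the proof.

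I expect the real labor to be bookkeeping rather than any single hard estimate, and to sit in two places. First, verifying the operator identities and the idempotency $\varepsilon_p^2=\varepsilon_p$, and checking that the infinite sum genuinely makes sense over an arbitrary $\ZZ_{(p)}$-algebra — this is where one must keep careful track that the denominators are always prime to $p$. Second, matching the two notions of $p$-typicality: ``$\mathfrak f_\ell\gamma=0$ for all $\ell\neq p$'' for curves versus ``$f_\ell(x)=0$ for all $\ell\neq p$'' for formal group laws as in Definition \ref{p-typical}, which requires unwinding the tester power series $f_\ell$ of Equation \ref{p-typ-testers} — built from roots of unity that need not lie in the base ring — in terms of the Frobenius operators $\mathfrak f_\ell$. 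Both facts are entirely classical; the details can be found in \cite{Cart} and, in a form close to the present one, in \cite{Rav}, Appendix 2, so in the actual write-up I would carry one of them out in full and cite the other.
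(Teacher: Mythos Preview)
Your proposal is correct and follows essentially the same route as the paper: both reduce to Cartier's theorem that every formal group law over a $\ZZ_{(p)}$-algebra is isomorphic to a $p$-typical one, which gives a retraction of the inclusion $\cM_\fglp(A)\hookrightarrow\cM_\fgl(A)$ together with a natural isomorphism to the identity. The only difference is one of emphasis: the paper simply cites the result (as in \cite{Rav}, A.2.1.18) and packages it as a retraction plus natural transformation, whereas you sketch the actual construction of the idempotent $\varepsilon_p$ on curves --- which is precisely the content of that citation.
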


\begin{proof} Let $A$ be a commutative $\ZZ_{(p)}$-algebra.
Cartier's theorem (see, for example, \cite{Rav}A.2.1.18) is usually phrased
as follows: Given any formal group law $F$ over $A$ there
is a $p$-typical formal group law $eF$ over $A$ and an isomorphism
$\phi_F:F \to eF$ of formal group laws; furthermore, if
$F$ is $p$-typical, then $eF = F$ and $\phi$ is the identity.
This implies that if $\psi:F \to F'$ is any isomorphism of formal groups laws, then there is a unique isomorphism $e\psi$ so that the following
diagram commutes:
$$
\xymatrix{
F \rto^{\phi_F} \dto_\psi & eF\dto^{e\psi}\\
F' \rto_{\phi_{F'}} & eF'
}
$$
Rephrased, we see that we have a retraction
$e:\cM_\fgl(A) \to \cM_\fglp(A)$ of the inclusion of groupoids
$\iota:\cM_\fglp(A) \to \cM_\fgl(A)$ and a natural transformation
$\phi:1 \to \iota e$.
\end{proof}

The following is now and immediate consequence of Theorem
\ref{fgl-stack-is-fg} and Theorem \ref{quillen-idem}.
\begin{cor}\label{fglps-to-fgs}The canonical $1$-morphism of prestacks
$$
\cM_\coordp \longr \cM_\fg \otimes \ZZ_{(p)}
$$
identifies $\cM_\fg \otimes \ZZ_{(p)}$ as the stack associated
to the prestack $\cM_\coordp$.
\end{cor}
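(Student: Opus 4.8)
The plan is to assemble three equivalences of prestacks over $\Aff_{\ZZ_{(p)}}$ and then invoke the universal property of stackification, which is $2$-functorial and carries equivalences of prestacks to equivalences of stacks while preserving the unit $1$-morphism.

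First I would localize Theorem \ref{fgl-stack-is-fg}. That theorem exhibits $\cM_\fgl \to \cM_\fg$ as the stack associated to the prestack $\cM_\fgl$ over $\Aff_\ZZ$ in the $fpqc$-topology. Restricting all the categories fibered in groupoids in sight to the slice $\Aff_{\ZZ_{(p)}}$ of affine schemes over $\ZZ_{(p)}$ — which, for a category fibered in groupoids, is the same operation as base change along $\Spec(\ZZ_{(p)}) \to \Spec(\ZZ)$ — one checks that the $fpqc$-topology on $\Aff_{\ZZ_{(p)}}$ is the one induced from $\Aff_\ZZ$ and that the associated-stack construction commutes with this restriction. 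Consequently the base-changed morphism $\cM_\fgl \otimes \ZZ_{(p)} \to \cM_\fg \otimes \ZZ_{(p)}$ again exhibits its target as the stack associated to its source; here one also uses that $\cM_\fg \otimes \ZZ_{(p)}$, being the base change of the stack $\cM_\fg$, is itself a stack.

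Next, Proposition \ref{coordp-equiv-fglp} supplies an equivalence $\cM_\coordp \to \cM_\fglp$ of prestacks over $\Aff_{\ZZ_{(p)}}$, and Theorem \ref{quillen-idem} (Cartier's idempotent) supplies an equivalence $\cM_\fglp \to \cM_\fgl \otimes \ZZ_{(p)}$. Composing, and observing that by construction the canonical forgetful morphism $\cM_\coordp \to \cM_\fg \otimes \ZZ_{(p)}$ is exactly the composite
$$
\cM_\coordp \longr \cM_\fglp \longr \cM_\fgl \otimes \ZZ_{(p)} \longr \cM_\fg \otimes \ZZ_{(p)},
$$
we obtain a $2$-commuting triangle whose right leg is a stackification and whose hypotenuse $\cM_\coordp \to \cM_\fgl \otimes \ZZ_{(p)}$ is an equivalence of prestacks. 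Since an equivalence of prestacks induces an equivalence of associated stacks and is compatible with the universal $1$-morphisms, the diagonal $\cM_\coordp \to \cM_\fg \otimes \ZZ_{(p)}$ is also a stackification. This is the assertion of the corollary.

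The only step that requires genuine care — everything else being a formal $2$-categorical chase — is the localization claim of the second paragraph, namely that stackification over $\Aff_\ZZ$ restricts compatibly to stackification over $\Aff_{\ZZ_{(p)}}$. This reduces to the observation that the inclusion of sites $\Aff_{\ZZ_{(p)}} \hookrightarrow \Aff_\ZZ$ is cocontinuous: every $fpqc$-cover of a $\ZZ_{(p)}$-scheme already lies in $\Aff_{\ZZ_{(p)}}$, so covers need not be refined, and hence the restriction functor on fibered categories commutes both with the associated-stack functor and with the formation of its unit. Once this is recorded, the corollary follows immediately from Theorems \ref{fgl-stack-is-fg} and \ref{quillen-idem} together with Proposition \ref{coordp-equiv-fglp}.
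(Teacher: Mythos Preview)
Your proposal is correct and follows essentially the same route as the paper: the corollary is stated as an immediate consequence of Theorem \ref{fgl-stack-is-fg} and Theorem \ref{quillen-idem}, and your argument just unwinds this, together with Proposition \ref{coordp-equiv-fglp} (which the paper's one-line justification omits but which is clearly needed to pass from $\cM_\fglp$ to $\cM_\coordp$). Your extra care about the compatibility of stackification with base change to $\ZZ_{(p)}$ is more than the paper spells out, but it is the right point to check and your justification via cocontinuity of the site inclusion is sound.
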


Similarly $\cM_\coordp \to \cM_\fg \otimes \ZZ_{(p)}$
identifies the target as the stack associated to the prestack
source. Compare Theorem \ref{coord-stack-is-fg}.

The following now follows from Corollary \ref{fglps-to-fgs} and Remark
\ref{p-typ-prop}, parts 2 and 3.

\begin{cor}\label{ptyp-cover}Let  $V = \ZZ_{(p)}[u_1,u_2,\cdots]$
and let $G_F \to \Spec(V)$ be the formal group formed from
the universal $p$-typical formal group law $F$. Then the map
\index{$V$ (ungraded $BP_\ast$)}
$$
\Spec(V) \longr \cM_\fg \otimes \ZZ_{(p)}
$$
classifying $G$ is an $fpqc$-presentation of
$\cM_\fg \otimes \ZZ_{(p)}$. There is an isomorphism of
affine schemes
$$
\Spec(V) \times_{\cM_\fg} \Spec(V) \cong
\Spec(V[t_{0}^{\pm 1},t_1,t_2,\cdots]).
$$
\end{cor}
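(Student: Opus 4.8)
The plan is to deduce everything from Corollary \ref{fglps-to-fgs} — which exhibits $\cM_\fg\otimes\ZZ_{(p)}$ as the stack associated to the prestack $\cM_\coordp$ — together with the explicit representability statements of Remark \ref{p-typ-prop}, parts 2 and 3. I would organize the proof into two steps: first that $\Spec(V)\to\cM_\fg\otimes\ZZ_{(p)}$ is a presentation in the sense of Definition \ref{alg-stack}, and then the identification of the two-fold fibre product. For the first step, the diagonal of $\cM_\fg\otimes\ZZ_{(p)}$ is representable, quasi-compact, and separated by Proposition \ref{diagonal-for-mfg} (equivalently Proposition \ref{diagonal}) base-changed along $\Spec\ZZ_{(p)}\to\Spec\ZZ$, so it remains to see that $\Spec(V)\to\cM_\fg\otimes\ZZ_{(p)}$ is surjective, flat, and quasi-compact. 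Surjectivity is immediate: a formal group over a field $\FF$ (necessarily a $\ZZ_{(p)}$-algebra for the mapping groupoid to be nonempty) carries a coordinate, hence arises from a formal group law, hence — by Cartier's theorem \ref{quillen-idem} — from a $p$-typical formal group law, so it lies in the image of $\Spec(V)\cong\fglp$.

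For flatness and quasi-compactness I would imitate the proof of Proposition \ref{lazard-covers}: given $G:\Spec(A)\to\cM_\fg\otimes\ZZ_{(p)}$, the pullback $\Spec(A)\times_{\cM_\fg}\Spec(V)$ is affine over $\Spec(A)$ by Proposition \ref{pull-backs-affine-scheme}, hence quasi-compact; and for flatness I would pass to a faithfully flat quasi-compact extension $A\to B$ over which $G$ acquires a coordinate (Lemma \ref{coords-after-ff}) and then, applying Cartier over $B$, a $p$-typical one, so that $\Spec(B)\to\cM_\fg$ factors through $\Spec(V)$ and $\Spec(B)\times_{\cM_\fg}\Spec(V)\cong\Spec(B)\times_{\Spec V}\bigl(\Spec(V)\times_{\cM_\fg}\Spec(V)\bigr)$; by the second step this is $\Spec(B[t_0^{\pm1},t_1,\dots])$, visibly faithfully flat over $\Spec(B)$, and flatness descends along $A\to B$.

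For the second step, the discussion preceding Corollary \ref{pull-backs-affine-scheme} shows $\Spec(V)\times_{\cM_\fg}\Spec(V)$ co-represents the functor sending a ring $R$ to the set of triples $(f_1,f_2,\phi)$ with $f_i:V\to R$ and $\phi:f_1^\ast G_F\to f_2^\ast G_F$ an isomorphism of formal groups. Each $f_i^\ast G_F$ carries the $p$-typical coordinate pulled back from the universal one, so by Remark \ref{p-typ-prop}.3 the isomorphism $\phi$ is encoded by the power series $\phi^{-1}$, which has the form $t_0x+_{F_1}t_1x^{p}+_{F_1}t_2x^{p^2}+_{F_1}\cdots$ with $t_0\in R^\times$. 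Conversely, for any $f_1$ and any such sequence $(t_0,t_1,\dots)$, that power series is an isomorphism from $f_1^\ast G_F$ onto a formal group whose distinguished coordinate is again $p$-typical — this is the ``moreover'' clause of Remark \ref{p-typ-prop}.3 — and this recovers both $f_2$ and $\phi$. These assignments are mutually inverse, so the functor is co-represented by $V[t_0^{\pm1},t_1,t_2,\dots]$.

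I do not expect deep obstacles: the content is carried entirely by Cartier's theorem and Remark \ref{p-typ-prop}.3. The point that genuinely needs care is the converse in the second step, namely that the formal group law manufactured from a pair $(f_1,(t_i))$ is automatically $p$-typical; this is exactly the ``if and only if'' in the last line of Remark \ref{p-typ-prop}.3, and it is what pins down $V[t_0^{\pm1},t_1,\dots]$ — rather than some larger ring — as the answer. One should also record that the fibre product in the statement is computed with $G_F$ viewed as a formal group with its coordinate forgotten, which is precisely the formal group classified by $\Spec(V)\to\cM_\fg\otimes\ZZ_{(p)}$, so no discrepancy arises.
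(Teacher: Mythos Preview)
Your proposal is correct and follows essentially the same approach as the paper, which simply says the result ``follows from Corollary \ref{fglps-to-fgs} and Remark \ref{p-typ-prop}, parts 2 and 3''; you have correctly unpacked what those two inputs contribute. The only organizational wrinkle is that your first step invokes the fibre-product computation of your second step, so in a written version you should establish the second step first (or at least note explicitly, as you do, that it is logically prior).
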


\begin{rem}[{\bf Gradings and formal group laws}]\label{gradings} There is a natural grading\index{gradings}
on the Lazard ring $L$ and the ring $V =\ZZ_{(p)}[u_1,u_2,\cdots]$
which supports the universal $p$-typical formal group
law. This can be useful for computations.

To get the grading, we put an action of the multiplication group
$\GG_m = \Spec(\ZZ[t^{\pm 1}])$ on scheme $\fgl = \Spec(L)$
of formal group laws as follows. If 
$$
x+_Fy = \sum c_{ij}x^iy^j
$$
is a formal group law over a ring $R$ and 
$\la \in R^\times$ is a unit in $R$, define a new formal
group law $\la F$ over $R$ by
$$
x+_{\la F} y = \la^{-1}((\la x) +_F (\la y)).
$$
This action translates into a coaction
$$
\psi: L \longr \ZZ[t^{\pm 1}] \otimes L
$$
and hence a grading on $L$: $x \in L$ is of degree
$n$ if $\psi(x) =  t^{n} \otimes x$. Then coefficients
$a_{ij}$ of the universal formal group law have degree
$i+j-1$; since
$$
x +_F y = x + y + b_1C_2(x,y) + b_2C_3(x,y) + \cdots
$$
modulo decomposables, we have that $b_i$ has degree $i$.
In particular, $c_{ij}$ is a homogeneous polynomial in
$b_k$ with $k < i +j$.

The same construction applies to $p$-typical formal group
laws and the $p$-series 
$$
[p]_G(x) = px +_G u_1x^p +_G u_2x^{p^2} +_G \cdots
$$
shows that, under the action of $\GG_m$, $u_k$ has degree $p^{k}-1$.
Since the universal $p$-typical formal group is defined over the ring
$V = \ZZ_{(p)}[u_1,u_2,\cdots]$ we have that
the coefficients $c_{ij}$ of the universal $p$-typical
formal group are homogeneous polynomials in the
$u_k$ where $p^k \leq i+j$.

The action of $\GG_m$ extends to the entire groupoid
scheme of formal group laws and their isomorphisms.
If $\phi(x) = \sum_{i \geq 0} a_i x^i$ is an isomorphism
from $F$ to $G$, define
$$
(\la\phi)(x) = \la^{-1}\phi(\la x).
$$
Then $\la \phi$ is an isomorphism from $\la F$ to $\la G$.
If $\phi$ is universal isomorphism over $W = L[a_{0}^{\pm 1},a_1,
\cdots]$, the $a_i$ has degree $i$. More interesting is the
case of $p$-typical formal group laws; if $\phi$ is the
universal isomorphism of $p$-typical formal group laws
over $V[t_{0}^{\pm 1},t_1,t_2,\cdots]$, then
$$
\phi^{-1}(x) = t_0x +_G t_1x^p +_G t_2x^{p^2} +_G \cdots
$$
and we see that the degree of $t_k$ is $p^k-1$. Thus
if $a_i$ is the $i$th coefficient of this power series, we
have that $a_i$ is a homogeneous polynomial in 
$t_k$ and $u_k$ with $p^k \leq i$.

{\bf Warning:}The grading here is not the topological grading;
in order to obtain the usual topological gradings we should
double the degree -- so that, for example, the degree of $v_i$
is $2(p^i-1)$. Also, I'll say nothing about the role of odd degree
elements in comodules -- and there are some subtleties here.
See \cite{LEFT} for a systematic treatment.\index{gradings, caution on}
\end{rem}

\section{The moduli stack of formal groups as a homotopy orbit}

One of the main points of this chapter is to describe the
moduli stack $\cM_\fg$ as the homotopy inverse limit
if the moduli stacks $\cM_\fg\langle n \rangle$ of $n$-buds
for formal groups. This is a restatement of classical results
of Lazard. See  Theorem \ref{chunks-inv}. This has consequences for
the quasi-coherent sheaves on $\cM_\fg$; see Theorem \ref{fp-modules}.

\subsection{Algebraic homotopy orbits}

First some generalities, from \cite{Laumon} \S\S 2.4.2, 3.4.1, and
4.6.1. Let $\La$ be an group scheme over a base
scheme $S$. Let $X \to S$ be a right-$\Lambda$-scheme.
Thus, there is an action morphism
$$
X\times_S \Lambda  \longr X
$$
over $S$ such that the evident diagrams commute. From this data,
we construct a stack $X \times_{\La} E\Lambda$, called the
{\it homotopy orbits}\index{homotopy orbit stack}\index{stack,
homotopy orbits} of the action of $\La$ on $X$, as
follows.\footnote{Under appropriate finiteness hypotheses
which will not apply in our examples, the homotopy orbit
stack can become an algebraic {\it orbifold}.}

Recall that an $\Lambda$-torsor is a scheme $P \to S$ with
a right action of $\Lambda$ so that there is an $fpqc$ cover
$T \to S$ and an isomorphism of $\Lambda$-schemes over $T$
$$
T \times_S \Lambda \cong T \times_S P.
$$
If you want a choice-free way of stating this last, we remark
that this is equivalent to requiring that the natural map
$$
(T \times_S P)\times_T (T \times_S \La) \longr
(T \times_S P)\times_T (T \times_S \La) 
$$
over $(T \times_S \Lambda)$ sending $(x,g)$ to $(xg,g)$ is
an isomorphism.

To define $X \times_\La E\La$ we need to specify a 
category fibered in groupoids.  Suppose $U \to S$ is a scheme over $S$. Define the objects $[X \times_{\La} E\La](U)$ to be pairs
$(P,\alpha)$
where $P\to U $ is a $\Lambda\times_S U$-torsor and
$$
\alpha:P \to U \times_S X
$$
is a $\Lambda$-morphism over $U$. A morphism $(P,\alpha) \to (Q,\beta)$
is an equivariant isomorphism $P \to Q$ so that the evident diagram
over $U \times_S X$ commutes. If $V \to U$ is a morphism of schemes 
over $S$, then the map $[X \times_\La E\La](U) \to [X \times_\La
E\La](V)$ is defined by pull-back. This gives a stack (see \cite{Laumon},
3.4.2) ; we discuss to what extent it is an algebraic stack.

There is a natural map $X \to X \times_{\La} E\La$ defined as follows.
If $f:U \to X$ is a morphism of schemes over $S$ define
$P = U \times_S \La$ and let $\alpha$ be the evident composition
over $U$
$$
\xymatrix{
U \times_S \La \rto^-{f \times \La} & U \times_S X \times_S \La
\rto&
U \times_S X
}
$$
given pointwise by $(u,g) \mapsto (u,f(u)g).$

Note that if  $U \to X \times_{\La} E\La$
classifies $P \to U \times_S X$, then a factoring
$$
\xymatrix{
&X \dto\\
U \ar[ur] \rto & X \times_\La E\La
}
$$
is equivalent to a choice of section of $P \to U$ and hence
an chosen equivariant isomorphism $U \times_S \La \to P$ over $U$.
The notion of an algebraic stack in the $fpqc$ topology was defined
in Definition \ref{alg-stack}.

\begin{prop}\label{homotopy-orbit} Let $\La$ be a
group scheme over $S$ and suppose the structure morphism
$\La \to S$ is flat and quasi-compact. Let $X$ be a scheme over $S$
with a right $\La$-action. Then $X \times_\La E\La$ is an algebraic
stack in the $fpqc$ topology and 
$$
q:X \longr X \times_\La E\La
$$
is an $fpqc$ presesentation. There is a natural commutative
diagram
$$
\xymatrix{
X \times_S \La \ar@<.5ex>[r]^-{d_0} \ar@<-.5ex>[r]_-{d_1}\dto_{\cong}
& X \dto^{=}\\
X \times_{X \times_\La E\La} X
\ar@<.5ex>[r]^-{p_1} \ar@<-.5ex>[r]_-{p_2}& X
}
$$
where $d_0(x,g) = x$ and $d_1(x,g) = xg$ and the vertical isomorphism
sends $(x,g)$ to the triple $(x,xg,g:xg \to x)$.
\end{prop}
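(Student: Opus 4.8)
The plan is to check the two clauses of Definition \ref{alg-stack} for $\cM := X \times_\La E\La$ and to read off the diagram of (3) along the way; essentially all the content is the single computation $X \times_\cM X \cong X \times_S \La$, and everything else is formal, resting on faithfully flat descent (Proposition \ref{ff-affine}). That $\cM$ is an $fpqc$-stack has already been noted (it is \cite{Laumon}, 3.4.2), so for clause (2) it remains to see that $q : X \to \cM$ is representable, flat, quasi-compact and surjective. Given a scheme $W$ and a morphism $W \to \cM$ classifying a pair $(P, \alpha)$, the remark preceding the statement identifies, for every $V \to W$, the lifts of $V \to W \to \cM$ to $X$ with sections of $P \times_W V \to V$ (together with the forced $2$-morphism); unwinding this gives a natural isomorphism $W \times_\cM X \cong P$ over $W$. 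Since a torsor is a scheme by definition, $q$ is representable; and since an $\La$-torsor is $fpqc$-locally isomorphic to $\La \times_S W \to W$, which is flat and quasi-compact because $\La \to S$ is, and since flatness, quasi-compactness and surjectivity descend along $fpqc$ covers, $q$ is a flat, quasi-compact, surjective presentation. This is (2).

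Next I would produce the diagram of (3) by specializing the isomorphism $W \times_\cM X \cong P$ to $W = X$, where the tautological torsor is $X \times_S \La$ with its canonical trivialization, and computing $X \times_\cM X$ objectwise. A $V$-point of $X \times_\cM X$ is a pair of morphisms $a, b : V \to X$ together with an $\La$-equivariant automorphism $\psi$ of the trivial torsor over $V$ that intertwines the two structure maps $(v,h) \mapsto a(v)h$ and $(v,h) \mapsto b(v)h$ to $X$; such a $\psi$ is multiplication by a unique morphism $g : V \to \La$, subject only to the relation $a = b \cdot g$. Hence $(b, g) \mapsto (a := bg,\ b,\ \psi_g)$ is an isomorphism $X \times_S \La \xrightarrow{\ \cong\ } X \times_\cM X$ of schemes over $X \times_S X$; unwinding it, the projections $p_1, p_2$ pull back to $(x,g) \mapsto xg$ and $(x,g) \mapsto x$, and $(x,g)$ is sent to the triple $(xg,\ x,\ g : xg \to x)$. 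This is exactly the asserted square, with $d_1, d_0$ matched to $p_1, p_2$.

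Finally, for clause (1): the morphism $q \times q : X \times_S X \to \cM \times_S \cM$ is faithfully flat and quasi-compact, being a composite of base changes of $q$, and by (3) the base change of $\Delta_\cM$ along $q \times q$ is the morphism $X \times_\cM X \cong X \times_S \La \to X \times_S X$ given by the action graph $(\mathrm{pr}_X, \mathrm{act})$. This is a morphism of schemes, hence representable; it inherits quasi-compactness from $\La \to S$ (via the shear automorphism $(x,g) \mapsto (xg, g)$ of $X \times_S \La$, which writes $\mathrm{act}$ as $\mathrm{pr}_X$ precomposed with an isomorphism, reducing quasi-compactness to that of $X \times_S \La \to X$, a base change of $\La \to S$), and it is separated whenever $\La \to S$ is. Since these properties of a morphism of stacks may be checked after the faithfully flat cover $q \times q$ — the argument of Lemma \ref{check-once}, applied with the presentation $X \times_S X \to \cM \times_S \cM$ — the diagonal $\Delta_\cM$ is representable, quasi-compact and separated, which is (1).

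I expect the real work to be the objectwise bookkeeping in the second paragraph — keeping the torsor structure maps, the $2$-morphisms and the left/right $\La$-conventions straight — and, subordinate to it, the fact that every descent used above (representability of $q$, of the relevant $\Iso$-sheaves, and of $X \times_S \La$ through $q \times q$) is an instance of faithfully flat descent for affine schemes, Proposition \ref{ff-affine}. This is harmless in every application here, where $\La$ and $X$ are affine over $S$ — for $\cM_\fg$ one has $\La$ equal to $\Spec$ of a polynomial ring and $X = \Spec(L)$ — so that in particular the separatedness half of (1) holds; under the bare hypothesis that $\La \to S$ be merely flat and quasi-compact one should keep in mind that quasi-compactness of the action graph also uses a mild separation hypothesis on $X \to S$, and that the ``separated'' part of (1) genuinely requires $\La \to S$ separated.
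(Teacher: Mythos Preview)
The paper states this proposition without proof (it is immediately followed by Example \ref{extremes-orbits}); the stack property is attributed to \cite{Laumon}, 3.4.2, and the rest is treated as evident. So there is nothing to compare against directly, and your argument fills in exactly the details the paper omits.

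Your proof is correct and well-organized. The key identification $W \times_\cM X \cong P$ is precisely what the remark preceding the proposition is set up to give you, and specializing to $W = X$ with its trivial torsor yields the diagram cleanly. One small cosmetic point: your triple comes out as $(xg,\,x,\,g:xg\to x)$ with $p_1,p_2$ matched to $d_1,d_0$, whereas the paper writes $(x,\,xg,\,g:xg\to x)$ with $d_0,d_1$ matched to $p_1,p_2$; you note the swap, and it is harmless.

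Your final caveat is well taken and worth keeping: the ``separated'' clause for the diagonal in Definition \ref{alg-stack} does require $\La \to S$ separated, which is not among the stated hypotheses. The paper is silent on this, but every application in the paper has $\La$ affine over $S$ (indeed $\La = \Spec$ of a polynomial Hopf algebra over $\ZZ$), so the issue never arises. Your remark that the quasi-compactness of the action graph uses mild separation on $X \to S$ is slightly overstated: after the shear isomorphism the action graph factors as an isomorphism followed by a base change of $X \times_S \La \to X$, and quasi-compactness of the latter needs only $\La \to S$ quasi-compact, which is assumed.
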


\begin{exam}\label{extremes-orbits}There are two evident examples.
First we can take $X = S$ itself with the necessarily trivial
right action, and we'll write 
$$
B\La \defeq S \times_\La E\La.
$$
This is the moduli stack of $\La$-torsors on $S$-schemes or the
{\it classifying stack} of $\La$.\index{stack, classifying}\index{classifying
stack} The
other example sets $X = \La$ with the canonical right action.
Let's assume $\La$ is an affine group scheme over $S$. 
Then the projection map
$$
\La \times_{\La} E\La \to S
$$
is an equivalence. For if $\alpha: P \to U \times_S \La$ is 
any morphism of $\La$-torsors over $U$, then $\alpha$
becomes an isomorphism on some faithfully flat over.
Since $\La \to S$ is affine, $\alpha$ is then an isomorphism
by faithfully flat descent. It follows that the groupoid
$[\La \times_{\La} E\La](U)$ is contractible.
\end{exam}

\begin{rem}\label{cech-for-homotopy-orbit}Note that the \v Cech cover
of $X \times_\La E\La$ that arises from the cover
$X \to X \times_{\La} E\La$ is the standard bar complex
given by the action of $\La$ on $X$. Thus, $X \times_\La E\La$
is that analog of the geometric realization of this bar complex,
whence the name homotopy orbits.
\end{rem}

\begin{rem}\label{qc-comodule}Suppose that $S = \Spec(R)$,
$X = \Spec(A)$ and $\La = \Spec(\Ga)$. Then the group
action $X \times_S \La \to X$ yields a Hopf algebroid
structure on the pair $(A,A \otimes_R \Ga)$. This
is a {\it split} Hopf algebroid. By Remark \ref{comod-qc-sheaves}
the category of quasi-coherent sheaves over $X \times_\La E\La$
is equivalent to the category of $(A,A \otimes_R \Ga)$-comodules.
\end{rem}

\begin{rem}\label{top-exam}Let's compare this construction
of $X \times_\La E\La$ with a construction in simplicial sets. 
Suppose $\La$ is a discrete group (in sets) and $X$ is
a discrete right $\La$-set. Then the simplicial set
$X \times_\La E\La$ is defined to be the nerve of the
groupoid with object set $X$ and morphism set $X \times G$.
However, this groupoid is equivalent to the groupoid with
objects $\alpha:P \to X$ where $P$ is a free and transitive
$G$-set; morphisms are the evident commutative triangles.
This is a direct translation of the construction above.
Equivalent groupoids have weakly equivalent nerves; hence, 
if we are only interested in homotopy type,
we could define $X \times_{\La} E\La$ to be the nerve
of the larger groupoid.
\end{rem}

Next let us say some words about naturality.
This is simpler if we make some assumptions on our group schemes.
A group scheme $\La$ over $S$ is {\it affine over $S$}
if the structure map $q:\La \to S$ is an affine morphism. Since
affine morphisms are closed under composition and base change,
the multiplication map $\La \times_S \La \to \La$ is a morphism
of schemes affine over $S$. Thus the quasi-coherent $\cO_S$-algebra
sheaf $q^\ast \cO_\La$ is a sheaf of Hopf algebras. In most of
our examples, $S = \Spec(A)$ is itself affine; in this case, $\La = \Spec(\Ga)$
for some Hopf algebra $\Ga$ over $A$.

If $\La$ is a group scheme affine over $S$ and $P \to S$ is a $\La$-torsor,
then $P \to S$ is an affine morphism by faithfully flat descent. If
$\phi:\La_1 \to \La_2$ is a morphism of group schemes affine over
$S$ and $P \to S$ a $\La_1$ torsor, let $P \times_{\La_1} \La_2$
be the sheaf associated to the presheaf
$$
A \mapsto (P(A) \times_{S(A)} \La_2(A))/\sim
$$
where $\sim$ is the equivalence relation given pointwise by
$$
(xb,a) \sim (x,ba)
$$
with $x \in P(A)$, $a \in \La_2(A)$, and $b \in \La_1(A)$. 

\begin{lem}\label{push-forward-torsor} Let $\La_1 \to \La_2$
be a morphism of groups schemes affine over $S$ and
Let $P \to S$ be a $\La_1$-torsor. Then
$P \times_{\La_1} \La_2$ is actually a $\La_2$-torsor
over $S$. 
\end{lem}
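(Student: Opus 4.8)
The plan is to prove that $P \times_{\La_1} \La_2$ is a $\La_2$-torsor over $S$ by reducing everything to the case where the torsor is trivial, via faithfully flat descent. The statement to prove is that (a) $\La_2$ acts on $P \times_{\La_1} \La_2$ on the right, (b) this sheaf is representable by a scheme affine over $S$, and (c) there is an $fpqc$ cover $T \to S$ over which $P \times_{\La_1} \La_2$ becomes isomorphic, as a $\La_2$-scheme, to $T \times_S \La_2$. Point (a) is immediate: $\La_2$ acts on the right factor of $P(A) \times_{S(A)} \La_2(A)$, and this action descends through the equivalence relation $\sim$ because $(xb,a)a' \sim (x,ba)a' = (x,baa')$ and $(xb,aa') \sim (x,baa')$ agree. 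So the real content is in (b) and (c).

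First I would address (c), since it is the heart of the matter. Because $P \to S$ is a $\La_1$-torsor, there is by definition an $fpqc$ cover $T \to S$ and an isomorphism of $\La_1 \times_S T$-schemes $\psi: T \times_S \La_1 \to T \times_S P$. I would check that base change commutes with the formation of $P \times_{\La_1} \La_2$ — that is, $T \times_S (P \times_{\La_1} \La_2) \cong (T \times_S P) \times_{\La_1 \times_S T} (\La_2 \times_S T)$ — which holds because sheafification and pushouts of the presheaf-level construction commute with the (colimit-preserving) restriction functor $f^\ast$. Then, using $\psi$, the right-hand side is identified with $(T \times_S \La_1) \times_{\La_1 \times_S T} (\La_2 \times_S T)$, and the presheaf defining this is pointwise $(\La_1(A) \times \La_2(A))/\sim$ with $(gb, a) \sim (g, ba)$; the map $(g,a) \mapsto ga$ (using $\La_1 \to \La_2$) gives a $\La_2$-equivariant isomorphism to $\La_2 \times_S T$, with inverse $a \mapsto [(e,a)]$. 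Since an isomorphism of sheaves that is $\La_2$-equivariant is exactly what is needed, this establishes the torsor condition once representability is known.

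For (b), representability: over the cover $T \to S$ the sheaf $T \times_S (P \times_{\La_1} \La_2)$ is isomorphic to $T \times_S \La_2$, which is a scheme affine over $T$ since $\La_2 \to S$ is affine. The morphism $P \times_{\La_1} \La_2 \to S$ is therefore affine after the faithfully flat base change $T \to S$; by faithfully flat descent for affine morphisms (Proposition \ref{ff-affine}), it follows that $P \times_{\La_1} \La_2 \to S$ is itself an affine morphism of schemes, hence in particular representable. One must also verify that the descent datum on $T \times_S (P\times_{\La_1}\La_2)$ coming from the two projections $T \times_S T \rightrightarrows T$ matches up, but this is automatic since $P \times_{\La_1}\La_2$ is defined as an honest sheaf on $S$ to begin with — we are simply observing that an already-given sheaf, which becomes affine over a cover, is affine.

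**The main obstacle** I expect is purely bookkeeping rather than conceptual: carefully checking that the presheaf-level quotient construction $A \mapsto (P(A) \times_{S(A)} \La_2(A))/\sim$ commutes with sheafification and with pullback $f^\ast$ along $f : T \to S$, so that the identification $T \times_S (P \times_{\La_1} \La_2) \cong (T \times_S P) \times_{\La_1 \times_S T} \La_2 \times_S T$ is legitimate. This is a standard fact (restriction along $f$ is exact, hence preserves the relevant colimits, and the associated-sheaf functor is left adjoint to the inclusion), but it needs to be stated with enough care that the subsequent identifications using the trivialization $\psi$ are rigorous. Everything else — equivariance, the explicit inverse maps, the appeal to descent — is routine given the results already in the excerpt.
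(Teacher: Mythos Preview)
Your proposal is correct and follows essentially the same approach as the paper: trivialize $P$ over an $fpqc$ cover $T \to S$, use that base change commutes with the $(-) \times_{\La_1} \La_2$ construction to identify $T \times_S (P \times_{\La_1} \La_2) \cong T \times_S \La_2$, and then invoke faithfully flat descent for affine morphisms. The paper's proof is terser, but the logical structure is the same; your added care about the action descending through $\sim$ and about base change commuting with sheafification just spells out what the paper takes for granted.
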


\begin{proof} If we can choose an isomorphism 
$P \cong \La_1$ over $S$, then we get an induced isomorphism
$P \times_{\La_1} \La_2 \cong \La_2$. More generally,
let $f:T \to S$ be an $fpqc$-cover so that 
$$
T \times_S P \cong T \times_S \La_1.
$$
Then
$$
T \times_S (P \times_{\La_1} \La_2) \cong
(T \times_S P) \times_{T \times_S \La_1} (T \times_S \La_2)
\cong T \times_S \La_2.
$$
Since $\La_2$ is affine over $S$, $T \times_S \La_2$ is affine
over $T$ and faithfully flat descent implies $P \times_{\La_1} \La_2$
is an affine torsor over $S$. 
\end{proof}

Now suppose $X_1$ is a right $\La_1$-scheme, $\La_2$
is a right $\La_2$-scheme and $q:X_1 \to X_2$ is
a morphism of $\La_1$-schemes. Then we get a morphism
of stacks
$$
X_1 \times_{\La_1} E\La_1 \to X_2 \times_{\La_2} E\La_2
$$
sending the pair $(P, \alpha)$ to the pair
$(P \times_{\La_1}\La_2,q\alpha)$; that is, there is a commutative
diagram of $\La_1$-schemes
$$
\xymatrix{
P \rto^\alpha\dto & X_1 \dto^q\\
P \times_{\La_1}\La_2 \rto &X_2.
}
$$

Such morphisms have quite nice properties.
Recall that a morphism of groupoids $f:G \to H$ is a fibration
\index{fibration of groupoids}
if for all $x \in H$, all $y \in G$ and all morphisms
$\phi:x \to f(y)$ in $H$, there is a morphism $\psi: x' \to y$
in $G$ with $f\psi = \phi$. Equivalently the morphism
of nerves $BG \to BH$ is a Kan fibration of simplicial
sets. We will say that a morphism of stacks $\cM \to \cN$ is
a fibration if for all commutative rings $R$, the map
$\cM(R) \to \cN(R)$ is a fibration of groupoids.\footnote{This
begs for a much more extensive and sophisticated discussion.
See \cite{jardine} and \cite{hollander}.}

A topological version of the following result can be found
in Remark \ref{top-fib} below.

\begin{prop}\label{fibration}Suppose $f:\La_1 \to \La_2$ is
a morphism of group schemes affine over $S$, $X_1$ is a $\La_1$-scheme,
$X_2$ is a $\La_2$-scheme, and $q:X_1 \to X_2$ is
a morphism of $\La_1$-schemes. Then
$$
X_1 \times_{\La_1} E\La_1 \longr X_2 \times_{\La_2} E\La_2
$$
is a fibration of algebraic stacks in the $fpqc$ topology.
\end{prop}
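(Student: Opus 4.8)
The plan is to unwind the definitions and check the fibration condition groupoid-pointwise. By definition I must show that for every scheme $U\to S$ the functor
$$
F_U\colon [X_1\times_{\La_1}E\La_1](U)\longrightarrow [X_2\times_{\La_2}E\La_2](U)
$$
is an isofibration of groupoids. Recall from the construction of the homotopy orbit stack that an object of the source is a pair $(P,\alpha)$ with $P\to U$ a $(\La_1\times_S U)$-torsor and $\alpha\colon P\to U\times_S X_1$ a $\La_1$-morphism over $U$, a morphism $(P,\alpha)\to(\bar P,\bar\alpha)$ is a $\La_1$-equivariant isomorphism $P\to\bar P$ over $U\times_SX_1$, and $F_U(P,\alpha)=(P\times_{\La_1}\La_2,\tilde\alpha)$, where $\tilde\alpha$ is the unique $\La_2$-morphism on $P\times_{\La_1}\La_2$ induced, via the universal property of $P\times_{\La_1}\La_2$ from Lemma \ref{push-forward-torsor}, by the $\La_1$-morphism $q\alpha\colon P\to U\times_SX_2$ (here $\La_1$ acts on $X_2$ through $f$). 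So, fixing an object $(P,\alpha)$ of the source and an arbitrary isomorphism $\phi\colon(Q,\beta)\to F_U(P,\alpha)$ in the target, I must exhibit an object of the source with image $(Q,\beta)$ together with a morphism of the source lying over $\phi$.

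The lift will be produced by a fibre product. Write $j\colon P\to P\times_{\La_1}\La_2$, $x\mapsto[x,e]$, for the canonical morphism; it is equivariant for the $\La_1$-action on the target through $f$. Set
$$
P'\defeq Q\times_{P\times_{\La_1}\La_2}P,
$$
the fibre product formed along $\phi$ and $j$, a scheme affine over $U$. The second projection $\psi\colon P'\to P$ is the base change of the isomorphism $\phi$ along $j$, hence an isomorphism, so $P'$ inherits from $P$ the structure of a $(\La_1\times_SU)$-torsor for which $\psi$ is $\La_1$-equivariant; put $\alpha'\defeq\alpha\circ\psi$, so that $\psi\colon(P',\alpha')\to(P,\alpha)$ is a morphism of the source groupoid. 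One checks directly that the first projection $P'\to Q$ is $\La_1$-equivariant for the action of $\La_1$ on $Q$ through $f$, hence induces a $\La_2$-morphism $P'\times_{\La_1}\La_2\to Q$ of $\La_2$-torsors over $U$; any such morphism is an isomorphism, and it identifies $\tilde{\alpha'}$ with $\beta$ (because $\alpha'=\alpha\psi$ and $\phi$ intertwines $\tilde\alpha$ and $\beta$) and $F_U(\psi)$ with $\phi$. Granting these identifications, $F_U(P',\alpha')=(Q,\beta)$ and $F_U(\psi)=\phi$, which is exactly the required lift.

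To verify the identifications I would argue faithfully flat locally on $U$. Choosing an fpqc cover $T\to U$ over which $P$ trivializes, so $T\times_UP\cong\La_1\times_ST$ equivariantly, the scheme $T\times_U(P\times_{\La_1}\La_2)$ becomes the corresponding trivial $\La_2$-torsor with $j$ becoming $f$, and $\phi$ is carried to a standard trivialization of $Q$; over $T$ all of $\psi$, the map $P'\to Q$, the induced $\La_2$-morphism $P'\times_{\La_1}\La_2\to Q$, and $F_U(\psi)$ are then written out explicitly in terms of $f$ and $q$ and the asserted equalities are visible. Since these are morphisms of schemes, of $\La_2$-torsors, and (after passing to $\Spec$) of quasi-coherent algebra sheaves, faithfully flat descent returns the statements over $U$ itself.

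The one subtlety — and the place I expect the real work to be — is that $f$ need not be a monomorphism, so $j\colon P\to P\times_{\La_1}\La_2$ is not injective but only a $\ker(f)$-torsor, and a priori the fibre product $P'$ could be ``larger'' than $P$. The point is that this never happens: $\psi\colon P'\to P$ is the base change of the \emph{isomorphism} $\phi$, not of $j$, so it is an isomorphism irrespective of the fibres of $j$, and combined with descent to the split-torsor case this makes the whole argument purely formal once the trivialized picture is drawn.
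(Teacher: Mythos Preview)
Your proof is correct and follows essentially the same route as the paper: both construct the lift as the fibre product $P'=Q\times_{P\times_{\La_1}\La_2}P$ (the paper writes this as $Q'\times_Q P$), observe that the projection $\psi\colon P'\to P$ is an isomorphism because it is the base change of the isomorphism $\phi$, and verify the identification $P'\times_{\La_1}\La_2\cong Q$ by passing to an fpqc cover where the torsors trivialize. Your final paragraph slightly overdramatizes the ``subtlety'' --- as you yourself note, $\psi$ is a pullback of an isomorphism and hence an isomorphism regardless of the fibres of $j$ --- but the argument is sound.
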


\begin{proof}Suppose we are given a diagram (over a base-scheme
$U$ suppressed from the notation)
$$
\xymatrix{
& P \rto^\alpha \dto & X \dto^q\\
Q' \rto_\phi & Q \rto_\beta & Y
}
$$
with (1) $P$ a $\La_1$-torsor and $\alpha$ a $\La_1$-morphism;
(2) $Q'$ and $Q$ both $\La_2$-torsors, $\beta$
$\La_2$-map and $\phi$ is $\La_2$-isomorphism; and (3) $P \to Q$ a morphism of $\La_1$-schemes
with $P \times_{\La_1}\La_2 \cong Q$. Then we take the pull-back
$$
\xymatrix{
Q' \times_QP\rto^-\psi\dto& P\dto\\
Q' \rto_\phi & Q.
}
$$
Then $Q'$ is a $\La_1$-torsor and $\psi$ is a $\La_1$-isomorphism.
Finally, we must check that the natural map 
$(Q'\times_Q P) \times_{\La_1}\La_2 \to Q'$ is an isomorphism of
$\La_2$-torsors. If we can choose isomorphisms $P \cong \La_1$
and $Q \cong\La_2$ this is clear. The general case follows
from faithfully flat descent.
\end{proof}

It is also relatively easy to identify fibers in this setting. We restrict
ourselves to a special case.

\begin{prop}\label{fibers}Suppose $f:\La_1 \to \La_2$ is flat surjective
morphism of group schemes affine over $S$ 
with kernel $K$. Suppose that $X_1$ is a $\La_1$-scheme,
$X_2$ is a $\La_2$-scheme, and $q:X_1 \to X_2$ is
a morphism of $\La_1$-schemes. Then there is a homotopy
pull-back diagram
$$
\xymatrix{
X_1 \times_K EK \rto \dto & X_1 \times_{\La_1} E\La_1\dto\\
X_2 \rto & X_2 \times_{\La_2} E\La_2.
}
$$
\end{prop}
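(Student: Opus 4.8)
The plan is to show that the strict $2$-categorical fiber product of the cospan
$X_2 \longr X_2 \times_{\La_2} E\La_2 \longleftarrow X_1 \times_{\La_1} E\La_1$
is equivalent to $X_1 \times_K EK$, and that this strict fiber product already computes the homotopy pull-back. For the homotopy-correctness: the bottom map $X_2 \to X_2 \times_{\La_2} E\La_2$ is the canonical presentation of Proposition \ref{homotopy-orbit}, and it is a fibration of algebraic stacks in the $fpqc$-topology by Proposition \ref{fibration} applied to the trivial group $\boldone \to \La_2$, the $\boldone$-scheme $X_2$, the $\La_2$-scheme $X_2$, and the identity $X_2 \to X_2$, using $X_2 \times_\boldone E\boldone \cong X_2$. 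Since the right-hand vertical map is pulled back along a fibration, its strict pull-back is homotopy-correct. Before this one must pin down the maps: the top map $X_1 \times_K EK \to X_1 \times_{\La_1} E\La_1$ is the naturality morphism for $K \hookrightarrow \La_1$ and $\mathrm{id}\colon X_1 \to X_1$, and the left map $X_1 \times_K EK \to X_2$ sends a pair $(P,\alpha)$ (with $P \to U$ a $K$-torsor and $\alpha\colon P \to U \times_S X_1$ a $K$-map) to the morphism $U \cong P/K \to X_2$ induced by $q\alpha$, which is $K$-invariant because the $\La_1$-action on $X_2$ factors through $\La_2 = \La_1/K$; chasing the canonical isomorphisms $(P'\times_K\La_1)\times_{\La_1}\La_2 \cong P'\times_K\La_2 \cong U\times_S\La_2$ shows the two composites around the square agree.

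Next I would write down the comparison functor $X_1 \times_K EK \to X_2 \times_{X_2 \times_{\La_2} E\La_2} (X_1 \times_{\La_1} E\La_1)$ and an explicit inverse. In one direction a $K$-torsor datum $(P',\alpha')$ over $U$ is sent to the $\La_1$-torsor $P := P'\times_K\La_1$ (a torsor by Lemma \ref{push-forward-torsor}), the unique $\La_1$-equivariant extension $\alpha$ of $\alpha'$ (given pointwise by $[p',g]\mapsto \alpha'(p')\cdot g$), the induced map $u\colon U \to X_2$, and the canonical trivialization $\theta$ of $P\times_{\La_1}\La_2 \cong P'\times_K\La_2 \cong U\times_S\La_2$. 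In the other direction an object of the $2$-fiber product is a triple $\big(u\colon U \to X_2,\ (P,\alpha),\ \theta\big)$ where $\theta$ exhibits $P\times_{\La_1}\La_2$ as the trivial $\La_2$-torsor; since $P\times_{\La_1}\La_2 \cong P/K$, the datum $\theta$ is a section $s\colon U \to P/K$, equivalently a reduction of the structure group of $P$ from $\La_1$ to $K$, and one sets $P' := P\times_{P/K}U$ (a $K$-torsor) with $\alpha' := \alpha|_{P'}$. The compatibility of $\theta$ with the maps down to $U\times_S X_2$ forces the map $P'/K = U \to X_2$ induced by $q\alpha'$ to equal $u$, so the output lands in $X_1\times_K EK$; the same recipes work on morphisms.

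Then I would verify these functors are mutually quasi-inverse. Starting from $(P',\alpha')$, the round trip yields the pull-back of $P'\times_K\La_1 \to (P'\times_K\La_1)/K = U$ along the canonical section, which is canonically $K$-isomorphic to $P'$ via $p'\mapsto [p',e]$, compatibly with the $\alpha$'s. Starting from a triple $(u,(P,\alpha),\theta)$, the round trip yields $(P\times_{P/K}U)\times_K\La_1$, canonically $\La_1$-isomorphic to $P$ via $[p',g]\mapsto p'g$, compatibly with $\alpha$, $u$, and $\theta$. Each isomorphism is constructed directly; whenever one must check a morphism of $fpqc$-sheaves is an isomorphism, it suffices, by faithfully flat descent exactly as in the proof of Lemma \ref{push-forward-torsor}, to pass to an $fpqc$-cover trivializing the torsors, where everything reduces to $K\times_K\La_1 \cong \La_1$ and $\La_1\times_{\La_1}\La_2 \cong \La_2$.

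The real work sits in the torsor bookkeeping of the previous two paragraphs, in particular the identification of trivializations of the associated $\La_2$-bundle $P\times_{\La_1}\La_2 \cong P/K$ with $K$-reductions of $P$, and the compatibility of the canonical isomorphisms $(-\times_K\La_1)\times_{\La_1}\La_2 \cong -\times_K\La_2$ with all the structure; but none of this is conceptually deep, since it is $1$-categorical and descends to the split case. The one point that needs consistent care throughout is the "map to $X_2$'' coordinate $u$, which in each verification is controlled by the fact that $K$ acts trivially on $X_2$, so I do not expect a genuine obstacle.
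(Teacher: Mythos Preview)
Your proposal is correct and follows essentially the same approach as the paper: both directions of the equivalence---reducing a $\La_1$-torsor $P$ with trivialized $\La_2$-pushforward to a $K$-torsor by pulling back along the induced section of $P/K \to U$, and inducing a $K$-torsor up via $(-)\times_K\La_1$---are exactly the constructions the paper gives. Your treatment is more careful than the paper's in explicitly justifying why the strict $2$-fiber product is homotopy-correct (via the fibration argument for $\boldone \to \La_2$) and in verifying the round trips, both of which the paper leaves implicit.
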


\begin{proof}Let $f:U \to X_2$ be a morphism of schemes.
Then the composition $U \to X_2 \times_{\La_2} E\La_2$
classifies the pair $(U \times_S \La_2,\alpha)$ where $\alpha$
is the composition
$$
\xymatrix@C=17pt{
U \times_S \La_2 \ar[rr]^-{f \times \La_2} &&
U \times_S X_2 \times_S \La_2 \ar[rr] &&
U \times_S X_2.
}
$$
The homotopy fiber at $U$ is the groupoid with objects the
commutative diagrams
$$
\xymatrix{
P \rto^-\beta \dto^g & U \times_S X_1 \dto\\
U \times_{S} \La_2 \rto_-\alpha & U\times_S X_2
}
$$
where $(P,\beta)$ is an object in $[X_1 \times_{\La_1} E\La_1](U)$
and $g$ is a $\La_1$ morphism so that the induced map
$P \times_{\La_1}\La_2 \to U \times_S \La_2$ is an isomorphism.
Let $P'$ be the pull-back of $g$ at inclusions induced by the identity
$U \to U \times_{\La_2} \La_2$. Then $P' \to U \times_S X_2$
is an equivariant morphism from a $K$-torsor to $X_2$.
This defines the functor from the pull-back to $X_1 \times_K EK$.

Conversely, given a $K$-torsor $P$ over $U$ and a $K$-morphism
$P \to X_1$ we can produce a diagram
$$
\xymatrix{
P \times_{U \times_S K} (U\times_S \La_1) \rto\dto^g & U \times_S X_1 \dto\\
P \times_{U \times_S K} (U\times_S \La_2)  \rto & U\times_S X_2.
}
$$
Since $K$ is the kernel of $\La_1 \to \La_2$, projection gives
a natural morphism of $\La_2$-torsors over $U$
$$
P \times_{U \times_S K} (U\times_S \La_2) \to U \times_S \La_2
$$of $\La_2$ torsors over $U$.
This defines the functor back and gives the equivalence of categories.
\end{proof}

\begin{rem}\label{top-fib}In the topological setting of Remark 
\ref{top-exam} we gave two ways to construct
$X \times_\La E\La$. With the smaller, and more usual construction,
a morphism
$$
X_1 \times_{\La_1} E\La_1 \longr X_2 \times_{\La_2} E\La_2
$$
is a fibration only if $\La_1 \to \La_2$ is onto. However,
in the larger construction using transitive and free $\La$-sets,
this morphism is always a fibration, by the same argument
as that given for Proposition \ref{fibration}. Either model allows
us to prove the analog of Proposition \ref{fibers}.
\end{rem}

As a final generality we have:

\begin{prop}\label{torsor-equiv}Suppose $f:\La_1 \to \La_2$ is flat surjective
morphism of group schemes affine over $S$ and let $K$ be the kernel.
Suppose that $X_1$ is a $\La_1$-scheme,
$X_2$ is a $\La_2$-scheme, and $q:X_1 \to X_2$ is
a morphism of $\La_1$-schemes. If $X_1 \to X_2$ is
a $K$-torsor over $X_2$, then
$$
X_1 \times_{\La_1} E\La_1\longr X_2 \times_{\La_2} E\La_2
$$
is an equivalence of algebraic stacks.
\end{prop}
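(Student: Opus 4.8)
The plan is to deduce the result from Proposition \ref{fibers}. Write $g:X_1 \times_{\La_1} E\La_1 \to X_2 \times_{\La_2} E\La_2$ for the morphism under consideration and let $u:X_2 \to X_2 \times_{\La_2} E\La_2$ be the canonical morphism; every object of $X_2 \times_{\La_2} E\La_2$ is, after an $fpqc$ base change of its defining scheme, isomorphic to one pulled back along $u$, so $u$ is a cover for the purposes of descent. By Proposition \ref{fibers} the square
$$
\xymatrix{
X_1 \times_K EK \rto \dto_p & X_1 \times_{\La_1} E\La_1\dto^g\\
X_2 \rto^u & X_2 \times_{\La_2} E\La_2
}
$$
is a homotopy pull-back, so the base change of $g$ along $u$ is exactly the projection $p:X_1 \times_K EK \to X_2$. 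It therefore suffices to prove (a) that $p$ is an equivalence, using that $q:X_1 \to X_2$ is a $K$-torsor, and (b) that a morphism of $fpqc$-stacks whose base change along $u$ is an equivalence is itself an equivalence.

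For (a) I would exhibit an explicit quasi-inverse to $p$, relativizing the computation $\La \times_\La E\La \simeq S$ of Example \ref{extremes-orbits}. Note first that $K = \La_1 \times_{\La_2} S$ is the base change of $f$ (which is affine, being a morphism of schemes affine over $S$, and flat by hypothesis) along the unit section, so $K \to S$ is flat, affine and quasi-compact, and $K$ acts trivially on $X_2$ through $f$; hence $q$ is a flat affine quasi-compact $K$-torsor. Now, for a scheme $V$ over $S$, an object of $[X_1 \times_K EK](V)$ is a pair $(P,\alpha)$ with $P \to V$ a $(K\times_S V)$-torsor and $\alpha:P \to V \times_S X_1$ a $K$-morphism over $V$; composing $\alpha$ with $q$ yields a $K$-invariant morphism $P \to V \times_S X_2$, which---as $P/(K\times_S V)=V$---is the same datum as a point $V \to X_2$, and this is $p(P,\alpha)$. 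Conversely, a point $x:V \to X_2$ pulls the torsor $q$ back to a $(K\times_S V)$-torsor $V \times_{X_2} X_1 \to V$ carrying its tautological $K$-morphism to $X_1$. These constructions are natural in $V$ and mutually inverse up to canonical isomorphism, since a $K$-equivariant morphism of $K$-torsors over a fixed base is automatically an isomorphism; hence $p$ is an equivalence of stacks.

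For (b) I would use that equivalences of stacks in the $fpqc$-topology are local on the target: $g$ is an equivalence if and only if it is fully faithful and essentially surjective, and both properties may be checked after base change along the cover $u$. Full faithfulness is the statement that $\Iso(x,y) \to \Iso(gx,gy)$ is an isomorphism of $fpqc$-sheaves on the source of $x$ and $y$, which may be tested $fpqc$-locally; essential surjectivity may be tested along $u$, since every object of $X_2 \times_{\La_2} E\La_2$ is $fpqc$-locally in the image of $u$, hence $fpqc$-locally in the image of $g$. As the base change of $g$ along $u$ is the equivalence $p$ from part (a), both conditions hold, so $g$ is an equivalence. I expect the main obstacle to be part (b): making this descent argument for equivalences of fibered categories in the $fpqc$-topology precise. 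Everything else is formal manipulation with torsors, and the descent argument itself runs parallel to reasoning already used in the proof of Theorem \ref{coord-stack-is-fg}.
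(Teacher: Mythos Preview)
Your proposal is correct, and your part (a) is exactly the content needed. The paper's own proof offers two routes, both shorter than yours. First, it says the result ``follows immediately from Propositions \ref{fibration} and \ref{fibers}'': by \ref{fibration} the map $g$ is a fibration of groupoids over every ring, and by \ref{fibers} together with your part (a) its fibers are contractible, so $g$ is an equivalence pointwise---no descent argument is required, and your part (b) can be dropped entirely once you invoke \ref{fibration}. Second, the paper gives a direct construction of the inverse: given a $\La_2$-torsor $P$ over $U$ with a $\La_2$-map $\alpha:P \to U\times_S X_2$, form the pull-back
\[
Q = P \times_{U\times_S X_2}(U\times_S X_1);
\]
since $X_1\to X_2$ is a $K$-torsor, $Q$ is a $\La_1$-torsor over $U$, and the projection $\beta:Q\to U\times_S X_1$ is the required $\La_1$-map. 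This assignment $(P,\alpha)\mapsto(Q,\beta)$ is the quasi-inverse to $g$. Your descent route is sound but the most roundabout of the three; the direct pull-back construction is the cleanest, and is really just a global version of your part (a) applied fiberwise.
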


\begin{proof}The hypothesis of $X_1 \to X_2$ means that when the
action is restricted to $K$, then $X_1$ is (after pulling back to
an $fpqc$-cover of $X_1$) isomorphic to $X_2 \times_S K$.
The result follows immediately from Propositions \ref{fibration}
and \ref{fibers},
but can also be proved directly. For if $\alpha: P \to U \times_S X_2$
is some $\La_2$-equivariant morphism from a $\La_2$-torsor
over $U$, then we can form the pull back square
$$
\xymatrix{
Q \rto^-\beta \dto & U \times_S X_1\dto\\
P \rto_-\alpha & U \times_S X_2
}
$$
and $\beta:Q \to U \times_S X_1$ is a $\La_1$-equivariant
morphism from a $\La_1$-torsor over $U$. This defines the necessary
equivalence of categories.
\end{proof}

\subsection{Formal groups}

We now specialize to the case where $S = \Spec(\ZZ)$, 
$\Lambda = \Spec(\ZZ[a_0^{\pm 1},a_1,\ldots])$ is the group scheme of power series invertible under composition.
We set $X = \fgl = \Spec(L)$ where $L$ is the Lazard ring. Thus
for a commutative ring $R$
$$
\La(R) = xR[[x]]^\times
$$
and $X(R) = \fgl(R)$ is
the set of formal group laws over $R$. The group scheme $\La$
acts on $\fgl$ by the formula
$$
(F\phi)(x_1,x_2) = \phi^{-1}(F(\phi(x_1),\phi(x_2)).
$$

In Theorem \ref{coord-affine} we produced, for any formal
group $G$ over an affine scheme $U$, an affine morphism
of schemes
$$
\coord_G \longr S.
$$
The following is essentially a combination of Lemma \ref{coords-after-ff}
and Theorem \ref{coord-affine}.2.

\begin{lem}\label{coord-is-torsor}For a formal group $G$ over
a quasi-compact and quasi-separated scheme $U$, the scheme of
coordinates $\coord_G \to U$ is a $\La$-torsor over $U$.
\index{coordinates, as torsor}
\end{lem}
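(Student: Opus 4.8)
The plan is to exhibit $\coord_G$ as an affine scheme over $U$ carrying a right $\La$-action which becomes trivial after an $fpqc$ base change; by the definition of a torsor recalled above, this is exactly what must be shown.

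First I would pin down the $\La$-action. By Theorem \ref{coord-affine}.3 the functor of objects $\coord_G \to U$ is an affine scheme, hence in particular an $fpqc$-sheaf, and for an affine $T \to U$ its $T$-points are the coordinates of $T \times_U G$; equivalently, by the proof of Theorem \ref{coord-affine}, $\coord_G$ is identified with $U \times_{\cM_\fg} \fgl$. The group scheme $\La$ acts on $\fgl$ by the change-of-variables formula written just before the statement, and pulling this action back along $U \to \cM_\fg$ gives a right $\La$-action on $\coord_G$. Concretely, for $\phi \in \La(T)$ — a power series $\phi(t)$ over $H^0(T,\cO_T)$ with invertible linear coefficient — the substitution $\phi^{-1}(x)$ makes sense termwise (the powers of $x$ vanish in each infinitesimal neighborhood $G_n$) and is again a coordinate, its leading coefficient being a unit. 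The right-action axioms then reduce to a direct comparison with the composition law on $\La$, which I would not spell out.

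Next I would establish local triviality. By Lemma \ref{coords-after-ff} there is a quasi-compact and quasi-separated scheme $T$ and a faithfully flat quasi-compact morphism $f : T \to U$ such that $f^\ast G$ admits a coordinate $x_0$; by Proposition \ref{why-fpqc} the map $f$ is an $fpqc$ cover. Since Definition \ref{coord-scheme} builds $\coord(G/-)$ out of affine schemes over $U$, restricting along $f$ identifies $T \times_U \coord_G$ with $\coord_{f^\ast G}$. I would then consider
$$
\mu : T \times_U \La \longrightarrow \coord_{f^\ast G}, \qquad \phi \longmapsto x_0 \cdot \phi,
$$
which is $\La$-equivariant by construction. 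It is an isomorphism: for every $T' \to T$, any coordinate of $T' \times_T f^\ast G$ is obtained from $x_0|_{T'}$ by a unique change of coordinates — that is, it equals $x_0 \cdot \phi$ for a unique $\phi \in \La(T')$ — which is precisely the ``empty or contractible'' clause of Theorem \ref{coord-affine}.2 together with the identification of the morphisms of $\coord(G/-)$ with elements of $\La$ from Remark \ref{fgls-rems}.3 and Definition \ref{coord-scheme}.3. Hence $\mu$ is an isomorphism of $\La$-schemes over $T$, and $\coord_G \to U$ is a $\La$-torsor.

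The only genuinely non-formal inputs are the two statements just quoted from Theorem \ref{coord-affine} and Lemma \ref{coords-after-ff}; granting those, the remainder is bookkeeping. The step most likely to need care is verifying that every element of $\La(T')$ really produces a new coordinate, so that the action is simply transitive on the fibre over any base where a coordinate exists — but this is just the observation that a power series with unit leading coefficient, substituted into a coordinate, still generates $\omega_e$ locally, so I expect no serious obstacle.
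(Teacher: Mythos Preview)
Your proposal is correct and follows essentially the same route as the paper: pass to an $fpqc$ cover where a coordinate exists (Lemma \ref{coords-after-ff}), identify the pulled-back scheme with $\coord_{f^\ast G}$, and then check that a choice of coordinate trivializes it as a $\La$-scheme. The paper's proof is terser---it simply asserts that $V \times_U \coord_G = \coord_{f^\ast G}$ is a free right $\La$-scheme and points to Remark \ref{example-to-clarify} for the explicit power-series formulas---whereas you articulate the $\La$-action via the identification $\coord_G \cong U \times_{\cM_\fg} \fgl$ and justify the trivialization through the contractibility clause of Theorem \ref{coord-affine}.2; both arguments amount to the same thing.
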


\begin{proof}The formal group $G$ over $U$ may not
have a coordinate.  However, Lemma \ref{coords-after-ff}
implies that there is an $fqpc$-cover $f:V \to U$ so
that $f^\ast G$ has a coordinate. Reading the proof
of Lemma \ref{coords-after-ff} 
we see that $V$ can be chosen to be affine. Then
$$
V \times_U \coord_G = \coord_{f^\ast G}
$$
is certainly a free right $\La$-scheme over $V$. See Remark
\ref{example-to-clarify} for explicit formulas.
\end{proof}

The following result implies that every $\La$-torsor over $\fgl$
arises in this way from a formal group.

\begin{lem}\label{torsor-over-fgl}Let $S$ be a quasi-compact and
quasi-separated scheme. Let $P \to S$ be a $\La$-torsor
and let $P \to S \times \fgl$ be a morphism $\Lambda$-schemes over
$S$. Then there is a formal group $G \to S$ and
an isomorphism $P \to \coord_G$ of $\La$-torsors
over $S$. This isomorphism is stable under pull-backs in $S$ and
natural in $P$. Furthermore, if $P = \coord_H$, then there
is a natural isomorphism $G \cong H$.
\end{lem}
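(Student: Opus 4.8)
The plan is to descend along the torsor $P \to S$ itself, using that $\cM_\fg$ is a stack to get effectivity, and then to match $\coord_G$ with $P$ by the same descent. First I would check that $\pi \colon P \to S$ is a faithfully flat, quasi-compact morphism and that $P$ is quasi-compact and quasi-separated: since $\La = \Spec(\ZZ[a_0^{\pm 1},a_1,\dots])$ is affine over $\Spec(\ZZ)$, the torsor $P$ is $fpqc$-locally on $S$ isomorphic to a base change of $\La$, so $P \to S$ is affine by descent of affineness (Proposition \ref{ff-affine}), hence quasi-compact with quasi-compact and quasi-separated total space, and it is flat and surjective by local triviality. Thus $\pi$ is an $fpqc$-cover, and over $P$ (and over $P \times_S P$, which is qcqs for the same reason) we may form coordinate schemes of formal groups and invoke Theorem \ref{coord-affine}.

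Second I would build a formal group on the cover. The pullback $\pi^\ast P = P \times_S P \to P$ is trivialized by the diagonal section $\delta \colon P \to P \times_S P$; transporting the $\La$-equivariant morphism $\pi^\ast\alpha \colon P \times_S P \to P \times \fgl$ through this trivialization gives a morphism $P \to \fgl = \Spec(L)$, that is (Theorem \ref{lazard-rep}, Remark \ref{fgls-rems}) a formal group $G_P$ over $P$ equipped with a coordinate $x_P$. Comparing the explicit trivializations of Remark \ref{example-to-clarify}, one gets a canonical isomorphism of $\La$-torsors over $P$, $\coord_{G_P} \cong \pi^\ast P$, compatible with the two structure maps to $\fgl$ (on the left the map sending a coordinate to its formal group law, on the right $\pi^\ast\alpha$). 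The key auxiliary point, which I would isolate as a lemma, is that $G \mapsto (\coord_G \to T,\ \coord_G \to \fgl)$ is a fully faithful functor from formal groups over any quasi-compact quasi-separated $T$ to $\La$-torsors over $T$ equipped with an equivariant map to $\fgl$: faithfulness because a homomorphism of formal groups is determined by where it sends a coordinate function, and fullness because an isomorphism of coordinate torsors respecting the $\fgl$-maps matches the two associated formal group laws $fpqc$-locally, hence (Lemma \ref{coord-pull-fgl} and Theorem \ref{coord-affine}.2, contractibility of the coordinate groupoid) is induced by a unique, canonically defined isomorphism of formal groups that then descends.

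Third I would run the descent. Over $P \times_S P$ the two projections $d_0,d_1$ give formal groups $d_i^\ast G_P$ whose coordinate torsors are, by the previous step, both canonically identified with $(\pi d_0)^\ast P = (\pi d_1)^\ast P$ compatibly with maps to $\fgl$; by the full faithfulness just described, this identification is induced by a unique isomorphism $\psi \colon d_0^\ast G_P \to d_1^\ast G_P$, and its cocycle identity over $P \times_S P \times_S P$ follows from the same uniqueness together with the evident cocycle identity for the torsor identifications. Since $\cM_\fg$ is a stack in the $fpqc$-topology (Proposition \ref{prestack}), the descent datum $(G_P,\psi)$ is effective: there is a formal group $G$ over $S$ with $\pi^\ast G \cong G_P$ carrying $\psi$ to the canonical isomorphism. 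Applying $\coord_{(-)}$ and using its compatibility with pullback, the isomorphism $\coord_{G_P} \cong \pi^\ast P$ solves the descent problem defining $P$ as a torsor-with-$\fgl$-map, so by uniqueness of descent it descends to an isomorphism $\coord_G \cong P$ over $S$; stability under pullback in $S$ and naturality in $P$ hold because $\pi^\ast$ is faithful and everything was constructed on the cover by canonical operations, and if $P = \coord_H$ then $\coord_G \cong \coord_H$ compatibly with the $\fgl$-maps, whence $G \cong H$ by the same full faithfulness.

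The step I expect to be the real work is the full faithfulness claim of the second paragraph, together with the verification that the equivalence it provides carries descent data — in particular the cocycle condition — back and forth. This is precisely where one must unwind how the $\La$-action on $\fgl$ encodes changes of coordinate (Examples \ref{fgl-local-hom} and \ref{fgls-rems}); once that bookkeeping is in place, the rest is formal manipulation with torsors plus the already-established stack property of $\cM_\fg$.
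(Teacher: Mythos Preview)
Your argument is correct and follows the same overall arc as the paper --- construct a formal group on an $fpqc$-cover of $S$, build the descent datum, and descend using that $\cM_\fg$ is a stack --- but the implementations differ in two ways worth noting. First, the paper covers $S$ by an arbitrary trivializing $q:T \to S$ (as in Lemma \ref{coords-after-ff}) and works pointwise with the fibers $P(U,f)$: once $P(U,f)$ is a free $\La(U)$-set the composite $P(U,f) \to \fgl(U) \to \fg(U)$ collapses the orbit to a single formal group $G_f$ and simultaneously identifies $P(U,f) \cong \coord_{G_f}(U)$; the descent isomorphism and its cocycle condition then come from the \emph{uniqueness} of this collapsed image rather than from any full-faithfulness statement. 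You instead use $P$ itself as the cover via the tautological trivialization by the diagonal, and you isolate the bookkeeping into an explicit full-faithfulness lemma for $G \mapsto (\coord_G \to \fgl)$. Your packaging is cleaner and more obviously functorial; the paper's is more hands-on and avoids having to separately verify that $P$ is quasi-compact and quasi-separated. Both routes rely on exactly the same ingredients (Theorem \ref{coord-affine}, Lemma \ref{coord-pull-fgl}, and Proposition \ref{prestack}).
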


\begin{proof} We begin with an observation. Let 
$f:U \to S$ be any morphism of schemes so that fiber
$P(U,f)$ of $P(U) \to S(U))$ at $f$
is a free $\La(U)$-set. Then we have a commutative diagram
$$
\xymatrix{
P(U,f) \rto \dto & \fgl(U) \dto \\
P(U,f)/\La(U) = \ast \rto & \fg(U)
}
$$
and the image of the bottom map is a formal group $G_f$ over
$U$. Since the fiber of $\fgl(U) \to \fg(U)$ at $G_f$ is
$\coord_{G_f}(U)$ we have that $G_f$ has a coordinate
and we have  an isomorphism of
free $\La(U)$-sets
\begin{equation}\label{torsor-iso}
P(U,f) \cong \coord_{G_f}(U).
\end{equation}

To get a formal group over $S$ we use descent. Choose a faithfully
flat and quasi-compact map $q:T \to S$ so that fiber $P(T,q)$ 
is a free $\La(T)$-set. This yields a formal group $G_q$
over $T$ as above. Next examine
the commutative diagram
$$
\xymatrix{
P(T) \ar@<.5ex>[r]\ar@<-.5ex>[r] \dto & P(T \times_S T)\dto\\
S(T) \ar@<.5ex>[r]\ar@<-.5ex>[r]  & S(T \times_S T)
}
$$
where the horizontal maps are given by the two projections. Since
the two maps
$$
\xymatrix{
T \times_S T \ar@<.5ex>[r]^-{p_1}\ar@<-.5ex>[r]_-{p_2} & T \rto^q &S
}
$$
are equal the projection maps yield morphisms between fibers
$$
\xymatrix{
P(T,q) \ar@<.5ex>[r]^-{p_1^\ast}\ar@<-.5ex>[r]_-{p_2^\ast}
& P(T \times_S T,qp_1)
}
$$
and hence a unique isomorphism $p_1^\ast G_q \cong p_2^\ast G_q$.
This isomorphism will satisfy the cocycle condition, using uniqueness.
Now descent gives the formal group $G \to S$. Note that if 
$P = \coord_H$, then $G_q = q^\ast H$; therefore, $G \cong H$.

We now define the isomorphism of torsors $P \to \coord_G$ over $S$.
Since both $P$ and $\coord_G$ are sheaves in the $fpqc$ topology,
it is sufficient to define a natural isomorphism $P(U,f) \to \coord_G(U,f)$
for all $f:U \to S$ so that both $P(U,f)$ and $\coord_G(U,f)$ are 
free $\La(U)$-sets. This isomorphism is defined by Equation
\ref{torsor-iso} using the observation that 
$$
\coord_{f^\ast G}(U) = \coord_G(U,f).
$$
\end{proof}

\begin{prop}\label{fg-homotopy-orbit}This morphism
$$
\cM_\fg \longr \fgl \times_\La E\La
$$
is an equivalence of algebraic stacks.
\end{prop}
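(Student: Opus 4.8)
The plan is to write down the morphism in question and then check that it induces an equivalence of groupoids over every scheme, the essential input being Lemma~\ref{coord-is-torsor} (that $\coord_G$ is a $\La$-torsor) and Lemma~\ref{torsor-over-fgl} (that every such torsor arises in this way). Recall the morphism $\Phi\colon \cM_\fg \to \fgl \times_\La E\La$: to a formal group $G$ over a quasi-compact, quasi-separated scheme $U$ it assigns the pair $(\coord_G,\alpha_G)$, where $\coord_G \to U$ is the $\La$-torsor of Lemma~\ref{coord-is-torsor} and $\alpha_G\colon \coord_G \to U \times \fgl$ sends a coordinate $x$ of a pullback of $G$ to the formal group law $x_1 +_{(G,x)} x_2$ it determines, as in Remark~\ref{fgls-rems}.2. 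First I would check that $\alpha_G$ is $\La$-equivariant and natural in $U$: equivariance follows from the change-of-coordinates formulas of Remark~\ref{example-to-clarify} and the definition of the $\La$-action on $\fgl$, and naturality from the functoriality of $\coord_{(-)}$ (Theorem~\ref{coord-affine}) together with Remark~\ref{fgls-rems}.4. An isomorphism $G \to H$ of formal groups over $U$ transports coordinates (Remark~\ref{fgls-rems}.4) and so induces a $\La$-equivariant isomorphism $\coord_G \to \coord_H$ over $\fgl$; this defines $\Phi$ on morphisms.

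Both $\cM_\fg$ (Proposition~\ref{prestack}) and $\fgl \times_\La E\La$ (Proposition~\ref{homotopy-orbit}) are stacks in the $fpqc$ topology, so $\Phi$ is an equivalence as soon as it induces an equivalence of groupoids on sections over every affine scheme $U$ --- any such $U$ being automatically quasi-compact and quasi-separated, so that the cited lemmas apply. Thus it suffices to prove that, for each such $U$,
$$
\Phi_U\colon \cM_\fg(U) \longr (\fgl \times_\La E\La)(U)
$$
is an equivalence of groupoids.

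Here Lemma~\ref{torsor-over-fgl} provides precisely a quasi-inverse. Given $(P,\alpha)$ --- a $\La$-torsor $P \to U$ together with a $\La$-morphism $P \to U \times \fgl$ --- it produces a formal group $G$ over $U$ and an isomorphism $P \cong \coord_G$ of $\La$-torsors compatible with the maps to $\fgl$, \emph{naturally in $P$}, and it identifies the reconstructed group with $H$ when $(P,\alpha) = (\coord_H,\alpha_H)$. Reading this as the statement that $(P,\alpha) \mapsto G$ defines a functor $\Psi_U$ with natural isomorphisms $\Phi_U\Psi_U \cong \mathrm{id}$ (the isomorphisms $P \cong \coord_G$) and $\Psi_U\Phi_U \cong \mathrm{id}$ (the isomorphisms $G \cong H$), we conclude that $\Phi_U$ is an equivalence, and hence that $\Phi$ is an equivalence of algebraic stacks.

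The substantive work has thus already been carried out in Lemmas~\ref{coord-is-torsor} and~\ref{torsor-over-fgl}; what remains, and is the only point requiring real care, is the bookkeeping that matches the two incarnations of the isomorphism data --- isomorphisms of formal groups on the $\cM_\fg$ side, and $\La$-equivariant isomorphisms of coordinate torsors commuting with the tautological maps to $\fgl$ on the homotopy-orbit side. This means using repeatedly the identification $\coord_{f^\ast G} \cong f^\ast \coord_G$ and the dictionary of Remark~\ref{fgls-rems} between coordinates, formal group laws, and the change-of-coordinates action of $\La$, and checking that $\Phi$, $\Psi_U$, and the natural transformations above are all compatible with it. That is the main obstacle, though it is a matter of diligence rather than of difficulty.
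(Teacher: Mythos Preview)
Your proposal is correct and follows essentially the same approach as the paper: the key input is Lemma~\ref{torsor-over-fgl}, which furnishes the inverse functor and the natural isomorphisms witnessing that both composites are equivalent to the identity. The paper's proof is a one-line appeal to that lemma, whereas you spell out the forward morphism via Lemma~\ref{coord-is-torsor} and reduce to affines using the stack property; these are reasonable elaborations of the same argument rather than a different route.
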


\begin{proof} Lemma \ref{torsor-over-fgl} at once supplies the
map $\fgl \times_{\La} E\La\to \cM_\fg$ and the needed natural
transformations from either of the two composites to the identity.
\end{proof}

\begin{rem}[{\bf More on gradings}]\label{gradings-revisited-2} In
Remarks \ref{comod-qc-sheaves} and \ref{qc-comodule}
we noted that the category of
quasi-coherent sheaves on $\cM_\fg$ is equivalent to the
category of $(L,W)$-comodules. In Remark \ref{gradings} we noted
that $(L,W)$ has a natural grading. We'd now like to put the
gradings into the comodules and recover the $E_2$-term of
the Adams Novikov Spectral Sequence as the cohomology
of the moduli stack $\cM_\fg$.

Let $\La_0$ be a group scheme with
a right action by another group scheme $H$. Then we can form the semi-direct
product $\La_0 \ltimes H = \La$. To specify a right action of $\La$ on a scheme
$X$ is to specify actions of $\La_0$ and $H$ on $X$ so that for all rings
$A$ and all $x \in X(A)$, $g \in \La_0(A)$, and $u \in H(A)$, we have
$$
x(gu) = (xu)(gu).
$$
We then get a morphism of algebraic stacks
\begin{equation}\label{semi-direct}
X//\La_0 \defeq X \times_{\La_0} \La_0 \longr X \times_G E\La \defeq X//\La
\end{equation}
If $H$ and $\La_0$ are both flat over the base ring $R$, then this
is  a reprentable and flat morphism. We now want to identify the
fiber product $X//\La_0 \times_{X//\La} X//\La_0$.

Let $A$ be a commutative ring and $P_0$ a $\La_0$-torsor over $A$.
If $u \in H(A)$ is an $A$-point of $H$, then we get a new $A$-torsor
$P^u$ with underlying scheme $P$ but a new action defined pointwise
by
$$
x \ast g = x(gu).
$$
Here we have used $\ast$ for the new action and juxtaposition
for the old. If $\alpha:P \to A \otimes X$ is a morphism of
$\La_0$-schemes then we get a new morphism $\alpha^u:P^u \to X$
given pointwise by
$$
\alpha^u(x) = \alpha(x)u^{-1}.
$$
Conjugation by $u$ in $\La$ defines an isomorphism
$\phi_u:P \times_{\La_0} \La \to P^u \times_{\La_0} \La$ of
$\La$-torsors over $A$ so that the following diagram commutes
$$
\xymatrix@R=10pt{
P \times_{\La_0} \La \ar[dr]^\alpha\ar[dd]_{\phi_u}\\
& A \otimes X.\\
P^u \times_{\La_0} \La \ar[ur]_{\alpha^u}
}
$$
Thus we have defined a morphism
$$
X//\La_0 \times H \to X//\La_0 \times_{X//\La} X//\La_0
$$
given pointwise by
$$
((P,\alpha),u) \mapsto ((P,\alpha),(P^u,\alpha^u),\phi_u)
$$
and we leave it to the reader to show that this is an equivalence.

From this equivalence we can conclude that the category of 
quasi-coherent sheaves on $X \times_\La E\La$ is equivalent to the
category of cartesian quasi-cohernet sheaves on the \v Cech nerve
induced by the morphism of Equation \ref{semi-direct}:
\begin{equation}\label{cobar3}
\xymatrix{
\cdots\ X//\La_0 \times H \times H\ar@<1.5ex>[r] \ar[r] \ar@<-1.5ex>[r]&
\ar@<.75ex>[l] \ar@<-.75ex>[l] X//\La_0 \times H \ar@<.75ex>[r] \ar@<-.75ex>[r] &
\lto X//\La_0 \rto & X//\La.
}
\end{equation}

This translates into comdodules as follows. Suppose that $\La_0 = \Spec(\Ga_0)$
and $H = \Spec(K)$ for Hopf algebras $\Ga_0$ and $K$ respectively.  Then
$\La = \Spec(\Ga)$ where $\Ga = \Ga_0 \otimes K$ with the twisted Hopf
algebra structure determined by the action of $H$ on $\La_0$. Suppose 
$X = \Spec(A)$. If $M$ is an $(A,A \otimes K$)-comodule, then
$M \otimes \La_0$ has an induced structure as an $(A,A \otimes K)$-comodule
using the diagonal coaction. We define the category of
$(A,A \otimes K)$-comodules in $(A,A\otimes \La_0)$-comodules
to be those comodules so that the comodule structure map
$$
M \longr M \otimes_A (A \otimes \La_0)
$$
is a morphism of $(A,A \otimes K)$-comodules. 
We have
\begin{enumerate}

\item the category of quasi-coherent sheaves on $X \times_\La E\La$ 
is equivalent to the category of  $(A,A \otimes \Ga)$-comodules; and

\item the category of cartesian sheaves on the \v Cech nerve
of $X//\La_0 \to X//\La$ is equivalent to the category of $(A,A \otimes K)$
comodules in the category of $(A,A\otimes K)$-comodules in
$(A,A\otimes \La_0)$-comodules.
\end{enumerate}

From this we conclude that the category of  $(A,A \otimes \Ga)$-comodules
is equivalent to the category of $(A,A\otimes K)$-comodules in
$(A,A\otimes \La_0)$-comodules.

As example, suppose $H = \GG_m$. Then the action of $\GG_m$
on $G_0$ and $X$ gives a grading to $\La_0$ and $A$ and the
category of $(A,A \otimes K)$
comodules in the category of $(A,A\otimes K)$-comodules in
$(A,A\otimes \La_0)$-comodules is equivalent to the category
of {\it graded} $(A,A \otimes \La_0)$-comodules. Thus
we conclude that the category $(A,A \otimes \La_0[a_0^{\pm 1}])$ comodules
is equivalent to the category of graded $(A,A \otimes \La_0)$-comodules.
In this case it is possible to give completely explicit formulas for
the equivalence. For example, if $M$ is an $(A,A \otimes \La_0[a_0^{\pm 1}])$
comodule, the comodule structure map induces a homomorphism
\index{comdules, graded}
$$
\xymatrix{
M \rto & M \otimes_A ( A \otimes \La_0[a_0^{\pm 1}]) \rto^-\cong &
M  \otimes \La \otimes \ZZ[a_0^{\pm 1}] \rto^-{1 \otimes \epsilon \otimes 1} &
M \otimes \ZZ[a_0^{\pm 1}]
}
$$
which defines the grading and the map
$$
\xymatrix{
M \rto & M \otimes_A ( A \otimes \La_0[a_0^{\pm 1}]) \rto^-{a_0=1}&
M \otimes_A ( A \otimes \La_0)
}
$$
induces the comodule structure.

This equivalence of categories can be used to refine the isomorphism
of Equation \ref{coh-is-ext}. If $\cF$ is a quasic-coherent sheaf
of $X \times_\La E\La$, let $M$ be the associated comodules.
The we have natural isomorphisms --  where we have added asterisks ($\ast$)
to indicate where we are working with graded comodules.
\begin{align}\label{coh-is-ext-graded}
H^s(X \times_G EG,\cF) &\cong \Ext_\La(A,M)\\
                        &\cong \Ext^s_{\La_{0,\ast}}(A_\ast,M_\ast)\nonumber.
\end{align}
In the case of formal groups, we get the grading on the Lazard ring of
this yields the isomorphism of Remark \ref{gradings}; write $L_\ast$
for this graded ring. Then
$$
W_{0,\ast} = L_\ast[a_1,a_2,a_3,\cdots]
$$
represents the functor of strict isomorphisms. The complex cobordism
ring $MU_\ast$ is $L_\ast$ with the grading doubled; likewise,
$MU_\ast MU$ is $W_{0,\ast}$ with the grading doubled. With all
of this done, we can identify
sheaf cohomology with $E_2$-term on the Adams-Novikov spectral
sequence. For example,
\begin{align}\label{ANSS-E2-2}\index{Adams-Novikov spectral sequence}
H^s(\cM_\fg,\omega^t) &\cong \Ext_W(L,L[t])\nonumber\\
                        &\cong \Ext^s_{MU_\ast MU}(MU_\ast,\Omega^{2t}MU_\ast)\\
                        &\cong \Ext^s_{MU_\ast MU}(\Sigma^{2t}MU_\ast,MU_\ast)\nonumber
\end{align}
The extra factor of $2$ arises as part of the topological grading. 
\end{rem}

\def\pnty#1{{{\langle{#1}\rangle}}}

\subsection{Buds of formal groups}

One of the difficulties with the moduli stack $\cM_\fg$ of formal
groups is that it does not have good finiteness properties. We have
written $\cM_\fg$ as $\fgl \times_\La E\La$ and neither the
group $\La$ or the scheme $\fgl$ is  of finite type over $\ZZ$.
However, we can
write $\cM_\fg$ as the homotopy inverse limit of stacks $\cM_\fg\pnty{n}$
which has an affine smooth cover of dimension $n$. 

Let $n \geq 1$ and $\La\pnty{n}$ be the affine group scheme over
$\Spec(\ZZ)$ which
assigns to each commutative ring $R$, the partial power series
of degree $n$
$$
f(x) = a_0x + a_1x^2 + \cdots + a_{n-1}x^{n} \in R[[x]]/(x^{n+1})
$$
with $a_0$ a unit. This becomes a
group under composition of power series. Of course,
$$
\La\pnty{n} = \Spec(\ZZ[a_0^{\pm 1},a_1,\ldots,a_{n-1}]).
$$
Similarly, let $\fgl\pnty{n}$ be the affine scheme of {\it $n$-buds of
formal group laws}\index{buds of formal group laws}
$$
F(x,y) \in R[[x,y]]/(x,y)^{n+1}.
$$
Thus we are requiring that $F(x,0) = x = F(0,x)$, $F(x,y) = F(y,x)$,
and
$$
F(x,F(y,z)) = F(F(x,y),z)
$$
all modulo $(x,y)^{n+1}$. The symmetric $2$-cocyle lemma
\cite{Rav} A.2.12 now implies that\index{Lazard ring for buds, $L\pnty{n}$}
$$
\fgl\pnty{n} = \Spec(\ZZ[x_1,x_2,\cdots,x_{n-1}]) \defeq \Spec(L\pnty{n})
$$
and  modulo $(x_1,\ldots,x_n)^2$, the universal $n$-bud
reads
$$
F_u(x,y) = x+y + x_1C_2(x,y) + \cdots x_{n-1}C_{n}(x,y)
$$
where $C_k(x,y)$ is the $k$th symmetric $2$-cocyle.
The group $\La\pnty{n}$ acts of $\fgl\pnty{n}$.

\begin{defn}\label{buds-stack} The {\bf moduli stack of $n$-buds
of formal groups} is the homotopy orbit stack\index{stack, of buds}
$$
\cM_\fg\pnty{n} = \fgl\pnty{n} \times_{\La\pnty{n}}  E\La\pnty{n}.
$$
\end{defn}

\begin{rem}\label{warning-grading}1.) {\bf Warning:} The stacks $\cM_\fg\pnty{n}$ are not related
to the spectra $BP\pnty{n}$ which appear in chromtatic
stable homotopy -- see \cite{Rav} -- but I was running out
of notation. I apologize for the confusion. The objects $BP\pnty{n}$
will not appear in these notes, although the
cognoscenti should contemplate Lemma \ref{p-buds} below.

2.) Using Remarks \ref{gradings} and \ref{gradings-revisited-2} we see that the category of
quasi-cohernet sheaves is equivalent to the category of graded
comodules over the graded Hopf algebroid $(L\pnty{n}_\ast,W\pnty{n}_{0,\ast})$
where $L\pnty{n}_\ast$ is the ring $L\pnty{n}$ with the degree of $x_i$
equal to $i$ and 
$$
W\pnty{n}_{0,\ast} = L\pnty{n}_\ast[a_1,a_2,\cdots,a_{n-1}]
$$
with the degree of $a_i$ equal to $i$. This will be important later
in the proof of Theorem \ref{fp-modules}. Note that $W\pnty{n}_{0,\ast}$
represents the functor of strict isomorphisms of buds.
\end{rem}

There are canonical maps
$$
\cM_\fg \longr \cM_\fg\pnty{n} \longr \cM_\fg\pnty{n-1}.
$$

\begin{exam}\label{pnty-low} To make your confusion specific\footnote{This is a quote from Steve Wilson. See \cite{primer}.}, note that
$$
\cM_\fg\pnty{1} = B\GG_m = \Spec(\ZZ) \times_{\GG_m} E\GG_m.
$$
This is because $\La_1(R) = R^\times = \GG_m(R)$ is the
group of units in $R$ and, modulo 
$(x,y)^2$, the unique bud of a formal group law is $x + y$.
We also have
$$
\cM_\fg\pnty{2} = \AA^1\times_{\La_2} E\La_2
$$
where $\La_2$ acts on $\AA^1$ by 
$$
(b,a_0x + a_1x^2) \mapsto a_0b - 2(a_1/a_0).
$$
Note that, modulo $(x,y)^3$, any bud of a formal group law
is of the form $x+y + bxy$.
\end{exam}

The following implies that $\cM_\fg\pnty{n}$ is an algebraic
stack in the sense of \cite{Laumon} D\'efinition 4.1. See
also \cite{Laumon}, Exemple 4.6.

\begin{prop}\label{chunks-dim} The morphism
$$
\Spec(L\pnty{n}) \to \cM_\fg\pnty{n}
$$
classifying the universal $n$-bud of a formal group law
is a presentation and smooth of relative dimension $n$.
\end{prop}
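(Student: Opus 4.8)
The plan is to read everything off the description of $\cM_\fg\pnty{n}$ as a homotopy orbit stack together with the general results of the previous subsection. Recall from Definition \ref{buds-stack} that $\cM_\fg\pnty{n} = \fgl\pnty{n} \times_{\La\pnty{n}} E\La\pnty{n}$, where $\fgl\pnty{n} = \Spec(L\pnty{n})$ with $L\pnty{n} = \ZZ[x_1,\ldots,x_{n-1}]$ — this identification being exactly the content of the symmetric $2$-cocycle lemma recalled above — and $\La\pnty{n} = \Spec(\ZZ[a_0^{\pm 1},a_1,\ldots,a_{n-1}])$. The structure map $\La\pnty{n}\to\Spec(\ZZ)$ is affine, in particular quasi-compact, and it is smooth, in particular flat; so Proposition \ref{homotopy-orbit} applies and tells us that
$$
q:\fgl\pnty{n} = \Spec(L\pnty{n}) \longr \cM_\fg\pnty{n}
$$
is an $fpqc$-presentation, which is the first assertion. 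That same proposition supplies a canonical isomorphism
$$
\fgl\pnty{n} \times_{\cM_\fg\pnty{n}} \fgl\pnty{n} \cong \fgl\pnty{n} \times \La\pnty{n}
$$
under which the two projections become $d_0(x,g) = x$ and $d_1(x,g) = xg$. Since by Proposition \ref{homotopy-orbit} the stack $\cM_\fg\pnty{n}$ is $fpqc$-algebraic, its diagonal is representable, and therefore $q$ is a representable morphism (a morphism to a stack with representable diagonal is representable); in particular it makes sense to ask for its relative dimension in the sense of Definition \ref{stacks-over-flat}.

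To show $q$ is smooth of relative dimension $n$ I would apply Lemma \ref{check-once} to the property ``smooth of relative dimension $n$''. This property is closed under base change, and a morphism $f:X\to Y$ has it if and only if $Z\times_Y X\to Z$ has it for some faithfully flat quasi-compact $Z\to Y$, since both smoothness and the relative dimension descend along faithfully flat maps (\cite{EGAIV}). As $q$ is itself a presentation, it then suffices to verify the property after base change along $q$, that is, to check that $d_0:\fgl\pnty{n}\times_{\cM_\fg\pnty{n}}\fgl\pnty{n}\to\fgl\pnty{n}$ is smooth of relative dimension $n$. Via the isomorphism above this map is the first projection
$$
\fgl\pnty{n} \times \La\pnty{n} = \Spec(L\pnty{n}[a_0^{\pm 1},a_1,\ldots,a_{n-1}]) \longr \Spec(L\pnty{n}),
$$
which is manifestly smooth: it factors as a $\GG_m$-torsor followed by an $\AA^{n-1}$-bundle over $\fgl\pnty{n}$, hence is smooth of relative dimension $1 + (n-1) = n$. (Either $d_0$ or $d_1$ works, since $(x,g)\mapsto(xg,g)$ is an automorphism of $\fgl\pnty{n}\times\La\pnty{n}$.)

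The only points that need care — and where all the (minor) work lies — are first that ``smooth of relative dimension $n$'' satisfies the faithfully flat descent hypothesis needed to invoke Lemma \ref{check-once}, which is standard, and second the bookkeeping that the open subscheme $\Spec(\ZZ[a_0^{\pm 1}])\subseteq\AA^1$ contributes relative dimension exactly $1$ while the free coordinates $a_1,\ldots,a_{n-1}$ contribute $n-1$, so the total is $n$ rather than $n-1$. I do not expect any serious obstacle: the proposition is essentially the remark that the homotopy orbit presentation of $\cM_\fg\pnty{n}$ has a finite-type affine cover and that the group $\La\pnty{n}$ acting on it is smooth of dimension $n$ over $\ZZ$.
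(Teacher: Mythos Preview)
Your argument is correct and is essentially the same as the paper's: both use Proposition \ref{homotopy-orbit} for the presentation claim and then reduce the smoothness check, via $fpqc$-descent of smoothness, to the projection $\fgl\pnty{n}\times\La\pnty{n}\to\fgl\pnty{n}$, which is visibly smooth of relative dimension $n$. The only cosmetic difference is that you invoke Lemma \ref{check-once} explicitly while the paper phrases the same reduction as ``since smoothness is local for the $fpqc$ topology, we may assume that $\Spec(R)\to\cM_\fg\pnty{n}$ classifies a bud of a formal group law.''
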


\begin{proof}That the morphism is a presentation
follows from Proposition \ref{homotopy-orbit}. 
To see that it is smooth of relative dimension $n$, we must
check that for all morphisms $\Spec(R) \to \cM_\fg\pnty{n}$
the resulting pull-back
$$
\Spec(R) \times_{\cM_\fg\pnty{n}} \Spec(L\pnty{n}) \to \Spec(R)
$$
is smooth of relative dimension $n$.
Since smoothness is local for the $fpqc$ topology, we may
assume that $\Spec(R) \to \cM_\fg\pnty{n}$ classifies
a bud of formal group law. Then
$$
\Spec(R) \times_{\cM_\fg\pnty{n}} \Spec(L\pnty{n}) \cong
\Spec(R[a^{\pm 1}_0,a_1,\cdots,a_{n-1}]) = \Spec(R) \times \La_n
$$
and this suffices.
\end{proof}

Recall that that $n$th symmetric  $2$-cocycle is
$$
C_n(x,y) = \frac{1}{d_n}[(x+y)^n - x^n - y^n].
$$
where
$$
d_n = \brackets{p,}{n=p^k\ \hbox{for a prime $p$};}{1,}{\mathrm{otherwise}.}
$$
Let $\GG_a$ be the additive group scheme and let
$\AA^1\pnty{n}$ be the $\GG_a$ scheme with action
$\AA^1\pnty{n} \times \GG_a \to \AA^1\pnty{n}$ given
by 
$$
(x,a) \mapsto x - d_na.
$$

\begin{lem}\label{trans-down}The morphism $\La\pnty{n} \to \La\pnty{n-1}$
of affine group schemes is flat and surjective with kernel $\GG_a$. Furthermore
there is an equivariant isomorphism of $\GG_a$ schemes over
$\fgl\pnty{n-1}$
$$
\fgl\pnty{n} \cong \fgl\pnty{n-1} \times \AA^1\pnty{n}.
$$
\end{lem}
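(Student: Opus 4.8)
The plan is to treat the two assertions separately: the first is a bookkeeping exercise with truncated power series, and the second reduces to a direct computation with the action formula of \S3.2 once Lazard's explicit presentation of $\fgl\pnty{n}$ is in hand.

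For the statement about group schemes, recall $\La\pnty{n} = \Spec(\ZZ[a_0^{\pm 1},a_1,\ldots,a_{n-1}])$ and $\La\pnty{n-1} = \Spec(\ZZ[a_0^{\pm 1},a_1,\ldots,a_{n-2}])$, and that the homomorphism in question sends a partial series of degree $n$ to its truncation modulo $x^n$; on coordinate rings this is the evident inclusion $\ZZ[a_0^{\pm 1},a_1,\ldots,a_{n-2}] \hookrightarrow \ZZ[a_0^{\pm 1},a_1,\ldots,a_{n-1}]$, a free ring extension, hence faithfully flat, so the morphism is flat. It is surjective because any degree $n-1$ partial series with unit leading coefficient lifts to degree $n$ by adjoining the coefficient $0$ in degree $n$. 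An $f\in\La\pnty{n}(R)$ lies in the kernel precisely when $f(x)\equiv x$ modulo $x^n$, i.e.\ $f(x)=x+ax^{n}$ with $a\in R$; since $(x+ax^{n})\circ(x+bx^{n})\equiv x+(a+b)x^{n}$ modulo $x^{n+1}$, the kernel is isomorphic to $\GG_a$ with coordinate $a$.

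For the second assertion I would invoke the symmetric $2$-cocycle lemma in the form giving $\fgl\pnty{n}=\Spec(\ZZ[x_1,\ldots,x_{n-1}])$ \emph{compatibly in $n$}: the truncation $\fgl\pnty{n}\to\fgl\pnty{n-1}$ is then the projection $\AA^{n-1}_{\ZZ}\to\AA^{n-2}_{\ZZ}$ forgetting $x_{n-1}$, so $x_{n-1}$ is a coordinate on the fibre, and the universal $n$-bud reads $F_u(x,y)\equiv x+y+x_1C_2(x,y)+\cdots+x_{n-1}C_{n}(x,y)$ modulo $(x_1,\ldots,x_{n-1})^2$. Since $x_{n-1}$ carries weight $n-1$ (Remark \ref{gradings}) while $C_n$ is homogeneous of degree $n$ in $(x,y)$, no term in $x_{n-1}^2$ can appear in the degree-$n$ (in $(x,y)$) part of $F_u$; hence that part is $x_{n-1}C_n(x,y)+Q_n(x_1,\ldots,x_{n-2})(x,y)$ with $Q_n$ free of $x_{n-1}$, and all lower-degree parts of $F_u$ involve only $x_1,\ldots,x_{n-2}$.

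Finally I would compute the action of the kernel $\GG_a\subseteq\La\pnty{n}$. With $\phi(x)=x+ax^{n}$, so $\phi^{-1}(x)\equiv x-ax^{n}$ modulo $x^{n+1}$, and using $(F\phi)(x,y)=\phi^{-1}(F(\phi(x),\phi(y)))$, a substitution modulo $(x,y)^{n+1}$ gives $F(\phi(x),\phi(y))\equiv F(x,y)+ax^{n}+ay^{n}$ and $F(\phi(x),\phi(y))^{n}\equiv(x+y)^{n}$, whence
$$
(F\phi)(x,y)\equiv F(x,y)-a\bigl[(x+y)^{n}-x^{n}-y^{n}\bigr]=F(x,y)-a\,d_n\,C_n(x,y)
$$
modulo $(x,y)^{n+1}$. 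Applying this with $F=F_u$ and reading off the coordinates $x_1,\ldots,x_{n-1}$, the element $\phi$ fixes $x_1,\ldots,x_{n-2}$ and sends $x_{n-1}\mapsto x_{n-1}-a\,d_n$ — which is exactly the declared $\GG_a$-action on $\AA^1\pnty{n}$, trivial on the $\fgl\pnty{n-1}$ factor. Thus the coordinate identification $\fgl\pnty{n}\cong\fgl\pnty{n-1}\times\AA^1\pnty{n}$ is $\GG_a$-equivariant over $\fgl\pnty{n-1}$, as claimed. The one genuinely delicate step is the structural input from Lazard: that the presentations $\ZZ[x_1,\ldots,x_{n-1}]$ may be chosen so that truncation is the obvious projection and the universal bud is linear in $x_{n-1}$ with leading cocycle $C_n$. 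Granting that, everything else is the bounded-degree manipulation above.
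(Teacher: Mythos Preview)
Your proof is correct and follows essentially the same route as the paper: identify the kernel as the series $x+ax^n$ under composition, compute that acting by such a $\phi$ on a bud $F$ subtracts $a\,d_n\,C_n(x,y)$, and then read off the coaction on the Lazard generators to see $x_{n-1}\mapsto x_{n-1}-a\,d_n$ with $x_1,\dots,x_{n-2}$ fixed. You supply more detail than the paper does---notably the grading argument for why the degree-$n$ part of $F_u$ is linear in $x_{n-1}$, and the explicit verification of flatness for $\La\pnty{n}\to\La\pnty{n-1}$---but the substance is the same.
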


\begin{proof}The kernel of $\La\pnty{n}(R) \to \La\pnty{n-1}(R)$ is
all power series of the form $\phi_a(x) = x + ax^n$ modulo $(x^{n+1})$.
Since $\phi_a(\phi_{a'}(x)) = \phi_{(a+a')}(x)$ modulo $(x^{n+1})$,
the first statement follows. For the splitting of $\fgl\pnty{n}$ note that if
$\phi_a(x)$ is an isomorphism of buds of formal group laws $F \to F'$,
then
\begin{align*}
F'(x,y) &= F(x,y) + a[x^n - y^n - (x+y)^n]\\
& = F(x,y) - d_naC(x,y).
\end{align*}
Thus the coaction morphism on coordinate rings 
$$
\ZZ[x_1,\ldots, x_n] \longr \ZZ[x_1,\ldots, x_n] \otimes \ZZ[a]
$$
sends $x_i$ to $x_i$ is $i \ne n$ and $x_n$ to 
$$
x_n \otimes 1- 1 \otimes d_n a.
$$
This gives the splitting.
\end{proof}

\begin{prop}\label{chunk-fibration}For all $n \geq 1$ the
reduction map
$$
\cM_\fg\pnty{n} \longr \cM_\fg\pnty{n-1}
$$
is a fibration. If $R$ is any commutative ring in which $d_n$
is a unit, then
$$
\cM_\fg\pnty{n} \otimes R\longr 
\cM_\fg\pnty{n-1} \otimes R
$$
is an equivalence of algebraic stacks.
\end{prop}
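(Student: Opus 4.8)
The plan is to deduce both assertions from the general results on homotopy orbit stacks in Section~3.1, feeding in the structural input of Lemma~\ref{trans-down}. For the fibration statement, apply Proposition~\ref{fibration} with $\La_1=\La\pnty{n}$, $\La_2=\La\pnty{n-1}$, the truncation homomorphism $f:\La\pnty{n}\to\La\pnty{n-1}$, $X_1=\fgl\pnty{n}$ with its $\La\pnty{n}$-action, and $X_2=\fgl\pnty{n-1}$ with its $\La\pnty{n-1}$-action regarded as a $\La\pnty{n}$-action via $f$. The map $q:\fgl\pnty{n}\to\fgl\pnty{n-1}$ reducing an $n$-bud modulo $(x,y)^n$ is then a morphism of $\La\pnty{n}$-schemes: the action formula $(F\phi)(x,y)=\phi^{-1}(F(\phi(x),\phi(y)))$ and the truncation $\phi\mapsto f(\phi)$ are both compatible with reduction modulo $(x,y)^n$, so $q$ intertwines the two actions. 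Proposition~\ref{fibration} then says precisely that
$$
\cM_\fg\pnty{n}=\fgl\pnty{n}\times_{\La\pnty{n}}E\La\pnty{n}\longr\fgl\pnty{n-1}\times_{\La\pnty{n-1}}E\La\pnty{n-1}=\cM_\fg\pnty{n-1}
$$
is a fibration of algebraic stacks in the $fpqc$ topology, and this uses nothing about $d_n$.

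For the second assertion (which I will treat for $n\geq 2$, the case $n=1$ being $\cM_\fg\pnty{1}=B\GG_m$ of Example~\ref{pnty-low}), base change the whole picture along $\ZZ\to R$ and apply Proposition~\ref{torsor-equiv} with $K=\GG_a$. Lemma~\ref{trans-down} supplies the two extra hypotheses: $f$ is flat and surjective with kernel $\GG_a$, and there is a $\GG_a$-equivariant isomorphism $\fgl\pnty{n}\cong\fgl\pnty{n-1}\times\AA^1\pnty{n}$ over $\fgl\pnty{n-1}$, with $\GG_a$ acting trivially on $\fgl\pnty{n-1}$ and by $x\cdot a=x-d_n a$ on $\AA^1\pnty{n}$. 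When $d_n$ is a unit in $R$, the orbit map $a\mapsto -d_n a$ through the origin is an isomorphism of $\GG_a$-schemes $\GG_{a,R}\cong\AA^1\pnty{n}\otimes R$; hence $\AA^1\pnty{n}\otimes R$ is a (trivial) $\GG_a$-torsor over $\Spec(R)$, and therefore $\fgl\pnty{n}\otimes R\to\fgl\pnty{n-1}\otimes R$ is a $\GG_{a,R}$-torsor over $\fgl\pnty{n-1}\otimes R$. Proposition~\ref{torsor-equiv}, applied with $X_1=\fgl\pnty{n}\otimes R$, $X_2=\fgl\pnty{n-1}\otimes R$ and $K=\GG_{a,R}$, then yields that $\cM_\fg\pnty{n}\otimes R\to\cM_\fg\pnty{n-1}\otimes R$ is an equivalence of algebraic stacks.

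I do not anticipate a genuine obstacle: the substance has been packaged into Propositions~\ref{fibration} and~\ref{torsor-equiv} together with Lemma~\ref{trans-down}, and what remains is bookkeeping. The two points that warrant real care are (i) checking the equivariance of the truncation map $q$ carefully enough that the hypotheses of Proposition~\ref{fibration} are literally met, and (ii) the elementary but decisive observation that the action $x\cdot a=x-d_n a$ on $\AA^1\pnty{n}$ is simply transitive exactly when $d_n\in R^\times$ --- this is the dividing line between being merely a fibration and being an equivalence. By way of contrast it is worth flagging that over $\FF_p$ with $n=p^k$ this $\GG_a$-action is trivial, so $\fgl\pnty{n}\otimes\FF_p$ is not a $\GG_a$-torsor over $\fgl\pnty{n-1}\otimes\FF_p$ and Proposition~\ref{torsor-equiv} does not apply; one can see the discrepancy through the homotopy fiber computation of Proposition~\ref{fibers}, which identifies the fiber of $\cM_\fg\pnty{n}\to\cM_\fg\pnty{n-1}$ over $\fgl\pnty{n-1}$ with $\fgl\pnty{n-1}\times(\AA^1\pnty{n}\times_{\GG_a}E\GG_a)$, and this last factor is contractible over $\Spec(R)$ precisely when $d_n$ is invertible there (compare Example~\ref{extremes-orbits}).
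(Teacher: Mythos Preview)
Your proposal is correct and follows essentially the same route as the paper: invoke Proposition~\ref{fibration} for the fibration statement, then Lemma~\ref{trans-down} together with Proposition~\ref{torsor-equiv} (and the observation that $\AA^1\pnty{n}\otimes R\cong\GG_{a,R}$ as $\GG_a$-schemes when $d_n\in R^\times$) for the equivalence. Your write-up is more explicit about the equivariance check and the failure mode when $d_n$ is not a unit, but the argument is the same.
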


\begin{proof} This follows immediately from Example \ref{extremes-orbits},
Propositions \ref{fibration} and
\ref{torsor-equiv}, Lemma \ref{trans-down}, and the following
fact: if $d_n$ is a unit in $A$, then $\AA^1\pnty{n}$ is isomorphic
to $\GG_a$ as a right $\GG_a$-scheme.
\end{proof}

\begin{thm}\label{chunks-inv}
The natural map
$$
\cM_\fg \longr \holim \cM_\fg\pnty{n}
$$
is an equivalence of stacks.
\end{thm}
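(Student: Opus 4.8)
The plan is to reduce the theorem to three elementary facts: that $\fgl=\Spec(L)=\lim_n\fgl\pnty{n}$ as schemes, that $\La=\lim_n\La\pnty{n}$ as group schemes, and that the homotopy-orbit construction carries this tower of ``group scheme acting on a scheme'' data to the tower $\{\cM_\fg\pnty{n}\}$ compatibly with (homotopy) inverse limits. The first two facts are Lazard's contribution: the symmetric $2$-cocycle lemma (Remark \ref{sym-2-cocyle}) is exactly what identifies $L\pnty{n}\cong\ZZ[x_1,\dots,x_{n-1}]$ and $L\cong\ZZ[x_1,x_2,\dots]$, so that $L=\colim_n L\pnty{n}$ and hence $\fgl=\lim_n\fgl\pnty{n}$ along the evident surjections; dually $\La=\Spec(\ZZ[a_0^{\pm1},a_1,\dots])=\lim_n\La\pnty{n}$, with transition maps $\La\pnty{n+1}\to\La\pnty{n}$ flat and surjective with kernel $\GG_a$ by Lemma \ref{trans-down}, and the $\La$-action on $\fgl$ the inverse limit of the $\La\pnty{n}$-actions on $\fgl\pnty{n}$.

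First I would pin down the target. By Proposition \ref{chunk-fibration} each $\cM_\fg\pnty{n}\to\cM_\fg\pnty{n-1}$ is a fibration of stacks, so for every scheme $U$ the tower $\{\cM_\fg\pnty{n}(U)\}_n$ is a tower of fibrations of groupoids, and its homotopy limit can be described by the strict $2$-categorical limit. Unwinding Definition \ref{buds-stack} and the functoriality of homotopy orbits (Lemma \ref{push-forward-torsor}), a $U$-point of $\holim_n\cM_\fg\pnty{n}$ is a coherent system: $\La\pnty{n}$-torsors $P_n\to U$, $\La\pnty{n}$-equivariant maps $\alpha_n\colon P_n\to U\times\fgl\pnty{n}$, and isomorphisms $\theta_n\colon P_n\times_{\La\pnty{n}}\La\pnty{n-1}\xrightarrow{\ \sim\ }P_{n-1}$ over $U\times\fgl\pnty{n-1}$ satisfying the evident compatibilities, with morphisms the compatible systems of equivariant isomorphisms.

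Next I would build the comparison with $\cM_\fg=\fgl\times_\La E\La$ (Proposition \ref{fg-homotopy-orbit}). Given such a coherent system, set $P\defeq\lim_n P_n$. Faithfully flat descent shows $P\to U$ is a $\La$-torsor: fpqc-locally on $U$ every $P_n$ is trivial, compatibly, whence $P\cong U\times\lim_n\La\pnty{n}=U\times\La$ there; and the $\alpha_n$ assemble to a $\La$-equivariant $\alpha=\lim_n\alpha_n\colon P\to U\times\lim_n\fgl\pnty{n}=U\times\fgl$. This yields a functor $\holim_n\cM_\fg\pnty{n}\to\cM_\fg$, natural in $U$, with an inverse sending $(P,\alpha)$ to the system $(P\times_\La\La\pnty{n},\,\alpha\bmod(x,y)^{n+1})$; these are mutually inverse up to natural isomorphism because $P=\lim_n(P\times_\La\La\pnty{n})$ and $\fgl=\lim_n\fgl\pnty{n}$. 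Since both this equivalence and the canonical map $\cM_\fg\to\holim_n\cM_\fg\pnty{n}$ are induced by the projections $\fgl\to\fgl\pnty{n}$, $\La\to\La\pnty{n}$, they agree, so the canonical map is an equivalence.

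The step I expect to be the main obstacle is the bookkeeping in the previous paragraph: verifying carefully that $P=\lim_n P_n$ is a $\La$-torsor and that the coherence isomorphisms $\theta_n$ appearing in the homotopy limit of the tower are exactly the gluing data that produce it — that is, that forming homotopy orbits commutes with this particular cofiltered limit. An alternative, perhaps cleaner, route avoids torsors entirely: identify $\cM_\fg\pnty{n}$ with the moduli of $n$-buds of formal groups, observe that a formal group over $U$ is by construction the colimit of its infinitesimal neighbourhoods $\Inf^n_U(G)$ and that homomorphisms are compatible systems of homomorphisms of buds (Remark \ref{lie-morphisms}, Lemma \ref{prod-fl}, Definition \ref{frml-var}), and read off the equivalence $\cM_\fg(U)\simeq\lim_n\cM_\fg\pnty{n}(U)$ directly; here too Proposition \ref{chunk-fibration} is what legitimizes replacing $\holim$ by the strict limit.
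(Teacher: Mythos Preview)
Your proposal is correct and follows essentially the same architecture as the paper: use Proposition \ref{chunk-fibration} to replace $\holim$ by the strict limit, then identify $\cM_\fg(R)\cong\lim_n\cM_\fg\pnty{n}(R)$. The paper dispatches this last identification in one word (``obvious''), whereas you supply two detailed routes --- the torsor bookkeeping and the buds-of-formal-groups description --- either of which fills in that word. Your ``alternative, cleaner route'' is in fact closest in spirit to what the paper has in mind; the torsor approach is a legitimate but heavier way to unpack the same content.
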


\begin{proof} We must prove that
for all rings $R$ the natural morphism of groupoids
$$
\cM_\fg(R) \longr \holim \cM_\fg\pnty{n}(R)
$$
is an equivalence. By Proposition \ref{chunk-fibration} we have
that the projection map
$$
\cM_\fg\pnty{n}(R)  \longr \cM_\fg\pnty{n-1}(R)
$$
is a fibration of groupoids for all $n$. Thus we need only show
$\cM_\fg(R) \cong \lim \cM_\fg\pnty{n}(R)$, but
this is obvious.
\end{proof}

The next result is an incredibly complicated way to prove that
every formal group over an algebra over the rationals
is isomorphic to the additive formal group. It proves more,
however, as it also identifies the automorphisms of the additive
formal group. For the proof combine Theorem \ref{chunks-inv}
and Proposition \ref{chunk-fibration}.

\begin{cor}\label{rational}The projection map
$$
\cM_\fg \otimes \QQ
\longr \cM_\fg\pnty{1} \otimes \QQ\simeq
B(\GG_m \otimes \QQ)
$$
is an equivalence.
\end{cor}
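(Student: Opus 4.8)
The plan is to deduce this directly from the pro-algebraic description of $\cM_\fg$ in Theorem \ref{chunks-inv} together with the rational collapse of the bud tower recorded in Proposition \ref{chunk-fibration}. Recall that Theorem \ref{chunks-inv} gives an equivalence $\cM_\fg \to \holim_n \cM_\fg\pnty{n}$, checked by evaluating on a commutative ring $R$ and noting that each $\cM_\fg\pnty{n}(R) \to \cM_\fg\pnty{n-1}(R)$ is a fibration of groupoids, so that the homotopy inverse limit of the tower $\{\cM_\fg\pnty{n}(R)\}_n$ is its strict inverse limit.

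First I would restrict attention to $\QQ$-algebras $R$, so that $\cM_\fg\otimes\QQ$ and $\cM_\fg\pnty{n}\otimes\QQ$ are simply the restrictions of $\cM_\fg$ and $\cM_\fg\pnty{n}$ to the category $\Aff_\QQ$ of affine schemes over $\Spec(\QQ)$. Since the equivalence of Theorem \ref{chunks-inv} is evaluated pointwise on rings, it survives this restriction and gives $\cM_\fg\otimes\QQ \simeq \holim_n (\cM_\fg\pnty{n}\otimes\QQ)$. The key input is that for every $n\geq 2$ the integer $d_n$ (which is either a prime or $1$) is a unit in any $\QQ$-algebra, so Proposition \ref{chunk-fibration} tells us that $\cM_\fg\pnty{n}\otimes\QQ \to \cM_\fg\pnty{n-1}\otimes\QQ$ is an equivalence of algebraic stacks for all $n\geq 2$. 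Thus the tower $\{\cM_\fg\pnty{n}\otimes\QQ\}_{n\geq 1}$ is, levelwise on $R$, a tower of fibrations of groupoids in which every transition map beyond the first is an equivalence; its homotopy inverse limit is therefore equivalent to its first term $\cM_\fg\pnty{1}\otimes\QQ$.

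Chaining the two equivalences yields $\cM_\fg\otimes\QQ \simeq \cM_\fg\pnty{1}\otimes\QQ$, and by construction this equivalence is the projection map in the statement. To finish, invoke Example \ref{pnty-low}, which identifies $\cM_\fg\pnty{1}$ with $B\GG_m = \Spec(\ZZ)\times_{\GG_m} E\GG_m$; restricting along $\Spec(\QQ)\to\Spec(\ZZ)$ gives $\cM_\fg\pnty{1}\otimes\QQ \simeq B(\GG_m\otimes\QQ)$, as claimed.

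I do not anticipate a genuine obstacle here: all the content is already in Theorem \ref{chunks-inv} and Proposition \ref{chunk-fibration}. The only point needing a moment's care is the elementary fact that the homotopy inverse limit of a tower of fibrations of groupoids whose transition maps are eventually weak equivalences (here, an equivalence at every stage past the first) is weakly equivalent to the corresponding finite stage — and this is exactly why one wants Proposition \ref{chunk-fibration} to supply a tower of \emph{fibrations}, not merely a tower with equivalent stages.
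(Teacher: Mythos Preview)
Your proposal is correct and follows exactly the approach the paper indicates: the paper's proof is the single sentence ``combine Theorem \ref{chunks-inv} and Proposition \ref{chunk-fibration}'', and you have spelled out precisely that combination, together with the identification of $\cM_\fg\pnty{1}$ from Example \ref{pnty-low}.
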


When working at a prime $p$, the moduli stacks
$\cM_\fg\pnty{p^n} \otimes \ZZ_{(p)}$
form the significant layers in the tower. These should
have covers by ``$p$-typical buds''; the next result
makes that thought precise. Recall that the universal
$p$-typical formal group law $F$ is defined over the the ring
$V \cong \ZZ_{(p)}[u_1,u_2,\cdots]$. See Corollary
\ref{ptyp-cover}.

\begin{lem}\label{p-buds}Let $V_n = \ZZ_{(p)}[u_1,\ldots,u_n]$
be the subring of $V$ generated by $u_k$, $k \leq n$.
The $p^n$-bud $F_{p^n}$
of the universal $p$-typical formal group law $F$ is defined
over $V_n$ and the morphism
$$F_{p^n}:\Spec(V_n) \to \cM_\fg\pnty{p^n}\otimes\ZZ_{(p)}
$$
classifying this bud is a presentation. Furthermore there is an isomorphism
$$
\Spec(V_n) \times_{\cM_\fg\pnty{n}} \Spec(V_n) \cong
\Spec(V_n[t_0^{\pm 1},t_1,\cdots,t_n]).
$$
\end{lem}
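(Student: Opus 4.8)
The strategy is to bootstrap from the already-established global picture to the $p^n$-bud level, using the Cartier idempotent (Theorem \ref{quillen-idem}) and its buds-analogue. First I would verify that the $p^n$-bud $F_{p^n}$ of the universal $p$-typical formal group law $F$ over $V = \ZZ_{(p)}[u_1,u_2,\ldots]$ is already defined over $V_n = \ZZ_{(p)}[u_1,\ldots,u_n]$: by Remark \ref{p-typ-prop}.1 the $p$-series of $F$ reads $[p]_F(x) = px +_F u_1 x^p +_F u_2 x^{p^2} +_F \cdots$, and by the grading of Remark \ref{gradings} the coefficient $c_{ij}$ of the $p$-typical formal group law is a polynomial in the $u_k$ with $p^k \le i+j$; hence modulo $(x,y)^{p^n+1}$ only $u_1,\ldots,u_n$ can appear. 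The next step is to show that the morphism $F_{p^n}: \Spec(V_n) \to \cM_\fg\pnty{p^n}\otimes\ZZ_{(p)}$ is a presentation, i.e.\ faithfully flat and quasi-compact. Quasi-compactness is automatic since the source is affine. For faithful flatness, I would exhibit $\Spec(V_n)$ as built from $\Spec(V)$ by the appropriate ``truncation + $p$-typicality'' steps: we have the presentation $\Spec(V) \to \cM_\fg \otimes \ZZ_{(p)}$ of Corollary \ref{ptyp-cover}, the reduction $\cM_\fg \otimes \ZZ_{(p)} \to \cM_\fg\pnty{p^n}\otimes\ZZ_{(p)}$, and the claim becomes that this composite factors through $\Spec(V_n)$ with the residual map $\Spec(V) \to \Spec(V_n)$ (the quotient by $u_{n+1},u_{n+2},\ldots$) compatible.

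The cleanest route for the presentation claim is to imitate the pattern already used for $\cM_\fg$: produce a ``prestack of $p$-typical $p^n$-buds with coordinate'' $\cM_{\coordp}\pnty{p^n}$, show by a truncated version of Theorem \ref{quillen-idem} that $\cM_{\coordp}\pnty{p^n} \to \cM_\fg\pnty{p^n}\otimes\ZZ_{(p)}$ exhibits the target as the associated stack, and observe that the scheme of objects of the associated groupoid scheme is exactly $\Spec(V_n)$. Concretely: a $p^n$-bud of formal group law together with a $p$-typical coordinate is classified by the universal such object, whose parameter ring is $V_n$ by the degree count above; so $\Spec(V_n) \to \cM_\fg\pnty{p^n}\otimes\ZZ_{(p)}$ is a presentation by Proposition \ref{homotopy-orbit} (or directly by the ``associated stack'' formalism of Theorem \ref{coord-stack-is-fg}, truncated). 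Then, arguing as in Proposition \ref{lazard-covers}, flatness is checked after base change to a cover where the bud acquires a $p$-typical coordinate, where the pull-back becomes $\Spec$ of a polynomial extension.

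For the last identification, $\Spec(V_n) \times_{\cM_\fg\pnty{p^n}} \Spec(V_n)$, I would use that this fiber product is the scheme of objects of $(V_n$-points of$)$ the groupoid $\Iso$ of the two $p$-typical $p^n$-buds, which by Proposition \ref{coordp-equiv-fglp} (truncated) is the scheme of isomorphisms of $p$-typical $p^n$-buds of formal group laws. By Remark \ref{p-typ-prop}.3 such an isomorphism $\phi$ has $\phi^{-1}(x) = t_0 x +_F t_1 x^p +_F \cdots$, and modulo $(x)^{p^n+1}$ only $t_0,\ldots,t_n$ survive (with $t_0$ a unit, $t_i$ of degree $p^i-1$), so the universal isomorphism is defined over $V_n[t_0^{\pm 1},t_1,\ldots,t_n]$. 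Faithful flatness of the map $\Spec(V_n[t_0^{\pm1},t_1,\ldots,t_n]) \to \Spec(V_n)$ then confirms this is precisely the fiber product.

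**Main obstacle.** The one genuinely delicate point is the truncated Cartier idempotent: I need that every $p^n$-bud of a formal group law over a $\ZZ_{(p)}$-algebra is, after a change of coordinate, the $p^n$-bud of a (unique) $p$-typical formal group law, coherently in the bud index, and that the relevant parametrizing rings are exactly $V_n$ (no extra generators $u_k$ with $p^k > p^n$ but $k \le p^n$ sneaking in via the non-$p$-power symmetric cocycles). This is where the interplay between the ``$p^n$-bud'' truncation (in powers of $x,y$) and the ``$p$-typical'' structure (governed by $p$-power exponents in the $p$-series) has to be reconciled; the grading argument of Remark \ref{gradings}, which tells us $u_k$ has degree $p^k-1$ while the bud truncation kills degree $\ge p^n$, is exactly the tool that makes the count come out to $u_1,\ldots,u_n$. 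I expect the rest to be routine faithfully-flat-descent bookkeeping parallel to Corollary \ref{ptyp-cover}.
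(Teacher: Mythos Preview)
Your proposal is correct and largely parallels the paper's argument: the grading count from Remark~\ref{gradings} to show $F_{p^n}$ lives over $V_n$, and the identification of the fiber product via the truncated universal $p$-typical isomorphism (Remark~\ref{p-typ-prop}.3), are exactly what the paper does.

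The one place you take a heavier route than necessary is the presentation claim, and in particular the ``main obstacle'' you flag. You propose building a prestack of $p$-typical $p^n$-buds and proving a truncated Cartier idempotent to show its associated stack is $\cM_\fg\pnty{p^n}\otimes\ZZ_{(p)}$. The paper sidesteps this entirely. For surjectivity it argues directly over a field $\FF$: given a $p^n$-bud $G$, lift it to a \emph{full} formal group law $G'$ over $\FF$ (always possible, since $L\pnty{p^n}$ is a polynomial subring of $L$, so the truncation map on formal group laws is surjective), apply the ordinary Cartier idempotent from Theorem~\ref{quillen-idem} to get a $p$-typical $G''\cong G'$, and then truncate $G''$ back to a $p^n$-bud. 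By the first paragraph's grading argument this bud is classified by a map $V_n\to\FF$, and its isomorphism with $G$ gives the required $2$-commuting lift. The same lift-then-truncate trick handles the fiber product. So no truncated Cartier theory is needed---the full Cartier idempotent plus the surjectivity of bud truncation does the job. Your systematic approach would also succeed (Cartier's construction is visibly compatible with truncation), but it is more machinery than the situation demands.
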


\begin{proof} We use the gradings of Remark \ref{gradings}.
The $n$-bud of a formal group law $G$ is given by the equation
$$
G_n(x,y) = \sum_{i+j \leq n} c_{ij}x^iy^j.
$$
If $F$ is the universal $p$-typical formal group law, we see that
$F_n$ is defined over the subring of $V$ generated by the $u_k$ with
$p^k \leq  n$. Similarly if $\phi(x)$ is the universal isomorphism
of $p$-typical formal group laws, then its bud
$$
\phi_n(x) = \sum_{i=0}^{n-1} a_ix^{i+1}
$$
is defined over the subring of
$V[t_0^{\pm 1},t_1,\cdots]$ generated by $t_k$ and
$u_k$ with $p^k \leq n$.

To show that we have a presentation, suppose $G$ is
a $p^n$ bud of a formal group over a field $\FF$ which
is a $\ZZ_{(p)}$-algebra. Since $\FF$ is a field, we
may assume $G$ arises from the bud of formal group law,
which we also call $G$. Choose any formal group
law $G'$ whose $p^n$-bud is $G$ and choose an
isomorphism $G' \to G''$ where $G''$ is $p$-typical. 
Then the $p^n$-bud of $G''$ is isomorphic to
$G$ and, by the previous paragraph, arises from
a morphism $g:V_n \to \FF$. Thus we obtain the requisite
$2$-commuting diagram
$$
\xymatrix@R=20pt@C=20pt
{
&\Spec(V_n)\dto^{F_{p^n}}\\
\Spec(\FF) \ar[ur]^{\Spec(g)} \rto_-G &
\cM_\fg\pnty{p^n}\otimes \ZZ_{(p)}.
}
$$
A similar argument computes the homotopy pull-back.
\end{proof}

\begin{rem}\label{intrinsic-germ}It is possible to give an intrinsic
geometric definition of an $n$-bud of a formal group in the style
of Definition \ref{frml-var} and Definition \ref{formal-group}.
First an $n$-{\it germ} of a formal Lie variety $X$ over a scheme
$S$ is an affine morphism of schemes $X \to S$ with
a closed section $e$ so that
\begin{enumerate}

\item $X = \Inf^n_S(X)$;

\item the quasi-coherent sheaf $\omega_e$ is locally free of finite rank on $S$;

\item the natural map of graded rings $\Sym_\ast (\omega_e) \to
\gr_\ast(X)$ induces an isomorphism
$$
\Sym_\ast(\omega_e)/\cJ^{n+1} \to \gr_\ast(X)
$$
where $\cJ = \oplus_{k>0}\Sym_k(\omega_e)$ is the augmentation
ideal.
\end{enumerate}
An $n$-bud of a formal groupis  then an $n$-germ $G \to S$ so that
$\omega_e = \omega_G$ is locally free of rank $1$ and there
is a ``multiplication'' map
$$
\Inf_S^n(G \times_S G) \to G
$$
over $S$ so that the obvious diagrams commute.
\end{rem}

\subsection{Coherent sheaves over $\cM_\fg$}

We would like to show that any finitely presented sheaf
over $\cM_\fg$ can be obtained by base change from
$\cM_\fg\pnty{n}$ for some $n$.

Let $m$ and $n$ be integers $0 \leq n < m \leq \infty$
and let
$$
q_{(m,n)} = q:\cM_\fg\pnty{m} \to \cM_\fg\pnty{n}
$$
be the projection. I'll write $q$ for  $q_{(m,n)}$ whenever
possible. Also, I'm writing
$\cM_\fg\pnty{\infty}$ for $\cM_\fg$ itself.

Write $\Qmod_\fg\pnty{n}$ for the quasi-coherent
sheaves on $\cM_\fg\pnty{n}$. We begin by discussing the
pull-back and push-forward functors
$$
\xymatrix{
q^\ast: \Qmod_\fg\pnty{n} \ar@<.5ex>[r]
& \ar@<.5ex>[l] \Qmod_\fg\pnty{m}:q_\ast.
}
$$

By Remark \ref{qc-comodule}, the category of quasi-coherent
sheaves on $\cM_\fg\pnty{n}$ is equivalent to the category
of $(L\pnty{n},W\pnty{n})$-comodules. In fact, if $\cF$ is a quasi-coherent
sheaf, the associated comodule $M$ is obtained by evaluating
$\cF$ at $\Spec(L\pnty{n}) \to \cM_\fg\pnty{n}$, and the comodule
structure is obtained by evaluating $\cF$ on the parallel arrows
$$
\xymatrix{
\Spec(W\pnty{n})
\ar@<.5ex>[r] \ar@<-.5ex>[r] & \Spec(L\pnty{n}) \rto & \cM_\fg\pnty{n}.
}
$$
We will describe
the functors $q_\ast$ and $q^\ast$ by giving a description
on comodules.

Let $\Gamma\pnty{n,m}$ be the group scheme which assigns to each commutative ring $A$  the 
invertible (under composition) power series modulo $(x^{m+1})$
$$
x + a_{n}x^{n+1} + a_{n+1}x^{n+2} + \cdots + a_{n-1}x^m \quad a_i \in R.
$$
Then $\Gamma\pnty{n,m} = \Spec(\ZZ[a_n,a_{n+1},\ldots,a_{m-1}])$
and $\Gamma\pnty{n,m}$
is the kernel of the projection map $\Lambda\pnty{m} \to \Lambda\pnty{n}$.

By Proposition \ref{fibers} there is an
equivalence of algebraic stacks
$$
\Spec(L\pnty{n}) \times_{\cM_\fg\pnty{m}} \cM_\fg\pnty{m} \simeq \fgl\pnty{m}\times_{\Gamma\pnty{n,m}}
E\Gamma\pnty{n,m}.
$$
Let $\cF$ be a quasi-coherent sheaf on $\cM_\fg\pnty{m}$. Then
the value of $q_\ast\cF$ when evaluated at
$\Spec(L\pnty{n}) \to \cM_\fg\pnty{n}$
is $H^0(\Spec(L\pnty{n}) \times_{\cM_\fg\pnty{n}} \cM_\fg\pnty{m},\cF)$.
If $M = \cF(\Spec(L\pnty{m}))$ is the $(L\pnty{m},W\pnty{m})$-comodule equivalent
to $\cF$, then these global sections are the $(L\pnty{n},W\pnty{n})$-comodule $N$
defined by the equalizer diagram
$$
\xymatrix{
N \rto &M \ar@<.5ex>[r] \ar@<-.5ex>[r] & 
\ZZ[a_n,a_{n+1},\ldots,a_{m-1}] \otimes_{L\pnty{m}} M
}
$$ 
where the parallel arrows are given by left inclusion and the coaction map.
The assignment $M \mapsto N$ determines $q_\ast \cF$. 

To describe $q^\ast$, we give the left adjoint to the functor just described
on comodules. If $N$ is a $(L\pnty{n},W\pnty{n})$-comodule, define a
$(L\pnty{m},W\pnty{m})$ comodule $M = L\pnty{m} \otimes_{L\pnty{n}} N$ with coaction map
$$
L\pnty{m} \otimes_{L\pnty{n}} N \to W\pnty{m} \otimes_{L\pnty{m}} \otimes L\pnty{m}
\otimes_{L\pnty{n}} N \cong W\pnty{m} \otimes_{W\pnty{n}} W\pnty{n} \otimes_{L\pnty{n}} N
$$
given by
$$
\eta_R \otimes \psi:L\pnty{m} \otimes_{L\pnty{n}} N \longr W\pnty{m}
\otimes_{W\pnty{n}} W\pnty{n} \otimes_{L\pnty{n}} N.
$$

\begin{prop}\label{flat-split}For all $m$ and $n$, $0 \leq n \leq m
\leq \infty$, the projection morphism
$$
q:\cM_\fg\pnty{m} \longr \cM_\fg \pnty{n}
$$
is faithfully flat.
\end{prop}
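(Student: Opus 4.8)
The plan is to use the presentations $p_k\colon \fgl\pnty{k} = \Spec(L\pnty{k}) \to \cM_\fg\pnty{k}$ of Proposition \ref{homotopy-orbit} (and Theorem \ref{fg-algebraic} when $k = \infty$), which are faithfully flat and quasi-compact, and to deduce faithful flatness of $q$ by $fpqc$-descent from a model of $q$ on atlases. First I would record that the square
$$
\xymatrix{
\fgl\pnty{m} \rto^-\iota \dto_{p_m} & \fgl\pnty{n} \dto^{p_n}\\
\cM_\fg\pnty{m} \rto_-q & \cM_\fg\pnty{n}
}
$$
commutes, where $\iota$ is the morphism classifying the $n$-bud of the universal $m$-bud of a formal group law over $L\pnty{m}$; this is immediate from the fact that $q$ is truncation of buds. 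The next step is to identify $\iota$. By iterating Lemma \ref{trans-down}, which presents each $\fgl\pnty{k}\to\fgl\pnty{k-1}$ as the projection of $\fgl\pnty{k-1}\times\AA^1\pnty{k}$ onto its first factor, $\iota$ is on coordinate rings the standard inclusion of polynomial rings $\ZZ[x_1,\dots,x_{n-1}]\hookrightarrow\ZZ[x_1,\dots,x_{m-1}]$ (this is corroborated by the grading of Remark \ref{gradings}: a bud-coefficient $c_{ij}$ with $i+j\le n$ has degree $i+j-1\le n-1$, hence lies in $\ZZ[x_1,\dots,x_{n-1}]$). In particular $L\pnty{m}$ is a free $L\pnty{n}$-module, so $\iota$ is affine and faithfully flat for $m<\infty$; for $m=\infty$ one instead notes $L\cong\ZZ[x_1,x_2,\dots]$ is the filtered colimit of the free $L\pnty{n}$-algebras $L\pnty{k}$, $k\ge n$, and concludes faithful flatness of $L\pnty{n}\to L$ from flatness of a filtered colimit together with purity of faithfully flat maps. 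In all cases $q\circ p_m = p_n\circ\iota\colon \fgl\pnty{m}\to\cM_\fg\pnty{n}$ is then faithfully flat and quasi-compact.

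Finally I would descend along $p_m$. Flatness of a morphism of $fpqc$-algebraic stacks is local for the $fpqc$-topology on the source, so, $p_m$ being a faithfully flat quasi-compact presentation, $q$ is flat if and only if $q\circ p_m$ is flat (descent of flatness, in the spirit of Remark \ref{ff-descent}); and $q$ is surjective because $q\circ p_m$ is surjective and factors through $q$. Since $q\circ p_m$ was just shown to be faithfully flat, so is $q$. Alternatively, one can base-change along the presentation $p_n$ of the target: by Proposition \ref{fibers}, $\fgl\pnty{n}\times_{\cM_\fg\pnty{n}}\cM_\fg\pnty{m}\simeq\fgl\pnty{m}\times_{\Gamma\pnty{n,m}}E\Gamma\pnty{n,m}$ (using that $\Lambda\pnty{m}\to\Lambda\pnty{n}$ is flat and surjective with kernel $\Gamma\pnty{n,m}$, by iterating Lemma \ref{trans-down}), and the structure map of this homotopy orbit to $\fgl\pnty{n}$ is faithfully flat by the same computation applied to its presentation $\fgl\pnty{m}\to\fgl\pnty{m}\times_{\Gamma\pnty{n,m}}E\Gamma\pnty{n,m}$; one then invokes that faithful flatness is $fpqc$-local on the target.

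The routine bookkeeping — composites and base changes of faithfully flat quasi-compact morphisms are again such, affine morphisms are quasi-compact, filtered colimits of flat modules are flat — I would not belabor. The step that needs genuine care, and which I expect to be the main obstacle, is pinning down $\iota$: that the bud-truncation maps $\fgl\pnty{m}\to\fgl\pnty{n}$ really are modeled by forgetting variables (so that $L\pnty{m}$ is free, hence faithfully flat, over $L\pnty{n}$), and, in the case $m=\infty$, that passage to the colimit preserves faithful flatness. Everything else is a formal consequence of faithfully flat descent and the presentations already in hand.
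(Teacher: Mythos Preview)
Your argument is correct and reduces to the same ring-theoretic fact as the paper's: the inclusion $L\pnty{n}\hookrightarrow L\pnty{m}$ is (faithfully) flat, being a polynomial extension. The packaging differs, however. You argue geometrically by descent through the atlases, first showing the composite $\fgl\pnty{m}\to\cM_\fg\pnty{n}$ is faithfully flat and then descending along the presentation $p_m$ (or, alternatively, base-changing along $p_n$). The paper instead uses the explicit description of $q^\ast$ on comodules worked out in the paragraphs immediately preceding the proposition: $q^\ast$ corresponds to the functor $N\mapsto L\pnty{m}\otimes_{L\pnty{n}}N$, which is exact precisely because $L\pnty{n}\to L\pnty{m}$ is flat; surjectivity then gives faithful flatness. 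The paper's route is shorter because the groundwork (identifying $q^\ast$ with an explicit tensor functor) is already in place, and it sidesteps the question of what ``flat'' means for a non-representable morphism of stacks by taking exactness of pullback as the definition. Your descent argument is more portable but carries more overhead; in particular, the step ``flatness is $fpqc$-local on the source'' for stack morphisms deserves the same care you flag for $\iota$, since $q$ is not representable and the paper's notion of flatness here is exactness of $q^\ast$ rather than Definition~\ref{stacks-over-flat}.
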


\begin{proof} The morphism $q$ is
flat if and only if the functor $\cF \mapsto q^\ast \cF$ is exact. However,
since the ring homomorphism $L\pnty{n} \to L\pnty{m}$ is flat, the equivalent 
functor $N \mapsto  L\pnty{m} \otimes_{L\pnty{n}} N$ on comodules
is evidently exact. The morphism $q$ is now faithfully flat because
it is surjective.
\end{proof}

The notions of finitely presented and coherent sheaves on schemes were defined
in Remark \ref{mod-shvs}.
 
\begin{defn}\label{fp-fg-sheaves}Let $\cF$ be a quasi-coherent sheaf on on
an $fpqc$-algebraic stack $\cM$. Then $\cF$ is {\bf finitely presented}
if there is an $fqpc$-presentation $q:X \to \cM$ so that $q^\ast \cF$
is finitely presented.\index{sheaf, finitely presented, on a stack}
\end{defn}

By examining the definitions, we see that it is equivalent to specify that
there is an $fqpc$-cover $p:Y \to \cM$ and an exact sequence of
sheaves
$$
\cO_{Y}^{(J)} \to \cO_{Y}^{(I)} \to p^\ast \cF \to 0.
$$
with $I$ and $J$ finite. In many of our examples, the cover we have
a cover $X \to \cM$ with $X = \Spec(A)$ with $A$ Noetherian or,
at worst, coherent. In this case, a finitely presented module sheaf
is coherent (see Remark \ref{mod-shvs}). Also $\cF$ is finitely
presented if and only of $\cF(\Spec(A) \to \cM)$ is a finitely 
presented $A$-module.

In the following result, there is experimental evidence to show that
$\cF_0$ might actually by $(q_n)_\ast \cF$, but I don't need this
fact and couldn't find a quick proof.

\begin{thm}\label{fp-modules}Let $\cF$ be a finitely presented
quasi-coherent sheaf on $\cM_\fg$. Then there is an integer $n$,
a quasi-coherent sheaf $\cF_0$ on $\cM_\fg\pnty{n}$ and
an isomorphism
$$
q_n^\ast \cF_0 \to \cF.
$$
is an isomorphism.
\end{thm}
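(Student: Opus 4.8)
The plan is to pass to comodules and run a ``finite presentation descends along a filtered colimit'' argument, but carried out with the \emph{graded} Hopf algebroids of Remarks~\ref{gradings} and~\ref{gradings-revisited-2}; the grading is precisely what makes the statement true, so it is not optional. Recall from Remark~\ref{gradings-revisited-2} (with the $\GG_m$-action of Remark~\ref{gradings}) that quasi-coherent sheaves on $\cM_\fg$ are equivalent to graded $(L_\ast,W_{0,\ast})$-comodules and quasi-coherent sheaves on $\cM_\fg\pnty{n}$ to graded $(L\pnty{n}_\ast,W\pnty{n}_{0,\ast})$-comodules, and that under $q_n$ the pull-back functor $q_n^\ast$ becomes extension of scalars $N_\ast\mapsto L_\ast\otimes_{L\pnty{n}_\ast}N_\ast$ with the coaction $\eta_R\otimes\psi$ written down just before Proposition~\ref{flat-split}. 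The key structural feature is that $L_\ast=\ZZ[x_1,x_2,\dots]$ with $|x_i|=i$ and $W_{0,\ast}=L_\ast[a_1,a_2,\dots]$ with $|a_i|=i$, that $L\pnty{n}_\ast$ and $W\pnty{n}_{0,\ast}$ are their ``truncations'', and that consequently $L_\ast=\colim_n L\pnty{n}_\ast$, $W_{0,\ast}=\colim_n W\pnty{n}_{0,\ast}$ (this is the ring-level content of Theorem~\ref{chunks-inv}, and the transition maps are flat by Proposition~\ref{flat-split}) with the crucial extra property that each graded piece stabilises: $(L\pnty{n}_\ast)_d=(L_\ast)_d$ and $(W\pnty{n}_{0,\ast})_d=(W_{0,\ast})_d$ once $n>d$.

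Given $\cF$ finitely presented on $\cM_\fg$, I would first check that the associated graded $(L_\ast,W_{0,\ast})$-comodule $M_\ast$ is finitely presented as a graded $L_\ast$-module: a finite generating set of the finitely presented $L$-module $\cF(\Spec(L)\to\cM_\fg)$ lies in finitely many degrees and its homogeneous components generate $M_\ast$ as a graded module, and the kernel of the resulting finite graded free presentation is finitely generated because $\cF(\Spec(L)\to\cM_\fg)$ is finitely presented. Fix a finite graded free presentation $L_\ast^{(I)}\to M_\ast\to 0$ with generators in degrees $\le D$ and finitely many relations, all involving only finitely many elements of $L_\ast$. Since $L_\ast=\colim_n L\pnty{n}_\ast$, for $n$ large all of this data is defined over $L\pnty{n}_\ast$, producing a finitely presented graded $L\pnty{n}_\ast$-module $M_{0,\ast}$ with $L_\ast\otimes_{L\pnty{n}_\ast}M_{0,\ast}\cong M_\ast$. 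Next I would descend the coaction $\psi\colon M_\ast\to W_{0,\ast}\otimes_{L_\ast}M_\ast$: evaluating $\psi$ on the finitely many, bounded-degree generators of $M_{0,\ast}$ lands, after enlarging $n$, inside $W\pnty{n}_{0,\ast}\otimes_{L\pnty{n}_\ast}M_{0,\ast}$ — this is forced by the degree bound together with the stabilisation of graded pieces. Extending $L\pnty{n}_\ast$-linearly, and enlarging $n$ once more so that the finitely many defining relations of $M_{0,\ast}$, the counit identity, and coassociativity (again equalities of bounded-degree elements, valid in the colimit) all hold at finite stage, one obtains the structure of a graded $(L\pnty{n}_\ast,W\pnty{n}_{0,\ast})$-comodule on $M_{0,\ast}$, i.e.\ a (finitely presented) quasi-coherent sheaf $\cF_0$ on $\cM_\fg\pnty{n}$. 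By construction the comodule of $q_n^\ast\cF_0$ is $L_\ast\otimes_{L\pnty{n}_\ast}M_{0,\ast}$ with coaction equal to the original $\psi$, so $q_n^\ast\cF_0\cong\cF$.

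The step I expect to be the main obstacle — and the reason this is not a formality from $\cM_\fg=\holim\cM_\fg\pnty{n}$ — is descending the coaction. Without a grading the colimit $L=\colim L\pnty{n}$ does not interact well with $\eta_L\colon L\to W$ (indeed $\eta_L$ does not respect the filtration of $W$ by the $W\pnty{n}$), so $\psi(M)$ need not lie in any $W\pnty{n}\otimes M_0$ and the argument collapses. The $\GG_m$-grading of Remark~\ref{gradings} repairs exactly this: it realises $L_\ast$ and $W_{0,\ast}$ as colimits of their truncations in which every bounded-degree slice is eventually constant, so ``finitely presented'' genuinely forces ``defined over some $\cM_\fg\pnty{n}$.'' A secondary, purely bookkeeping point is the passage from the definition of finite presentation for sheaves on stacks (Definition~\ref{fp-fg-sheaves}, tested on $\Spec(L)\to\cM_\fg$) to finite presentation of $M_\ast$ as a graded module, which I sketched above.
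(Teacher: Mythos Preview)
Your proposal is correct and follows essentially the same route as the paper: translate to graded $(L_\ast,W_{0,\ast})$-comodules via Remark~\ref{gradings-revisited-2}, use finite presentation to bound the degrees of the generators, relations, and coaction data, and then invoke the stabilisation of graded pieces to descend everything to $(L\pnty{n}_\ast,W\pnty{n}_{0,\ast})$ for $n$ large. The paper is terser---it gives the explicit bound $n\ge\max\{|a-b|:a,b\in\{s_i,t_j\}\}$ and dispatches the coaction in one line---but your more discursive treatment (including the explanation of why the grading is essential) is the same argument.
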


\begin{proof} Using Remark \ref{gradings-revisited-2} and 
Remark \ref{warning-grading}.2, this result is
equivalent to the following statement. 
Let $M$ be a graded comodule over the graded Hopf algebroid 
$(L_\ast,L_\ast[a_1,a_2,\cdots])$ which is finitely presented as an
$L_\ast$-module. Then there is an integer $n$ and a graded comodule
over $(L\pnty{n}_\ast,L\pnty{n}_\ast[a_1,a_2,\cdots,a_{n-1}])$ and an isomorphism
of graded comodules $L_\ast \otimes_{L(\pnty{n}_\ast} M_0 \cong M$.
This we now prove.

If $N$ is a graded module, write $\Sigma^s N$ for
the graded module with $(\Sigma^s N)_k = N_{k-s}$. Let
$$
\oplus\ \Sigma^{t_j} L_\ast \to \oplus\ \Sigma^{s_i}L_\ast \to M \to 0
$$
be any finite presentation. Choose and integer $n$ greater than or
equal to the maximum of the integers $|a-b|$ where 
$$
a,b \in \{\ s_i,t_j\ \}.
$$
Then we can complete the commutative square of $L\pnty{n}_\ast$-modules
$$
\xymatrix{
\oplus\ \Sigma^{t_j} L\pnty{n}_\ast \ar@{-->}[r]^-f\dto 
& \oplus\ \Sigma^{s_i}L\pnty{n}_\ast\dto\\
\oplus\ \Sigma^{t_j} L_\ast \rto &\oplus\ \Sigma^{s_i}L 
}
$$
and, if $M_0$ is the cokerenel of $f$, a morphism of $L\pnty{n}_\ast$-modules
$M_0 \to M$ so that
$$
L_\ast \otimes_{L\pnty{n}_\ast}M_0 \longr M
$$
is an isomorphism. We now need only check that $M_0$ is a
$W\pnty{n}_{0,\ast}$-comodule. But this follows from the same condition
on $n$ we used above to produce $f$.
\end{proof}

\section{Invariant derivations and differentials}

\subsection{The Lie algebra of a group scheme}

We begin with a basic recapitulation of the notion of the Lie
algebra of a group scheme $G$ over a scheme $S$.
The tangent
scheme and the connection between the tangent scheme and differentials
was discussed in \S 1.3.

\begin{defn}\label{lie} Let $G \to S$ be a group scheme over $S$.
 Let $\lie_{G}$ to be the scheme over $S$
obtained by the pull-back diagram\index{Lie algebra}
$$
\xymatrix{
\lie_{G} \rto \dto & \tan_{G/S} \dto\\
S \rto_e & G.
}
$$
\end{defn}

Let $e:S \to G$ be the inclusion of the identity, which we will
assume is closed. If $\omega_e$ is the conormal sheaf
of this embedding, then, by Lemma \ref{fund-seq} we get
a natural isomorphism
$$
d:\omega_e \longr e^\ast \Omega_{G/S} 
$$
and it follows immediately from Proposition \ref{tan-is-affine} that
$$
\lie_{G} \cong \VV(\omega_e).
$$
In particular, $\lie(G) \to S$ is an affine morphism.
See Remark \ref{tan-for} for a similar construction.

\begin{rem}\label{structure-of-lie}The scheme $\lie_{G} \to S$
has a great deal of structure; we'll emphasize those points
which apply most directly here.

1.) Since $\tan_{G/S}$ is an abelian group scheme
over $G$, $\lie_{G}$ is an abelian group scheme over $S$. More than
that, it is an $\AA^1_S$-module; that is, there is a multiplication
morphism of schemes
$$
\AA^1_S \times_S \lie_{G} \longr \lie_{G}
$$
making $\lie_{G}$ into a module over the ring scheme $\AA^1_S$.
This is a coordinate free way of saying that the abelian
group $\lie_{G}(A)$ is naturally an $A$-module.
To get this $A$-module structure, let $a \in A$ and define
$u_a:A(\epsilon) \to A(\epsilon)$ to be the $A$-algebra map determined
by $u_a(\epsilon) = a\epsilon$. Then $\lie_{G}(u_a)$ determines the
multiplication by $a$ in $\lie_{G}(A)$. 

2.) The zero section
$s: G \to \tan_{G/S}$ defines an action of $G$ on $\lie_{G}$
by conjugation; if $x \in G(R)$, this action is written
$$
\mathrm{Ad}(x):\lie_{G} \longr \lie_{G}.
$$
The naturality of the semi-direct product construction shows that there
is a natural isomorphism of group schemes over $G$
$$
\tan_{G/S} \cong G \rtimes_S \lie_{G}.
$$
In particular, if $G$ is commutative we have an isomorphism
\begin{equation}\label{split-tan}
\tan_{G/S} \cong G \times_S \lie_{G}
\end{equation}
which is natural with respect to homomorphisms of abelian
group schemes.

3.) There is a Lie bracket
$$
[\ ,\ ]:\lie_{G} \times_S \lie_{G} \longr \lie_{G}.
$$
Thus, $\lie_{G}$ is an $\AA^1_S$-Lie algebra. If $G$ is commutative --
as is our focus here -- this bracket is zero, so we won't belabor it. 
\end{rem}

\begin{rem}[{\bf Invariant derivations}]\label{inv-derivations}In
Corollary \ref{global-der-cotan} we wrote down a natural isomorphism
between the module $\Der_S(G,\cO_G)$ of
derivations of $G$ over $S$ with coefficients in $\cO_G$ and
the module of sections of $q:\tan_{G/S} \to G$.
If $s'$ is a section of $\lie_{G} \to S$, then we get a section
$$
s = s'\times G: G = S \times_S G \longr \lie_{G} \rtimes G \cong \tan_{G/S}
$$
of $\tan_{G/S} \to G$
and the assignment $s' \mapsto s$ induces an isomorphism from the module
of sections of $\lie_{G}$ to the module of left invariant sections
of $\tan_{G/S}$. The inverse assigns to $s$ the composition
\index{invariant derivations}
$$
\xymatrix{
S \rto^e & G \rto^-{s_0}& \lie_G(S).
}
$$
There is a sheaf version of this which defines an isomorphism from
the local sections of $\lie_{G} \to G$ to an appropriate sheaf
of invariant derivations in $\shder_S(G,\cO_G)$.
\end{rem}



Now let $G \to S$ be a formal group over $S$; we define
$\lie_{G}$ exactly as above:
$$
\lie_{G} = e^\ast \tan_{G/S} \to S.
$$
Let $\vare: \lie_{G} \to \tan_{G/S}$ be the induced map. In Remark
\ref{tan-for} we showed that $(\tan_{G/S},\vare)$ is a formal
Lie variety over $\lie_{G}$ and that there is a natural
isomorphism of abelian group schemes
$$
\VV(\omega_G) \cong \lie_{G}
$$
over $S$. Exactly as in Equation \ref{split-tan} we have an isomorphism
(now as $fpqc$ sheaves)
$$
\tan_{G/S} \cong G \times_S \lie_{G}
$$
over $S$.

\begin{rem}\label{compute-lie}Let $f:G \to H$ be homomorphism
of smooth, commutative formal groups over $S$. In the presence of 
coordinates, it is possible to give a concrete formula for computing
$\lie(f)$ and $\tan(f)$. 

First suppose that we choose can choose a coordinate $y$ for
$G$. Then $y$ determines an isomorphism
$$
\lambda_y: \GG_a \longr \lie_{G}
$$
from the additive group over $S$ to $\lie_{G}$ sending
$a \in \GG_a(R)$ to $\epsilon a \in \lie_{G}$. 

Next suppose that we also choose a coordinate $x$ for $H$. Then
the image of $y$ under $f$ is a power series $f(x)$ and
we get a commutative diagram
$$
\xymatrix{
G \times_S \GG_a \rto \dto_{G \times \lambda_y} &
H \times_S\GG_a \dto^{H \times \lambda_x}\\
\tan_{G/S} \rto_{\tan(f)} & \tan_{H/S}
}
$$
where the top morphism is given pointwise by
$$
(a,b) \mapsto (f(a),bf'(a)).
$$
Restricting to the Lie schemes, we get a commutative diagram
of schemes over $S$
$$
\xymatrix{
\GG_a \rto^{f'(0)} \dto_{\lambda_y} &
\GG_a \dto^{\lambda_x}\\
\lie_{G} \rto_{\lie(f)} & \lie_{H/S}.
}
$$
\end{rem}

Note that we have also effectively proved the following result.

\begin{prop}\label{ga-torsor}Let $G$ be a smooth
one-dimensional, commutative formal groups over $S$.
Then $\lie_{G}$ is a naturally a $\GG_a$-torsor
in the fpqc topology.
\end{prop}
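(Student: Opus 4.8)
The plan is to reduce the statement to the explicit local picture already assembled in Remark \ref{compute-lie}. First I would recall the two structural inputs. By Remark \ref{tan-for} (or the discussion following Definition \ref{lie}), the natural isomorphism $d:\omega_G\to e^\ast\Omega_{G/S}$ of Lemma \ref{fund-seq} together with Proposition \ref{tan-is-affine} gives a natural isomorphism of abelian group schemes $\lie_G\cong\VV(\omega_G)$ over $S$, affine over $S$; since $\omega_G$ is locally free of rank one (Definition \ref{formal-group}), $\lie_G$ is Zariski-locally isomorphic, \emph{as a group scheme}, to $\GG_{a}$ over $S$. Second, every formal group admits a coordinate Zariski-locally (Example \ref{fg-local}), and by the computation at the end of Remark \ref{compute-lie} a coordinate $y$ for $G$ over an open $U$ produces an isomorphism of group schemes $\lambda_y:\GG_{a}|_U\to\lie_G|_U$, $a\mapsto\epsilon a$. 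Packaging this, Lemma \ref{coords-after-ff} furnishes a quasi-compact faithfully flat cover $f:T\to S$ (one may even take $T=\sqcup U_i$ a disjoint union of affine opens) so that $f^\ast G$ has a coordinate and hence $f^\ast\lie_G\cong\GG_{a}$ over $T$ as group schemes.

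Next I would put the $\GG_a$-torsor structure on $\lie_G$. On the cover $T$, transport the right-translation action of $\GG_{a}$ on itself across $\lambda_y$; this makes $\lambda_y$ a $\GG_{a}$-equivariant isomorphism, so $f^\ast\lie_G$ is a trivial $\GG_{a}$-torsor over $T$. To get a torsor over $S$ I would descend this action along $f$: over $T\times_S T$ one has the canonical isomorphism $p_0^\ast\lie_G\cong p_1^\ast\lie_G$ coming from $\lie_G$ being an $fpqc$-sheaf, and the required verification is that this isomorphism is compatible with the two transported translation actions, so that by faithfully flat descent (Remark \ref{ff-descent}) the $\GG_{a}$-action descends to an action of $\GG_{a}$ on $\lie_G$ over $S$. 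With that action in hand, the isomorphism $f^\ast\lie_G\cong\GG_{a}$ over $T$ exhibits $\lie_G$ as a $\GG_a$-torsor in the $fpqc$ topology, and naturality in $G$ is immediate from the naturality of $\VV(-)$ and of the isomorphism $\lambda_y$ recorded in Remark \ref{compute-lie} (it commutes with $\lie(\phi)$ for homomorphisms $\phi$).

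The main obstacle is precisely this descent compatibility. A change of coordinate $y\mapsto y'$ with $y'=cy+\cdots$, $c\in\cO^\times$, alters $\lambda_y$ by the leading coefficient $c$, i.e.\ by an automorphism of $\GG_a$ that is scaling rather than a translation; so the substance of the argument is to check that the two translation actions transported via $\lambda_y$ and $\lambda_{y'}$ agree on overlaps (they differ only by the \emph{same} rescaling of the acting $\GG_a$ on source and target, so give the same action of the abstract group $\GG_a$), which is exactly what promotes the easy ``$\lie_G$ is $fpqc$-locally $\GG_a$'' statement to a genuine, global, and canonical $\GG_a$-torsor structure. Everything else is routine bookkeeping with the material of \S 4.1 and the formulas of Remark \ref{compute-lie}.
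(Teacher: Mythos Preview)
Your argument reaches the right conclusion but is more circuitous than the paper's, and the descent step you flag as ``the main obstacle'' is not actually resolved.

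The paper's proof is two lines. The point you miss is that the $\GG_a$-structure on $\lie_G$ is already \emph{global and intrinsic}: by Remark~\ref{structure-of-lie}.1, $\lie_G$ is an $\AA^1_S$-module over $S$, and this makes $\lie_G\to S$ a $\GG_a$-scheme without any choices. There is therefore nothing to descend. The only remaining task is to exhibit an fpqc-local trivialization, and that is precisely what Remark~\ref{compute-lie} provides: after passing to a cover $f:T\to S$ on which $f^\ast G$ has a coordinate, the map $\lambda_y$ gives $f^\ast\GG_a\cong\lie_T(f^\ast G)\cong f^\ast\lie_G$.

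By contrast, you attempt to \emph{construct} the $\GG_a$-action by transporting translation along local trivializations $\lambda_y$ and then gluing. Your own analysis shows why this is delicate: under a coordinate change $y\mapsto y'$ with leading coefficient $c\in\cO^\times$, the transported action changes by a scaling, so the two actions $(a,v)\mapsto v+\lambda_y(a)$ and $(a,v)\mapsto v+\lambda_{y'}(a)$ on $\lie_G$ genuinely differ. Your parenthetical resolution (``they differ only by the same rescaling of the acting $\GG_a$ on source and target'') does not fix this: rescaling the acting copy of $\GG_a$ is an automorphism of $\GG_a$, not the identity, so the two actions do not glue to a single action of the constant group $\GG_a$. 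The way out is exactly the paper's observation that the relevant structure is already there before you ever choose a coordinate.
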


\begin{proof}The scheme $\lie_{G} \to S$ is a $\GG_a$-scheme because
it is an $\AA^1_S$-module. If we choose an fpqc cover $f:T \to S$
so that $f^\ast G$ can be given a coordinate, then we have
just shown, in Remark \ref{compute-lie}, that a choice of
coordinate defines an isomorphism
$$
f^\ast\GG_a \longr \lie_T(f^\ast G) \cong f^\ast\lie_{G}.
$$
\end{proof}

\subsection{Invariant differentials}\label{inv-diff-sec}

Let $q:G \to S$ be a group scheme over $S$ with identity $e:S \to G$. 
Let us assume that $G$ is flat -- and hence faithfully flat -- and quasi-compact
over $S$. Then we have a diagram
$$
\xymatrix{
G \times_S G \ar@<.5ex>[r]^-{p_1} \dto_f \ar@<-.5ex>[r]_-m & G \dto^=\\
G \times_S G \ar@<.5ex>[r]^-{p_1} \ar@<-.5ex>[r]_-{p_2} & G 
}
$$
where $f$ is an isomorphism give pointwise by $f(x,y) = (x,xy)$ and
$m$ is the multiplication map. From this we conclude that we have
a modified version of descent for $q:G \to S$: the category of
quasi-coherent sheaves on $S$ is equivalent to the category of 
quasi-coherent sheaves $\cF$ on $G$ equipped with an isomorphism
$$
(p_1)^\ast \cF \to m^\ast \cF
$$
satisfying a suitable cocycle condition we leave the reader to formulate.

To apply this, we note that we have diagram
$$
\xymatrix{
G \times_S G \ar@<.5ex>[r]^-{p_1} \dto_-{p_2} \ar@<-.5ex>[r]_-m & G \dto^q\\
G \rto_q & S
}
$$
and both the squares are Cartesian. This supplies an isomorphism
$$
(p_1)^\ast \Omega_{G/S} \cong \Omega_{G \times_S G/G} \cong
m^\ast \Omega_{G/S}
$$
which satisfies the necessary cocycle condition. The resulting quasi-coherent
sheaf $\omega_{G}$ on $S$ is the {\it sheaf of invariant differentials}
\index{invariant differentials} on $G$.
Since $\omega_G$ is already the name we've given to the conormal
sheaf of the unit $e:S \to G$ we have to justify this notion. So for the
next sentence, let's write $\omega_G$ for the invariant differentials
and $e^\ast \Omega_{G/S}$ for the conormal sheaf. Then, 
by construction, we have that 
\begin{equation}\label{little-to-big}
q^\ast \omega_{G} \cong \Omega_{G/S}
\end{equation}
from which it follows that
\begin{equation}\label{big-to-little}
e^\ast\Omega_{G/S} = e^\ast q^\ast \omega_{G} \cong \omega_{G}.
\end{equation}
Thus, from now on, we make no distinction between the two.

\begin{exam}\label{affine-invar}This definition is less arcane that it 
seems. Unwinding the proof of faithfully flat descent, we see that there
is an equalizer diagram of sheaves of $S$
$$
\xymatrix{
\omega_{G} \rto & q_\ast \Omega_{G/S} \ar@<.5ex>[r]^-{dp_1 }
\ar@<-.5ex>[r]_-{dm}
& q_\ast \Omega_{(G \times_S G)/G}
}
$$
where I have written $q$ for the canonical projections to $S$.
To be even more concrete, 
suppose $S = \Spec(R)$ and $G$ is affine over $R$; that is,
$G = \Spec(A)$ for some Hopf algebra $A$ over $R$. Then $\omega_{G}$
is determined by the $R$-module $\omega_{A}$ defined by the equalizer
diagram
$$
\xymatrix{
\omega_{A} \rto & \Omega_{A/R} \ar@<.5ex>[r]^-{di_1 }
\ar@<-.5ex>[r]_-{d\Delta}
& \Omega_{(A \otimes_R A)/A}.
}
$$
For example, if $G = \GG_m$, then $A = R[x^{\pm 1}]$ with
$\Delta(x) = x \otimes x$ and we calculate that
$\omega_{A}$ is the free $R$-module on $dx/x$.
\end{exam}

\begin{rem}\label{the-dual-of-inv}As $\lie_{G} \cong \VV(\omega_G)$,
the sheaf {\it dual} to $\omega_{G}$ is the quasi-coherent sheaf which
assigns to each Zariski open $U \subseteq S$ the sections of 
$\lie_{G} \vert_U \to U$. In particular, the global sections of this
dual sheaf are exactly invariant derivations of $G$. If we need
a name for this sheaf we will call it $\mathrm{lie}_{G/S}$.
\end{rem}

These notions extend to formal groups, with a little care. In this
case we don't have a sheaf $\Omega_{G/S}$ defined -- although we could produce
it if need be. However, in Remark \ref{tan-for}, we did define sheaves
$(\Omega_{G/S})_n$ over $G_n$ and we {\it define}
$$
q^\ast \Omega_{G/S} \defeq \lim q_\ast (\Omega_{G/S})_n
\cong \lim q^\ast \Omega_{G_n/S}
$$
over $S$, where $q:G_n \to S$ is any of the projections. Similarly
$$
q_\ast \Omega_{G \times G/G} = \lim q_\ast \Omega_{G_n \times G_n/G_n}.
$$
The following allows us to call $\omega_G$ the sheaf of invariant
differentials for $G$.\index{invariant differentials, formal}

\begin{prop}\label{formal-inv-diff}Let $G \to S$ be a 
formal group over $S$. Then there is an equalizer diagram
of sheaves on $S$
$$
\xymatrix{
\omega_{G} \rto & q_\ast \Omega_{G/S} \ar@<.5ex>[r]^-{dp_1 }
\ar@<-.5ex>[r]_-{dm}
& q_\ast \Omega_{(G \htimes_S G)/G}
}
$$
\end{prop}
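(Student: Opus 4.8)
\emph{Proof proposal.} This is the formal--group counterpart of Example~\ref{affine-invar}, and the plan is to reduce to that computation. By construction (Remark~\ref{tan-for}) both $q_\ast\Omega_{G/S}$ and $q_\ast\Omega_{(G\htimes_S G)/G}$ are inverse limits over the infinitesimal neighbourhoods, so the first task is to write the displayed diagram as the inverse limit over $n$ of diagrams $q_\ast(\Omega_{G/S})_n\rightrightarrows q_\ast\Omega_{(\,\cdot\,)/G}$ at finite level. Here I would use Lemma~\ref{prod-fl}: $\Inf^n_S(G\htimes_S G)=\bigcup_{p+q=n}G_p\times_S G_q$, and both the first projection $p_1$ and the multiplication $m$ carry this scheme into $G_n$. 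Exactly as in the two Cartesian squares of \S\ref{inv-diff-sec}, one gets natural identifications of $p_1^\ast(\Omega_{G/S})_n$ and of $m^\ast(\Omega_{G/S})_n$ with the relative differentials of $\Inf^n_S(G\htimes_S G)$ over $G$ taken along the complementary projection, and the two arrows $dp_1,dm$ into the common sheaf $q_\ast\Omega_{(\Inf^n_S(G\htimes_S G))/G}$ are the resulting maps. Passing to the limit over $n$ gives the diagram in the statement; since inverse limits commute with equalizers it then suffices to compute the equalizer of the limit diagram.

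That computation I would do locally. Formation of all three sheaves, of the two maps, and of $\omega_G=\omega_e$ is compatible with base change in $S$ (Remark~\ref{liev-pb}, Equation~\ref{inf-pb}, together with the base--change properties of differentials), so by Lemma~\ref{coords-after-ff} I may pass along a faithfully flat quasi-compact $f\colon T\to S$ on which $f^\ast G$ acquires a coordinate. Writing then $S=\Spec(A)$ with coordinate $x$, we get $q_\ast\Omega_{G/S}\cong A[[x]]\,dx$ and $q_\ast\Omega_{(G\htimes_S G)/G}\cong A[[x,y]]\,dx$ (differentials relative to the first copy of $G$), with $dp_1\bigl(f(x)\,dx\bigr)=f(x)\,dx$ and $dm\bigl(f(x)\,dx\bigr)=f(x+_F y)\,\partial_x(x+_F y)\,dx$, where $x+_F y$ is the formal group law of $(G,x)$. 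An element $f(x)\,dx$ lies in the equalizer iff $f(x)=f(x+_F y)\,\partial_x(x+_F y)$; evaluating at $x=0$ (legitimate since $x+_F y=x+\cdots$, so $\partial_x(x+_F y)\big|_{x=0}\in A[[y]]$ has constant term $1$ and is a unit) forces $f$ to be the unique invariant differential restricting to $dx$ at the origin, and the associativity of $F$ shows conversely that every $A$-multiple of it satisfies the identity. Hence the equalizer is the free rank--one $A$-module on this invariant differential, which under $d\colon\omega_e\xrightarrow{\ \cong\ }e^\ast\Omega_{G/S}$ is exactly $\omega_G$ (compare Examples~\ref{naive-inv-differentials} and~\ref{dlog}, and Equations~\ref{little-to-big}--\ref{big-to-little}).

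To conclude, the retraction $e^\ast\colon q_\ast\Omega_{G/S}\to e^\ast\Omega_{G/S}\cong\omega_G$ restricts to a natural map from the equalizer $\cE$ to $\omega_G$; the computation above shows this map is an isomorphism after the faithfully flat base change to $T$, and since $f$ is faithfully flat it reflects isomorphisms, so $\cE\cong\omega_G$ over $S$ and the sequence is exact.

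The step I expect to be the real obstacle is the bookkeeping in the first paragraph: because $\Inf^n_S(G\htimes_S G)$ is the union of the products $G_p\times_S G_q$ rather than $G_n\times_S G_n$, one must check with some care that $dp_1$ and $dm$ genuinely target one and the same limit sheaf $q_\ast\Omega_{(G\htimes_S G)/G}$ and that the two Cartesian--square identifications of \S\ref{inv-diff-sec} descend to each finite level compatibly with the transition maps. Everything after that --- the power--series manipulation and the faithfully--flat--descent argument --- is routine.
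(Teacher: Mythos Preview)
Your proposal is correct and matches the paper's approach. In fact the paper gives no proof at all for this proposition: it states the result and immediately follows it with Example~\ref{dlog}, which is precisely the local power-series computation you carry out in your second paragraph. Your contribution is to supply the surrounding scaffolding --- the reduction to finite levels via inverse limits, the passage to a coordinate via Lemma~\ref{coords-after-ff}, and the faithfully flat descent at the end --- that the paper leaves entirely to the reader. The bookkeeping concern you flag in the last paragraph is legitimate but not serious: since both $p_1$ and $m$ are morphisms of formal Lie varieties $G\htimes_S G\to G$, they restrict to morphisms on each $\Inf^n$ (Remark~\ref{lie-morphisms}), and the induced maps on the limit sheaves are well defined; the two Cartesian-square identifications from the group-scheme discussion pass to the limit without incident.
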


\begin{exam}\label{dlog}Suppose that $S = \Spec(A)$ is affine
and that $G$ can be given a coordinate $x$. Then
$\omega_{G}$ is determined by its $S$-module of global
sections over $S$ and we
we have an equalizer diagram of $A$-modules
$$
\xymatrix{
H^0(S,\omega_{G}) \rto & A[[x]]dx \ar@<.5ex>[r]^-{di_1 }
\ar@<-.5ex>[r]_-{d\Delta}
& A[[x,y]]dx.
}
$$
Let's write $F(x,y) = \Delta(x)$ for the resulting formal group law
and $F_x(x,y)$ for the partial derivative of that power series
with respect to $x$. Then
an invariant differential $f(x)dx$ must satisfy the equality
$$
f(x)dx = f(F(x,y))F_x(x,y)dx.
$$
Setting $x= 0$ and then setting $y = x$ we get that
$$
f(x) = \frac{f(0)}{F_x(0,x)}.
$$
Since $F_x(0,0) = 1$, we conclude that $\omega_{G}$ is the
quasi-coherent sheaf on $\Spec(A)$ determined by the free
$A$-module of rank $1$ with generator
$$
\eta =  \frac{dx}{F_x(0,x)}.
$$
\end{exam}

\begin{exam}\label{calculate-lie} Calculating with $\lie_{G}$ and
$\omega_{G}$ is standard, at least locally. Compare Remark
\ref{compute-lie}. Suppose $S = \Spec(A)$
and $f:G \to H$ is a homomorphism of formal groups over $S$.
By passing to a faithfully flat extension, we may as well assume
that $G$ and $H$ can be given coordinates $x$ and $y$ respectively;
then $f$ is determined by a power series $f(x) \in A[[x]]$ and
the induced morphism
$$
df:\omega_{H} \longr  \omega_{G}
$$
is multiplication by $f'(0)$.
\end{exam}

\subsection{Invariant differentials in characteristic $p$}

As a warm-up  for the next section, we will isolate some of the
extra phenomena that occurs when we are working over
a base scheme $S$ which is itself a scheme over $\Spec(\FF_p)$.
In this case there is a Frobenius morphism
$f:S \to S$. Indeed, if $R$ is an $\FF_p$ algebra, the Frobenius
$x \mapsto x^p$ defines a natural morphism $f_R:R \to R$ of
$\FF_p$ algebras and 
$$
f_S(R)= S(f_R):S(R) \longr S(R),
$$
If $X \to S$ is any scheme over $S$, we define $X^{(p)}$ to be the
pull-back
$$
\xymatrix{
X^{(p)} \rto \dto & X \dto\\
S \rto_f & S
}
$$
and the relative Frobenius\index{Frobenius, relative}
$F:X \to X^{(p)}$ to the unique morphism
of schemes over $S$ so that the following diagram commutes
$$
\xymatrix{
X \ar@/^1pc/[rr]^f \rto_F \ar[dr] & X^{(p)}\dto \rto & X \dto\\
&S \rto_f & S.
}
$$
The following is an exercise in definitions and the universal
properties of pull-backs.

\begin{lem}\label{tan-frob}Let $X \to S$ be a scheme over a
scheme $S$ over $\Spec(\FF_p)$.

1.) There is a natural isomorphism $\tan_{X/S}^{(p)} \cong
\tan_{X^{(p)}/S}$.

2.) If $G \to S$ is a group scheme over $S$, then there is a natural
isomorphism $\lie_{G}^{(p)} \cong \lie_{G^{(p)}/S}$.

3.) The relative Frobenius $F: X \to X^{(p)}$ induces the the zero
homomorphism $\tan_{X/S}(F): \tan_{X/S} \to \tan_{X/S}^{(p)}$;
that is, $\tan_{X/S}(F)$ can be factored
$$
\xymatrix{
\tan_{X/S} \rto & X \rto^-F & X^{(p)} \rto^-s & \tan_{X/S}^{(p)}
}
$$
where $s$ is the zero section.

4.) If $G \to S$ is a group scheme, the relative Frobenius
$F: G\to G^{(p)}$ induces the the zero
homomorphism $\lie(F): \lie_{G} \to \lie_{G^{(p)}/S}$;
that is, $\lie(F)$ can be factored
$$
\xymatrix{
\lie_{G} \rto & S  \rto^-s &\lie_{G^{(p)}/S}.
}
$$
\end{lem}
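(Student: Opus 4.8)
The plan is to treat parts (1) and (2) as bookkeeping about base change, and to reduce parts (3) and (4) to a single feature of the Frobenius on the ring of dual numbers; part (4) will be part (3) restricted to the identity sections.

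First, for (1): the relative tangent construction commutes with base change in $S$. Evaluating on $A(\epsilon)$-points gives $\tan_{X_T}\cong\tan_X\times_{\tan_S}\tan_T$ for any $g\colon T\to S$ with $X_T=T\times_S X$, and since the zero section $S\to\tan_S$ is natural, the map $T\to\tan_T$ factors through $T\xrightarrow{g}S\to\tan_S$; writing a pullback along a composite as an iterated pullback then yields $\tan_{X_T/T}\cong T\times_S\tan_{X/S}$. Taking $T=S$ and $g=f$ the Frobenius of $S$, and recalling that $X^{(p)}$ is by definition the pullback of $X$ along $f$, this is exactly $\tan_{X^{(p)}/S}\cong\tan_{X/S}^{(p)}$. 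For (2), the identity section $e^{(p)}\colon S\to G^{(p)}$ is the base change of $e\colon S\to G$ along $f$; since pullback commutes with base change, $\lie_{G^{(p)}/S}=(e^{(p)})^\ast\tan_{G^{(p)}/S}\cong(e^\ast\tan_{G/S})^{(p)}=\lie_G^{(p)}$.

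The crux is (3). The point is that the Frobenius endomorphism $\phi$ of $A(\epsilon)=A[x]/(x^2)$ kills $\epsilon$, since $\epsilon^p=0$ for $p\ge 2$, and hence factors as $A(\epsilon)\xrightarrow{q}A\xrightarrow{\phi_A}A\xrightarrow{\iota}A(\epsilon)$ with $q(\epsilon)=0$ and $\iota$ the unit inclusion; in particular $\iota q$ is idempotent, and the absolute Frobenius $f_X$ acts on $X(A(\epsilon))=\tan_X(A)$ through the image of $X(\iota)$, i.e. through the zero section. Now a point $\xi\in\tan_{X/S}(A)$ is a point of $X(A(\epsilon))$ whose image under the structure map lies in the image of $S(\iota)$ — this is what ``vertical tangent vector'' means. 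Writing $p_1,p_2$ for the two projections of $X^{(p)}$, with $F$ characterized by $p_1\circ F=f_X$ and $p_2\circ F=(\text{structure map})$, I would check that both components of $F(\xi)\in X^{(p)}(A(\epsilon))$ are fixed by the relevant copy of $\iota q$: the first by the displayed factorization of $f_X$, the second because $\xi$ is vertical. Since $X^{(p)}$ is a subfunctor of a product, $F(\xi)$ is then fixed by $\iota q$, hence lies in the image of the zero section $s\colon X^{(p)}\to\tan_{X^{(p)}/S}$; applying $q$ identifies it with $s$ applied to the underlying point of $\xi$ pushed forward by $F$. This gives the factorization $\tan_{X/S}(F)=s\circ F\circ(\text{projection }\tan_{X/S}\to X)$, which under the identification of (1) is the one stated. (When $X\to S$ is separated one could instead invoke $\tan_{X/S}\cong\VV(\Omega_{X/S})$ from Proposition \ref{tan-is-affine} together with the vanishing of $F$ on relative differentials, since $d(b^p)=pb^{p-1}\,db=0$; the functorial argument avoids any separatedness hypothesis.)

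Finally, for (4): since $F\colon G\to G^{(p)}$ is a homomorphism it carries $e$ to $e^{(p)}$, so $\lie(F)$ is obtained from $\tan_{G/S}(F)$ by pulling back along $e$ on the source and $e^{(p)}$ on the target. Applying this to the factorization of (3) and using $F\circ e=e^{(p)}$ turns the zero section $s$ into the zero section $S\to\lie_{G^{(p)}/S}$, so $\lie(F)$ factors as $\lie_G\to S\xrightarrow{s}\lie_{G^{(p)}/S}$, as claimed. The only step needing genuine care is (3), and only at the level of bookkeeping — keeping track of the two projections of $X^{(p)}$ and of what it means for a tangent vector to be vertical; once the factorization of $\phi$ through $q$ is recorded, everything else is formal.
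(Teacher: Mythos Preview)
Your proof is correct and follows essentially the same approach as the paper: parts (1)--(2) are treated as base-change formalities, and parts (3)--(4) rest on the single observation that the Frobenius on $A(\epsilon)$ kills $\epsilon$ and hence factors as $A(\epsilon)\to A\to A\to A(\epsilon)$. The paper states this key factorization and leaves the rest implicit; you have supplied the bookkeeping (tracking the two projections of $X^{(p)}$ and the verticality condition) that the paper omits, but the argument is the same.
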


\begin{proof} The first of two of these statements are an exercise
in definitions and the universal properties of pull-backs.
The second two follow from the fact that if $R$ is an
$\FF_p$-algebra, the $f_{R(\epsilon)}(-) = (-)^p:R(\epsilon)
\to R(\epsilon)$ factors
$$
\xymatrix{
R(\epsilon) \rto^{\epsilon=0}& R \rto^{f_R} & R \rto & R(\epsilon).
}
$$
\end{proof} 

While the morphism $\lie(F)$ induced by the relative
Frobenius $F:G \to G^{(p)}$ is the zero map, the relative
Frobenius 
$$
F: \lie_{G} \to \lie_{G}^{(p)} \cong\lie_{G^{(p)}/S}
$$
is not. This is the map on affine schemes over $S$
$$
\VV(\omega_{G}) \longr \VV(\omega_{G})^{(p)} \cong
\VV(\omega_{G^{(p)}})
$$
induced by the Frobenius morphism on algebra sheaves
$$
\Sym_{S}(\omega_{G^{(p)}}) 
\to \Sym_{S}(\omega_{G}).
$$
By restricting to the sub-$\cO_S$-module $\omega_{G^{(p)}}$
of $\Sym_S(\omega_{G^{(p)}})$
we get the map needed for the following result. $\Sym_p(-)$
is the $p$th symmetric power functor.

\begin{lem}\label{lie-smooth-2}Let $G$ be a group scheme over
$S$ and $S$ a scheme over $\FF_p$. Then the $p$th power
map induces a natural homomorphism
of quasi-coherent sheaves over $S$
$$
\omega_{G^{(p)}} \to \mathrm{Sym}_p(\omega_{G})
$$
which, if $G$ is smooth of dimension $1$, becomes an isomorphism
$$
\omega_{G^{(p)}} \cong \mathrm{Sym}_p(\omega_{G}) \cong
\omega_{G}^{\otimes p}.
$$
\end{lem}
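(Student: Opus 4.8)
The plan is to read the map $\omega_{G^{(p)}} \to \Sym_p(\omega_G)$ off the construction already given in the discussion preceding the statement, and then to establish the isomorphism claim by a short local computation.

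For the first assertion there is essentially nothing new to do. As recalled just above, the relative Frobenius $F: \lie_G = \VV(\omega_G) \to \VV(\omega_G)^{(p)} \cong \VV(\omega_{G^{(p)}})$ corresponds to a morphism of quasi-coherent $\cO_S$-algebra sheaves $\Sym_S(\omega_{G^{(p)}}) \to \Sym_S(\omega_G)$. Such an algebra homomorphism is determined by its effect on the degree-one generators, and here those generators are carried into degree $p$; restricting it to the degree-one summand $\omega_{G^{(p)}} \subseteq \Sym_S(\omega_{G^{(p)}})$ therefore produces an $\cO_S$-linear map landing in $\Sym_p(\omega_G)$. This is the asserted homomorphism, and it is natural in $G$ because $F$ and the identifications $\VV(\omega_G)^{(p)} \cong \VV(\omega_{G^{(p)}})$ — equivalently $f^\ast\omega_G \cong \omega_{G^{(p)}}$, with $f:S \to S$ the absolute Frobenius — are. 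Under the latter identification the map is, concretely, the $p$th power map: a local generator $1 \otimes \eta$ of $f^\ast\omega_G$ goes to $\eta^p$, and one extends $\cO_S$-linearly.

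Now suppose $G$ is smooth of relative dimension $1$, so that $\omega_G = \omega_e$ is locally free of rank $1$. I would first record the elementary fact that for an invertible sheaf $L$ the natural multiplication map $L^{\otimes p} \to \Sym_p(L)$ is an isomorphism: this is checked over a Zariski-open cover, where $L$ is free on a generator $\eta$ and $\Sym_p(L)$ is free on $\eta^p$. This yields the canonical isomorphism $\Sym_p(\omega_G) \cong \omega_G^{\otimes p}$, hence the second isomorphism in the statement.

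It then remains to check that $\omega_{G^{(p)}} \to \Sym_p(\omega_G) \cong \omega_G^{\otimes p}$ is an isomorphism; since a morphism of quasi-coherent sheaves on $S$ is an isomorphism once it becomes one after restriction to the members of a Zariski-open (or faithfully flat) cover, we may pass to an open $U = \Spec(A) \subseteq S$ over which $\omega_G$ is free on a generator $\eta$ — for a formal group one may take $\eta = dx/F_x(0,x)$ for a local coordinate $x$, as in Example \ref{dlog}. Over $U$ the sheaf $\omega_{G^{(p)}}\vert_U = (f^\ast\omega_G)\vert_U$ is free of rank $1$ on $1 \otimes \eta$, which by the formula above maps to the free generator $\eta^p$ of $\omega_G^{\otimes p}\vert_U$; so the map carries a basis to a basis and is an isomorphism over $U$, hence over $S$. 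The one place that demands care is the first step: pinning down the identification $\omega_{G^{(p)}} \cong f^\ast\omega_G$ and recognizing the algebra map induced by the relative Frobenius as the $p$th power map in the relevant degrees. Once that is done, the rank-one computation with $\Sym_p$ and the local freeness of $\omega_G$ close the argument immediately.
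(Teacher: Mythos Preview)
Your proposal is correct and follows the same approach as the paper. The paper's own proof is a single sentence—``The last statement follows because $\omega_G$ is locally free of rank 1''—with the construction of the map already given in the preceding discussion; you have simply unpacked both of these, carrying out the local rank-one computation that the paper leaves to the reader.
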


\begin{proof}The last statement follows because $\omega_{G}$ is
locally free of rank 1.
\end{proof}

The exact same argument now proves:

\begin{lem}\label{lie-smooth-1}Let $G$ be a formal group over
$S$ and $S$ a scheme over $\FF_p$. Then the $p$th power
map induces a natural homomorphism
of quasi-coherent sheaves over $S$
$$
\omega_{G^{(p)}} \to \mathrm{Sym}_p(\omega_{G})
$$
yields an isomorphism
$$
\omega_{G^{(p)}} \cong \mathrm{Sym}_p(\omega_{G}) \cong
\omega_{G}^{\otimes p}.
$$
\end{lem}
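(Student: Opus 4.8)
The plan is to run exactly the argument used for Lemma \ref{lie-smooth-2}, taking advantage of the fact that Remark \ref{tan-for} already provides, for a formal group $G$ over $S$, the identification $\lie_{G} \cong \VV(\omega_{G})$ of abelian group schemes affine over $S$ and the formal Lie variety structure on $\tan_{G/S}$. First I would pin down the map. Applying Lemma \ref{tan-frob}(2) to $G$ (or, if one prefers to stay on schemes, to each infinitesimal neighborhood $G_{n} \to S$ and passing to the colimit) gives a natural isomorphism $\lie_{G}^{(p)} \cong \lie_{G^{(p)}/S}$, hence $\VV(\omega_{G})^{(p)} \cong \VV(\omega_{G^{(p)}})$ and in particular $\omega_{G^{(p)}} \cong f^{\ast}\omega_{G}$, where $f:S \to S$ is the absolute Frobenius. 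The relative Frobenius $F:\lie_{G} \to \lie_{G}^{(p)}$ is, just as in the scheme case discussed before Lemma \ref{lie-smooth-2}, induced by the Frobenius endomorphism of algebra sheaves $\Sym_{S}(\omega_{G^{(p)}}) \to \Sym_{S}(\omega_{G})$, $\xi \mapsto \xi^{p}$; since this raises symmetric degree by $p$, its restriction to the degree-one summand $\omega_{G^{(p)}} \subseteq \Sym_{S}(\omega_{G^{(p)}})$ lands in $\Sym_{p}(\omega_{G})$, and after absorbing the Frobenius semilinearity into the twist $\omega_{G^{(p)}} \cong f^{\ast}\omega_{G}$ it becomes an $\cO_{S}$-linear map $\omega_{G^{(p)}} \to \Sym_{p}(\omega_{G})$, natural in $G$ by functoriality of relative Frobenius and of $\Sym$.

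Next, since $\omega_{G}$ is locally free of rank $1$, the canonical surjection $\omega_{G}^{\otimes p} \to \Sym_{p}(\omega_{G})$ is an isomorphism (the symmetric algebra of a line bundle is its polynomial algebra, whose graded pieces are the tensor powers), giving the second isomorphism in the statement. So it remains to check that $\omega_{G^{(p)}} \to \Sym_{p}(\omega_{G})$ is an isomorphism, and being an isomorphism of quasi-coherent sheaves is $fpqc$-local on $S$. By Example \ref{fg-local} (compare Remark \ref{pwr-series-is-lie}) we may therefore assume $S = \Spec(A)$ and $G = \Spf(A[[x]])$ for some formal group law $F$ over $A$; then $G^{(p)} = \Spf(A[[x']])$ carries the Frobenius-twisted formal group law $F^{(p)}$, and the relative Frobenius $F:G \to G^{(p)}$ is the morphism of formal schemes sending the coordinate $x'$ to $x^{p}$. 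Reading off conormal sheaves as in Example \ref{dlog}: $\omega_{G}$ is the free rank-one $A$-module on the class of $x$ and $\omega_{G^{(p)}}$ the free rank-one $A$-module on the class of $x'$, and the induced map sends the class of $x'$ to the class of $x^{p}$, which is an $A$-module generator of $\Sym_{p}(\omega_{G}) = A\cdot x^{p}$. Hence the map is an isomorphism, and composing with $\Sym_{p}(\omega_{G}) \cong \omega_{G}^{\otimes p}$ finishes the argument.

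The only point requiring any care — the \emph{main obstacle}, such as it is — is the bookkeeping of the Frobenius twist: verifying that the semilinear map $\xi \mapsto \xi^{p}$ on $\omega_{G}$ corresponds, under the identification $\omega_{G^{(p)}} \cong f^{\ast}\omega_{G}$ coming from Lemma \ref{tan-frob}, to the claimed $\cO_{S}$-linear map, and that this matches the local computation with a coordinate. For a formal (rather than affine) group one should also note that nothing is lost in passing through the infinitesimal neighborhoods $G_{n}$: Lemma \ref{tan-frob} applies to each $G_{n} \to S$, the relative Frobenius is compatible with the inclusions $G_{n} \hookrightarrow G_{n+1}$, and $\omega_{G}$ and $\omega_{G^{(p)}}$ are already computed on $G_{1}$, so the colimit recovers precisely the map above. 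With those remarks, the proof is identical to that of Lemma \ref{lie-smooth-2}.
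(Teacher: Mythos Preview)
Your proposal is correct and takes the same approach as the paper. The paper's proof is literally one sentence---``The exact same argument now proves''---pointing back to Lemma \ref{lie-smooth-2}, whose only content is that $\omega_G$ being locally free of rank $1$ gives $\Sym_p(\omega_G)\cong\omega_G^{\otimes p}$; you have simply unpacked that sentence, including the passage through infinitesimal neighborhoods and the local coordinate check, which the paper leaves implicit.
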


\section{The height filtration}

The theory of formal groups in characteristic zero is quite
simple: in Corollary \ref{rational} we saw that over $\QQ$,
we are reduced to studying the additive formal group law
and its automorphisms. In characteristic $p > 0$ (and hence
over the integers) the story is quite different. Here formal groups
are segregated by height and it is the height filtration which is
at the heart of the geometry of $\cM_\fg$. The point of this
section is to spell this out in detail.

\subsection{Height and the elements $v_n$}

We are going to study formal groups $G$ over schemes $S$
which are themselves schemes over $\Spec(\FF_p)$. In Lemma
\ref{tan-frob} we introduced and discussed the relative Frobenius
$F$ and its effect on tangent and Lie schemes. The following is
a standard lemma for formal groups. The homomorphism
$F:G \to G^{(p)}$ is the relative Frobenius.

\begin{lem}\label{factor-frob}Let $f:G \to H$ be a homomorphism of 
formal groups over $S$
which is a scheme over $\Spec(\FF_p)$. If
$$
0 = \lie (f): \lie_{G} \to \lie_{H}.
$$
then there is a unique morphism $g:G^{(p)} \to H$ so that 
there is a factoring
$$
\xymatrix{
G \rto^{F}\ar[dr]_f& G^{(p)} \dto^g\\
&H
}
$$
\end{lem}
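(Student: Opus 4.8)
The plan is to reduce to a local computation with coordinates and then glue. First I would record that the relative Frobenius $F\colon G\to G^{(p)}$ is an $fpqc$-epimorphism: on each infinitesimal neighbourhood it is the finite faithfully flat map dual to $A[[\bar x]]/(\bar x^{m})\to A[[x]]/(x^{pm})$, $\bar x\mapsto x^p$, which is free of rank $p$ on the basis $1,x,\dots,x^{p-1}$. Consequently a morphism $g$ with $f=g\circ F$ is unique if it exists, so it suffices to construct $g$ over the members of a Zariski cover of $S$ and glue, the local solutions agreeing on overlaps by uniqueness. So I would pass to an affine open $\Spec(A)\subseteq S$ with $A$ an $\FF_p$-algebra over which both $G$ and $H$ admit coordinates $x$ and $y$ (Example~\ref{fg-local}); then $f$ becomes a homomorphism of formal group laws given by a power series $f(x)\in xA[[x]]$, while $G^{(p)}\cong\Spf(A[[\bar x]])$ carries the formal group law obtained from $F_G$ by applying the Frobenius $\phi$ of $A$ to the coefficients, and $F^\ast\colon A[[\bar x]]\to A[[x]]$ sends $\bar x\mapsto x^p$.

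The heart of the argument is to show $f(x)\in A[[x^p]]$. By Example~\ref{calculate-lie} (compare Remark~\ref{compute-lie}) the hypothesis $\lie(f)=0$ is equivalent to $f'(0)=0$, and moreover the induced map $df\colon\omega_H\to\omega_G$ on invariant differentials is multiplication by $f'(0)$, hence zero. Writing a generator of $\omega_H$ as $dy/(F_H)_y(0,y)$ as in Example~\ref{dlog}, its pullback along $f$ is $f'(x)\,dx\,/\,(F_H)_y(0,f(x))$; since $(F_H)_y(0,f(x))$ is a unit in $A[[x]]$ and this pullback equals $df$ of the generator, namely $0$, I conclude $f'(x)=0$ in $A[[x]]$. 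Because $A$ is an $\FF_p$-algebra, $f'(x)=0$ forces every coefficient of $f$ in a degree prime to $p$ to vanish, so $f(x)=g(x^p)$ for a unique power series $g(t)\in tA[[t]]$.

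Finally, the series $g$ defines a morphism $g\colon G^{(p)}=\Spf(A[[\bar x]])\to H$, $y\mapsto g(\bar x)$, and by construction $f^\ast=F^\ast\circ g^\ast$, i.e.\ $f=g\circ F$. To see that $g$ is a homomorphism I would start from the identity $f(F_G(x_1,x_2))=f(x_1)+_{F_H}f(x_2)$, which reads $g\big(F_G(x_1,x_2)^p\big)=g(x_1^p)+_{F_H}g(x_2^p)$; then use that in characteristic $p$ raising $F_G(x_1,x_2)=\sum c_{ij}x_1^ix_2^j$ to the $p$-th power (legitimate term by term by completeness) replaces each $c_{ij}$ by $c_{ij}^p=\phi(c_{ij})$ and each monomial by its $p$-th power, so that $F_G(x_1,x_2)^p$ is exactly the formal group law of $G^{(p)}$ evaluated at $(x_1^p,x_2^p)$. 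Thus the homomorphism identity for $g$ holds after the substitution $\bar x_i\mapsto x_i^p$, which is the injection $F^\ast$ in two variables, hence holds in $A[[\bar x_1,\bar x_2]]$; so $g$ is a homomorphism of formal group laws, therefore of formal groups $G^{(p)}\to H$, and the local $g$'s glue by the uniqueness noted at the start. I expect the only genuinely delicate point to be the implication $f'(0)=0\Rightarrow f'(x)=0$: it fails over general base rings and here uses the characteristic-$p$ hypothesis through the explicit shape of the invariant differential; the rest is bookkeeping with coordinates, base change, and the freshman's dream.
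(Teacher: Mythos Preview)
Your proof is correct. It follows essentially the same route as the paper—reduce to a local coordinate computation, show $f'(x)=0$, then write $f(x)=g(x^p)$—but reaches the key step $f'(x)=0$ by the dual mechanism. The paper argues on the tangent side: the splitting $\tan_{G/S}\cong G\times_S\lie_G$ of Equation~\ref{split-tan} is natural in homomorphisms of commutative formal groups, so under it $\tan(f)$ becomes $f\times\lie(f)$; hence $\lie(f)=0$ forces $\tan(f)$ to factor through the zero section, which in the coordinate description of Remark~\ref{compute-lie} reads $f'(x)=0$. You argue on the cotangent side, using that pullback of forms sends the invariant differential $\eta_H$ to $df(\eta_H)=f'(0)\cdot\eta_G=0$, and then the explicit formula $f^\ast\eta_H=f'(x)\,dx/(F_H)_y(0,f(x))$ forces $f'(x)=0$. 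These are the same identity viewed from opposite ends. Your extra care—uniqueness via $F^\ast$ being injective (equivalently $F$ faithfully flat on each infinitesimal level), and the explicit check that $g$ is a homomorphism via $F_G(x_1,x_2)^p=F_{G^{(p)}}(x_1^p,x_2^p)$—fills in points the paper leaves to the reader.
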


\begin{proof} It follows immediately from the natural decomposition
$\tan_{G/S} \cong G \times \lie_{G}$ that the induced map
$$
\tan(f):\tan_{G/S} \to \tan_{H/S}
$$
is the zero homomorphism as well. Because of the uniqueness
of $g$ it is sufficient to prove the result locally, so choose
an fqpc-cover $q:T \to S$ so that $q^\ast G$ and $q^\ast H$
can each be given a coordinate. As in Remark \ref{compute-lie},
we write
$f$ as a power series $f(x)$ and because $\tan(f) = 0$ we
conclude that $f'(x) = 0$. Because we are working over
$\FF_p$, we may write $f(x) = g(x^p)$ for some
unique $g(x)$ and we let $g$ define the needed homomorphism
$g^\ast :G^{(p)} \to H$.
\end{proof}

Let $G$ be a formal group over $S$, with $S$ a scheme over
$\Spec(\FF_p)$. Since $G$ is commutative the $p$th power map
$$
[p]: G \longr G
$$
is a homomorphism of formal groups over $S$. If $G$ can be given
a coordinate, then Remark \ref{compute-lie} implies that
$\lie([p]) = 0$. More generally, we choose an fpqc cover
$f:T \to S$ so that $f^\ast G$ has a coordinate. Then,
since $f$ is faithfully flat and $f^\ast\lie([p]) = 0$, we
have $\lie([p]) = 0$. Therefore, Lemma \ref{factor-frob}
implies there is a unique homomorphism $V:G^{(p)} \to G$ so
that we have a factoring
$$
\xymatrix{
G \rto^{F} \ar[dr]_{[p]} & G^{(p)} \dto^{V}\\
&G.\\
}
$$
The homomorphism $V$ is called the {\it Verschiebung}.
\index{Verschiebung}.
The induced morphism $\lie(V):\lie_{G}^{(p)} \to
\lie_{G}$ may itself be zero; if so, we obtain a factoring
$$
\xymatrix{
G^{(p)} \rto^{F^{(p)}} \ar[dr]_{V} & G^{(p^2)} \dto^{V_2}\\
&G.\\
}
$$
We may continue if $\lie(V_2) = 0$.

\begin{defn}[The height of a formal group]\label{height}\index{height,
of a formal group}Let
$G$ be a formal group over a scheme $S$ which is itself
a scheme over $\Spec(\FF_p)$. Define $G$ to have
{\bf height} at least $n$ if there is a factoring
$$
\xymatrix{
G \rto^{F} \ar[drrrr]_{[p]} & G^{(p)} \rto^{F^{(p)}} & G^{(p^2)}
\rto^{F^{(p^2)}} & \cdots \rto^{F^{(p^{n-1})}} & G^{(p^n)}
\dto^{V_n}\\
&&&&G.\\
}
$$
Define $G$ to have height exactly $n$ if  $\lie(V_n) \ne 0$.
\end{defn}

Note that a formal group may not have finite height; for example,
if $\hat{\GG}_a$ is the formal additive group, then
$0 = [p]:\hat{\GG}_a \to \hat{\GG}_a$ so it must have
infinite height. It follows from Lazard's uniqueness theorem
 (Corollary \ref{lazard-unique})
that every infinite height formal group is locally isomorphic
to the additive group.

\begin{prop}\label{vn}\index{$v_n$, as section of
$\omega_G^{p^n-1}$}Let $G$ be a formal group over a scheme $S$
which is itself a scheme over $\Spec(\FF_p)$. Suppose that
$G$ has height at least $n$. Then there is a global section
$$
v_n(G) \in H^0(S,\omega_G^{\otimes (p^n-1)})
$$
so that $G$ has height at least $n+1$ if and only if $v_n(G) = 0$.
The element $v_n(G)$ is natural; that is, if $H \to T$ is another
formal group, $f:T \to S$ is a morphism
of schemes and $\phi:H \to f^\ast G$ is an isomorphism of formal
groups, then
$$
f^\ast v_n(G) = v_n(H).
$$
\end{prop}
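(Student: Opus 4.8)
The plan is to build $v_n(G)$ out of the Verschiebung $V_n$ and then read off the height criterion and the naturality from the functoriality of the construction. Since $G$ has height at least $n$, Definition \ref{height} provides $V_n\colon G^{(p^n)}\to G$, and $V_n$ is unique by the uniqueness clause of Lemma \ref{factor-frob}. Applying $\lie(-)$ and using $\lie_{G'}\cong\VV(\omega_{G'})$ from \S4.1 together with $\lie_{G^{(p^n)}/S}\cong\lie_G^{(p^n)}$ from Lemma \ref{tan-frob}.2, the map $\lie(V_n)$ is $\cO_S$-linear, hence corresponds dually to an $\cO_S$-linear map $\omega_G\to\omega_{G^{(p^n)}}$. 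Iterating the canonical isomorphism $\omega_{G^{(p)}}\cong\omega_G^{\otimes p}$ of Lemma \ref{lie-smooth-1} --- legitimate since $G$ and all its Frobenius twists are smooth of dimension $1$ --- gives $\omega_{G^{(p^n)}}\cong\omega_G^{\otimes p^n}$, so $\lie(V_n)$ determines a map $\omega_G\to\omega_G^{\otimes p^n}$. As $\omega_G$ is invertible, such a map is the same datum as a global section
$$
v_n(G)\in H^0(S,\omega_G^{\otimes(p^n-1)}),
$$
which I take as the definition; it is canonical because $V_n$ is.

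For the height criterion, $v_n(G)=0$ iff the map $\omega_G\to\omega_G^{\otimes p^n}$ vanishes, iff $\lie(V_n)=0$. If $\lie(V_n)=0$, Lemma \ref{factor-frob} applied to $V_n$ yields $V_{n+1}\colon G^{(p^{n+1})}\to G$ with $V_n=V_{n+1}\circ F^{(p^n)}$, and composing with the Frobenius chain already factoring $[p]$ through $G^{(p^n)}$ exhibits height at least $n+1$. Conversely, if $G$ has height at least $n+1$, the two factorizations of $[p]$ through the Frobenius chain $G\to G^{(p^n)}$ must coincide, because that chain is an epimorphism of formal schemes (locally $x\mapsto x^{p^n}$ on power series rings, which is faithfully flat); hence $V_n=V_{n+1}\circ F^{(p^n)}$ and $\lie(V_n)=\lie(V_{n+1})\circ\lie(F^{(p^n)})=0$ by Lemma \ref{tan-frob}.4.

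For naturality I would treat the base change $f\colon T\to S$ and the isomorphism $\phi\colon H\to f^\ast G$ separately. The relative Frobenius commutes with base change (by its defining universal property and naturality of the absolute Frobenius), hence so does the Verschiebung; thus $f^\ast G$ has height at least $n$ with $V_n(f^\ast G)=f^\ast V_n$, and since $\lie(-)$, the isomorphism of Lemma \ref{tan-frob}, and the isomorphism of Lemma \ref{lie-smooth-1} are all natural and compatible with $f^\ast$, this yields $v_n(f^\ast G)=f^\ast v_n(G)$. For $\phi$, every step of the construction is functorial in the formal group, so $\phi$ induces compatible isomorphisms $\phi^{(p^k)}$ commuting with the Frobenius and Verschiebung maps, and the resulting isomorphism $d\phi\colon f^\ast\omega_G\to\omega_H$ intertwines the two maps $\omega\to\omega^{\otimes p^n}$ that define $v_n(f^\ast G)$ and $v_n(H)$; here one uses that $d\phi$ is compatible with the identifications $\omega_{(-)^{(p^n)}}\cong\omega_{(-)}^{\otimes p^n}$, which is the naturality clause of Lemma \ref{lie-smooth-1}. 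Combining the two halves gives $f^\ast v_n(G)=v_n(H)$ under the identification $\omega_H\cong f^\ast\omega_G$ induced by $\phi$.

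The main obstacle is organizational rather than conceptual: I must fix the direction of the duality $\lie_G\cong\VV(\omega_G)$ so that $V_n$ induces $\omega_G\to\omega_{G^{(p^n)}}$ and not its reverse, and I must check that the tower of canonical isomorphisms $\omega_{G^{(p^n)}}\cong\omega_G^{\otimes p^n}$ is simultaneously compatible with base change and with homomorphisms of formal groups. Both follow from the naturality already built into Lemmas \ref{tan-frob} and \ref{lie-smooth-1}, but they have to be tracked carefully so that the closing equality $f^\ast v_n(G)=v_n(H)$ is an equality of sections on the nose and not merely up to an unspecified unit.
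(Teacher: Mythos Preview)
Your argument is correct and follows essentially the same route as the paper: define $v_n(G)$ from the map $dV_n:\omega_G\to\omega_{G^{(p^n)}}\cong\omega_G^{\otimes p^n}$ using invertibility of $\omega_G$, and deduce naturality from the commuting square relating $V_n$ on $H$ and on $f^\ast G$. You supply more detail than the paper does on the equivalence ``height at least $n+1$'' $\Leftrightarrow$ $\lie(V_n)=0$ (the paper takes this as immediate from the recursive construction preceding Definition~\ref{height}) and you decompose the naturality into base change plus isomorphism, whereas the paper writes a single diagram; neither difference is substantive.
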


\begin{proof}Since $G$ is of height at least $n$, we have the
morphism $V_n:G^{(p^n)} \to G$ and $G$ has height at
least $n+1$ if and only if $\lie(V_n) = 0$. This will happen
if and only if the induced map
$$
dV_n:\omega_{G} \longr \omega_{G^{(p^n)}} \cong
\omega_{G}^{\otimes p^n}.
$$
is zero. The last isomorphism uses Lemma \ref{lie-smooth-1}.
Since $\omega_{G}$ is an invertible sheaf, $dV_n$ corresponds to
a unique morphism
$$
v_n(G): \cO_S \longr \omega_{G}^{\otimes (p^n-1)}.
$$
This defines the global section. 
The naturality statement follows from the commutative diagram
$$
\xymatrix{
\lie_{H^{(p^n)}} \rto^-{V_n} \dto_{\lie(\phi^{(p^n)})} &
\lie_{H} \dto^{\lie(\phi)}\\
f^\ast \lie_{G^{(p^n)}}\rto^-{V_n} & f^\ast \lie_{G}
}
$$
\end{proof}

\begin{rem}\label{vn-local}The global section $v_n$ can be computed
locally as follows. Let $S = \Spec(R)$ be affine and suppose
$G \to S$ can be given a coordinate $x$. Then if $G$ is
of height at least $n$, the power series expansion 
of $[p]:G \to G$ gives the $p$-series:
$$
[p](x) = a_nx^{p^n} + a_{2n}x^{2p^n} + \cdots
$$
If $\eta(G,x) = dx/F_x(0,x)$ is the invariant differential associated to this
coordinate, then
$$
v_n(G) = a_n \eta(G,x)^{\otimes p^n-1} \in \omega_{G}^{\otimes (p^n-1)}.
$$
In particular, $v_n(G) = 0$ if and only if $a_n = 0$. 
\end{rem}

We wish to define a descending chain of closed substacks
$$
\cdots \subseteq \cM(3) \subseteq \cM(2) \subseteq \cM(1) \subseteq \cM_{\fg}
$$
with $\cM(n)$ the moduli stack of  formal groups of height greater than
or equal to $n$. Of course, $\cM(n)$ will be defined by the
vanishing of $p,v_1,\ldots,v_{n-1}$, but it's worth dwelling on
the definition so that the behavior of $\cM(n)$ under base change is
transparent.

Let $\cM$ be an $fqpc$-algebraic stack over a base scheme $S$. Recall that an
{\it effective Cartier divisor}\index{Cartier divisor, effective} $D \subseteq \cM$ is  closed subscheme
so that the ideal sheaf $\cI(D) \subseteq \cO_\cM$ defining $D$ is locally free of
rank 1.\footnote{Some authors ({\it cf} \cite{KM}, Chapter 1) require also
that $D$ be flat over $S$. This implies that if $f:T \to S$ is a morphism
of schemes, then $T \times_S D$ is an effective Cartier divisor for $T \times_S \cM$.
But is also means that if $X$ is a scheme with $\cO_X$
torsion free then the closed subscheme obtained from setting
$p=0$ is not an effective Cartier divisor for $X$ over $\ZZ$.}
If we tensor the exact sequence
$$
0 \to \cI(D) \to \cO_\cM \to \cO_D \to 0
$$
of sheaves on $\cM$ with the dual sheaf $\cI(D)^{-1}$, then 
we get an exact sequence
$$
\xymatrix{
0 \rto &\cO_\cM \rto^-s & \cI(D)^{-1}\rto & \cO_D \otimes_{\cO_\cM} \cI(D)^{-1} \rto &0
}
$$
with $s$ a section of $\cI(D)^{-1}$. Conversely, given an invertible sheaf $\cL$, 
a section $s$ of $\cL$, and an exact sequence
$$
\xymatrix{
0 \rto &\cO_\cM \rto^s & \cL \rto& \cL/\cO_\cM \rto & 0
}
$$
then the substack of zeros of $s$ is an effective
Cartier divisor with ideal sheaf defined by the image of the injection
$$
s:\cL^{-1} \longr \cO_\cM.
$$
This establishes a one-to-one correspondence between effective
Cartier divisors and isomorphism classes of pairs $(\cL,s)$ as
above. We will say that the divisor is defined by the pair
$(\cL,s)$. For example
$$
\cM(1) \subseteq \cM_\fg 
$$
is the effective Cartier divisor defined by $(\cO_\fg,p)$. Suppose
$\cM(n)$ has been defined and classifies formal groups of
height at least $n$.

\begin{defn}\label{mn-divisor} 1.) Define the closed substack
$\cM(n+1) \subseteq \cM(n)$ to be the effective Cartier divisor
defined by the pair $(\omega^{p^n-1},v_n)$.\index{$\cM(n)$}

2.) Let $\layer{n} = \cM(n) - \cM(n+1)$ be the open complement
of $\cM(n+1)$ in $\cM(n)$. Then $\layer{n}$ classifies formal
groups of {\bf exact height $n$} or simply of height $n$.
\index{$\layer{n}$}

3.) Let $\cU(n)$ be the open complement of $\cM(n-1)$; then
$\cU(n)$ is the moduli stack of formal groups of height less than
or equal to $n$.\index{$\cU(n)$}
\end{defn} 

Then Proposition \ref{vn} implies that
$\cM(n+1)$ classifies formal groups of height at least $n+1$.
The inclusion $\cM(n) \subseteq \cM_\fg$ is closed; let
$\cI_n \subseteq \cO_\fg$ be the ideal sheaf defining this
inclusion. Thus we have an ascending sequence of ideal
sheaves
$$
0 \subseteq \cI_1 = (p) \subseteq \cI_2 \subseteq \cdots \cO_\fg
$$
and an isomorphism
$$
v_n(G):\omega^{-(p^n-1)} \longr \cI_{n+1}/\cI_n
$$
on $\cM(n)$.

\begin{rem}\label{exact-height}A formal group $G \to S$ has exact height
$n$ if the global section 
$v_n(G) \in H^0(S,\omega_G^{p^n-1})$ is invertible in the
sense that
$$
v_n:\omega_G^{-(p^n-1)} \longr \cO_S
$$
is an isomorphism.
This makes sense even if $n=0$, where would have $p$ invertible
in $H^0(S,\cO_S)$. This {\it defines} the notion of a formal
group of height $0$.
\end{rem}

\begin{rem}\label{vn-local-1} We can follow up Remark \ref{vn-local}
with a local description of $\cI_n$ and the process of defining
$\cM(n)$. If $G \to \Spec(R)$ can be given a coordinate $x$
and $G$ has height a least $n$, then we can write
$$
v_n(G) = u_n \eta(G,x)^{\otimes p^n-1}
$$
where $\eta(G,x) = dx/F_x(0,x)$ is the generator of $\omega_G$.
The choice of generator $\eta(G,x)^{\otimes p^n-1}$ for
$\omega_G^{p^n-1}$ defines an isomorphism
$R \cong \omega_G^{p^n-1}$ and the section
$v_n:R \to  \omega_G^{p^n-1}$ becomes isomorphic
to multiplication by $u_n$. Thus
$$
\cI_{n+1}(G)/\cI_{n}(G) = (u_n)
$$
is the principal ideal generated by $u_n$. Note that $u_n$ is 
not an isomorphism invariant, but the ideal is.

It is tempting to write, for a general formal group $G$ with a coordinate, that
there is an isomorphism
$$
\cI_n(G) = (p,u_1,\ldots,u_{n-1}).
$$
In general, $u_{n-1}$ is only well-defined modulo $\cI_{n-1}(G)$, so
we must be careful with this notation. It is possible to choose
a sequence $p,u_1,\ldots,u_{n-1}$ defining the ideal $\cI_n(R)$, but 
the choices make the sequence unpleasant. In the presence
of a $p$-typical coordinate, the situation improves. See the next remark.
\end{rem}

\begin{rem}\label{p-typical-vn}The form $v_n$ is defined globally
only when $p = v_1 = \cdots v_{n-1} = 0$. But if $G$ is a formal
group with a coordinate over a $\ZZ_{(p)}$ algebra $R$, then Cartier's
theorem gives a
$p$-typical coordinate $x$ for $G$. Let $F$ be the resulting formal group
law for $G$. Then we can write the $p$-series
$$
[p](x) = px +_F  u_1x^p +_F u_2x^{p^2} +_F \cdots
$$
Remark \ref{vn-local} implies that if $p=u_1=\cdots u_{n-1} = 0$,
then
$$
v_n(H) = u_n\eta(G,x)^{\otimes p^{n-1}}
$$
and we really can write $\cI_n(G) = (p,u_1,\ldots,u_{n-1})$.

Note that $v_n(G) = 0$ if and only if $u_n =0$. Since the morphism
$$
\Spec(\ZZ_{(p)}[u_1,u_2,\ldots]) \longr
\cM_{\fg} \otimes\ZZ_{(p)}
$$
classifying the universal $p$-typical formal group is an $fpqc$-cover,
this remark implies the following result.
\end{rem}

\begin{prop}\label{cover-for-mn} For all primes $p$ and all
$n \geq 1$,  there is an  $fpqc$-cover
$$
X(n) \defeq  \Spec(\FF_{(p)}[u_n,u_{n+1},\ldots]) \to \cM(n).
$$
Furthermore,
$$
p_1^\ast(p,u_1,\ldots,u_{n-1}) = 
p_2^\ast(p,u_1,\ldots,u_{n-1}) \subseteq \cO_{X(n)
\times_{\cM(n)} X(n)}
$$
and 
$$
X(n) \times_{\cM(n)} X(n) = \Spec(\FF_p[u_n,u_{n+1},\cdots]
[t_0^{\pm 1},t_1,t_2,\cdots]).
$$
\end{prop}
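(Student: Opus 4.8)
The plan is to obtain this by base change from the $p$-typical presentation of $\cM_\fg\otimes\ZZ_{(p)}$ supplied by Corollary \ref{ptyp-cover}, using Remark \ref{p-typical-vn} to identify the ideal sheaf $\cI_n$ of $\cM(n)$ in the $p$-typical chart. Concretely, Corollary \ref{ptyp-cover} provides the $fpqc$-presentation $u\colon\Spec(V)\to\cM_\fg\otimes\ZZ_{(p)}$ with $V=\ZZ_{(p)}[u_1,u_2,\ldots]$ classifying the universal $p$-typical formal group law $F$, together with $\Spec(V)\times_{\cM_\fg}\Spec(V)\cong\Spec(V[t_0^{\pm1},t_1,t_2,\ldots])$. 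Since $[p]_F(x)=px +_F u_1x^p +_F u_2x^{p^2} +_F \cdots$, Remark \ref{p-typical-vn} identifies the pullback $u^\ast\cI_n\subseteq V$ of the ideal sheaf defining $\cM(n)\subseteq\cM_\fg$ with $(p,u_1,\ldots,u_{n-1})$. Hence the base change $\Spec(V)\times_{\cM_\fg}\cM(n)$ is the affine scheme
$$\Spec\big(V/(p,u_1,\ldots,u_{n-1})\big)=\Spec(\FF_p[u_n,u_{n+1},\ldots])=X(n),$$
and, since surjectivity, flatness and quasi-compactness are stable under base change, the induced morphism $X(n)\to\cM(n)$ is an $fpqc$-presentation. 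That is the first assertion.

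For the double cover I would use that a closed immersion of stacks is a monomorphism, so that $X(n)\times_{\cM(n)}X(n)\cong X(n)\times_{\cM_\fg}X(n)$, and then expand this along $X(n)\to\Spec(V)\to\cM_\fg$:
$$X(n)\times_{\cM_\fg}X(n)\cong\big(\Spec(V)\times_{\cM_\fg}\Spec(V)\big)\times_{\Spec(V)\times\Spec(V)}\big(X(n)\times X(n)\big),$$
which realizes $X(n)\times_{\cM(n)}X(n)$ as the closed subscheme of $\Spec(V[t_0^{\pm1},t_1,\ldots])$ cut out by $p_1^\ast(p,u_1,\ldots,u_{n-1})+p_2^\ast(p,u_1,\ldots,u_{n-1})$, where $p_1,p_2$ denote the two projections to $\Spec(V)$, i.e.\ the left and right units. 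The key point is that these two ideals coincide.

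This I would extract from the fact that $\cI_n$ is a genuine quasi-coherent ideal sheaf on $\cM_\fg$ (equivalently, a sub-comodule of the structure comodule): pulling back the closed substack $\cM(n)$ along the two maps $\Spec(V)\times_{\cM_\fg}\Spec(V)\rightrightarrows\cM_\fg$ yields closed subschemes whose formation depends only on the underlying $1$-morphism up to $2$-isomorphism, since closed immersions are monomorphisms; and the two projections are tautologically $2$-isomorphic (this is also the naturality of $v_n$ from Proposition \ref{vn}). Granting $p_1^\ast(p,u_1,\ldots,u_{n-1})=p_2^\ast(p,u_1,\ldots,u_{n-1})$, the defining ideal collapses to $p_1^\ast(p,u_1,\ldots,u_{n-1})$ and
$$X(n)\times_{\cM(n)}X(n)\cong\Spec\big(V[t_0^{\pm1},t_1,\ldots]/(p,u_1,\ldots,u_{n-1})\big)=\Spec(\FF_p[u_n,u_{n+1},\ldots][t_0^{\pm1},t_1,t_2,\ldots]),$$
which is the last assertion.

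The main obstacle is precisely the equality $p_1^\ast(p,u_1,\ldots,u_{n-1})=p_2^\ast(p,u_1,\ldots,u_{n-1})$ in $V[t_0^{\pm1},t_1,\ldots]$ — in homotopy-theoretic language, the invariance of $I_n=(p,v_1,\ldots,v_{n-1})$ under the right unit of $(BP_\ast,BP_\ast BP)$. I prefer to see it as a formal consequence of $\cI_n$ being an honest quasi-coherent ideal sheaf, rather than verifying it by hand with explicit generators; the remaining care is pure bookkeeping — keeping the left and right copies of $V$ straight inside $V[t_0^{\pm1},t_1,\ldots]$, and checking that $X(n)\times_{\cM(n)}X(n)\cong X(n)\times_{\cM_\fg}X(n)$ because $\cM(n)\hookrightarrow\cM_\fg$ is a monomorphism.
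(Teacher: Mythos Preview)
Your proposal is correct and follows exactly the approach the paper intends: the paper does not give a separate proof but simply says, just before the proposition, that Remark~\ref{p-typical-vn} together with the $fpqc$-cover $\Spec(V)\to\cM_\fg\otimes\ZZ_{(p)}$ from Corollary~\ref{ptyp-cover} implies the result. You have supplied precisely the details the paper leaves implicit --- the base-change argument for the cover, the monomorphism trick for the fiber product, and the invariance of the ideal via the quasi-coherence of $\cI_n$.
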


If $q:\cN \to \cM$ is a representable and flat morphism of algebraic stacks
and $D \subseteq \cM$ is an effective Cartier divisor
defined by
$(\cL,s)$, then
$$
f^\ast D \defeq D \times_\cM \cN \subseteq \cN
$$
is an effective
Cartier divisor defined by $(q^\ast \cL,q^\ast s)$. To see this,
note that because $f$ is flat, we
have an exact sequence 
$$
0 \to f^\ast \cI(D) \longr f^\ast \cO_\cM \longr f^\ast \cO_D \to 0
$$
which is isomorphic to
$$
0 \to f^\ast \cI(D) \longr \cO_\cN \longr \cO_{f^\ast  D} \to 0.
$$
Thus $f^\ast \cI(D) \cong \cI(f^\ast D)$.
From this we can immediately conclude the following.

\begin{prop}\label{LEFT-easy-part}Let $q:\cN \to \cM_\fg$ be
a representable and flat morphism of stacks and define
$$
\cN(n) = \cM(n) \times_{\cM_\fg} \cN.
$$
Then 
$$
\cdots \subseteq \cN(2) \subseteq \cN(1) \subseteq \cN
$$
is a descending chain of closed substacks so that
$$
\cN(n+1) \subseteq \cN(n)
$$
is the effective Cartier divisor defined by $(\omega^{p^n-1},v_n)$.
\end{prop}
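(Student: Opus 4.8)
The plan is a straightforward induction on $n$, in each step invoking the stability of effective Cartier divisors under flat, representable base change recorded in the paragraph immediately preceding the statement.

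The first point is that the formation of $\cN(n)$ is compatible with the tower on $\cM_\fg$. Since the closed immersion $\cM(n+1)\to\cM_\fg$ factors through $\cM(n)$, there is a canonical identification
$$
\cN(n+1)=\cM(n+1)\times_{\cM_\fg}\cN\cong\cM(n+1)\times_{\cM(n)}\bigl(\cM(n)\times_{\cM_\fg}\cN\bigr)=\cM(n+1)\times_{\cM(n)}\cN(n).
$$
Writing $q_n:\cN(n)\to\cM(n)$ for the projection, this says $\cN(n+1)=q_n^\ast\cM(n+1)$. Next I would note that $q_n$ is representable and flat: it is the base change of $q:\cN\to\cM_\fg$ along the closed immersion $\cM(n)\hookrightarrow\cM_\fg$, and both representability and flatness of a representable morphism are stable under base change (this is immediate by unwinding Definition \ref{stacks-over-flat}). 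Thus each $q_n$ falls under the hypotheses of the pull-back statement for effective Cartier divisors quoted just above the proposition.

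Now the induction. For $n=0$ the divisor $\cM(1)\subseteq\cM_\fg$ is defined by $(\cO_\fg,p)$, so by flatness of $q$ the closed substack $\cN(1)=q^\ast\cM(1)\subseteq\cN$ is the effective Cartier divisor defined by $(\cO_\cN,p)$. Assume inductively that $\cN(n)\subseteq\cN$ is a closed substack. By Definition \ref{mn-divisor}, $\cM(n+1)\subseteq\cM(n)$ is the effective Cartier divisor defined by $(\omega^{p^n-1},v_n)$. Applying the base-change statement for effective Cartier divisors to $q_n:\cN(n)\to\cM(n)$ and this divisor gives that $\cN(n+1)=q_n^\ast\cM(n+1)\subseteq\cN(n)$ is the effective Cartier divisor defined by $(q_n^\ast\omega^{p^n-1},q_n^\ast v_n)$; by the naturality of $\omega$ and of the section $v_n$ (Proposition \ref{vn}), under the usual abuse of notation this is precisely the pair $(\omega^{p^n-1},v_n)$ on $\cN(n)$. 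Being an effective Cartier divisor, $\cN(n+1)\subseteq\cN(n)$ is in particular a closed immersion; composing with $\cN(n)\subseteq\cN$ shows $\cN(n+1)\subseteq\cN$ is closed. This both completes the induction and produces the asserted descending chain $\cdots\subseteq\cN(2)\subseteq\cN(1)\subseteq\cN$ with the stated divisor structure.

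I do not expect any serious obstacle. The only points requiring a little care are the bookkeeping identification $\cN(n+1)=q_n^\ast\cM(n+1)$ (so that the relevant divisor is pulled back along $q_n$ on $\cN(n)$, not along $q$ on $\cN$), and the verification that $q_n$ inherits representability and flatness from $q$; everything else is a direct appeal to the effective-Cartier-divisor pull-back fact together with Definition \ref{mn-divisor} and Proposition \ref{vn}.
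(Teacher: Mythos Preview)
Your proposal is correct and follows the same approach as the paper: the paper proves the key lemma that effective Cartier divisors pull back along representable flat morphisms in the paragraph immediately preceding the proposition, then simply says ``From this we can immediately conclude the following.'' You have spelled out the implicit induction and the fact that each $q_n$ inherits flatness and representability by base change, which is exactly the content the paper leaves to the reader.
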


This implies that for all $n$ the section $v_n$ defines an injection
$$
v_n: \cO_\cN \longr \omega^{p^n-1}.
$$
If $\cN = \Spec(R) \to \cM_\fg$ classifies a formal group for which
we can choose a coordinate, this implies that each of the
ideals $\cI_n(R)$ is generated by a regular sequence.
The Landweber Exact Functor Theorem \ref{land-exact-2} is a partial
converse to this result.

In these examples, the closed embedding $\cN(n) \subseteq \cN$ is
a regular embedding; that is, the ideal sheaf defining the embedding is
locally generated by a regular sequence.

\subsection{Geometric points and reduced substacks of $\cM_\fg$}

Suppose we now work at a prime $p$, so that $\cM_\fg = \cM_\fg \otimes
\ZZ_{(p)}$. We will show that $\cM_\fg$ has exactly one geometric
point for each height $n$, $0 \leq n \leq \infty$ and use this
to show that the substacks $\cM(n) \subseteq \cM_\fg$ give
a complete list of the reduced substacks of $\cM_\fg$.

We begin with the following definition.

\begin{defn}\label{points}Let $\cM$ be an algebraic stack.

1.) A {\bf geometric}\index{geometric point, of a stack}
 point $\xi$ of $X$ is an equivalence class
of the morphisms $x:\Spec(\FF) \to \cM$ where $\FF$
is a field. Two such morphisms $(x',\FF')$ and $(x'',\FF'')$
are equivalent if $\FF'$ and $\FF''$ have a common 
extension $\FF$ and the evident diagram
$$
\xymatrix{
\Spec(\FF) \rto\dto & \Spec(\FF'') \dto^{x''}\dto\\
\Spec(\FF') \rto_{x'}& \cM
}
$$
$2$-commutes.

2.) The set of geometric points $|X|$ has a topology
with open sets $|\cU|$ where $\cU \subseteq \cM$
is an open substack. When we write $|X|$ we will
mean this set with this topology. This is the
{\it geometric space}\index{geometric space, of a stack}
of the stack.
\end{defn}

The following result implies that the topology of $|\cM_\fg|$ is
quite simple.

\begin{prop}\label{open-ht}Let $\cU \subseteq \cM_\fg$ be
an open substack and suppose that $\cU$ has a geometric
point of height $n$. Then it has a geometric point of height
$k$ for all $k \leq n$.
\end{prop}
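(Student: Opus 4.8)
The plan is to show that a single formal group, living over the base of the $p$-typical presentation of $\cM_\fg$, already realizes every height $\le n$ among its fibers, and that the fiber carrying the given height-$n$ point specializes all the others; openness of the preimage of $\cU$ then finishes the argument.

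Concretely, recall from Corollary \ref{ptyp-cover} that $f:\Spec(V)\to\cM_\fg$ is an $fpqc$-presentation, where $V=\ZZ_{(p)}[u_1,u_2,\dots]$ carries the universal $p$-typical formal group law $F$ with $[p]_F(x)=px +_F u_1x^p +_F u_2x^{p^2} +_F \cdots$. Set $\cW=f^{-1}(\cU)=\cU\times_{\cM_\fg}\Spec(V)$; since $\cU\subseteq\cM_\fg$ is open, $\cW$ is an open subscheme of $\Spec(V)$. For $0\le k\le n$ with $k<\infty$ let $\pp_k\subset V$ be the prime $(p,u_1,\dots,u_{k-1})$, so that $\pp_0=(0)$ and $\pp_0\subseteq\pp_1\subseteq\cdots\subseteq\pp_n$. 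The quotient $V/\pp_k$ is $\FF_p[u_k,u_{k+1},\dots]$ (resp.\ $V$ itself when $k=0$), an integral domain, and on its fraction field $\kappa(\pp_k)$ the classes $p,u_1,\dots,u_{k-1}$ vanish while $u_k$ (a transcendental, hence nonzero) does not; by Remark \ref{p-typical-vn}, using Remark \ref{vn-local}, and Remark \ref{exact-height} for $k=0$ where $p$ becomes invertible, the formal group at $\pp_k$ has exact height $k$. Thus $f(\pp_k)$ is a geometric point of $\cM_\fg$ of exact height $k$.

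Next I would locate the given point among the $\pp_k$. By hypothesis there is a geometric point $x:\Spec(\FF)\to\cU$ classifying a formal group $G$ of height $n$. Every formal group over a field has a coordinate, and by Cartier's theorem (Remark \ref{p-typ-prop}, Corollary \ref{fglps-to-fgs}) it may be taken $p$-typical; the corresponding classifying map $\Spec(\FF)\to\Spec(V)$ picks out a point $z$, and the composite of that map with $f$ is $2$-isomorphic to $x$, so $f(z)\in|\cU|$, i.e.\ $z\in\cW$. Because $G$ has height $\ge n$, Remark \ref{p-typical-vn} shows that $p,u_1,\dots,u_{n-1}$ all lie in the prime $\pp_z$, i.e.\ $\pp_n\subseteq\pp_z$; hence $\pp_k\subseteq\pp_z$ for every $k\le n$, so each $\pp_k$ is a generization of $z$. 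An open subset of a spectrum contains all generizations of each of its points, so $\pp_k\in\cW$ for every $k\le n$, whence $f(\pp_k)\in|\cU|$; since $f(\pp_k)$ has exact height $k$, this exhibits the desired geometric points of $\cU$. If $n=\infty$ the same argument applies verbatim: $\pp_z$ then contains all of $p,u_1,u_2,\dots$, and the finite $\pp_k$ together with $z$ supply geometric points of $\cU$ of all heights.

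The only non-formal ingredient is the $p$-typical local description of the height filtration: over a base equipped with a $p$-typical coordinate, a formal group has height $\ge k$ precisely when $p,u_1,\dots,u_{k-1}$ vanish, and then $v_k$ equals $u_k$ times a generator of $\omega^{\otimes(p^k-1)}$ (Remark \ref{p-typical-vn}). This is what simultaneously turns ``$\pp_z$ contains $\pp_n$'' into a restatement of ``$G$ has height $\ge n$'' and makes the generic point of $V(p,u_1,\dots,u_{k-1})$ a formal group of exact height $k$; after that the proposition is just the openness of $\cW$ together with stability of open sets under generization, and I do not anticipate a real obstacle. The points to check carefully are the harmless edge cases: $k=0$, where the relevant residue field has characteristic $0$; $k=1$; and $n=\infty$.
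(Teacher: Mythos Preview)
Your proof is correct and follows a genuinely different, more direct route than the paper's.

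The paper lifts the height-$n$ point to the pullback $\cU\times_{\cM_\fg}\Spec(L)$ of the Lazard cover, passes to an affine open $\Spec(R)$ containing the lift, inverts a lift of $v_n$, and then invokes Proposition \ref{LEFT-easy-part} to conclude that the ideals $\cI_k(G_0)$ are generated by a regular sequence; this regularity is what guarantees that $v_k$ is not nilpotent in $R/\cI_k(G_0)$, after which one finds a suitable prime. You instead work with the $p$-typical cover $\Spec(V)$, where the chain of primes $\pp_0\subseteq\pp_1\subseteq\cdots$ with $\pp_k=(p,u_1,\dots,u_{k-1})$ is already written down explicitly and visibly realizes every height; the whole argument then collapses to the single topological fact that an open subset of a spectrum is stable under generization. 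Your approach avoids Proposition \ref{LEFT-easy-part} and the regularity argument entirely, at the small cost of relying on the $p$-typical presentation rather than a coordinate-free flat model. It also handles $n=\infty$ cleanly, a case the paper's proof does not address explicitly (since it uses $v_n(G)\ne 0$).
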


\begin{proof}Let $G:\Spec(k) \to U$ represent the geometric
point of height $n$ and $\Spec(L) \to \cM_\fg$ be the cover by
the Lazard ring. Then we have a $2$-commutative diagram
$$
\xymatrix{
&\cU \times_{\cM_\fg} \Spec(L) \rto^-j \dto^q& \Spec(L) \dto\\
\Spec(k) \ar[ur]^F  \rto_G& \cU \rto & \cM_\fg
}
$$
obtained by choosing a coordinate for $G$. The morphism $j$ is
open and the morphism $q$ is flat, as it is the pull-back of a
flat map. Choose an affine open $\Spec(R) \subseteq 
\cU \times_{\cM_\fg} \Spec(L)$ so that the morphism $F$
factors through $\Spec(R)$. Let $G_0$ be the resulting
formal group over $R$.

By localizing $R$ if necessary, we may assume that $R \to k$
is onto. Choose an element $w \in R$ which reduces to $v_n(G) \in k$.
Since $G$ has height $n$, $v_n(G) \ne 0$; thus, $w$ is not nilpotent.
By forming
$R[w^{-1}]$ if necessary, we may assume that $w$ is a
unit. From this we conclude that $\cI_{n+1}(G_0) = R$. Since
$\Spec(R) \to \cM_\fg$ is flat,  Proposition \ref{LEFT-easy-part}
implies that the ideals $\cI_{k}(G_0)$, $k \leq n+1$, is generated by a regular
sequence. (Note that $G_0$ has a canonical coordinate by
construction.)
Let $k \leq n$, $R_k = R/\cI_{k}(G_0)$, and  let $q_k:R \to R_k$ be
the quotient map.
We conclude immediately that $v_k(q_k^\ast G_0)$ is not
nilpotent in $R_k$. Choose a prime ideal $\mathfrak{p}$
in $R_k$ so that $v_k(q_k^\ast G_0) \ne 0$ in
$R/\mathfrak{p}$ and let $K$ be the field of fractions of
of $R/\mathfrak{p}$. Then 
$$
\Spec(K) \to \Spec(R) \to \cU
$$
represents a geometric point of height $k$.
\end{proof}

The importance of the closed substacks $\cM(n)$ is underlined by
the following result. Recall we are working at a prime, so
that $\cM_\fg = \ZZ_{(p)} \otimes \cM_\fg$.

\begin{thm}\label{ht-n-reduced}For all $n$, $1 \leq n \leq\infty$ the algebraic stack
$\cM(n)$ is reduced. Furthermore if $\cN \subseteq \cM_\fg$ is
any closed, reduced substack, then either $\cN = \cM_\fg$
or there is an $n$ so that
$$
\cM(n) = \cN.
$$\index{reduced substacks of $\cM_\fg$}
\end{thm}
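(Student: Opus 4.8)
The plan is to prove the two assertions separately, using in both cases the $fpqc$-cover $X(n) = \Spec(\FF_p[u_n,u_{n+1},\ldots]) \to \cM(n)$ from Proposition \ref{cover-for-mn}, together with the fact (Proposition \ref{LEFT-easy-part}) that $\cM(n) \subseteq \cM_\fg$ is a regular embedding, so that locally the defining ideal $\cI_n$ is $(p,u_1,\ldots,u_{n-1})$ with this a regular sequence.

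First I would show each $\cM(n)$ is reduced for $1 \le n < \infty$. A stack presented by an $fpqc$-cover $\Spec(B) \to \cM$ is reduced precisely when $B$ is reduced (reducedness can be checked $fpqc$-locally, since $B \to B'$ faithfully flat forces $B$ to inject into the reduced ring $B'$ when $B'$ is reduced; alternatively descend the nilradical). For $n$ finite, $X(n) = \Spec(\FF_p[u_n,u_{n+1},\ldots])$ is a polynomial ring over a field, hence a domain, hence reduced; so $\cM(n)$ is reduced. For $n = \infty$ one writes $\cM(\infty) = \cap \cM(n)$; the corresponding cover is $\Spec(\FF_p[u_\infty?]) = \Spec(\FF_p)$ — more precisely, imposing $p = u_1 = u_2 = \cdots = 0$ on the $p$-typical cover $\Spec(\FF_{(p)}[u_1,u_2,\ldots])$ leaves $\Spec(\FF_p)$, which is a field, hence reduced; so $\cM(\infty)$ is reduced as well. (I should double-check that the colimit/intersection is taken correctly and that $\cM(\infty)$ is a closed substack, which follows since it is the common vanishing locus of the $v_n$'s.)

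Second, the classification. Let $\cN \subseteq \cM_\fg$ be a closed reduced substack, defined by an ideal sheaf $\cJ \subseteq \cO_\fg$. The key reduction is to transport the problem along the $p$-typical cover $f \colon X = \Spec(V) \to \cM_\fg$ with $V = \FF_{(p)}$... no, $V = \ZZ_{(p)}[u_1,u_2,\ldots]$ from Corollary \ref{ptyp-cover}; since $f$ is $fpqc$ and $\cN$ is closed, $f^\ast \cN = \Spec(V/I)$ for an ideal $I \subseteq V$, and reducedness of $\cN$ means $I$ is a radical ideal which is moreover \emph{invariant} under the groupoid $\Spec(V[t_0^{\pm 1},t_1,\ldots]) = X \times_{\cM_\fg} X$ — i.e. $p_1^\ast I = p_2^\ast I$ as ideals of $V[t_0^{\pm 1},t_1,\ldots]$. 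This is exactly the situation governed by Landweber's invariant prime ideal theorem (see \cite{landweber}, or \cite{Rav} Theorem 4.3.2): the only invariant radical ideals of $V$ are $0$ — i.e. $\cN = \cM_\fg$ — and the ideals $I_n = (p, u_1, \ldots, u_{n-1})$ for $1 \le n \le \infty$ (with $I_\infty = (p,u_1,u_2,\ldots)$), together with their intersections; but since each $I_n$ is prime, a radical invariant ideal which is a proper intersection of such would have to be one of them by primality, and one checks directly that $V/I_n$ is the pullback $f^\ast \cM(n)$ by comparing with the cover $X(n)$ of Proposition \ref{cover-for-mn}. Pushing this classification of invariant ideals back down the cover (using that $\cN \mapsto f^\ast\cN$ is a bijection on closed substacks, by faithfully flat descent) yields $\cN = \cM(n)$ for some $n$, or $\cN = \cM_\fg$.

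The main obstacle is the invariant-prime-ideal input: everything else is bookkeeping with faithfully flat descent and the explicit presentations already assembled in the excerpt, but the statement that $(p, u_1, \ldots, u_{n-1})$ exhaust the invariant radical ideals of $V$ is the genuine arithmetic content — it is Landweber's theorem, and I would either cite it directly or, to keep the narrative self-contained, re-derive it from the fact that modulo an invariant prime the $p$-series $[p](x) = px +_F u_1 x^p +_F \cdots$ forces the first nonzero $u_i$ to generate an invariant ideal (using $p_1^\ast u_n \equiv p_2^\ast(t_0^{p^n-1} u_n) \bmod I_n$, which comes straight from Remark \ref{p-typ-prop}(3) and the grading of Remark \ref{gradings}), and then inducting. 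A secondary point to be careful about is the case $n = \infty$: one must check that $\cM(\infty)$, defined as $\cap_n \cM(n)$, really is a closed substack and that its pullback is $\Spec(V/I_\infty) = \Spec(\FF_p)$, so that it fits the statement rather than being an extra degenerate case.
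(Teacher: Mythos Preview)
Your approach to the first assertion (each $\cM(n)$ is reduced) matches the paper's: both use the explicit polynomial-ring cover from Proposition~\ref{cover-for-mn} and descend reducedness. One small imprecision: the paper's Corollary~\ref{stack-is-red} requires both $X$ \emph{and} $X \times_\cM X$ to be reduced for the nilradical to descend, so you should note that $X(n) \times_{\cM(n)} X(n) \cong \Spec(\FF_p[u_n,\ldots][t_0^{\pm 1},t_1,\ldots])$ is also reduced.

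For the classification, your route is genuinely different from the paper's. You pull back to the cover, obtain an invariant ideal of $V$, and invoke Landweber's invariant prime ideal theorem. The paper instead argues entirely with the geometric space $|\cM_\fg|$: it shows (Proposition~\ref{open-ht}) that any open substack containing a height-$n$ point contains all heights $\leq n$, dualizes this to closed substacks (Proposition~\ref{closed-sub}), and then uses that a reduced closed substack is determined by its set of geometric points. In effect the paper \emph{re-proves} Landweber's theorem in geometric language rather than citing it, which keeps the argument self-contained and sidesteps the comodule bookkeeping.

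There is one genuine gap in your outline. Landweber's theorem, as usually stated (e.g.\ \cite{Rav} Theorem~4.3.2), classifies invariant \emph{prime} ideals; you need invariant \emph{radical} ideals. Your sentence ``a radical invariant ideal which is a proper intersection of such would have to be one of them by primality'' is circular: it presupposes that every invariant radical ideal is an intersection of invariant primes, which is exactly the missing step---minimal primes over an invariant ideal are not a priori invariant. This gap is fixable (for instance, one shows directly that if $I$ is invariant radical with $I_n \subseteq I$ and $u_n \notin I$ then $I = I_n$, using that $\eta_R(u_n) \equiv t_0^{p^n-1} u_n \bmod I_n$ and the grading), but it is real work, and it is precisely the work that the paper's geometric argument via $|\cM_\fg|$ does for free.
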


Before proving this result, we need to recall what it means for
an algebraic stack to be reduced and how to produce the reduced
substack of a stack, assuming it exists.

Fix an $fpqc$-algebraic stack $\cM$. We define a diagram $\cC$ of closed
substacks of $\cM$ as follows:
\begin{enumerate}

\item An object of $\cC$ is a closed substack $\cN \subseteq \cM$
so that the induced inclusion on geometric points $|\cN| \to |\cM|$
is an isomorphism;

\item A morphism $\cN_1 \subseteq \cN_2$ is an inclusion of
closed substacks.
\end{enumerate}

This diagram $\cC$ of closed substacks is filtered; furthermore it
determines and is determined by a filtered (or cofiltered) diagram
$\{\cI_\cN\}$ of quasi-coherent ideals in $\cO_\cM$. Define
$$
\cI_\red = \colim_{\cC^{\mathrm{op}}}\cI_\cN.
$$
The colimit is taken pointwise and, since tensor products commute
the colimits, $\cI_\red$ is a quasi-coherent ideal. Let
$$
\cM_\red \subseteq \cM\index{substack, reduced}\index{reduced
substack}
$$
be the resulting closed substack. Note that $\cM_\red$ is the
initial closed substack $\cN \subseteq \cM$ so that $|\cN| = |\cM|$.
We say that $\cM$ is reduced if $\cM_\red = \cM$ or, equivalently,
if $\cI_\red = 0$.

The sheaf $\cI_\red$ should be closely related to the ideal of nilpotents
in $\cO_\cM$. Some care
is required here, however. If we define $\Nil_\cM(U) = \Nil_U$ for
any $fpqc$-morphism $U \to \cM$, the resulting ideal
sheaf may not be cartesian in the $fqpc$-topology; thus it is not
evidently quasi-coherent. (If $R\to S$ is a faithfully flat morphism
of rings, then it is not necessarily true that $\Nil_S = S \otimes_R
\Nil_R$.) However it is a sheaf in more restrictive
topologies, such as the ``smooth-\'etale'' used for the algebraic
stacks of \cite{Laumon}. 

\begin{defn}\label{nilpotent-sheaf}Let $\cM$ be an algebraic stack
in the $fpqc$-topology and suppose that $X \to \cM$ is an $fpqc$-presentation
so that
$$
p_1^\ast\Nil_X \cong \Nil_{X \times_\cM X} \cong p_2^\ast\Nil_X
$$
as ideal sheaves in $\cO_{X \times_\cM X}$. Then descent theory
yields a quasi-coherent ideal sheaf $\Nil_\cM \subseteq \cO_\cM$.
This is the {\bf sheaf of nilpotents} for $\cM$.\index{sheaf of nilpotents,
for a stack}
\end{defn}

\begin{rem}\label{rem-nil}1.)  It is not immediately clear that
$\Nil_\cM$ does not depend on the choice of cover $X \to \cM$;
however, this will follow from Proposition \ref{nil-is-red} to follow.

2.) If $\cM$ has a {\it smooth} cover,
then $\Nil_\cM$, when restricted to the smooth-\'etale topology,
agrees with the sheaf $\Nil_\cM$ as defined in $\cite{Laumon}$.

3.) In many of our standard examples, $\Nil_X = 0 = \Nil_{X \times_S X}$.
In particular, this applies to $\cM_\fg$ and $\cM(n)$, by Proposition
\ref{cover-for-mn}. 
\end{rem}

We need the following preliminary result before preceding.

\begin{lem}\label{space-of-closed}Let $\cM$ be an algebraic
stack and $\cN\subseteq \cM$ a closed substack. Let
$X \to \cM$ be an $fqpc$-cover. Then the natural map
$$
|X \times_\cM \cN| \longr |X| \times_{|\cM|} |\cN| 
$$
is an isomorphism.
\end{lem}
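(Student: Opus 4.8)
The plan is to reduce the statement to the corresponding fact about schemes, for which it is standard. First I would unwind the definitions: a geometric point of $X \times_\cM \cN$ is (an equivalence class of) a morphism $\Spec(\FF) \to X \times_\cM \cN$, which by the universal property of the $2$-categorical pull-back is the data of a morphism $a:\Spec(\FF) \to X$, a morphism $b:\Spec(\FF) \to \cN$, and a $2$-isomorphism between the two composites to $\cM$. Since $X \to \cM$ is representable, $X \times_\cM \cN$ is a scheme (note $\cN$ is a stack, but $\cN \to \cM$ being a closed immersion is in particular representable, so this is fine; alternatively pull back along $X \to \cM$ first). So the left-hand side is just the geometric space of a scheme in the sense of Remark \ref{espace-geom}, and a geometric point of it is an equivalence class of ring maps out of the relevant affine pieces.

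Next I would treat the surjectivity. Given a point $\xi \in |X|$ and a point $\eta \in |\cN|$ with the same image in $|\cM|$, represent them by $a:\Spec(\FF_1) \to X$ and $b:\Spec(\FF_2) \to \cN$; the hypothesis that they agree in $|\cM|$ means, after passing to a common field extension $\FF \supseteq \FF_1, \FF_2$, that the composites $\Spec(\FF) \to \cM$ are $2$-isomorphic. Choosing such a $2$-isomorphism produces exactly a morphism $\Spec(\FF) \to X \times_\cM \cN$ lying over $(\xi,\eta)$. This is the only place where one uses that we are working with an honest $2$-categorical (rather than strict) fiber product, and it is precisely what makes the statement true; it is the key step, though not a hard one. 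For injectivity, suppose two geometric points of $X \times_\cM \cN$ have the same image in $|X| \times_{|\cM|} |\cN|$; represented over a common field $\FF$, they give the same $a$ and the same $b$ up to the equivalence, and the two chosen $2$-isomorphisms $a^\ast \mathrm{(tautological\ object)} \cong b^\ast \mathrm{(tautological\ object)}$ differ by an automorphism of an object over a field; but since $\cN \subseteq \cM$ is a monomorphism of stacks (a closed immersion), there is no ambiguity — or, more carefully, any two such $2$-isomorphisms are identified after a further field extension, so the two points of $|X\times_\cM \cN|$ coincide.

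The main obstacle I anticipate is purely bookkeeping: one must be careful that ``geometric point'' for the scheme $X\times_\cM\cN$ (Remark \ref{espace-geom}) and ``geometric point'' for the stacks $X$, $\cM$, $\cN$ (Definition \ref{points}) are being compared correctly, and that the equivalence relations on both sides (common field extensions) match up. Concretely, I would cover $X$ by affines $\Spec(B) \subseteq X$, note $\Spec(B)\times_\cM \cN$ is affine (using that the diagonal of $\cM_\fg$ is representable and affine, Proposition \ref{diagonal-for-mfg} and Corollary \ref{pull-backs-affine-scheme}, in the cases of interest), and then the claim becomes the elementary assertion that for an affine scheme $\Spec(C) = \Spec(B)\times_\cM\cN$ the prime spectrum maps homeomorphically... no, rather, that $|\Spec(C)| \to |\Spec(B)| \times_{|\cM|} |\cN|$ is a bijection, which one checks on points using that a point of the target is a compatible pair of primes together with the $2$-isomorphism data over the residue fields. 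Since every step reduces to faithfully flat descent and the universal property of pull-backs, I expect no genuine difficulty beyond organizing the diagram chase; the topology statement (that it is a homeomorphism, not just a bijection) follows because open substacks of $\cM$ pull back to opens on both sides compatibly.
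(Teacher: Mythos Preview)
Your argument is correct and the surjectivity step matches the paper exactly (the paper even notes that surjectivity holds for an arbitrary pull-back, not just a closed one). For injectivity, however, the paper takes a much shorter route than your discussion of $2$-isomorphisms and automorphisms: since $\cN \subseteq \cM$ is closed, the base change $X \times_\cM \cN$ is simply a closed subscheme $Y \subseteq X$, so the composite
\[
|Y| = |X \times_\cM \cN| \longrightarrow |X| \times_{|\cM|} |\cN| \longrightarrow |X|
\]
is the inclusion $|Y| \hookrightarrow |X|$ and hence injective; thus the first map is injective. This is the same content as your observation that a closed immersion is a monomorphism of stacks, but it bypasses the bookkeeping about competing $2$-isomorphisms over a field extension that you flagged as the main obstacle. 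Your hedge ``any two such $2$-isomorphisms are identified after a further field extension'' is unnecessary once you phrase it this way: there is no $2$-isomorphism datum to track, because a point of $Y$ is literally just a point of $X$ that happens to land in the closed subscheme.
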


\begin{proof}This morphism is onto for a general pull-back; that is,
we don't need $\cN \to \cM$ to be a closed inclusion. To see that
is one-to-one, note that $X \times_\cM \cN$ is equivalent to 
closed subscheme $Y \subseteq X$ and that, hence, the
composite
$$
|Y| = |X \times_\cM \cN| \longr |X| \times_{|\cM|} |\cN|  \to |X|
$$
is an injection.
\end{proof}

\begin{prop}\label{nil-is-red} Suppose that $\cM$ is an algebraic
stack in the $fpqc$ topology and there is an $fpqc$-presentation
$X \to \cM$ so that $\Nil_\cM$ is defined. Then 
$$
\Nil_\cM = \cI_\red.
$$
\end{prop}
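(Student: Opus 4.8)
The plan is to prove the two inclusions $\cI_\red \subseteq \Nil_\cM$ and $\Nil_\cM \subseteq \cI_\red$ of quasi-coherent ideal sheaves on $\cM$, in each case pulling the question back to the presentation $q\colon X \to \cM$. Two routine facts will be used throughout. First, since $q$ is faithfully flat, the functor $q^\ast$ reflects inclusions of quasi-coherent subsheaves, and since closed immersions are stable under base change while $q$ is flat, one has $q^\ast \cI_\cN \cong \cI(X \times_\cM \cN)$ for every closed substack $\cN \subseteq \cM$ (the argument preceding Proposition \ref{LEFT-easy-part}). Second, on any scheme an ideal sheaf whose vanishing locus is the entire space is contained in the ideal sheaf of nilpotents, while conversely $\Nil$ is itself the ideal of the reduced subscheme, whose support is the whole space. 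Finally, by the very construction of $\Nil_\cM$ as the solution of a descent problem (Definition \ref{nilpotent-sheaf}), $q^\ast \Nil_\cM \cong \Nil_X$.

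For $\cI_\red \subseteq \Nil_\cM$, it suffices by faithful flatness to check $q^\ast \cI_\red \subseteq q^\ast \Nil_\cM = \Nil_X$. Now $q^\ast \cI_\red = \cI(Y)$ where $Y = X \times_\cM \cM_\red$ is a closed subscheme of $X$, and Lemma \ref{space-of-closed} together with the defining property $|\cM_\red| = |\cM|$ gives $|Y| \cong |X| \times_{|\cM|} |\cM_\red| = |X|$. An ideal sheaf with full vanishing locus consists of nilpotents, so $\cI(Y) \subseteq \Nil_X$, which is what was needed.

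For $\Nil_\cM \subseteq \cI_\red$, I would introduce the closed substack $\cM_{\Nil} \subseteq \cM$ cut out by $\Nil_\cM$. Pulling back along $q$, the closed subscheme $X \times_\cM \cM_{\Nil}$ has ideal $q^\ast \Nil_\cM = \Nil_X$, hence is the reduced subscheme $X_\red$, whose support is all of $|X|$. By Lemma \ref{space-of-closed}, $|X| = |X_\red| \cong |X| \times_{|\cM|} |\cM_{\Nil}|$, and since $q$ is surjective on geometric points this forces $|\cM_{\Nil}| = |\cM|$. Thus $\cM_{\Nil}$ belongs to the filtered diagram $\cC$ of closed substacks with full support used to build $\cM_\red$, and by the initiality of $\cM_\red$ in $\cC$ we get $\cM_\red \subseteq \cM_{\Nil}$, i.e. $\Nil_\cM = \cI_{\cM_{\Nil}} \subseteq \cI_{\cM_\red} = \cI_\red$. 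Combining the two inclusions proves the proposition, and as a byproduct shows $\Nil_\cM$ does not depend on the chosen presentation (Remark \ref{rem-nil}.1).

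The only thing requiring care --- rather than a genuine obstacle --- is the sheaf-theoretic bookkeeping: that $q^\ast$ reflects inclusions of quasi-coherent ideals and commutes with forming the ideal of a closed substack, and that the solution of the descent problem defining $\Nil_\cM$ really does pull back to $\Nil_X$. All of this is faithfully flat descent as set up in Remark \ref{ff-descent}, so no new ingredient beyond Lemma \ref{space-of-closed} and the elementary description of the nilradical is needed.
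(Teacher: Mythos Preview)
Your proof is correct and follows essentially the same approach as the paper's: both inclusions are established by pulling back along the presentation $q\colon X \to \cM$, invoking Lemma \ref{space-of-closed} to transfer the full-support condition between $X$ and $\cM$, and using that an ideal sheaf with full vanishing locus on a scheme is contained in the nilradical. The paper treats the inclusion $\cI_\red \subseteq \Nil_\cM$ by checking $\cJ \subseteq \Nil_\cM$ for each $\cN \in \cC$ separately and passing to the colimit, while you pull back $\cI_\red = \cI_{\cM_\red}$ directly; this is a cosmetic difference.
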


\begin{proof} Let $\cM_0 \subseteq \cM$ be
the closed substack defined by $\Nil_X$. Then $X_\red \to
\cM_0$ is a cover. Since $|X_\red| = |X|$ and $|X_\red| \to
|\cM_0|$ is surjective, we can conclude that $|\cM_0| = |\cM|$.
This shows that $\cM_\red \subseteq \cM_0$ or, equivalently,
that $\cI_\red \subseteq \Nil_\cM$.

For the other inclusion, let $\cN \subseteq \cM$ be a closed
inclusion defined by an ideal $\cJ$ and 
suppose $|\cN| = |\cM|$ and let $Y = \cN \times_\cM X \to \cN$
be the resulting cover. Then $Y$ is the closed subscheme of
$X$ defined by $\cJ|_X$ and the natural map
$$
|Y| \longr |X| \times_{|\cM|} |\cN|
$$
is an isomorphism, by Lemma \ref{space-of-closed}.
Thus, $|Y| = |X|$, which implies that $X_\red \subseteq Y$,
or $\Nil_X \subseteq \cJ|_X$. Since $\Nil_X = (\Nil_\cM)|_X$ and
$X$ is a cover $\cM$, this implies that $\Nil_\cM \subseteq \cJ$.
In particular, $\Nil_\cM \subseteq \cI_\red$. 
\end{proof}

\begin{cor}\label{stack-is-red} Suppose that $\cM$ is an algebraic
stack in the $fpqc$ topology and there is an $fpqc$-presentation
$X \to \cM$ so that  $X$ and $X \times_\cM X$ are reduced. Then
$\cM$ is reduced.
\end{cor}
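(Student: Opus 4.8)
The plan is to deduce this immediately from Proposition \ref{nil-is-red}. The first step is to observe that the hypotheses ``$X$ reduced'' and ``$X\times_\cM X$ reduced'' say precisely that $\Nil_X = 0$ and $\Nil_{X\times_\cM X} = 0$. Consequently the compatibility condition required in Definition \ref{nilpotent-sheaf} for the sheaf of nilpotents to be defined, namely
$$
p_1^\ast\Nil_X \cong \Nil_{X \times_\cM X} \cong p_2^\ast\Nil_X
$$
as ideal sheaves in $\cO_{X\times_\cM X}$, holds vacuously: all three sheaves are the zero ideal. Hence $\Nil_\cM$ is defined for the presentation $X \to \cM$.

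The second step is to identify $\Nil_\cM$. By its definition it is produced from $\Nil_X = 0$ by faithfully flat descent along $X \to \cM$; since the descent equivalence carries the zero ideal sheaf on $X$ to the zero ideal sheaf on $\cM$, we conclude $\Nil_\cM = 0$.

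The final step is to invoke Proposition \ref{nil-is-red}: with $\Nil_\cM$ defined, it gives $\cI_\red = \Nil_\cM = 0$. By the discussion preceding Definition \ref{nilpotent-sheaf}, the vanishing $\cI_\red = 0$ is exactly the assertion that $\cM_\red = \cM$, i.e.\ that $\cM$ is reduced.

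There is essentially no obstacle here; the corollary is a formal consequence of Proposition \ref{nil-is-red}, and the only thing one must actually check is that the defining hypothesis for $\Nil_\cM$ is satisfied. The single point worth flagging is that reducedness of $X$ by itself does not suffice: one genuinely needs reducedness of $X \times_\cM X$ as well, in order to know that the compatibility data (the cocycle) needed to descend the nilpotent sheaf is present. Given that, the argument is immediate.
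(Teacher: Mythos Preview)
Your proof is correct and is exactly the argument the paper intends: the corollary is stated without proof immediately after Proposition~\ref{nil-is-red}, and your expansion---checking that the vanishing of $\Nil_X$ and $\Nil_{X\times_\cM X}$ makes $\Nil_\cM$ defined and zero, then invoking $\cI_\red=\Nil_\cM$---is precisely how it follows. Your closing remark about why reducedness of $X\times_\cM X$ is genuinely needed is apt and matches the spirit of the paper's Definition~\ref{nilpotent-sheaf}.
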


We next begin an investigation of the closed substacks of $\cM_\fg$.
Recall that $\cM(1) = \cM_\fg \otimes \FF_p$.

\begin{prop}\label{closed-sub}Let $\cN \subseteq \cM_{\fg,p}$ be a closed
substack. If $\cN$ has a geometric point of height $n$, then
$$
\cM(n) \subseteq \cN.
$$
\end{prop}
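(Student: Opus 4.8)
The plan is to push the problem onto the explicit flat cover of $\cM(n)$ supplied by Proposition \ref{cover-for-mn} and turn it into a statement about invariant ideals. First I would intersect with $\cM(n)$: if $\xi$ is a geometric point of $\cN$ whose image in $\cM_\fg$ has height $n$, then that image is a formal group over a field of characteristic $p$ of height $\geq n$, so $\xi$ factors through the closed substack $\cM(n)$ and hence defines a geometric point of the closed substack $\cM(n)\times_{\cM_\fg}\cN$ of $\cM(n)$. It therefore suffices to prove that $\cM(n)\times_{\cM_\fg}\cN = \cM(n)$, i.e. that the ideal sheaf cutting out $\cM(n)\times_{\cM_\fg}\cN$ inside $\cM(n)$ is zero. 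By Proposition \ref{cover-for-mn} there is an $fpqc$-presentation $X(n) = \Spec(\FF_p[u_n,u_{n+1},\ldots])\to\cM(n)$ with $X(n)\times_{\cM(n)}X(n) = \Spec(\FF_p[u_n,u_{n+1},\ldots][t_0^{\pm 1},t_1,t_2,\ldots])$, so by faithfully flat descent it is enough to show that the pullback $\bar J\subseteq R:=\FF_p[u_n,u_{n+1},\ldots]$ of that ideal sheaf vanishes; here $\bar J$ is an invariant ideal for the Hopf algebroid $(R,R[t_0^{\pm 1},t_1,t_2,\ldots])$. Lifting $\xi$ along the cover (after a finite field extension, since $X(n)\to\cM(n)$ is an $fpqc$ cover and $\Spec(\FF)\to\cM(n)$ has field source) produces a prime $\bar\pp\supseteq\bar J$ of $R$, and since $\xi$ has \emph{exact} height $n$ we have $v_n(\xi)\neq 0$; because on $X(n)$ the section $v_n$ corresponds to $u_n$ (Remark \ref{vn-local}, using that the $p$-series of the universal $p$-typical group over $X(n)$ is $[p](x)=u_n x^{p^n}+_F\cdots$, Remark \ref{p-typical-vn}), this gives $u_n\notin\bar\pp$.

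So the whole proposition reduces to the algebraic statement: \emph{any nonzero invariant ideal $\bar J$ of $R=\FF_p[u_n,u_{n+1},\ldots]$ has $u_n\in\sqrt{\bar J}$} — granting this, $u_n$ would lie in $\bar\pp\supseteq\sqrt{\bar J}$, a contradiction, forcing $\bar J = 0$ and hence $\cM(n)\subseteq\cN$. To prove this I would first note that $\bar J$ is automatically homogeneous: the isomorphism group scheme contains the multiplicative group $\GG_m$ (the $t_0$-factor, i.e. rescaling of coordinates as in Remark \ref{gradings}), so an ideal invariant under all isomorphisms is in particular $\GG_m$-invariant, which is exactly homogeneity for the grading $|u_k|=p^k-1$. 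It also suffices to use invariance under \emph{strict} isomorphisms, i.e. the sub-Hopf-algebroid $(R,R[t_1,t_2,\ldots])$ in which $\eta_R$ is degree-preserving and $\eta_R(u_n)=u_n$. Now choose a nonzero homogeneous $\bar x\in\bar J$ of minimal degree and expand $\eta_R(\bar x)=\sum_m c_m t^m$ in the monomial basis $\{t^m\}$ of $R[t_1,t_2,\ldots]$ over $R$; invariance forces each $c_m\in\bar J$, while $c_m$ is homogeneous of degree $\deg\bar x - |t^m| < \deg\bar x$ for $m\neq\varnothing$, so minimality kills all non-constant coefficients and $\eta_R(\bar x)=\bar x$, i.e. $\bar x$ is primitive. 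Thus $\bar J$ contains a nonzero homogeneous primitive; if its degree is $0$ it is a unit and $\bar J=R$, and otherwise it remains only to know that every positive-degree homogeneous primitive of $R=BP_\ast/I_n$ is divisible by a power of $u_n$ — equivalently, that the ring of primitives is $\FF_p[u_n]$ — since then the minimal-degree primitive is a scalar multiple of $u_n^k$ and $u_n^k\in\bar J$.

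The main obstacle is precisely this last point: the computation of the primitives ($\Ext^0$) of the Hopf algebroid attached to $\cM(n)$, which is essentially the content of the Landweber--Morava invariant prime ideal theorem. I would carry it out by the standard descending argument on the highest-index variable $u_{n+k}$ occurring in a putative positive-degree primitive that is not divisible by $u_n$, using the explicit form of the right unit modulo $I_n$ — namely $\eta_R(u_{n+1})\equiv u_{n+1}+u_n t_1^{p^n}-u_n^p t_1$ and its analogues for $\eta_R(u_{n+k})$, which come from the coactions of the universal $p$-typical formal group law and the universal isomorphism (Remark \ref{p-typ-prop}) — to contradict primitivity. Everything upstream of that computation is routine: the reduction to the cover $X(n)$, the identification of $v_n$ with $u_n$ there, the automatic homogeneity of invariant ideals, and the extraction of a homogeneous primitive from $\bar J$; the real work, and the only place genuine input from the theory of $p$-typical formal groups enters, is the determination of the primitives of $BP_\ast/I_n$.
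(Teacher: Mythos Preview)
Your argument is correct, but it takes a genuinely different route from the paper's.

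The paper argues geometrically: since $\cM(n)$ is reduced (Corollary \ref{stack-is-red} applied via Proposition \ref{cover-for-mn}), it suffices to show the containment $|\cM(n)|\subseteq|\cN|$ of underlying spaces. Passing to the open complement $\cU=\cM_\fg-\cN$, this becomes the statement that if $\cU$ has no geometric point of height $n$ then it has none of height $k\geq n$, which is exactly the contrapositive of Proposition \ref{open-ht}. Thus the proposition is deduced in a few lines from results already in hand.

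Your approach instead pulls the ideal back to the explicit cover $X(n)=\Spec(\FF_p[u_n,u_{n+1},\ldots])$ and reduces to the purely algebraic claim that every nonzero invariant ideal of $\FF_p[u_n,u_{n+1},\ldots]$ contains a power of $u_n$. Your extraction of a minimal-degree homogeneous primitive from such an ideal is clean and correct, and it reduces everything to the identification of the primitives as $\FF_p[u_n]$ --- which, as you note, is essentially the Landweber--Morava invariant prime ideal theorem. So your proof is self-contained modulo that $\Ext^0$ computation, and in particular does not invoke Proposition \ref{open-ht} at all. The trade-off is that the paper's route is shorter because Proposition \ref{open-ht} (whose proof uses only the regularity of $p,v_1,\ldots$ on flat covers) has already absorbed the hard work, whereas your route concentrates all the content into the primitives calculation. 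Both are valid; the paper's is more in keeping with the stack-theoretic narrative, while yours makes the connection to the classical comodule-theoretic statement explicit.
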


\begin{proof}We begin with the following observation: suppose
that $\cN_1$ and $\cN_2$ are closed substacks of an algebraic
stack, that $\cN_1$ is reduced in the strong sense of 
Proposition \ref{nil-is-red}, and that $|\cN_1| \subseteq
|\cN_2|$. Then $\cN_1 \subseteq \cN_2$. For if
we let $X \to \cM$ be a cover and $Y_i \subseteq X$, $i=1,2$
the resulting closed subscheme which covers of $\cN_i$, then $Y_1$ is
reduced and Lemma \ref{space-of-closed} implies that
$|Y_1| \subseteq |Y_2|$. Then $Y_1 \subseteq Y_2$ and arguing
as the end of the proof of Proposition \ref{nil-is-red}, we
have $\cN_1 \subseteq \cN_2$.

To prove the result, then, we need only show that there is an $n$
so that $|\cM(n)| \subseteq |\cN|$. Thus we must prove that
if $\cN \subseteq \cM_\fg$ is closed and contains  a geometric
point of height $n$, then it contains a geometric point of 
height $k$ for all $k \geq n$. This can be rephrased in terms
of the complementary open $\cU = \cM_\fg - \cN$ as follows:
if $\cU$ does not have a geometric point of height $n$,
it does not have a geometric point of height $k$, $k \geq n$.
Rephrasing this as a positive statement gives
exactly Proposition \ref{open-ht}.
\end{proof}

\begin{thmred}Suppose $\cN \subseteq \cM_\fg$ is closed and reduced.
If $\cN \ne \cM_\fg$, then we have $\cN \subseteq \FF_p \otimes \cM_{\fg} = \cM(1)$.
Indeed, if $\cU = \cM_\fg - \cN$ is not empty, then it must contain
a geometric point of height $0$, by Proposition \ref{open-ht}.
Let $n$ be the smallest integer $1 \leq n \leq \infty$ so that $\cN$
has a geometric point of height $n$. Then Proposition \ref{closed-sub}
implies that $\cM(n) \subseteq \cN$. Furthermore $|\cM(n)| = |\cN|$.
The argument in the first paragraph of  Proposition \ref{closed-sub}
shows that $\cM(n) = \cN$.
\end{thmred}

\subsection{Isomorphisms and layers}

We continue to work at a prime $p$.
In this section we discuss the difference between the closed
substacks $\cM(n)$ and $\cM(n+1)$; that is, we 
discuss the geometry of
$$
\layer{n} \defeq \cM(n) - \cM(n+1).
$$
and the geometry of
$$
\cM(\infty) \defeq \cap_n \cM(n).
$$
In both case we will find that we have stacks of the form
$B\La$ where $\La$ is the group of automorphisms
of some height $n$ formal group law. The group $\La$
is not an algebraic group as it is not finite; however,
it will pro-\'etale in an appropriate sense. See Theorem
\ref{refined-lazard}.

Here is a preliminary result. 

\begin{lem}\label{layer-to-fg-affine}The inclusion
$$
f_n:\cH(n) \longr \cM_\fg
$$
is an affine morphism of algebraic stacks.
\end{lem}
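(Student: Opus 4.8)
The morphism $f_n:\cH(n)\to\cM_\fg$ factors as the open immersion $\cH(n)\hookrightarrow\cM(n)$ followed by the closed immersion $\cM(n)\hookrightarrow\cM_\fg$. A closed immersion is affine by construction (Definition \ref{affine-morphisms}), and affine morphisms are closed under composition, so the whole argument reduces to showing that the open immersion $j:\cH(n)\to\cM(n)$ is affine. The key geometric input is that, working over $\ZZ_{(p)}$, the complement $\cM(n+1)$ of $\cH(n)$ inside $\cM(n)$ is an \emph{effective Cartier divisor}: by Definition \ref{mn-divisor} it is cut out by the section $v_n$ of the invertible sheaf $\omega^{\otimes(p^n-1)}$ over $\cM(n)$. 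So I want to prove the general principle: if $D\subseteq\cX$ is an effective Cartier divisor on an $fpqc$-algebraic stack, then the inclusion $\cX-D\hookrightarrow\cX$ is an affine morphism.

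First I would reduce to the affine-scheme case using the presentation. By Proposition \ref{cover-for-mn} there is an $fpqc$-cover $X(n)=\Spec(\FF_p[u_n,u_{n+1},\ldots])\to\cM(n)$, and the property ``affine'' of a representable morphism can be checked after pulling back along an $fpqc$-cover (it is local in the flat topology, since affineness of a morphism of schemes descends by faithfully flat descent — Proposition \ref{ff-affine}; this is exactly the hypothesis of Lemma \ref{check-once}). So I am reduced to showing: if $\Spec(R)\to\cM(n)$ is any morphism with affine source, then $\Spec(R)\times_{\cM(n)}\cH(n)\to\Spec(R)$ is affine. Now $\cM(n+1)\subseteq\cM(n)$ is a representable flat-local Cartier divisor, so by the discussion preceding Proposition \ref{LEFT-easy-part}, its pull-back $\Spec(R)\times_{\cM(n)}\cM(n+1)$ is an effective Cartier divisor in $\Spec(R)$, i.e.\ the vanishing locus $V(t)$ of a non-zero-divisor $t\in R$ (a local generator of the pulled-back invertible ideal sheaf, which becomes principal after a Zariski-localization of $R$ — and Zariski-affineness is again checkable locally, so I may assume it is globally principal). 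Then $\Spec(R)\times_{\cM(n)}\cH(n)=\Spec(R)-V(t)=\Spec(R[1/t])$ is affine over $\Spec(R)$. That settles it.

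The one point requiring care — and the place I expect the only real friction — is the bookkeeping around ``the ideal of $\cM(n+1)$ in $\cM(n)$ is locally principal, so pulls back to a principal ideal''. The section $v_n$ trivializes $\omega^{\otimes(p^n-1)}$ only locally, so after pulling back to $\Spec(R)$ one gets an invertible ideal sheaf $\cI\subseteq\cO_{\Spec(R)}$, not literally a principal ideal; one must pass to an affine Zariski-open cover of $\Spec(R)$ on which $\cI$ is free of rank one, observe that affineness of $\Spec(R)-V(\cI)\to\Spec(R)$ may be checked on such a cover, and there it is $\Spec(R[1/t])\to\Spec(R)$. This is routine but is the only genuinely non-formal step. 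An even quicker route, if one does not want to invoke the general Cartier-divisor principle, is to use Proposition \ref{cover-for-mn} directly: the pullback of $\cH(n)$ along $X(n)\to\cM(n)$ is $\Spec(\FF_p[u_n,u_{n+1},\ldots])-V(u_n)=\Spec(\FF_p[u_n^{\pm1},u_{n+1},\ldots])$, which is affine, and then conclude by Lemma \ref{check-once} applied to the property ``affine''. I would present the argument via this second, more concrete route, mentioning the Cartier-divisor generality as the conceptual reason.
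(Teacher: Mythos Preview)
Your proof is correct and takes a genuinely different route from the paper's. The paper works directly over $\cM_\fg$: given $\Spec(R)\to\cM_\fg$ classifying a formal group $G$ with a coordinate, it analyzes the groupoid of triples $(f,\Gamma,\phi)$ with $\Gamma$ of exact height $n$ and $\phi:\Gamma\to f^\ast G$ an isomorphism, observes that the existence of $\phi$ forces $f$ to factor through $R/\cI_n(G)[u_n^{-1}]$, and checks that $\Spec(R/\cI_n(G)[u_n^{-1}])$ is equivalent to the pull-back; the general case is handled by faithfully flat descent. You instead factor $f_n$ as the open immersion $\cH(n)\hookrightarrow\cM(n)$ followed by the closed immersion $\cM(n)\hookrightarrow\cM_\fg$, and reduce to the principle that the complement of an effective Cartier divisor is affine. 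Your approach isolates the conceptual reason and would apply verbatim to any layer of a regular scale, while the paper's direct computation over $\cM_\fg$ yields the explicit description $\Spec(R/\cI_n(G)[u_n^{-1}])$ in one step (which is what is used in Remark~\ref{first-cover-layer} and in the proof of Proposition~\ref{flat-2}). One small expositional point: after invoking Lemma~\ref{check-once} you say you are ``reduced to showing: if $\Spec(R)\to\cM(n)$ is any morphism with affine source\ldots'', but Lemma~\ref{check-once} reduces you to the \emph{single} presentation $X(n)$, not to arbitrary $\Spec(R)$; your Cartier-divisor argument as written needs flatness of $\Spec(R)\to\cM(n)$ anyway. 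Your ``quicker route'' via $X(n)$ is clean and is the one to present.
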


\begin{proof}Suppose $\Spec(R) \to \cM_\fg$ is classifies a formal group
$G$ with a chosen coordinate $x$. Then the $2$-category
pull-back $\Spec(R) \times_{\cM_\fg} \cH(n)$ is the groupoid scheme
which assigns to each commutative ring $S$ the triples
$(f,\Ga,\phi)$ where $f:R \to S$ is a morphism of commutative
rings, $\Ga$ is formal group of exact height $n$ over $S$
and $\phi:\Ga \to f^\ast G$ is an isomorphism of formal groups.
An isomorphism of triples $(f,\Ga,\phi) \to (f,\Ga',\phi')$ is
an isomorphism of formal groups $\psi:\Ga \to \Ga'$ so that
$\phi'\psi=\phi$. Given such a triple, $(f,\Ga,\phi)$, the
existence of $\phi$ forces $f$ to factor as a composition
$$
\xymatrix{
R \rto^-q & R/\cI_n(G))[u_n^{-1}] \rto^-g & S
}
$$
where $[p]_G(x) = u_nx^{p^n} + \cdots$ modulo $\cI_n(G)$.
We now check that the morphism of groupoid schemes
$$
\Spec(R/\cI_n(G))[u_n^{-1}]) \to \Spec(R) \times_{\cM_\fg} \cH(n)
$$
sending $g$ to $(gq,(gq)^\ast G,1)$ is an equivalence. For
more general $G$, we use faithfully flat descent to describe
the pullback as an affine scheme. 
\end{proof}

\begin{rem}\label{first-cover-layer}
From this result and Proposition \ref{cover-for-mn} we have
that there is an $fpqc$-cover
$$
Y(n) \defeq \Spec(\FF_p[u_n^{\pm 1},u_{n+1},u_{n+2},\ldots]) \to \layer{n}.
$$
and that
$$
Y(n) \times_{\layer{n}} Y(n) \cong
\Spec(\FF_p[u_n^{\pm 1},u_{n+1},u_{n+2},\ldots][t_0^{\pm 1},t_1,t_2,\ldots]).
$$
\end{rem}

Now let $S$ be a scheme and let $G_1$ and $G_2$ be
two formal groups over $S$. Define the scheme of isomorphisms from
$G_1$ to $G_2$ by the $2$-category pull-back
$$
\xymatrix{
\Iso_{S}(G_1,G_2) \rto \dto & \cM_\fg \dto^\Delta\\
S \rto_-{G_1 \times G_2} \rto &\cM_\fg \times \cM_\fg.
}
$$
Thus if $f:T \to S$ is a morphism of schemes, then a $T$-point of
$\Iso_S(G_1,G_2)$ is an isomorphism $\phi:f^\ast G_1 \to f^\ast G_2$
of formal groups over $T$. By Proposition \ref{diagonal},
$\Iso_S(G_1,G_2)$ is affine over $S$.

If $G_3$ is another formal group over $S$, then there is a composition
$$
\Iso_{S}(G_2,G_3) \times_S \Iso_{S}(G_1,G_2) \longr \Iso_{S}(G_1,G_3).
$$
In particular, $\Aut_S(G_1) = \Iso_{S}(G_1,G_1)$ acts on the right
on $\Iso_S(G_1,G_2)$. 

Because isomorphisms are locally given by power series, it is fairly clear
that $\Iso_S(G_1,G_2) \to S$ does not have good finiteness properties.
To get well-behaved approximations, let $\cM_\fg\pnty{p^k}$ denote the
moduli stack of $p^k$-buds (Definition \ref{buds-stack}) and define
$\Iso_{S}(G_1,G_2)_k$ by the pull-back diagram
$$
\xymatrix{
\Iso_{S}(G_1,G_2)_k \rto \dto & \cM_\fg\pnty{p^k} \dto^\Delta\\
S \rto_-{G_1 \times G_2} \rto &\cM_\fg\pnty{p^k} \times \cM_\fg\pnty{p^k}.
}
$$
Thus, for $f:T \to S$, a $T$-point of $\Iso_{S}(G_1,G_2)_k$ is an isomorphism
of the $p^k$-buds $\phi:(G_1)_{p^k} \to (G_2)_{p^k}$. 

Let $\Iso_{S}(G_1,G_2)_\infty = \Iso_{S}(G_1,G_2)$
and let $\Iso_{S}(G_1,G_2)_0 = S$; then there is a tower under $\Iso_S(G_1,G_2)$
and over $S$ with transition morphisms
$$
\Iso_S(G_1,G_2)_k \longr \Iso_S(G_1,G_2)_{k-1}.
$$
Pointwise, these maps are fibrations, so we have that 
$$
\Iso_S(G_1,G_2) \to \holim \Iso_S(G_1,G_2)_k
$$
is an equivalence.

The following is a refined version of Lazard's uniqueness theorem. See
Corollary \ref{lazard-unique} below. 

\begin{thm}\label{refined-lazard}Let $S$ be a scheme over $\FF_p$ and
let $G_1$ and $G_2$ be two formal groups of strict height $n$, $1 \leq n <
\infty$ over $S$. Then
$$
\Iso_{S}(G_1,G_2)_1 \longr S
$$
is surjective and \'etale of degree $p^n-1$. For all $k > 1$, the morphism
$$
\Iso_{S}(G_1,G_2)_k \longr \Iso_{S}(G_1,G_2)_{k-1}
$$
is surjective and \'etale of degree $p^n$. Finally, the morphism
$$
\Iso_{S}(G_1,G_2) \longr S
$$
is surjective and pro-\'etale.
\end{thm}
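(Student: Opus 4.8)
The plan is to reduce everything to a purely local computation on the towers $\Iso_S(G_1,G_2)_k$, using the fact (already established above) that these schemes are affine over $S$ and that, after an $fpqc$ base change, both $G_1$ and $G_2$ acquire $p$-typical coordinates. First I would use faithfully flat descent: the properties "surjective", "\'etale of degree $d$", and "pro-\'etale" can all be checked after a faithfully flat quasi-compact base change $T\to S$, so by Lemma \ref{coords-after-ff} (applied to both $G_1$ and $G_2$, then refined by Cartier's idempotent, Theorem \ref{quillen-idem}) we may assume $S=\Spec R$ is affine, $R$ is an $\FF_p$-algebra, and both $G_i$ carry $p$-typical coordinates $x_i$ with $p$-series $[p]_{G_i}(x)=u_{i,n}x^{p^n}+\cdots$ where $u_{i,n}\in R^\times$ (since $G_i$ has strict height $n$, Remark \ref{exact-height}).

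Next I would set up the explicit description of $\Iso_S(G_1,G_2)_k$ in coordinates. A $T$-point is an invertible power series $\phi(x)=t_0x+t_1x^{p}+\cdots \pmod{x^{p^k+1}}$ — using Remark \ref{p-typ-prop}.3, an isomorphism between formal groups with $p$-typical coordinates is determined by a series of the form $\phi^{-1}(x)=t_0x+_{G_1}t_1x^p+_{G_1}\cdots$ — subject to the single functional equation $\phi(x_1+_{F_1}x_2)=\phi(x_1)+_{F_2}\phi(x_2) \pmod{(x_1,x_2)^{p^k+1}}$. The key algebraic input, which is essentially Lazard's argument (cf.\ \cite{Rav}, Appendix 2), is the "comparison lemma": working modulo successively higher powers, the functional equation imposes that at each new degree $p^j$ the coefficient $t_j$ satisfies a polynomial relation whose derivative (with respect to $t_j$) is a unit. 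Concretely, comparing $[p]$-series, the constraint at the bottom level is $u_{2,n}t_0^{p^n}=t_0 u_{1,n}$, i.e.\ $t_0^{p^n-1}=u_{1,n}u_{2,n}^{-1}$, a unit; this is a separable equation (derivative $(p^n-1)t_0^{p^n-2}$, a unit since $p\nmid p^n-1$) defining a finite \'etale $R$-algebra of rank $p^n-1$. Hence $\Iso_S(G_1,G_2)_1\to S$ is \'etale of degree $p^n-1$, and surjective because $u_{1,n}u_{2,n}^{-1}$ always has a root over an extension field. For $k>1$, passing from the $p^{k-1}$-bud to the $p^k$-bud introduces exactly one new coefficient $t_k$ (the coefficients in degrees between $p^{k-1}$ and $p^k$ not of the form $p^j$ are forced by the lower data, again by the $p$-typicality constraint of Remark \ref{p-typ-prop}.3), and the functional equation at degree $p^k$ reads, modulo lower terms, $u_{1,n}t_k^{?}-\text{(unit)}\cdot t_k = (\text{known})$; more precisely the relevant relation has the shape $a t_k - b t_k^{p} = c$ with $a\in R^\times$ when one tracks the linear term coming from $\phi(x_1+_Fx_2)$ and the nonlinear term from $\phi(x_1)+_Fy$, and its derivative in $t_k$ is $a$ (a unit, since the $p\,t_k^{p-1}$ term vanishes in characteristic $p$). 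That makes the fiber a free rank-$p^n$ separable extension — wait, I should be careful: the degree $p^n$ count comes from the structure of the relevant $C_{p^j}$-cocycle term, and I would verify it by the direct calculation sketched in Remark \ref{vn-local} and \cite{Rav} A.2. Thus $\Iso_S(G_1,G_2)_k\to\Iso_S(G_1,G_2)_{k-1}$ is finite \'etale of the asserted degree, and surjective.

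Finally, the statement about $\Iso_S(G_1,G_2)\to S$: by construction (the discussion immediately preceding the theorem) $\Iso_S(G_1,G_2)=\lim_k\Iso_S(G_1,G_2)_k$, an inverse limit along affine transition maps that are finite \'etale and surjective. Such an inverse limit is, by definition, a pro-\'etale morphism, and it is surjective because a cofiltered inverse limit of nonempty affine schemes over a field (the fibers over each geometric point of $S$) is nonempty — the fibers form a cofiltered system of nonempty finite sets, whose limit is nonempty by a compactness/König-type argument. This gives surjectivity of $\Iso_S(G_1,G_2)\to S$. I would then descend the whole package back down to the original $S$ using faithfully flat descent, noting that all the properties in play (surjectivity, being \'etale of a given degree, being pro-\'etale, being affine) descend along the cover $T\to S$ chosen at the start, and that $\Iso_T(f^*G_1,f^*G_2)_k\cong T\times_S\Iso_S(G_1,G_2)_k$ by the base-change compatibility of the $2$-categorical pullback defining these schemes.

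\textbf{Main obstacle.} The conceptual steps — descent, reduction to $p$-typical coordinates, identifying the inverse limit as pro-\'etale and proving surjectivity by a compactness argument — are routine. The real work, and the step I expect to be delicate, is the explicit coefficient-by-coefficient analysis establishing that the transition map $\Iso_S(G_1,G_2)_k\to\Iso_S(G_1,G_2)_{k-1}$ is \'etale of degree exactly $p^n$ (and the bottom one of degree $p^n-1$): one must extract, from the functional equation for an isomorphism of $p$-typical formal group laws modulo $(x,y)^{p^k+1}$, precisely the relation governing the single free coefficient $t_k$, show its leading ($t_k$-linear) coefficient is a unit — so the relation is separable over the base — and count its degree in $t_k$ correctly. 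This is exactly "Lazard's proof" alluded to in the introduction, and I would carry it out following the template in \cite{Rav}, Appendix 2, and \cite{Katz}, being careful that the unit $u_{i,n}$ (strict height $n$) is what makes the Frobenius-twisted terms invertible at each stage.
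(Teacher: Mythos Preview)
Your overall strategy matches the paper's exactly: reduce by $fpqc$ descent to the affine case with $p$-typical coordinates, then do a coefficient-by-coefficient analysis of the tower. The bottom-level computation you give, $t_0^{p^n-1}=u_{1,n}u_{2,n}^{-1}$, is precisely the paper's equation (it writes $b_0^{p^n-1}-v=0$ with $v=u_n/u'_n$).

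The one place where the paper is sharper than your sketch is the inductive step, and it resolves exactly the uncertainty you flag. Rather than working with the multiplicativity equation $\phi(x_1+_{F_1}x_2)=\phi(x_1)+_{F_2}\phi(x_2)$, the paper uses naturality of the Verschiebung: writing the new bud isomorphism as $\phi(x)=\phi_0(x)+_{G_1}b_kx^{p^k}$, the relation $V_{G_2}(\phi^{(p^n)}(x))=\phi(V_{G_1}(x))$ immediately yields, on comparing coefficients of $x^{p^k}$, an equation of the form
\[
b_k^{p^n}-vb_k+w=0
\]
with $v=u_n^{p^k}/u'_n$ a unit. This makes both the degree $p^n$ and the \'etaleness (derivative $-v$, a unit) transparent in one line, and explains why the exponent is $p^n$ rather than $p$: the Frobenius twist $\phi^{(p^n)}$ is what produces $b_k^{p^n}$. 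Your proposed route through the multiplication functional equation and the symmetric $2$-cocycles would eventually arrive at the same place, but the Verschiebung shortcut is what makes the argument clean.
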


The proof is below in \ref{lazard-proof}.

\begin{cor}[{\bf Lazard's Uniqueness Theorem}]\label{lazard-unique}Let $\FF$ be a field of characteristic\index{Lazard's uniqueness theorem}
$p$ and $G_1$ and $G_2$ two formal groups of strict height $n$. Then there
is a separable extension $f:\FF \to \EE$ so that $f^\ast G_1$ and $f^\ast G_2$
are isomorphic. In particular, if $\FF$ is separably closed, then $G_1$ and
$G_2$ are isomorphic.
\end{cor}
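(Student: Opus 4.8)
The plan is to deduce the Corollary directly from the refined uniqueness statement, Theorem \ref{refined-lazard}, by an elementary argument about \'etale algebras over the field $\FF$. Set $S = \Spec(\FF)$, so that $G_1$ and $G_2$ are formal groups of strict height $n$ over $\FF$ with $1 \leq n < \infty$. By Theorem \ref{refined-lazard} every map in the tower
$\cdots \to \Iso_S(G_1,G_2)_k \to \Iso_S(G_1,G_2)_{k-1} \to \cdots \to \Iso_S(G_1,G_2)_0 = S$
is surjective and finite \'etale (of degree $p^n-1$ at the bottom, degree $p^n$ above), so in particular each $\Iso_S(G_1,G_2)_k = \Spec(A_k)$ with $A_k$ a finite \'etale $\FF$-algebra and each transition map $A_{k-1}\to A_k$ surjective-\'etale, hence faithfully flat, hence injective. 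Since $\Iso_S(G_1,G_2) = \holim_k \Iso_S(G_1,G_2)_k$ (and this scheme is affine over $S$ by Proposition \ref{diagonal}), we have $\Iso_S(G_1,G_2) = \Spec(A)$ with $A = \colim_k A_k$.

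First I would check $A \neq 0$. The map $\Iso_S(G_1,G_2)_1 \to S$ is surjective \'etale of degree $p^n - 1 \geq 1$, so $A_1$ is a nonzero finite \'etale $\FF$-algebra; the same then holds for every $A_k$, and injectivity of the transition maps gives $A \neq 0$ (indeed $A$ is faithfully flat over $\FF$). Over a field each $A_k$ is a finite product of finite separable field extensions of $\FF$, so $A$ is a filtered union of such algebras.

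Then I would extract the desired separable extension. Choosing a maximal ideal $\mm \subseteq A$ and setting $\EE = A/\mm$, the image of each $A_k$ in $\EE$ is a subring of $\EE$ finite over $\FF$ and a quotient of the separable algebra $A_k$, hence a finite separable extension of $\FF$; therefore $\EE$, being the filtered union of these images, is a separable algebraic extension of $\FF$. The induced $\FF$-morphism $f:\Spec(\EE)\to\Spec(\FF)$ together with the point $\Spec(\EE)\to\Iso_S(G_1,G_2)$ is, by the definition of $\Iso_S$, precisely an isomorphism $f^\ast G_1 \to f^\ast G_2$ of formal groups over $\EE$, which proves the first assertion. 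For the last sentence: if $\FF$ is separably closed, then every separable algebraic extension of $\FF$ equals $\FF$, so $\EE = \FF$ and $G_1 \cong G_2$ already over $\FF$ (equivalently, $\Iso_S(G_1,G_2)\to\Spec(\FF)$ is surjective and nonempty over a separably closed field, hence has an $\FF$-rational point).

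All of the substance lies in Theorem \ref{refined-lazard}; granting it, the only point requiring care is the passage to the inverse limit — verifying that the ind-\'etale $\FF$-algebra $A$ is nonzero and that its residue fields are separable algebraic over $\FF$ — which is the routine commutative algebra indicated above. If one prefers to avoid choosing a maximal ideal of the infinite colimit, the same conclusion follows by constructing a compatible tower of points: surjectivity and finite \'etaleness of the maps $\Iso_S(G_1,G_2)_k \to \Iso_S(G_1,G_2)_{k-1}$ produce points of the $\Iso_S(G_1,G_2)_k$ over an increasing chain of finite separable extensions of $\FF$, whose union $\EE$ is separable algebraic over $\FF$ and carries a point of $\holim_k \Iso_S(G_1,G_2)_k = \Iso_S(G_1,G_2)$.
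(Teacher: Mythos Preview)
Your argument for finite $n$ is correct and is exactly the paper's approach, just spelled out in detail: the paper says only that the finite-height case ``follows from the surjectivity statement of Theorem \ref{refined-lazard}'', and your discussion of the ind-\'etale $\FF$-algebra $A=\colim A_k$ and its residue fields is a careful unpacking of why surjectivity of a pro-\'etale morphism over $\Spec(\FF)$ produces a point defined over a separable algebraic extension.

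There is one gap: you restrict at the outset to $1\le n<\infty$, but the Corollary as stated (and as used later, e.g.\ in the discussion of $\cM(\infty)$) is meant to cover $n=\infty$ as well, and Theorem \ref{refined-lazard} says nothing about that case. The paper handles $n=\infty$ by a separate, elementary argument: if $G$ has infinite height then $[p]_G=0$, so a choice of $p$-typical coordinate has $p$-series $px+_F u_1x^p+_F\cdots$ with all $u_i=0$, forcing $G\cong\hat{\GG}_a$ already over $\FF$; hence $G_1\cong G_2$ with no extension needed. You should add a sentence covering this case.
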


\begin{proof} If the height $n < \infty$, this follows from the surjectivity statement
of Theorem \ref{refined-lazard}. If $n=\infty$, then the $p$-series of $G_i$
must be zero; hence, a choice of $p$-typical coordinate for $G_i$ defines 
an isomorphism from $G_i$ to the additive formal
group.
\end{proof}

\begin{rem}\label{flat-and-iso}If $G_1$ and $G_2$ are two formal
groups over a scheme $S$ classified by maps $G_i:S \to \cM_\fg$, we have
a pull-back diagram
$$
\xymatrix{
\Iso_S(G_1,G_2) \rto \dto & S \times_{\cM_\fg} S\dto\\
S \rto_\Delta & S \times S.
}
$$
If $S = \Spec(A)$ is affine and each of the formal groups $G_i$ can
be given a coordinate, then this writes (by Lemma \ref{pull-back-for-coord-2}) $\Iso_S(G_1,G_2)$
as the spectrum of the ring
$$
A {\otimes_{(A \otimes A)}} A \otimes_L W \otimes_S A.
$$
Thus if $x \in A$ we have (using the standard notation for Hopf algebroids)
$$
x = \eta_R(x)
$$
in this commutative ring. This makes it very unusual that
$$
\Iso_S(G_1,G_2) \longr S
$$
is flat, let alone \'etale. Thus the Theorem \ref{refined-lazard} is something
of a surprise.
\end{rem}

\begin{exam}\label{lazard-refined-calc} We can be very concrete
about the scheme $\Aut_\FF(\Ga_n)$ where 
$\Ga_n$ is of strict height $n$ over over a field $\FF$. The formal
group $\Ga_n$  can be given a coordinate
and we can display $\Aut_\FF(\Ga_n)$ as
$$
\Aut_\FF(\Ga_n) = \Spec(\FF \otimes_L W \otimes_L \FF)
$$
where $L \to \FF$ classifies $\Ga_n$ with a choice of
coordinate. For example, if $1 \leq n < \infty$ and if $\Ga_n$
is the Honda formal group over $\FF_p$ with coordinate so that
$[p](x) = x^{p^n}$, then we have
an isomorphism of Hopf algebras
\begin{equation}\label{ht-n-hopf}
\FF_p \otimes_L W \otimes_L \FF_p =
\FF_p[a_0^{\pm 1},a_1,a_2,\cdots]/(a_i^{p^n} - a_i).
\end{equation}
This is the Hopf algebra analyzed by Ravenel in Chapter 6
of \cite{Rav}, where it is called the Morava stabilizer algebra
\index{Morava stabilizer algebra}. The automorphisms of the the $p^k$ buds
are displayed as
$$
\Aut_\FF(\Ga_n)_{p^k} =  \Spec(\FF_p[a_0^{\pm 1},a_1,\cdots,a_k]/(a_i^{p^n} - a_i).
$$
In the infinite height case, the failure to be \'etale can be easily seen:
if we take $\Ga_\infty = \hat{\GG}_a$ with its standard coordinate, then
\begin{equation}\label{ht-infty-hopf}
\FF_p \otimes_L W \otimes_L \FF_p =
\FF_p[a_0^{\pm 1},a_1,a_2,\cdots].
\end{equation}
This is closely related to the mod $p$ dual Steenrod algebra.
\end{exam}

\begin{thmlaz}\label{lazard-proof} This argument is a rephrasing of an argument I learned from  Neil Strickland \cite{fpfp}. But see also \cite{Katz}.
The properties listed -- \'etale, degree, and surjectivity -- are all local
in the $fpqc$-topology on $S$; thus we may assume that $S =
\Spec(A)$ is affine and that $G_1$ and $G_2$ can be given a simultaneous coordinate
$x$. Furthermore,
since all of these conditions remain invariant under isomorphisms of the formal groups
involved, we may assume that $G_1$ and $G_2$ are $p$-typical. This implies
that we may write the $p$-series of of the formal groups
\begin{align*}
[p]_{G_1}(x) &= u_nx^{p^n} +_{G_1} u_{n+1}x^{p^{n+1}} + \cdots\\
[p]_{G_2}(x) &= u'_nx^{p^n} +_{G_2} u'_{n+1}x^{p^{n+1}} + \cdots
\end{align*}
and, hence, that the Verschiebungs may be written
\begin{align*}
V_{G_1}(x) &= u_nx +_{G_1} u_{n+1}x^{p} + \cdots\\
V_{G_2}(x) &= u'_nx +_{G_2} u'_{n+1}x^{p} + \cdots.
\end{align*}
Because the formal groups have strict height $n$, $u_n$ and $u'_n$ are units.

First assume $k=0$. Then an isomorphism $\phi:(G_1)_p \to (G_2)_p$ of
$p$-typical formal group buds can be written $\phi(x) = b_0x$ modulo $(x^p)$.
Since $V_{G_2}(\phi^{(p^n)}(x)) = \phi(V_{G_1}(x))$ we have
$u'_nb_0^{p^n}x = u_nb_0x$. Since $b_0$, $u_n$, and $u'_n$ are all units
we get an equation
\begin{equation}\label{kiszero}
b_0^{p^n-1} - v = 0
\end{equation}
where $v=u_n/u'_n$ is a unit. Thus,
$$
\Iso_S(G_1,G_2)_1 = \Spec(A[b_0]/(b_0^{p^n-1} - v)).
$$
This is \'etale of degree $p^n-1$ over $\Spec(A)$ since $b_0$ is a unit
in $A[b_0]/(b_0^{p^n-1} - v)$.
Surjectivity follows from  the fact that $A \to A[b_0]/(b_0^{p^n-1} - v)$
is faithfully flat. 

Now assume $k > 0$ and keep the notation above. We make the inductive assumption
that $\Iso_S(G_1,G_2)_{k-1} = \Spec(A_{k-1})$ for some $A$-algebra $A_{k-1}$.
Suppose we have an isomorphism
$$
\phi_0(x):(G_1)_{p^{k-1}} \to (G_2)_{p^{k-1}}
$$
of $p^{k-1}$-buds over some $A$-algebra $R$. We want to lift this
to an isomorphism
$$
\phi:(G_1)_{p^k} \longr (G_2)_{p^k}
$$
so that $\phi \equiv \phi_0$ as isomorphisms of $(G_1)_{p^{k-1}}$ to
$(G_2)_{p^{k-1}}$. We may write $\phi(x) = \phi_0(x) +_{G_1} b_kx^{p^k}$.
Then again we must have
$$
V_{G_2}(\phi^{(p^n)}(x)) = \phi(V_{G_1}(x))
$$
and, equating the coefficients of $x^{p^k}$ we get an equation 
\begin{equation}\label{kisnotzero}
b_k^{p^n}  -vb_k+w =0
\end{equation}
where $v=u_n^{p^k}/u'_n$ is a unit. Thus,
$$
\Iso_S(G_1,G_2)_k = \Spec(A_{k-1}[b_k]/(b_k^{p^n} - vb_k+w)).
$$
This is faithfully flat, \'etale and of degree $p^n$ over $\Spec(A_{k-1})$.
\hfill$\square$
\end{thmlaz}

The projection morphism $\Iso_S(G_1,G_2)_k \to S$ has a right action by the
\'etale group scheme $\Aut_S(G_1)_k \to S$, $1 \leq k \leq \infty$. The
action induces a diagram of schemes over $\Iso_S(G_1,G_2)$
$$
\xymatrix{
\Iso_S(G_1,G_2)_k \times_S \Aut_S(G_1)_k \rto \dto_{p_1}&
\Iso_S(G_1,G_2)_k \times_S \Iso_S(G_1,G_2)_k  \dto^{p_1}\\
\Iso_S(G_1,G_2)_k  \rto_{=}& \Iso_S(G_1,G_2)_k 
}
$$
where the top map is given pointwise by
$$
(\phi,\psi) \mapsto (\phi,\phi\psi).
$$
This map is evidently an isomorphism; hence we have proven the following
result.

\begin{prop}\label{torsor-for-iso}The morphism $\Iso_S(G_1,G_2)_k \to S$
is an $\Aut_S(G_1)_k$-torsor.
\end{prop}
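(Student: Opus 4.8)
The plan is to deduce the torsor property directly from the ``shearing'' isomorphism displayed just before the statement, invoking Theorem \ref{refined-lazard} only to know that the structure map of $\Iso_S(G_1,G_2)_k$ over $S$ is itself a legitimate $fpqc$-cover. Recall (from the discussion of $\La$-torsors preceding Proposition \ref{homotopy-orbit}) that to exhibit a scheme $P\to S$ with a right $\La$-action as a $\La$-torsor it suffices to produce an $fpqc$-cover $T\to S$ together with an $\La$-equivariant isomorphism $T\times_S\La\cong T\times_S P$ over $T$; we will take $T$ to be $\Iso_S(G_1,G_2)_k$ itself.

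First I would set up the action and the pseudo-torsor structure. Composition of isomorphisms of $p^k$-buds (of formal groups, when $k=\infty$) makes $\Aut_S(G_1)_k=\Iso_S(G_1,G_1)_k$ a group scheme over $S$ acting on the right on $\Iso_S(G_1,G_2)_k$, and the morphism over $\Iso_S(G_1,G_2)_k$
\[
\Iso_S(G_1,G_2)_k \times_S \Aut_S(G_1)_k \longrightarrow \Iso_S(G_1,G_2)_k \times_S \Iso_S(G_1,G_2)_k,\qquad (\phi,\psi)\longmapsto(\phi,\phi\psi)
\]
has inverse $(\phi,\phi')\mapsto(\phi,\phi^{-1}\phi')$, which is well defined because a $T$-valued point of $\Iso_S(G_1,G_2)_k$ is by construction an isomorphism and hence invertible. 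One checks this inverse is $\Aut_S(G_1)_k$-equivariant, so $\Iso_S(G_1,G_2)_k$ is a pseudo-torsor under $\Aut_S(G_1)_k$.

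Second, I would check that $p\colon\Iso_S(G_1,G_2)_k\to S$ is an $fpqc$-cover. For $1\le k<\infty$, Theorem \ref{refined-lazard} presents $p$ as a finite composite of surjective finite \'etale morphisms, hence $p$ is surjective finite \'etale; in particular affine (so quasi-compact) and faithfully flat. For $k=\infty$, the morphism $\Iso_S(G_1,G_2)\to S$ is affine by Proposition \ref{diagonal} (hence quasi-compact), is surjective by the last clause of Theorem \ref{refined-lazard}, and is flat since it is $\Spec_S$ of the filtered colimit of the (faithfully) flat $\cO_S$-algebras defining the $\Iso_S(G_1,G_2)_k$. In every case $p$ is faithfully flat and quasi-compact, so $\{p\}$ is an $fpqc$-cover of $S$ by Proposition \ref{why-fpqc}.

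Finally, I would pull the pseudo-torsor isomorphism back along $p$ itself. Writing $T=\Iso_S(G_1,G_2)_k$, the projection $T\times_S\Iso_S(G_1,G_2)_k\to T$ carries the diagonal as a section, so base-changing the isomorphism of the second paragraph along $T\to S$ yields an $\Aut_S(G_1)_k\times_S T$-equivariant isomorphism of $\Aut_S(G_1)_k$-schemes over $T$
\[
T\times_S \Aut_S(G_1)_k \;\cong\; T\times_S \Iso_S(G_1,G_2)_k,
\]
which is exactly the trivialization demanded by the definition of an $\Aut_S(G_1)_k$-torsor. I do not expect a genuine obstacle: the mathematical substance is entirely contained in Theorem \ref{refined-lazard}, and the only point requiring a moment's care is confirming that the structure map is an honest $fpqc$-cover in the pro-\'etale case $k=\infty$, which is why the affineness statement of Proposition \ref{diagonal} is needed there.
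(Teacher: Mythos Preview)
Your proposal is correct and follows essentially the same approach as the paper: the paper's argument consists precisely of the shearing isomorphism $(\phi,\psi)\mapsto(\phi,\phi\psi)$ displayed just before the statement, together with the implicit appeal to Theorem~\ref{refined-lazard} to know that $\Iso_S(G_1,G_2)_k\to S$ is itself an $fpqc$-cover, so that pulling back along it trivializes the torsor. You have simply spelled out these steps more carefully, in particular the verification for $k=\infty$ that the structure map is affine and flat, which the paper leaves to the reader under the word ``pro-\'etale''.
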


We can specialize this result even further, but fir         st some
notation and definitions.

\begin{rem}\label{group-and-galois}If $X$ is a finite set, define
$X_\ZZ$ to be the scheme $\Spec(\map(X,\ZZ))$.
Then for any category $Y$ fibered in groupoids over affine schemes
we get a new functor $Y \times X_\ZZ $. If $Y = \Spec(R)$, then
$$
Y \times X_\ZZ = \Spec(\map(X,R)) \defeq X_R.
$$
If $G$ is a finite group, the $\GG_\ZZ$ is a finite group scheme over $\ZZ$
and the action of $G$ on itself extends to a right action on $Y \times G_\ZZ$.
\index{group scheme, defined by a group}

If $X = \lim X_k$ is a profinite set, define
$$
X_Z = \lim (X_k)_\ZZ = \Spec(\colim \map(X_k,\ZZ)).
$$
If $G = \lim G_k$ is a profinite group, then $G_\ZZ$ is a profinite group scheme
over $\ZZ$. \index{group scheme, defined by a profinite group}

The notation $G_\ZZ$ is cumbersome; we will drop it if $G$ is evidently
a profinite group.

Now suppose $X \to S$ is a finite and \'etale morphism of schemes;
let $\Aut_S(X)$ denote the automorphisms of $X$ over $S$. This is
finite group. Then $X$ is {\it Galois}\index{Galois morphism}\index{morphism,
Galois} over $S$ if the natural  map
$$
\Aut_S(X) \times_S X \longr X \times_S X
$$
given pointwise by $(\phi,x) \mapsto (x,\phi(x))$ is an isomorphism.
If $X = \lim X_k\to S$ where $\{ X_k \}$ is a tower of finite
and \'etale maps over $S$, then $X$ is {\it  pro-Galois} if there there is a coherent
set of morphisms $\Aut_S(X) \to \Aut_S(X_k)$ so that
$$
\Aut_S(X) = \lim \Aut_S(X_k)
$$
and each of the morphism $X_k \to S$ is Galois.
\end{rem}

\begin{rem}\label{finite-levels-of-morava} Suppose that $\Ga$ is a formal group
of height $n$ over a separably closed field $\FF$ and let $\GG_k(\Ga)$ be the
$\FF$-points of $\Aut_\FF(\Ga)_k$. If $k < \infty$, then $\GG_k(\Ga)$ has order
$p^nk-1$. The equations \ref{kiszero} and
\ref{kisnotzero} imply that the natural map
$$
\GG_k(\Ga)_\FF \longr \Aut_\FF(\Ga)_k,\quad k < \infty
$$
is an isomorphism. Furthermore,
$$
\GG(\Ga) \defeq \GG_\infty(\Ga) \cong \lim \GG_k(\Ga).
$$
This displays $\GG(\Ga)$ as a profinite group. Note that the equations
\ref{kiszero} and \ref{kisnotzero} also imply that
$$
\GG_1(\Ga) \cong \FF_{p^n}^\times\quad\mathrm{and}\quad
\GG_k(\Ga)/\GG_{k-1}(\Ga) \cong \FF_{p^n}.
$$
\end{rem}

\begin{thm}\label{refined-lazard-redux}Let $S$ be a scheme over a separably
closed field $\FF$ and
let $G_1$ and $G_2$ be two formal groups of strict height $n$, $1 \leq n <
\infty$ over $S$. Suppose that $G_1$ obtained by base change from a formal
group $\Ga$ of height $n$ over $\bar{\FF}_p$. Then
for all $k < \infty$, the morphism
$$
\Iso_{S}(G_1,G_2)_k \longr S
$$
is Galois with Galois group $\GG_k(\Ga)$. Finally, the morphism
$$
\Iso_{S}(G_1,G_2) \longr S
$$
is pro-Galois with Galois group $\GG(\Ga)$.
\end{thm}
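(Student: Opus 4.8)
\textbf{Proof plan for Theorem \ref{refined-lazard-redux}.}
The plan is to reduce everything to Theorem \ref{refined-lazard} and Proposition \ref{torsor-for-iso}, which already give that $\Iso_S(G_1,G_2)_k \to S$ is finite, \'etale, surjective of the right degree, and is an $\Aut_S(G_1)_k$-torsor. What must be added here is the identification of $\Aut_S(G_1)_k$ with the constant group scheme on $\GG_k(\Ga)$, and the verification that a torsor for a constant finite group scheme is the same thing as a Galois covering with that Galois group. First I would invoke the hypothesis that $G_1 = h^\ast \Ga$ for some structure map $h:S \to \Spec(\bar\FF_p)$ (composed with $S \to \Spec \FF$), and that $\FF$, hence $\bar\FF_p \subseteq \FF$ since $\FF$ is separably closed of characteristic $p$, so the base change makes sense. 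Then the formation of $\Aut_{(-)}(G_1)_k$ commutes with base change: $\Aut_S(G_1)_k \cong S \times_{\Spec \bar\FF_p} \Aut_{\bar\FF_p}(\Ga)_k$. By Remark \ref{finite-levels-of-morava} the natural map $\GG_k(\Ga)_\FF \to \Aut_\FF(\Ga)_k$ is an isomorphism for $k<\infty$; pulling back along $S \to \Spec \FF$ identifies $\Aut_S(G_1)_k$ with the constant group scheme $\GG_k(\Ga) \times S$.

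Next I would record the general fact that if $\Pi$ is a finite group, $\Pi_\ZZ$ the associated constant group scheme, and $P \to S$ a $\Pi_\ZZ$-torsor which is moreover finite \'etale over $S$ (which it is, by Theorem \ref{refined-lazard}), then $P \to S$ is Galois with group $\Pi$ in the sense of Remark \ref{group-and-galois}: indeed the torsor condition says exactly that the natural map $\Pi_\ZZ \times_S P \to P \times_S P$, $(\phi,x)\mapsto(x,\phi x)$, is an isomorphism, and since $\Aut_S(P) \supseteq \Pi$ acts, one checks $\Aut_S(P) = \Pi$ from the fact that $P$ becomes a disjoint union of copies of $S$ after a faithfully flat base change (e.g.\ $P \to P$ itself, using that $P\times_S P \cong \Pi_\ZZ \times_S P$). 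Applying this with $P = \Iso_S(G_1,G_2)_k$ and $\Pi = \GG_k(\Ga)$, using Proposition \ref{torsor-for-iso} together with the identification $\Aut_S(G_1)_k \cong \GG_k(\Ga)\times S$ from the previous paragraph, gives the Galois statement for each finite $k$.

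For the pro-Galois conclusion I would pass to the limit. By Theorem \ref{refined-lazard} the transition maps $\Iso_S(G_1,G_2)_k \to \Iso_S(G_1,G_2)_{k-1}$ are finite \'etale and surjective, so $\{\Iso_S(G_1,G_2)_k\}$ is a tower of finite \'etale surjections whose limit is $\Iso_S(G_1,G_2)$; each stage is Galois over $S$ by the finite-$k$ case. By Remark \ref{finite-levels-of-morava}, $\GG(\Ga) \cong \lim_k \GG_k(\Ga)$ as profinite groups, and the maps $\Aut_S(G_1)_k \to \Aut_S(G_1)_{k-1}$ induce the corresponding coherent system $\Aut_S(G_1) = \lim_k \Aut_S(G_1)_k$, so $\Aut_S(G_1) \cong \GG(\Ga)_\ZZ$ as a profinite group scheme over $S$. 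This is precisely the definition of $\Iso_S(G_1,G_2) \to S$ being pro-Galois with Galois group $\GG(\Ga)$.

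The main obstacle I anticipate is purely bookkeeping rather than conceptual: being careful that ``torsor for the constant group scheme $\Pi_\ZZ$'' in the $fpqc$ topology really does coincide with ``$\Pi$-Galois covering'' in the sense of Remark \ref{group-and-galois}, and that the automorphism group scheme $\Aut_S(G_1)_k$ is genuinely \emph{constant} (not merely \'etale) once we have base-changed to a field containing $\bar\FF_p$ --- this uses that over a separably closed field the \'etale group scheme $\Aut_\FF(\Ga)_k$ has all its points rational, which is exactly Remark \ref{finite-levels-of-morava}. Everything else is assembling results already proved: Theorem \ref{refined-lazard}, Proposition \ref{torsor-for-iso}, and the profinite identifications in Remark \ref{finite-levels-of-morava}.
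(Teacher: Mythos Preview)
Your proposal is correct and follows essentially the same approach as the paper: identify $\Aut_S(G_1)_k$ with the constant group scheme on $\GG_k(\Ga)$ via base change and Remark \ref{finite-levels-of-morava}, then invoke Proposition \ref{torsor-for-iso}. The paper's proof is much terser---it records the base-change isomorphism $\Aut_S(G_1)_k \cong S \times_{\Spec(\FF)} \Aut_\FF(\Ga)_k \cong \GG_k(\Ga)_S$ and then simply says ``the result now follows from Proposition \ref{torsor-for-iso}'', leaving implicit both the torsor-equals-Galois unpacking and the passage to the pro-Galois limit that you spell out.
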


\begin{proof}Let $f:T \to S$ be any morphism of schemes. Then
$$
T\times_S\Iso(G_1,G_2)_k \cong \Iso_T(f^\ast G_1,f^\ast G_2)_k.
$$
In particular
$$
\Aut_S(G_1)_k \cong S \times_{\Spec(\FF)} \Aut_\FF(\Ga)_k \cong \GG_k(\Ga)_S
$$
and the result now follows from Proposition \ref{torsor-for-iso}.
\end{proof}

The \'etale extensions we produced in the proof of Theorem \ref{refined-lazard} were of a very particular type. See Equations \ref{kiszero} and \ref{kisnotzero}.
This can be rephrased Proposition \ref{rel-frob} below, which can be proved by examining
the proof just given. Here, however, we give a more conceptual proof, based on the following
observation.

 If $R$ is an $\FF_p$-algebra, let us write $f_R:R \to R$ for the Frobenius
 homomorphism sending $x$ to $x^p$. Then $\cM$ is any stack over $\Spec(\FF_p)$,
 we get a Frobenius homomorphism
 $$
 f_\cM:\cM \longr \cM
 $$
 of stacks over $\Spec(\FF_p)$ which, upon evaluating at an $\FF_p$ algebra
 $R$ is given by
 $$
 f_\cM = \cM(f_R):\cM(R) \longr \cM(R).
 $$
 For example if $\cM(1) = \cM_\fg \otimes \Spec(\FF_p)$ is the moduli stack of
 formal group over schemes over $\FF_p$ then
 $$
 f_\cM(G \to S) = G^{(p)} \to S.
 $$

\begin{rem}[{\bf The Frobenius trick}]\label{frob-trick}
\index{Frobenius trick} Let $\layer{n}$ be
the moduli stack of formal groups of exact height $n$, with $1 \leq n < \infty$. For
all formal groups $G$ of exact height $n$ the {\it natural} factoring of the morphism
$[p]:G \to G$ in Definition \ref{height} yields a natural isomorphism
$$
V_n = V_n^G:G^{(p^n)} \longr G.
$$
Thus, if $f_\cN:\layer{n} \longr\layer{n}$ is the Frobenius -- which, as we have just
seen, assigns to each $G \to S$ the formal group $G^{(p)} \to S$ --
we get a natural transformation
$$
V_n:f_\cN^n \longr 1
$$
from $f_\cN^n$ to the identity of $\layer{n}$.
\end{rem}

\begin{prop}\label{rel-frob}Let $0 \leq k_1 \leq k_2 \leq \infty$. Then the
relative Frobenius
$$
\xymatrix@C=0pt{
\Iso_{S}(G_1,G_2)_{k_2} \ar[dr] \ar[rr]^F && \Iso_{S}(G_1,G_2)_{k_2}^{(p)}\ar[dl]\\
&\Iso_{S}(G_1,G_2)_{k_1}
}
$$
is an isomorphism.
\end{prop}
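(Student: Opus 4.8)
The plan is to prove the proposition by the Frobenius trick of Remark \ref{frob-trick}, bypassing the explicit equations \ref{kiszero} and \ref{kisnotzero}. First I would reduce to the case $1\le k_1\le k_2<\infty$: the case $k_2=\infty$ follows by passing to the limit, since $\Iso_S(G_1,G_2)=\holim_k\Iso_S(G_1,G_2)_k$ and the relative Frobenius is compatible with this tower, and one may shrink $k_1$ by composing transition maps. Since each $\Iso_S(-,-)_k\to S$ is affine (Proposition \ref{diagonal}) and the assertion is local on $S$ in the $fpqc$-topology, I would also take $S=\Spec(A)$ with $G_1,G_2$ carrying a common coordinate. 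The key reduction is that, for an affine morphism of $\FF_p$-schemes, the relative Frobenius $F\colon X\to X^{(p)}$ is an isomorphism as soon as its $n$-fold iterate
$$
F^{(n)}=F^{(p^{n-1})}\circ\cdots\circ F^{(p)}\circ F\colon X\longrightarrow X^{(p^n)}
$$
is: on coordinate rings the image of $F^{(n)}$ is the subalgebra generated over the base by $p^n$-th powers, which coincides with the one generated by $p$-th powers, so surjectivity transfers, and the remaining injectivity is routine. Thus it suffices to show that the $n$-fold relative Frobenius of $\Iso_S(G_1,G_2)_k$ is an isomorphism for every $k$. (One could instead just invoke Theorem \ref{refined-lazard} — the transition maps are \'etale, and the relative Frobenius of an \'etale morphism is an isomorphism — but the point here is to obtain this from the Frobenius trick alone.)

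The heart of the argument is the identification of the target and of $F^{(n)}$. Because the Frobenius twist commutes with pullback along $f\colon S\to S$ and with passage to infinitesimal neighbourhoods, there is a canonical isomorphism
$$
\Iso_S(G_1,G_2)_k^{(p^n)}\;\cong\;\Iso_S(G_1^{(p^n)},G_2^{(p^n)})_k
$$
under which $F^{(n)}$ becomes the map sending an isomorphism $\phi$ of $p^k$-buds to its Frobenius twist $\phi^{(p^n)}$ — this is the $\Iso$-scheme incarnation of the fact that the relative Frobenius of a scheme over $S$ acts on functors by the Frobenius of $S$. Now the Frobenius trick enters: since $G_1$ and $G_2$ have exact height $n$, the natural transformation $V_n\colon f_\cN^n\to 1$ of Remark \ref{frob-trick} supplies natural isomorphisms $V_n^{G_i}\colon G_i^{(p^n)}\xrightarrow{\ \cong\ }G_i$, and conjugation by them gives an isomorphism
$$
c\colon\Iso_S(G_1^{(p^n)},G_2^{(p^n)})_k\xrightarrow{\ \cong\ }\Iso_S(G_1,G_2)_k,\qquad \psi\longmapsto V_n^{G_2}\circ\psi\circ(V_n^{G_1})^{-1}.
$$
The naturality square of $V_n$ (valid for bud-isomorphisms as well, since restriction to $p^k$-buds is a functor) says exactly that $V_n^{G_2}\circ\phi^{(p^n)}=\phi\circ V_n^{G_1}$, whence $c\circ F^{(n)}=\mathrm{id}$ on $\Iso_S(G_1,G_2)_k$. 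As $c$ is an isomorphism, $F^{(n)}$ is an isomorphism, and the reduction of the first paragraph finishes the proof. All identifications are natural in $k$, hence compatible with the transition maps $\Iso_S(G_1,G_2)_{k_2}\to\Iso_S(G_1,G_2)_{k_1}$, which is what lets the conclusion be read as a statement over $\Iso_S(G_1,G_2)_{k_1}$.

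The main obstacle is not a hard estimate but the bookkeeping: making precise the identification $\Iso_S(G_1,G_2)_k^{(p)}\cong\Iso_S(G_1^{(p)},G_2^{(p)})_k$ together with the description of relative Frobenius on it (over which base the twist is taken, how the pullback-of-the-diagonal definition of $\Iso_S(G_1,G_2)_k$ interacts with the Frobenius of $\cM_\fg\pnty{p^k}$, and the comparison of the $S$-relative and $\Iso_S(G_1,G_2)_{k_1}$-relative Frobenius twists), and then checking that the $V_n$-conjugation is compatible with the tower in $k$. The one genuinely non-formal input is the exact-height-$n$ hypothesis: it enters solely through the Frobenius trick, and it is precisely what makes each $V_n^{G_i}$ an isomorphism and hence what upgrades $F^{(n)}$ from a universal homeomorphism to an isomorphism.
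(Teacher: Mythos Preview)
Your proposal is correct and rests on the same core idea as the paper: the Frobenius trick, i.e.\ the naturality of the isomorphism $V_n\colon G^{(p^n)}\to G$ for formal groups of exact height~$n$. The packaging differs. The paper works directly with the single relative Frobenius $F$ and writes down its inverse explicitly: given $\psi\colon f^\ast G_1^{(p)}\to f^\ast G_2^{(p)}$, set $\phi = V_{G_2}\circ\psi^{(p^{n-1})}\circ V_{G_1}^{-1}$, then check $\phi^{(p)}=\psi$ via the naturality square for $V_n^{(p)}$. In your language this inverse is exactly $c\circ(\text{Frobenius}^{n-1}\text{ of }X^{(p)})$, so your identity $c\circ F^{(n)}=\mathrm{id}$ is the composite of the paper's two verifications. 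What the paper's route buys is that it never needs your reduction step ``$F^{(n)}$ iso $\Rightarrow$ $F$ iso''; what your route buys is a cleaner statement of the key identity (the $n$-fold Frobenius is literally inverted by $V_n$-conjugation, with no stray $(p^{n-1})$-twist). Your reduction is fine but deserves one more sentence than ``routine'': surjectivity of $F^{(n)}$ gives surjectivity of $F$ (hence of every $F^{(p^i)}$ by base change), and then injectivity of $F$ follows since the composite $F^{(p^{n-1})}\circ\cdots\circ F^{(p)}\colon X^{(p)}\to X^{(p^n)}$ is surjective and $F^{(n)}$ is injective. The paper also spells out the general $(k_1,k_2)$ case by describing points of $\Iso_S(G_1,G_2)_{k_2}^{(p)}$ over $\Iso_S(G_1,G_2)_{k_1}$ as pairs $(\psi_1,\psi_2)$ and running the same construction; your appeal to naturality-in-$k$ of the identifications accomplishes the same thing.
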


\begin{proof} We will first do the absolute case of $\Iso_S(G_1,G_2) \to S$ --
that is, $k_1 =0$ and $k_2 = \infty$ -- and indicate at the end of the argument what
changes are needed in general.

The scheme $\Iso_S(G_1,G_2)^{(p)}$ over $S$ assigns to each morphism
$f:T \to S$ of schemes the set of isomorphisms
$$
\psi:f^\ast G_1^{(p)} \longr f^\ast G_2^{(p)}.
$$
The relative Frobenius  $F:\Iso_S(G_1,G_2) \to \Iso_S(G_1,G_2)^{(p)}$ over $S$
sends a $T$-point $\phi:f^\ast G_1 \to f^\ast G_2$ to the $T$-point
$\phi^{(p)}:f^\ast G_1^{(p)} \to G_2^{(p)}$. It is this we must show is an isomorphism.
However, if we are given $\psi$, may produce $\phi$ using the following commutative
diagram of isomorphisms
$$
\xymatrix@C=40pt{
f^\ast G_1^{(p^n)} \rto^-{\psi^{(p^{n-1})}} \dto_{V_{G_1}} &
f^\ast G_2^{(p^n)} \dto^{V_{G_2}}\\
f^\ast G_1 \rto_-{\phi} & f^\ast G_2.
}
$$
As $V_{G^{(p)}} = (V_G)^{(p)}$ we may conclude that $\phi^{(p)} = \psi$ from the diagram
$$
\xymatrix@C=40pt{
f^\ast G_1^{(p)} \dto_{\phi^{(p)}} & \lto_{V_{G_1}^{(p)}} f^\ast G_1^{(p^{n+1})} 
\dto^{\psi^{(p^n)}} \rto^{V_{G_1}^{(p)}} & f^\ast G_1^{(p)} \dto^{\psi}\\
f^\ast G_2^{(p)} & \lto^{V_{G_2}^{(p)}} f^\ast G_2^{(p^{n+1})} 
 \rto_{V_{G_2}^{(p)}} & f^\ast G_2^{(p)}.
}
$$

The same proof works in the relative case, but the notation is more complicated. The
scheme
$$
\Iso_S(G_1,G_2)^{(p)}_{k_2} \longr \Iso_S(G_1,G_2)_{k_1}
$$
consists of pairs $(\psi_1,\psi_2)$ where $\psi_1$ and $\psi_2$ are isomorphisms on buds
$$
\psi_2:f^\ast (G_1)^{(p)}_{p^{k_2}} \longr f^\ast (G_2)^{(p)}_{p^{k_2}}
$$
and
$$
\psi_1:f^\ast (G_1)_{p^{k_1}} \longr f^\ast (G_2)_{p^{k_1}}.
$$
so that 
$$
\psi_2 \equiv \psi_1^{(p)} :f^\ast (G_1)^{(p)}_{p^{k_1}} \longr f^\ast (G_2)^{(p)}_{p^{k_1}}.
$$
The relative Frobenius sends
$$
\phi:f^\ast (G_1)_{p^{k_2}} \longr f^\ast (G_2)_{p^{k_2}}
$$
to the pair $(\bar{\phi},\phi^{(p)})$ where $\bar{\phi}$ is the reduction of $\phi$.
The argument given in the absolute case now adapts to show this is an isomorphism.
\end{proof}

\begin{prop}\label{ht-n-points}Let $\cN(n) = \layer{n}$ if $1 \leq n < \infty$ and let $\cN(\infty) = \cM(\infty)$. Let
$$
g:\Spec(A) \to \cN(n),\qquad 1 \leq n \leq \infty
$$
be any morphism. Then $g$ is an $fqpc$-cover. In particular, $g$ is
faithfully flat.
\end{prop}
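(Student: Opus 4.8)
The plan is to check directly that $g$ is an $fpqc$-cover of $\cN(n)$: for every affine scheme $U$ equipped with a morphism $U \to \cN(n)$, the base change
$$
g_U : \Spec(A) \times_{\cN(n)} U \longrightarrow U
$$
should be flat, surjective and quasi-compact, so that Proposition \ref{why-fpqc} applies. (We may and do assume $A \ne 0$.) The first step is to describe $g_U$ concretely. Since the inclusions $\layer{n} \hookrightarrow \cM_\fg$ and $\cM(\infty) \hookrightarrow \cM_\fg$ are immersions, and in particular monomorphisms, the canonical map $\Spec(A) \times_{\cN(n)} U \to \Spec(A) \times_{\cM_\fg} U$ is an isomorphism, and by Corollary \ref{pull-backs-affine-scheme} the latter is $\Iso_{\Spec(A)\times U}(\mathrm{pr}_1^\ast G,\mathrm{pr}_2^\ast G')$, where $G$ over $A$ and $G'$ over $U$ are the formal groups classified by $g$ and by $U \to \cN(n)$. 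Because both $G$ and $G'$ factor through $\cN(n)$, they have exact height $n$ when $n < \infty$ and infinite height when $n = \infty$. Corollary \ref{pull-backs-affine-scheme} also gives that this isomorphism scheme is affine over $\Spec(A)\times U$, hence affine, hence quasi-compact, so $g_U$ is automatically quasi-compact; and $g_U$ factors as the projection $\Iso_{\Spec(A)\times U}(\mathrm{pr}_1^\ast G,\mathrm{pr}_2^\ast G') \to \Spec(A)\times U$ followed by the second projection $\mathrm{pr}_2:\Spec(A)\times U \to U$.

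It remains to show that both of these maps are faithfully flat. The map $\mathrm{pr}_2$ is the base change of $\Spec(A)\to\Spec(\FF_p)$ along $U \to \Spec(\FF_p)$, and $\Spec(A)\to\Spec(\FF_p)$ is faithfully flat since $A$ is a nonzero module over the field $\FF_p$. For the first map, suppose $1 \le n < \infty$. Then $\mathrm{pr}_1^\ast G$ and $\mathrm{pr}_2^\ast G'$ are formal groups of strict height $n$ over the $\FF_p$-scheme $\Spec(A)\times U$, so Theorem \ref{refined-lazard} applies and says that
$$
\Iso_{\Spec(A)\times U}(\mathrm{pr}_1^\ast G,\mathrm{pr}_2^\ast G') \longrightarrow \Spec(A)\times U
$$
is surjective and pro-\'etale. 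Writing this pro-\'etale scheme as an inverse limit along finite \'etale transition maps exhibits its structure sheaf as a filtered colimit of flat (indeed iterated finite \'etale) algebras over the base, hence flat; combined with surjectivity this gives faithful flatness. This settles the case $n < \infty$.

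The case $n = \infty$ is genuinely different, since Theorem \ref{refined-lazard} has no analogue there --- in fact the relevant isomorphism schemes are not \'etale, as Example \ref{lazard-refined-calc} shows. Here I would argue $fpqc$-locally on the base: using Lemma \ref{coords-after-ff}, choose an $fpqc$-cover $T \to \Spec(A)\times U$ over which both $\mathrm{pr}_1^\ast G$ and $\mathrm{pr}_2^\ast G'$ acquire coordinates, and then, applying Cartier's theorem (Theorem \ref{quillen-idem}), $p$-typical coordinates. A $p$-typical formal group law over an $\FF_p$-algebra has $p$-series $u_1 x^{p} +_F u_2 x^{p^2} +_F \cdots$ by Remark \ref{p-typ-prop}, and infinite height forces all $u_i$ to vanish; since the $u_i$ determine the $p$-typical law, that law is the additive one. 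Hence $\mathrm{pr}_1^\ast G$ and $\mathrm{pr}_2^\ast G'$ both restrict over $T$ to $\hat{\GG}_a$, so that the base change along $T \to \Spec(A)\times U$ of $\Iso_{\Spec(A)\times U}(\mathrm{pr}_1^\ast G,\mathrm{pr}_2^\ast G')$ is $\Iso_T(\hat{\GG}_a,\hat{\GG}_a) = \Aut_T(\hat{\GG}_a)$, which is the base change along $T \to \Spec(\FF_p)$ of $\Aut_{\FF_p}(\hat{\GG}_a) = \Spec(\FF_p[a_0^{\pm 1},a_1,a_2,\ldots])$ (Equation \ref{ht-infty-hopf}) and hence faithfully flat over $T$. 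As faithful flatness is local on the base for the $fpqc$-topology, the first map is faithfully flat in this case too.

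In all cases $g_U$ is a composite of faithfully flat maps and is quasi-compact, hence an $fpqc$-cover of $U$; as $U$ was arbitrary, $g$ is an $fpqc$-cover of $\cN(n)$, and in particular it is faithfully flat. The step I expect to be the main obstacle is the $n = \infty$ case: one must recognise the isomorphism schemes on the infinite-height layer as forms of $\Aut(\hat{\GG}_a)$ and verify that these are faithfully flat despite being very far from \'etale --- essentially reading Example \ref{lazard-refined-calc} in families --- together with the bookkeeping that the $2$-fibre product over $\cN(n)$ coincides with the one over $\cM_\fg$ and is the expected isomorphism scheme.
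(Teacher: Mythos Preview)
Your proof is correct. It takes a somewhat different route from the paper's, though both rest on the same core input.

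The paper argues as follows: given any two morphisms $g_i:\Spec(A_i)\to\cN(n)$, Theorem~\ref{refined-lazard} (or the analogous fact at infinite height) produces a faithfully flat $A_1\otimes A_2\to B$ over which the two formal groups become isomorphic, yielding a $2$-commuting square with $\Spec(B)$ faithfully flat over each $\Spec(A_i)$. Then one takes $g_1$ to be a \emph{known} presentation (from Remark~\ref{first-cover-layer} or Proposition~\ref{cover-for-mn}) and concludes for arbitrary $g_2$ by a two-out-of-three argument: $g_2f_2 \simeq g_1f_1$ is faithfully flat and $f_2$ is faithfully flat, hence $g_2$ is.

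You instead verify the defining property of an $fpqc$-cover directly: for any affine $U\to\cN(n)$ you identify the base change $g_U$ with the composite $\Iso_{\Spec(A)\times U}(\mathrm{pr}_1^\ast G,\mathrm{pr}_2^\ast G')\to\Spec(A)\times U\to U$ and show each factor is faithfully flat. The first factor is handled by Theorem~\ref{refined-lazard} for $n<\infty$, and for $n=\infty$ you trivialize both groups fpqc-locally to $\hat{\GG}_a$ and invoke the explicit description of $\Aut(\hat{\GG}_a)$. Your approach avoids needing a pre-existing presentation and is more explicit about the infinite-height case, which the paper's proof glosses over (Theorem~\ref{refined-lazard} is stated only for $n<\infty$). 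The paper's approach, on the other hand, isolates a clean general principle: once a stack has \emph{one} affine presentation and any two points become isomorphic after faithfully flat base change, every affine map to it is a presentation.
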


\begin{proof} If $g_i:\Spec(A_i)
\to \layer{n}$ be any two maps. Then, by Theorem \ref{refined-lazard}
there is a faithfully flat extension $A_1 \otimes A_2 \to B$ so that
the two formal groups over the tensor product become isomorphic
over $B$. Thus, we have a $2$-commutative diagram
$$
\xymatrix{
\Spec(B) \rto^{f_2} \dto_{f_1} & \Spec(A_2) \dto^{g_2}\\
\Spec(A_1) \rto_{g_1} & \layer{n}
}
$$
where both $f_1$ and $f_2$ are faithfully flat.

Now take $g_1$ to be faithfully flat and $g_2$ to be
arbitrary. Then $g_1f_1$ is faithfully flat and, since $f_1$
is faithfully flat, $g_2$ must be faithfully flat as well.
Since
$$
\Spec(A_1) \otimes_{\cM_\fg} \Spec(A_2) \to \Spec(A_1)
$$
is affine, it follows by descent that
$\Spec(A_2) \to \cM(n)[v_n^{\pm 1}]$
is affine as well. In particular, it is quasi-compact.
\end{proof}

The following is now immediate. We will almost always take
$\FF$ to be an algebraic extension of $\FF_p$. 

\begin{cor}\label{field-cover}Let $\cN(n) = \layer{n}$ if $1 \leq n < \infty$ and let $\cN(\infty) = \cM(\infty)$. Let $\FF$ be a field of
characteristic $p$ and $G \to \Spec(\FF)$ any formal
group of height $n$, $1 \leq n \leq \infty$. Then the classifying map for
$G$
$$
\Spec(\FF) \longr \cN(n)
$$
is a cover in the $fpqc$-topology. In particular,
$\layer{n}$ and $\cM(\infty)$ each has a single geometric point.
\end{cor}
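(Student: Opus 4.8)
The first assertion requires nothing beyond Proposition \ref{ht-n-points}. A formal group $G$ of height $n$ over a field $\FF$ determines a morphism $\Spec(\FF)\to\cN(n)$ — landing in $\layer{n}$ when $1\leq n<\infty$ and in $\cM(\infty)$ when $n=\infty$ — and Proposition \ref{ht-n-points} applied with $A=\FF$ says exactly that this morphism is an $fpqc$-cover. So the only thing left is the ``in particular'' clause, which I would extract from the same proposition.

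The plan for the singleton statement is: the classifying map $c:\Spec(\FF)\to\cN(n)$ is an $fpqc$-cover, hence in particular representable (its pull-back along any affine $T\to\cN(n)$ is, via the monomorphism $\cN(n)\to\cM_\fg$, the affine scheme $T\times_{\cM_\fg}\Spec(\FF)$ of Corollary \ref{pull-backs-affine-scheme}) and faithfully flat, hence surjective. A surjective morphism of algebraic stacks induces a surjection on geometric spaces, so $|\Spec(\FF)|\to|\cN(n)|$ is onto; since $|\Spec(\FF)|$ is a single point and $\cN(n)$ is non-empty (the Honda group of height $n$, resp.\ $\hat{\GG}_a$, provides a point), it follows that $|\layer{n}|$ and $|\cM(\infty)|$ each consist of a single geometric point.

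I do not anticipate a real obstacle: the substance is already in Proposition \ref{ht-n-points}, which in turn rests on the refined Lazard theorem \ref{refined-lazard}. The two places that want a word of care are (i) the representability of $c$, handled above using that $\cN(n)\to\cM_\fg$ is a monomorphism with affine pull-backs, and (ii) the case $n=\infty$, where one may alternatively avoid $\cN(\infty)$-pull-backs altogether: a formal group over a field whose $p$-series vanishes is isomorphic to $\hat{\GG}_a$ as soon as one chooses a $p$-typical coordinate (the closing clause of the proof of Corollary \ref{lazard-unique}), so any two classifying maps $\Spec(\FF_1)\to\cM(\infty)$ and $\Spec(\FF_2)\to\cM(\infty)$ become the ``additive'' map over any common field extension and hence represent the same geometric point. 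The analogous direct argument in finite height replaces this last step with the surjectivity of $\Iso_S(G_1,G_2)_1\to S$ from Theorem \ref{refined-lazard}.
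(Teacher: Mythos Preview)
Your proof is correct and follows the same approach as the paper: the corollary is stated there as ``immediate'' from Proposition~\ref{ht-n-points}, and your first paragraph is exactly that deduction, while your elaboration of the ``in particular'' clause (surjectivity on geometric spaces from a one-point scheme) simply unpacks what ``immediate'' means. The alternative direct arguments you sketch in the last paragraph are a nice bonus but not needed.
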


Now fix a formal group $\Gamma_n$ over $\FF_p$ of height
$n$; for example, the Honda formal group. If $n = \infty$ we may
as well fix $\Ga_n = \hat{\GG}_a$, the completion of the
additive group. Define $\Aut(\Ga_n)$
to be the group scheme which assigns to each $\FF_p$-algebra
$i:\FF_p \to A$ the automorphisms of the formal group
$i^\ast \Ga_n$ over $A$. See Example \ref{lazard-refined-calc} for
a concrete discussion.

\begin{thm}\label{diff-ht}Let $\layer{n} = \cM(n) - \cM(n+1)$ be the 
open substack of $\cM(n)$ complementary to $\cM(n+1)$. Then
$\layer{n}$ has a single geometric point represented by any formal
group $\Gamma_n$ of height $n$ over ${\FF}_p$.
Furthermore, the map
$$
\layer{n} \longr B\Aut(\Gamma_n)
$$
sending a formal group $G$ of height $n$ over an $\FF_p$-algebra
$A$ to the $\Aut(\Ga_n)$-torsor $\Iso(\Ga_n,G)$ is
an equivalence of stacks.\index{$\layer{n}$, as classifying stack}
\end{thm}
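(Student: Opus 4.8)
The plan is to establish Theorem \ref{diff-ht} in two stages: first identify the geometric points of $\layer{n}$, then produce the equivalence with $B\Aut(\Ga_n)$. The statement about geometric points is essentially Corollary \ref{field-cover}: any formal group of height $n$ over $\FF_p$ gives a classifying map $\Spec(\FF_p) \to \layer{n}$ which is an $fpqc$-cover, so $\layer{n}$ has a single geometric point, and the same conclusion holds over the algebraic closure. Lazard's uniqueness theorem (Corollary \ref{lazard-unique}, or more precisely the surjectivity half of Theorem \ref{refined-lazard}) guarantees any two height $n$ formal groups become isomorphic after a separable — hence $fpqc$ — extension, so the choice of representative does not matter.

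For the equivalence, the strategy is to exhibit $\layer{n}$ as a homotopy-orbit stack and apply the machinery of \S 3.1, specifically Example \ref{extremes-orbits} and the recognition that a quotient of a point by a group scheme is $B$ of that group scheme. First I would fix $\Ga_n$ over $\FF_p$ and define the functor $\Phi:\layer{n} \to B\Aut(\Ga_n)$ on objects by sending a height $n$ formal group $G$ over an $\FF_p$-algebra $A$ to the pair $(\Iso_A(i^\ast\Ga_n, G), \alpha)$ where $\alpha$ is the trivial morphism to a point; by Proposition \ref{torsor-for-iso} this $\Iso$-scheme is an $\Aut(\Ga_n)$-torsor over $\Spec(A)$, and by Theorem \ref{refined-lazard} it is pro-\'etale and surjective, so it is a genuine torsor in the $fpqc$ topology. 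On morphisms, an isomorphism $\psi:G \to G'$ of height $n$ formal groups induces, by post-composition, an $\Aut(\Ga_n)$-equivariant isomorphism $\Iso_A(i^\ast\Ga_n,G) \to \Iso_A(i^\ast\Ga_n,G')$, so $\Phi$ is a well-defined morphism of stacks; naturality under base change is immediate since $f^\ast\Iso_S(G_1,G_2) \cong \Iso_T(f^\ast G_1, f^\ast G_2)$.

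To see $\Phi$ is an equivalence I would check it is fully faithful and essentially surjective, working over each affine test scheme. Essential surjectivity: given an $\Aut(\Ga_n)$-torsor $P \to \Spec(A)$, after an $fpqc$ cover $T \to \Spec(A)$ trivializing $P$ we have the constant height $n$ formal group $T\times_{\Spec\FF_p}\Ga_n$, and the descent data on $P$ translates — via the identification of $\Aut(\Ga_n)$-equivariant structure with isomorphism data on formal groups — into descent data for a formal group over $\Spec(A)$; effective descent for $\cM_\fg$ (Proposition \ref{prestack}) produces the required $G$ with $\Phi(G) \cong P$, and $G$ has height $n$ because height is detected on an $fpqc$ cover. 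Fully faithfulness: a morphism $\Phi(G) \to \Phi(G')$ is an equivariant isomorphism of $\Iso$-torsors, and composing with the canonical section after a trivializing cover recovers a unique isomorphism $G \to G'$, the uniqueness coming from the fact that $\Iso$ is a sheaf (Proposition \ref{prestack}) together with faithfully flat descent of morphisms. Alternatively, and perhaps more cleanly, I would invoke Corollary \ref{field-cover} to present $\layer{n}$ via the cover $\Spec(\bar\FF_p) \to \layer{n}$ with automorphism group scheme $\Aut_{\bar\FF_p}(\Ga_n) \cong \bar\FF_p \otimes_{\FF_p} \Aut(\Ga_n)$ over the base, so that $\layer{n} \simeq \Spec(\bar\FF_p) \times_{\Aut(\Ga_n)} E\Aut(\Ga_n)$; by faithfully flat descent of $\Spec(\FF_p)$ along $\Spec(\bar\FF_p) \to \Spec(\FF_p)$ this is $B\Aut(\Ga_n)$ by the computation in Example \ref{extremes-orbits}.

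The main obstacle I anticipate is keeping the $fpqc$-descent bookkeeping honest, in two places: verifying that $\Iso_A(\Ga_n, G) \to \Spec(A)$ really is a torsor in the $fpqc$ topology (not merely pro-\'etale-locally trivial, which is what Theorem \ref{refined-lazard} directly gives) — this needs the surjectivity and the affineness from Proposition \ref{diagonal} to run faithfully flat descent — and checking that the equivalence of categories between "$\Aut(\Ga_n)$-torsor with equivariant trivialization data" and "height $n$ formal group with iso to $\Ga_n$" is compatible with the cocycle conditions on both sides, so that effective descent can be applied to recover objects and morphisms of $\cM_\fg$ from torsor data. Once that dictionary is set up carefully the argument is formal, and it is exactly parallel to the proof of Proposition \ref{fg-homotopy-orbit}, with $\Ga_n$ playing the role that $\fgl$ plays there and $\Aut(\Ga_n)$ the role of $\La$.
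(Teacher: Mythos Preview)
Your argument is correct, but the paper takes a more streamlined route. Rather than building the equivalence by hand, the paper observes that $\layer{n} \to \Spec(\FF_p)$ is a \emph{neutral gerbe}: condition (i) of Definition~\ref{gerbe} is exactly that any two height $n$ formal groups over an $\FF_p$-algebra become isomorphic after a faithfully flat extension (the surjectivity in Theorem~\ref{refined-lazard}), condition (ii) is that every $\FF_p$-algebra admits some height $n$ formal group, and the section is provided by the choice of $\Ga_n$. The result then follows from Lemma~\ref{neutral-gerbe} (which is Laumon--Moret-Bailly, Lemma 3.21), and that lemma already tells you the equivalence sends $G$ to $\Iso(\Ga_n,G)$.

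What you have written is essentially the proof of Lemma~\ref{neutral-gerbe} specialized to this case: your essential surjectivity argument via descent is the content of the effective-descent axiom for gerbes, and your full-faithfulness argument is the isomorphism-sheaf axiom. So there is no mathematical gap, but you are reproving a general lemma inside a specific application. The gerbe framework buys you a clean conceptual statement that also handles Theorem~\ref{diff-ht-infty} with no extra work, and it makes transparent why the choice of $\Ga_n$ is inessential (any section of a neutral gerbe gives the same classifying stack up to equivalence). Your direct approach has the advantage of being self-contained and of making the torsor $\Iso(\Ga_n,G)$ appear as a construction rather than as the output of an abstract lemma, which is pedagogically useful; but since the paper has already set up Definition~\ref{gerbe} and Lemma~\ref{neutral-gerbe}, invoking them is the cleaner option here. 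Your ``alternative'' route via $\Spec(\bar{\FF}_p)$ and homotopy orbits is also viable but requires an extra Galois-descent step that the gerbe argument avoids entirely.
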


\begin{thm}\label{diff-ht-infty}Let $\cM(\infty) = \cap \cM(n)$. Then
$\cM(\infty)$ has a single geometric point represented by the
formal additive group  $\hat{\GG}_a$  over ${\FF}_p$.
Furthermore, the map
$$
\cM(\infty) \longr B\Aut(\hat{\GG}_a)
$$
sending a formal group $G$ of infinite height over an $\FF_p$-algebra
$A$ to the $\Aut(\hat{\GG}_a)$-torsor $\Iso(\hat{\GG}_a,G)$
is an equivalence of stacks.
\end{thm}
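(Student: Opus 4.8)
The plan is to run the argument of Theorem \ref{diff-ht} with the height $n$ formal group $\Ga_n$ replaced by the formal additive group $\hat{\GG}_a$ over $\FF_p$: the only changes are that each exact-height-$n$ input is replaced by its infinite-height counterpart, and all of those counterparts are already available. First I would record that the target is a well-behaved stack. By Example \ref{lazard-refined-calc} (Equation \ref{ht-infty-hopf}) the automorphism group scheme is $\Aut(\hat{\GG}_a) = \Spec(\FF_p[a_0^{\pm 1},a_1,a_2,\dots])$, which is affine and faithfully flat over $\Spec(\FF_p)$; hence by Proposition \ref{homotopy-orbit} and Example \ref{extremes-orbits}, $B\Aut(\hat{\GG}_a)$ is an $fpqc$-algebraic stack for which $\Spec(\FF_p) \to B\Aut(\hat{\GG}_a)$ is a presentation. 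On the source side, $\cM(\infty) = \cap_n \cM(n)$ is the closed substack of $\cM_\fg$ cut out by the quasi-coherent ideal $\colim_n \cI_n$; it lives over $\Spec(\FF_p)$ because $\cM(\infty) \subseteq \cM(1) = \cM_\fg \otimes \FF_p$, and it inherits a representable, separated, quasi-compact diagonal from $\cM_\fg$.

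Next I would introduce the comparison morphism $\Phi\colon \cM(\infty) \to B\Aut(\hat{\GG}_a)$ sending a formal group $G$ of infinite height over an $\FF_p$-algebra $A$ to the $\Aut(\hat{\GG}_a)$-torsor $\Iso_A(\hat{\GG}_a,G) \to \Spec(A)$; its torsor structure comes from Proposition \ref{torsor-for-iso} together with the base-change identity $\Aut_A(\hat{\GG}_a) \cong A \otimes_{\FF_p} \Aut(\hat{\GG}_a)$. The statement about geometric points is then immediate: over any field $\FF \supseteq \FF_p$ an infinite-height formal group has vanishing $p$-series, so a $p$-typical coordinate exhibits it as $\hat{\GG}_a$ (this is the $n=\infty$ case of Corollary \ref{lazard-unique}), and Corollary \ref{field-cover} already records that $\cM(\infty)$ has a single geometric point, represented by $\hat{\GG}_a$.

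To see that $\Phi$ is an equivalence I would argue by faithfully flat descent on the target. By Proposition \ref{ht-n-points} (the case $n=\infty$) the classifying morphism $\Ga\colon \Spec(\FF_p) \to \cM(\infty)$ of $\hat{\GG}_a$ is an $fpqc$-presentation, so it suffices to check that $\Phi$ becomes an equivalence after base change along the canonical presentation $\Spec(\FF_p) \to B\Aut(\hat{\GG}_a)$. The $A$-points of $\Spec(\FF_p) \times_{B\Aut(\hat{\GG}_a)} \cM(\infty)$ are triples consisting of a map $\Spec(A) \to \Spec(\FF_p)$, an infinite-height formal group $G$ over $A$, and a trivialization of $\Iso_A(\hat{\GG}_a,G)$, i.e. an isomorphism $\hat{\GG}_a \cong G$ over $A$; since such a $G$ is then determined up to unique isomorphism, this groupoid is equivalent, via the forgetful functor, to the discrete set $\Spec(\FF_p)(A)$, and that forgetful functor is exactly the projection to the presentation. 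Hence $\Phi$ is an equivalence of stacks. Equivalently, one checks that the \v Cech nerve of $\Ga$ is isomorphic, as an augmented simplicial scheme, to the bar construction of the (necessarily trivial) $\Aut(\hat{\GG}_a)$-action on $\Spec(\FF_p)$: this uses only that $\cM(\infty) \hookrightarrow \cM_\fg$ is a monomorphism, so that $\Spec(\FF_p) \times_{\cM(\infty)} \Spec(\FF_p) = \Iso_{\FF_p}(\hat{\GG}_a,\hat{\GG}_a) = \Aut(\hat{\GG}_a)$, and that a stack is recovered from the \v Cech nerve of an $fpqc$-presentation.

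The main obstacle is essentially bookkeeping rather than mathematics: all the real content has been absorbed into Proposition \ref{ht-n-points}, Corollary \ref{field-cover}, Proposition \ref{torsor-for-iso}, and Corollary \ref{lazard-unique}, so the only points requiring genuine care are (i) confirming that the infinite intersection $\cM(\infty)$ is a bona fide $fpqc$-algebraic stack with affine diagonal, and (ii) checking that the abstract descent equivalence is implemented by the explicit functor $G \mapsto \Iso(\hat{\GG}_a,G)$ and not some twist of it — both handled exactly as in the proof of Theorem \ref{diff-ht}.
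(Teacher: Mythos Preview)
Your argument is correct, but it takes a different route from the paper. The paper proves Theorems \ref{diff-ht} and \ref{diff-ht-infty} simultaneously by observing that $\cM(\infty) \to \Spec(\FF_p)$ is a \emph{neutral gerbe}: condition (i) of Definition \ref{gerbe} is that any two infinite-height formal groups become isomorphic after a faithfully flat extension (Corollary \ref{lazard-unique}), condition (ii) is that every $\FF_p$-algebra carries some infinite-height formal group, and the section is supplied by $\hat{\GG}_a$. The equivalence $\cM(\infty) \simeq B\Aut(\hat{\GG}_a)$, together with the explicit description of the functor as $G \mapsto \Iso(\hat{\GG}_a,G)$, then drops out of the general Lemma \ref{neutral-gerbe} (Laumon--Moret-Bailly 3.21) with no further work.

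Your approach instead verifies the equivalence directly by comparing \v Cech nerves of the two presentations $\Spec(\FF_p) \to \cM(\infty)$ and $\Spec(\FF_p) \to B\Aut(\hat{\GG}_a)$, or equivalently by checking that $\Phi$ pulls back to an equivalence along the canonical cover of the target. This is essentially an unwinding of the proof of Lemma \ref{neutral-gerbe} in this special case, so the two arguments have the same content; the gerbe formulation is more concise and makes the independence of the choice of $\hat{\GG}_a$ structurally transparent, while your version has the virtue of being self-contained and avoiding the auxiliary definition. Note also that your closing remark that points (i) and (ii) are ``handled exactly as in the proof of Theorem \ref{diff-ht}'' is slightly off: in the paper that proof \emph{is} the gerbe argument, not the descent argument you have sketched.
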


The apparent choice of the formal group $\Ga_n$ makes this result
a bit puzzling. This can be rectified by coming to terms with the
notion of a gerbe. Here we appeal to \cite{Laumon} \S 3.15ff.

\begin{defn}\label{gerbe}1.) Let $S$ be a scheme and $X \to S$
a scheme over $S$. Then a {\bf gerbe} over $X$ is a stack
$q:\cG \to X$ over $X$ with the properties that\index{gerbe}
\begin{enumerate}

\item[i.)] for all affine $U \to S$ and all pairs of morphisms
$x_1,x_2:U \to \cG$ so that $qx_1 = qx_2:U \to X$,
there is an faithfully flat covering $f:V \to U$ by an affine
so that there is an isomorphism $f^\ast x_1 \cong f^\ast x_2$;

\item[ii.)] for all affine $U \to S$ and all $f:U \to X$ over $S$,
 there is an faithfully flat covering $f:V \to U$ by an affine
so that there is a morphism $x:V \to \cG$ with $qx = fx$.
\end{enumerate}

\noindent 2.) A gerbe $q:\cG \to X$ is {\bf neutral} if there is a section
$s:X \to \cG$ of $q$.\index{gerbe, neutral}
\end{defn}

The following is exactly Lemma 3.21 of \cite{Laumon}
\begin{lem}\label{neutral-gerbe}Suppose $q:\cG \to X$ is a
neutral gerbe over $X$. Then a section $s$ of $q$ determines
an equivalence of stacks over $X$
$$
\cG \simeq B\Aut_\cG(s/X)
$$
where $\Aut(s/X)$ is the group scheme which assigns to each
$f:U \to X$ the group $\Aut_\cG(f^\ast x \to U)$. This equivalence
sends $g \in \cG(U)$ to the torsor $\Iso(sq(g),g)$.
\end{lem}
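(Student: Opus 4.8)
The plan is to construct the equivalence $\cG \simeq B\Aut_\cG(s/X)$ directly as a morphism of categories fibered in groupoids over $X$-schemes, check it is fully faithful and essentially surjective over every affine, and invoke the standard fact that such a morphism is an equivalence of stacks. Fix the section $s:X \to \cG$. For an affine $f:U \to X$, the fiber $B\Aut_\cG(s/X)(U)$ is the groupoid of $\Aut_\cG(f^\ast s/U)$-torsors over $U$; call this group scheme $\La_U$, so $\La_U = \Aut_\cG(f^\ast s \to U)$, and note that by hypothesis (ii) in Definition \ref{gerbe} — applied with the cover $\mathrm{id}$ — a section exists, so $\La_U$ is the automorphism {\it scheme}, affine in our examples because $\Aut$-sheaves of the stacks at hand are affine (as in Proposition \ref{diagonal} and Proposition \ref{diagonal-for-mfg}).

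First I would define the functor $\Phi:\cG \to B\Aut_\cG(s/X)$ on an object $g \in \cG(U)$ by $\Phi(g) = \Iso_\cG(f^\ast s, g)$, the sheaf on $U$-schemes assigning to $h:V \to U$ the set of isomorphisms $h^\ast f^\ast s \to h^\ast g$ in $\cG(V)$. The right action of $\Aut_\cG(f^\ast s) = \La_U$ by precomposition makes this an $\La_U$-presheaf; one checks it is an $fpqc$ sheaf (the automorphisms of objects in a stack form a sheaf) and that it is an $\La_U$-torsor. Local triviality is exactly hypothesis (i): there is an $fpqc$ cover $V \to U$ with $V^\ast f^\ast s \cong V^\ast g$ in $\cG(V)$, and any such isomorphism is a $V$-point of $\Iso_\cG(f^\ast s, g)$, so $V \times_U \Iso_\cG(f^\ast s, g) \cong V \times_U \La_U$ compatibly with the action. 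On morphisms $\phi:g \to g'$ in $\cG(U)$, $\Phi$ sends $\phi$ to postcomposition $\Iso_\cG(f^\ast s,g) \to \Iso_\cG(f^\ast s, g')$, manifestly equivariant and an isomorphism of torsors. Naturality in $U$ is the statement $h^\ast \Iso_\cG(f^\ast s, g) \cong \Iso_V(h^\ast f^\ast s, h^\ast g)$, which holds because $\Iso_\cG$ is defined by a $2$-categorical pullback and these commute with base change; this also shows $\Phi$ is a morphism over $X$ in the fibered sense, since $q\Phi(g) = f = qg$.

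Next I would verify $\Phi$ is an equivalence fiberwise over each affine $f:U \to X$. For full faithfulness, given $g,g' \in \cG(U)$, a morphism of $\La_U$-torsors $\Iso_\cG(f^\ast s,g) \to \Iso_\cG(f^\ast s,g')$ produces, after an $fpqc$ cover $V \to U$ trivializing both torsors, an isomorphism $V^\ast g \to V^\ast g'$ in $\cG(V)$ satisfying the cocycle condition over $V \times_U V$ (because the torsor map is equivariant and compatible with the two projections); since $\cG$ is a stack, this descends to a unique morphism $g \to g'$ in $\cG(U)$, and one checks this inverts the construction, giving a bijection $\Hom_{\cG(U)}(g,g') \cong \Hom_{\La_U\text{-tors}}(\Phi g, \Phi g')$. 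Essential surjectivity: given an $\La_U$-torsor $P \to U$, choose an $fpqc$ cover $V \to U$ with $P|_V \cong \La_U|_V$; the object $V^\ast f^\ast s \in \cG(V)$ together with the descent datum on $P$ transported through the identification $\La_U = \Aut_\cG(f^\ast s)$ gives an object of $\cG$ over the cover satisfying the cocycle condition, hence by effective descent for $\cG$ an object $g \in \cG(U)$ with $\Phi(g) \cong P$. Finally, a morphism of categories fibered in groupoids over $X$-schemes which is an equivalence on every affine fiber is an equivalence of stacks (both $\cG$ and $B\Aut$ are stacks, so a cartesian fully faithful essentially surjective functor is an equivalence), completing the argument.

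The main obstacle I anticipate is purely bookkeeping rather than conceptual: tracking the cocycle conditions carefully through the identification $\La_U = \Aut_\cG(f^\ast s)$ in the essential-surjectivity step, and confirming that the descent datum on the torsor $P$ really does match the descent datum needed to glue copies of $f^\ast s$ in $\cG$. Everything else is formal — gerbe axioms (i) and (ii) are precisely engineered to give local triviality of $\Iso_\cG(f^\ast s, g)$ and local existence of objects, and stackiness of $\cG$ supplies the descent needed for faithfulness and essential surjectivity. One should also remark that the affineness of $\Aut_\cG(s/X)$, needed so that $B\Aut_\cG(s/X)$ is an honest algebraic stack in our sense, is automatic in the applications (Theorems \ref{diff-ht} and \ref{diff-ht-infty}) because there $\Aut(\Ga_n)$ is affine by Example \ref{lazard-refined-calc}.
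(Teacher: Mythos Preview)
The paper does not prove this lemma at all; it simply cites \cite{Laumon}, Lemma 3.21. Your proposal supplies exactly the standard argument one would find there: send $g$ to the $\Aut(s)$-torsor $\Iso(f^\ast s,g)$, use gerbe axiom (i) for local triviality of that torsor, and use effective descent in the stack $\cG$ for essential surjectivity. The structure is correct and complete.

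One small wrinkle: your parenthetical ``by hypothesis (ii) in Definition \ref{gerbe} --- applied with the cover $\mathrm{id}$ --- a section exists'' is confused. Axiom (ii) only gives \emph{local} sections; the global section $s$ is supplied by the neutrality hypothesis, not by (ii), and that is what makes $f^\ast s$ available over every $U$. Whether $\La_U = \Aut_\cG(f^\ast s)$ is an affine scheme is a separate representability question about the diagonal of $\cG$, which (as you correctly note at the end) holds in the applications by Example \ref{lazard-refined-calc}. This does not affect the argument, but you should clean up that sentence.
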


\begin{thmht}Let $\cN(n) = \layer{n}$ if $1 \leq n < \infty$ and let $\cN(\infty) = \cM(\infty)$. We claim that $\cN(n) \to \Spec(\FF_p)$ is
a neutral gerbe. Then the result follows from Lemma \ref{neutral-gerbe}.
The two conditions to be gerbe are easily satisfied in this case:
(1) any two height $n$ formal groups over an $\FF_p$-algebra
$A$ become isomorphic after a faithfully flat extension and
(2) every $\FF_p$ algebra $A$ has a height $n$ formal group
over it. Finally the choice of $\Ga_n$ shows that we have a
neutral gerbe.
\end{thmht}

\begin{rem}\label{morava-stab}The {\it Morava stabilizer group}
$\SS_n$ is defined to be the $\bar{\FF}_p$ points of the
algebraic group $\Aut(\Ga_n)$; that is, if $:i:\FF_p \to \bar{\FF}_p$
is the inclusion, then $\SS_n$ is the automorphisms over $\bar{\FF}_p$
of the formal group $i^\ast\Ga_n$. By Theorem \ref{lazard-unique},
this is independent of the choice of $\Ga_n$. The {\it big Morava
stabilizer group} $\GG_n$ is the group of $2$-commuting diagrams
$$
\xymatrix@C=5pt{
\Spec(\bar{\FF}_p) \ar[rr] \ar[dr]_{i^\ast\Ga_n}
&& \Spec(\bar{\FF}_p)\ar[dl]^{i^\ast\Ga_n}\\
&\cM_\fg.
}
$$
There is a semi-direct product decomposition
$$
\GG_n \cong \mathrm{Gal}(\bar{\FF}_p/\FF_p) \rtimes \SS_n.
$$
\end{rem}

\section{Localizing sheaves at a height $n$ point}

In this section we define and discuss the sheaves $\cF[v_n^{-1}]$
when $\cF$ is an $\cI_n$-torsion quasi-coherent sheaf on
the moduli stack $\cM_\fg$ of formal groups. This is largely
groundwork for the results on chromatic convergence to be
proved later, but we do revisit the Landweber Exact Functor
Theorem here, using a proof due to Mike Hopkins \cite{hop-notes}.
We begin with a discussion of the derived tensor product
and derived completions, which -- by results of Hovey
\cite{hovey-comod} --
have a particularly nice form for the stacks under consideration
here.

\subsection{Derived tensor products and derived completion}

We will want to control the derived tensor product of two
quasi-coherent sheaves on an algebraic stack
$\cM$. While this works particularly well if 
$\cM$ is quasi-compact and separated, for the stacks
encountered in homotopy theory we can do even better: by using
results of Mark Hovey, it is
possible to give completely functorial construction using
resolutions by locally free sheaves.
This is because we will be able to assume
that the category $\Qmod_\cM$ of quasi-coherent
sheaves is generated by the finitely generated locally
free sheaves on $\cM$. There is no reason to expect
this assumption to hold in great generality, of course, but
it holds  when $\cM$ is one the stacks that arises in the chromatic
picture. We will discuss this below in Proposition \ref{adams-generates}.

The following a restatement of some of the results
of \cite{hovey-comod} \S 2, especially Theorem 2.13 of that
paper. Indeed, that result is a statement about the cofibrant
objects in a model category structure on chain complexes
of quasi-coherent sheaves. Weak equivalences can
be deduced from point (2) of the next statement.

\begin{prop}\label{hovey-results}Let $\cM$ be an algebraic
stack so that the finitely generated locally free sheaves generate
the category of $\Qmod_\cM$ of quasi-coherent sheaves
on $\cM$. Then for any chain complex of
quasi-coherent sheaves $\cF$ on $\cM$ there is
a natural quasi-isomorphism of chain complexes
$$
\cP^\cM_\bullet(\cF) = \cP_\bullet \to \cF_\bullet
$$
with the properties that
\begin{enumerate}

\item for all $n$, the sheaf $\cP_n$ is a coproduct of
finitely generated locally free sheaves;

\item  for all finitely generated locally free
sheaves $\cV$ on $\cM$ the morphism of function complexes
$$
\Hom(\cV,\cP_\bullet) \longr \Hom(\cV,\cF_\bullet)
$$
is a quasi-isomorphim.
\end{enumerate}
\end{prop}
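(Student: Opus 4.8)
The plan is to build the resolution $\cP_\bullet(\cF)$ by a functorial cofibrant-replacement construction in the projective model structure on chain complexes of quasi-coherent sheaves, using finitely generated locally free sheaves as generators, and then simply transcribe the relevant parts of \cite{hovey-comod} \S 2. First I would set up the small-object argument: let $\cG$ denote the set of isomorphism classes of finitely generated locally free sheaves on $\cM$, and for each $\cV \in \cG$ and each $n \in \ZZ$ let $D^n(\cV)$ be the chain complex that is $\cV$ in degrees $n, n-1$ with identity differential and $S^n(\cV)$ the complex that is $\cV$ concentrated in degree $n$. The generating cofibrations are the maps $S^{n-1}(\cV) \to D^n(\cV)$ and the generating trivial cofibrations are $0 \to D^n(\cV)$. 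The hypothesis that the finitely generated locally free sheaves generate $\Qmod_\cM$ is exactly what makes the fibrations — maps that are degreewise epimorphisms with locally free (hence ``projective relative to $\cG$'') kernel, or more precisely maps inducing surjections on $\Hom(\cV,-)$ at the chain level — detectable by $\cG$, so that a map is a weak equivalence iff $\Hom(\cV,-)$ sends it to a quasi-isomorphism for all $\cV \in \cG$; this is the content of point (2) and it is built into the definition of weak equivalence in Hovey's structure.

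Given the model structure, I would take $\cP_\bullet(\cF) \to \cF_\bullet$ to be the functorial cofibrant replacement produced by the small-object argument applied to $0 \to \cF_\bullet$, factoring it as a (generating) trivial cofibration followed by a fibration. Functoriality in $\cF$ is automatic from the small-object argument. That $\cP_\bullet(\cF) \to \cF_\bullet$ is a quasi-isomorphism is the statement that it is a weak equivalence (it is the ``trivial cofibration'' half of the factorization). That each $\cP_n$ is a coproduct of finitely generated locally free sheaves follows because cofibrant objects in this structure are retracts of cell complexes built from the generating cofibrations $S^{n-1}(\cV) \to D^n(\cV)$ with $\cV$ finitely generated locally free, and in each homological degree such a cell complex is a coproduct of copies of the $\cV$'s; one then checks the retract of such a thing is again, degreewise, a summand of a coproduct of finitely generated locally frees — and on a stack where these generate, a summand of a coproduct of locally frees can be absorbed into the construction, or one simply notes Hovey's Theorem 2.13 already states the resolution in the desired ``coproduct of finitely generated projectives'' form. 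For point (2), given a finitely generated locally free $\cV$, the functor $\Hom(\cV,-)$ on chain complexes preserves the relevant exact sequences and sends generating trivial cofibrations $0 \to D^n(\cW)$ to quasi-isomorphisms (since $\Hom(\cV, D^n(\cW))$ is acyclic), hence sends the cellular trivial cofibration $0 \to \cP_\bullet(\cF)$, and therefore also the augmentation $\cP_\bullet(\cF) \to \cF_\bullet$ after comparing, to a quasi-isomorphism; more directly, $\Hom(\cV,-)$ is exact on locally free sheaves and the cone of $\cP_\bullet(\cF) \to \cF_\bullet$ is acyclic, with $\cP_\bullet(\cF)$ built from pieces for which $\Hom(\cV,-)$ behaves like an exact functor, so $\Hom(\cV, \mathrm{cone})$ is acyclic.

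The main obstacle, and the only real content beyond bookkeeping, is establishing that Hovey's model category hypotheses are met for the stacks $\cM$ under consideration — i.e. that the category of quasi-coherent sheaves is a nice enough (locally presentable, or at least has the solution-set condition for the small-object argument) abelian category and that ``finitely generated locally free sheaves generate'' is both the correct smallness/generation hypothesis and actually holds. Here I would invoke the discussion promised in Proposition \ref{adams-generates}, which handles precisely the stacks in the chromatic picture, and cite \cite{hovey-comod} \S 2 (especially Theorem 2.13) for the model structure and the explicit form of cofibrant replacement. I expect the proof in the paper to consist of little more than ``apply Theorem 2.13 of \cite{hovey-comod}, whose hypotheses hold by Proposition \ref{adams-generates}, and read off (1) and (2) from the structure of cofibrant objects,'' and that is the proof I would write, spelling out only the identification of the degreewise structure of a cell complex and the exactness of $\Hom(\cV,-)$ against locally free sheaves as the two points needing a sentence each.
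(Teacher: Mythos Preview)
Your proposal is correct and matches the paper's approach exactly: the paper does not give a proof at all, stating just before the proposition that it is ``a restatement of some of the results of \cite{hovey-comod} \S 2, especially Theorem 2.13 of that paper,'' and noting that the result describes cofibrant objects in Hovey's model structure on chain complexes. Your elaboration via the small-object argument and the structure of cofibrant cell complexes is precisely the content of Hovey's Theorem 2.13, so you have essentially written out what the paper only cites.
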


Let $\cF$ be a sheaf on $\cM$. In much of the sequel we 
will write $\cF(R)$ for $\cF(\Spec(R) \to \cM)$. We note
that the tensor product of quasi-coherent sheaves behaves
particularly well.

\begin{lem}\label{tens-is-sheaf}Let $\cE$ and $\cF$ be two
quasi-coherent sheaves in the $fpqc$-topology on an algebraic stack
$\cM$. Then presheaf $\cE \otimes \cF = \cE \otimes_{\cO}\cF$
which assigns to each flat and quasi -compact
morphism $\Spec(R) \to \cM$ the tensor product
$$
\cE(R) \otimes_R \cF(R)
$$
is a quasi-coherent sheaf.
\end{lem}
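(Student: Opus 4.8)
The plan is to fall back on the bookkeeping of cartesian sheaves. By Definition~\ref{qc-mod-on-stack} a quasi-coherent sheaf on $\cM$ is precisely a cartesian $\cO_\cM$-module sheaf for the category of affines over $\cM$, so it suffices to verify two things about the presheaf $\cG := \cE \otimes \cF$: that it is cartesian in the sense of Definition~\ref{super-cartesian}, and — to justify the word ``sheaf'' — that it additionally satisfies $fpqc$ descent on affines. Both are pure exercises in faithfully flat descent, and I would establish the cartesian property first, since the sheaf property is an easy consequence of it.

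For the cartesian property, fix a $2$-commuting diagram of affine schemes over $\cM$ with vertices $\Spec(B)$ over $\Spec(A)$. Since $\cE$ and $\cF$ are quasi-coherent, the canonical maps $B \otimes_A \cE(A) \to \cE(B)$ and $B \otimes_A \cF(A) \to \cF(B)$ are isomorphisms. Combining these with the standard isomorphism $B \otimes_A \big(M \otimes_A N\big) \cong \big(B\otimes_A M\big) \otimes_B \big(B \otimes_A N\big)$, natural in the $A$-modules $M$ and $N$, produces an isomorphism $B \otimes_A \big(\cE(A)\otimes_A\cF(A)\big) \cong \cE(B) \otimes_B \cF(B)$. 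Naturality of all the maps involved shows that this is the canonical comparison map for $\cG$ and that these isomorphisms are compatible with composition of morphisms over $\cM$; hence $\cG$ is cartesian, and therefore quasi-coherent.

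For the sheaf condition, let $\{\Spec(A_i) \to \Spec(A)\}$ be an $fpqc$-covering family of an affine over $\cM$. By Proposition~\ref{why-fpqc} we may replace it by the single faithfully flat, quasi-compact morphism $\Spec(B) \to \Spec(A)$ with $B = \prod A_i$; the cartesian property just established then identifies the equalizer diagram for $\cG$ attached to this cover with $\cG(A) \to B \otimes_A \cG(A) \rightrightarrows (B\otimes_A B)\otimes_A \cG(A)$, that is, with the beginning of the Amitsur complex of the $A$-module $\cG(A)$, and this is an equalizer by faithfully flat descent for modules along $A \to B$. There is no genuine obstacle here; the only points that deserve a moment's care are the reduction to a single faithfully flat morphism (Proposition~\ref{why-fpqc}) and the check that the base-change isomorphisms of the previous paragraph really are the structure maps of a module sheaf, which is immediate from the functoriality of $\otimes$.
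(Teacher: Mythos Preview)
Your proof is correct and matches the paper's approach: both reduce the sheaf condition to the exactness of the Amitsur complex for the module $\cE(A)\otimes_A\cF(A)$ along a faithfully flat $A\to B$, using the cartesian property of $\cE$ and $\cF$ to make the identification. The only difference is organizational—you establish the cartesian property first and then invoke it for the sheaf condition, whereas the paper checks the sheaf condition first (making the same base-change identification inline) and then records quasi-coherence as already having been observed.
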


\begin{proof}To see that we actually have a sheaf, let $R \to S$
be faithfully flat extension. Then
$$
\xymatrix{
\cE(R) \otimes_R \cF(R) \rto & \cE(S) \otimes_S \cF(S) 
\ar@<.5ex>[r] \ar@<-.5ex>[r] & \cE(S\otimes_R S) \otimes_{(S\otimes_R S)}
 \cF(S \otimes_R S)
 }
 $$
 is, up to isomorphism,
 $$
 \xymatrix@C=15pt{
\cE(R) \otimes_R \cF(R) \rto & S \otimes_R(\cE(R) \otimes_R \cF(R)) 
\ar@<.5ex>[r] \ar@<-.5ex>[r] & S\otimes_R S \otimes_R (\cE(R) \otimes_{R} \cF(R)).
 }
$$
If we apply $S \otimes _R (-)$ to the later sequence it becomes
exact, as it has a retraction. Since $R \to S$ is faithfully flat,
it was exact to begin with. This proves we have a sheaf; it is quasi-coherent
because (as we have already noted)
$$
S \otimes_R \cE(R) \otimes_R \cF(R) \mathop{\longr}^{\cong} \cE(S) \otimes_S \cF(S).
$$
\end{proof}
 
\begin{defn}\label{tensor-prod}\index{tensor product, derived} Suppose $\cM$ is an algebraic
stack in the $fqpc$-topology so that the finitely generated
locally free sheaves generated $\Qmod_\cM$.
Let $\cE$ and $\cF$ be two quasi-coherent sheaves on
$\cM$. Define their {\bf derived tensor product}
$\cE \otimes^L \cF =\cE \otimes^L_{\cO_\cM} \cF$ to be the chain complex of
quasi-coherent sheaves (for the $fpqc$-topology) with
values at $\Spec(R) \to \cM$ given by
$$
\cE(R) \otimes_R \cP_\bullet(R)
$$
where $\cP_\bullet \to \cF$ is the natural resolution of 
Proposition \ref{hovey-results}. 
\end{defn}

Many of the usual properties of tensor product apply to this
construction. For example, if
$$
0 \to \cE_{1} \to \cE_{2} \to \cE_{3} \to 0
$$
is a short exact sequence of quasi-coherent sheaves,
then we get a distinguished triangle in the derived
category of quasi-coherent sheaves
$$
\cE_{1} \otimes^L \cF 
\to \cE_{2} \otimes^L_{\cO_\cM} \cF
\to \cE_{3} \otimes^L_{\cO_\cM} \cF
\to (\cE_{1} \otimes^L_{\cO_\cM} \cF)[-1].
$$
This definition and the distinguished triangle generalize to 
the case when $\cE$ and $\cF$ are bounded below
complex chain complexes of quasi-coherent sheaves.

Closely related to the derived tensor product is the derived
completion.

\begin{defn}\label{der-completion}\index{completion, derived} Let $\cZ \subseteq \cM$ be
a closed substack defined by a quasi-coherent ideal sheaf
$\cI$. Let $\cF$ be a quasi-coherent sheaf on $\cM$. Then
the {\bf derived completion} of $\cF$ at $\cZ$ by
$$
L(\cF)^\cmpl_\cI = L(\cF)^\cmpl_\cZ = \holim (\cO/\cI^n \otimes^L \cF).
$$
\end{defn}

Thus, if $\Spec(R) \to \cM$ is faithfully flat and quasi-compact,
we can set
$$
L(\cF)^\cmpl_\cZ(R) = \lim P_\bullet (R)/\cI^n(R)\cP_\bullet (R).
$$
This is an $\cO$-module sheaf, but not necessarily quasi-coherent,
as inverse limit and tensor product need not commute. If
$j_n:\cZ^{(n)} \subseteq \cM$ are the infinitesimal neighborhoods
of $\cZ$ defined by the powers of $\cI$, then
$$
L(\cF)^\cmpl_\cZ = \holim (j_n)_\ast (Lj_n)^\ast \cF
$$
where $ (Lj_n)^\ast$ is the total left derived functor of $j_n^\ast$.

We now turn to the question of when the hypotheses
of Proposition \ref{hovey-results} apply. 
There is a classical and useful notion from stable homotopy
theory which guarantees that the
finitely generated locally free sheaves generate the category
of quasi-coherent sheaves.

\begin{defn}\label{adams-cond}\index{Adams condition}1.)
A Hopf algebroid
$(A,\La)$ is an {\bf Adams Hopf algebroid} if the left
unit $A \to \La$ is flat and the $(A,\La)$-comodule
$\La$ can be written as a filtered colimit of comodules
$\La_i$ each of which is a finitely generated projective
$A$-module.

2.) An algebraic stack $\cM$ will be an {\bf Adams stack}
\index{Adams stack}
if there is an $fpqc$-presentation $\Spec(A) \to \cM$
so that
$$
\Spec(A) \times_\cM \Spec(A) \cong \Spec(\La)
$$
is itself affine and the resulting Hopf algebroid $(A,\La)$
is an Adams Hopf algebroid.
\end{defn}

This definition has a curious and unfortunate feature.
We would like to assert
that if $\cM$ has one $fpqc$-presentation $\Spec(A) \to \cM$
so that $(A,\La)$ is an Adams Hopf algebroid then any other $fpqc$
presentation would have the same property. But this is not 
known. See \cite{hovey-comod}, Question 1.4.2.\footnote{This 
problem could be avoided by working with resolutions by appropriately
flat modules; these exist over any quasi-compact and separated
stack. See \cite{all} \S 1. I have chosen to use the Adams condition
only because it fits better with the culture of homotopy theory.} However,
we do have the following rephrasing of Proposition 1.4.4
of \cite{hovey-comod}.

\begin{prop}\label{adams-generates}Let $\cM$ be an Adams
stack. The the finitely generated locally free sheaves
generate the category $\Qmod_\cM$ of quasi-coherent sheaves
on $\cM$.
\end{prop}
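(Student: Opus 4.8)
The plan is to reduce the statement to the known comodule-theoretic result, namely Proposition 1.4.4 of \cite{hovey-comod}, via the equivalence of categories between quasi-coherent sheaves on $\cM$ and comodules over the Adams Hopf algebroid $(A,\La)$ arising from a chosen Adams presentation. First I would invoke Remark \ref{comod-qc-sheaves}: since $\cM$ is an Adams stack, by definition there is an $fpqc$-presentation $\Spec(A) \to \cM$ with $\Spec(A) \times_\cM \Spec(A) \cong \Spec(\La)$ affine and $(A,\La)$ an Adams Hopf algebroid, so the category $\Qmod_\cM$ is equivalent to the category of $(A,\La)$-comodules. Under this equivalence the structure sheaf $\cO_\cM$ corresponds to $A$ with its canonical comodule structure, and more generally one must check that the finitely generated locally free sheaves on $\cM$ correspond precisely (or at least cofinally) to the dualizable comodules, i.e.\ those comodules whose underlying $A$-module is finitely generated projective.

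Next I would carry out that correspondence in detail. Given a finitely generated locally free sheaf $\cV$ on $\cM$, its value $M = \cV(\Spec(A) \to \cM)$ is a finitely generated projective $A$-module carrying a comodule structure; conversely any comodule that is finitely generated projective over $A$ gives, by the quasi-coherence/cartesian-sheaf dictionary of Remark \ref{mod-sheaves}, a locally free sheaf of finite rank on $\cM$. (One uses that ``finitely generated projective'' and ``finitely generated locally free'' coincide, and that these properties are $fpqc$-local, as recorded via \cite{EGAIV} in the proof of Lemma \ref{eff-desc-lie}.) Thus the subcategory of finitely generated locally free sheaves on $\cM$ matches the subcategory of dualizable $(A,\La)$-comodules.

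Then the core input: Proposition 1.4.4 of \cite{hovey-comod} states exactly that for an Adams Hopf algebroid $(A,\La)$ the dualizable comodules generate the category of $(A,\La)$-comodules (this is where the colimit presentation $\La = \colim \La_i$ with each $\La_i$ finitely generated projective is used, to resolve an arbitrary comodule by dualizable ones). Transporting ``generate'' across the equivalence of categories — generation is a purely categorical notion, preserved by equivalences — gives that the finitely generated locally free sheaves generate $\Qmod_\cM$, which is the assertion.

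The only genuine obstacle is the one flagged in Definition \ref{adams-cond} and its footnote: the Adams condition is phrased in terms of a \emph{particular} presentation, and it is not known that it is independent of the presentation. So I should be careful to state and prove the proposition using the presentation that is part of the given Adams-stack data, rather than asserting it for an arbitrary one; with that caveat the argument is essentially a translation, and the substantive content is entirely contained in \cite{hovey-comod}. Everything else — checking the dictionary between locally free sheaves and dualizable comodules, and checking that generation passes through the equivalence — is routine and I would not grind through it.
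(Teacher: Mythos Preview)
Your proposal is correct and matches the paper's approach exactly: the paper simply states that this proposition is a rephrasing of Proposition~1.4.4 of \cite{hovey-comod}, and you have spelled out the translation via the equivalence of Remark~\ref{comod-qc-sheaves} and the identification of finitely generated locally free sheaves with dualizable comodules. Your caveat about the presentation-dependence of the Adams condition is also the one the paper flags just before the proposition.
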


We now make good on our claim that most of the stacks in this
monograph are of this kind.

\begin{prop}\label{mn-adams}For all $n$, $0 \leq n \leq \infty$,
the moduli stack $\cM_\fg\pnty{n}$ of $n$-buds of formal
groups is an Adams stack.
\end{prop}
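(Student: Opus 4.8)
The plan is to exhibit an explicit $fpqc$-presentation of $\cM_\fg\pnty{n}$ and check the Adams condition directly on the associated Hopf algebroid. For finite $n$, Definition \ref{buds-stack} presents $\cM_\fg\pnty{n}$ as the homotopy orbit stack $\fgl\pnty{n}\times_{\La\pnty{n}}E\La\pnty{n}$, and Proposition \ref{homotopy-orbit} tells us that $\Spec(L\pnty{n})\to\cM_\fg\pnty{n}$ is an $fpqc$-presentation with $\Spec(L\pnty{n})\times_{\cM_\fg\pnty{n}}\Spec(L\pnty{n})\cong\Spec(W\pnty{n})$ affine, where (using the gradings of Remark \ref{warning-grading}) $W\pnty{n} = L\pnty{n}[a_0^{\pm1},a_1,\ldots,a_{n-1}]$. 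For $n=\infty$, Theorem \ref{fg-algebraic} (equivalently Proposition \ref{fg-homotopy-orbit} together with Remark \ref{comod-qc-sheaves}) gives the presentation $\Spec(L)\to\cM_\fg$ with $\Spec(L)\times_{\cM_\fg}\Spec(L)\cong\Spec(W)$ and $W = L[a_0^{\pm1},a_1,a_2,\ldots]$. So in all cases we have a candidate Adams Hopf algebroid $(A,\Ga)$ with $A = L\pnty{n}$ and $\Ga = W\pnty{n}$ (interpreting $L\pnty{\infty} = L$, $W\pnty{\infty}=W$), and the left unit $A\to\Ga$ is visibly flat since $\Ga$ is a localization of a polynomial ring over $A$.

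It then remains to write $\Ga$ as a filtered colimit of $(A,\Ga)$-subcomodules $\Ga_i$, each finitely generated and projective over $A$. First I would use the standard grading: by Remark \ref{warning-grading}.2 (and Remark \ref{gradings-revisited-2}), the pair $(A_\ast,\Ga_{0,\ast})$ is a \emph{graded} Hopf algebroid with $\Ga_{0,\ast} = A_\ast[a_1,\ldots,a_{n-1}]$ (or $A_\ast[a_1,a_2,\ldots]$ for $n=\infty$) where $\deg a_i = i$, and passing to the ungraded situation just introduces the invertible generator $a_0$, which I would handle by the $\GG_m$-decomposition already recorded in Remark \ref{gradings-revisited-2}. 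The coaction $\psi\colon\Ga_{0,\ast}\to\Ga_{0,\ast}\otimes_{A_\ast}\Ga_{0,\ast}$ preserves the internal grading, so each graded piece $(\Ga_{0,\ast})_k$ is a finitely generated $A_\ast$-submodule but is not in general a subcomodule; however, the finite direct sum $\Ga_i \defeq \bigoplus_{k\le i}(\Ga_{0,\ast})_k$ \emph{is} a subcomodule, because the coaction of a degree-$k$ element lands in the sum of $(\Ga_{0,\ast})_j\otimes(\Ga_{0,\ast})_{k-j}$ with $0\le j\le k$ on the nose (this is the content of the observation in the proof of Theorem \ref{fp-modules} that the cobar coaction is bounded by internal degree). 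These $\Ga_i$ are finitely generated free $A_\ast$-modules, their union is all of $\Ga_{0,\ast}$, and they form a filtered (indeed nested) system; tensoring up by $\ZZ[a_0^{\pm1}]$ — equivalently, re-grading via the semi-direct product of Remark \ref{gradings-revisited-2} — converts this into the desired filtration of $\Ga = W\pnty{n}$ by finitely generated projective $A$-subcomodules. This verifies the Adams Hopf algebroid condition of Definition \ref{adams-cond}.1, hence $\cM_\fg\pnty{n}$ is an Adams stack.

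The main obstacle, and the step requiring genuine care rather than bookkeeping, is the claim that the finite-degree truncations $\bigoplus_{k\le i}(\Ga_{0,\ast})_k$ are genuinely subcomodules and that each is projective (here free) over $A_\ast$: one must be sure that the right unit $\eta_R\colon A_\ast\to\Ga_{0,\ast}$ and the symmetric $2$-cocycle description of $L\pnty{n}$ (Remark \ref{warning-grading}, via \cite{Rav} A.2.12) interact correctly with the grading, so that no coaction output escapes the truncation and so that $\Ga_{0,\ast}$ is a polynomial algebra over $A_\ast$ in the $a_i$'s with $a_i$ of positive degree — making each graded piece a finite free $A_\ast$-module on monomials in the $a_i$. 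Once this compatibility is pinned down, everything else is formal. For $n=\infty$ the argument is literally the same, with the colimit over all $i$ now being genuinely infinite, which is exactly what the Adams condition allows.
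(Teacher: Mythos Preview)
Your overall strategy matches the paper's: use the presentation $\Spec(L\pnty{n})\to\cM_\fg\pnty{n}$, identify the Hopf algebroid as $(L\pnty{n},W\pnty{n})$ with $W\pnty{n}=L\pnty{n}[a_0^{\pm1},a_1,\ldots,a_{n-1}]$, and filter by internal degree using the grading of Remark~\ref{warning-grading}.2. The claim that the degree truncations $\bigoplus_{k\le i}(\Ga_{0,\ast})_k$ of $\Ga_{0,\ast}=L\pnty{n}_\ast[a_1,\ldots,a_{n-1}]$ are finitely generated free subcomodules is correct and is exactly what the paper uses.

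The gap is in your passage from $\Ga_{0,\ast}$ to $\Ga=W\pnty{n}$. You write that ``tensoring up by $\ZZ[a_0^{\pm1}]$'' converts your filtration into one by finitely generated projective $L\pnty{n}$-subcomodules of $W\pnty{n}$, but $\Ga_i\otimes\ZZ[a_0^{\pm1}]$ is \emph{not} finitely generated over $L\pnty{n}$, since $\ZZ[a_0^{\pm1}]$ has infinite rank. The appeal to the semi-direct product equivalence of Remark~\ref{gradings-revisited-2} does not help: that equivalence preserves underlying $A$-modules, and the graded comodule corresponding to $W\pnty{n}$ has underlying module $L\pnty{n}[a_0^{\pm1},a_1,\ldots,a_{n-1}]$, not $\Ga_{0,\ast}$. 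The paper repairs exactly this point by truncating the powers of $a_0$ along with the degree: it sets $A_{n,i}\subseteq\ZZ[a_1,\ldots,a_{n-1}]$ to be the elements of degree $\le i$, then
\[
B_{n,i}=\bigoplus_{-i\le s\le i}A_{n,i}\,a_0^{s}\subseteq\ZZ[a_0^{\pm1},a_1,\ldots,a_{n-1}],
\qquad W\pnty{n,i}=L\pnty{n}\otimes B_{n,i},
\]
and checks that each $W\pnty{n,i}$ is a finitely generated free $L\pnty{n}$-module, a subcomodule of $W\pnty{n}$, and that $\colim_i W\pnty{n,i}=W\pnty{n}$. Once you bound the $a_0$-powers in this way, your argument goes through.
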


\begin{proof} We show that the evident presentation
$$
\Spec(L\pnty{n}) \longr \cM_\fg\pnty{n}
$$
has the desired property. Recall from Proposition
\ref{homotopy-orbit} that
$$
\Spec(L\pnty{n}) \times_{\cM_\fg\pnty{n}} \Spec(L_n)
\cong \Spec(W\pnty{n}) \cong \fgl\pnty{n} \times \La\pnty{n}.
$$
We use the gradings of Remark \ref{gradings-revisited-2}
and, especially, Remark \ref{warning-grading}.2.

Let 
$$
A_{n,i} \subseteq \ZZ[a_1,\ldots,a_{n-1}]
\subseteq \ZZ[a_0^{\pm 1},a_1,\ldots,a_{n-1}]
$$
be the elements of degree less than or equal to $i$,
let
$$
B_{n,i} = \mathop{\oplus}_{-i \leq s \leq i} A_{n,i}a_0^{s}
\subseteq \ZZ[a_0^{\pm 1},\ldots,a_{n-1}]
$$
and let
$$
W\pnty{n,i} = L\pnty{n} \otimes B_{n,i}.
$$
Then $W\pnty{n,i}$ is a finitely generated free $L\pnty{n}$ module, a
sub-comodule of $W\pnty{n}$, and $\colim_i W\pnty{n,i} = W\pnty{n}$.
\end{proof}

\begin{prop}The following stacks are Adams stacks.
\begin{enumerate}

\item $\cM(n)$, the closed substack of $\cM_\fg \otimes
\ZZ_{(p)}$ of formal groups of height at least $n$;

\item $\layer{n}= \cM(n)[v^{-1}]$, the open substack of $\cM(n)$
of formal groups
of exactly height $n$;

\item $\cU(n)$, the open substack of $\cM_\fg \otimes
\ZZ_{(p)}$ of formal
groups of height at most $n$.
\end{enumerate}
\end{prop}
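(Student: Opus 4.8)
The plan is to reduce everything to Proposition \ref{mn-adams}, which we already have: the stacks $\cM_\fg\pnty{n}$ of $n$-buds are Adams stacks. The key point is that the Adams condition is inherited under the kinds of base change that relate $\cM(n)$, $\layer{n}$, and $\cU(n)$ to $\cM_\fg$ and hence, via Theorem \ref{fp-modules}-type reasoning, to some $\cM_\fg\pnty{m}$. More precisely, for each of the three stacks I would produce an explicit $fpqc$-presentation $\Spec(A) \to \cM$ with $\Spec(A) \times_\cM \Spec(A) \cong \Spec(\La)$ affine and exhibit the comodule $\La$ as a filtered colimit of sub-comodules that are finitely generated projective over $A$.

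First I would handle $\cM(n)$. By Proposition \ref{cover-for-mn} we have the $fpqc$-cover $X(n) = \Spec(\FF_{(p)}[u_n,u_{n+1},\ldots]) \to \cM(n)$ with
$$
X(n) \times_{\cM(n)} X(n) = \Spec(\FF_p[u_n,u_{n+1},\cdots][t_0^{\pm 1},t_1,t_2,\cdots]).
$$
This is the base change along $\cM(n) \to \cM_\fg \otimes \ZZ_{(p)}$ of the $p$-typical presentation $\Spec(V) \to \cM_\fg \otimes \ZZ_{(p)}$ of Corollary \ref{ptyp-cover} (with $V = \ZZ_{(p)}[u_1,u_2,\cdots]$), killing $p, u_1, \ldots, u_{n-1}$. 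So the Hopf algebroid is $(\FF_p[u_n,u_{n+1},\ldots], \FF_p[u_n,\ldots][t_0^{\pm 1},t_1,\ldots])$, which is the mod $(p,u_1,\ldots,u_{n-1})$ reduction of the $BP$-style Hopf algebroid $(V, V[t_0^{\pm 1},t_1,\ldots])$. Using the grading of Remarks \ref{gradings} and \ref{gradings-revisited-2} (degree of $u_k$ is $p^k-1$, degree of $t_k$ is $p^k-1$, degree of $t_0$ mapped into the $a_0$-grading), I would filter $\La$ exactly as in the proof of Proposition \ref{mn-adams}: let $A_{i}$ be the span of monomials in the $t_k$, $k \geq 1$, of degree $\leq i$, set $B_i = \oplus_{-i \leq s \leq i} A_i t_0^{s}$, and $\La_i = \FF_p[u_n,u_{n+1},\ldots] \otimes B_i$. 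Each $\La_i$ is a finitely generated free module over the base ring, is a sub-comodule (the coaction of a bounded-degree element lands in bounded degree, by the formulas in Remark \ref{gradings}), and $\colim_i \La_i = \La$. The left unit is flat since it is just the inclusion of a polynomial generator set, so $(A,\La)$ is an Adams Hopf algebroid and $\cM(n)$ is an Adams stack. For $n = \infty$, note $\cM(\infty) = \cap \cM(n)$; here I would use $\hat{\GG}_a$ over $\FF_p$ and the presentation $\Spec(\FF_p) \to \cM(\infty)$ with $\Spec(\FF_p) \times_{\cM(\infty)} \Spec(\FF_p) = \Spec(\FF_p[a_0^{\pm 1},a_1,a_2,\cdots])$ from Equation \ref{ht-infty-hopf}, which is graded with $a_i$ in degree $i$, and filter by bounded degree in the $a_i$'s and bounded $a_0$-powers exactly as before.

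For $\layer{n} = \cM(n)[v_n^{-1}]$ with $1 \leq n < \infty$, I would use the cover $Y(n) = \Spec(\FF_p[u_n^{\pm 1},u_{n+1},\ldots]) \to \layer{n}$ of Remark \ref{first-cover-layer}, with
$$
Y(n) \times_{\layer{n}} Y(n) \cong \Spec(\FF_p[u_n^{\pm 1},u_{n+1},\ldots][t_0^{\pm 1},t_1,t_2,\ldots]).
$$
This is obtained from the $\cM(n)$ case by inverting $u_n$, a homogeneous element, so the grading survives; the same filtration $\La_i = \FF_p[u_n^{\pm 1},u_{n+1},\ldots] \otimes B_i$ works, each $\La_i$ finitely generated free, a sub-comodule, with union $\La$, and the left unit flat. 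Hence $\layer{n}$ is an Adams stack. For $\cU(n)$, the open substack of $\cM_\fg \otimes \ZZ_{(p)}$ of formal groups of height at most $n$, the presentation is the restriction of $\Spec(V) \to \cM_\fg \otimes \ZZ_{(p)}$: the pullback $\Spec(V) \times_{\cM_\fg} \cU(n)$ is the open subscheme $\Spec(V) - V(p,u_1,\ldots,u_n)$, which is a scheme but not affine, so I cannot directly use it. Instead I would take the standard affine Zariski cover of this open subscheme by the basic opens $D(p), D(u_1), \ldots, D(u_n)$ and use their disjoint union $\Spec(A)$ with $A = \prod_{k=0}^{n} V[1/u_k]$ (setting $u_0 = p$) as the presentation; then $\Spec(A) \times_{\cU(n)} \Spec(A)$ is affine (it is a finite product of affines, each a localization of $V[t_0^{\pm 1},t_1,\ldots]$), and the grading still exists on all pieces since we are only inverting homogeneous elements, so the analogous bounded-degree filtration produces the Adams presentation.

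The main obstacle I expect is the $\cU(n)$ case: the naive presentation fails to have affine diagonal pullback, so some care is needed to replace it by a genuinely affine one while keeping the grading, and then to check that a finite product of Adams Hopf algebroids (or rather, the Hopf algebroid of the disjoint-union cover) is again Adams — this last point is a formal but slightly fiddly verification that a finite product of flat left units is flat and that a finite product of filtered colimits of finitely generated projectives is again such. Everything else is a routine adaptation of the proof of Proposition \ref{mn-adams}, using that all the localizations and quotients involved are by homogeneous elements or homogeneous ideals and hence compatible with the $\GG_m$-grading that powers the finite-type filtration.
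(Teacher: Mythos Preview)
Your arguments for $\cM(n)$ and $\layer{n}$ are correct and use the same $p$-typical presentations the paper does. The only difference is packaging: you re-run the bounded-degree filtration from Proposition~\ref{mn-adams} by hand, whereas the paper simply writes down the three presentations
\[
\Spec(\FF_p[u_n,u_{n+1},\ldots])\to\cM(n),\quad
\Spec(\FF_p[u_n^{\pm 1},u_{n+1},\ldots])\to\layer{n},\quad
\Spec(\ZZ_{(p)}[u_1,\ldots,u_{n-1},u_n^{\pm 1}])\to\cU(n)
\]
and appeals to Hovey's general results (Theorem~1.4.9 and Proposition~1.4.11 of \cite{hovey-comod}) that the Adams condition is inherited under the relevant quotients and localizations.

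For $\cU(n)$ your route diverges from the paper's, and the paper's is substantially cleaner. You correctly observe that the pullback of $\Spec(V)$ to $\cU(n)$ is the non-affine open $\Spec(V)\setminus V(p,u_1,\ldots,u_n)$ and then try to repair this with a disjoint union of basic opens. That forces you into a product Hopf algebroid whose pieces $\La_{j,k}$ involve inverting both $\eta_L(u_j)$ and $\eta_R(u_k)$, and the comodule structure genuinely mixes the $(j,k)$ components; the ``fiddly verification'' you flag is real. The paper sidesteps all of this by using the Johnson--Wilson presentation $\Spec(\ZZ_{(p)}[u_1,\ldots,u_{n-1},u_n^{\pm 1}])\to\cU(n)$: a \emph{single} affine whose flatness is an instance of Landweber exactness (Corollary~\ref{land-exact-1}), and whose Adams filtration is then no harder than in the other two cases. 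The moral is that you do not need to cover the open complement of $\cM(n+1)$ inside $\Spec(V)$; you only need \emph{some} affine mapping flatly and surjectively to $\cU(n)$, and truncating the $u_i$'s at $n$ while inverting $u_n$ does the job.
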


\begin{proof} Because we have base-changed over $\ZZ_{(p)}$,
we can choose the morphism
$$
\Spec(\ZZ_{(p)}[u_1,u_2,\cdots]) \to \cM_\fg \otimes \ZZ_{(p)}
$$
representing the universal $p$-typical formal group as the
presentation. Then we have presentations
$$
\Spec(\FF_p[u_n,u_{n-1},\cdots]) \to \cM(n)
$$
and
$$
\Spec(\FF_p[u_n^{\pm 1},u_{n-1},\cdots]) \to\layer{n}
$$
and
$$
\Spec(\ZZ_{(p)}[u_1,\ldots,u_{n-1},u_{n}^{\pm 1}]) \to \cU(n).
$$
Then we appeal to Theorem 1.4.9 and Proposition 1.4.11. of 
\cite{hovey-comod}.
\end{proof}

\subsection{Torsion modules and inverting $v_n$}

In the next section on Landweber exactness, and later when we
discuss chromatic convergence, we are are going to 
need some technical lemmas about inverting $v_n$ for $\cI_n$-torsion
sheaves on $\cM_\fg$. We begin with some definitions so that we can
work in some generality with algebraic stacks $\cN$ flat over $\cM_\fg$.
The following definition generalizes the definition of 
regular scale given in \cite{LEFT}.

\begin{defn}\label{scale}\index{scale}Let $\cN$ be an algebraic stack and
$$
0 = \cJ_{0} \subseteq \cJ_1 \subseteq \cJ_2 \subseteq \cdots \subseteq \cO_\cN
$$
be an ascending sequence of ideal sheaves. Then the sequence $\{\cJ_n\}$
forms a {\bf regular scale} for $\cN$
if for all $n$, the ideal sheaf $\cJ_{n+1}/\cJ_{n}$ is locally free of rank
$1$ as a $\cO/\cJ_{n}$ module. A regular scale is a {\bf finite} if 
$\cJ_n = \cO$ for some $n$.
\end{defn}

\begin{rem}\label{scale-reg-seq}Given a regular scale on $\cN$,
let $\cN(n)$ denote that closed substack defined by $\cJ_n$. Then
$\cN(n) \subseteq \cN(n-1)$ is an effective Cartier divisor for
$\cN(n-1)$. An embedding $\cZ \subseteq \cN$ of a closed substack
is called {\it regular} if the ideal defining the embedding is locally
generated by a regular sequence. Thus a regular scale produces
regular embeddings $\cN(n) \subseteq \cN$, but it is does more:
it specifies the terms in the regular sequence modulo
the lower terms.
\end{rem}

\begin{exam}\label{in-form-scale} Fix a prime $p$, let $\cM = \cM_\fg$ and let
$$
0 \subseteq \cI_1 \subseteq \cI_2 \subseteq \cdots \subseteq \cO_\fg
$$
be the ascending chain of ideals giving the closed substacks
$\cM(n) \subseteq \cM_\fg$ classifying formal groups of height greater 
than or equal to $n$. This, of course, is the basic example
of a regular scale. This scale is not finite; however, if we let $i_n:\cU(n) \to \cM_\fg$
be the open substack classifying formal groups of height less than
or equal to $n$, then
$$
i_n^\ast \cI_0 \subseteq i_n^\ast \cI_1 \subseteq i_n^\ast \cI_2
\subseteq \cdots \subseteq i_n^\ast \cO_\fg = \cO_{\cU(n)}
$$
is a finite regular scale as $i_n^\ast \cI_n = i_n^\ast \cI_k = \cO_{\cU(n)}$
for $k \geq n$.

This example can be generalized to stacks $\cN$ representable
and flat over $\cM_\fg$. See Proposition \ref{LEFT-easy-part}.
\end{exam}

We now come to torsion modules and inverting $v_n$. Let $\cN$
be an algebraic stack and let $\{\cJ_n\}$ be a scale for $\cN$.
Let $j_n:\cN(n) \subseteq \cN$ be the closed inclusion
defined by $\cJ_n$ and let $i_{n-1}:\cV(n-1) \to \cM_\fg$ be the open
complement. (The numerology is chosen to agree with case
of $\cI_n \subseteq \cO_\fg$.) Let's write $\cO$ for $\cO_\cN$.

\begin{defn}\label{in-torsion} An $\cO$-module sheaf $\cF$ is
{\bf supported on}\index{support, for a sheaf} $\cN(n)$ if $i_{n-1}^\ast \cF = 0$. 
We also say that $\cF$ is $\cJ_n$-{\bf torsion}\index{torsion sheaf}
\index{sheaf, torsion} if for any flat and
quasi-compact morphism $\Spec(R) \to \cM_\fg$, the $R$-module
$\cF(R)$ is $\cI_n(R)$-torsion.
\end{defn} 

In Definition \ref{in-torsion} we do not assume that $\cF$ is
quasi-coherent; however, the next result shows that the
two notions defined there are equivalent for
quasi-coherent sheaves.

\begin{lem}\label{char-support}Let $\cF$ be a quasi-coherent $\cO $-module
sheaf. Then $\cF$ is supported on $\cN(n)$ if and only
if $\cF$ is $\cJ_n$-torsion. 
\end{lem}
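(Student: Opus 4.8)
The plan is to reduce the statement to a purely local, ring-theoretic assertion and then exploit the quasi-coherence hypothesis via faithfully flat descent. First I would observe that one direction is essentially trivial: if $\cF$ is $\cJ_n$-torsion, then for any flat quasi-compact $\Spec(R) \to \cN$ the module $\cF(R)$ is killed by some power of $\cJ_n(R)$ locally, and since $i_{n-1}^\ast$ is computed by inverting the elements cutting out $\cN(n)$ (equivalently, by restricting along the open complement $\cV(n-1)$), the restriction $i_{n-1}^\ast\cF$ is the localization of a $\cJ_n$-torsion module away from $\cN(n)$, hence zero. So $\cJ_n$-torsion $\Rightarrow$ supported on $\cN(n)$.

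For the converse I would work with a presentation. Choose an $fpqc$-presentation $p:\Spec(A) \to \cN$; then the scale $\{\cJ_k\}$ pulls back to an ascending chain of ideals $\{J_k \subseteq A\}$, and $\cN(n)$ pulls back to $\Spec(A/J_n)$ while $\cV(n-1)$ pulls back to an open $U \subseteq \Spec(A)$ which is the complement of $V(J_n)$. Since $\cF$ is quasi-coherent, $M \defeq \cF(A)$ is an $A$-module and $\cF$ is recovered from $M$ by pullback; moreover $i_{n-1}^\ast\cF = 0$ means precisely that $M$ restricted to $U$ vanishes, i.e. $M_{\pp} = 0$ for every prime $\pp$ not containing $J_n$ (working locally on an affine cover of $U$). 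The key step is then the commutative-algebra fact: an $A$-module $M$ all of whose localizations away from $V(J_n)$ vanish is $J_n$-torsion in the sense that every element is annihilated by some power of $J_n$ — at least when $J_n$ is finitely generated, which holds here because $\cJ_n/\cJ_{n-1}$ is invertible and, inductively, $\cJ_n$ is locally generated by a regular sequence of length $n$ (Remark \ref{scale-reg-seq}). Indeed, for $J_n = (a_1,\dots,a_n)$ and $x \in M$, the hypothesis gives that $x$ dies in $M[a_i^{-1}]$ for each $i$ after passing to the relevant localization, hence $a_i^{N_i} x = 0$ for suitable $N_i$, so a large power of $J_n$ kills $x$; globalizing over the affine cover and using quasi-compactness of $\Spec(A)$ gives a uniform bound, so $M$ is $J_n$-torsion, and therefore $\cF$ is $\cJ_n$-torsion.

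The one subtlety — and the step I expect to be the main obstacle — is passing from the statement ``$i_{n-1}^\ast\cF = 0$'' on the stack to the honest vanishing of localizations $M_\pp$ on the presentation. Because $\cV(n-1) \subseteq \cN$ is an open substack, $i_{n-1}^\ast\cF = 0$ is equivalent to $\cF(W) = 0$ for every affine $W \to \cN$ factoring through $\cV(n-1)$; pulling back along $p$ and using that $\Spec(A) \times_\cN \cV(n-1)$ is the open $U$, this does give $M|_U = 0$, but one must check compatibility of the support condition with the cartesian structure of $\cF$, which is exactly faithfully flat descent for quasi-coherent sheaves (Remark \ref{ff-descent}, and the equivalence of Equation \ref{epsilon-is-cartesian-1}). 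Once that bookkeeping is done, the argument is local and elementary. I would also note that the finite-generation of $\cJ_n$ is genuinely needed: the implication fails for non-finitely-generated ideals, so the regular-scale hypothesis (or at least local finite generation of each $\cJ_n$) is where the geometry of the height filtration enters.
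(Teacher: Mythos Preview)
Your proposal is correct and follows essentially the same route as the paper. Both arguments reduce to the affine situation, use that $\cJ_n$ is locally generated by a finite (regular) sequence $u_0,\dots,u_{n-1}$, and then observe: (i) for ``torsion $\Rightarrow$ supported'', on $\cV(n-1)$ the ideal $\cJ_n$ is the unit ideal so any torsion module vanishes; (ii) for ``supported $\Rightarrow$ torsion'', the open $\cV(n-1)$ pulls back to the basic opens $\Spec(R[u_i^{-1}])$, so $\cF(R[u_i^{-1}])=0$ for each $i$, whence each element of $\cF(R)$ is killed by a power of $\cJ_n(R)$. The paper does this directly for an arbitrary flat quasi-compact $\Spec(R)\to\cN$, while you phrase it via a fixed presentation and then descend; your detour through vanishing of stalks $M_{\pp}$ is unnecessary (the vanishing of $M[u_i^{-1}]$ follows immediately from $\Spec(R[u_i^{-1}])\to\cV(n-1)$), but it does no harm.
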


\begin{proof}This is a consequence of the fact that $\cJ_n$ defines
a regular embedding.
For each flat and quasi-compact morphism
$\Spec(R) \to \cM_\fg$, choose -- by passing to a faithfully flat
extension if necessary --  generators $(p,u_1,\ldots,u_{n-1})$
of $\cJ_n(R)$.

First suppose $\cF$ is supported on $\cN(n)$. Then there are commutative
diagrams
$$
\xymatrix{
\Spec(R[u_i^{-1}]) \rto^{\subseteq} \dto & \Spec(R)\dto\\
V(n-1) \rto^{\subseteq} &\cN.
}
$$
Thus $R[u_i^{-1}] \otimes_R \cF(R) \cong \cF(R[u_i^{-1}]) = 0$,
and we may conclude that $\cF(R)$ is $\cJ_n(R)$-torsion.

Conversely, suppose $\cF$ is $\cJ_n$-torsion and $\Spec(R) \to V(n-1)$
is any flat and quasi-compact morphism. Then $\cJ_n(R) = R$, so
$\cJ_n(R)^k = R$ for all $k > 0$. If $x \in \cF(R)$, then
$Rx = \cJ_n(R)^kx = 0$ for some $k$, whence $x=0$. Thus
$i_{n-1}^\ast \cF = 0$.
\end{proof}

In the following result, $\hom$ denote the sheaf of
homomorphisms.

\begin{lem}\label{where-tor-supp}Let $\cF$ be a quasi-coherent
$\cJ_n$-torsion sheaf. Then evaluation defines a natural
isomorphism
$$
\colim \hom_\cO(\cO/\cJ_n^k,\cF) \mathop{\longr}^{\cong} \cF.
$$
If $f_k:\cN(n)_k \subseteq \cN$ is the inclusion of the
$k$th infinitesimal neighborhood
of $\cN(n)$ defined by the vanishing of $\cI_n^k$,  then there
is a quasi-coherent sheaf $\cF_k$ on $\cN(n)_k$ and a natural
isomorphism
$$
(f_k)_\ast \cF_k \cong \hom_\cO(\cO/\cI_n^k,\cF).
$$
\end{lem}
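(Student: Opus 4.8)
The plan is to prove both assertions by testing them, as statements about (filtered colimits of) quasi-coherent $\cO_\cN$-module sheaves, on the flat quasi-compact morphisms $u\colon\Spec(R)\to\cN$ that generate the $fpqc$-site, and after a faithfully flat extension of $R$ to assume that $\cJ_n(R)$ is generated by a regular sequence $(x_1,\dots,x_n)$ (Remark \ref{scale-reg-seq}). The key preliminary is that $\hom_\cO(\cO/\cJ_n^k,\cF)$ is quasi-coherent and its formation commutes with flat base change, with value $\Hom_R\big(R/\cJ_n(R)^k,\cF(R)\big)$ at $u$. For this I would note that $\cO/\cJ_n^k$ is locally finitely presented: the Koszul complex on $(x_1,\dots,x_n)$ resolves $\cO/\cJ_n$ by a finite complex of finite free modules, and since the associated graded of $\cJ_n$ with respect to a regular sequence is a polynomial algebra over $\cO/\cJ_n$ on $n$ generators, each layer $\cJ_n^k/\cJ_n^{k+1}$ is finite free over $\cO/\cJ_n$; induction along $0\to\cJ_n^k/\cJ_n^{k+1}\to\cO/\cJ_n^{k+1}\to\cO/\cJ_n^k\to0$ then gives finite presentation of every $\cO/\cJ_n^k$. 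A local presentation $\cO^a\to\cO^b\to\cO/\cJ_n^k\to0$ exhibits $\hom_\cO(\cO/\cJ_n^k,\cF)$ as the kernel of a map $\cF^{\,b}\to\cF^{\,a}$ of quasi-coherent sheaves, hence quasi-coherent, and exactness plus finite additivity of flat base change give the claimed compatibility.

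For part 1, put $M=\cF(R)$. Evaluation at the residue class of $1$ identifies $\Hom_R(R/\cJ_n(R)^k,M)$ with the submodule $M[\cJ_n(R)^k]=\{x\in M:\cJ_n(R)^k x=0\}$, and under this identification the transition maps are the inclusions $M[\cJ_n(R)^k]\subseteq M[\cJ_n(R)^{k+1}]$. By Definition \ref{in-torsion}, $M$ is $\cJ_n(R)$-torsion, i.e.\ each of its elements is killed by some power of $\cJ_n(R)$; hence $\colim_k M[\cJ_n(R)^k]=M$ and the colimit of the evaluation maps is the identity. Filtered colimits of quasi-coherent sheaves are quasi-coherent and are computed sectionwise on affines (the argument of Lemma \ref{tens-is-sheaf}), so this local computation yields the natural isomorphism $\colim_k\hom_\cO(\cO/\cJ_n^k,\cF)\xrightarrow{\ \cong\ }\cF$.

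For part 2, I would first check that $\hom_\cO(\cO/\cJ_n^k,\cF)$ is annihilated by $\cJ_n^k$: for local sections $j$ of $\cJ_n^k$ and $\phi$ of $\hom_\cO(\cO/\cJ_n^k,\cF)$ one has $(j\phi)(\bar r)=\phi(\overline{jr})=0$, since $jr$ is a section of $\cJ_n^k$. Thus $\hom_\cO(\cO/\cJ_n^k,\cF)$ is a quasi-coherent module sheaf over $\cO/\cJ_n^k$. Writing $f_k\colon\cN(n)_k\hookrightarrow\cN$ for the closed embedding cut out by $\cJ_n^k$, so that $(f_k)_\ast\cO_{\cN(n)_k}=\cO/\cJ_n^k$, the analogue of Proposition \ref{qc-affine} for representable affine morphisms of algebraic stacks — which follows from the scheme case by faithfully flat descent along a presentation of $\cN$ — identifies quasi-coherent sheaves on $\cN(n)_k$ with quasi-coherent $\cO/\cJ_n^k$-modules on $\cN$ via $(f_k)_\ast$. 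Applying the inverse of this equivalence to $\hom_\cO(\cO/\cJ_n^k,\cF)$ produces the desired quasi-coherent $\cF_k$ and the natural isomorphism $(f_k)_\ast\cF_k\cong\hom_\cO(\cO/\cJ_n^k,\cF)$.

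The one step that needs genuine care, and the only place the scale hypothesis is really used, is the quasi-coherence and flat-base-change behavior of $\hom_\cO(\cO/\cJ_n^k,\cF)$: this rests on $\cO/\cJ_n^k$ being finitely presented, which in turn rests on $\cJ_n$ being locally generated by a regular sequence. Once that is in hand the remaining steps — identifying $\Hom_R(R/\cJ_n(R)^k,M)$ with a torsion submodule, recognizing the colimit as the identity, and passing across the equivalence $(f_k)_\ast$ — are routine.
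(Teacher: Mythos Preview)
Your argument is correct and follows the same route as the paper's: reduce the first statement to a local check that $\colim_k M[\cJ_n(R)^k]=M$ for a $\cJ_n(R)$-torsion module, and deduce the second from the equivalence between quasi-coherent $\cO/\cJ_n^k$-modules on $\cN$ and quasi-coherent sheaves on $\cN(n)_k$ furnished by the affine closed immersion $f_k$ (Proposition~\ref{qc-affine}). The paper's proof is much terser---it simply invokes finite generation of $\cJ_n$ for the first part and the affineness of closed inclusions for the second---whereas you supply the extra verification that $\hom_\cO(\cO/\cJ_n^k,\cF)$ is quasi-coherent and commutes with flat base change, using finite presentation of $\cO/\cJ_n^k$ via the regular-sequence structure; this is a legitimate detail the paper elides, though strictly speaking finite generation of $\cJ_n$ (not the full scale hypothesis) already suffices for the colimit identification in part~1.
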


\begin{proof}The first statement can be check locally, and there
it follows from the fact that $\cJ_n$ if finitely generated. For
the second statement, we use the fact that any closed
inclusion is affine (see \ref{affine-morphisms}). From this it follows that
$(f_k)_\ast$ induces an equivalence between the
categories of quasi-coherent $\cO/\cJ_n^k$-modules
on $\cN$ and the category of quasi-coherent modules on
$\cN(n)_k$. See Proposition \ref{qc-affine}.
\end{proof}

Suppose $\cF$ is a quasi-coherent $\cJ_n$-torsion sheaf.
The Lemma \ref{char-support} implies that $i_{n-1}^\ast \cF =0$.
We next consider $i_n^\ast \cF$ or, more exactly, the resulting
push-forward $(i_n)_\ast i_n^\ast \cF$, which is a sheaf
on $\cN$. The next result shows that
the natural map
$$
(i_n)_\ast i_n^\ast \cF \to R(i_n)_\ast i_n^\ast \cF
$$
is an equivalence and gives a local description of $(i_n)_\ast i_n^\ast \cF$.

\begin{prop}\label{no-higher-derived} Let $\cF$ be a quasi-coherent
$\cJ_n$-torsion sheaf on $\cN$.
Let $\Spec(R) \to \cN$ be any flat and quasi-compact
morphism so that $\cJ_n(R)/\cJ_{n-1}(R)$ is free of rank one
over $R/\cJ_{n-1}(R)$. Then we have an isomorphism
$$
[(i_n)_\ast i_n^\ast \cF](R) \cong \cF[u_n^{-1}]
$$
where $u_n \in \cJ_n(R)$ is any element so that
$u_n + \cJ_{n-1}(R)$ generates the $R$-module $\cJ_n(R)/\cJ_{n-1}(R)$.
Furthermore, 
$$
R^s(i_n)_\ast i_n^\ast \cF = 0,\ s > 0.
$$

\end{prop}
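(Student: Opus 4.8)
The whole statement is local, so I would work entirely with a fixed flat and quasi-compact morphism $\Spec(R)\to\cN$ over which $\cJ_n(R)/\cJ_{n-1}(R)$ is free of rank one, generated by the image of some $u_n\in\cJ_n(R)$. Such morphisms form a cover of $\cN$ in the relevant sense (by passing to faithfully flat extensions if needed, exactly as in the proof of Lemma \ref{char-support}), and both sides of the claimed isomorphism are quasi-coherent sheaves, so it suffices to compute their $R$-sections and to check the higher derived functors vanish locally. The open substack $\cV(n-1)\subseteq\cN$ is, over $\Spec(R)$, the basic open $\Spec(R[u_n^{-1}])\hookrightarrow\Spec(R)$: indeed $\cJ_n(R)$ is generated by $u_n$ together with the generators of $\cJ_{n-1}(R)$, and over the locus where $\cJ_n$ is the unit ideal the element $u_n$ must be invertible modulo a nilpotent coming from $\cJ_{n-1}$; since $\cF$ is $\cJ_n$-torsion (hence $i_{n-1}^\ast\cF=0$ by Lemma \ref{char-support}), inverting $u_n$ already kills $\cJ_{n-1}(R)\cdot\cF(R)$, so $i_n^\ast\cF$ restricted to this chart is the quasi-coherent sheaf associated to $\cF(R)[u_n^{-1}]$.

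**The computation of $(i_n)_\ast$.** Write $j:\Spec(R[u_n^{-1}])\hookrightarrow\Spec(R)$ for the open immersion. By cartesianness of quasi-coherent sheaves (Corollary \ref{pull-backs-affine-scheme} and the base-change properties of pull-back, applied to the square expressing $\cV(n-1)\times_\cN\Spec(R)\cong\Spec(R[u_n^{-1}])$), one has $[(i_n)_\ast i_n^\ast\cF](R)\cong j_\ast(j^\ast\widetilde{\cF(R)})(\Spec(R)) = \cF(R)[u_n^{-1}]$, which is the first assertion. Here I would be slightly careful that the formation of $(i_n)_\ast i_n^\ast\cF$ commutes with the flat base change $\Spec(R)\to\cN$; this is Proposition \ref{push-forward-qc} together with flat base change for quasi-compact quasi-separated morphisms, and $i_n$ is quasi-compact and separated since it is an open immersion of a quasi-compact stack. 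So the only real content left is the vanishing of the higher derived push-forwards.

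**The vanishing $R^s(i_n)_\ast i_n^\ast\cF=0$ for $s>0$.** This again reduces, by flat base change, to computing $R^sj_\ast$ of a quasi-coherent sheaf on the affine scheme $\Spec(R[u_n^{-1}])$ and pushing forward along an open immersion $j$ whose complement is the vanishing locus of the single element $u_n$. The standard fact here is that if $U=\Spec(A[f^{-1}])\hookrightarrow\Spec(A)$ is a basic open and $\cG$ is quasi-coherent on $U$, then $j_\ast\cG$ is quasi-coherent and $R^sj_\ast\cG=0$ for $s>0$: the Čech complex for the one-element cover $\{U\}$ of $U$ is concentrated in degree zero, so $H^s(U,\cG)=0$ for $s>0$ and likewise for all its restrictions, and $j$ being an affine morphism (as $U\to\Spec(A)$ is affine) has no higher direct images on affines. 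Alternatively, and perhaps more in the spirit of the surrounding material, I would use Lemma \ref{where-tor-supp}: $\cF$ is the colimit of the $\hom_\cO(\cO/\cJ_n^k,\cF)\cong (f_k)_\ast\cF_k$, and $i_n^\ast$ kills each of these (they are supported on $\cN(n+1)\subseteq\cN(n)=\cV(n)^c$... no — they are $\cJ_n$-torsion, so $i_n^\ast$ does \emph{not} kill them, and one must instead observe that $i_n^\ast\cF=i_n^\ast(\cF[u_n^{-1}])$ and $\cF[u_n^{-1}]$ is the localization). I would ultimately settle on the clean Čech argument, since it is self-contained.

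**Expected main obstacle.** The genuinely fiddly point is the identification $\cV(n-1)\times_\cN\Spec(R)\cong\Spec(R[u_n^{-1}])$, i.e.\ that the open complement of the Cartier divisor cut out by $\cJ_n/\cJ_{n-1}$ (after already being on the chart where $\cJ_{n-1}$ is as small as possible) is \emph{exactly} inverting $u_n$ and not merely contained in it — one must use that $\cF$ is $\cJ_n$-torsion so that the difference between inverting $u_n$ and inverting all of $\cJ_n(R)$ is invisible to $\cF$. Once that is pinned down, the rest is the elementary vanishing of higher cohomology under a one-element affine open cover, combined with flat base change to descend from the chart $\Spec(R)$ to $\cN$. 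I do not expect any difficulty that requires more than the tools already assembled (Lemmas \ref{char-support} and \ref{where-tor-supp}, Proposition \ref{push-forward-qc}, and Corollary \ref{pull-backs-affine-scheme}).
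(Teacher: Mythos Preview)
Your approach has the right ingredients but a genuine gap in execution. The core error is the claim that the open $\cV(n)\times_\cN\Spec(R)$ is the single basic open $\Spec(R[u_n^{-1}])$. (You write $\cV(n-1)$ and there are other index slips, but the statement concerns $i_n:\cV(n)\to\cN$; I read you as meaning $\cV(n)$ throughout.) This is false: $\cV(n)$ is the complement of $\cN(n+1)$, cut out by $\cJ_{n+1}$, so over $\Spec(R)$ it is the union of the basic opens where each generator of $\cJ_{n+1}(R)$ is inverted, not just the one for $u_n$. Your justification---that $u_n$ becomes invertible ``modulo a nilpotent coming from $\cJ_{n-1}$''---fails because the lower ideals are not nilpotent on $\Spec(R)$ in general. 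Consequently your ``clean \v Cech argument'' for a one-element affine cover does not apply, and that is precisely where the vanishing of $R^s(i_n)_\ast$ for $s>0$ is supposed to come from.

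The fix is exactly the route you begin and then abandon: use Lemma~\ref{where-tor-supp} to write $\cF$ as a filtered colimit of sheaves $(f_k)_\ast\cF_k$ pushed forward from the infinitesimal neighborhoods $f_k:\cN(n)_k\to\cN$. On $\cN(n)_k$ the ideal $\cJ_n$ \emph{is} nilpotent, so there the open $\cN(n)_k\times_\cN\cV(n)$---the complement of $\cN(n+1)$ inside $\cN(n)_k$---is genuinely the principal open where $u_n$ is invertible; this is the setting in which your nilpotence remark becomes correct. Now the one-element \v Cech argument gives $R(p_1)_\ast p_1^\ast\cF_k\simeq\cF_k[u_n^{-1}]$ concentrated in degree zero, and pushing forward along the exact affine $(f_k)_\ast$ and passing to the colimit finishes. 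This is precisely the paper's proof: it uses the base-change square with $\cN(n)_k$ to transport the problem to a closed substack where the open really is principal. An alternative salvage of your direct approach would be to take the full \v Cech cover of $\cV(n)\times_\cN\Spec(R)$ by all the basic opens and observe that $\cJ_n$-torsion kills $\cF$ on every piece except $\Spec(R[u_n^{-1}])$, collapsing the complex to degree zero; but the reduction via Lemma~\ref{where-tor-supp} is what makes the single-generator picture honest.
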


\begin{proof} By Lemma \ref{where-tor-supp} and a colimit
argument, we may assume that $\cF = f_\ast \cE$
for some quasi-coherent sheaf $\cE$ on the $k$th
infinitesimal neighborhood $f: \cN(n)_k \to\cN$ of $\cN(n)$.

Consider the pull-back square\footnote{In the case where $\cN = \cM_\fg$ and $\cJ_n = \cI_n$, we have that
$\cN(n)_k \times_{\cN} \cV(n)$ is the $k$th infinitesimal neighborhood
of $\layer{n}$.}
$$
\xymatrix{
\cN(n)_k \times_{\cN} \cV(n) \rto^-{p_2} \dto_{p_1} & \cV(n) \dto^{i_n}\\
\cN(n)_k \rto_{f} & \cN.\\
}
$$
(In the case where $\cN = \cM_\fg$ and $\cJ_n = \cI_n$, we have that
$\cN(n)_k \times_{\cN} \cV(n)$ is the $k$th infinitesimal neighborhood
of $\layer{n}$.) Then
$i_n^\ast f_\ast \cE= (p_2)_\ast p_1^\ast \cE$; thus, we may
conclude that we have an equivalence in the derived category
\begin{equation}\label{compute-back-1}
R(i_n)_\ast i_n^\ast f_\ast \cE \simeq f_\ast R(p_1)_\ast 
p_1^\ast \cE.
\end{equation}
The open inclusion $\cN(n)_k \times_{\cN} \cV(n) \subseteq \cN(n)_k$ is the
complement of the closed inclusion $\cN(n+1) \subseteq \cN(n)_k$. Locally,
this closed inclusion is defined by the vanishing of $u_n$. We see that this
implies
\begin{equation}\label{compute-back-2}
R(p_1)_\ast  p_1^\ast \cE \simeq (p_1)_\ast p_1^\ast \cE \cong
\cE[u_n^{-1}].
\end{equation}
The result now follows because $f_\ast$ is exact.
\end{proof}

\begin{rem}\label{invert-vn-1} Now let $f:\cN \to \cM_\fg$
be a representable and flat morphism of algebraic stacks and
let $\{\cJ_n\} = \{f^\ast \cI_n\}$ be the resulting
scale. See Proposition \ref{LEFT-easy-part}. Regard $v_n$
as a global section of $\omega^{p^n-1}$ considered as
a sheaf over $\cN(n)$. Suppose $\cF$ is actually an 
$\cO/\cJ_n$-module sheaf; that is, suppose $\cF = (j_n)_\ast \cE$
for some quasi-coherent sheaf $\cE$ on $\cN(n)$. Then we can form
the colimit sheaf $\cF[v_n^{-1}]$ of the sequence
$$
\xymatrix{
\cF \rto^-{v_n} & \cF \otimes \omega^{p^n-1} \rto^-{v_n} & \cF \otimes \omega^{2(p^n-1)} \rto^-{v_n} & \cdots.
}
$$
We claim that $\cF[v_n^{-1}] \cong (i_n)_\ast i_n^\ast \cF$.

By Equations \ref{compute-back-1} and \ref{compute-back-2}, and
because $(j_n)_\ast$ is exact, it is sufficient to show
$$
\cE[v_n^{-1}] \cong (p_1)_\ast p_1^\ast \cE.
$$
Since multiplication by
$v_n$ is invertible for sheaves on
$$
\cN(n) \times_{\cN} \cV(n) \cong \layer{n} \times_{\cM_\fg} \cN,
$$
the natural map
$\cE\to (p_1)_\ast p_1^\ast \cE$ factors as a map
$\cE[v_n^{-1}] \to (p_1)_\ast p_1^\ast \cE$. To show this is
an isomorphism we need only work locally. Let
$$
G:\Spec(R) \longr \cM(n)
$$
be a flat and quasi-compact morphism classifying a formal group $G$. 
Taking a faithfully flat extension if needed, we may choose
an invariant derivation $u \in \omega^{-1}_G$ for $G$ generating
the free $R$-module $\omega^{-1}_G$; then the
element
$$
u_n \defeq u^{(p^n-1)}v_n(G) \in R = \omega^{0}_G
$$
generates $\cJ_n(R) = \cJ_n(R)/\cJ_{n-1}(R)$. Then we have 
a commutative diagram
$$
\xymatrix{
\cE(G) \rto^-{v_n} \dto_{=}&
\cE(G) \otimes \omega^{p^n-1} \rto^-{v_n} \dto^{u^{p^n-1}}&
\cE(G) \otimes \omega^{2(p^n-1)} \dto^{u^{2(p^n-1)}}\rto^-{v_n} & \cdots\\
\cE(G) \rto^-{u_n} & \cE(G) \rto^-{u_n} & \cE(G) \rto^-{u_n} & \cdots.
}
$$
Since the vertical maps are isomorphisms, the claim follows from
Proposition \ref{no-higher-derived}.

This observation can be easily be generalized to the case where
$\cF$ is an $\cO/\cJ_n^k$-module sheaf for any $k \geq 1$ because
a power of $v_n$ is a global section of the appropriate power
of $\omega$.
\end{rem}

Because of the previous remark, the following definition does not
create an ambiguity.\index{$\cF[v_n^{-1}]$}\index{inverting $v_n$}

\begin{defn}\label{invert-vn}Let $f:\cN \to \cM_\fg$ be a representable
and flat morphism of algebraic stacks and let $\{\cJ_n\} = \{f^\ast \cI_n\}$
be the resulting scale for $\cN$. If $\cF$ be a quasi-coherent 
$\cJ_n$-torsion sheaf on $\cN$ define
$$
\cF[v_n^{-1}] = (i_n)_\ast i_n^\ast \cF.
$$
\end{defn}

\subsection{LEFT: A condition for flatness}

Let $f:\cN \to \cM_\fg$ be a representable
morphism of algebraic stacks. We would like to give a concrete and easily
checked condition on this morphism to guarantee that it
be flat. This condition is a partial converse to Proposition \ref{LEFT-easy-part}
and a version of the Landweber Exact Functor Theorem (LEFT).
This theorem has a variety of avatars; the one we give
here is due to Hopkins and Miller. See \cite{hop-notes} and \cite{LEFT}.
The original source is \cite{landweber}.


In this section we will work over $\cM_\fg$ over $\Spec(\ZZ)$, rather
than at a given prime. 

Now let $f:\cN \to \cM_\fg$ be a representable, quasi-compact,
and quasi-separated morphism of stacks. The hypotheses on the morphism guarantee that if $\cF$ is a quasi-coherent sheaf on $\cN$, then $f_\ast \cF$ is
a quasi-coherent sheaf on $\cM_\fg$. Compare Proposition
\ref{push-forward-qc}. Let $\cJ_n \subseteq \cO_\cN$
be the kernel of the morphism 
$$
\cO_\cN = f^\ast \cO_\fg \to f^\ast (\cO_\fg/\cI_n).
$$
Thus $\cJ_n$ defines the closed inclusion 
$$
j_n:\cN(n) = \cM(n) \times_{\cM_\fg} \cN \mathop{\longr}^{\subseteq} \cN.
$$
Thus there is a surjection $f^\ast \cI_n \to \cJ_n$ which becomes an isomorphism
if $f$ is flat. Also note that
$$
(j_n)_\ast \cO_{\cN(n)} = \cO_\cN/\cJ_n = \cO_\cN/f^\ast \cI_n.
$$
From this we can conclude that the global section $v_n \in
H^0(\cM(n),\omega^{p^n-1})$ defines a surjection
$$
v_n:\cO_\cN/\cJ_{n-1} \to \cJ_n/\cJ_{n-1} \otimes \omega^{p^n-1}.
$$
This includes the case $n=0$; we set $v_0 = p$.
The basic criterion of flatness is the following. Note
that if $\cN$ is a stack over $\ZZ_{(\ell)}$ for some prime $\ell$, then
the hypotheses automatically true for all prime $p \ne \ell$. This
remark will have a variant for all our other versions of
Landweber exactness below.

\begin{thm}[{\bf Landweber Exactness I}]\label{land-exact-2}Let
$f:\cN \to \cM_\fg$ be
a representable, quasi-compact, and quasi-separated morphism of stacks.
Suppose that for all primes $p$,
\begin{enumerate}

\item $v_n:\cO_\cN/\cJ_{n-1} \to \cJ_n/\cJ_{n-1} \otimes \omega^{p^n-1}$
is an isomorphism, and

\item $\cJ_n = \cO_\cN$ for large $n$.
\end{enumerate}
Then $f$ is flat.
Conversely, if for all primes $p$, $\cJ_n = \cO_\cN$ for some $n$, then $f$ is flat only if
condition (1) holds.
\end{thm}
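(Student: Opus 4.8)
\textbf{Proof plan for Theorem \ref{land-exact-2}.} The statement splits into a sufficiency direction (conditions (1) and (2) imply flatness) and a necessity direction (if $f$ is flat and the filtration terminates, then (1) holds). The plan is to handle the converse first, as it is essentially a restatement of Proposition \ref{LEFT-easy-part}: if $f$ is flat and representable, then $f^\ast \cI_n \to \cJ_n$ is an isomorphism, so $\cN(n) = \cM(n)\times_{\cM_\fg}\cN \subseteq \cN$ is the effective Cartier divisor pulled back from $\cM(n+1)\subseteq\cM(n)$, which is exactly the assertion that $v_n:\cO_\cN/\cJ_{n-1}\to \cJ_n/\cJ_{n-1}\otimes\omega^{p^n-1}$ is an isomorphism. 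So the bulk of the work is the forward implication.

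For the forward implication, the key reduction is that flatness is local for the $fpqc$-topology on $\cM_\fg$ (Lemma \ref{check-once}), so I may check it after pulling back along a faithfully flat cover of $\cM_\fg$. The natural cover to use is $\Spec(V)\to \cM_\fg\otimes\ZZ_{(p)}$ with $V = \ZZ_{(p)}[u_1,u_2,\dots]$ from Corollary \ref{ptyp-cover}, working one prime at a time; over this cover the ideals $\cJ_n$ pull back to $(p,u_1,\dots,u_{n-1})$ and the hypotheses (1) and (2) say precisely that $p,u_1,u_2,\dots$ acts as a (possibly finite) regular sequence on the $V$-module $M = (f_\ast\cO_\cN)(\Spec(V))$ — more accurately, on the base-changed module over whatever affine $\Spec(A)\to\cM_\fg$ we are testing, after further faithfully flat extension to make $\cJ_n$ generated by $(p,u_1,\dots,u_{n-1})$. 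I would then invoke the classical Landweber exactness criterion at the level of modules: if $p,v_1,v_2,\dots$ is a regular sequence on an $L$-module (equivalently $BP_\ast$-module) $M$ in the sense that multiplication by $v_n$ is injective on $M/(p,\dots,v_{n-1})M$ and the filtration exhausts, then $M$ is flat over $L$. This is exactly the algebra underlying \cite{landweber}, with the streamlined proof in \cite{hop-notes} and \cite{LEFT}; the point emphasized in the introduction is that $\cH(n)$ has a unique geometric point, which is what makes the invariant-prime-ideal input (Theorem \ref{ht-n-reduced}) available.

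Concretely, the module-level argument runs as follows: to show $M$ is flat over a coherent ring like $L_{(p)}$ it suffices to show $\Tor_1^{L}(L/I, M) = 0$ for all finitely generated invariant (Landweber) ideals $I$, and by Theorem \ref{ht-n-reduced} (the geometric form of Landweber's invariant prime ideal theorem) the only invariant prime ideals over $\ZZ_{(p)}$ are $(p,v_1,\dots,v_{n-1})$ and $(0)$; a dévissage over a Noetherian subring carrying the relevant data then reduces $\Tor$-vanishing to the regularity of the sequence $p,v_1,v_2,\dots$ on $M$, which is hypothesis (1), together with (2) to terminate the induction. Passing back through the $fpqc$-descent, flatness of $f_\ast\cO_\cN(\Spec(A)) $ over $A$ for every affine $\Spec(A)\to\cM_\fg$ gives flatness of $f$.

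\textbf{Main obstacle.} The genuinely delicate point is the descent bookkeeping: the hypotheses (1) and (2) are stated intrinsically on $\cN$, but the regular-sequence criterion is a statement about an honest module over an honest ring, so I must pass to a faithfully flat $\Spec(A)\to\cM_\fg$ over which the $\cJ_n$ are actually generated by the expected elements $p,u_1,\dots,u_{n-1}$, verify that conditions (1)–(2) really do translate into "$p,u_1,u_2,\dots$ is a regular sequence on $(f_\ast\cO_\cN)(\Spec(A))$" (and not something weaker), and then check that flatness over all such $A$ descends. The other subtlety, which I would flag but not belabor, is the reduction to Noetherian coefficients needed to make the $\Tor$ computation and the invariant prime ideal theorem bite — here the fact that $L$ is a filtered colimit of Noetherian rings (Remark \ref{mod-shvs}) is what saves the day, but the finitely-generated-ideal argument has to be set up carefully so that invariance of the ideals is preserved under the reduction.
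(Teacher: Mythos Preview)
Your handling of the converse via Proposition \ref{LEFT-easy-part} matches the paper. For the forward direction you take a genuinely different route --- closer to Landweber's original argument and Miller's account in \cite{LEFT} --- while the paper follows Hopkins. The paper reformulates the hypotheses as saying that the scale $\{\cI_n\}$ acts regularly and finitely on the quasi-coherent sheaf $\cF = f_\ast\cO_\cN$ (Remark \ref{LEFT-reformulate}, Theorems \ref{land-exact-2-bis1} and \ref{land-exact-2-bis}) and then runs a \emph{downward} induction on $\Tor$-degree (Proposition \ref{flat-1}): from $\cF/\cI_k = 0$ for large $k$ one descends through the sequences $0 \to \cF/\cI_n \to (\cF/\cI_n)[v_n^{-1}] \to \cF/\cI_{n+1} \to 0$, and the required input is that $\Tor^{\cO}_s\bigl((\cF/\cI_n)[v_n^{-1}], -\bigr) = 0$ for $s > n$. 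That bound (Proposition \ref{flat-2}) comes not from the invariant-prime classification but from the gerbe description $\layer{n} \simeq B\Aut(\Gamma_n)$ of Theorem \ref{diff-ht}: quasi-coherent sheaves on $\layer{n}$ are $\FF_p$-vector spaces with comodule structure, so tensoring over $\cO_{\layer{n}}$ is exact and the residual $\Tor$ is a local computation against a regular sequence of length $n$.

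Your route can be made to work, but as written it contains a real gap. The sentence ``to show $M$ is flat over a coherent ring like $L_{(p)}$ it suffices to show $\Tor_1^L(L/I, M) = 0$ for all finitely generated \emph{invariant} ideals $I$'' is false: flatness over $L$ as a ring requires testing against all finitely generated ideals, and restricting to invariant ones is a genuine weakening. The standard witness is $E(n)_\ast$, which satisfies your regular-sequence hypothesis yet is not flat over $BP_\ast$ --- already $\Tor_1^{BP_\ast}\bigl(BP_\ast/(v_{n+1}), E(n)_\ast\bigr) \cong E(n)_\ast \ne 0$. What the invariant-prime d\'evissage actually proves is that $\cF \otimes_{\cO_\fg} (-)$ is exact on quasi-coherent sheaves on $\cM_\fg$, i.e.\ on $(L,W)$-comodules; that is the content of Theorem \ref{land-exact-2-bis}, not $L$-module flatness of $\cF(L)$. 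You then still owe the passage from ``$f_\ast\cO_\cN$ is flat over $\cO_\fg$'' to ``$f$ is a flat representable morphism,'' and framing everything as a $V$-module over $\Spec(V)$ obscures rather than supplies that step. Your ``main obstacle'' paragraph worries about descent bookkeeping and Noetherian reduction, but the actual obstacle is this conflation of ring-theoretic flatness over $L$ with sheaf-theoretic flatness over $\cO_\fg$.
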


\begin{rem}\label{land-exact-2-redux} The hypotheses of Theorem \ref{land-exact-2}
imply, in particular, that the ideals $\cJ_n$ form a finite regular scale for
$\cN$; in particular, in the descending chain of closed substacks
$$
\cdots \subseteq \cN(n) \subseteq \cN(n-1) \subseteq \cdots \subseteq
\cN(1) \subseteq \cN
$$
each of the inclusions is that of an effective Cartier divisor and that there
is an $n$ so that $\cN(k)$ is empty for $k > n$. Furthermore, an
inductive argument shows that the natural surjections
$f^\ast \cI_n \to \cJ_n$ are, in fact, isomorphisms. Indeed, if
$f^\ast \cI_{n-1} \cong \cJ_{n-1}$, then we obtain a diagram
$$
\xymatrix@R=10pt{
&f^\ast \cI_n/f^\ast \cI_{n-1}\ar@{>>}[dd] \\
\cO_\cN/f^\ast \cI_{n-1} \ar@{>>}[ur]^{v_n}\ar[dr]_{v_n}^\cong\\
&\cJ_n/f^\ast \cI_{n-1}
}
$$
and we can conclude $f^\ast \cI_n/f^\ast \cI_{n-1} \to \cJ_n/\cJ_{n-1}$ is
an isomorphism.
\end{rem}

By specializing to the affine case and using Remark \ref{vn-local-1},
we obtain a more classical version of Landweber exactness. 

\begin{cor}\label{land-exact-1}Let $g:\Spec(A) \to \cM_\fg$  classify
a formal group $G$ with a coordinate $x$. For all primes $p$,
let $p,u_1,u_2,\ldots$
be elements of $A$ so that the $p$-series can be written
$$
[p](x) = u_kx^{p^k} + \cdots
$$
modulo $(p,u_1,\ldots,u_{k-1})$. Suppose the elements $p, u_1,\ldots$
form a regular sequence and suppose there is some $n$ so that
$$
(p,u_1,\ldots,u_{n-1}) = A
$$
Then $g$ is flat.
\end{cor}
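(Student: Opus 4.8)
\textbf{Proof proposal for Corollary \ref{land-exact-1}.} The plan is to deduce this affine statement directly from Theorem \ref{land-exact-2} by translating the hypotheses on the regular sequence $p, u_1, u_2, \ldots$ into the sheaf-theoretic conditions (1) and (2) of that theorem. First I would take $\cN = \Spec(A)$ and $f = g$; since $\Spec(A)$ is affine, $g$ is automatically representable, quasi-compact, and quasi-separated (every morphism of affine schemes has these properties, as noted after Remark \ref{stand-props-schemes-1}), so the structural hypotheses of Theorem \ref{land-exact-2} are free. It then remains to identify the ideal sheaves $\cJ_n \subseteq \cO_\cN$ and the maps $v_n$ explicitly in terms of the chosen coordinate.

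The key step is the local computation already recorded in Remark \ref{vn-local-1} and Remark \ref{p-typical-vn}: after choosing the coordinate $x$ (and, if one wishes, replacing it by a $p$-typical coordinate via Cartier's theorem, though the statement as given works with any coordinate once the $u_i$ are chosen to express the $p$-series as stated), the ideal $\cJ_n$ of the closed subscheme $\cN(n) = \cM(n) \times_{\cM_\fg} \Spec(A)$ is precisely the ideal $(p, u_1, \ldots, u_{n-1}) \subseteq A$, and the section $v_n$ induces, after trivializing $\omega_G$ by the generator $\eta(G,x) = dx/F_x(0,x)$, the map
$$
v_n: A/(p,u_1,\ldots,u_{n-1}) \longr (p,u_1,\ldots,u_{n-1},u_n)/(p,u_1,\ldots,u_{n-1})
$$
given by multiplication by the residue class of $u_n$. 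Now the hypothesis that $p, u_1, u_2, \ldots$ is a regular sequence says exactly that each $u_n$ is a non-zero-divisor on $A/(p,u_1,\ldots,u_{n-1})$, which means this multiplication map is injective; it is visibly surjective onto $\cJ_{n+1}/\cJ_n$ by construction. Hence condition (1) of Theorem \ref{land-exact-2} holds for every prime $p$. Condition (2) is the hypothesis $(p,u_1,\ldots,u_{n-1}) = A$, i.e. $\cJ_n = \cO_\cN$, which by the regularity of the scale also forces $\cJ_k = \cO_\cN$ for all $k \geq n$. Theorem \ref{land-exact-2} then gives that $g$ is flat.

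The main obstacle is not conceptual but bookkeeping: one must be careful about the fact that $u_n$ is only well-defined modulo $\cJ_n(A) = (p, u_1, \ldots, u_{n-1})$, so that "regular sequence" must be read in the correct order and the identification $\cJ_n(A) = (p, u_1, \ldots, u_{n-1})$ must be justified by induction on $n$ — this is the content of Remark \ref{land-exact-2-redux}, which shows the natural surjections $f^\ast \cI_n \to \cJ_n$ are isomorphisms once condition (1) holds below level $n$. I would also note that the apparent quantification over all primes $p$ is harmless: if $A$ happens to be a $\ZZ_{(\ell)}$-algebra the conditions are vacuous for $p \neq \ell$ (the corresponding $v_0 = p$ is already a unit, so $\cJ_1 = A$), exactly as in the remark preceding Theorem \ref{land-exact-2}. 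With these points settled the corollary follows immediately.
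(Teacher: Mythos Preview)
Your proposal is correct and follows exactly the route the paper indicates: the paper simply says ``By specializing to the affine case and using Remark \ref{vn-local-1}, we obtain a more classical version of Landweber exactness,'' and you have spelled out precisely that specialization, including the care about $\cJ_n$ versus $f^\ast\cI_n$ handled in Remark \ref{land-exact-2-redux}.
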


In Landweber's original paper \cite{landweber} the hypothesis that
$\cI_n(G) = A$ for some $n$ was not required. I believe Hollander
also has a way to remove this hypothesis. See \cite{hollander2}.

\begin{rem}\label{LEFT-reformulate}We can reformulate the hypotheses of
Theorem \ref{land-exact-2}
as conditions on the quasi-coherent algebra sheaf $f_\ast \cO_\cN$
on $\cM_\fg$.
As a matter of notation, let's write
$$
\cF/\cI_n \defeq (j_n)_\ast j_n^\ast \cF
$$
for any $\cF$ be a quasi-coherent
sheaf on $\cM_\fg$. We will say that the regular scale $\{\cI_n\}$ acts
{\bf regularly and finitely}\index{regular and finite action} on $\cF$ if for all $n$
$$
v_n:\cF/\cI_n \longr  \cF/\cI_n \otimes \omega^{p^n-1}
$$
is injective and $\cF/\cI_n = 0$ for large $n$. Because have
a pull-back square for all $n$
$$
\xymatrix{
\cM(n) \times_{\cM_\fg} \cN \rto \dto & \cN \dto^f\\
\cM(n) \rto_{j_n} & \cM_\fg
}
$$
we have that $f_\ast(\cO_\cN/\cJ_n) = (f_\ast \cO_\cN)/\cI_n$.

Suppose the hypotheses of Theorem \ref{land-exact-2} hold. Then
$$
v_n:\cO_\cN/\cJ_{n-1} \to \cO_n/\cJ_{n-1} \otimes \omega^{p^n-1}
$$
is injective and $\cO_\cN/\cJ_n = 0$ for large $n$. Since $f_\ast$
is left exact, we have that $\{\cI_n\}$ acts regularly and finitely
on $f_\ast \cO_\cN$.

Conversely, suppose $\{\cI_n\}$ acts regularly and finitely
on $f_\ast \cO_\cN$. We will see below in Theorem \ref{flat-2}
that this implies that $f$ is a flat morphism, which in turn implies
that $f^\ast \cI_n = \cJ_n$ and, hence, that
$$
f^\ast (\cI_n/\cI_{n-1}) = \cJ_n/\cJ_{n-1}.
$$
This, in its turn, implies that the hypotheses of Theorem \ref{land-exact-2}
hold. Thus, Theorem \ref{land-exact-2} is equivalent to the following
result.
\end{rem}

\begin{thm}[{\bf Landweber Exactness II}]\label{land-exact-2-bis1}Let
$f:\cN \to \cM_\fg$ be
a representable, quasi-compact, and quasi-separated morphism of stacks.
Suppose  that for all primes,
the set of ideals $\{\cI_n\}$ acts regularly and finitely on $f_\ast \cO_\cN$.
Then $f$ is flat.

Conversely, if for all primes $p$, $f_\ast \cO_\cN/\cI_n = 0$ for some $n$,
then $f$ is flat only if
the set of ideals $\{\cI_n\}$ acts regularly and finitely on $f_\ast \cO_\cN$.
\end{thm}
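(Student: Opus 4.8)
The plan is to deduce Theorem \ref{land-exact-2-bis1} from Theorem \ref{land-exact-2} by establishing the dictionary sketched in Remark \ref{LEFT-reformulate}, so the real work is entirely contained in the base-change identity and the left-exactness bookkeeping already indicated there. First I would record the pull-back square
$$
\xymatrix{
\cM(n) \times_{\cM_\fg} \cN \rto \dto & \cN \dto^f\\
\cM(n) \rto_{j_n} & \cM_\fg
}
$$
and, using that $f$ is representable, quasi-compact and quasi-separated (so $f_\ast$ of a quasi-coherent sheaf is quasi-coherent, Proposition \ref{push-forward-qc}) together with the fact that $j_n$ is a closed, hence affine, morphism, observe that flat base change in this square gives the natural identification $f_\ast(\cO_\cN/\cJ_n) \cong (f_\ast\cO_\cN)/\cI_n$; here I use $\cF/\cI_n \defeq (j_n)_\ast j_n^\ast \cF$ and the exactness of $(j_n)_\ast$ from Proposition \ref{qc-affine}. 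The point is that $(j_n)_\ast j_n^\ast f_\ast \cO_\cN$ and $f_\ast (j'_n)_\ast (j'_n)^\ast \cO_\cN$ agree, where $j'_n$ is the closed inclusion $\cN(n)\subseteq\cN$; this is just commutativity of push-forward along the two sides of the square, valid because one leg is affine.

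Next I would prove the forward implication. Assume the hypotheses of Theorem \ref{land-exact-2}: for every prime $p$, the map $v_n:\cO_\cN/\cJ_{n-1}\to \cO_\cN/\cJ_{n-1}\otimes\omega^{p^n-1}$ is an isomorphism and $\cJ_n=\cO_\cN$ for large $n$. Applying the left-exact functor $f_\ast$ and using the identification above, $v_n:(f_\ast\cO_\cN)/\cI_{n-1}\to (f_\ast\cO_\cN)/\cI_{n-1}\otimes\omega^{p^n-1}$ is injective and $(f_\ast\cO_\cN)/\cI_n=0$ for large $n$; that is precisely the statement that $\{\cI_n\}$ acts regularly and finitely on $f_\ast\cO_\cN$. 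Conversely, suppose $\{\cI_n\}$ acts regularly and finitely on $f_\ast\cO_\cN$. The surjection $f^\ast\cI_n\to\cJ_n$ always holds; the inductive argument of Remark \ref{land-exact-2-redux} shows that once $f$ is known to be flat these surjections are isomorphisms, whence $f^\ast(\cI_n/\cI_{n-1})=\cJ_n/\cJ_{n-1}$ and condition (1) of Theorem \ref{land-exact-2} follows. So the only missing link is the assertion, attributed in Remark \ref{LEFT-reformulate} to a ``Theorem \ref{flat-2}'', that regular and finite action of $\{\cI_n\}$ on $f_\ast\cO_\cN$ already forces $f$ to be flat; I would invoke that result (or, if it must be proved in place, reduce to the affine situation $\Spec(A)\to\cM_\fg$ where $f_\ast\cO_\cN$ becomes an $A$-algebra $B$, and run the classical Landweber argument: the chromatic filtration on $B$ is by regular ideals $\cI_n(A)B$ with $\cI_n(A)B=B$ eventually, and a $\cH(n)$-has-one-point / $\Tor$-vanishing induction over $n$ shows $B$ is $A$-flat, exactly the mechanism used in the proof of Theorem \ref{land-exact-2}).

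Finally I would dispose of the converse clause: if for all primes $p$ we have $f_\ast\cO_\cN/\cI_n=0$ for some $n$ and $f$ is flat, then $f^\ast\cI_n=\cJ_n$ and $v_n$ on $\cO_\cN/\cJ_{n-1}$ is an isomorphism by the converse part of Theorem \ref{land-exact-2}; pushing forward by the left-exact $f_\ast$ and using $f_\ast(\cO_\cN/\cJ_n)=(f_\ast\cO_\cN)/\cI_n$ gives that $v_n$ on $(f_\ast\cO_\cN)/\cI_{n-1}$ is injective, and the hypothesis gives $(f_\ast\cO_\cN)/\cI_n=0$ for large $n$, i.e. regular and finite action. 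I expect the main obstacle to be not any of this formal manipulation but the appeal to ``Theorem \ref{flat-2}'': the genuinely nontrivial input is that regular-and-finite action of the chromatic ideals on the push-forward algebra is \emph{sufficient} for flatness, and one has to be careful that the hypothesis is imposed on $f_\ast\cO_\cN$ rather than on $\cO_\cN$ directly — the equivalence of the two formulations is clean only because $f_\ast$ is left exact and commutes with the relevant closed base changes, which is why the base-change identity in the first paragraph is the load-bearing step.
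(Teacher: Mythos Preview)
Your proposal is correct and follows essentially the same route as the paper: you reproduce the dictionary of Remark \ref{LEFT-reformulate} (the base-change identity $f_\ast(\cO_\cN/\cJ_n)\cong (f_\ast\cO_\cN)/\cI_n$ and left-exactness of $f_\ast$) and correctly isolate the one nontrivial input, namely that regular-and-finite action on $f_\ast\cO_\cN$ forces flatness, which the paper packages as Theorem \ref{land-exact-2-bis} (proved via Propositions \ref{flat-1} and \ref{flat-2}). One small clarification: what you call the ``forward implication'' in your second paragraph is really one half of the equivalence of hypotheses, not the forward direction of \ref{land-exact-2-bis1} itself; the actual forward direction is exactly the ``missing link'' you flag, and the paper handles it by applying \ref{land-exact-2-bis} to $\cF=f_\ast\cO_\cN$ rather than by routing back through \ref{land-exact-2}.
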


This, in turn, is a corollary Proposition \ref{LEFT-easy-part} and  the following result.
Here and in what follows the higher torsion sheaves are defined by
$$
\Tor^{\cO}_s(\cF,\cE) = H_s(\cF \otimes^L_\cO \cE).
$$

\begin{thm}[{\bf Landweber Exactness III}]\label{land-exact-2-bis}Let
$\cF$ be a quasi-coherent sheaf on $\cM_\fg$. 
Suppose that for all primes $p$, the set of ideals $\{\cI_n\}$ acts regularly and finitely on
$\cF$. Then $\cF$ is flat as an $\cO_\fg$ module; that is,
$$
\Tor^{\cO}_s(\cF,\cE) = 0,\quad s>0.
$$
Conversely, if for all primes $p$, $\cF/\cI_n\cF = 0$ for some $n$,
then $\cF$ is flat only if
the set of ideals $\{\cI_n\}$ acts regularly and finitely on $\cF$.
\end{thm}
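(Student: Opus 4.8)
\textbf{Proof proposal for Theorem \ref{land-exact-2-bis}.}

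The plan is to reduce the vanishing of $\Tor^\cO_s(\cF,\cE)$ to a purely local statement about the behavior of a regular sequence on an $L$-module (or, after base-changing over $\ZZ_{(p)}$, on a $V$-module), and then to run the classical Landweber-style dévissage. First I would observe that flatness of $\cF$ can be checked after pulling back along the faithfully flat presentation $\Spec(L)\to\cM_\fg$ of Theorem \ref{fg-algebraic} (or, one prime at a time, along $\Spec(V)\to\cM_\fg\otimes\ZZ_{(p)}$ of Corollary \ref{ptyp-cover}), since the derived tensor product is computed pointwise on the $fpqc$-site by Definition \ref{tensor-prod} and $L\to$ any flat $L$-algebra is flat. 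So it suffices to show: if $M=\cF(\Spec(L))$ is an $L$-module on which, for each $p$, the sequence $p,u_1,u_2,\dots$ (with $u_k$ the coefficient giving $v_k$ as in Remark \ref{vn-local-1}, well-defined modulo $(p,u_1,\dots,u_{k-1})$) acts so that multiplication by $u_n$ on $M/(p,u_1,\dots,u_{n-1})M$ is injective and $M/(p,u_1,\dots,u_{n-1})M=0$ for $n\gg0$, then $\Tor^L_s(M,N)=0$ for $s>0$ and all $L$-modules $N$. Here the hypothesis ``acts regularly and finitely'' on $\cF$ translates, via Lemma \ref{char-support} and the identification $(f_\ast\cO)/\cI_n = \cO/\cI_n$-type formulas, into exactly this statement about $M$; the injectivity of $v_n:\cF/\cI_n\to\cF/\cI_n\otimes\omega^{p^n-1}$ becomes injectivity of multiplication by $u_n$ after trivializing $\omega$ with a coordinate.

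The second step is the dévissage proper, following Landweber. By a colimit/overring argument it is enough to prove $\Tor^L_s(M,N)=0$ for $N$ finitely generated, and then by the structure of finitely generated modules over the (coherent, but in general non-Noetherian) ring $L$ — or more cleanly by first reducing to $N$ a cyclic module $L/\pp$ for $\pp$ a prime — one is reduced to computing $\Tor^L_s(M,L/\pp)$. The key input is Landweber's invariant prime ideal theorem, which in the geometric language of Theorem \ref{ht-n-reduced} says the only closed reduced substacks of $\cM_\fg\otimes\ZZ_{(p)}$ are $\cM_\fg$, the $\cM(n)$, and $\cM(\infty)$; equivalently the only invariant primes of $L$ (resp.\ $V$) are $(p,u_1,\dots,u_{n-1})$ and $(p,u_1,u_2,\dots)$. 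One filters $L/\pp$ by invariant-prime quotients, reducing to $N=L/(p,u_1,\dots,u_{n-1})$. Now $p,u_1,\dots,u_{n-1}$ is a regular sequence \emph{on $L$} (this is part of the structure of the height filtration, Definition \ref{mn-divisor} and the remarks around Proposition \ref{cover-for-mn}), so $N$ has a Koszul resolution by finite free $L$-modules, and $\Tor^L_s(M,N)=H_s$ of the Koszul complex $M\otimes_L K_\bullet(p,u_1,\dots,u_{n-1})$. The regularity hypothesis on $M$ — exactly that each $u_k$ is a non-zero-divisor on $M/(p,u_1,\dots,u_{k-1})M$ — says that $p,u_1,\dots,u_{n-1}$ is also an $M$-regular sequence, hence the Koszul homology vanishes above degree $0$. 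The finiteness hypothesis ($M/(p,u_1,\dots,u_{n-1})M=0$ for large $n$) is what guarantees every prime in the support of $N$ can be reached, i.e.\ that the filtration terminates. This proves one direction.

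For the converse, assume $\cF/\cI_n\cF=0$ for some $n$ (at each $p$) and that $\cF$ is flat; I would deduce condition (1) directly. Flatness gives $\Tor^\cO_1(\cF,\cO/\cI_n)=0$ for every $n$; working locally over $\Spec(L)$ and using the Koszul resolution of $\cO/\cI_n$ by the regular sequence $p,u_1,\dots,u_{n-1}$, the vanishing of $\Tor_1$ forces $u_n$ — the next Koszul boundary map — to be injective on $M/(p,u_1,\dots,u_{n-1})M$, which is precisely injectivity of $v_n:\cF/\cI_n\to\cF/\cI_n\otimes\omega^{p^n-1}$. The finiteness clause is hypothesis, so we are done. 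The main obstacle, and the step I would spend the most care on, is the dévissage over the non-Noetherian ring $L$: one must make the filtration-by-invariant-primes argument work without finite-length hypotheses, which is why the ``acts finitely'' assumption is essential and why I would phrase the reduction so that the terminating chain $\cF/\cI_n=0$ does all the work that Noetherian induction would otherwise do; invoking Theorem \ref{ht-n-reduced} (Landweber's theorem) to pin down the invariant primes is the conceptual heart, and everything else is Koszul bookkeeping.
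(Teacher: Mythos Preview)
Your approach is the classical Landweber route, but there is a real gap in your d\'evissage. You reduce to $\Tor^L_s(M,N)$ with $N$ an arbitrary finitely generated $L$-module, then to cyclic $L/\pp$ for an arbitrary prime $\pp$, and only then invoke Theorem~\ref{ht-n-reduced}. But that theorem classifies the \emph{invariant} primes of $(L,W)$---the sub-comodule primes---not all primes of $L\cong\ZZ[x_1,x_2,\dots]$, of which there are vastly more. Your module-theoretic filtration loses the comodule structure on $N$ when passing to cyclic subquotients, and once lost it cannot be recovered; $L/\pp$ for a non-invariant $\pp$ has no reason to admit a filtration by the $L/\cI_n$. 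What the classical argument actually needs is the Landweber \emph{filtration} theorem: every finitely presented $(L,W)$-comodule admits a finite filtration by sub-comodules with successive quotients of the form $\Sigma^t L/\cI_n$. This does not follow from generic module-theoretic d\'evissage; it is a separate result about comodules, proved by showing that the largest invariant sub-ideal of the annihilator of a nonzero comodule is an invariant prime. With that in hand your Koszul argument does finish the proof, since $\cE$ ranges only over quasi-coherent sheaves. Your converse argument is fine.

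The paper takes a genuinely different route, due to Hopkins, which avoids the filtration theorem entirely. Instead of filtering $\cE$, it runs a downward induction on $n$ using the exact sequences $0\to\cF/\cI_n\otimes\omega^{-(p^n-1)}\to\cF/\cI_n\to\cF/\cI_{n+1}\to 0$ (Proposition~\ref{flat-1}), reducing to the claim that $\Tor^\cO_s((\cF/\cI_n)[v_n^{-1}],\cE)=0$ for $s>n$. That claim (Proposition~\ref{flat-2}) is proved using Theorem~\ref{diff-ht}: since $\layer{n}\simeq B\Aut(\Gamma_n)$ is covered faithfully flatly by $\Spec(\FF_p)$, tensoring over $\cO_{\layer{n}}$ is exact, and the only Tor comes from the length-$n$ local Koszul description of $(f_n)_\ast\cO_{\layer{n}}$. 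So the deep input is Lazard's uniqueness theorem rather than the invariant prime ideal theorem---though as Remark~\ref{how-formal} notes, both encode the same special geometry of $\cM_\fg$. The Hopkins argument has the virtue of working directly with the height stratification and never needing a structure theorem for comodules; your approach, once the filtration theorem is properly invoked, is the one closest to Landweber's original.
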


Theorem \ref{land-exact-2-bis} was  proved by Mike Hopkins in \cite{hop-notes}; the proofs here are the same.

Let $j_n:\cM(n) \to \cM_\fg$ be the
inclusion. 
The first result is this. The argument requires careful
organization of exact sequences.

\begin{prop}\label{flat-1}Suppose that for each prime $p$, the
scale $\{\cI_n\}$ acts regularly and finitely on $\cF$ and that
for each $n$, 
$$
\Tor^{\cO}_s((\cF/\cI_n)[v_n^{-1}],-)=0,\quad s> n.
$$ 
Then $\cF$ is a flat $\cO_\fg$-module sheaf.
\end{prop}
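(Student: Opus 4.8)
We fix a prime $p$ throughout; the ideals $\cI_n$ and the forms $v_n$ are those attached to $p$, with $\cI_0=0$ and $v_0=p$. The plan is to prove, by downward induction on $n$, the apparently stronger assertion
$$(P_n):\qquad \Tor^{\cO}_s(\cF/\cI_n,\cE)=0\ \ \text{for all}\ \ s>n\ \ \text{and all quasi-coherent}\ \ \cE,$$
and then to read the proposition off from $(P_0)$, since $\cF/\cI_0=\cF$. As $\{\cI_n\}$ acts finitely on $\cF$, there is an $N$ with $\cF/\cI_n=0$ for $n\geq N$, which supplies the trivial base case $(P_N)$.

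The engine driving the induction is a short exact sequence. For each $n$ the closed embedding $\cM(n+1)\subseteq\cM(n)$ is the effective Cartier divisor defined by $(\omega^{p^n-1},v_n)$, so tensoring the associated presentation $\omega^{-(p^n-1)}\xrightarrow{v_n}\cO/\cI_n\to\cO/\cI_{n+1}\to 0$ with $\cF/\cI_n$ over $\cO/\cI_n$ and twisting by $\omega^{p^n-1}$ identifies $\cF/\cI_{n+1}\otimes\omega^{p^n-1}$ with the cokernel of $v_n\colon\cF/\cI_n\to\cF/\cI_n\otimes\omega^{p^n-1}$. Because the scale acts regularly on $\cF$, this $v_n$ is injective, so I obtain a short exact sequence
$$0\to\cF/\cI_n\xrightarrow{v_n}\cF/\cI_n\otimes\omega^{p^n-1}\to\cF/\cI_{n+1}\otimes\omega^{p^n-1}\to 0.$$
Running its long exact $\Tor(-,\cE)$ sequence and invoking $(P_{n+1})$, so that $\Tor_s(\cF/\cI_{n+1},\cE)=0$ for $s>n+1$, shows that $v_n$ acts invertibly on $\Tor_s(\cF/\cI_n,\cE)$ for $s\geq n+2$ and injectively on $\Tor_{n+1}(\cF/\cI_n,\cE)$; the $\omega$-twists are harmless since $\omega$ is invertible.

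To close the induction I would bring in the hypothesis on $(\cF/\cI_n)[v_n^{-1}]$. By Remark \ref{invert-vn-1}, $(\cF/\cI_n)[v_n^{-1}]$ is the telescope of the maps $v_n\colon\cF/\cI_n\otimes\omega^{k(p^n-1)}\to\cF/\cI_n\otimes\omega^{(k+1)(p^n-1)}$, and since $\Tor$ commutes with filtered colimits, $\Tor_s((\cF/\cI_n)[v_n^{-1}],\cE)$ is the localization $\Tor_s(\cF/\cI_n,\cE)[v_n^{-1}]$, which vanishes for $s>n$ by assumption. For $s\geq n+2$, where $v_n$ is already invertible, this forces $\Tor_s(\cF/\cI_n,\cE)=0$; for $s=n+1$, where $v_n$ is injective, the natural map $\Tor_{n+1}(\cF/\cI_n,\cE)\to\Tor_{n+1}(\cF/\cI_n,\cE)[v_n^{-1}]=0$ is injective, so that group vanishes too. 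Hence $\Tor_s(\cF/\cI_n,\cE)=0$ for all $s>n$, which is $(P_n)$; downward induction then gives $(P_0)$, i.e. $\cF$ is flat.

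The delicate point is exactly this organization of exact sequences: the identification of $\cF/\cI_{n+1}$ as a cokernel (resting on the effective-Cartier-divisor structure of the height filtration established earlier), keeping the $\omega$-twists consistent, and separating the range $s>n$ into the subranges $s\geq n+2$ and $s=n+1$ — it is the localization hypothesis that upgrades injectivity or invertibility of $v_n$ into vanishing of the group. Implicitly one also uses that $\cM_\fg$ is an Adams stack (Proposition \ref{mn-adams} with $n=\infty$), so that $\Tor^{\cO}_\bullet$ and the localizations above are defined via resolutions by locally free sheaves.
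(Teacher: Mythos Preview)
Your proof is correct and follows essentially the same route as the paper: downward induction on $n$ using the short exact sequence coming from the regular action of $v_n$, the inductive hypothesis to get injectivity of $v_n$ on $\Tor_s(\cF/\cI_n,\cE)$ for $s>n$, and the telescope description of $(\cF/\cI_n)[v_n^{-1}]$ together with the hypothesis to force vanishing. The only cosmetic difference is that you split the range $s>n$ into $s\geq n+2$ (where $v_n$ is invertible) and $s=n+1$ (where it is merely injective), while the paper observes uniformly that injectivity for all $s>n$ suffices, since a filtered colimit of injections with zero colimit forces each term to vanish.
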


\begin{proof}By hypothesis, we have that for all large
$k$, 
$$
\Tor^{\cO}_s(\cF/\cI_k,-)=0.
$$ 
This begins a downward induction, where the induction hypothesis
is that
$$
\Tor^{\cO}_s((\cF/\cI_{n+1}),-)=0,\qquad s > n+1.
$$ 
The final result is the case $n=-1$. 

We  make the argument for the induction step, using the following
fact: if $\cL$ is any locally free sheaf, then
$$
\Tor_s^{\cO}(\cE \otimes \cL,\cE') \cong \Tor_s^{\cO}(\cE,\cE') \otimes \cL
$$
for any quasi-coherent sheaves $\cE$ and $\cE'$.

Since the scale $\{\cI_n\}$ acts regularly, we have an
exact sequence
$$
0 \to \cF/\cI_n \otimes \omega^{-(p^n-1)} \mathop{\longr}^{v_n}
\cF/\cI_n \to \cF/\cI_{n+1} \to 0.
$$
The induction hypothesis implies that for any quasi-coherent
sheaf $\cE$
$$
v_n: \Tor_s^{\cO}(\cF/\cI_n,\cE) \longr  \Tor_s^{\cO}(\cF/\cI_n,\cE)
\otimes \omega^{p^n-1}
$$
is an injection for $s > n$. 

Now recall that in Remark 
\ref{invert-vn-1} we showed that $\cF/\cI_n[v_n^{-1}]$
can be written as the colimit of the sequence
$$
\xymatrix{
\cF/\cI_n \rto^-{v_n} & \cF/\cI_n \otimes \omega^{p^n-1} \rto^-{v_n} &
\cF/\cI_n \otimes \omega^{2(p^n-1)} \rto^-{v_n} & \cdots.
}
$$
Thus, we have for $s > n$,
$$
0 = \Tor_s(\cF/\cI_n[v_n^{-1}],\cE) \cong \colim
\Tor_s(\cF/\cI_n,\cE) \otimes \omega^{t(p^n-1)}.
$$
Since each of the maps in the sequence is an injection,  the induction
step follows.
\end{proof}

Now we must check the hypothesis of  Proposition
\ref{flat-1} in order to prove Theorem \ref{land-exact-2-bis}.
Recall from Definition \ref{invert-vn}, that if $\cE$ is
any $\cI_n$-torsion sheaf, then
$$
\cE[v_n^{-1}] = (i_n)_\ast i_n^\ast \cE
$$
where $i_n:\cU(n) \to \cM_\fg$ is the inclusion. In the case
where $\cE = \cF/\cI_n$, we have that $\cE$ is the push-forward
of the sheaf $j_n^\ast \cF$ on $\cM(n)$. Since there is a pull-back
diagram
$$
\xymatrix{
\layer{n} \rto^{g_n} \dto_{k_n} & \cM(n) \dto^{j_n}\\
\cU(n) \rto_{i_n} & \cM_\fg
}
$$
we have
$$
i_n^\ast \cF/\cI_n = (k_n)_\ast g_n^\ast j_n^\ast \cF.
$$
Write $f_n:\layer{n} \to \cM_\fg$ for the inclusion
Thus we can conclude that
$$
\cF/\cI_n[v_n^{-1}] = (f_n)_\ast (f_n)^\ast \cF.
$$
The next result then verifies the hypothesis of Proposition
\ref{flat-1}.

\begin{prop}\label{flat-2}Let $\cF$ be any quasi-coherent sheaf
on $\layer{n}$ and let $\cE$ be any quasi-coherent sheaf on
$\cM_\fg$. Then
$$
Tor_s^{\cO}((f_n)_\ast \cF,\cE) = 0,\qquad s > n.
$$
\end{prop}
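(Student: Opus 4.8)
The plan is to reduce the statement to a flat-dimension computation over a polynomial ring. Work at the prime $p$, so that $\cM_\fg=\cM_\fg\otimes\ZZ_{(p)}$, and fix the faithfully flat presentation $\pi\colon\Spec(V)\to\cM_\fg$ with $V=\ZZ_{(p)}[u_1,u_2,\dots]$ carrying the universal $p$-typical formal group law (Corollary \ref{ptyp-cover}). Since $\Tor^{\cO}_s((f_n)_\ast\cF,\cE)$ is the $s$-th homology sheaf of $\cE\otimes^L_{\cO}\cP_\bullet((f_n)_\ast\cF)$ and $\pi$ is flat, it suffices to show that the value of this sheaf at $\Spec(V)$ vanishes for $s>n$; and because $\cP_\bullet((f_n)_\ast\cF)(V)$ is a flat $V$-resolution of $M_0\defeq((f_n)_\ast\cF)(\Spec V)$, that value is $\Tor^V_s(N,M_0)$ with $N=\cE(\Spec V)$. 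So I want $M_0$ to have flat dimension at most $n$ over $V$; note that this will also show that quasi-coherence of $\cE$ is not really needed beyond $N$ being a $V$-module.

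First I would identify $M_0$ concretely. The morphism $f_n$ is affine (Lemma \ref{layer-to-fg-affine}), so $(f_n)_\ast$ commutes with base change along $\pi$ (Proposition \ref{qc-affine}); and $\Spec(V)\times_{\cM_\fg}\layer{n}$ is the affine scheme $Y(n)=\Spec(B(n))$ with $B(n)=\FF_p[u_n^{\pm 1},u_{n+1},\dots]=(V/I_n)[v_n^{-1}]$, $I_n=(p,u_1,\dots,u_{n-1})$ --- this uses the $p$-typical description of $\cI_n$ (Remark \ref{p-typical-vn}), together with Proposition \ref{cover-for-mn} and Remark \ref{first-cover-layer}. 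Hence $M_0$ is exactly the $B(n)$-module underlying the pullback of $\cF$ along the cover $g\colon Y(n)\to\layer{n}$. Since $p,u_1,\dots,u_{n-1}$ is a regular sequence in the polynomial ring $V$, the module $V/I_n$ admits a length-$n$ Koszul free resolution over $V$, and localization being flat and exact, $B(n)$ has flat dimension at most $n$ over $V$.

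The crux --- and the only place the special geometry of $\layer{n}$ enters --- is that $M_0$ is flat over $B(n)$. For this I would invoke Theorem \ref{diff-ht}: $\layer{n}\simeq B\Aut(\Gamma_n)$ is a neutral gerbe over $\Spec(\FF_p)$, so a quasi-coherent sheaf on it has, underlying it, merely an $\FF_p$-vector space $W$ (its value at the tautological point $\Spec(\FF_p)\to B\Aut(\Gamma_n)$). Pulling $\cF$ back along $Z\defeq Y(n)\times_{\layer{n}}\Spec(\FF_p)\to Y(n)$ --- an $fpqc$ cover, since by Theorem \ref{refined-lazard} it is the pro-\'etale surjective $\Aut(\Gamma_n)$-torsor $\Iso_{Y(n)}(\Gamma_n,G)$ --- turns $\cF$ into the free sheaf $W\otimes_{\FF_p}\cO_Z$, which is flat over $\cO_Z$. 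As flatness descends along $fpqc$ covers, $\cF|_{Y(n)}$, hence $M_0$, is flat over $B(n)$. Combining the last two paragraphs, $N\otimes^L_V M_0\simeq(N\otimes^L_V B(n))\otimes_{B(n)}M_0$, the first factor has homology concentrated in degrees $[0,n]$, and $-\otimes_{B(n)}M_0$ is exact, so $\Tor^V_s(N,M_0)=\Tor^V_s(N,B(n))\otimes_{B(n)}M_0=0$ for $s>n$, as desired.

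The main obstacle I anticipate is the bookkeeping in the base-change identifications: verifying that $\Spec(V)\times_{\cM_\fg}\layer{n}$ is $\Spec(B(n))$ with precisely this ring (which rests on unwinding the proof of Lemma \ref{layer-to-fg-affine} together with Remark \ref{p-typical-vn}) and that pushforward along the affine morphism $f_n$ is genuinely compatible with restriction to $\Spec(V)$. Nothing else should be delicate; in particular the case $n=0$, should one want it, runs identically with $B(0)=V[1/p]$ and the gerbe $\layer{0}\simeq B(\GG_m\otimes\QQ)$ in place of $B\Aut(\Gamma_n)$.
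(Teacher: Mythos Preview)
Your proposal is correct and follows essentially the same line as the paper's proof: both use that $f_n$ is affine, that $\layer{n}\simeq B\Aut(\Gamma_n)$ forces any quasi-coherent $\cF$ to be flat over $\cO_{\layer{n}}$ (the paper phrases this as exactness of $\cF\otimes_{\cO_{\layer{n}}}(-)$ on comodules, you phrase it via descent of flatness along the torsor $Z\to Y(n)$), and that $\cI_n$ is locally generated by a regular sequence of length $n$ to bound the flat dimension of $(f_n)_\ast\cO_{\layer{n}}$. The only organizational difference is that you localize to the cover $\Spec(V)$ at the outset and compute $\Tor^V_s(N,B(n))\otimes_{B(n)}M_0$ directly, whereas the paper first writes the sheaf-level projection formula $(f_n)_\ast\cF\otimes^L\cE\simeq(f_n)_\ast(\cF\otimes^L_{\cO_{\layer{n}}}L(f_n^\ast)\cE)$ and then localizes; these are the same computation.
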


\begin{proof} Recall from Lemma \ref{layer-to-fg-affine},
that the inclusion $f_n:\layer{n}\to \cM_\fg$ is affine. This
implies that the category of quasi-coherent sheaves
on $\layer{n}$ is equivalent, via the the push-forward
$(f_n)_\ast$, to the category of quasi-coherent $(f_n)_\ast \cO_{\layer{n}}$
modules on $\cM_\fg$. (Compare Proposition \ref{qc-affine}.)
In particular, $(f_n)_\ast$ is exact on
quasi-coherent sheaves. Also, for all quasi-coherent sheaves $\cE$ on
$\cM_\fg$, there is a natural isomorphism
$$
(f_n)_\ast f_n^\ast \cE \cong (f_n)_\ast \cO_{\layer{n}} \otimes_{\cO_\fg}
\cE.
$$
It follows that there is a natural isomorphism
\begin{equation}\label{push-pull-affine}
(f_n)_\ast (\cF) \otimes \cE \cong (f_n)_\ast (\cF \otimes_{\cO_\layer{n}} f_n^\ast \cE)
\end{equation}
which becomes an equivalence of derived sheaves
$$
(f_n)_\ast (\cF) \otimes^L \cE \cong (f_n)_\ast (\cF \otimes_{\cO_\layer{n}}^L L(f_n^\ast) \cE).
$$
By Theorem \ref{diff-ht} we have that the morphism
$\Gamma_n:\Spec(\FF_p)  \to \cH(n)$ classifying any height
$n$ formal group over $\FF_p$ is an $fqpc$-cover;
hence, the category of quasi-coherent sheaves on
$\cH(n)$ is equivalent to  the category of $(\FF_p,\cO_{\Aut(\Gamma_n)})$
comodules. Here we have written
$\Spec(\cO_{\Aut(\Gamma_n)}) = \Spec(\FF_p) \times_{\layer{n}} \Spec(\FF_p)$. From this we have that the functor $\cF \otimes_{\layer{n}}(-)$
is exact, since the corresponding functor on comodules is simply
$$
\cF(\Ga_n:\Spec(\FF_p)\to \layer{n}) \otimes_{\FF_p} (-).
$$
Thus we need only show that 
$$
H_s L(f_n^\ast) \cE = 0
$$
for $s > n$. Since $(f_n)_\ast$ is exact, we need only check
that $H_s (f_n)_\ast L(f_n^\ast) \cE = 0$ for $s > n$.  We need only
check this equation locally, thus we may evaluate at any
morphism
$$
G:\Spec(R) \longr \cM_\fg
$$
classifying a formal group with a coordinate. Applying
the formula of Equation \ref{push-pull-affine} we see that
locally these homology sheaves are given by
$$
\Tor^R_s(u_n^{-1}R/\cI_n(G),\cE(G)).
$$
The result now follows from the fact that $\cI_n(G)$ is locally
generated by a regular sequence of length $n$. 
\end{proof}

\begin{rem}\label{how-formal}Almost all of the argument for
Theorem \ref{land-exact-2-bis} uses only that we have a sequence
of regularly embedded closed substacks $\{\cM(n)\}$ of $\cM_\fg$.
However, in the proof Proposition \ref{flat-2} we used Theorem 
\ref{diff-ht} which, in turn, ultimately depends on Lazard's proof of the
result that, over a separably closed field of characteristic $p$,
all formal groups of height $n$ are isomorphic. Thus, it does not
appear to me that the Landweber exact functor theorem is a
generality -- it seems quite specific to formal groups.
\end{rem}

\section{The formal neighborhood of a height $n$ point}

In this section we make the following slogan precise:  the formal neighborhood
in $\cM_\fg$ of a height $n$ formal group $\Ga$ over a perfect
field of characteristic $p$ is the Lubin-Tate space of the deformations
of $\Ga$. This is not quite true as stated; a more precise statement
is that Lubin-Tate space is a universal cover the formal neighborhood
and the automorphisms of the formal group are the covering
transformations. The exact result is given in Theorem \ref{lt-is-univ}
below.

Let $\cU_\fg(n) = \cM_{\fg} - \cM(n+1)$ be the open substack of $\cM_\fg$
classifying formal groups of height less than or equal to $n$. Then
$$
\layer{n} = \cU_\fg(n) - \cU_\fg(n-1)
$$
is a closed substack of $\cU_\fg(n)$ defined by the vanishing of the ideal
$\cI_n$. Recall the $\layer{n}$ has a single
geometric point, but that this point has plenty of automorphisms.
See Theorem \ref{diff-ht}. We wish to write down a description of 
the formal neighborhood $\hatlayer{n}$ of $\layer{n} \subseteq \cU_\fg(n)$.
\index{$\hatlayer{n}$}

By definition, $\hatlayer{n}$ is the category fibered in groupoids over
$\Aff_{\ZZ_{(p)}}$ which assigns to each $\ZZ_{(p)}$-algebra
$B$ the groupoid with objects the formal groups $G$ over $B$ so that
\begin{enumerate}

\item $\cI_{n}(G) \subseteq B$ is nilpotent; and

\item $\cI_{n+1}(G) =B$.
\end{enumerate}
Thus, if $q:B \to B/\cI_{n}(G)$ is the quotient map, the formal group $q^\ast G$
has strict height $n$ in the sense that $v_i(G) = 0$ for
$i < n$ and $v_n(G)$ is invertible. A great many examples of such formal
groups can be obtained as deformations of a height $n$ formal group;
thus, we now discuss deformations and Lubin-Tate space.

\subsection{Deformations of height $n$ formal groups over a field}

Fix a formal group $\Ga$ of height $n$
over a perfect field $\FF$ of characteristic $p$. (In practice, $\FF$ will
be an algebraic extension of a the prime field $\FF_p$). Recall
that an Artin local ring is a Noetherian commutative ring
with a unique nilpotent maximal ideal. Let $\Art_\FF$ denote the
category of Artin local rings $(A,\mm)$ so that we can choose
an isomorphism $A/\mm \cong \FF$ from the residue field of 
$A$ to $\FF$. The isomorphism is not
part of the data. Morphisms in $\Art_\FF$ are
ring homomorphisms which induce an isomorphism on residue fields.

\begin{defn}\label{deformation}\index{deformation, of a formal
group}A {\bf deformation} of the pair
$(\FF,\Ga)$ to an object $A$ of $\Art_\FF$ is
a Cartesian square
$$
\xymatrix{
\Ga \rto \dto & G \dto\\
\Spec(\FF) \rto^f & \Spec(A)
}
$$
where $G$ is a formal group over $A$ and $f$ induces an isomorphism
$\Spec(\FF) \cong \Spec(A/\mm)$. 

Deformations become a category $\Def(\FF,\Ga)$
fibered in groupoids over $\Art_\FF$ by setting a morphism
to be a commutative cube
$$
\xymatrix@!0@C=35pt@R=30pt{
&\Ga \ar[rr] \ar'[d][dd] && G'  \ar[dd]\\
\Ga \ar[ur]^= \ar[rr] \ar[dd] && G \ar[ur] \ar[dd]\\
& \Spec(\FF) \ar'[r][rr] &&\Spec(A')\\
\Spec(\FF) \ar[rr] \ar[ur]^-= && \Spec(A)\ar[ur]
}
$$
where the right face is also a Cartesian. 
\end{defn}

\begin{rem}\label{alt-deformation} We can rephrase this as follows.
A deformation of $\Ga$ to $A$ is a triple $(G,i,\phi)$ where $G$ is a formal
group over $A$, $i:\Spec(\FF) \to \Spec(A/\mm)$ is an isomorphism
and $\phi:\Ga \to i^\ast G_0$ is an isomorphism of formal groups over $\FF$.
Here and always we write $G_0$ for the {\it special fiber} of $G$; that is,
the induced formal group over $A/\mm$. There is an isomorphism
of deformations $\psi:(G,i,\phi) \to (G,i',\phi')$ to $A$ if $i = i'$ and
$\psi:G \to G'$ is an isomorphism of formal groups so that
$$
\xymatrix@R=5pt{
&i^\ast G_0 \ar[dd]^{i^\ast \psi_0} \\
 \Ga\ar[ur]^\phi\ar[dr]_{\phi'}\\
&i^\ast G_0'
}
$$
commutes.

In either formulation of a deformation, we note that if $G$ is
a deformation of $\Ga$ to $(A,\mm)$, then $\cI_n(G) \subseteq  \mm$
and $\cI_{n+1}(G) = A$.
\end{rem}

\begin{rem}\label{two-def-formal} Let $R = (R,\mm_R)$ be a complete
local ring so that $R/\mm_R \cong \FF$. We write $\Spf(R)$ equally
for the resulting formal scheme and for the 
the category fibered in groupoids over $\Art_\FF$ that assigns to
each object $(A,\mm)$ of $\Art_\FF$ the discrete groupoid of
all ring homomorphisms which induce an isomorphism
$R/\mm_R \cong A/\mm$; so, in particular,
$f(\mm_R) \subseteq \mm$. This is an abuse of notation, but a mild one,
and should
cause no confusion. Indeed, the formal scheme $\Spf(R)$ 
is the left Kan extension of the functor $\Spf(R)$ on $\Art_\FF$
along the inclusion of $\Art_\FF$ into all rings. 
\end{rem}

\begin{thm}[{\bf Lubin-Tate}]\label{Lubin-Tate}The category fibered in groupoids
$\Def(\FF,\Ga)$ is discrete and representable; that is, there is
a complete local ring $R(\FF,\Ga)$ and a deformation
$$
\xymatrix{
\Ga \rto \dto & H \dto\\
\Spec(\FF) \rto &\Spf(R(\FF,\Ga))
}
$$
of $\Ga$ to $R(\FF,\Ga)$\index{$R(\FF,\Ga)$} so that the induced morphism
$$
\Spf(R(\FF,\Ga)) \longr \Def(\FF,\Ga)
$$
is an equivalence of categories fibered in groupoids over $\Art_\FF$.
\end{thm}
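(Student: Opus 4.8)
\textbf{Proof proposal for Theorem \ref{Lubin-Tate}.} The plan is to reduce to the classical Lubin--Tate theorem of \cite{LT}, which is precisely the statement that the functor of deformations of a formal group law of height $n$ over $\FF$ is pro-represented by a power series ring $R(\FF,\Ga) \cong W(\FF)[[u_1,\ldots,u_{n-1}]]$, where $W(\FF)$ is the ring of Witt vectors. The only real work is to pass from the classical statement, phrased in terms of formal group laws (i.e.\ formal groups equipped with a coordinate, and $\ast$-isomorphisms fixing the reduction) to the coordinate-free groupoid $\Def(\FF,\Ga)$ defined in Definition \ref{deformation}. First I would fix once and for all a coordinate $x_0$ for $\Ga$, so that $\Ga$ is presented by a height-$n$ formal group law $F_0$ over $\FF$, and then spell out the equivalence between $\Def(\FF,\Ga)$ and the classical deformation functor using the coordinate-theoretic translation of Remarks \ref{fgls-rems} and \ref{p-typ-prop}.

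The key steps, in order: (1) Show that $\Def(\FF,\Ga)$ is \emph{discrete}, i.e.\ for each $(A,\mm)$ the groupoid $\Def(\FF,\Ga)(A)$ has no nontrivial automorphisms. Given an isomorphism $\psi:(G,i,\phi)\to(G,i,\phi)$ of deformations, $\psi:G\to G$ reduces to the identity on the special fiber $G_0$ modulo $\mm$; writing $\psi$ locally as a power series $\psi(x)=x + (\text{higher order in }\mm)$, one uses that $G$ has height exactly $n$ on the special fiber together with the compatibility with the $p$-series (Remark \ref{p-typ-prop}(1)) and a filtration-by-powers-of-$\mm$ / successive-approximation argument to force $\psi = \mathrm{id}$. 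This is essentially the rigidity part of Lubin--Tate. (2) Construct the universal deformation: invoke the classical result to get $R = R(\FF,\Ga)$ a complete local ring with $R/\mm_R\cong\FF$ and a formal group law $\tilde F$ over $R$ reducing to $F_0$, such that every formal group law deformation of $F_0$ to an Artin local $A$ is obtained from $\tilde F$ by a unique local homomorphism $R\to A$ up to unique $\ast$-isomorphism. Let $H$ be the formal group over $\Spf(R)$ determined by $\tilde F$, with its tautological reduction datum over $\FF$. (3) Check that the induced map $\Spf(R(\FF,\Ga))\to\Def(\FF,\Ga)$ is an equivalence of categories fibered in groupoids over $\Art_\FF$: essential surjectivity says every deformation $(G,i,\phi)$ of $\Ga$ to $A$ is isomorphic to a pullback of $H$ — choose a coordinate for $G$ (possible Zariski-locally, but $\Spec(A)$ has one point so $G$ globally has a coordinate), transport $\phi$ to put the special fiber in the form $F_0$, apply the classical universality to get $R\to A$, and use Remark \ref{fgls-rems}.4 to produce the isomorphism of deformations; full faithfulness follows from step (1) (discreteness on both sides) together with the uniqueness clause in the classical theorem.

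The main obstacle I anticipate is step (1), the rigidity/discreteness statement, together with the bookkeeping needed to make the dictionary between $\Def(\FF,\Ga)$ and the classical coordinate-dependent functor genuinely precise — in particular verifying that the notion of isomorphism of deformations in Remark \ref{alt-deformation} matches the classical notion of $\ast$-isomorphism under the choice of coordinate, and that nothing depends on that choice. The substantive input is Lubin--Tate's theorem itself; once that is cited, the argument is the standard "stack-ification" of a representability statement, and the successive-approximation argument proving $\psi=\mathrm{id}$ is the one genuinely computational lemma, which I would state separately and prove by induction on the nilpotence order of $\mm$, using that on each graded piece the relevant obstruction group vanishes because $v_n$ is invertible on the special fiber.
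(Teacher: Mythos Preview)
Your proposal is correct and follows roughly the same overall shape as the paper's discussion --- cite the classical result and translate via the coordinate dictionary (the paper does this translation in Remark \ref{def-reformed}) --- but the two diverge on the key rigidity step and on what exactly is being cited. For discreteness, you propose a direct successive-approximation argument on powers of $\mm$, using invertibility of $v_n$ on the special fiber. The paper instead deduces rigidity from Lemma \ref{iso-unramified}: the morphism $\Iso_A(G_1,G_2)\to\Spec(A)$ is unramified because over the residue field it is pro-\'etale by the refined Lazard theorem (Theorem \ref{refined-lazard}), and an unramified morphism admits at most one section lifting a given point. This is cleaner and reuses machinery already in place; your argument would work but duplicates effort. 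For representability, you invoke the full classical Lubin--Tate theorem; the paper instead cites only Lubin and Tate's tangent-space calculation (deformations to $\FF[\epsilon]$) and then appeals to Schlessinger's criterion \cite{Schles} to pass from the infinitesimal computation to pro-representability by $W(\FF)[[u_1,\ldots,u_{n-1}]]$. Your route is more self-contained if one grants the classical theorem as a black box; the paper's route is more in the spirit of modern deformation theory and makes the role of the tangent space explicit.
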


The formal spectrum $\Spf(R(\FF,\Ga)$ is called {\it Lubin-Tate space}.
\index{Lubin-Tate space}

\begin{rem}\label{define-from-spf}
The induced morphism $\Spf(R(\FF,\Ga)) \longr \Def(\FF,\Ga)$ is
not completely trivial to define. Given a homomorphism
$R(\FF,\Ga) \to A$ to an Artin local ring which induces an
isomorphism of residue fields, we are asserting there is
a unique way to complete the back square of the following
diagram so that it commutes.
$$
\xymatrix@!0@C=35pt@R=30pt{
&\Ga \ar[dl]^=\ar[rr] \ar'[d][dd] && f^\ast H\ar[dl] \ar[dd]\\
\Ga  \ar[rr] \ar[dd] && H \ar[dd]\\
& \Spec(\FF) \ar[dl]^=\ar'[r][rr] &&\Spec(A)\ar[dl]\\
\Spec(\FF) \ar[rr] && \Spf(R(\FF,\Ga))
}
$$
Thus, given $f:\Spec(A) \to \Spf(R(\FF,\Ga)$, the universal deformation
$(H,j,\phi_u)$ gets sent to
$$
f^\ast(H,j,\phi_u) \defeq (f^\ast H,f_0^{-1}j,\phi_u)
$$
where
$f_0:\Spec(A/\mm_A) \to \Spf(R(\FF,\Ga)/\mm)$ is the induced
isomorphism and we have written $\phi_u$ for both the universal
isomorphism
$$
\phi_u:\Ga \longr j^\ast H_0
$$
and the induced isomorphism
$$
\phi_u:\Ga \longr (f_0^{-1}j)^\ast (f^\ast H)_0 \cong j^\ast H_0.
$$
In this language, the theorem of Lubin and Tate reads as follows: given
a deformation $(G,i,\phi)$ of $\Ga$ to $A \in \Art_\FF$, there is 
a homomorphism $f:R(\FF,\Ga) \to A$ inducing an isomorphism on
residue fields and a unique isomorphism of deformations
$$
\psi:(G,i,\phi) \longr f^\ast(H,j,\phi_u) .
$$
\end{rem}
\bigskip

The main lemma of Lubin and Tate is to calculate the
deformations of $\Ga$ to the ring of dual numbers $\FF[\rege]$,
where $\rege^2=0$. Indeed, they show for that ring
there is a non-canonical isomorphism
$$
\pi_0\Def(\FF,\Ga)_{\FF[\rege]}\cong (\FF\rege)^{n-1}
$$
where $(-)^{k}$ means the $k$th Cartesian power and
$$
\pi_1(\Def(\FF,\Ga),G)_{\FF[\rege]} = \{1\}
$$
for any deformation $G$.
The general theory of deformations (see \cite{Schles}, Proposition
3.12) then shows that there is a (non-canonical) isomorphism
\begin{equation}\label{lt-eq-1}
\pi_0\Def(\FF,\Ga)\cong \mm^{n-1}
\end{equation}
where we are write $\mm$ for the functor which assigns to an Artin local ring
$A$ its maximal ideal $\mm_A$. For any deformation $G$,
\begin{equation}\label{lt-eq-2}
\pi_1(\Def(\FF,\Ga),G) = \{1\}.
\end{equation}
It immediately follows that the ring $R(\FF,\Ga)$ is a power
series ring. More explicitly, since $R(\FF,\Ga)$ is
local, Noetherian, and $p$-complete, the universal deformation $H$ can
be given a $p$-typical coordinate $x$ for which the $p$-series of $H$
becomes
$$
[p]_H(x) = px +_H u_1x^p +_H \cdots +_H u_{n-1} x^{p^{n-1}} +_H u_nx^{p^n}
+ _H \cdots.
$$
Then there is an isomorphism
\begin{equation}\label{lt-is-power-series}
R(\FF,\Ga) \cong W(\FF)[[u_1,\ldots,u_{n-1}]]
\end{equation}
where $W(\FF)$ is the Witt vectors of $\FF$
and the maximal ideal $\mm = \cI_{n}(H) = (p,u_1,\ldots,u_{n-1})$. Note
that $u_n$ is a unit. This isomorphism is non-canonical as it 
depends on a choice of $p$-typical coordinate.

Equation \ref{lt-eq-2} can be deduced from the following result.

\begin{lem}\label{iso-unramified}Let $(A,\mm)$ be an Artin local ring
with $A/\mm$ of characteristic $p$. Let $G_1$ and $G_2$ be two formal
groups over $B$ so that $(G_1)_0$ and $(G_2)_0$ are of
height $n < \infty$. Then the affine morphism
$$
\Iso_B(G_1,G_2) \longr \Spec(A)
$$
is unramified. In particular, if we are given a choice of isomorphism
$$
\phi:(G_1)_0 \longr (G_2)_0
$$
over $(A/\mm)$,
then there is at most one isomorphism $\psi:G_1 \to G_2$
over $A$ so that $(\psi)_0 = \phi$.
\end{lem}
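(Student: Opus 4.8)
\textbf{Proof plan for Lemma \ref{iso-unramified}.} The plan is to reduce to a local computation with coordinates and then invoke the explicit \'etale descriptions obtained in the proof of Theorem \ref{refined-lazard}. First I would recall that ``unramified'' is local on the base and stable under the passage to a faithfully flat cover, so I may replace $A$ by a faithfully flat extension. Since $G_1$ and $G_2$ are formal groups over the local ring $A$ and $\omega_{G_i}$ is invertible, after such an extension I can choose a simultaneous coordinate for $G_1$ and $G_2$, and — using Cartier's theorem (Remark \ref{p-typical-vn}, Theorem \ref{quillen-idem}) — I may even arrange both to be $p$-typical. The key point is that $\mm$ is nilpotent, so the special fibers $(G_i)_0$ over $A/\mm$ have strict height $n$ exactly when $\cI_n(G_i)\subseteq\mm$ and $v_n((G_i)_0)$ is a unit; in particular the unit $u_n$ (resp.\ $u_n'$) in the $p$-series of $G_i$ is a unit in $A$ itself, because an element of $A$ is a unit iff its image in $A/\mm$ is.

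Next I would run the same ``Frobenius/Verschiebung matching'' computation used in \ref{lazard-proof}. An isomorphism $\psi\colon G_1\to G_2$ over $A$ is a power series $\psi(x)=\sum_{i\ge 0}b_i x^{i+1}$ with $b_0$ a unit, and the relation $V_{G_2}(\psi^{(p^n)}(x))=\psi(V_{G_1}(x))$ forces, degree by degree, the equations
$$
b_0^{p^n-1}=v,\qquad b_k^{p^n}-v_k b_k + w_k = 0
$$
(for suitable units $v,v_k$ and elements $w_k$ depending on the lower $b_j$), exactly as in Equations \ref{kiszero} and \ref{kisnotzero}. The upshot is that $\Iso_A(G_1,G_2)$, Zariski-locally on a cover, is $\Spec$ of a ring obtained from $A$ by successively adjoining roots of separable polynomials $b_0^{p^n-1}-v$ and $b_k^{p^n}-v_kb_k+w_k$; each such extension is \'etale over the previous stage because the derivatives $-(p^n-1)b_0^{p^n-2}$ and $-v_k$ are units (here $p^n-1$ is a unit since we are in characteristic $p$). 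Hence $\Iso_A(G_1,G_2)\to\Spec(A)$ is \'etale, in particular unramified. Since the property of being unramified descends along the faithfully flat cover used to obtain the simultaneous coordinate, the morphism is unramified in general.

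For the final ``at most one isomorphism'' clause, I would argue as follows: an isomorphism $\psi\colon G_1\to G_2$ over $A$ lifting a prescribed $\phi\colon (G_1)_0\to(G_2)_0$ is precisely an $A$-point of $\Iso_A(G_1,G_2)$ lying over the given $(A/\mm)$-point $\phi$; because $\Iso_A(G_1,G_2)\to\Spec(A)$ is unramified (equivalently, its diagonal is an open immersion) and $A\to A/\mm$ is a surjection with nilpotent kernel, two such $A$-points agreeing modulo $\mm$ must coincide. Concretely, in the coordinates above the coefficients $b_k$ of $\psi$ are determined one at a time by the equations $b_0^{p^n-1}=v$ and $b_k^{p^n}-v_kb_k+w_k=0$ together with their reductions mod $\mm$, and since these polynomials have unit derivative, Hensel's lemma (applicable as $\mm$ is nilpotent) gives a unique solution lifting the reduction. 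The main obstacle is purely bookkeeping: making sure the coefficient-matching really does produce, at each stage $k$, a single polynomial of the stated shape with unit linear term, so that uniqueness propagates; this is exactly the content of the computation in \ref{lazard-proof}, so I would simply cite that computation rather than redo it, noting that the hypothesis $\mm$ nilpotent is what upgrades ``unramified'' into the genuine uniqueness statement.
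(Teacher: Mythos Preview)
Your argument has a genuine gap: you try to run the Frobenius/Verschiebung computation from \ref{lazard-proof} directly over $A$, but that computation only makes sense over $\FF_p$-schemes. The Artin local ring $A$ is merely assumed to have residue field of characteristic $p$; for example $A=\ZZ/p^2$ or $A=W(\FF)/\mm^k$ are allowed, and over such rings there is no relative Frobenius on $\Spec(A)$, no morphism $\psi^{(p^n)}$, and no Verschiebung $V_{G_i}$ (the formal groups $G_i$ have $\cI_n(G_i)\subseteq\mm$, not $\cI_n(G_i)=0$). So the relation $V_{G_2}(\psi^{(p^n)}(x))=\psi(V_{G_1}(x))$ you invoke is not available over $A$, and the defining equations of $\Iso_A(G_1,G_2)$ are \emph{not} literally the clean polynomials $b_0^{p^n-1}=v$, $b_k^{p^n}-v_kb_k+w_k=0$; there are correction terms coming from $p,u_1,\ldots,u_{n-1}\in\mm$.

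The paper's proof avoids this entirely by using the fiberwise criterion: a morphism locally of finite type is unramified if and only if each fiber is (this is the content of \cite{Milne}, Proposition I.3.2). Since $\Spec(A)$ has the single point $\Spec(A/\mm)$, one only needs $\Iso_{A/\mm}((G_1)_0,(G_2)_0)\to\Spec(A/\mm)$ to be unramified, and \emph{there} Theorem~\ref{refined-lazard} applies verbatim to give pro-\'etale, hence unramified. Your final paragraph deducing uniqueness of the lift $\psi$ from the unramified property is correct and matches the paper's argument exactly; it is only the route to ``unramified'' that needs to be replaced.
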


\begin{proof} By Theorem \ref{refined-lazard} the morphism
$$
\Iso_{A/\mm}((G_1)_0,(G_2)_0) \longr \Spec(A/\mm)
$$
is pro-\'etale; that is, flat and unramified. Since $\Spec(A/\mm) \to
\Spec(A)$ is the unique point of $\Spec(A)$, the main statement follows.
(See Proposition I.3.2 of \cite{Milne}.)
The statement about the unique lifting follows from one of the
characterizations of unramified: there is at most one way to complete
the diagram
$$
\xymatrix{
\Spec(A/\mm) \rto^\phi \dto & \Iso_B(G_1,G_2) \dto \\
\Spec(A) \ar@{-->}[ur] \rto_= & \Spec(A)
}
$$
so that both triangles commute.
\end{proof}

\begin{rem}\label{def-reformed} We can give an alternate
description of deformations in terms of formal group laws. Fix
a coordinate $x$ for $\Ga$ and let $F_\Ga$ be the resulting formal
group law over $\FF$. Define $\Defalt(\FF,\Ga)$ to be the
groupoid valued functor on $\Art_\FF$ which assigns to each
Artin local ring $(A,\mm)$ of $\Art_\FF$ the groupoid with objects
all pairs $(i,F)$ where $i:A/\mm \to \FF$ is an isomorphism
and $F$ is  a formal group law over $A$ so that
$$
i^\ast F_0(x,y) = F_\Ga(x,y) \in \FF[[x,y]].
$$
Here we've written $F_0(x,y)$ for the reduction of $F$
to $A/\mm$. There is a morphism $\psi:(i,F) \to (i',F')$ if $i = i'$
and $\psi:F \to F'$ is an isomorphism of formal group laws
so that
$$
i^\ast \psi_0(x) = x \in \FF[[x]].
$$
The set $\pi_0\Defalt(\FF,\Ga)$ is the the set of $\star$-isomorphism
classes of deformations of the formal group law $F_\Ga$.
\index{$\star$-isomorphisms}

There is a natural transformation of groupoid functors
$$
\Defalt(\FF,\Ga) \longr \Def(\FF,\Ga).
$$
This is a equivalence. It is obviously full and faithful, so 
we need only show that every object in the target is isomorphic
to some object from the source.

To see this, we'll use the notation of Remark \ref{alt-deformation}.
Let $(A,\mm)$ be an Artin local ring over $\FF$
and $G$ a deformation of $\Ga$ to $A$. Since
$\FF$ is a field, $G_0$ can be given a coordinate;
since $A$ is local Noetherian, $G$ can be given a coordinate
which reduces to a chosen coordinate for $G_0$. The isomorphism
$\phi:\Ga \to G_0$ determines an isomorphism of formal group laws
$$
\phi:F_{G_0}(x,y) \longr (i^\ast)^{-1} F_\Ga(x,y).
$$
Lift the power series $\phi(x)$ to a power series $\psi(x) \in A[[x]]$  so that
$\psi_0(x) = \phi(x)$ and define a formal group law $F(x,y)$
over $A$ by requiring that
$$
\psi(x):F_G(x,y) \longr F(x,y)
$$
is an isomorphism. Then $F(x,y)$ is the required formal group
law.
\end{rem}

\begin{rem}\label{spf-lt} Following the example of Remark
\ref{two-def-formal} we extend the notion of deformations
to all commutative rings by left Kan extensions along the
forgetful functor from $\Art_\FF$ to rings. In more detail, let
$B$ be a commutative ring. Define
the category $\Art_\FF/B$ to have objects the morphisms
$$
A \longr B
$$
of commutative rings where $(A,\mm)$ is an Artin local ring
in $\Art_\FF$.
Morphisms in $\Art/B$ are commutative triangles.  Since the
tensor product $A \otimes_\ZZ A'$ of Artin local rings is
an Artin local ring, 
the category $\Art_\FF/B$ is filtered and has a cofinal subcategory
consisting of those morphisms which are injections. 

Define the groupoid $\Def(\FF,\Ga)_B$ of deformations of
$\Ga$ over $B$ to be the colimit
$$
\Def(\FF,\Ga)_B = \colim_{\Art_\FF/B}\Def(\FF,\Ga)_A.
$$
Thus a generalized deformation of $\Ga$ to $B$ is a deformation
of $\Ga$ to an Artin local subring $A \subseteq B$. This is
probably easiest to understand using formal group laws.

Fix  a coordinate of $\Ga$ and let $\Def_\ast(\FF,\Ga)$ be
the groupoid defined in Remark \ref{def-reformed}. Then by
that remark,  there is an equivalence
$$
\colim_{\Art_\FF/B}\Def_\ast(\FF,\Ga)_A \to \Def(\FF,\Ga)_B
$$
and the elements of the source are easily described. The
objects are equivalence classes of pairs $(F,i)$ where
$F$ is a formal group law
$$
F(x,y) = \sum a_{ij}x^iy^j \in B[[x,y]]
$$
so that the coefficients $a_{ij}$ lie in an Artin local subring
$A \subseteq B$ and so that the pair $(F|_A,i) \in \Def_\ast(\FF,\Ga)_A$.
Isomorphisms in $\Def(\FF,\Ga)_B$ must similarly lie over
Artin local subrings.
\end{rem}

The following result extends and follows immediately from
Remark \ref{two-def-formal} and  Theorem \ref{Lubin-Tate}.

\begin{thm}\label{LT-extended}The natural isomorphism of functors
on commutative rings
$$
\Spf(R(\FF,\Ga)) \longr \pi_0\Def(\FF,\Ga)
$$
is an isomorphism and for all deformation $G$ of $\Ga$ over $B$
$$
\pi_1(\Def(\FF,\Ga)_B,G)= \{\ 1\ \}.
$$
\end{thm}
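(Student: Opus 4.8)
The plan is to deduce both assertions from the representability result Theorem \ref{Lubin-Tate} by passing to filtered colimits. Recall from Remark \ref{spf-lt} that, by definition, $\Def(\FF,\Ga)_B = \colim_{\Art_\FF/B}\Def(\FF,\Ga)_A$, and from Remark \ref{two-def-formal} that the functor $\Spf(R(\FF,\Ga))$ on commutative rings is the left Kan extension, along the inclusion of $\Art_\FF$ into all commutative rings, of its restriction to $\Art_\FF$; hence $\Spf(R(\FF,\Ga))(B) = \colim_{\Art_\FF/B}\Hom(R(\FF,\Ga),A)$, the colimit being over the same category $\Art_\FF/B$, which is filtered (the tensor product of two Artin local rings in $\Art_\FF$ is again such a ring, and the injections are cofinal, as noted in Remark \ref{spf-lt}). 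So both functors appearing in the statement are computed as filtered colimits over $\Art_\FF/B$ of their values on $\Art_\FF$.

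Next I would invoke Theorem \ref{Lubin-Tate}: for each object $A$ of $\Art_\FF$ the morphism $\Spf(R(\FF,\Ga))_A \to \Def(\FF,\Ga)_A$ of Remark \ref{define-from-spf} — sending $f\colon R(\FF,\Ga)\to A$ to the pulled-back universal deformation $f^\ast(H,j,\phi_u)$ — is an equivalence of groupoids with discrete target. This morphism is natural in $A$, naturality being precisely the functoriality of pulling back $(H,j,\phi_u)$, so it induces on filtered colimits over $\Art_\FF/B$ a morphism $\Spf(R(\FF,\Ga))(B) \to \Def(\FF,\Ga)_B$; since a filtered colimit of equivalences of groupoids is an equivalence, this map is an equivalence for every $B$, and it is visibly the natural transformation of the statement.

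Finally, I would observe that a filtered colimit of discrete groupoids is discrete: the functor $\pi_0$ and the isotropy (automorphism) groups of objects both commute with filtered colimits of groupoids, and each $\Def(\FF,\Ga)_A$ with $A\in\Art_\FF$ has trivial isotropy by Theorem \ref{Lubin-Tate} (which itself rests on the uniqueness Lemma \ref{iso-unramified}). Hence $\Def(\FF,\Ga)_B$ is discrete, which yields $\pi_1(\Def(\FF,\Ga)_B,G) = \{1\}$ for every deformation $G$ of $\Ga$ over $B$ — such a $G$ is a deformation over some Artin local subring $A\subseteq B$, and its isotropy in the colimit is the colimit of the trivial isotropy groups at the finite stages — while the equivalence of the previous paragraph becomes an isomorphism of the set $\Spf(R(\FF,\Ga))(B)$ with $\pi_0\Def(\FF,\Ga)_B$. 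The one point requiring care is bookkeeping rather than mathematics: checking that the natural transformation of Remark \ref{define-from-spf}, built over $\Art_\FF$ with its choices of residue-field identifications and $\star$-isomorphisms, is genuinely compatible with the two left Kan extensions. I do not expect a real obstacle, as no geometric input beyond Lubin--Tate enters.
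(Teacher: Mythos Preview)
Your proof is correct and is exactly the argument the paper has in mind: the text asserts that the result ``extends and follows immediately from Remark \ref{two-def-formal} and Theorem \ref{Lubin-Tate}'', and you have simply spelled out that passage to filtered colimits over $\Art_\FF/B$ preserves the equivalence and discreteness established by Lubin--Tate on each Artin local ring.
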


Now let $\layer{n} \subseteq \cU_\fg(n)$ be the closed
substack of formal groups of exact height $n$ and let 
$\hatlayer{n}$ denote its formal neighborhood.
There is a $1$-morphism of groupoid schemes
$$
\Def(\FF,\Ga) \longr \hatlayer{n}
$$
which sends a deformation $(G/B,\phi)$ to the formal group $G$.

Given a formal group $G$ over $B$ so that $\cI_{n}(G)$ is nilpotent and
$\cI_{n+1}(G) = B$, it is not necessarily true that $G$ arises from a
deformation; that would
amount to a choice of $2$-commuting diagram
$$
\xymatrix{
&\Def(\FF,\Ga)\dto\\
\Spec(B) \ar[ur] \rto_-G & \hatlayer{n}.
}
$$
Nonetheless, we have the following two results. Recall from Corollary
\ref{field-cover} that if $\Ga$ is a height $n$ formal group
over a field $\FF$, then the induced map $\Spec(\FF) \to \layer{n}$
is  a presentation. Such a formal group also defines a trivial
deformation; that is, $\Ga$ is itself a deformation of $\Ga$ to
$\FF$.

\begin{prop}\label{pull-over-layer}Let $\Ga$ be a height $n$ formal
group law over an algebraic extension of $\FF_p$. Then the
$2$-commuting square
$$
\xymatrix{
\Spec(\FF) \rto \dto & \Def(\FF,\Ga) \dto \\
\layer{n} \rto & \hatlayer{n}
}
$$
is $2$-category pull-back square.
\end{prop}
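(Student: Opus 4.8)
The plan is to verify directly that $\Spec(\FF)$, with its evident maps to $\Def(\FF,\Ga)$ and $\layer{n}$, represents the $2$-category fiber product $\layer{n} \times_{\hatlayer{n}} \Def(\FF,\Ga)$. Concretely, for a test ring $B$ I must show the groupoid of triples $(u,v,\theta)$ — where $u: \Spec(B) \to \layer{n}$, $v:\Spec(B) \to \Def(\FF,\Ga)$, and $\theta$ is an isomorphism between the two induced objects of $\hatlayer{n}(B)$ — is equivalent to $\Spec(\FF)(B) = \Alg_{\ZZ_{(p)}}(\FF, B)$, i.e.\ to the (discrete) set of $\FF$-algebra structures on $B$. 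First I would unwind what a $B$-point of $\Def(\FF,\Ga)$ is, using the generalized deformations of Remark \ref{spf-lt}: it is a deformation of $\Ga$ to an Artin local subring $A \subseteq B$, which by Remark \ref{alt-deformation} carries in particular an isomorphism $i:\Spec(\FF)\cong \Spec(A/\mm_A)$; composing $A \to B$ we get a ring map $\FF \cong A/\mm_A \to B/\cI_n(G)B$, but to land in $\Spec(\FF)(B)$ I want a map $\FF \to B$ — so the key point is that the datum $\theta$, an isomorphism in $\hatlayer{n}(B)$ between the underlying formal group of the deformation and the formal group coming from $u$, forces the nilpotent ideal $\cI_n(G)B$ to be zero. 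This is exactly where the hypothesis that $u$ factors through $\layer{n}$ (not just $\hatlayer{n}$) enters: over $\layer{n}$ the section $v_n$ is a unit and $\cI_n = 0$.

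So the main steps, in order, are: (1) given a $B$-point of the fiber product, use $\theta:G \xrightarrow{\cong} G'$ (where $G'$ is classified by $u:\Spec(B)\to\layer{n}$) and the naturality of the $v_i$ (Proposition \ref{vn}) to conclude $v_i(G) = 0$ for $i<n$ and $v_n(G)$ invertible in $B$; since $G$ comes from a deformation, $\cI_n(G)$ is a nilpotent ideal of some Artin subring $A$, and its extension to $B$ must vanish because $v_n$ is simultaneously in $\cI_{n+1}/\cI_n$'s image and invertible — hence $\cI_n(G)B = 0$ and the special-fiber isomorphism $i$ upgrades to an honest $\FF$-algebra structure $\FF \to B$. (2) Conversely, an $\FF$-algebra structure on $B$ produces the base-changed formal group $\Ga_B$, which is tautologically a deformation (to the Artin subring $\FF \subseteq B$) and also visibly of exact height $n$ over $B$, hence a point of both $\Def(\FF,\Ga)$ and $\layer{n}$ with the evident identity $2$-cell; this gives the map back. (3) Check these are mutually inverse up to canonical natural isomorphism — here the rigidity built into deformation theory does the work: by Theorem \ref{LT-extended} (equivalently Lemma \ref{iso-unramified}) the automorphism groups of deformations over $B$ are trivial, and by Lazard's uniqueness (Corollary \ref{lazard-unique}) plus Theorem \ref{refined-lazard} the isomorphism $\theta$ over $B$ refining a given isomorphism on special fibers is unique, so there is no room for a nontrivial automorphism of a triple $(u,v,\theta)$, making both sides discrete and the comparison an isomorphism of sets rather than merely an equivalence.

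The step I expect to be the genuine obstacle is step (1): pinning down that $\cI_n(G)B = 0$ and not merely nilpotent, and doing so functorially in $B$. The subtlety is that $\Def(\FF,\Ga)_B$ is defined as a filtered colimit over Artin local subrings $A\subseteq B$ (Remark \ref{spf-lt}), so a priori the nilpotent ideal $\cI_n(G)$ lives in $A$, and one must argue its image in $B$ dies after imposing the exact-height condition coming from $u$. I would handle this by noting that $\cI_n(G)B$ is a nilpotent ideal on which $v_n$ induces an isomorphism onto a locally free rank-one module, while the factorization of $u$ through $\layer{n}$ says the pulled-back $v_n$ is already a unit section of $\omega^{p^n-1}$ over $B$; comparing via $\theta$ and using that $\cI_{n+1}(G)=B$ forces $\cI_n(G)B$ to be both nilpotent and to contain a unit unless it is zero. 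Once $\cI_n(G)B=0$, the residue isomorphism $i:\FF\cong A/\mm_A$ followed by $A/\mm_A \to B/\cI_n(G)B = B$ is the sought $\FF$-algebra structure, and the rest is bookkeeping. A secondary technical point worth care is checking that the $2$-cell $\theta$ in the fiber-product groupoid, which lives in $\hatlayer{n}$, is automatically an isomorphism in the finer sense needed — but this follows because all morphisms in $\cM_\fg$, hence in its substacks and formal neighborhoods, are isomorphisms by construction.
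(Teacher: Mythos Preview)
Your approach is correct and is essentially the paper's argument, but the paper executes step~(1) more cleanly by invoking the Lubin-Tate representation $\Def(\FF,\Ga)\simeq\Spf(R(\FF,\Ga))$ from Theorem~\ref{LT-extended}: a $B$-point of the pull-back is then a triple $(G,f:R(\FF,\Ga)\to B,\phi:G\to f^\ast H)$ with $G$ of exact height $n$, and since the maximal ideal $\mm\subseteq R(\FF,\Ga)$ equals $\cI_n(H)$, the isomorphism $\phi$ gives the single line $0=\cI_n(G)=\cI_n(f^\ast H)=B\cdot f(\mm)$, so $f$ factors through $\FF$. This replaces your filtered-colimit bookkeeping over Artin subrings entirely. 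Note also that your own argument already finishes earlier than you think: once naturality of the $v_i$ under $\theta$ gives $v_i(G)=0$ for $i<n$ in $B$, you have $\cI_n(G)=0$ in $B$ immediately by definition of the scale; the subsequent passage about ``$v_n$ inducing an isomorphism on $\cI_n(G)B$'' and the ideal being ``both nilpotent and containing a unit'' is unnecessary and not quite coherent as written (the section $v_n$ lives on $\cO/\cI_n$, not on $\cI_n$).
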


\begin{proof}Write $P$ for the $2$-category pull-back.
By  Theorem \ref{LT-extended}, an object in $P$ over a ring
$B$ is a triple $(G,f,\phi)$ where
$G$ is formal group of exact height $n$ over $B$, $f:R(\FF,\Ga) \to B$
is a ring homomorphism so that $B\cdot f(\mm)$ is nilpotent, and
$\phi:G \to f^\ast H$ is an isomorphism of formal groups. Here
$H$ is the universal deformation as in Theorem \ref{Lubin-Tate} and $\mm \subseteq
R(\FF,\Ga)$ is the maximal ideal. A morphism
$\psi: (G,f,\phi)\to (G',f,\phi')$ is an isomorphism $\psi:G \to G'$
so that $\phi'\psi=\phi$. In particular, such a triple has no non-identity
automorphisms and $P$ is discrete.

Given a triple $(G,f,\phi)$, we have that
$$
0 = \cI_n(G) = \cI_n(f^\ast H) = B\cdot f(\mm);
$$
hence, the morphism $f:R(\FF,\Ga) \to B$ factors through $\FF$. Furthermore
$\phi$ itself defines an isomorphism
$$
(G,f,\phi) \to (f^\ast H,f,1).
$$
It follows that there is an equivalence $\Spec(\FF) \to P$ sending $g:\FF \to B$ to the
triple $(f^\ast H,f,1)$ with $f$ the composite $R(\FF,\Ga) \to \FF \to B$.
\end{proof}

\begin{prop}\label{fflat-defs} The morphism $q:\Def(\FF,\Ga) \to \hatlayer{n}$ is representable, flat, and surjective.
\end{prop}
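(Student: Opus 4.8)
The plan is to verify representability, flatness, and surjectivity for the morphism $q:\Def(\FF,\Ga) \to \hatlayer{n}$ in that order, reducing each claim to a statement that has already been established.

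For representability, I would take an arbitrary morphism $\Spec(B) \to \hatlayer{n}$, classified by a formal group $G$ over $B$ with $\cI_n(G)$ nilpotent and $\cI_{n+1}(G) = B$, and identify the $2$-category pull-back $\Spec(B) \times_{\hatlayer{n}} \Def(\FF,\Ga)$. Using Theorem \ref{LT-extended}, an object of this pull-back over a $B$-algebra $C$ is a triple $(f,\psi)$ where $f:R(\FF,\Ga) \to C$ induces an isomorphism on residue fields and $\psi: C\otimes_B G \to f^\ast H$ is an isomorphism of formal groups over $C$, with $H$ the universal deformation; morphisms are isomorphisms $\psi \to \psi'$ compatible with the $f$'s. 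By Lemma \ref{iso-unramified} (applied after reducing mod the nilpotent ideal $\cI_n$ and using that a deformation is rigid), such a triple has no nontrivial automorphisms, so the pull-back is discrete. It is then the functor which to $C$ assigns the set of ring homomorphisms $R(\FF,\Ga)\otimes_{?} \to C$ making the relevant square commute; more precisely, the existence of $\psi$ forces $f$ to factor through the quotient $B \to B/\cI_n(G)$ appropriately, and one identifies the pull-back with $\Iso$-scheme data over a base that is affine by Proposition \ref{diagonal}. Thus the pull-back is a scheme, and $q$ is representable.

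For flatness, I would use that $R(\FF,\Ga) \cong W(\FF)[[u_1,\ldots,u_{n-1}]]$ by Equation \ref{lt-is-power-series}, so $\mm = \cI_n(H) = (p,u_1,\ldots,u_{n-1})$ is generated by a regular sequence of length $n$. Pulling back along $\Spec(B) \to \hatlayer{n}$ as above, the scheme $\Spec(B)\times_{\hatlayer{n}}\Def(\FF,\Ga)$ should be describable, after a faithfully flat base change making $G$ isomorphic to a pull-back of the universal deformation (which we may assume by Proposition \ref{pull-over-layer}-type reasoning at the level of the special fiber, together with formal smoothness of $R(\FF,\Ga)$), as an affine scheme built from $R(\FF,\Ga)$ by a process that is flat over $B$: essentially it is $\Iso$ of two formal groups both of whose special fibers have height $n$, twisted by the power series ring structure. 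Concretely, using that $H$ has a $p$-typical coordinate and $G$ can be given one too, the pull-back becomes $\Spec$ of a ring obtained from $B$ by adjoining the coordinate-change power series modulo the equations coming from matching $p$-series; since $R(\FF,\Ga)$ is itself flat (indeed free as a module over $W(\FF)$, hence flat over $\ZZ_{(p)}$) and the deformation-theoretic rigidity kills higher $\Tor$, this base change is flat. The hard part will be organizing this reduction cleanly: one wants to avoid re-deriving Lubin–Tate and instead quote Theorem \ref{LT-extended} and the power-series description to see directly that $q$ is "formally smooth relative to a flat base," i.e.\ that the pull-back to any affine is flat.

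For surjectivity, I would argue on geometric points: if $\Spec(K) \to \hatlayer{n}$ with $K$ a field classifies a formal group of height exactly $n$ (the only possibility, since $\cI_n$ is nilpotent hence zero over a field and $\cI_{n+1} = K$), then by Lazard's Uniqueness Theorem \ref{lazard-unique} there is a separable extension $K \to \EE$ over which this formal group becomes isomorphic to $\EE\otimes_{\FF_p}\Ga$ (after first noting any height $n$ formal group over a field is defined over a subfield that is an algebraic extension of $\FF_p$, and base-changing $\Ga$ appropriately). That isomorphism exhibits a point of $\Def(\FF,\Ga)$ — namely the trivial deformation of the reduced formal group — mapping to the given point after the extension $K \to \EE$. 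Since $\EE$ is again a field, this shows $q$ is surjective on geometric points, which is the definition of surjectivity for a representable morphism. Combining the three parts completes the proof. $\square$
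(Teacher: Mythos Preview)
Your representability and surjectivity sketches are workable, though more circuitous than needed. For representability, the paper simply reduces by descent to the case where $G$ has a coordinate and then writes the pull-back as $\Spf(B \otimes_L W \otimes_L R(\FF,\Ga))$ completed at $\cI_n$, using Lemma~\ref{pull-back-for-coord-1}; you can avoid the detour through rigidity of deformations since discreteness of $\Def(\FF,\Ga)$ is already Theorem~\ref{LT-extended}. For surjectivity, the paper factors any field-point through $\layer{n}$ and then invokes Proposition~\ref{pull-over-layer} together with Corollary~\ref{field-cover}; your direct appeal to Lazard's uniqueness accomplishes the same thing.

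The genuine gap is flatness. Your proposed argument --- describe the pull-back explicitly after base change, then check it is flat over $B$ by hand via coordinate-change equations --- is not a proof, and you say as much (``the hard part will be organizing this reduction cleanly''). The point you are missing is that this is exactly what the Landweber Exact Functor Theorem (Theorem~\ref{land-exact-2}) is for: the ring $R(\FF,\Ga) \cong W(\FF)[[u_1,\ldots,u_{n-1}]]$ has $\cI_k$ generated by the regular sequence $p,u_1,\ldots,u_{k-1}$ for $k\le n$, and $\cI_{n+1}=R(\FF,\Ga)$ since $u_n$ is a unit. That is precisely the hypothesis of Corollary~\ref{land-exact-1}, and flatness follows in one line. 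Your attempt to prove flatness ``bare-handed'' from the power-series description, without LEFT, would essentially require you to reprove LEFT in this special case.
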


\begin{proof} We first show it is representable; in fact, we will show
that given a diagram
$$
\xymatrix{
\Spec(B) \rto^-G & \hatlayer{n} & \lto \Spf(R(\FF,\Ga))
}
$$
then
$$
p_1:\Spec(B) \times_{\hatlayer{n}} \Spf(R(\FF,\Ga)) \to \Spec(B)
$$
is a formal affine scheme over $B$. By a descent argument,
we may assume that $G$ has a coordinate. Then, arguing as in
Lemma \ref{pull-back-for-coord-1}, we see that  the pull-back is 
equivalent to
$$
\Spf(B \otimes_L W \otimes_L R(\FF,\Ga)),
$$
the formal neighborhood of $B \otimes_L W \otimes_L R(\FF,\Ga)$
at the ideal $\cI_n(p_1^\ast G) = \cI_n(p_2^\ast H)$
where $p_i$, $i=1,2$ are the projections onto the two
factors.

To see that $q$ is flat, we apply
Theorem \ref{land-exact-2} (the Landweber Exact
Functor Theorem) and the description of  $R(\FF,\Ga)$ given
in Equation \ref{lt-is-power-series}. 

For surjectivity, note that if $k$ is any field and $g:\Spec(k) \to \hatlayer{n}$
classifies a formal group $G$, then $\cI_n(G)=0$ and $G$ is a height $n$;
that is, $g$ factors through $\layer{n}$. The result now follows from
the first part and the fact that $\Spec(\FF) \to \layer{n}$ is surjective --
see  Corollary \ref{field-cover}.
\end{proof}

\subsection{The action of the automorphism group}

Let
$\GG(\FF,\Ga) = \Aut(\FF,\Ga)$, the automorphism of the pair
$(\FF,\Ga)$. An element of $\GG(\FF,\Ga)$ is a pull-back diagram
\index{$\GG(\FF,\Ga)$, Morava stabilizer group}
$$
\xymatrix{
\Ga\rto^g\dto & \Ga\dto\\
\Spec(\FF) \rto_\sigma &\Spec(\FF).
}
$$
where $\sigma$ is induced by a field automorphism. For historical and
topological reasons we call this the {\it Morava stabilizer group}
\index{Morava stabilizer group} of the
pair $(\FF,\Ga)$. We may write such a diagram
as a pair $(\sigma,\lambda)$ where $\lambda:\Ga \to \sigma^\ast \Ga$
is the isomorphism induced by $g$.
If $\FF$ is an algebraic extension of $\FF_p$ and $\Ga$ is defined over
$\FF_p$, this yields an isomorphism
$$
\GG(\FF,\Ga) \cong \mathrm{Gal}(\FF/\FF_p) \ltimes \Aut(\Ga)
$$
where $\Aut(\Ga)$ is the group of automorphisms of $\Ga$ defined
over $\FF$.

The group $\GG(\FF,\Ga)$ acts on the groupoid functor
\index{$\GG(\FF,\Ga)$, action on Lubin-Tate space}
 $\Def(\FF,\Ga)$ on the right by sending a diagram
$$
\xymatrix{
\Ga \rto \dto & G \dto\\
\Spec(\FF) \rto & \Spec(A)
}
$$
to the outer square of the diagram
$$
\xymatrix{
\Ga \rto^g \dto & \Ga \rto \dto & G \dto\\
\Spec(\FF) \rto_\sigma & \Spec(\FF) \rto & \Spec(A)
}
$$
This action commutes with the isomorphisms in $\Def(\FF,\Ga)$; thus we obtain
a right action on $\pi_0\Def(\FF,\Ga)$ and hence a left action on $R(\FF,\Ga)$.

\begin{rem}\label{source-of-confusion}If, following Remark \ref{alt-deformation},
we
think of a deformation of $\Ga$ to $A$ as a triple, $(G,i,\phi)$ and
an element of $\GG(\FF,\Ga)$ as a pair $(\sigma,\la)$ as above,
then the action of $(\sigma,\la)$ on $(G,i,\phi)$ yields the triple
$$
(G,i\sigma,(\sigma^\ast\phi)\la).
$$

Because $\pi_0\Def(\FF,\Ga)$ is a set of equivalence classes, we
must take
a little care in interpreting this action on $R(\FF,\Ga)$. See, for
example, \cite{DHAJ}.
\end{rem}

The following is the key lemma about this action.

\begin{lem}\label{input-for-pb} Let $A$ be a Artin local ring 
with residue field isomorphic to $\FF$
and let $(G,i,\phi)$ and $(G',i',\phi')$ be two deformations
of $(\FF,\Ga)$. Suppose there is an isomorphism of
formal groups $\psi:G \to G'$ over $A$. Then there
is a unique pair $(\sigma,\la) \in \Aut(\FF,\Ga)$ so that $\psi$
induces an isomorphism
$$
\psi:(G,i\sigma,\sigma^\ast(\phi)\la) \longr (G',i',\phi').
$$
\end{lem}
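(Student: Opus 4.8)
The plan is to leverage the two rigidity facts already available: Lemma \ref{iso-unramified} (deformations have at most one isomorphism lifting a prescribed special-fiber isomorphism) and the description of a deformation as a triple $(G,i,\phi)$ from Remark \ref{alt-deformation}, together with the explicit formula for the $\GG(\FF,\Ga)$-action in Remark \ref{source-of-confusion}. First I would reduce the problem to the special fiber. The isomorphism $\psi:G \to G'$ over $A$ restricts to an isomorphism of the special fibers $\psi_0:G_0 \to G_0'$ over $A/\mm$. Transporting through the given isomorphisms $i,i'$ of residue fields with $\FF$, and through $\phi:\Ga \to i^\ast G_0$ and $\phi':\Ga \to (i')^\ast G_0'$, the data $(\psi_0,i,i')$ produces an automorphism of $\Ga$ over $\FF$ covering a field automorphism $\sigma$ of $\FF$ — concretely, $\sigma = i' \circ \psi_0\text{-induced map} \circ i^{-1}$ on residue fields (one must check this is well-defined as a field map, which is immediate since $\psi_0$ is an isomorphism), and $\la:\Ga \to \sigma^\ast \Ga$ is the unique isomorphism making the square relating $\phi,\phi',\psi_0,\sigma$ commute. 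This defines the candidate pair $(\sigma,\la) \in \Aut(\FF,\Ga) = \GG(\FF,\Ga)$.

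Next I would verify that with this choice of $(\sigma,\la)$, the map $\psi$ genuinely is a morphism of deformations $(G,i\sigma,(\sigma^\ast\phi)\la) \to (G',i',\phi')$. By the definition of morphism in $\Def(\FF,\Ga)$ (Remark \ref{alt-deformation}), this requires two things: that the residue-field isomorphisms agree, i.e. $i\sigma$ equals $i'$ composed with the residue map of $\psi$ — which holds by construction of $\sigma$ — and that the triangle relating $(\sigma^\ast\phi)\la$, $\phi'$, and $(i')^\ast\psi_0$ commutes, which is exactly the defining property of $\la$ chased back through the definitions. This is a diagram chase using only that $\phi$, $\phi'$, $\psi$ are isomorphisms and the compatibility of pullback with composition; no new input is needed.

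Finally, uniqueness. Suppose $(\sigma',\la')$ is another pair for which $\psi$ is a morphism of deformations $(G,i\sigma',(\sigma'^\ast\phi)\la') \to (G',i',\phi')$. Comparing residue-field data forces $i\sigma' = i\sigma$ on residue fields, hence $\sigma' = \sigma$ since $i$ is an isomorphism. With $\sigma' = \sigma$ fixed, the two source deformations $(G,i\sigma,(\sigma^\ast\phi)\la)$ and $(G,i\sigma,(\sigma^\ast\phi)\la')$ have the same underlying formal group $G$ and the same residue isomorphism; the morphism $\psi$ to $(G',i',\phi')$ then determines both $\la$ and $\la'$ via the same commuting triangle, and by Lemma \ref{iso-unramified} (applied over $A/\mm$, or directly since $\phi$ is an isomorphism) we get $\la = \la'$.

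The main obstacle I anticipate is purely bookkeeping: keeping straight the directions of the various isomorphisms ($\phi:\Ga \to i^\ast G_0$ versus its inverse, $\la:\Ga \to \sigma^\ast\Ga$, and how $\sigma^\ast(-)$ interacts with composition) so that the candidate pair $(\sigma,\la)$ is defined with the correct variance and actually reproduces the action formula $(G,i\sigma,(\sigma^\ast\phi)\la)$ from Remark \ref{source-of-confusion} rather than its inverse. Once the conventions are pinned down, every step is a formal consequence of definitions plus the single substantive input, Lemma \ref{iso-unramified}.
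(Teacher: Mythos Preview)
Your approach is correct and is essentially the paper's proof: write down the two conditions required for $\psi$ to be a morphism of deformations (namely $i\sigma = i'$ and the commutative triangle over $\FF$), and observe that these equations determine $(\sigma,\la)$ uniquely. Two small points where you overcomplicate things: first, $\psi:G\to G'$ is an isomorphism \emph{over} $A$, so $\psi_0$ lives over the identity of $\Spec(A/\mm)$ and contributes nothing to the base --- the equation $i\sigma = i'$ alone determines $\sigma$, with no ``$\psi_0$-induced map'' needed; second, Lemma \ref{iso-unramified} is not required for uniqueness of $\la$, since once $\sigma$ is fixed the triangle $\phi' = (i')^\ast\psi_0 \circ (\sigma^\ast\phi)\circ\la$ determines $\la$ directly (as you yourself note parenthetically).
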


\begin{proof} This is simply the assertion that there is a unique
way to fill in left face of the following diagram so that it commutes
$$
\xymatrix@!0@C=35pt@R=30pt{
&\Ga \ar[dl]_g\ar[rr] \ar'[d][dd] &&G_0\ar[dl]_{\psi_0} \ar[dd]\\
\Ga  \ar[rr] \ar[dd] && G_0' \ar[dd]\\
& \Spec(\FF) \ar[dl]_\sigma\ar'[r]^-i[rr] &&\Spec(A/\mm)\ar[dl]\\
\Spec(\FF) \ar[rr]_-{i'} && \Spec(A/\mm)
}
$$
Alternatively, writing down the equations provides both the
pair $(\sigma,\la)$ and its uniqueness. Indeed, we need an equality of
isomorphisms from $\Spec(\FF)$ to $\Spec(A/\mm)$ 
$$
i\sigma= i'
$$
and a commutative diagram of isomorphisms of formal groups
$$
\xymatrix@R=5pt{
&i^\ast G_0 \ar[dd]^{(i\sigma)^\ast \psi_0}\\
\Ga \ar[dr]_-{\phi'}\ar[ur]^-{\sigma^\ast(\phi)\la}\\
& i^\ast G'_0.
}
$$
\end{proof}

The action of the Morava stabilizer group 
is actually continuous in a sense we now make precise.
Assume now that $\FF$ is an algebraic extension of $\FF_p$.
First we notice that the extended automorphism group $\GG(\FF,\Ga)$
is profinite. Define normal subgroups $\GG_k(\FF,\Ga)$ of
$\GG(\FF,\Ga)$ as follows. The group $\GG(\FF,\Ga)$
is the set of Cartesian squares
$$
\xymatrix{
\Ga \rto^\la \dto & \Ga \dto\\
\Spec(\FF) \rto_\sigma & \Spec(\FF)
}
$$
under composition. The subgroup $\GG_i(\FF,\Ga)$ is the set of those
squares so that $\sigma$ is the identity and $\la$ induces the identity
on the $p^k$-bud of the formal group $\Ga$. Then $\GG_0(\FF,\Ga) =
\Aut_\FF(\Ga)$,
$$
\GG(\FF,\Ga)/\GG_0(\FF,\Ga) = \Gal(\FF,\FF_p)
$$
and
$$
\GG(\FF,\Ga) \cong \lim \GG_k(\FF,\Ga).
$$
If $\FF \to \FF'$ is an extension of subfields of $\bFF$, then
we get an injection of groups $\GG_0(\FF,\Ga) \to \GG_0(\FF,\Ga)$
which preserves the subgroups above. Thus the following
result displays $\GG(\FF,\Ga)$ as a profinite group. In
Remark \ref{finite-levels-of-morava} we made the 
following calculation. There is an isomorphism
$$
\GG_0(\bFF,\Ga)/\GG_1(\bFF,\Ga) \cong \FF_{p^n}^\times 
$$
and for $k > 0$ a non-canonical isomorphism
$$
\GG_k(\bFF,\Ga)/\GG_{k+1}(\bFF,\Ga) \cong \FF_{p^n}
$$

\begin{rem}[{\bf The continuity of the action}]\label{continuity} Define
$\Def_k(\FF,\Ga)$ to be the groupoid of triples
$(G,i,\phi)$ where $G$ is a formal group over an Artin local
ring $A$, $i:\Spec(\FF)  \to \Spec(A/\mm)$ is an isomorphism and
$$
\phi: \Ga_{p^k} \longr i^\ast (G_0)_{p^k}
$$
is an isomorphism of $p^k$-buds. The morphisms in $\Def_k(\FF,\Ga)$
are isomorphisms $\psi:G \to G'$ which induce the appropriate commutative
triangle over $\FF$. (Note that $\Def_f(\FF,\GG)$ is {\it not} the deformation
of the buds, as these isomorphisms are defined over the whole group.)
Since every isomorphism of buds over a Noetherian
local ring can be lifted to an isomorphism of the formal groups, we have
that the evident map
$$
\Def(\FF,\Ga) \longr \Def_k(\FF,\Ga)
$$
is surjective on objects and $\GG(\FF,\Ga)$-equivariant; furthermore, the induced
map
\begin{equation}\label{iso-to-buds}
\Def(\FF,\Ga) \longr \lim \Def_k(\FF,\Ga)
\end{equation}
is an isomorphism.
The action of $\GG(\FF,\Ga)$ on $\Def_k(\FF,\Ga)$ factors through the
quotient group $\GG(\FF,\Ga)/\GG_k(\FF,\Ga)$. If we give $\Def_k(\FF,\Ga)$
the discrete topology and $\Def(\FF,\Ga)$ the topology defined by the natural
isomorphism \ref{iso-to-buds}, then the action of $\GG(\FF,\Ga)$ on
$\Def(\FF,\Ga)$ is continuous.
\end{rem}

\begin{lem}\label{pi-defk} There is a natural isomorphism of
functors on Artin local rings
$$
\xymatrix{
[\pi_0\Def(\FF,\Ga)]/\GG_k(\FF,\Ga) \rto^-\cong & \pi_0\Def_k(\FF,\Ga)
}
$$
and for all $(G,i,\phi)$ in $\Def_k(\FF,\Ga)_A$,
$$
\pi_1(\Def_k(\FF,\Ga)_A,G) = \GG_k(\FF,\Ga).
$$
\end{lem}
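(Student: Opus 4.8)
The plan is to deduce this directly from the Lubin–Tate theorem as extended in Theorem \ref{LT-extended}, together with the fact that $\Def(\FF,\Ga) \to \Def_k(\FF,\Ga)$ is a $\GG_k(\FF,\Ga)$-quotient in the appropriate sense. First I would recall from Remark \ref{continuity} that for each Artin local ring $A$ the map $\Def(\FF,\Ga)_A \to \Def_k(\FF,\Ga)_A$ is surjective on objects and $\GG(\FF,\Ga)$-equivariant, and that the action of $\GG(\FF,\Ga)$ on $\Def_k(\FF,\Ga)_A$ factors through $\GG(\FF,\Ga)/\GG_k(\FF,\Ga)$; the content left to establish is that two objects of $\Def(\FF,\Ga)_A$ have the same image in $\pi_0\Def_k(\FF,\Ga)$ if and only if they differ by the action of $\GG_k(\FF,\Ga)$, and that the automorphism group of any object of $\Def_k(\FF,\Ga)_A$ is exactly $\GG_k(\FF,\Ga)$.

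For the $\pi_1$ statement I would argue as follows. Fix $(G,i,\phi)$ in $\Def_k(\FF,\Ga)_A$ where $\phi:\Ga_{p^k}\to i^\ast(G_0)_{p^k}$ is an isomorphism of $p^k$-buds. An automorphism of this object in $\Def_k(\FF,\Ga)_A$ is an isomorphism $\psi:G\to G'$ of formal groups over $A$ with $G'=G$, inducing the identity compatibility triangle on $p^k$-buds over $\FF$; that is, $(i^\ast\psi_0)_{p^k}\circ \phi = \phi$ as maps of $p^k$-buds, so $(\psi_0)_{p^k}$ is the identity $p^k$-bud automorphism of $(G_0)_{p^k}$. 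By Lemma \ref{iso-unramified} the morphism $\Iso_A(G,G)\to\Spec(A)$ is unramified, hence $\psi$ is the unique lift of $\psi_0$; and $\psi_0$ is an automorphism of $G_0$ over $\FF$ inducing the identity on the $p^k$-bud. Via the isomorphism $\phi$ and the equivalence of Theorem \ref{diff-ht}/\ref{refined-lazard-redux}, the group of such $\psi_0$ is precisely $\GG_k(\FF,\Ga)$ — by definition this is the subgroup of $\Aut_\FF(\Ga)$ (with $\sigma = \mathrm{id}$) inducing the identity on the $p^k$-bud. Conversely, any element of $\GG_k(\FF,\Ga)$, viewed as $\la:\Ga\to\Ga$, transports via $\phi$ to an automorphism of $(G_0)_{p^k}$ that is the identity, hence lifts (uniquely, and since the lift of the identity is the identity on buds but need not be on $G$) to an automorphism of $G$ over $A$ — here I would invoke that isomorphisms of buds over a Noetherian local ring lift to isomorphisms of formal groups, as used in Remark \ref{continuity}. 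This gives $\pi_1(\Def_k(\FF,\Ga)_A,G)\cong\GG_k(\FF,\Ga)$, naturally in $A$.

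For the $\pi_0$ statement, I would combine Theorem \ref{LT-extended} with the $\pi_1$ computation just obtained. Theorem \ref{LT-extended} says $\pi_0\Def(\FF,\Ga) \cong \Spf(R(\FF,\Ga))$ and that $\Def(\FF,\Ga)_A$ is discrete (connected components are singletons). The quotient map $\Def(\FF,\Ga)_A \to \Def_k(\FF,\Ga)_A$ is surjective on objects, so $\pi_0\Def(\FF,\Ga)_A \to \pi_0\Def_k(\FF,\Ga)_A$ is surjective; it remains to identify the fibers. Suppose $(G,i,\phi)$ and $(G',i',\phi')$ in $\Def(\FF,\Ga)_A$ become isomorphic in $\Def_k(\FF,\Ga)_A$; then there is $\psi:G\to G'$ with a compatibility triangle on $p^k$-buds. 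By Lemma \ref{input-for-pb} (applied with the $p^k$-bud-level data, or by the same filling-in-the-face argument at bud level) there is a unique $(\sigma,\la)\in\GG(\FF,\Ga)$ so that $\psi$ is an isomorphism of deformations $(G,i\sigma,\sigma^\ast(\phi)\la)\to(G',i',\phi')$ on $p^k$-buds; the condition that $\psi$ already matches on $p^k$-buds forces $(\sigma,\la)\in\GG_k(\FF,\Ga)$. Hence the two original deformations differ by the $\GG_k(\FF,\Ga)$-action on $\pi_0\Def(\FF,\Ga)_A$. Conversely, the action is through $\GG_k$, so elements in the same $\GG_k$-orbit map to the same component. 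This yields the natural isomorphism $[\pi_0\Def(\FF,\Ga)]/\GG_k(\FF,\Ga)\xrightarrow{\cong}\pi_0\Def_k(\FF,\Ga)$.

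\textbf{Main obstacle.} The delicate point is the precise equivariance bookkeeping: matching the left-action-on-$R(\FF,\Ga)$ versus right-action-on-deformations conventions (Remark \ref{source-of-confusion}), and checking that the unique pair $(\sigma,\la)$ produced by the bud-level version of Lemma \ref{input-for-pb} genuinely lies in $\GG_k(\FF,\Ga)$ rather than merely in $\GG(\FF,\Ga)$. I would handle this by writing out the two defining equations ($i\sigma = i'$ and the commuting triangle of $p^k$-bud isomorphisms) exactly as in the proof of Lemma \ref{input-for-pb}, and reading off that the $p^k$-bud compatibility of $\psi$ with $\phi,\phi'$ is equivalent to $\la$ inducing the identity on $\Ga_{p^k}$ and $\sigma = \mathrm{id}$, which is the definition of $\GG_k(\FF,\Ga)$.
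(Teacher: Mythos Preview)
Your $\pi_0$ argument is essentially the paper's: both apply Lemma \ref{input-for-pb} to produce, from an isomorphism $\psi$ in $\Def_k$, the unique $(\sigma,\lambda)\in\GG(\FF,\Ga)$ making $\psi$ an isomorphism of full deformations, and then observe that the $p^k$-bud compatibility forces $\lambda_{p^k}=\mathrm{id}$, i.e.\ $\lambda\in\GG_k(\FF,\Ga)$. One clarification: Lemma \ref{input-for-pb} must be applied at the \emph{full} level (to chosen lifts $\bar\phi,\bar\phi'$), not ``with the $p^k$-bud-level data'' as you suggest parenthetically; the paper's proof does exactly this.

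For $\pi_1$ your ``Conversely'' direction has a genuine gap. Given $\lambda\in\GG_k(\FF,\Ga)$ you transport it via $\phi$; but $\phi$ is only a $p^k$-bud isomorphism, so it sees only $\lambda_{p^k}=\mathrm{id}$, and you recover only the identity automorphism of the bud---lifting that to $G$ gives $\psi=\mathrm{id}$, not an automorphism encoding $\lambda$. The paper avoids this by first choosing a full lift $\bar\phi:\Ga\to i^\ast G_0$ of $\phi$ and then running the uniqueness argument from Lemma \ref{input-for-pb} exactly as for $\pi_0$: an automorphism $\psi$ of $(G,i,\phi)$ in $\Def_k$ determines the unique $\lambda$ with $\psi:(G,i,\bar\phi\lambda)\to(G,i,\bar\phi)$ an isomorphism in $\Def$, and this $\lambda$ lies in $\GG_k$. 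Your forward map $\psi\mapsto\psi_0$ does become a well-defined injection into $\GG_k$ once you use $\bar\phi$ rather than $\phi$ to make the identification, but the inverse map cannot be built from bud-level data as you attempt.
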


\begin{proof}This is a direct consequence of Lemma \ref{input-for-pb}.
The natural transformation
$$
\pi_0\Def(\FF,\Ga)/\GG_k(\FF,\Ga) \longr \pi_0\Def_k(\FF,\Ga)
$$
is onto. If $(G,i,\phi)$ and $(G',i,\phi')$ are
two deformations and
$$
\psi:(G,i,\phi) \longr (G',i,\phi')
$$
is an isomorphism in the $\Def_k(\FF,\Ga)$ there is a {\it unique}
automorphism $\la$ of $\Ga$ over $\FF$ so that 
$$
\psi:(G,i,\phi\la) \longr (G',i,\phi')
$$
is an isomorphism in $\Def(\FF,\Ga)$. Note that $\la$ is necessarily
in $\GG_k(\FF,\Ga)$.

Almost the same proof gives the statement about $\pi_1$. Indeed,
if $(G,i,\bar{\phi})$ is any lift of $(G,i,\phi)$ to $\Def(\FF,\Ga)$,
then an automorphism $\psi:(G,i,\phi) \to (G,i,\phi)$ determines
a unique element in $\GG_k(\FF,\Ga)$ so that $\psi$ induces
an isomorphism
$$
\psi:(G,i,\bar{\phi}\la) \longr (G,i,\bar{\phi}).
$$
The assignment $\psi \mapsto \la$ induces the requisite isomorphism. 
\end{proof}

The functor $\Def_k(\FF,\Ga)$ from Artin rings to groupoids can be
extended to all commutative rings using a left Kan extension
as in Remark \ref{spf-lt}. Since
we are taking a filtered colimit, the natural transformation
$\Def(\FF,\Ga) \to \Def_k(\FF,\Ga)$ remains onto for all commutative
rings. We get a natural sequence of maps
$$
\xymatrix{
\pi_0\Def(\FF,\Ga)_B \rto
& \rto^-\cong \lim \pi_0\Def(\FF,\Ga)_B/\GG_k(\FF,\Ga)
& \lim \Def_k(\FF,\Ga)_B.
}
$$
The first map, which is an isomorphism for Artin rings,
is not immediately an isomorphism in this generality
because colimits don't commute with
limits in general; however it is continuous
and, as a result, an injection.

\subsection{Deformations are the universal cover}

We now prove the main result -- that Lubin-Tate space is
the universal cover of $\hatlayer{n}$. The notion of the group
scheme defined by a profinite group $G$ was covered
in Remark \ref{group-and-galois}. In the following result
$\GG(\Ga,\FF)$ is profinite group of automorphisms
of the pair $(\Ga,\FF)$;  that is, the big Morava stabilizer
group.

\begin{thm}\label{galois-decomp} The natural transformations of groupoids
over Artin local rings
$$
\Def(\FF,\Ga) \times \GG(A,\FF) \longr \Def(\FF,\Ga) \times_{\hatlayer{n}}
\Def(\FF,\Ga)
$$
given by
$$
((G,i,\phi),(\sigma,\la)) \mapsto ((G,i,\phi),(G,i\sigma,\phi\la),1:G \to G)
$$
is an equivalence.
\end{thm}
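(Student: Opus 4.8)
The plan is to show the natural transformation is fully faithful and essentially surjective as a morphism of categories fibered in groupoids over $\Art_\FF$. First I would unwind both sides. An object of the left-hand side over an Artin local ring $A$ is a pair $((G,i,\phi),(\sigma,\la))$ where $(G,i,\phi)\in\Def(\FF,\Ga)_A$ and $(\sigma,\la)\in\GG(A,\FF)$. An object of the right-hand side over $A$ is a triple $((G_1,i_1,\phi_1),(G_2,i_2,\phi_2),\psi)$ consisting of two deformations and an isomorphism $\psi:G_1\to G_2$ of the \emph{underlying formal groups} over $A$ (recall that morphisms in $\hatlayer{n}$ are just isomorphisms of formal groups, so the $2$-category pullback over $\hatlayer{n}$ imposes no compatibility beyond $\psi$ being an isomorphism of formal groups). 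The displayed map sends the pair to $((G,i,\phi),(G,i\sigma,(\sigma^\ast\phi)\la),1_G)$; here I would double-check using Remark \ref{source-of-confusion} that the target triple really is a well-formed object, i.e.\ that $(G,i\sigma,(\sigma^\ast\phi)\la)$ is a deformation of $\Ga$ --- this is exactly the recipe for the $\GG(\FF,\Ga)$-action on $\Def(\FF,\Ga)$, so it is.

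The essential surjectivity is where Lemma \ref{input-for-pb} does all the work. Given a target object $((G,i,\phi),(G',i',\phi'),\psi)$, Lemma \ref{input-for-pb} produces a \emph{unique} pair $(\sigma,\la)\in\Aut(\FF,\Ga)=\GG(\FF,\Ga)$ such that $\psi$ upgrades to an isomorphism of deformations $(G,i\sigma,(\sigma^\ast\phi)\la)\to(G',i',\phi')$. Then $((G,i,\phi),(\sigma,\la))$ maps, under the displayed transformation, to $((G,i,\phi),(G,i\sigma,(\sigma^\ast\phi)\la),1_G)$, and the isomorphism of deformations just extracted --- together with the identity on the first factor --- gives a morphism in the right-hand groupoid from the image of our chosen source object to the given target object. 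So every target object is isomorphic to one in the image. For full faithfulness I would note that a morphism in the left-hand side from $((G,i,\phi),(\sigma,\la))$ to $((G',i',\phi'),(\sigma',\la'))$ is just an isomorphism of deformations $(G,i,\phi)\to(G',i',\phi')$ (the $\GG$-factor is discrete, so it forces $(\sigma,\la)=(\sigma',\la')$), while a morphism on the right between the two images is an isomorphism of deformations $\theta:(G,i,\phi)\to(G',i',\phi')$ compatible with the identities on the second factors; tracing through, these are the same data, and the uniqueness clause in Lemma \ref{input-for-pb} (equivalently Lemma \ref{iso-unramified}) guarantees there is no ambiguity.

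I would then remark that the statement is phrased only over $\Art_\FF$, so no left Kan extension or continuity argument is needed here --- the extension to all commutative rings, if wanted, follows formally by the colimit conventions of Remark \ref{spf-lt}, but the theorem as stated is a statement about groupoid-valued functors on Artin local rings, and both sides are built from $\Def(\FF,\Ga)$, which is discrete, so both sides are discrete groupoid-valued functors and checking an equivalence reduces to checking a bijection on $\pi_0$ together with triviality of automorphisms. The triviality of automorphisms on the left is immediate (automorphisms of a deformation form the trivial group by Theorem \ref{LT-extended}, and $\GG$ contributes nothing since it is discrete), and on the right it follows from Lemma \ref{iso-unramified}. Thus the whole proof compresses to: (i) the map is well-defined (Remark \ref{source-of-confusion}); (ii) $\pi_0$-injectivity and $\pi_0$-surjectivity, both of which are exactly the existence-and-uniqueness content of Lemma \ref{input-for-pb}.

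\textbf{Main obstacle.} The only genuinely delicate point is bookkeeping: making sure the action formula $(G,i,\phi)\cdot(\sigma,\la)=(G,i\sigma,(\sigma^\ast\phi)\la)$ is applied consistently on both sides, and that the ``$1:G\to G$'' in the displayed map is interpreted as a morphism $(G,i,\phi)\to(G,i\sigma,(\sigma^\ast\phi)\la)$ in $\hatlayer{n}$ rather than in $\Def(\FF,\Ga)$ --- it is an isomorphism of underlying formal groups but visibly \emph{not} an isomorphism of deformations, which is precisely why the fiber product over $\hatlayer{n}$ is larger than over $\Def(\FF,\Ga)$ and why $\GG(A,\FF)$ appears. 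Once that distinction is kept straight, everything else is a formal consequence of Lemma \ref{input-for-pb}.
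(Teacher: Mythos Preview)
Your proposal is correct and follows essentially the same path as the paper: both arguments reduce everything to Lemma \ref{input-for-pb}, which supplies the unique $(\sigma,\la)$ needed to match an arbitrary object of the pull-back with one in the image. The paper packages this as an explicit inverse-with-contraction (displaying the isomorphism $(1,\psi)$ from the image object to the given one), while you phrase it as essential surjectivity plus full faithfulness and then observe that both sides are discrete so that only $\pi_0$ matters; these are the same argument in slightly different dress, and your care with the action formula $(\sigma^\ast\phi)\la$ versus the paper's abbreviated $\phi\la$ is well placed.
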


\begin{proof} A typical element in the pull-back is a triple
\begin{equation}\label{typical-def-pb}
((G,i,\phi),(G',i',\phi'),\psi:G \to G')
\end{equation}
where the first two terms are deformations and $\psi$ is any isomorphism
of formal groups. A morphism in the pull-back 
\begin{align*}
(\ga,\ga'): ((G_1,i,\phi),(G_1',i',&\phi'),\psi_1:G_1 \to G_1')\to\\
&((G_2,j,\phi),(G_2',i',\phi'),\psi_2:G_2 \to G_2')
\end{align*}
are isomorphisms $\ga$ and $\ga'$ of deformations so that
$\psi_2\ga = \ga'\psi_1$. Now we apply Lemma \ref{input-for-pb}.
Given the typical element, as in \ref{typical-def-pb}, we get a unique
pair $(\sigma,\la)$ in $\GG(\FF,\Ga)$ so that
\begin{equation}\label{contract-def}
(1,\psi):((G,i,\phi),(G,i\sigma,\phi\la),1_G) \to ((G,i,\phi),(G',i',\phi'),\psi)
\end{equation}
is an isomorphism in the pull-back. The assignment
$$
((G,i,\phi),(G',i',\phi'),\psi) \mapsto ((G,i,\phi),(G,i\sigma,\phi\la),1_G)
$$
becomes a natural
transformation of groupoids sending a morphism $(\ga,\ga')$ to $(\ga,\ga)$.
Then \ref{contract-def}  displays the necessary contraction.
\end{proof}

\begin{defn}\label{auts-over}Let $q:Y \to X$ be a morphism of
categories fibered in groupoids over some base category.
The group $\Aut_Y(X)$\index{$\Aut_Y(X)$} of automorphisms of $Y$ over $X$ 
consists of equivalence classes pairs $(f,\psi)$ where $f:Y \to Y$ is a
1-morphism of groupoids and $\psi:q \to qf$ is a $2$-morphism. 
Two such pairs $(f,\psi)$ and $(f',\psi')$ are equivalent if there
is a 2-morphism $\phi:f \to f'$ so that $\psi'\phi=\psi$. The
composition law reads
$$
(g,\psi)(f,\phi) = (gf,(f^\ast\psi)\phi).
$$
\end{defn}

There is a homomorphism
$$
\GG(\FF,\Ga) \longr  \Aut_{\hatlayer{n}}(\Def(\FF,\Ga))
$$
sending $(\sigma,\la)$ to the pair $(f_{(\sigma,\la)},1)$ where
$f_{(\sigma,\la)}$ is the transformation
$$
(G,i,\phi) \mapsto (G,i\sigma,\phi\la).
$$

\begin{thm}\label{auts-is-morava}This homomorphism
$$
\GG(\FF,\Ga) \longr \Aut_{\hatlayer{n}}(\Def(\FF,\Ga)).
$$
is an isomorphism.
\end{thm}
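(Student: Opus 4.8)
The plan is to show the homomorphism $\GG(\FF,\Ga) \to \Aut_{\hatlayer{n}}(\Def(\FF,\Ga))$ is both injective and surjective by exploiting Theorem \ref{galois-decomp}, which identifies $\Def(\FF,\Ga) \times_{\hatlayer{n}} \Def(\FF,\Ga)$ with $\Def(\FF,\Ga) \times \GG(A,\FF)$. The central observation is that an element of $\Aut_{\hatlayer{n}}(\Def(\FF,\Ga))$ is essentially the same data as a ``graph section'' of one of the two projections from the fiber product, and the fiber product is now completely understood.

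First I would unwind Definition \ref{auts-over}: a pair $(f,\psi)$ representing an element of $\Aut_{\hatlayer{n}}(\Def(\FF,\Ga))$ consists of a $1$-morphism $f:\Def(\FF,\Ga) \to \Def(\FF,\Ga)$ together with a $2$-isomorphism $\psi:q \to qf$ between the two composites to $\hatlayer{n}$. Given such a pair, the universal property of the $2$-category pull-back supplies a $1$-morphism $(\mathrm{id}, f, \psi):\Def(\FF,\Ga) \to \Def(\FF,\Ga) \times_{\hatlayer{n}} \Def(\FF,\Ga)$, which is a section of the first projection $p_1$. Composing with the equivalence of Theorem \ref{galois-decomp}, a section of $p_1$ corresponds to a morphism $\Def(\FF,\Ga) \to \Def(\FF,\Ga) \times \GG(A,\FF)$ whose first component is the identity; since $\Def(\FF,\Ga)$ is discrete (Theorem \ref{Lubin-Tate}) and $\GG(A,\FF)$ is a constant/profinite group scheme, such a morphism is determined by a single locally constant $\GG(A,\FF)$-valued function on $\pi_0\Def(\FF,\Ga)$. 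But $\pi_0\Def(\FF,\Ga) \cong \Spf(R(\FF,\Ga))$ is connected (it is the formal spectrum of a complete local ring, so has a single closed point), so this function is constant: it is a single element $(\sigma,\la) \in \GG(\FF,\Ga)$. Chasing the equivalence of Theorem \ref{galois-decomp} back through, the section $(\mathrm{id},f,\psi)$ associated to $(\sigma,\la)$ is exactly the one sending $(G,i,\phi)$ to the triple $((G,i,\phi),(G,i\sigma,\phi\la),1_G)$, i.e. $f = f_{(\sigma,\la)}$ up to $2$-isomorphism and $\psi$ is forced to be (equivalent to) the identity. This gives a two-sided inverse to the homomorphism in the statement, completing both surjectivity and injectivity at once.

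To make this fully rigorous I would need to check two compatibilities: (i) that the correspondence ``pair $(f,\psi)$ modulo the equivalence of Definition \ref{auts-over}'' $\leftrightarrow$ ``section of $p_1$ modulo $2$-isomorphism over $\Def(\FF,\Ga)$'' is a bijection — this is a formal $2$-categorical statement about pull-backs, using that a $2$-morphism $\phi: f \to f'$ with $\psi'\phi = \psi$ is precisely a $2$-morphism of the induced sections; and (ii) that the group structure matches, i.e. the composition law $(g,\psi)(f,\phi) = (gf,(f^\ast\psi)\phi)$ of Definition \ref{auts-over} is carried to the group law on $\GG(\FF,\Ga)$. For (ii) one tracks that composing $f_{(\sigma_1,\la_1)}$ with $f_{(\sigma_2,\la_2)}$ sends $(G,i,\phi)$ to $(G, i\sigma_2\sigma_1, \phi\la_2\la_1)$ — wait, one must be careful about the order, since the action in Remark \ref{source-of-confusion} applies $(\sigma,\la)$ on the right — so I would verify the composite matches the product in $\GG(\FF,\Ga)$ under the convention used there, possibly finding an anti-isomorphism which one then absorbs by using the opposite-group convention or by adjusting which projection one sections.

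The main obstacle I anticipate is bookkeeping rather than conceptual depth: correctly matching the $2$-categorical data (a pair $(f,\psi)$ up to the specified equivalence) with a section of a projection out of the fiber product, and then pinning down the group-law compatibility including the left-versus-right action subtlety flagged in Remark \ref{source-of-confusion}. The genuinely substantive input — that $\Def(\FF,\Ga)$ is discrete and has connected $\pi_0$, and that the fiber product is $\Def(\FF,\Ga) \times \GG(A,\FF)$ — is already in hand via Theorems \ref{Lubin-Tate}, \ref{LT-extended}, and \ref{galois-decomp}, so once the bookkeeping is set up the argument should close cleanly. An alternative, more hands-on route would be to construct the inverse directly: given $(f,\psi)$, evaluate on the universal deformation $(H,j,\phi_u)$ over $R(\FF,\Ga)$ to get another deformation $f(H,j,\phi_u)$ together with an identification of underlying formal groups coming from $\psi$, then invoke Lemma \ref{input-for-pb} to extract the unique $(\sigma,\la) \in \GG(\FF,\Ga)$; I would likely present this concrete version as it avoids invoking Theorem \ref{galois-decomp} as a black box and makes the uniqueness (hence injectivity) transparent.
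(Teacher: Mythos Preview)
Your proposal is correct, and in fact you outline two routes, the second of which (``evaluate on a deformation and invoke Lemma \ref{input-for-pb}'') is exactly what the paper does. The paper's proof is the hands-on one: given $(f,\psi)$ and any deformation $(G,i,\phi)$, the datum $\psi_G:G \to G_f$ is an isomorphism of formal groups between two deformations of $\Ga$, so Lemma \ref{input-for-pb} produces a \emph{unique} $(\sigma,\la)$ making $\psi_G$ an isomorphism of deformations $(G,i\sigma,\phi\la) \to f(G,i,\phi)$; uniqueness forces this $(\sigma,\la)$ to be independent of the deformation and the assignments $\psi_G$ to assemble into the required $2$-morphism $f_{(\sigma,\la)} \to f$.

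Your primary approach, by contrast, routes the argument through Theorem \ref{galois-decomp}: an automorphism over $\hatlayer{n}$ is a section of $p_1$ on the fiber product, which via the equivalence of Theorem \ref{galois-decomp} becomes a natural transformation $\Spf(R(\FF,\Ga)) \to \GG(\FF,\Ga)$, necessarily constant since $R(\FF,\Ga)$ is complete local. This is a perfectly valid and conceptually clean repackaging, but note that Theorem \ref{galois-decomp} was itself proved by the same pointwise application of Lemma \ref{input-for-pb}, so you are not really avoiding that lemma---you are just invoking it once in bulk rather than again here. The paper's direct route is shorter and makes the constancy of $(\sigma,\la)$ transparent via uniqueness; your route makes the group-theoretic structure (automorphisms $=$ sections $=$ points of $\GG$) more visible at the cost of the bookkeeping you flag in (i) and (ii). Your caution about left/right action conventions in (ii) is well-placed but ultimately harmless: the paper's proof sidesteps it entirely by working with individual elements.
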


\begin{proof} That the map is an injection is clear from the definitions.
We now prove 
that it's surjective. Let $(f,\psi)$ be an element of the automorphisms of
$\Def(\FF,\Ga)$ over $\hatlayer{n}$. Let's write
$$
f(G,i,\phi) = (G_f,i_f,\phi_f).
$$
Then $\psi$ gives isomorphism of formal groups $\psi_G:G \to G_f$. By
Lemma \ref{input-for-pb} there is a unique pair $(\sigma,\la) \in \GG(\FF,\Ga)$
so that 
$$
\psi_G:(G,i\sigma,\phi\la) \to (G_f,i_f,\phi_f)
$$
is an isomorphism of deformations. The uniqueness of $(\sigma,\la)$
and this equation give us the needed $2$-morphism
$$
\phi:f_{(\sigma,\la)} \longr f
$$
\end{proof}

We wish to show that the isomorphism of Theorem \ref{auts-is-morava}
is appropriately continuous.

\begin{lem}\label{pro-finite-aut-over}There is a surjective homomorphism of
groups
$$
q_k:\Aut_{\hatlayer{n}}(\Def(\FF,\Ga)) \to  \Aut_{\hatlayer{n}}(\Def_k(\FF,\Ga))
$$
which induces a commutative diagram of groups
$$
\xymatrix{
\GG(\FF,\Ga) \rto \dto &  \Aut_{\hatlayer{n}}(\Def(\FF,\Ga))\dto^{q_k}\\
\GG(\FF,\Ga)/\GG_{k+1}(\FF,\Ga) \rto &  \Aut_{\hatlayer{n}}(\Def_k(\FF,\Ga)).
}
$$
\end{lem}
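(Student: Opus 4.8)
The plan is to construct the homomorphism $q_k$ explicitly and then check the two required properties (surjectivity and commutativity of the square) by direct manipulation of the definitions, relying on the results already established for $\Def(\FF,\Ga)$ and $\Def_k(\FF,\Ga)$. Recall from Remark \ref{continuity} that there is a surjective, $\GG(\FF,\Ga)$-equivariant morphism of groupoid functors $r_k:\Def(\FF,\Ga) \to \Def_k(\FF,\Ga)$ over $\hatlayer{n}$, sending a triple $(G,i,\phi)$ to $(G,i,\phi_{p^k})$ where $\phi_{p^k}$ is the induced isomorphism of $p^k$-buds; this morphism commutes with the projections to $\hatlayer{n}$ and is initial among such among the $\Def_k$'s by the limit description \ref{iso-to-buds}. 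First I would show that $r_k$ has a universal property that lets any automorphism of $\Def(\FF,\Ga)$ over $\hatlayer{n}$ descend: given a pair $(f,\psi) \in \Aut_{\hatlayer{n}}(\Def(\FF,\Ga))$, one checks that $r_k f$ factors (uniquely up to the appropriate $2$-morphism) through $r_k$, because $r_k$ is surjective on objects and its morphisms are controlled by $\GG_k(\FF,\Ga)$ via Lemma \ref{pi-defk}. This produces the descended pair $q_k(f,\psi) \in \Aut_{\hatlayer{n}}(\Def_k(\FF,\Ga))$, and functoriality of the construction makes $q_k$ a group homomorphism.

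For surjectivity of $q_k$, I would use the composite $\GG(\FF,\Ga) \to \Aut_{\hatlayer{n}}(\Def(\FF,\Ga)) \xrightarrow{q_k} \Aut_{\hatlayer{n}}(\Def_k(\FF,\Ga))$. By Theorem \ref{auts-is-morava} the first map is an isomorphism, so it suffices to show the composite is surjective. But the action of $\GG(\FF,\Ga)$ on $\Def_k(\FF,\Ga)$ factors through $\GG(\FF,\Ga)/\GG_k(\FF,\Ga)$ (Remark \ref{continuity}), and the bud-level analogue of Lemma \ref{input-for-pb}, together with Lemma \ref{pi-defk}, shows that $\GG(\FF,\Ga)/\GG_{k+1}(\FF,\Ga)$ already exhausts the automorphisms of $\Def_k(\FF,\Ga)$ over $\hatlayer{n}$: running the proof of Theorem \ref{auts-is-morava} verbatim with $\Def_k$ in place of $\Def$ (where the relevant uniqueness of the pair $(\sigma,\la)$ now holds only modulo $\GG_{k+1}(\FF,\Ga)$, which is exactly the subgroup acting trivially at bud level $p^k$) gives the isomorphism $\GG(\FF,\Ga)/\GG_{k+1}(\FF,\Ga) \cong \Aut_{\hatlayer{n}}(\Def_k(\FF,\Ga))$. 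This both proves surjectivity of $q_k$ and supplies the bottom arrow of the commutative square. The commutativity of the square is then immediate: both composites send $(\sigma,\la) \in \GG(\FF,\Ga)$ to the automorphism of $\Def_k(\FF,\Ga)$ induced by $(\sigma,\la)$ acting through the quotient, since $r_k$ is equivariant and the descended action is by construction the quotient action.

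The main obstacle I expect is bookkeeping the $2$-categorical subtleties: an element of $\Aut_Y(X)$ is an \emph{equivalence class} of pairs $(f,\psi)$ (Definition \ref{auts-over}), so I must check that the descent $q_k$ is well-defined on equivalence classes — i.e., that a $2$-morphism $\phi:f \to f'$ with $\psi'\phi = \psi$ descends to a $2$-morphism between the descended data. This uses again that $r_k$ is full on the relevant morphism spaces, with kernel-of-descent governed precisely by $\GG_{k+1}(\FF,\Ga)$ acting trivially on $\Def_k(\FF,\Ga)$; the indexing shift from $k$ to $k+1$ is the one point that needs care, matching the shift already present in Remark \ref{finite-levels-of-morava} and Lemma \ref{pi-defk}. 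Once the well-definedness and the identification of $\Aut_{\hatlayer{n}}(\Def_k(\FF,\Ga))$ with $\GG(\FF,\Ga)/\GG_{k+1}(\FF,\Ga)$ are in hand, the diagram and surjectivity follow formally.
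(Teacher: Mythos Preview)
Your proposal is correct and follows essentially the same route as the paper: both define $q_k$ via the action of $\GG(\FF,\Ga)$ on $\Def_k(\FF,\Ga)$ (using Theorem~\ref{auts-is-morava} to identify $\Aut_{\hatlayer{n}}(\Def(\FF,\Ga))$ with $\GG(\FF,\Ga)$), and both prove surjectivity by rerunning the argument of Theorem~\ref{auts-is-morava} at the bud level---lifting the bud isomorphisms $\phi,\phi_f$ to full isomorphisms $\bar{\phi},\bar{\phi}_f$ and applying Lemma~\ref{input-for-pb} to extract a pair $(\sigma,\la)$ whose class modulo $\GG_{k+1}(\FF,\Ga)$ is independent of the lifts. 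The only cosmetic difference is that the paper defines $q_k$ directly through the identification with $\GG(\FF,\Ga)$ rather than via your abstract descent through $r_k$, which lets it skip the $2$-categorical well-definedness check you flag as the main obstacle.
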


\begin{proof} We use Theorem \ref{auts-is-morava}. Any automorphism
of $\Def(\FF,\Ga)$ of the form $f_{(\sigma,\la)}$ immediately
induces an automorphism of $\Def_k(\FF,\Ga)$. This defines a morphism
$$
\GG(\FF,\Ga) \longr  \Aut_{\hatlayer{n}}(\Def_k(\FF,\Ga))
$$
which factors through $\GG(\FF,\Ga)/\GG_{k+1}(\FF,\Ga)$. It remains
only to show that it's onto. To show this, we use a variant of the
argument in the proof of Theorem \ref{auts-is-morava}. If $(f,\psi)$ is
an automorphism of $\Def_k(\FF,\Ga)$ over $\hatlayer{n}$, we again write
$$
f(G,i,\phi) = (G_f,i_f,\phi_f).
$$
choose isomorphisms $\bar{\phi}$ and $\bar{\phi}_f$ lifting $\phi$ and
$\phi_f$ respectively. Then Lemma \ref{input-for-pb} supplies 
an element $(\sigma,\la) \in \GG(\FF,\Ga)$ so that 
$$
\psi_G:(G,i\sigma,\bar{\phi}\la) \to (G_f,i_f,\bar{\phi}_f).
$$
The class of $(\sigma,\la)$ in $\GG(\FF,\Ga)/\GG_{k+1}(\FF,\Ga)$
is independent of the choice and $\psi$ supplies the needed
$2$-morphism to show surjectivity.
\end{proof}

The groupoid $\Def_0(\FF,\Ga)$ has a simple description. Indeed,
$\Def_0(\FF,\Ga)$ assigns to each Artin local ring $A$ the pairs
$(G,i)$ where $i:\FF \to \AA/\mm$ is an isomorphism. Since $\FF$
is perfect, the universal property of Witt vectors (\cite{p-divisible} \S III.3)
implies there is a unique homomorphism of rings $W(\FF) \to A$ which
reduces to $\FF$ modulo maximal ideals. Thus, we conclude that
$\Def_0(\FF,\Ga)$ is the functor from groupoids to which assigns to
each Artin local $W(\FF)$-algebra $A$ so that 
$$
W(\FF)/(p) \longr A/\mm
$$
is an isomorphism the groupoid of formal groups $G$ over $A$
so that $\cI_{n}(G)\subseteq \mm$ and $\cI_{n+1}(G) = A$. Thus we have proved:

\begin{lem}\label{def0} There is a natural isomorphism
of categories fibered in groupoids over $\Art_\FF$
$$
\Def_0(\FF,\Ga) \mathop{\longr}^{\cong} W(\FF) \otimes_{\ZZ_p} \hatlayer{n}.
$$
\end{lem}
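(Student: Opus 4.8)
\textbf{Proof proposal for Lemma \ref{def0}.}

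The plan is to unwind both sides and produce the isomorphism directly from the universal property of Witt vectors. First I would recall the explicit description of $\Def_0(\FF,\Ga)$: by definition it assigns to each Artin local ring $A$ in $\Art_\FF$ the groupoid whose objects are triples $(G,i,\phi)$ where $G$ is a formal group over $A$, $i:\Spec(\FF) \to \Spec(A/\mm)$ is an isomorphism, and $\phi:\Ga_{p^0} \to i^\ast(G_0)_{p^0}$ is an isomorphism of $p^0$-buds; but a $p^0$-bud carries no information (it is just $x$, i.e. the ideal $(x)/(x^2)$ giving $\omega_G$), so $\phi$ is no data at all. Likewise the morphisms of $\Def_0(\FF,\Ga)_A$ are isomorphisms $\psi:G \to G'$ inducing the identity on $p^0$-buds over $\FF$, which is again automatic; hence $\Def_0(\FF,\Ga)_A$ is equivalent to the groupoid of pairs $(G,i)$ with $G$ a formal group over $A$ whose special fiber has height $n$ (equivalently $\cI_n(G) \subseteq \mm$ and $\cI_{n+1}(G) = A$) and $i:\FF \cong A/\mm$ an isomorphism.

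The key step is then to invoke the universal property of Witt vectors: since $\FF$ is perfect, for any $A \in \Art_\FF$ with a chosen isomorphism $i: \FF \cong A/\mm$ there is a \emph{unique} ring homomorphism $w_i:W(\FF) \to A$ reducing modulo maximal ideals to $\FF \cong A/\mm$; see \cite{p-divisible} \S III.3. This exhibits the datum of $i$ as precisely the datum of a $W(\FF)$-algebra structure on $A$ for which $W(\FF)/(p) = \FF \to A/\mm$ is an isomorphism. Under this dictionary, a pair $(G,i)$ over $A$ is the same as a $W(\FF)$-algebra structure on $A$ (with $W(\FF)/(p) \to A/\mm$ an isomorphism) together with a formal group $G$ over $A$ with $\cI_n(G) \subseteq \mm$ and $\cI_{n+1}(G) = A$. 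But the right-hand side $W(\FF) \otimes_{\ZZ_p} \hatlayer{n}$, evaluated on the Artin local $\ZZ_{(p)}$-algebra $A$, is by the definition of the base change and of $\hatlayer{n}$ exactly the groupoid of such data: a morphism $\Spec(A) \to W(\FF)$ (i.e. a $W(\FF)$-algebra structure on $A$) together with a formal group $G$ over $A$ satisfying $\cI_n(G)$ nilpotent and $\cI_{n+1}(G) = A$. The condition that $W(\FF)/(p) \to A/\mm$ be an isomorphism is automatic here since the map must land in the maximal ideal and $A/\mm$ has characteristic $p$ with $W(\FF) \to A/\mm$ factoring through $\FF = W(\FF)/(p)$, which is then forced to be an isomorphism because $i$ was. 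One checks the assignment is functorial in $A$ and compatible with morphisms of triples, giving the desired isomorphism of categories fibered in groupoids over $\Art_\FF$.

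I expect the main obstacle to be bookkeeping rather than anything deep: one must verify that ``$i$ an isomorphism onto $A/\mm$'' matches ``$W(\FF)\otimes_{\ZZ_p}(-)$ evaluated at $A$ with the residue-field condition'' on the nose, including that there are no extra automorphisms on either side (both groupoids have only identity automorphisms of the structural maps, since a $W(\FF)$-algebra map is unique and an isomorphism of formal groups over $A$ inducing the identity on $p^0$-buds is the identity — this last point uses that such a $\psi$ has $\psi'(0) = 1$, hence by Lemma \ref{iso-unramified} is the unique lift of the identity). I would also double-check the coherence: the left Kan extension used to define $\Def_0(\FF,\Ga)$ on all commutative rings (Remark \ref{spf-lt}) and the formation of $W(\FF)\otimes_{\ZZ_p}\hatlayer{n}$ agree, but since the statement is asserted only over $\Art_\FF$ this is not needed for the lemma as stated.
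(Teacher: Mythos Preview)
Your proof is essentially the same as the paper's: the paragraph immediately preceding the lemma \emph{is} the paper's proof, and it proceeds exactly as you do—observe that $\Def_0(\FF,\Ga)$ reduces to pairs $(G,i)$, then invoke the universal property of Witt vectors for perfect $\FF$ to trade the datum $i$ for a $W(\FF)$-algebra structure on $A$.

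One small correction to your final paragraph: your claim that ``an isomorphism of formal groups over $A$ inducing the identity on $p^0$-buds is the identity'' via Lemma~\ref{iso-unramified} is not right. That lemma only says there is at most one $\psi$ lifting a \emph{given} $\psi_0$; it does not force $\psi_0$ to be the identity merely from $\psi'(0)=1$. Fortunately, under the paper's convention (where $\GG_0(\FF,\Ga) = \Aut_\FF(\Ga)$, so the ``identity on $p^0$-bud'' condition is vacuous), the morphisms in $\Def_0$ are \emph{all} isomorphisms $\psi:G\to G'$, and your main argument already matches the morphisms on both sides without this extra check.
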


We now define what it means for a morphism to be Galois in this
setting. Galois morphisms of schemes were defined in 
Remark \ref{group-and-galois}.

\begin{defn}\label{galois-covering}A representable 
morphism $q:X \to Y$ of sheaves of groupoids in the $fpqc$-topology is
{\bf Galois} if $q$ faithfully flat, and if the natural
map
$$
X \times \Aut_Y(X) \longr X \times_Y  X
$$
is an equivalence of groupoids over $X$. 
\end{defn}

The main result of the section is now as follows.

\begin{thm}\label{lt-is-univ}Let $\FF = \bFF$ be the algebraic closure
of the prime field and let $\Ga$ be any height $n$-formal group
over $\FF_p$. Then
$$
q:\Def(\FF,\Ga) \longr \hatlayer{n}
$$
is Galois with Galois group 
$$
\GG(\FF,\Ga) = \Gal(\FF/\FF_p) \ltimes \Aut_{\bFF}(\Ga).
$$
The discrete groupoid
$\Def(\FF,\Ga) \simeq \Spf(R(\FF,\Ga))$ itself has no
non-trivial \'etale covers, so the morphism $q$ is 
the universal cover.
\end{thm}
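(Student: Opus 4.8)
The plan is to assemble this theorem from the pieces already established in this section, namely Theorem \ref{galois-decomp}, Theorem \ref{auts-is-morava}, Proposition \ref{fflat-defs}, and the description of $R(\FF,\Ga)$ as a power series ring over the Witt vectors in Equation \ref{lt-is-power-series}. First I would verify that $q:\Def(\FF,\Ga) \to \hatlayer{n}$ satisfies Definition \ref{galois-covering}. Representability, flatness, and surjectivity of $q$ are exactly Proposition \ref{fflat-defs}, so $q$ is faithfully flat in the required sense. The condition that
$$
\Def(\FF,\Ga) \times \Aut_{\hatlayer{n}}(\Def(\FF,\Ga)) \longr \Def(\FF,\Ga) \times_{\hatlayer{n}} \Def(\FF,\Ga)
$$
is an equivalence follows by combining Theorem \ref{galois-decomp}, which gives this equivalence with the group $\GG(\FF,\Ga)$ in place of $\Aut_{\hatlayer{n}}(\Def(\FF,\Ga))$, with the isomorphism $\GG(\FF,\Ga) \cong \Aut_{\hatlayer{n}}(\Def(\FF,\Ga))$ of Theorem \ref{auts-is-morava}; one checks these identifications are compatible, i.e. that the map of Theorem \ref{galois-decomp} factors through the map of Definition \ref{galois-covering} under the isomorphism of Theorem \ref{auts-is-morava}. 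Theorem \ref{galois-decomp} is stated over Artin local rings, so I would then extend to all commutative rings by the left Kan extension / filtered colimit argument already used in Remark \ref{spf-lt} and Remark \ref{continuity}, noting that both sides of the equivalence commute with the relevant filtered colimits. The identification of the Galois group with $\Gal(\bFF/\FF_p) \ltimes \Aut_{\bFF}(\Ga)$ is then just the semidirect product decomposition of $\GG(\bFF,\Ga)$ recorded earlier in this section (see the discussion preceding Remark \ref{source-of-confusion} and Remark \ref{morava-stab}), valid because $\Ga$ is defined over $\FF_p$ and $\FF = \bFF$ is an algebraic extension of $\FF_p$.

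For the final assertion, that $\Def(\FF,\Ga)$ has no nontrivial finite \'etale covers and hence $q$ is the universal cover, I would argue as follows. By Theorem \ref{Lubin-Tate} the groupoid $\Def(\FF,\Ga)$ is discrete and equivalent to $\Spf(R(\FF,\Ga))$, and by Equation \ref{lt-is-power-series} we have $R(\FF,\Ga) \cong W(\bFF)[[u_1,\ldots,u_{n-1}]]$ with maximal ideal $\mm = (p,u_1,\ldots,u_{n-1})$. A finite \'etale cover of the formal scheme $\Spf(R(\FF,\Ga))$ is, by the topological invariance of the \'etale site, determined by a finite \'etale cover of the closed point $\Spec(R(\FF,\Ga)/\mm) = \Spec(\bFF)$; since $\bFF$ is algebraically closed, every such cover is a finite disjoint union of copies of $\Spec(\bFF)$, and lifting back this says every finite \'etale cover of $\Spf(R(\FF,\Ga))$ is a finite disjoint union of copies of $\Spf(R(\FF,\Ga))$ itself. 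Hence $\Spf(R(\FF,\Ga))$ is simply connected, and being itself a (pro-)\'etale Galois cover of $\hatlayer{n}$ with group $\GG(\bFF,\Ga)$, it is the universal cover.

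The main obstacle I anticipate is not any single deep input — all the hard work is in Theorems \ref{galois-decomp} and \ref{auts-is-morava} and the Lubin--Tate representability theorem — but rather the bookkeeping needed to pass cleanly from the Artin-local statements to the statement over all commutative rings, and to check that the two a priori different equivalences (the one produced in Theorem \ref{galois-decomp} and the one implicit in Definition \ref{galois-covering}) really do agree under the identification of Theorem \ref{auts-is-morava}. This is the kind of compatibility that is ``obvious'' once the definitions are unwound but requires care in writing down, because $\pi_0\Def(\FF,\Ga)$ is a set of equivalence classes and the action of $\GG(\FF,\Ga)$ on $R(\FF,\Ga)$ must be interpreted with the caveat flagged in Remark \ref{source-of-confusion}. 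A secondary point needing a sentence of justification is why $q$, which is only pro-\'etale rather than \'etale (its finite stages being \'etale by Theorem \ref{refined-lazard}), still deserves to be called the universal cover: this is exactly the assertion that $\Spf(R(\FF,\Ga))$ admits no further nontrivial \'etale covers, which is the content of the last displayed step.
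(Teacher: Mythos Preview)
Your proposal is correct and follows essentially the same route as the paper: the paper's proof simply cites Proposition \ref{fflat-defs}, Theorem \ref{galois-decomp}, Theorem \ref{auts-is-morava}, Lemma \ref{pro-finite-aut-over}, and Lemma \ref{def0} for the Galois claim, and then dispatches the simply-connectedness of $\Spf(R(\FF,\Ga))$ in one line by noting the ring is complete local with algebraically closed residue field. You omit explicit mention of Lemma \ref{pro-finite-aut-over} and Lemma \ref{def0}, which the paper invokes to track the profinite structure of the Galois group through the tower $\Def_k(\FF,\Ga)$, but your core argument and your more detailed justification of the final step (via topological invariance of the \'etale site) are sound and amount to the same thing.
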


\begin{proof}To get that $q$ is Galois,
combine Proposition \ref{fflat-defs}, Theorem \ref{galois-decomp}, Theorem
\ref{auts-is-morava},
Lemma \ref{pro-finite-aut-over}, and Lemma \ref{def0}.
That $R(\FF,\Ga)$ has
no non-trivial \'etale extensions follows from the fact that this
ring is complete, local, and has an algebraically closed residue
field.
\end{proof}

\begin{rem}\label{reinterpret-lt-ring}All of these results can
be rewritten in terms of the Lubin-Tate ring $R(\FF,\Ga)$ of
Theorem \ref{Lubin-Tate} if
we wish. For example, we can define a homomorphism
$$
\GG(\FF,\Ga) \longr \Aut_{\hatlayer{n}}(\Spf(R(\FF,\Ga))
$$
as follows. We refer to Remark \ref{define-from-spf}.
Let $(H,j,\phi_u)$ be the universal deformation over $\Spf(R(\FF,\Ga))$
and let $(\sigma,\la)$ be in $\GG(\FF,\Ga)$. Then we get a new
deformation $(H,j\sigma,\phi_u\la)$ over $\Spf(R(\FF,\Ga)$, 
classified by a map
$$
f=f_{(\sigma,\la)}:\Spf(R(\FF,\Ga)) \to \Spf(R(\FF,\Ga)).
$$
Thus there is a unique isomorphism of deformations
$$
\psi = \psi_{(\sigma,\la)}:(H,j,\phi_u) \to f^\ast(H,j,\phi_u).
$$
The pair $( f_{(\sigma,\la)},\psi_{(\sigma,\la)})$ now produces the
$2$-commuting diagram
$$
\xymatrix@C=0pt
{
\Spf(R(\FF,\Ga)) \ar[dr] \ar[rr]^{f_{(\sigma,\la)}} && \Spf(R(\FF,\Ga))\ar[dl]\\
&\hatlayer{n}.
}
$$
and the assignment
$$
(\sigma,\la) \longmapsto (f_{(\sigma,\la)}, \psi_{(\sigma,\la)})
$$
defines a group homomorphism
$$
\GG(\FF,\Ga) \longr \Aut_{\hatlayer{n}}(\Spf(R(\FF,\Ga))).
$$
Theorem \ref{auts-is-morava} then becomes the following result.
\end{rem}

\begin{prop}\label{auts-is-morava-aff}This homomorphism
$$
\GG(\FF,\Ga) \longr \Aut_{\hatlayer{n}}(\Spf(R(\FF,\Ga))).
$$
is an isomorphism.
\end{prop}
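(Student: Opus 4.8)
The statement to prove is Proposition \ref{auts-is-morava-aff}: the homomorphism $\GG(\FF,\Ga) \to \Aut_{\hatlayer{n}}(\Spf(R(\FF,\Ga)))$ described in Remark \ref{reinterpret-lt-ring} is an isomorphism. The whole point is that this is nothing more than Theorem \ref{auts-is-morava} transported across the equivalence $\Spf(R(\FF,\Ga)) \simeq \Def(\FF,\Ga)$ of Theorem \ref{Lubin-Tate}. So the plan is to check that the two homomorphisms in question are literally identified under that equivalence, and then invoke Theorem \ref{auts-is-morava}.

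\emph{First}, I would recall the setup precisely. By Theorem \ref{Lubin-Tate} the universal deformation $(H,j,\phi_u)$ over $\Spf(R(\FF,\Ga))$ induces an equivalence $\Phi : \Spf(R(\FF,\Ga)) \to \Def(\FF,\Ga)$ of categories fibered in groupoids over $\Art_\FF$ (hence, after left Kan extension as in Remark \ref{spf-lt}, over all commutative rings); concretely $\Phi$ sends $f : \Spf(R(\FF,\Ga)) \to \Spf(R(\FF,\Ga))$, i.e.\ $f : \Spec(A) \to \Spf(R(\FF,\Ga))$, to $f^\ast(H,j,\phi_u) = (f^\ast H, f_0^{-1}j,\phi_u)$. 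Both $\Spf(R(\FF,\Ga))$ and $\Def(\FF,\Ga)$ sit over $\hatlayer{n}$ — the former via $\Phi$ followed by $q : \Def(\FF,\Ga) \to \hatlayer{n}$, which just reads off the underlying formal group — and $\Phi$ is a morphism \emph{over} $\hatlayer{n}$, since $q\Phi(f)$ is the formal group $f^\ast H$, which is exactly the composite $\Spec(A) \xrightarrow{f} \Spf(R(\FF,\Ga)) \xrightarrow{\mathrm{univ.\ def.}} \hatlayer{n}$ up to canonical $2$-isomorphism.

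\emph{Second}, a general observation: if $\Phi : Y_1 \to Y_2$ is an equivalence of categories fibered in groupoids over a fixed base $X$ — meaning $\Phi$ is an equivalence and there is a $2$-isomorphism $q_2\Phi \cong q_1$ — then conjugation by $\Phi$ induces a group isomorphism $\Aut_X(Y_1) \xrightarrow{\cong} \Aut_X(Y_2)$, where $\Aut_X(-)$ is the group of Definition \ref{auts-over}. (This is a routine diagram chase with the composition law $(g,\psi)(f,\phi) = (gf,(f^\ast\psi)\phi)$ and the defining equivalence relation on pairs; the $2$-isomorphism $q_2\Phi \cong q_1$ is what lets one translate the data $\psi : q \to qf$ on one side into the corresponding data on the other.) Applying this with $Y_1 = \Spf(R(\FF,\Ga))$, $Y_2 = \Def(\FF,\Ga)$, $X = \hatlayer{n}$ gives an isomorphism $\Phi_\ast : \Aut_{\hatlayer{n}}(\Spf(R(\FF,\Ga))) \xrightarrow{\cong} \Aut_{\hatlayer{n}}(\Def(\FF,\Ga))$.

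\emph{Third}, I would verify the compatibility triangle: the square
$$
\xymatrix{
\GG(\FF,\Ga) \rto \dto_{=} & \Aut_{\hatlayer{n}}(\Spf(R(\FF,\Ga))) \dto^{\Phi_\ast}\\
\GG(\FF,\Ga) \rto & \Aut_{\hatlayer{n}}(\Def(\FF,\Ga))
}
$$
commutes, where the bottom map is the one of Theorem \ref{auts-is-morava} and the top map is the one defined in Remark \ref{reinterpret-lt-ring}. This is where the one genuine check lies: unwinding the definition in Remark \ref{reinterpret-lt-ring}, an element $(\sigma,\la)$ acts on $\Spf(R(\FF,\Ga))$ via the pair $(f_{(\sigma,\la)}, \psi_{(\sigma,\la)})$ where $f_{(\sigma,\la)}$ classifies the deformation $(H,j\sigma,\phi_u\la)$; transporting along $\Phi$, this becomes the pair whose first component sends $(G,i,\phi) \mapsto (G,i\sigma,\phi\la)$ — i.e.\ exactly the transformation $f_{(\sigma,\la)}$ of Theorem \ref{auts-is-morava} — with the $2$-isomorphism $\psi_{(\sigma,\la)} : (H,j,\phi_u) \to f^\ast(H,j,\phi_u)$ transporting to the trivial $2$-isomorphism (because Theorem \ref{auts-is-morava} uses the pair $(f_{(\sigma,\la)},1)$, and on $\Def(\FF,\Ga)$ the evident identity exhibits the commutation). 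Using Remark \ref{source-of-confusion}'s explicit formula $(\sigma,\la) \cdot (G,i,\phi) = (G,i\sigma,(\sigma^\ast\phi)\la)$ one matches the two formulas on the nose. Granting this, the diagram commutes, $\Phi_\ast$ is an isomorphism, and the bottom arrow is an isomorphism by Theorem \ref{auts-is-morava}; hence the top arrow is an isomorphism, which is the assertion.

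\textbf{Main obstacle.} There is no deep difficulty — everything deep is already in Theorems \ref{Lubin-Tate}, \ref{galois-decomp}, and \ref{auts-is-morava}. The only real work is bookkeeping: being careful with $2$-morphisms and the equivalence relation on $\Aut_X(-)$ so that "conjugation by $\Phi$ is an isomorphism" and "the triangle commutes" are actually correct and not merely plausible. In particular the subtle point flagged in Remark \ref{source-of-confusion} — that $\pi_0\Def(\FF,\Ga)$ is a set of equivalence classes, so the action on $R(\FF,\Ga)$ must be interpreted with care — is where I would slow down and write things out, making sure the $2$-isomorphisms $\psi_{(\sigma,\la)}$ are pinned down uniquely (which they are, by Lemma \ref{iso-unramified}/\ref{input-for-pb}, since deformation isomorphisms are unique once the reduction is fixed). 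Once that uniqueness is in hand the identification of the two homomorphisms is forced, and the proof reduces to a one-line citation of Theorem \ref{auts-is-morava}.
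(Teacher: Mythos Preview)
Your proposal is correct and matches the paper's approach exactly: the paper does not give a separate proof of this proposition but simply introduces it with ``Theorem \ref{auts-is-morava} then becomes the following result,'' treating it as the restatement of Theorem \ref{auts-is-morava} under the Lubin-Tate equivalence $\Spf(R(\FF,\Ga)) \simeq \Def(\FF,\Ga)$. You have written out in detail the bookkeeping (conjugation by an equivalence induces an isomorphism on $\Aut_X(-)$, and the two $\GG(\FF,\Ga)$-actions match under this) that the paper leaves implicit.
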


Theorem \ref{lt-is-univ} then reads as follows:

\begin{thm}\label{lubin-tate-fnbhd} Let $\Ga$ be a formal
group of height $n$ over $\FF_p$
and let
$$
q:\Spf(R(\bar{\FF}_p,\Ga)) \longr \hatlayer{n}
$$
classify a universal deformation of $\Ga$ regarded as a formal
group over $\bFF$. Then $q$ is the universal
cover of the formal neighborhood $\hatlayer{n}$ of $\Ga$; specifically, $q$
is pro-\'etale and Galois with
Galois group the big Morava stabilizer group
$$
\GG(\bar{\FF}_p,\Ga)) \cong \mathrm{Gal}(\bar{\FF}_p/\FF_p) \rtimes \Aut_\bFF(\Ga).
$$
\end{thm}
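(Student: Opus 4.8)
The plan is to obtain Theorem \ref{lubin-tate-fnbhd} as a direct corollary of the structural results already assembled in this section, specialized to the case $\FF = \bFF$. First I would invoke Theorem \ref{lt-is-univ}, which already asserts precisely that for $\FF = \bFF$ the morphism $q:\Def(\bFF,\Ga) \longr \hatlayer{n}$ is Galois with Galois group $\GG(\bFF,\Ga) = \Gal(\bFF/\FF_p) \ltimes \Aut_\bFF(\Ga)$, and that $\Def(\bFF,\Ga)$ has no nontrivial \'etale covers, hence $q$ is the universal cover. The only translation needed is between the two avatars of Lubin--Tate space: the category fibered in groupoids $\Def(\bFF,\Ga)$ and the formal spectrum $\Spf(R(\bFF,\Ga))$. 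But this identification is exactly the content of Theorem \ref{Lubin-Tate} (together with its extension to all commutative rings, Theorem \ref{LT-extended}): the universal deformation $(H,j,\phi_u)$ over $R(\bFF,\Ga)$ induces an equivalence $\Spf(R(\bFF,\Ga)) \xrightarrow{\ \simeq\ } \Def(\bFF,\Ga)$, discrete and representable. So a classifying map $q:\Spf(R(\bFF,\Ga)) \to \hatlayer{n}$ for a universal deformation of $\Ga$ viewed over $\bFF$ is, up to this equivalence, the morphism of Theorem \ref{lt-is-univ}.

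With that dictionary in hand, I would carry out the following steps in order. Step one: fix $\Ga$ of height $n$ over $\FF_p$ and its base change to $\bFF$; by Theorem \ref{Lubin-Tate} choose a universal deformation and the induced equivalence $\Spf(R(\bFF,\Ga)) \simeq \Def(\bFF,\Ga)$, so that $q$ corresponds to the projection $\Def(\bFF,\Ga) \to \hatlayer{n}$. Step two: transport the ``Galois'' conclusion. By Definition \ref{galois-covering} this means $q$ is faithfully flat and $\Def(\bFF,\Ga)\times \Aut_{\hatlayer{n}}(\Def(\bFF,\Ga)) \to \Def(\bFF,\Ga)\times_{\hatlayer{n}}\Def(\bFF,\Ga)$ is an equivalence; faithful flatness is Proposition \ref{fflat-defs}, the automorphism group is identified with $\GG(\bFF,\Ga)$ by Theorem \ref{auts-is-morava}, and the fibered-product computation is Theorem \ref{galois-decomp}. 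Step three: identify the Galois group explicitly. Since $\Ga$ is defined over $\FF_p$ and $\bFF$ is an algebraic extension of $\FF_p$, the discussion preceding Lemma \ref{input-for-pb} gives the semidirect product decomposition $\GG(\bFF,\Ga) \cong \Gal(\bFF/\FF_p) \ltimes \Aut_\bFF(\Ga)$ (equivalently $\rtimes$, matching the notation of Remark \ref{morava-stab}). Step four: pro-\'etaleness. The tower $\Def_k(\bFF,\Ga)$ with $\Def(\bFF,\Ga) \cong \lim \Def_k(\bFF,\Ga)$ (Equation \ref{iso-to-buds}), together with Lemma \ref{pro-finite-aut-over} and Theorem \ref{refined-lazard} (each bud-level isomorphism scheme is \'etale, hence each $\Def_k \to \Def_{k-1}$ is \'etale), exhibits $q$ as an inverse limit of finite \'etale Galois morphisms; since $R(\bFF,\Ga)$ is complete local with algebraically closed residue field it admits no nontrivial finite \'etale extensions, so $q$ is the universal pro-\'etale cover. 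Combining these steps yields the statement.

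I do not expect a serious obstacle here; the theorem is essentially a repackaging of Theorem \ref{lt-is-univ} in the language of $\Spf(R(\bFF,\Ga))$. If anything is delicate, it is making sure the ``universal cover'' and pro-Galois bookkeeping are phrased consistently across the two models of Lubin--Tate space and that the continuity of the $\GG(\bFF,\Ga)$-action (Remark \ref{continuity} and Lemma \ref{pro-finite-aut-over}) is what underwrites the word ``pro-\'etale'' rather than merely ``flat and unramified'' at each finite stage. Since Remark \ref{reinterpret-lt-ring} and Proposition \ref{auts-is-morava-aff} have already done the work of re-expressing the automorphism identification affinely, the proof can simply cite these and close.

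\begin{proof}
Fix $\Ga$ a formal group of height $n$ over $\FF_p$ and regard it as a
formal group over $\bFF$ via the inclusion $\FF_p \to \bFF$. By
Theorem \ref{Lubin-Tate} and its extension Theorem \ref{LT-extended},
a universal deformation $(H,j,\phi_u)$ of $\Ga$ over $R(\bFF,\Ga)$
induces an equivalence of categories fibered in groupoids over all
commutative rings
$$
\Spf(R(\bFF,\Ga)) \mathop{\longr}^{\simeq} \Def(\bFF,\Ga),
$$
under which the classifying morphism
$q:\Spf(R(\bFF,\Ga)) \to \hatlayer{n}$ of the statement corresponds to
the projection $\Def(\bFF,\Ga) \to \hatlayer{n}$. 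Theorem
\ref{lt-is-univ} then applies with $\FF = \bFF$: the morphism $q$ is
Galois in the sense of Definition \ref{galois-covering}. Concretely,
$q$ is faithfully flat by Proposition \ref{fflat-defs}; its group of
automorphisms over $\hatlayer{n}$ is identified with $\GG(\bFF,\Ga)$
by Theorem \ref{auts-is-morava} (equivalently, in the affine
formulation, Proposition \ref{auts-is-morava-aff}); and the natural
morphism
$$
\Def(\bFF,\Ga) \times \GG(\bFF,\Ga) \longr
\Def(\bFF,\Ga) \times_{\hatlayer{n}} \Def(\bFF,\Ga)
$$
is an equivalence by Theorem \ref{galois-decomp}.

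Since $\Ga$ is defined over $\FF_p$ and $\bFF/\FF_p$ is an algebraic
extension, the automorphism group of the pair $(\bFF,\Ga)$ decomposes
as a semidirect product
$$
\GG(\bFF,\Ga) \cong \Gal(\bFF/\FF_p) \rtimes \Aut_\bFF(\Ga),
$$
as recorded in Remark \ref{morava-stab}. It remains to check that $q$
is pro-\'etale and is the universal cover. By Equation
\ref{iso-to-buds} we have $\Def(\bFF,\Ga) \cong \lim
\Def_k(\bFF,\Ga)$, and by Lemma \ref{pro-finite-aut-over} together
with Theorem \ref{refined-lazard} each transition morphism in this
tower is finite, \'etale, and Galois with Galois group
$\GG(\bFF,\Ga)/\GG_{k+1}(\bFF,\Ga)$; passing to the limit exhibits
$q$ as pro-\'etale and pro-Galois with Galois group $\GG(\bFF,\Ga) =
\lim \GG(\bFF,\Ga)/\GG_{k}(\bFF,\Ga)$. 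Finally, by the isomorphism
\ref{lt-is-power-series} the ring $R(\bFF,\Ga) \cong
W(\bFF)[[u_1,\ldots,u_{n-1}]]$ is complete, local, and has
algebraically closed residue field $\bFF$, so it admits no nontrivial
finite \'etale extension; hence $\Def(\bFF,\Ga) \simeq
\Spf(R(\bFF,\Ga))$ has no nontrivial \'etale cover, and $q$ is the
universal cover of $\hatlayer{n}$.
\end{proof}
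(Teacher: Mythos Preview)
Your proposal is correct and takes essentially the same approach as the paper: the theorem is presented there as a direct restatement of Theorem \ref{lt-is-univ} in the affine language of $\Spf(R(\bFF,\Ga))$, with the translation mediated by Remark \ref{reinterpret-lt-ring} and Proposition \ref{auts-is-morava-aff}. You cite the same constellation of inputs (Proposition \ref{fflat-defs}, Theorem \ref{galois-decomp}, Theorem \ref{auts-is-morava}, Lemma \ref{pro-finite-aut-over}, and the complete-local-with-algebraically-closed-residue-field argument), and your additional invocation of Theorem \ref{refined-lazard} to justify \'etaleness at each finite level is a reasonable elaboration of what the paper leaves implicit; the paper also uses Lemma \ref{def0} to anchor the base of the tower, which you might mention for completeness.
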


\subsection{Morava modules}

We add two remarks intended to clarify what it means to be a comodule
over Morava $E$-theory
$$
E_n \defeq E(\FF_{p^n},\Ga_n).
$$
The conceptual difficulty is that we define
$$
(E_n)_\ast E_n = \pi_\ast L_{K(n)} (E_n\wedge E_n)
$$
where $L_{K(n)}$ is localization at Morava $K$-theory. As such, the usual translation
from homotopy theory to comodules needs some modification. The 
appropriate concept is that of a Morava module, and we will give
some exposition of this in Remark \ref{qc-hatlayer}. To simplify  matters
we pass to $E(\bFF,\Ga)$.

\begin{rem}\label{relation-to-homotopy} Theorem \ref{galois-decomp} 
is the restatement of the well-known calculation 
of the homology cooperations in Lubin-Tate theory. There is a $2$-periodic
homology theory $E(\fGa)$ with $E(\fGa)_0 = R(\fGa)$ and whose associated
formal group is a choice of universal deformation of $\Ga$. Then $E(\fGa)$ is
Landweber exact and\index{$(E_n)_\ast E_n$, and Lubin-Tate space}
\begin{align*}
E(\fGa)_0 E(\fGa) &\defeq \pi_0L_{K(n)}(E(\fGa) \wedge E(\fGa))\\
&\ \cong \map(\GG(\fGa),R(\fGa))
\end{align*}
where $\map(-,-)$ is the set of continuous maps. Proofs of this
statement can be found in \cite{Strick2} and \cite{HoveyHHA}; indeed,
the argument given here for Theorem  \ref{galois-decomp}  is very
similar to Hovey's.
\end{rem}

\def\RF{{{R(\bFF,\Ga)}}}
\def\GF{{{\GG(\bFF,\Ga)}}}

\begin{rem}[{\bf Morava modules}]\label{qc-hatlayer}\index{Morava modules}
Theorem \ref{lubin-tate-fnbhd} allows us
to interpret quasi-coherent sheaves on $\hatlayer{n}$ as quasi-coherent
sheaves on $\Spf(R(\bFF,\Ga))$ with a suitable $\GG(\FF,\Ga)$ action.
Let's spell this out in more detail.

Let $\mm = \cI_{n}(H) \subseteq R(\bFF,\Ga)$, where $H$ is any choice of
the universal deformation. Recall that a a quasi-coherent sheaf on 
$\Spf(\RF)$ is determined by a tower
$$
\cdots \to M_k \to M_{k-1} \to \cdots \to M_1
$$
where $M_k$ is an $\RF/\mm^k$-module, $M_k \to M_{k-1}$ is a
$\RF/\mm^k$-module homomorphism and
$$
\RF/\mm^{k-1} \otimes_{\RF/\mm^k} M_k \longr M_{k-1}
$$
is an isomorphism. Under appropriate finiteness conditions, this tower
is determined by its inverse limit $\lim M_k$ regarded as a continuous
$\RF$-module. 

A quasi-coherent sheaf on
$$
\Spf(\RF) \times_{\hatlayer{n}}\Spf(\RF) \cong \Spf(\map(\GF,\RF))
$$
has a similar description as  modules over the tower
$$
\{\map(\GF,\RF/\mm^n)\}.
$$

A {\it Morava module} is a tower of $\RF$-modules
$$
\cdots \to M_k \to M_{k-1} \to \cdots \to M_1
$$
so that
\begin{enumerate}

\item $M_k$ is an $\RF/\mm^k$-module and the induced map
$$
\RF/\mm^{k-1} \otimes_{\RF/\mm^k} M_k \longr M_{k-1}
$$
is an isomorphism;

\item $M_k$ has a continuous $\GF$-action, where $M_k$ has the
discrete topology;

\item the action of $\GF$ is twisted over $\RF$ in the sense that
if $a \in \RF$, $x \in M_k$, and $g \in \GF$, then
$$
g(ax) = g(a)g(x).
$$
\end{enumerate}

Now Theorem \ref{lubin-tate-fnbhd}  implies there in equivalence of categories
between quasi-coherent sheaves on $\hatlayer{n}$ and Morava modules.
\end{rem}

\section{Completion and chromatic convergence}

In this section we give the recipe for recovering a coherent
sheaf on $\cM_\fg$ (over $\ZZ_{(p)}$) from its restrictions
to each of the open substacks of formal groups of height
less than or equal to $n$. This has two steps: passing from
one height to the next via a fracture square (Theorem
\ref{fracture-square-1}) and then taking a derived
inverse limit (Theorem \ref{chrom-conv}). The latter
theorem has particular teeth as the union of the open
substacks of finite height is not all of $\cM_\fg$.

Students of the homotopy theory literature will see that, in the end,
our arguments are not so different from the Hopkins-Ravenel
Chromatic Convergence of \cite{orange}. Much of the algebra
here can be reworked in the language of comodules and, as such,
it can be deduced from the work of Hovey and Strickland \cite{HS2}.

\subsection{Local cohomology and scales}

We begin by recalling some notation from Definition \ref{scale}
and Proposition \ref{LEFT-easy-part}.
Let $f:\cN \to \cM_\fg$ be
a representable, separated, and flat morphism of algebraic stacks.
We will confuse the ideal sheaves $\cI_n$ defining the height filtration
with the pull-backs $f^\ast \cI_n$, which induce the height
filtration on $\cN$. Thus, we let
$$
0 = \cI_0 \subseteq \cI_1 \subseteq \cI_2 \subseteq \cdots \subseteq \cO_\cN
$$
denote the resulting scale on $\cN$.
Let $\cN(n) = \cM(n) \times_{\cM_\fg} \cN \subseteq \cN$ be
the closed substack defined by $\cI_n$ and
let $\cV(n-1)$ be the open complement. We will write $i_n:\cV(n) \to \cN$
and $j_n:\cN(n) \to \cN$ for the inclusions. Finally, let's write
$\cO$ for $\cO_\cN$. 

If $\cF$ is a quasi-coherent $\cI_n$-torsion sheaf, we defined (in
\ref{invert-vn})
$$
\cF[v_n^{-1}] = (i_n)_\ast i_n^\ast \cF.
$$
The notation was justified in Remark \ref{invert-vn-1}. A local
description of this sheaf was given in Proposition \ref{no-higher-derived}.

We wish to recursively define quasi-coherent sheaves $\cO/\cI_n^\infty$
on $\cN$
by setting $\cO/\cI_{0}^\infty = \cO$ and then defining
$\cO/\cI_{n+1}^\infty$ by the short exact sequence\index{$\cO/\cI_n^\infty$}
\begin{equation}\label{k-infty}
0 \to \cO/\cI_{n}^\infty \to \cO/\cI_{n}^\infty[v_n^{-1}]
\to \cO/\cI_{n+1}^\infty \to 0.
\end{equation}
In order to do this, we must prove the following lemma.
In the process,
give local descriptions on these sheaves. See Equations
\ref{concrete-1} and \ref{concrete-2}.

\begin{lem}\label{inject-at-zero}For all $n \geq  0$, the sheaf
$\cO/\cI_{n}^\infty$ is an $\cI_{n}$-torsion sheaf and the unit of
the adjunction
$$
\cO/\cI_{n}^\infty \to (i_n)_\ast i_n^\ast \cO/\cI_{n}^\infty =
\cO/\cI_{n}^\infty[v_n^{-1}]
$$
is injective.
\end{lem}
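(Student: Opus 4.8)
The proof will be an induction on $n$, with the inductive hypothesis being precisely the two assertions in the statement: that $\cO/\cI_n^\infty$ is $\cI_n$-torsion, and that the unit $\cO/\cI_n^\infty \to \cO/\cI_n^\infty[v_n^{-1}]$ is injective. The base case $n=0$ is immediate: $\cO/\cI_0^\infty = \cO$ is $\cI_0 = 0$-torsion vacuously, and the map $\cO \to (i_0)_\ast i_0^\ast \cO$ is just the unit for the open inclusion $i_0 = \mathrm{id}$ (since $\cV(-1) = \cN$), hence an isomorphism, in particular injective. So the work is entirely in the inductive step: assuming the hypothesis for $n$, I must first make sense of the defining sequence \eqref{k-infty} (which requires the injectivity part of the hypothesis at level $n$ so that $\cO/\cI_{n+1}^\infty$ is well-defined as the cokernel), and then verify both assertions at level $n+1$.

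First I would establish the local descriptions promised in the statement. Working locally at a flat, quasi-compact $G:\Spec(R) \to \cN$ for which $\cI_n(R)/\cI_{n-1}(R)$ is free of rank one — generated, after a faithfully flat extension if needed, by an element $u_n$ lifting a generator, with $\cI_n(R) = (p, u_1, \dots, u_{n-1})$ in the $p$-typical case of Remark \ref{p-typical-vn} — one sees by induction that $[\cO/\cI_n^\infty](R)$ is the iterated local cohomology module $R/(p^\infty, u_1^\infty, \dots, u_{n-1}^\infty)$, i.e. the colimit $\colim_k R/(p^{k_0}, \dots, u_{n-1}^{k_{n-1}})$. Here I would use Proposition \ref{no-higher-derived}, which identifies $[(i_n)_\ast i_n^\ast \cF](R)$ with $\cF[u_n^{-1}]$ for $\cI_n$-torsion $\cF$ and shows the higher derived functors vanish; combined with the short exact sequence \eqref{k-infty} this gives the recursive local formula. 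That $\cO/\cI_n^\infty$ is $\cI_n$-torsion then follows from Lemma \ref{char-support} (the equivalence of ``supported on $\cN(n)$'' and ``$\cI_n$-torsion'' for quasi-coherent sheaves): each local section is killed by a power of each generator of $\cI_n(R)$, hence by a power of $\cI_n(R)$. Actually the $\cI_{n+1}$-torsion statement at level $n+1$ is the subtle one: $\cO/\cI_n^\infty[v_n^{-1}]$ is $\cI_n$-torsion (localization of an $\cI_n$-torsion sheaf), hence a fortiori $\cI_{n+1}$-torsion, and $\cO/\cI_{n+1}^\infty$ is a quotient of it, so it too is $\cI_{n+1}$-torsion.

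The heart of the argument is the injectivity of $\cO/\cI_{n+1}^\infty \to (i_{n+1})_\ast i_{n+1}^\ast \cO/\cI_{n+1}^\infty = \cO/\cI_{n+1}^\infty[v_{n+1}^{-1}]$, and this I expect to be the main obstacle. The plan is to check it locally, where it becomes the statement that $v_{n+1}$ — equivalently, the element $u_{n+1}$ modulo $\cI_{n+1}$ — acts injectively on $[\cO/\cI_{n+1}^\infty](R) = R/(p^\infty, \dots, u_n^\infty)$ after we have already inverted the lower generators implicitly; more precisely one must show that multiplication by $u_{n+1}$ is injective on the module $M := \colim_k R/(p^{k_0}, u_1^{k_1}, \dots, u_n^{k_n})$. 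This is exactly a regularity statement: since $f$ is flat, Proposition \ref{LEFT-easy-part} gives that $\cI_0 \subseteq \cI_1 \subseteq \cdots$ is a regular scale, so locally $(p, u_1, \dots, u_n, u_{n+1})$ is (the leading terms of) a regular sequence; hence $u_{n+1}$ is a nonzerodivisor on $R/(p, u_1, \dots, u_n)$, and one promotes this to the nonzerodivisor statement on the infinite-torsion module $M$ by a standard direct-limit and ``socle'' argument with local cohomology modules of the form $H^n_{(p,\dots,u_n)}(R)$. I would phrase the key input as: for a regular sequence $x_1, \dots, x_{n+1}$, the element $x_{n+1}$ acts injectively on $\colim_k R/(x_1^k, \dots, x_n^k)$ — this follows because the kernel of $x_{n+1}$ on each $R/(x_1^k, \dots, x_n^k)$ maps to zero in $R/(x_1^{k'}, \dots, x_n^{k'})$ for $k' \gg k$ (depth considerations: $x_{n+1}$ is a nonzerodivisor on $R/(x_1^{k}, \dots, x_n^{k})$ since regular sequences remain regular after raising terms to powers), so in fact the kernel is zero at each finite stage already. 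Once this local injectivity is in hand, the global injectivity of the unit follows since injectivity of a morphism of quasi-coherent sheaves can be checked on an $fpqc$-cover, and the identification of the target with $\cO/\cI_{n+1}^\infty[v_{n+1}^{-1}]$ is Remark \ref{invert-vn-1}. This completes the inductive step.
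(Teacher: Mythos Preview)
Your approach is essentially the paper's: reduce to a local statement, identify $\cO/\cI_n^\infty(R)$ recursively with the modules $R/(u_0^\infty,\dots,u_{n-1}^\infty)$ via Proposition \ref{no-higher-derived}, and then use regularity of the scale to get that $u_n$ is a nonzerodivisor on this module (you are in fact more explicit than the paper here, which simply says ``the result now follows''). One slip: your base case is misindexed. The inclusion $i_0$ is $\cV(0)\to\cN$, where $\cV(0)$ is the complement of $\cN(1)$, not the identity; so the $n=0$ statement is that $\cO\to\cO[v_0^{-1}]=\cO[p^{-1}]$ is injective, i.e.\ that $p$ is a nonzerodivisor on $\cO_\cN$ --- which is exactly the first instance of the regularity hypothesis, handled by the same argument you give for the inductive step.
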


\begin{proof} Both statements are local, so can be proved by
evaluating on an affine morphism $\Spec(R) \to \cM$ which
is flat and quasi-compact. By taking a faithfully flat extension
if necessary,
we may assume that are elements $u_n \in R$
so that
$$
u_n + \cI_n(R) \in \cO/\cI_{n}(R) \cong 
 R/(u_0,\cdots,u_{n-1})
$$
is a generator of $\cI_n(R)/\cI_{n-1}(R)$.
Since we have a scale, multiplication by $u_n$ on
$R/(u_0,\cdots,u_{n-1})$ is injective. Define
$R$-modules $R/(u_0^\infty,\cdots,u_{n-1}^\infty)$
inductively by beginning with $R$ and by insisting there
be a short exact sequence
$$
0 \to R/(u_0^\infty,\cdots,u_{n-1}^\infty) \to
R/(u_0^\infty,\cdots,u_{n-1}^\infty)[u_n^{-1}] 
\to R/(u_0^\infty,\cdots,u_{n}^\infty)
\to 0.
$$
Then inductively we have, using Proposition \ref{no-higher-derived}
\begin{equation}\label{concrete-1}
\cO/\cI_{n}^\infty(R) = R/(u_0^\infty,\cdots,u_{n-1}^\infty)
\end{equation}
and 
\begin{equation}\label{concrete-2}
(i_n)_\ast i_n^\ast \cO/\cI_{n}^\infty(R) =
R/(u_0^\infty,\cdots,u_{n-1}^\infty)[{u}_n^{-1}].
\end{equation}
The result now follows.
\end{proof}

We note that Proposition \ref{no-higher-derived} also implies:

\begin{lem}\label{no-higher-derived-1}For all $n > 0$ and all $s > 0$
$$
R^s(i_n)_\ast i_n^\ast \cO/\cI^{\infty}_{n} = 0.
$$
\end{lem}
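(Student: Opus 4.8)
The statement to prove is Lemma~\ref{no-higher-derived-1}: for all $n > 0$ and all $s > 0$, we have $R^s(i_n)_\ast i_n^\ast \cO/\cI^{\infty}_{n} = 0$.

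The plan is to reduce this directly to Proposition~\ref{no-higher-derived}, which says that if $\cF$ is a quasi-coherent $\cJ_n$-torsion sheaf, then $R^s(i_n)_\ast i_n^\ast \cF = 0$ for $s > 0$. So the only thing I need to verify is that $\cO/\cI^\infty_n$ is a quasi-coherent $\cI_n$-torsion sheaf. But this is precisely the content of Lemma~\ref{inject-at-zero}, which states that $\cO/\cI^\infty_n$ is an $\cI_n$-torsion sheaf; quasi-coherence is built into the recursive construction, since Equation~\ref{k-infty} exhibits $\cO/\cI^\infty_{n+1}$ as a cokernel of a map of quasi-coherent sheaves (here $\cO/\cI^\infty_n[v_n^{-1}] = (i_n)_\ast i_n^\ast \cO/\cI^\infty_n$ is quasi-coherent because $(i_n)_\ast$ preserves quasi-coherence for the open inclusion $i_n$, by Proposition~\ref{push-forward-qc} and the fact that open inclusions are quasi-compact and quasi-separated in our setting, and because $R^s(i_n)_\ast i_n^\ast \cO/\cI^\infty_n = 0$ for $s>0$ by induction, so the derived and underived pushforwards agree).

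Concretely, I would argue as follows. First, observe that $\cO/\cI^\infty_0 = \cO$ is quasi-coherent and $\cI_0 = 0$-torsion (vacuously). Then, inductively, suppose $\cO/\cI^\infty_n$ is quasi-coherent and $\cI_n$-torsion. By Proposition~\ref{no-higher-derived} applied to $\cF = \cO/\cI^\infty_n$, we get $R^s(i_n)_\ast i_n^\ast \cO/\cI^\infty_n = 0$ for $s > 0$; this already proves the lemma at level $n$. Moreover, the $s=0$ term $(i_n)_\ast i_n^\ast \cO/\cI^\infty_n = \cO/\cI^\infty_n[v_n^{-1}]$ is quasi-coherent (by the local description in Equation~\ref{concrete-2}, or by the pushforward-preserves-quasi-coherence statement), so the short exact sequence~\ref{k-infty} makes sense and exhibits $\cO/\cI^\infty_{n+1}$ as a quasi-coherent sheaf. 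Finally, $\cO/\cI^\infty_{n+1}$ is $\cI_{n+1}$-torsion by Lemma~\ref{inject-at-zero}. This closes the induction.

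I do not expect a genuine obstacle here: the lemma is essentially a bookkeeping corollary, explicitly flagged as such in the text (``We note that Proposition~\ref{no-higher-derived} also implies''). The only point requiring a moment's care is making sure the recursion in Equation~\ref{k-infty} is well-posed at each stage — i.e. that $\cO/\cI^\infty_n[v_n^{-1}]$ is honestly a quasi-coherent sheaf on $\cN$ so that the quotient $\cO/\cI^\infty_{n+1}$ is again quasi-coherent and the torsion hypothesis of Proposition~\ref{no-higher-derived} is available at the next level. That well-posedness is exactly what Lemma~\ref{inject-at-zero} and its proof (via Equations~\ref{concrete-1} and~\ref{concrete-2}) supply, so the whole argument is a short induction interleaving Lemma~\ref{inject-at-zero} and Proposition~\ref{no-higher-derived}.
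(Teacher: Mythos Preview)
Your proof is correct and follows exactly the paper's approach: the lemma is stated there as an immediate consequence of Proposition~\ref{no-higher-derived}, applied to $\cF = \cO/\cI_n^\infty$, which is $\cI_n$-torsion by Lemma~\ref{inject-at-zero}. Your additional care about the recursion being well-posed (quasi-coherence at each stage) is more detail than the paper spells out, but entirely in the same spirit.
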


\begin{rem}[{\bf Triangles and fiber sequences}]\label{shift-notation}In the rest of the section, we are
going to use a shift functor on (co-)chain complexes of sheaves
determined\index{triangles vs fiber sequences}
by the following equation. If $C$ is a cochain complex and
$n$ is an integer, then\index{$C[n]$}
$$
H^sC[n] = H^{s+n}C.
$$
If $C$ is a chain complex,  then we regard it as a cochain complex
by the equation $H^sC = H_{-s}C$; thus, $H_sC[n] = H_{s-n}C$.
A distinguished triangle of cochain complexes
$$
A \to B \to C \to A[1]
$$
induces a long exact sequence in cohomology
$$
\cdots \to H^sA \to H^sB \to H^sC \to H^sA[1] = H^{s+1}A \to \cdots
$$
To shorten notation we may revert to the homotopy theory conventions
and say that $A \to B \to C$ is a fiber sequence in cochain complexes.

If $M$ is a sheaf, we may regard it as a cochain complex in degree
zero; hence
$$
H^sM[-n] = \brackets{M,}{s=n;}{0,}{s \ne n.}
$$
\end{rem}

We now introduce local cohomology, which will be an important tool
for the rest of this section.

\begin{defn}\label{loc-coh-def}\index{local cohomology}
Let $Z \subseteq \cN$ be any closed substack with open complement
$i:U \to \cN$. If $\cF$ is a quasi-coherent sheaf 
on $\cN$, define the derived {\bf local cohomology sheaf} of
$\cF$ by the the distinguished triangle
\begin{equation}\label{local-coh-def}
R\Gamma_Z(\cN,\cF) \to \cF \to Ri_\ast i^\ast \cF \to R\Gamma_Z(\cN,\cF)[1].
\end{equation}
Put another way, $R\Gamma_Z(\cN,\cF)$ is the homotopy fiber
of the map $\cF \to Ri_\ast i^\ast \cF$.
\end{defn}

If $\cN$ is understood, we may write $R\Ga_{Z}\cF$ for $R\Ga_Z(\cN,\cF)$;
if $Z$ is defined by an ideal sheaf $\cI \subseteq \cO$, we may write
$R\Ga_\cI \cF$ for $R\Ga_Z(\cN,\cF)$.

The {\it local cohomology} of $\cF$ at $Z$ is then the graded cohomology sheaf
$$
H^\ast_Z(\cN,\cF) \defeq H^\ast R\Gamma_Z(\cN,\cF).
$$
If $V \to \cN$
is an open morphism in our topology, then
$$
\Gamma_{Z}(\cN,\cF)(V) =
H^0_{Z}(\cN,\cF)(V)
$$
is the set of sections $s \in \cF(V)$ which vanish when restricted
to $\cF(U \times_\cN V)$. If $\cI$ is locally generated
by a regular sequence, then we can give the following local
description of $\Gamma_Z(\cN,\cF)$. Let $\Spec(R) \to \cN$
be any morphism so that $\cI(R)$ is generated by a regular
sequence $u_0,\ldots,u_{n-1}$. Then there is an exact sequence
\begin{equation}\label{begin-Koszul}
\Gamma_Z(\cN,\cF)(R) \to \cF(R) \to \prod_i \cF(R)[u_i^{-1}].
\end{equation}
This has the following consequence.  See Corollary 3.2.4 of
\cite{all} for a generalization.

In the next result and what follows, $\hom$ denotes the
sheaf of homomorphisms and $\Hom$ denotes its
global sections.

\begin{lem}\label{local-to-ext} Suppose that the ideal $\cI \subseteq \cO_\cN
= \cO$ defining
the closed substack $Z \subseteq \cN$ is locally generated by
a regular sequence. Then for any quasi-coherent sheaf $\cF$ on
$\cN$ there is a natural equivalence
$$
\colim_k R\hom(\cO/\cI^k,\cF) \mathop{\longr}^{\simeq}
R\Ga_{Z}(\cN,\cF).
$$
\end{lem}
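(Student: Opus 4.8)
The strategy is to reduce both sides to local computations over an affine, flat, quasi-compact $\Spec(R) \to \cN$ on which $\cI(R)$ is generated by a regular sequence $u_0,\dots,u_{n-1}$, and to exhibit on each side the (stabilized) Koszul/\v Cech complex attached to that sequence. Since $\cI$ is locally generated by a regular sequence, both $R\Ga_Z(\cN,-)$ and $\colim_k R\hom(\cO/\cI^k,-)$ commute with the relevant base changes and are computed as derived functors of left-exact functors on quasi-coherent sheaves; so it suffices to produce a natural transformation and then check it is an equivalence after evaluating at such an $\Spec(R) \to \cN$.

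First I would construct the map. For each $k$ there is a canonical map $\cO/\cI^k \to \cO$ is not quite right --- rather, the inclusion $\cI^k \hookrightarrow \cO$ yields, after applying $\hom(-,\cF)$, a map $\hom(\cO/\cI^k,\cF) \to \hom(\cO,\cF) = \cF$ whose composite with $\cF \to Ri_\ast i^\ast\cF$ is zero (a section of $\cF$ killed by a power of $\cI$ restricts to $0$ on the open complement $U = \cV$ where $\cI$ is the unit ideal). Hence each $R\hom(\cO/\cI^k,\cF) \to \cF$ factors canonically through the homotopy fiber $R\Ga_Z(\cN,\cF)$, and these factorizations are compatible as $k$ varies, giving the natural transformation $\colim_k R\hom(\cO/\cI^k,\cF) \to R\Ga_Z(\cN,\cF)$. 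This uses only the distinguished triangle \eqref{local-coh-def} defining local cohomology and the elementary observation above, and requires no choice of generators.

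Next, to check it is an equivalence, evaluate at $\Spec(R) \to \cN$ with $\cI(R) = (u_0,\dots,u_{n-1})$ a regular sequence and with $\cF(R) = M$. On the right, the exact sequence \eqref{begin-Koszul} together with the fact (Proposition \ref{no-higher-derived}, applied iteratively, or a standard \v Cech argument) that $Ri_\ast i^\ast \cF$ is computed by the \v Cech complex on the opens $\Spec(R[u_i^{-1}])$ identifies $R\Ga_Z(\cN,\cF)(R)$ with the shifted \v Cech complex --- equivalently the stable Koszul complex --- on $u_0,\dots,u_{n-1}$ with coefficients in $M$. On the left, $R\hom(\cO/\cI^k,\cF)(R) = R\Hom_R(R/\cI(R)^k, M)$, and after taking $\colim_k$ one obtains (by the standard fact that $\colim_k \Ext^\ast_R(R/(u_0,\dots,u_{n-1})^k, M)$ is the local cohomology $H^\ast_{\cI(R)}(M)$, computed by the same stable Koszul complex) exactly the same object; here I would use that for a regular sequence the cofinal system of ideals $\cI(R)^k$ can be replaced by $(u_0^k,\dots,u_{n-1}^k)$, and that $\Hom_R(R/(u_0^k,\dots,u_{n-1}^k),-)$ with the transition maps given by multiplication by $u_i$ assembles into the stable Koszul complex in the colimit. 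Matching the two complexes compatibly with the maps constructed in the previous paragraph finishes the argument.

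The main obstacle I anticipate is bookkeeping rather than conceptual: one must verify that the two natural ``stable Koszul'' presentations agree \emph{as maps}, i.e. that the comparison transformation constructed abstractly really induces the identification of complexes, and that everything is independent of the choice of regular sequence and stable under the faithfully flat extensions used to find such a sequence. This is where the hypothesis that $\cI$ is \emph{locally} generated by a regular sequence is essential, and where I would lean on the cited generalization in \cite{all} (Corollary 3.2.4) to avoid re-deriving the local cohomology--Koszul comparison from scratch; the only new point over the affine scheme case is descent along the cover, which is routine since both functors in sight commute with the flat base change $R \to S$ that produces the regular sequence.
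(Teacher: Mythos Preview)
Your proposal is correct, but the paper's argument is considerably shorter and avoids almost all of the Koszul bookkeeping you anticipate. The paper does not work at the derived level at all until the last sentence: it first observes that the underived evaluation-at-the-unit map
\[
\colim_k \hom(\cO/\cI^k,\cF) \longrightarrow \Gamma_Z(\cN,\cF)
\]
is an isomorphism of left-exact functors. This is checked locally from the exact sequence \eqref{begin-Koszul}: a section $x\in\cF(R)$ lies in $\Gamma_Z(\cN,\cF)(R)$ if and only if each $u_i^{t_i}x=0$ for some $t_i$, which is precisely the condition that $x$ comes from $\hom(\cO/\cI^k,\cF)$ for large $k$. Having matched the underived functors, the derived statement is immediate because filtered colimits are exact, so $\colim_k R\hom(\cO/\cI^k,-)$ \emph{is} the right derived functor of $\colim_k \hom(\cO/\cI^k,-)\cong \Gamma_Z(\cN,-)$.

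Your route---constructing the comparison map via the defining triangle for $R\Gamma_Z$ and then identifying both sides locally with the stable Koszul complex---works, and has the virtue of making the Koszul description explicit (which the paper only develops later, in the Greenlees--May discussion and Remark~\ref{kozsul-localcoh}). But it front-loads exactly the ``bookkeeping'' you worry about: matching the two stable-Koszul presentations as maps, checking independence of the choice of regular sequence, and so on. The paper sidesteps all of this by recognizing that once two left-exact functors agree, their derived functors agree; no explicit complex is needed.
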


\begin{proof}Before taking derived functors, we note that there
is certainly a natural map
$$
\colim_k \Hom(\cO/\cI^k,\cF) \longr
\Ga_{Z}(\cN,\cF)
$$
given by evaluating at the unit. We first prove that this is an isomorphism;
for this it is sufficient to work locally. Let $\Spec(R) \to \cM$
where $\cI(R)$ is generated by the regular sequence $u_0,\cdots,
u_{k-1}$. Then the exact sequence of \ref{begin-Koszul} implies
that $x \in \cF(R)$ is in $\Gamma_Z(\cM,\cF)(R)$ if and only
if for all $i$ there is a $t_i$ so that $u_i^{t_i}x = 0$. This
yields the desired (underived) isomorphism.
Since colimit is exact on 
filtered diagrams, the derived version follows.
\end{proof}

We now set $Z = \cN(n+1)$, defined by $\cI_{n+1}$, so that $\cV(n) = \cN - \cN(n+1)$ and
there is  a distinguished triangle
$$
R\Ga_{\cN(n+1)}\cF \to \cF \to R(i_{n})_\ast i_n^\ast \cF \to 
R\Ga_{\cN(n+1)}\cF[1].
$$
If $\cF$ is a quasi-coherent $\cI_n$-torsion sheaf, then Proposition
\ref{no-higher-derived} applies and $(i_n)_\ast i_n^\ast \cF = \cF[v_n^{-1}]$.

The exact sequence defining $\cO/\cI_n^\infty$ and
Lemmas \ref{inject-at-zero} and \ref{no-higher-derived-1} imply
following result.

\begin{lem}\label{no-higher-derived-redux}For all $n \geq   1$ there
is an isomorphism in the derived category
$$
R\Gamma_{\cN(n)}(\cN,\cO/\cI_{n-1}^\infty) \cong \cO/\cI_{n}^\infty[-1].
$$
\end{lem}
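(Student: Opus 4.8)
\textbf{Proof proposal for Lemma \ref{no-higher-derived-redux}.}

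The plan is to unwind the short exact sequence \eqref{k-infty} defining $\cO/\cI_n^\infty$ through the local cohomology triangle \eqref{local-coh-def} with $Z = \cN(n)$, using the two preliminary lemmas just established. Recall \eqref{k-infty} reads
$$
0 \to \cO/\cI_{n-1}^\infty \to \cO/\cI_{n-1}^\infty[v_{n-1}^{-1}]
\to \cO/\cI_{n}^\infty \to 0,
$$
and that by definition $\cO/\cI_{n-1}^\infty[v_{n-1}^{-1}] = (i_{n-1})_\ast i_{n-1}^\ast \cO/\cI_{n-1}^\infty$, where $i_{n-1}:\cV(n-1) \to \cN$ is the open complement of $\cN(n)$. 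So this short exact sequence is precisely the (underived) version of the local cohomology triangle for the closed substack $\cN(n)$ applied to the sheaf $\cF = \cO/\cI_{n-1}^\infty$, provided the derived pushforward agrees with the underived one and $R\Gamma_{\cN(n)}(\cN, \cO/\cI_{n-1}^\infty)$ has no cohomology.

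The key steps, in order: First I would invoke Lemma \ref{no-higher-derived-1} (applied with index $n-1$), which gives $R^s(i_{n-1})_\ast i_{n-1}^\ast \cO/\cI_{n-1}^\infty = 0$ for $s > 0$; hence $R(i_{n-1})_\ast i_{n-1}^\ast \cO/\cI_{n-1}^\infty \simeq (i_{n-1})_\ast i_{n-1}^\ast \cO/\cI_{n-1}^\infty = \cO/\cI_{n-1}^\infty[v_{n-1}^{-1}]$ as an object of the derived category, concentrated in degree zero. Second, I would write down the local cohomology triangle \eqref{local-coh-def} with $Z = \cN(n)$, $i = i_{n-1}$, and $\cF = \cO/\cI_{n-1}^\infty$:
$$
R\Gamma_{\cN(n)}(\cN, \cO/\cI_{n-1}^\infty) \to \cO/\cI_{n-1}^\infty
\to \cO/\cI_{n-1}^\infty[v_{n-1}^{-1}] \to R\Gamma_{\cN(n)}(\cN, \cO/\cI_{n-1}^\infty)[1].
$$
Third, I would combine this triangle with the long exact sequence in cohomology: in degrees $s \le -1$ and $s \ge 1$ the two middle terms vanish, so $H^s R\Gamma_{\cN(n)}(\cN, \cO/\cI_{n-1}^\infty) = 0$ there; in degree $0$ the sequence
$$
0 \to H^0 R\Gamma_{\cN(n)}(\cN, \cO/\cI_{n-1}^\infty) \to \cO/\cI_{n-1}^\infty
\to \cO/\cI_{n-1}^\infty[v_{n-1}^{-1}] \to H^1 R\Gamma_{\cN(n)}(\cN, \cO/\cI_{n-1}^\infty) \to 0
$$
is exact, and by Lemma \ref{inject-at-zero} the middle map (the unit of the adjunction) is injective, so $H^0 R\Gamma_{\cN(n)}(\cN, \cO/\cI_{n-1}^\infty) = 0$, while its cokernel is by definition $\cO/\cI_n^\infty$; thus $H^1 R\Gamma_{\cN(n)}(\cN, \cO/\cI_{n-1}^\infty) \cong \cO/\cI_n^\infty$. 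Reassembling, $R\Gamma_{\cN(n)}(\cN, \cO/\cI_{n-1}^\infty)$ is concentrated in cohomological degree $1$ with value $\cO/\cI_n^\infty$, which in the shift conventions of Remark \ref{shift-notation} is exactly $\cO/\cI_n^\infty[-1]$. One should also note that $\cO/\cI_{n-1}^\infty$ is $\cI_{n-1}$-torsion (Lemma \ref{inject-at-zero}), which is what makes the identification $(i_{n-1})_\ast i_{n-1}^\ast \cO/\cI_{n-1}^\infty = \cO/\cI_{n-1}^\infty[v_{n-1}^{-1}]$ consistent with Proposition \ref{no-higher-derived} and justifies citing Lemma \ref{no-higher-derived-1}.

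The argument is essentially a bookkeeping exercise once the three ingredients (acyclicity of the pushforward, the local cohomology triangle, and injectivity of the unit) are in hand; there is no real obstacle, though one must be careful about the degree shift conventions and about checking that the short exact sequence \eqref{k-infty} really is the degree-zero part of the triangle rather than something that only agrees up to some correction term. The one point deserving a sentence of care is the identification of the connecting map / cokernel with $\cO/\cI_n^\infty$: this is true by the very definition \eqref{k-infty}, but it requires that the map $\cO/\cI_{n-1}^\infty \to \cO/\cI_{n-1}^\infty[v_{n-1}^{-1}]$ appearing in the local cohomology triangle is the same map as the one in \eqref{k-infty}, which holds because both are the unit of the $(i_{n-1}^\ast, (i_{n-1})_\ast)$ adjunction.
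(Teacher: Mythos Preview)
Your proposal is correct and is exactly the argument the paper has in mind: the paper simply states that the result follows from the defining exact sequence \eqref{k-infty} together with Lemmas \ref{inject-at-zero} and \ref{no-higher-derived-1}, and you have spelled out precisely how those three ingredients combine via the local cohomology triangle. There is nothing to add.
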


We also have the following key calculation.

\begin{prop}\label{vanishing-n} For all $n \geq 1$ there is an equivalence
in the derived category of quasi-coherent sheaves
$$
R\Gamma_{\cN(n)}(\cN,\cO) \simeq \cO/\cI_n^\infty[-n].
$$
That is,
$$
H^s_{\cN(n)}(\cN,\cO) \cong \brackets{0,}{s\ne n;}{\cO/\cI_n^\infty,}{s=n.}
$$
\end{prop}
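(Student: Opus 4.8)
The statement is an induction on $n$, and the engine is the long exact sequence for local cohomology combined with the recursive construction of $\cO/\cI_n^\infty$ in Equation \ref{k-infty}. The base case $n=1$ is essentially Lemma \ref{no-higher-derived-redux} applied with $n-1 = 0$: since $\cO/\cI_0^\infty = \cO$ is $\cI_0 = 0$-torsion (vacuously) and the adjunction unit $\cO \to (i_0)_\ast i_0^\ast \cO$ has the shape needed, we get $R\Gamma_{\cN(1)}(\cN,\cO) \simeq \cO/\cI_1^\infty[-1]$. So I would first set up the inductive hypothesis $R\Gamma_{\cN(n)}(\cN,\cO) \simeq \cO/\cI_n^\infty[-n]$ and aim to promote it one step.

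\textbf{Key steps.} First, recall the transitivity of local cohomology for a nested pair of closed substacks $\cN(n+1) \subseteq \cN(n) \subseteq \cN$: there is a distinguished triangle
$$
R\Gamma_{\cN(n+1)}(\cN,\cF) \to R\Gamma_{\cN(n)}(\cN,\cF) \to R(i_n)_\ast R\Gamma_{\cN(n+1)\cap \cV(n)}(\cV(n), i_n^\ast \cF) \to
$$
but it is cleaner here to use the more elementary observation that $R\Gamma_{\cN(n+1)}$ factors as $R\Gamma_{\cN(n+1)} \simeq R\Gamma_{\cN(n+1)} \circ R\Gamma_{\cN(n)}$ on the level of derived functors (since $\cN(n+1) \subseteq \cN(n)$), so
$$
R\Gamma_{\cN(n+1)}(\cN,\cO) \simeq R\Gamma_{\cN(n+1)}\big(\cN, R\Gamma_{\cN(n)}(\cN,\cO)\big) \simeq R\Gamma_{\cN(n+1)}\big(\cN, \cO/\cI_n^\infty[-n]\big)
$$
by the inductive hypothesis. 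Second, since $\cO/\cI_n^\infty$ is an $\cI_n$-torsion quasi-coherent sheaf (Lemma \ref{inject-at-zero}) and $\cN(n+1) = \cN(n) - \cV(n)$ corresponds to setting $\cN(n)$-locally $v_n = 0$, I would invoke Lemma \ref{no-higher-derived-redux} to identify $R\Gamma_{\cN(n+1)}(\cN, \cO/\cI_n^\infty) \simeq \cO/\cI_{n+1}^\infty[-1]$. Combining the two shifts gives $R\Gamma_{\cN(n+1)}(\cN,\cO) \simeq \cO/\cI_{n+1}^\infty[-1][-n] = \cO/\cI_{n+1}^\infty[-(n+1)]$, which is exactly the claim for $n+1$. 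The cohomology-sheaf reformulation then follows from the conventions of Remark \ref{shift-notation}: a sheaf placed in a single cohomological degree $n$ has $H^s(\cdots) = \cO/\cI_n^\infty$ for $s = n$ and $0$ otherwise.

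\textbf{Main obstacle.} The delicate point is justifying the iterated-functor identity $R\Gamma_{\cN(n+1)} \simeq R\Gamma_{\cN(n+1)} \circ R\Gamma_{\cN(n)}$ and, more to the point, checking that when we apply $R\Gamma_{\cN(n+1)}$ to $\cO/\cI_n^\infty$ we really land in the situation covered by Lemma \ref{no-higher-derived-redux} — i.e. that the closed substack $\cN(n+1) \subseteq \cN$ restricted to the support $\cN(n)$ of $\cO/\cI_n^\infty$ is cut out $\cN(n)$-locally by the single regular element $v_n$, so that Lemma \ref{inject-at-zero} (injectivity of the unit) and Lemma \ref{no-higher-derived-1} (vanishing of higher $R(i_n)_\ast$) both apply with no higher-derived correction terms. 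Once those local facts are in hand, the argument is a two-line manipulation of shifts; the work is entirely in assembling the local input, all of which has been prepared in the preceding lemmas. I expect no genuine difficulty beyond bookkeeping, since Proposition \ref{no-higher-derived} was designed precisely to kill the potential obstructions.
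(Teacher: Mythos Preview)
Your proposal is correct but takes a genuinely different inductive route from the paper. The paper fixes the closed substack $\cN(n)$ and inducts on $k$, proving the stronger statement
$$R\Gamma_{\cN(n)}(\cN,\cO/\cI_{n-k}^\infty) \simeq \cO/\cI_{n}^\infty[-k],$$
with Lemma \ref{no-higher-derived-redux} as the case $k=1$; the inductive step applies $R\Gamma_{\cN(n)}$ to the defining short exact sequence of $\cO/\cI_{n-k}^\infty$ and shows that the middle term $(i_{n-k})_\ast i_{n-k}^\ast\cO/\cI_{n-(k+1)}^\infty$ has vanishing $R\Gamma_{\cN(n)}$ via the factorization $\cV(n-k)\hookrightarrow\cV(n-1)\hookrightarrow\cN$. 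Setting $k=n$ gives the Proposition. You instead induct on $n$ and vary the closed substack, using the composition identity $R\Gamma_{\cN(n+1)}\simeq R\Gamma_{\cN(n+1)}\circ R\Gamma_{\cN(n)}$ together with a single application of Lemma \ref{no-higher-derived-redux} (index shifted by one). Your route is arguably more conceptual and shorter, but it costs you the composition identity, which---while standard for open immersions (since $(i')^\ast R(i')_\ast \simeq \mathrm{id}$ makes $R\Gamma_{\cN(n+1)}$ vanish on anything of the form $R(i_{n-1})_\ast(-)$)---is not established in the paper and so would need a sentence of justification. The paper's route stays entirely within the lemmas already on the page.
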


\begin{proof}We proceed by induction to show that 
$$
R\Gamma_{\cN(n)}(\cN,\cO/\cI_{n-k}^\infty) \simeq \cO/\cI_{n}^\infty[-k].
$$
Lemma \ref{no-higher-derived-redux} is the case $k=1$. To get the inductive
case, we have an exact sequence
$$
0 \to \cO/\cI_{n-(k+1)}^\infty \to (i_{n-k})_\ast i^\ast_{n-k} \cO/\cI_{n-(k+1)}^\infty
\to \cO/\cI_{n-k}^\infty \to 0.
$$
Hence we need to show that
$$
R\Ga_{\cN(n)}  (i_{n-k})_\ast i^\ast_{n-k}\cO/\cI_{n-(k+1)}^\infty = 0,
$$
or equivalently that
$$
 (i_{n-k})_\ast i^\ast_{n-k}\cO/\cI_{n-(k+1)}^\infty
  \to R(i_{n-1})_\ast i^\ast_{n-1}  (i_{n-k})_\ast i^\ast_{n-k}\cO/\cI_{n-(k+1)}^\infty
$$
is an equivalence. Consider the sequence of inclusions
$$
\xymatrix{
\cV(n-k) \ar@/_1pc/[rr]_{i_{n-k}} \rto^-f & \cV(n-1) \rto^-{i_{n-1}} & \cN
}
$$
We easily check that $i_{n-1}^\ast (i_{n-k})_\ast = f_\ast$; since $i_{n-1}^\ast$
is exact we have an equivalence
$$
R(i_{n-k})_\ast i^\ast_{n-k}\cO/\cI_{n-(k+1)}^\infty
  \to R(i_{n-1})_\ast i^\ast_{n-1}(i_{n-k})_\ast i^\ast_{n-k}\cO/\cI_{n-(k+1)}^\infty
$$
The result now follows from Lemma \ref{no-higher-derived-1}.
\end{proof}

\begin{thm}\label{local-to-tensor}Let $\cF$ be a quasi-coherent sheaf
on $\cN$. Then there are natural equivalences in the derived category
$$
R\Gamma_{\cN(n)}(\cN,\cF) \simeq \cO/\cI_n^\infty[-n] \otimes_\cO^L 
\cF$$
\end{thm}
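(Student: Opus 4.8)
\textbf{Proof proposal for Theorem \ref{local-to-tensor}.}

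The plan is to reduce the general statement to the case $\cF = \cO$, which is precisely Proposition \ref{vanishing-n}, together with a comparison between local cohomology and the derived tensor product by $\cO/\cI_n^\infty$. First I would observe that by Lemma \ref{local-to-ext}, since $\cI_n$ is locally generated by a regular sequence (Proposition \ref{LEFT-easy-part}), there is a natural equivalence $R\Ga_{\cN(n)}(\cN,\cF) \simeq \colim_k R\hom(\cO/\cI_n^k,\cF)$. I would then use that the local cohomology functor, being built from $i_n^\ast$ and $R(i_n)_\ast$, can be expressed as $R\Ga_{\cN(n)}(\cN,-) \simeq R\Ga_{\cN(n)}(\cN,\cO) \otimes_\cO^L (-)$; this is the key structural input, and it follows from the projection-formula-type identity that the homotopy fiber of $\cF \to R(i_n)_\ast i_n^\ast \cF$ is computed by tensoring $\cF$ with the homotopy fiber of $\cO \to R(i_n)_\ast i_n^\ast \cO$. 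Concretely, this rests on two facts: that $R(i_n)_\ast$ commutes with the relevant derived tensor products because $i_n$ is an open (hence flat) inclusion, and that for flat, quasi-compact, separated morphisms the derived pushforward satisfies $R(i_n)_\ast(i_n^\ast \cO \otimes^L i_n^\ast \cF) \simeq R(i_n)_\ast i_n^\ast \cO \otimes^L \cF$. Combined with Proposition \ref{vanishing-n}, which identifies $R\Ga_{\cN(n)}(\cN,\cO)$ with $\cO/\cI_n^\infty[-n]$, the result follows.

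More explicitly, I would argue as follows. Working locally on an affine $\Spec(R) \to \cN$ that is flat and quasi-compact and over which $\cI_n(R)$ is generated by a regular sequence $u_0,\ldots,u_{n-1}$ (after a faithfully flat extension, as in the proof of Lemma \ref{inject-at-zero}), the complex $R\Ga_{\cN(n)}(\cN,\cO)(R)$ is computed by the stable Koszul complex on $u_0,\ldots,u_{n-1}$, i.e.\ the tensor product over $0 \le i \le n-1$ of the two-term complexes $[R \to R[u_i^{-1}]]$. This is a complex of flat $R$-modules, so tensoring it with $\cF(R)$ computes $R\Ga_{\cN(n)}(\cN,\cO)(R) \otimes_R \cF(R)$, and the exact-sequence bookkeeping of Equation \ref{begin-Koszul} together with Lemma \ref{local-to-ext} applied to $\cF$ shows this agrees with $R\Ga_{\cN(n)}(\cN,\cF)(R)$. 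The identification of the Koszul complex with $\cO/\cI_n^\infty[-n]$ locally is exactly the content of Equation \ref{concrete-1} and the inductive short exact sequences defining $R/(u_0^\infty,\ldots,u_{n-1}^\infty)$; indeed the iterated mapping cone of $[R \to R[u_i^{-1}]]$ is by construction $R/(u_0^\infty,\ldots,u_{n-1}^\infty)$ placed in cohomological degree $n$, which is $\cO/\cI_n^\infty(R)[-n]$ by Equation \ref{concrete-1}. Naturality in $\cF$ is clear since every step is a functor of $\cF$, and the local equivalences glue because all sheaves involved are quasi-coherent and the construction is compatible with flat base change.

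The main obstacle I anticipate is making precise the assertion that $R\Ga_{\cN(n)}(\cN,-)$ is given by $- \otimes^L R\Ga_{\cN(n)}(\cN,\cO)$ at the level of the derived category of quasi-coherent sheaves on the stack $\cN$, rather than merely locally: one must know that the stable Koszul complex (equivalently, the telescope $\colim_k \cO/\cI_n^k$ of Lemma \ref{local-to-ext}) is a complex of \emph{flat} $\cO$-modules, or at least that it is acyclic for $\otimes \cF$, so that no higher derived-tensor corrections appear. This is where I would invoke that $R[u_i^{-1}]$ is flat over $R$ and that finite tensor products of flat modules are flat, so the stable Koszul complex is termwise flat; then $- \otimes$ (this complex) already computes $- \otimes^L$ (its class in the derived category). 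Once that point is settled, the theorem is essentially a formal consequence of Proposition \ref{vanishing-n}, Lemma \ref{local-to-ext}, and the flat-base-change compatibility that has been used repeatedly in the excerpt (e.g.\ in Remark \ref{invert-vn-1} and the proof of Proposition \ref{no-higher-derived}).
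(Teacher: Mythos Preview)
Your proposal is correct and arrives at the same conclusion by the same overall strategy: reduce to the case $\cF=\cO$ (Proposition \ref{vanishing-n}) after establishing that $R\Gamma_{\cN(n)}(\cN,-)$ is given by derived tensor with $R\Gamma_{\cN(n)}(\cN,\cO)$. The difference is in how you justify that reduction. You argue via the projection formula for the open immersion $i_n$ (equivalently, via termwise flatness of the stable Koszul complex), which is correct but somewhat more machinery than needed here. The paper instead stays inside Lemma \ref{local-to-ext} and uses a single algebraic fact: each $\cO/\cI_n^k$ is locally finitely presented and, since $\cI_n$ is locally generated by a regular sequence, is perfect. For a perfect object one has $R\hom(\cO/\cI_n^k,\cF)\simeq R\hom(\cO/\cI_n^k,\cO)\otimes^L\cF$ immediately, so
\[
R\Gamma_{\cN(n)}(\cN,\cF)\simeq\colim_k R\hom(\cO/\cI_n^k,\cF)\simeq\colim_k R\hom(\cO/\cI_n^k,\cO)\otimes^L\cF\simeq R\Gamma_{\cN(n)}(\cN,\cO)\otimes^L\cF,
\]
and Proposition \ref{vanishing-n} finishes. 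Your route has the advantage of making the connection with the stable Koszul complex explicit (which the paper only unpacks later, in the Greenlees--May discussion), while the paper's route is shorter and avoids having to worry about globalizing the projection formula on the stack.
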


\begin{proof} This follows immediately from Lemma \ref{local-to-ext}
and Proposition \ref{vanishing-n}; indeed, since $\cO/\cI_n^k$ is
locally finitely presented
\begin{align*}
R\Gamma_{\cN(n)}(\cN,\cF) &\simeq \colim R\hom(\cO/\cI_n^k,\cF)\\
	&\simeq \colim R\hom(\cO/\cI_n^k,\cO) \otimes^L \cF\\
	&\simeq R\Gamma_{\cN(n)}(\cN,\cO) \otimes^L \cF.
\end{align*}
\end{proof}

Another consequence of Lemma \ref{local-to-ext} and Proposition
\ref{vanishing-n} is the following result.

\begin{prop}\label{finite-iso}There is an equivalence
$$
\colim_k R\hom(\cO/\cI_n^k,\cO) \mathop{\longr}^{\cong}
\cO/\cI  _n^\infty[-n].
$$
\end{prop}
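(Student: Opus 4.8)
The statement is really a packaging of the two facts already established in this subsection: Lemma \ref{local-to-ext}, which identifies $\colim_k R\hom(\cO/\cI_n^k,\cF)$ with $R\Ga_{\cN(n)}(\cN,\cF)$ whenever $\cI_n$ is locally generated by a regular sequence, and Proposition \ref{vanishing-n}, which computes $R\Ga_{\cN(n)}(\cN,\cO)$ to be $\cO/\cI_n^\infty[-n]$. So the plan is simply to specialize $\cF = \cO$ in Lemma \ref{local-to-ext} and then substitute the answer of Proposition \ref{vanishing-n}. Concretely: $\colim_k R\hom(\cO/\cI_n^k,\cO) \simeq R\Ga_{\cN(n)}(\cN,\cO) \simeq \cO/\cI_n^\infty[-n]$, which is exactly the asserted equivalence.

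The one hypothesis I need to check before invoking Lemma \ref{local-to-ext} is that $\cI_n \subseteq \cO_\cN = \cO$ is locally generated by a regular sequence. This is the content of Proposition \ref{LEFT-easy-part} together with Remark \ref{vn-local-1}: since $f:\cN \to \cM_\fg$ is representable, separated, and flat, each inclusion $\cN(k) \subseteq \cN(k-1)$ is an effective Cartier divisor cut out by $v_{k-1}$, so on any flat affine chart $\Spec(R) \to \cN$ (after a faithfully flat extension to trivialize $\omega$) the ideal $\cI_n(R)$ is generated by the regular sequence $p, u_1,\dots,u_{n-1}$. That is the standing set-up of this whole section, so there is nothing new to prove here; I would just cite it.

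I would also remark that the equivalence is the natural one: the map $\colim_k R\hom(\cO/\cI_n^k,\cO) \to \cO/\cI_n^\infty[-n]$ is obtained by composing the natural map to $R\Ga_{\cN(n)}(\cN,\cO)$ of Lemma \ref{local-to-ext} (evaluation at the unit, then passing to derived functors) with the natural equivalence of Proposition \ref{vanishing-n}, whose construction there is via the inductive exact sequences defining $\cO/\cI_n^\infty$. Since both ingredients are natural in the evident sense, so is the composite; this matters because Theorem \ref{local-to-tensor} has already used the same composite implicitly, and it is worth recording that the two usages agree.

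There is no real obstacle: the proposition is a corollary in the strict sense, and the proof is a two-line chain of cited equivalences. If anything needs a sentence of care it is the bookkeeping of the shift $[-n]$ and the direction of the arrow, i.e. making sure that ``$R\hom(\cO/\cI_n^k,\cO)$ is locally finitely presented'' (so that $R\hom(\cO/\cI_n^k,-)$ commutes with the relevant base changes) is invoked consistently with the way it was used in the proof of Theorem \ref{local-to-tensor}; but since $\cO/\cI_n^k$ is a quotient of $\cO$ by a finitely generated ideal this is immediate and already in hand.
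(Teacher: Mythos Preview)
Your proposal is correct and matches the paper's own treatment exactly: the paper introduces this proposition with the sentence ``Another consequence of Lemma \ref{local-to-ext} and Proposition \ref{vanishing-n} is the following result,'' offering no further proof. Your chain $\colim_k R\hom(\cO/\cI_n^k,\cO) \simeq R\Ga_{\cN(n)}(\cN,\cO) \simeq \cO/\cI_n^\infty[-n]$ is precisely the intended argument.
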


We also will be interested in what happens if we vary $n$. Consider the sequence
of inclusions
$$
\xymatrix{
\cV(n-1) \ar@/_1pc/[rr]_{i_{n-1}} \rto^-f & \cV(n) \rto^-{i_{n}} & \cN
}
$$
Recall that $\cV(n)$ is the complement of $\cN(n+1)$. In the case
where $\cN = \cM_\fg$, $\cN(n)=\cM(n)$ classifies formal groups
of height at least $n$, $\cV(n) = \cU(n)$ classifies formal groups
of height at most $n$ and $\layer{n} = \cM(n) \cap \cU(n)$ classifies
formal groups of exact height $n$.

\begin{lem}\label{rel-local} For all quasi-coherent $\cF$ on $\cN$, there 
are fiber sequences of cochain complexes of quasi-coherent
sheaves
$$
R(i_n)_\ast i_n^\ast R\Ga_{\cN(n)}\cF \to R(i_n)_\ast (i_n)^\ast \cF \to
 R(i_{n-1})_\ast (i_{n-1})^\ast \cF
$$
and
$$
R\Ga_{\cN(n+1)}\cF \to R\Ga_{\cN(n)}\cF \to R(i_n)_\ast i_n^\ast R\Ga_{\cN(n)}\cF
$$
\end{lem}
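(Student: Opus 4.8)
The plan is to derive both fiber sequences formally from the octahedral axiom applied to the chain of inclusions $\cV(n-1) \xrightarrow{f} \cV(n) \xrightarrow{i_n} \cN$, together with the defining triangle of local cohomology in Equation \ref{local-coh-def}. The one nontrivial input is the compatibility $i_n^\ast \circ i_{n-1,\ast} \simeq f_\ast$ on derived categories, which I would establish exactly as in the proof of Proposition \ref{vanishing-n}: since $\cV(n-1) \subseteq \cV(n)$ is the open complement of the closed substack $\cN(n) \cap \cV(n)$ inside $\cV(n)$, one checks on the underlying topology (using Lemma \ref{space-of-closed}) that the square expressing $\cV(n-1) = \cV(n) \times_\cN \cV(n-1)$ is a genuine pull-back, and then $i_n^\ast$ is exact, so the base-change morphism $i_n^\ast R(i_{n-1})_\ast \to Rf_\ast$ is an equivalence; note $i_{n-1} = i_n \circ f$.

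First I would treat the second fiber sequence, as it is the more elementary of the two and is essentially a restatement of the long exact sequence relating local cohomology at nested closed substacks $\cN(n+1) \subseteq \cN(n) \subseteq \cN$. Applying Definition \ref{loc-coh-def} with $Z = \cN(n)$ (open complement $i_{n-1}:\cV(n-1) \to \cN$) and with $Z = \cN(n+1)$ (open complement $i_n:\cV(n) \to \cN$) gives two triangles with common middle term $\cF$:
$$
R\Ga_{\cN(n)}\cF \to \cF \to R(i_{n-1})_\ast i_{n-1}^\ast \cF, \qquad R\Ga_{\cN(n+1)}\cF \to \cF \to R(i_{n})_\ast i_{n}^\ast \cF.
$$
There is a natural map of triangles from the second to the first induced by the restriction $\cV(n-1) \to \cV(n)$ (equivalently the adjunction unit $R(i_n)_\ast i_n^\ast \cF \to R(i_{n-1})_\ast i_{n-1}^\ast \cF$, using $i_{n-1} = i_n f$ and $i_{n-1}^\ast \cF = f^\ast i_n^\ast \cF$), hence a map $R\Ga_{\cN(n+1)}\cF \to R\Ga_{\cN(n)}\cF$. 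The octahedral axiom applied to the composite $R\Ga_{\cN(n+1)}\cF \to \cF$ factoring through $R\Ga_{\cN(n)}\cF$ identifies the cofiber of $R\Ga_{\cN(n+1)}\cF \to R\Ga_{\cN(n)}\cF$ with the cofiber of $R(i_n)_\ast i_n^\ast \cF \to R(i_{n-1})_\ast i_{n-1}^\ast \cF$, which by the first fiber sequence below (shifted) is $R(i_n)_\ast i_n^\ast R\Ga_{\cN(n)}\cF$. Thus it suffices to establish the first fiber sequence.

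For the first fiber sequence, I would apply $R(i_n)_\ast i_n^\ast(-)$ to the local cohomology triangle $R\Ga_{\cN(n)}\cF \to \cF \to R(i_{n-1})_\ast i_{n-1}^\ast \cF$; this yields a triangle
$$
R(i_n)_\ast i_n^\ast R\Ga_{\cN(n)}\cF \to R(i_n)_\ast i_n^\ast \cF \to R(i_n)_\ast i_n^\ast R(i_{n-1})_\ast i_{n-1}^\ast \cF,
$$
so the only thing to check is that the third term is canonically $R(i_{n-1})_\ast i_{n-1}^\ast \cF$. Using $i_n^\ast R(i_{n-1})_\ast \simeq Rf_\ast$ established above, the third term becomes $R(i_n)_\ast Rf_\ast i_{n-1}^\ast \cF = R(i_n f)_\ast i_{n-1}^\ast \cF = R(i_{n-1})_\ast i_{n-1}^\ast \cF$, as desired, with the map being the expected restriction. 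The main obstacle is purely bookkeeping: making sure the three adjunction/base-change maps assemble into an honest morphism of triangles so that the octahedron applies; once the base-change identity $i_n^\ast R(i_{n-1})_\ast \simeq Rf_\ast$ is in hand — and this is a standard consequence of $i_n^\ast$ being exact on quasi-coherent sheaves together with the pull-back square of open substacks — everything else is formal manipulation in the derived category.
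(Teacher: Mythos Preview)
Your proof is correct and takes essentially the same approach as the paper: both arguments hinge on the base-change equivalence $i_n^\ast R(i_{n-1})_\ast \simeq Rf_\ast$ (deduced from exactness of $i_n^\ast$), and both observe that the two fiber sequences are formally equivalent via the defining triangle of local cohomology. Your organization is mildly more direct—applying $R(i_n)_\ast i_n^\ast$ straight to the local-cohomology triangle on $\cN$, rather than first building the triangle on $\cV(n)$ and then pushing forward—but this is a cosmetic difference, not a substantive one.
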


\begin{proof}The fiber sequence which defines local cohomology
(see Definition \ref{loc-coh-def}) yields that these sequences
are equivalent; so, we prove the first.

For any quasi-coherent sheaf on $\cN$, we have a fiber sequence
\begin{equation}\label{before-i-n}
R\Ga_{\cH(n)}(\cV(n),i_n^\ast \cF) \to i_n^\ast \cF 
\to Rf_\ast i_{n-1}^\ast \cF.
\end{equation}
Here we have taken the liberty of writing $\cH(n)$ for $\cV(n) \cap
\cN(n)$ and we have used $f^\ast i_n^\ast \cong
i_{n-1}^\ast$.

Next note that the adjoint to the equivalence $R(i_{n-1})_\ast \cong
R(i_n)_\ast Rf_\ast$ yields  a commutative diagram
$$
\xymatrix{
i_n^\ast R\Ga_{\cN(n)}(\cN,\cF) \rto\dto  & i_n^\ast \cF\rto\dto^{=} &
i_{n}^\ast R(i_{n-1})_\ast i_{n-1}^\ast \cF\dto \\
R\Ga_{H(n)}(\cV(n),i_n^\ast \cF)  \rto & i_n^\ast \cF\rto &
Rf_\ast i_{n-1}^\ast \cF.
}
$$
Finally, for all quasi-coherent sheaves
$\cE$ on $\cV(n-1)$, the natural map
$$
i_{n}^\ast R(i_{n-1}^\ast) \cE\longr Rf_\ast \cE 
$$
is an equivalence; indeed, we easily check that
$i_{n}^\ast (i_{n-1})_\ast\cE \to f_\ast \cE$ is an isomorphism and
then we use that $i_n^\ast$ is exact. From this we conclude that
$$
R\Ga_{H(n)}(\cV(n),i_n^\ast \cF)  \longr i_n^\ast R\Ga_{\cN(n)}(\cN,\cF)
$$
is an equivalence. We feed this into Equation \ref{before-i-n} and
apply $R(i_n)_\ast$ to get the result.
\end{proof}

Applying the second of the fiber sequences of Lemma \ref{rel-local} to
$\cF = \cO$ itself and using Theorem \ref{local-to-tensor},
we get the fiber sequence
\begin{equation}\label{k-infty-shifted}
\cO/\cI_{n+1}^\infty[-n-1] \to \cO/\cI_n^\infty[-n] \to\cO/\cI_n^\infty[v^{-1}][-n]
\end{equation}
which is the evident shift of the defining sequence \ref{k-infty}. From this
we obtain the following result.

\begin{lem}\label{local-to-tensor-bound}There is a natural commutative diagram
$$
\xymatrix{
R\Gamma_{\cN(n+1)}(\cN,\cF) \dto_\simeq \rto &
R\Gamma_{\cN(n)}(\cN,\cF) \dto^ \simeq \\
\cO/\cI_{n+1}^\infty[-n-1] \otimes_\cO^L \cF\rto &
\cO/\cI_n^\infty[-n] \otimes_\cO^L \cF
}
$$
where the bottom morphism is the boundary morphism induced by the short
exact sequence
$$
0 \to \cO/\cI_n^\infty \to  \cO/\cI_n^\infty[v_n^{-1}] \to  \cO/\cI_{n+1}^\infty \to 0.
$$
\end{lem}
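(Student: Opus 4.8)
\textbf{Proof plan for Lemma \ref{local-to-tensor-bound}.}

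The plan is to reduce the statement to Theorem \ref{local-to-tensor}, whose conclusion is exactly that $R\Ga_{\cN(n)}(\cN,\cF) \simeq \cO/\cI_n^\infty[-n]\otimes^L_\cO \cF$ naturally in $\cF$, and then to identify the map that connects the $n$ and $n+1$ versions of this equivalence. First I would recall the fiber sequence of cochain complexes from Lemma \ref{rel-local}, second displayed sequence, applied to $\cF$ directly:
$$
R\Ga_{\cN(n+1)}\cF \to R\Ga_{\cN(n)}\cF \to R(i_n)_\ast i_n^\ast R\Ga_{\cN(n)}\cF.
$$
The map $R\Ga_{\cN(n+1)}\cF \to R\Ga_{\cN(n)}\cF$ appearing here is the top horizontal morphism of the claimed square (it is the natural transformation of local cohomology functors coming from the inclusion $\cN(n+1)\subseteq \cN(n)$, i.e. from $\cV(n-1)\supseteq \cV(n)$ and the factorization $i_{n-1} = i_n f$). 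So the top row is already pinned down; the content of the lemma is that under the equivalences of Theorem \ref{local-to-tensor} this map goes over to the connecting morphism of the short exact sequence \ref{k-infty}, shifted and tensored with $\cF$.

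Next I would treat the case $\cF = \cO$ first, since everything is $\otimes^L_\cO \cF$ of that case (Theorem \ref{local-to-tensor} is proved by exactly this tensoring argument, and naturality in $\cF$ of both the local cohomology triangles and the tensor product lets one propagate). For $\cF=\cO$, Proposition \ref{vanishing-n} gives $R\Ga_{\cN(m)}(\cN,\cO)\simeq \cO/\cI_m^\infty[-m]$, and the fiber sequence above becomes, after substituting these identifications and $R(i_n)_\ast i_n^\ast R\Ga_{\cN(n)}\cO \simeq (\cO/\cI_n^\infty)[v_n^{-1}][-n]$ (using Lemma \ref{no-higher-derived-1} to drop higher derived functors on the $\cI_n$-torsion sheaf $\cO/\cI_n^\infty$, and Remark \ref{invert-vn-1} / Definition \ref{invert-vn} to write $(i_n)_\ast i_n^\ast = (-)[v_n^{-1}]$), precisely the sequence \ref{k-infty-shifted}:
$$
\cO/\cI_{n+1}^\infty[-n-1] \to \cO/\cI_n^\infty[-n] \to \cO/\cI_n^\infty[v_n^{-1}][-n].
$$
This is the $[-n]$-shift of the defining short exact sequence \ref{k-infty}, and its connecting map $\cO/\cI_{n+1}^\infty[-n-1]\to\cO/\cI_n^\infty[-n]$ is, by construction, the boundary map of that short exact sequence. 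Hence for $\cF=\cO$ the square commutes with the bottom arrow being exactly the asserted boundary morphism. Tensoring the whole diagram with $\cF$ over $\cO$ (using $R\Ga_{\cN(m)}(\cN,\cF)\simeq R\Ga_{\cN(m)}(\cN,\cO)\otimes^L\cF$ from Theorem \ref{local-to-tensor}, naturally) gives the general statement, since $\otimes^L\cF$ is a triangulated functor and carries the boundary morphism of the short exact sequence to the boundary morphism of the associated distinguished triangle $\cO/\cI_n^\infty\otimes^L\cF \to \cO/\cI_n^\infty[v_n^{-1}]\otimes^L\cF \to \cO/\cI_{n+1}^\infty\otimes^L\cF$.

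The main obstacle I anticipate is not any single computation but the bookkeeping needed to check that the equivalence of Theorem \ref{local-to-tensor} is compatible with the two natural maps in play — the ``change of level'' map $R\Ga_{\cN(n+1)}\to R\Ga_{\cN(n)}$ on local cohomology, and the connecting map of \ref{k-infty} — i.e. that one genuinely gets a commuting square of complexes (or at least a commuting square in the derived category) rather than two separately-correct identifications. Concretely, this means verifying that the identification $R(i_n)_\ast i_n^\ast R\Ga_{\cN(n)}\cO \simeq \cO/\cI_n^\infty[v_n^{-1}][-n]$ is compatible with the one coming straight from Proposition \ref{no-higher-derived} and Remark \ref{invert-vn-1}, and then tracking the connecting maps of the long exact cohomology sequences through Remark \ref{shift-notation}'s shift conventions. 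Since all the sheaves involved ($\cO/\cI_n^\infty$ and its localization) are concentrated in a single cohomological degree after the stated shifts, the ``derived category'' square reduces to an honest square of quasi-coherent sheaves, and the compatibility can be checked locally on an affine flat quasi-compact $\Spec(R)\to\cN$ with $\cI_n(R)$ generated by a regular sequence, using the explicit modules $R/(u_0^\infty,\ldots,u_{n-1}^\infty)$ and their localizations from the proof of Lemma \ref{inject-at-zero} (Equations \ref{concrete-1}, \ref{concrete-2}); there the boundary map is the visible connecting homomorphism, which settles it.
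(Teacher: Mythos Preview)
Your proposal is correct and follows essentially the same approach as the paper: apply the second fiber sequence of Lemma \ref{rel-local} to $\cF=\cO$, identify the terms via Proposition \ref{vanishing-n} and Theorem \ref{local-to-tensor} to obtain the shifted sequence \ref{k-infty-shifted}, and then tensor with $\cF$. The paper in fact treats this lemma as an immediate consequence (``From this we obtain the following result'') without spelling out the compatibility checks you flag; your concern about tracking the identifications through the connecting maps is the only real content, and your plan to verify it locally via the explicit modules of Equations \ref{concrete-1} and \ref{concrete-2} is exactly right.
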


\subsection{Greenlees-May duality}

There is a remarkable duality between local cohomology and completion
first noticed by Greenlees and May \cite{GM} and globalized in 
\cite{all}. Similar results appear in \cite{DG}, which also has
the general version of the fracture square we will write down below
in  Theorem \ref{fracture-square}. The techniques of \cite{all} apply directly
to the case of a qausi-compact and separated stack $\cN$ and
the closed substacks $\cN(n) \subseteq \cN$ arising from a scale. The main
result we'll use is the following. Derived completion was defined
in Definition \ref{der-completion}.

\begin{prop}\label{compl-hom}For all quasi-coherent sheaves $\cF$ on $\cN$
there is a natural equivalence
$$
L(\cF)^\cmpl_{\cN(n)} \simeq R\hom(\cO/\cI_n^\infty[-n],\cF).
$$
\end{prop}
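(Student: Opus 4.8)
The strategy is to deduce the statement from Proposition~\ref{finite-iso}, which already identifies $\cO/\cI_n^\infty[-n]$ with $\colim_k R\hom(\cO/\cI_n^k,\cO)$, by moving the functor $R\hom(-,\cF)$ past this filtered colimit and then exploiting that each $\cO/\cI_n^k$ is, locally, a perfect complex. Throughout I work in the derived category of quasi-coherent sheaves on $\cN$, with the homological algebra set up as in \cite{all} (and, when $\cN$ is an Adams stack, via the functorial resolutions of Proposition~\ref{hovey-results}); recall from Definition~\ref{der-completion} that $L(\cF)^\cmpl_{\cN(n)} = \holim_k\bigl(\cO/\cI_n^k\otimes_\cO^L\cF\bigr)$.

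\emph{Step 1 (pass $R\hom$ across the colimit).} By Proposition~\ref{finite-iso} there is a natural equivalence $\colim_k R\hom(\cO/\cI_n^k,\cO)\xrightarrow{\ \simeq\ }\cO/\cI_n^\infty[-n]$, so $R\hom(\cO/\cI_n^\infty[-n],\cF)\simeq R\hom\bigl(\colim_k R\hom(\cO/\cI_n^k,\cO),\cF\bigr)$. Since filtered colimits of chain complexes of quasi-coherent sheaves are exact, this colimit computes the homotopy colimit, and taking $R\hom$ out of it turns it into a homotopy limit: with a suitably fibrant model $\cG$ for $\cF$ one has $\hom(\colim_k C_k,\cG) = \lim_k\hom(C_k,\cG)$ on the nose, and the transition maps $\cO/\cI_n^{k+1}\twoheadrightarrow\cO/\cI_n^k$ together with the regularity of the scale (so that the graded pieces $\cI_n^j/\cI_n^{j+1}$ are locally free over $\cO/\cI_n$) make $\{\hom(C_k,\cG)\}_k$ a tower of fibrations computing $\holim$. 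Hence $R\hom(\cO/\cI_n^\infty[-n],\cF)\simeq\holim_k R\hom\bigl(R\hom(\cO/\cI_n^k,\cO),\cF\bigr)$, naturally in $\cF$.

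\emph{Step 2 (biduality levelwise).} It remains to produce a natural (in $k$ and $\cF$) equivalence $R\hom\bigl(R\hom(\cO/\cI_n^k,\cO),\cF\bigr)\simeq\cO/\cI_n^k\otimes_\cO^L\cF$, which may be checked after pulling back along an $fpqc$-cover $\Spec(R)\to\cN$ on which $\cI_n$ is generated by a regular sequence $u_0,\dots,u_{n-1}$. There $\gr_{\cI_n}\cO$ is a polynomial ring over $\cO/\cI_n$, so each $\cI_n^j/\cI_n^{j+1}$ is free over $\cO/\cI_n$; filtering $\cO/\cI_n^k$ by the submodules $\cI_n^j/\cI_n^k$ and using that $\cO/\cI_n$ has projective dimension $n$ (Koszul), one finds that $\cO/\cI_n^k$ is finitely presented of projective dimension $\le n$, hence locally quasi-isomorphic to a bounded complex of finite free modules. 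For such a (locally) perfect complex $P=\cO/\cI_n^k$, writing $P^\vee = R\hom(P,\cO)$ one has $P^{\vee\vee}\simeq P$ and $R\hom(P^\vee,\cF)\simeq R\hom(P^\vee,\cO)\otimes_\cO^L\cF = P^{\vee\vee}\otimes_\cO^L\cF\simeq P\otimes_\cO^L\cF$; these being maps of complexes of quasi-coherent sheaves, the local check suffices. Feeding this into Step~1 gives $R\hom(\cO/\cI_n^\infty[-n],\cF)\simeq\holim_k\bigl(\cO/\cI_n^k\otimes_\cO^L\cF\bigr) = L(\cF)^\cmpl_{\cN(n)}$, and chasing the chain of identifications shows the composite is the natural map.

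\emph{Main obstacle.} The only genuine work is the model-categorical bookkeeping of Step~1 — choosing the resolutions so that $\hom(\colim_k C_k,\cG) = \lim_k\hom(C_k,\cG)$ actually computes the homotopy limit while remaining functorial in $\cF$ — together with the local perfectness of $\cO/\cI_n^k$ used in Step~2; both are precisely the ingredients of Greenlees–May duality, so the argument transfers essentially verbatim from the affine case of \cite{GM} and its globalization in \cite{all}, and no new idea is required.
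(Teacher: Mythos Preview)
Your proposal is correct and follows essentially the same route as the paper: the paper also defines the comparison map via the double-dual pairing $\cE\otimes\cF\to\Hom(\Hom(\cE,\cO),\cF)$, passes $R\hom$ across the filtered colimit $\colim_k R\hom(\cO/\cI_n^k,\cO)\simeq R\Gamma_{\cN(n)}\cO$, and then checks the biduality step locally using the Koszul resolution of $R/(u_0^i,\dots,u_{n-1}^i)$ by finite free modules---exactly your ``local perfectness'' of $\cO/\cI_n^k$. The only cosmetic difference is that you run the chain of identifications from $R\hom(\cO/\cI_n^\infty[-n],\cF)$ toward the completion, while the paper runs it the other way.
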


This result is actually equivalent to an apparently stronger result --
{\it Greenlees-May duality}:\index{Greenlees-May duality}

\begin{thm}\label{compl-hom-gen} Let $\cE$ and $\cF$ be two 
chain complexes of quasi-coherent sheaves on $\cN$ . Then there
is a natural equivalence
$$
R\hom (R\Ga_{\cN(n)}\cE,\cF) \simeq R\hom(\cE,L(\cF)^\cmpl_{\cN(n)}).
$$
\end{thm}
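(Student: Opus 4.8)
\textbf{Proof proposal for Theorem \ref{compl-hom-gen} (Greenlees--May duality).}

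The plan is to deduce the general statement from the special case recorded in Proposition \ref{compl-hom}, which identifies $L(\cF)^\cmpl_{\cN(n)}$ with $R\hom(\cO/\cI_n^\infty[-n],\cF)$, together with Theorem \ref{local-to-tensor}, which identifies $R\Ga_{\cN(n)}\cE$ with $\cO/\cI_n^\infty[-n]\otimes^L_\cO \cE$. Granting these, the theorem becomes the assertion that there is a natural equivalence
$$
R\hom\bigl((\cO/\cI_n^\infty[-n])\otimes^L_\cO\cE,\cF\bigr) \simeq R\hom\bigl(\cE, R\hom(\cO/\cI_n^\infty[-n],\cF)\bigr),
$$
which is just the derived tensor--hom adjunction in the derived category of quasi-coherent sheaves on $\cN$, applied with the fixed complex $\cP \defeq \cO/\cI_n^\infty[-n]$ in the middle slot. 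So the heart of the matter is to make precise and legitimate this adjunction in the present setting.

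The steps I would carry out are the following. First, replace $\cE$ and $\cF$ by well-behaved representatives: using Proposition \ref{hovey-results} (recall $\cN$ is an Adams stack, or at least has the requisite generation by finitely generated locally free sheaves, by the results cited after Definition \ref{adams-cond}), choose a resolution $\cP^\cN_\bullet(\cE)\to\cE$ by coproducts of finitely generated locally free sheaves, so that $\cO/\cI_n^\infty[-n]\otimes^L_\cO\cE$ is computed termwise; dually, one computes $R\hom$ into a suitably fibrant (K-injective) replacement of $\cF$. Second, establish the \emph{underived} adjunction isomorphism: for a locally free sheaf $\cV$, a quasi-coherent $\cP$, and any sheaf $\cG$, one has $\hom(\cV\otimes_\cO\cP,\cG)\cong\hom(\cV,\hom(\cP,\cG))$, which is checked locally (both sides evaluate on an affine $\Spec(R)\to\cN$ to the classical tensor--hom adjunction for $R$-modules, using that $\cV$ is locally free of finite rank so tensoring and homming commute with the relevant base changes and with the $fpqc$-sheaf condition, exactly as in Lemma \ref{tens-is-sheaf}). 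Third, pass to chain complexes: apply the adjunction degreewise to $\cP^\cN_\bullet(\cE)$ and assemble, using that $R\hom$ out of a complex of coproducts of finitely generated locally free sheaves is computed by the naive $\hom$-complex into a fibrant target, and that this computation is compatible with the totalization/shift conventions of Remark \ref{shift-notation}. Fourth, feed in Proposition \ref{compl-hom} on the right-hand side and Theorem \ref{local-to-tensor} on the left-hand side to rewrite the two sides as $R\hom(R\Ga_{\cN(n)}\cE,\cF)$ and $R\hom(\cE,L(\cF)^\cmpl_{\cN(n)})$ respectively, and check naturality in $\cE$ and $\cF$ (this is automatic from the naturality of all the constructions invoked). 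Finally, observe the converse implication --- that Theorem \ref{compl-hom-gen} contains Proposition \ref{compl-hom} --- by specializing $\cE=\cO$ (shifted appropriately), so that the two statements are indeed equivalent as claimed in the text preceding the theorem.

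The main obstacle I anticipate is purely homotopical bookkeeping rather than conceptual: one must be careful that the derived tensor product of Definition \ref{tensor-prod} (built from the functorial Hovey resolution on the second variable) and the derived Hom agree, as a pair of adjoint functors, on the level of the derived category --- i.e. that the chosen point-set models actually represent the derived functors and that the adjunction passes to the localization. The cleanest way around this is to note that the functor $\cG\mapsto\cO/\cI_n^\infty[-n]\otimes^L_\cO\cG$ is exact (it preserves distinguished triangles, by the remarks following Definition \ref{tensor-prod}) and preserves coproducts, hence by Brown representability has a right adjoint on the derived category, which one then identifies with $R\hom(\cO/\cI_n^\infty[-n],-)$ via the underived adjunction of Step two applied to a generating set of compact objects (the finitely generated locally free sheaves). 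A secondary subtlety is that $L(\cF)^\cmpl_{\cN(n)}$ is in general only an $\cO$-module sheaf, not quasi-coherent (as noted after Definition \ref{der-completion}); but this causes no difficulty since $R\hom(\cO/\cI_n^\infty[-n],\cF)$ is computed in the larger category of $\cO$-modules anyway, and Proposition \ref{compl-hom} is an equivalence there. Everything else is a matter of assembling the pieces, and I would reference \cite{all}, \S\S 3--4, and \cite{GM} for the formal framework, since the arguments there apply verbatim to the quasi-compact separated stack $\cN$ with its regular scale.
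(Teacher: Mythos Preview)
Your proposal is correct and follows exactly the same route as the paper: reduce to the derived tensor--hom adjunction via Theorem \ref{local-to-tensor} on the left and Proposition \ref{compl-hom} on the right. The paper simply takes the adjunction $R\hom(\cP\otimes^L\cE,\cF)\cong R\hom(\cE,R\hom(\cP,\cF))$ as given rather than unpacking the homotopical bookkeeping you outline, but the argument is identical.
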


Certainly Theorem \ref{compl-hom-gen} implies Proposition
\ref{compl-hom} by setting $\cE = \cO_\cN$ and applying
Proposition \ref{vanishing-n}. Conversely, Theorem \ref{local-to-tensor}
gives a natural isomorphism
\begin{align*}
R\hom (R\Ga_{\cN(n)}\cE,\cF)
&\cong R\hom(\cO/\cI_n^\infty[-n] \otimes^L\cE,\cF)\\
&\cong R\hom(\cE,R\hom(\cO/\cI_n^\infty[-n],\cF)).
\end{align*}
Hence Theorem \ref{compl-hom-gen} follows from Proposition
\ref{compl-hom}. 

The argument to prove Proposition \ref{compl-hom} goes exactly as in
\cite{all}; hence we will content ourselves with giving an outline.

Lemma \ref{local-to-ext} allows us to define a natural map
$$
\Phi:L(\cF)^\cmpl_{\cN(n)} \longr R\hom(R\Ga_{\cN(n)}(\cN,\cO),\cF)
$$
as follows. First note that for $\cO$-module sheaves $\cE$ and $\cF$, there
is a natural map 
\begin{equation}\label{op-eval}
\cE \otimes \cF \longr \Hom(\Hom(\cE,\cO),\cF)
\end{equation}
given pointwise by sending $x \otimes y$ to the homomorphism
$\phi_{x\otimes y}$ with
$$
\phi_{x\otimes y}(f) = f(x)y.
$$
The morphism of Equation \ref{op-eval} can be derived to an  morphism
$$
\cE \otimes^L \cF \longr R\Hom(R\Hom(\cE,\cO),\cF).
$$
Now $\Phi$ is defined as the composition
\begin{align*}
L(\cF)^\cmpl_{\cN(n)} = \holim (\cF \otimes^L \cO/\cI_n^k)
\to &\holim R\hom(R\hom(\cO/\cI_n^k,\cO),\cF)\\
\cong\ &R\hom(\colim R\hom(\cO/\cI_n^k,\cO),\cF)\\
\cong\ &R\hom(R\Ga_{\cN(n)}(\cN,\cO),\cF).
\end{align*}

Proposition \ref{compl-hom} now can be restated as

\begin{prop}\label{compl-hom-1}For all quasi-coherent sheaves $\cF$,
the natural map
$$
\Phi:L(\cF)^\cmpl_{\cN(n)} \longr R\hom(R\Ga_{\cN(n)}(\cN,\cO),\cF)
$$
is an equivalence.
\end{prop}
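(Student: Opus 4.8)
The plan is to check that the map $\Phi$ is an equivalence \emph{locally}, i.e., after evaluating on flat quasi-compact $\Spec(R) \to \cN$ with $\cI_n(R)$ generated by a regular sequence $u_0,\ldots,u_{n-1}$; since both source and target are defined by derived limits and colimits that commute with the (flat) restriction, it suffices to treat this affine, regular-sequence situation. There the statement becomes the original Greenlees--May duality for the Koszul/\v Cech complexes attached to a regular sequence in a commutative ring, applied to an $R$-module $M = \cF(R)$. So the real content is to reduce to \cite{all} (equivalently \cite{GM}) and then cite it; I would not reprove Greenlees--May from scratch.

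First I would recall, from Lemma \ref{local-to-ext} and Proposition \ref{finite-iso}, that $R\Ga_{\cN(n)}(\cN,\cO)$ is computed by $\colim_k R\hom(\cO/\cI_n^k,\cO)$, and that locally $R\hom(\cO/\cI_n^k,\cO)$ is represented by a Koszul complex on $u_0^k,\ldots,u_{n-1}^k$; the colimit over $k$ is then the stable Koszul (\v Cech) complex $\check{C}^\bullet(u_0,\ldots,u_{n-1})$, concentrated in degrees $0$ through $n$, whose cohomology is $R/(u_0^\infty,\ldots,u_{n-1}^\infty)$ in top degree (Equation \ref{concrete-1}). Dually, $L(\cF)^\cmpl_{\cN(n)}(R) = \lim_k (P_\bullet(R)/\cI_n^k(R)P_\bullet(R))$ with $P_\bullet \to \cF$ the Hovey resolution of Proposition \ref{hovey-results}; locally this is the derived $\cI_n$-completion of $M$, which is classically represented by $M \otimes \check{C}^\bullet(u_0,\ldots,u_{n-1})$ shifted, or more precisely by the total complex of the tower of Koszul complexes. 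The map $\Phi$, traced through its definition via Equation \ref{op-eval}, is then exactly the Greenlees--May comparison map between derived completion and $R\hom$ out of stable local cohomology.

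Next I would invoke the relevant theorem of \cite{all} (the globalized Greenlees--May duality for quasi-compact separated stacks with a closed substack cut out by an ideal locally generated by a regular sequence), which says precisely that this comparison map is an equivalence; this applies verbatim to $\cN(n) \subseteq \cN$ since $\cI_n$ is a regular ideal by Proposition \ref{LEFT-easy-part}. A small point needing care: \cite{all} is stated for quasi-compact and separated stacks, whereas $\cN$ (and $\cM_\fg$ itself) need not be quasi-compact; but since the assertion is local on $\cN$, one can restrict to a quasi-compact open (or simply to an affine presentation together with its \v Cech nerve, using that quasi-coherent sheaves are cartesian sheaves), where the hypotheses of \cite{all} are met, and then descend.

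\textbf{Main obstacle.} The genuinely delicate step is not the algebra of Koszul complexes but confirming that the various derived functors in play ($R\Ga$, $L(-)^\cmpl$, $R\hom$) are computed by the explicit local models uniformly enough that the globally-defined $\Phi$ agrees, after restriction, with the classical Greenlees--May map; this requires knowing that the Hovey resolution $P_\bullet$ of Proposition \ref{hovey-results}, which is by coproducts of finitely generated locally free sheaves, can be used to compute $R\hom$ out of $\cO/\cI_n^k$ and that the inverse limit defining derived completion is homotopy-invariant on such resolutions. All of this is handled in \cite{all}, so in practice the obstacle is bookkeeping: making the identifications precise enough to legitimately quote that reference rather than redo it.
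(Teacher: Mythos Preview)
Your proposal is correct and follows essentially the same approach as the paper: reduce to the affine case where $\cI_n(R)$ is generated by a regular sequence, identify both sides with the Koszul/\v Cech models, and invoke Greenlees--May duality (via \cite{GM} or \cite{all}). The paper spells out the Koszul-complex equivalences a bit more explicitly, while you are more attentive to the quasi-compactness and bookkeeping issues, but the strategy is the same.
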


\def\bu{{{\mathbf{u}}}}

The first observation is that the
question is local; that is, it is sufficient to show that there
$\Phi$ is an equivalence when evaluated at any flat and quasi-compact
morphism $\Spec(R) \longr \cM$ for which $\cI_n(R)$ is 
generated by a regular sequence. This follows readily from the definition
of completion (\ref{der-completion}) and the remarks immediately
afterwards. If we write
 $I = \cI_n(R)$ and $M = \cF(R)$, then
we are asking that the map
$$
\Phi_V: L(M)^\cmpl_I \longr R\hom(R\Gamma_I(R),M)
$$
be an equivalence. This is exactly what Greenlees and May prove.
There is a finiteness condition in the argument which is worth emphasizing:
for all $i$, the $R$-module $R/(u_0^i,\cdots,u_{n-1}^i)$ has a
finite resolution by finitely generated free $R$-modules.
The usual such resolution is the Koszul complex, which 
we now review.

\def\bx{{{\mathbf{u}}}}

Let $R$ be a
commutative ring and let $u \in R$. Define $K(u)$ to the chain
complex
$$
\xymatrix{
R \rto^u & R
}
$$
concentrated in degrees $0$ and $1$. If $\bx = (u_0,\ldots,u_{n-1})$
is an ordered $n$-tuple of elements in $R$, define the
{\it Koszul complex}\index{Koszul complex} to be
$$
K(\bx) = K(u_0) \otimes \cdots \otimes K(u_{n-1}).
$$ 
Note that if $\bx$ is a regular sequence in $R$ and $I$ is the
ideal generated by $u_0,\ldots,u_{n-1}$, then $K(\bx)$ is the
Kozsul resolution of $R/I$ and
$$
H_s(K(\bx) \otimes M) \cong \Tor_s^R(R/I,M).
$$

Now fix the $n$-tuple $\bx$ and define $\bx^i = (u_0^i,\ldots,u_{n-1}^i)$.
The commutative squares
$$
\xymatrix{
R \rto^{u_j^i} \dto_{u_j} & R\dto^{=}\\
R \rto^{u_j^{i-1}} & R
}
$$
combine to give morphisms $f_i:K(\bx^i) \to K(\bx^{i-1})$. Thus
if the element of $\bx$ form a regular sequence,\footnote{Or, more
generally, if the elements of $\bx$ are {\it pro-regular} as in
\cite{GM}.} then a simple bicomplex arguments shows that
for any $R$-module $M$ there is an homology isomorphism
$$
L(M)^\cmpl_I \simeq \holim_j (K(\bx^j) \otimes M).
$$
This equivalence is natural in $M$, although it doesn't look
very natural in $R$ or $I$.

The dual complex
$$
K^\ast(\bx) \defeq \Hom_R(K(\bx),R)
$$
is a chain complex concentrated in degrees $s$, $-n \leq s \leq 0$. Note
that if the $u_i$ form a regular sequence
\begin{equation}\label{homology-dual}
H_sK^\ast(\bx) = \Ext_R^{-s}(R/(u_0,\ldots,u_{n-1}),R) \cong
\brackets{R/(u_0,\ldots,u_{n-1}),}{s=-n;}{0,}{s\ne-n.}
\end{equation}
The dual of the maps $f_i$ give maps $f_i^\ast:K^\ast(\bx^{i-1})
\to K(\bx^i)$. Define 
$$
K^\ast(\bx^\infty) = \colim K^\ast(\bx^i).
$$
We have have natural homology equivalences, assuming the elements
in $\bx$ form a regular sequence:
\begin{align*}
K^\ast(\bx^\infty) \otimes M &\simeq \colim K^\ast(\bx^i) \otimes M\\
&\simeq \colim \Hom_R(K(\bx^i),M)\\
&\simeq\colim R\hom_R(R/(x_0^i,\ldots,x_{n-1}^i),M)\\
&\simeq R\Ga_I(M).
\end{align*}
More is true, because $K(\bx^i)$ is finitely generated as a
chain complex of $R$-modules we have that the map
of Equation \ref{op-eval}
$$
K(\bx^i) \otimes M \longr \Hom(K^\ast(\bx^i),M)
$$
is an isomorphism, natural in $M$. Then the local version of
Greenlees-May duality follows:
\begin{align*}
L(M)^\cmpl_I &\simeq \holim (K(\bx^j) \otimes R)\\
& \simeq \holim  \Hom(K^\ast(\bx^i),M)\\
& \simeq R\hom(\colim K^\ast(\bx^i),M)\\
& \simeq R\hom (R\Ga_I(R),M).
\end{align*}
It is an exercise is bicomplexes to show that this is, up to natural
homology equivalence, the map $\Phi$ of Proposition \ref{compl-hom-1}.

\begin{rem}\label{kozsul-localcoh}The isomorphism
$$
H_{-s}(K^\ast(\bx^\infty) \otimes M) \cong H^s_I(R,M)
$$
developed above extends the exact sequence of Equation \ref{begin-Koszul}.
Indeed, $K^\ast(\bx^\infty)$ is exactly the chain complex\index{local cohomology
complex}
$$
M \to \prod_i M[u_i^{-1}] \to \prod_{i_1<i_2} M[u_{i_1}^{-1}u_{i_2}^{-1}] 
\to \cdots \to M[u_0^{-1}\cdots u_{k-1}^{-1}] \to 0\ \cdots.
$$
\end{rem}

\subsection{Algebraic chromatic convergence}

We now supply the two results we promised: a fracture square
for reconstructing quasi-coherent sheaves for the completions
and a decomposition of a coherent sheaf as a homotopy
inverse limit.

We begin with a preliminary calculation. Compare Corollary 5.1.1 of
\cite{all}.

\begin{thm}\label{local-doesnt-see-compl}Let $\cF$ be a quasi-coherent
sheaf on $\cN$. Then the natural map
$$
R\Ga_{\cN(n)}\cF \longr R\Ga_{\cN(n)}L(\cF)^\cmpl_{\cN(n)}
$$
is an equivalence.
\end{thm}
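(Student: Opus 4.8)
The claim is that applying $R\Ga_{\cN(n)}$ to a quasi-coherent sheaf $\cF$ does not change if we first complete $\cF$ along $\cN(n)$. The approach is to use Greenlees--May duality (Proposition \ref{compl-hom}), which identifies $L(\cF)^\cmpl_{\cN(n)}$ with $R\hom(\cO/\cI_n^\infty[-n],\cF)$, together with the identification $R\Gamma_{\cN(n)}(\cN,-) \simeq \cO/\cI_n^\infty[-n] \otimes_\cO^L (-)$ from Theorem \ref{local-to-tensor}. So the statement reduces to showing that the natural map
$$
\cO/\cI_n^\infty[-n] \otimes^L_\cO \cF \longrightarrow \cO/\cI_n^\infty[-n] \otimes^L_\cO R\hom(\cO/\cI_n^\infty[-n],\cF)
$$
is an equivalence. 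First I would reduce to a local statement: both source and target are complexes of quasi-coherent sheaves, so it suffices to check the map is an equivalence after evaluating at any flat quasi-compact $\Spec(R)\to \cN$ with $\cI_n(R)$ generated by a regular sequence $\bx = (u_0,\ldots,u_{n-1})$. This is legitimate because $R\Gamma$, $R\hom$ of a locally finitely presented sheaf, and derived tensor all commute with the relevant flat base change.

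Locally the statement becomes: for an $R$-module $M$, the map $K^\ast(\bx^\infty)\otimes_R M \to K^\ast(\bx^\infty)\otimes_R R\hom_R(K^\ast(\bx^\infty),M)$ is a homology isomorphism, where $K^\ast(\bx^\infty) = \colim_i K^\ast(\bx^i)$ is the local cohomology (stable Koszul) complex reviewed in the Greenlees--May subsection. The cleanest way to see this is to use that $R\Gamma_{I}$ is \emph{idempotent} on the derived category in the appropriate sense: $R\Gamma_I R\Gamma_I \simeq R\Gamma_I$, equivalently $K^\ast(\bx^\infty)\otimes^L_R K^\ast(\bx^\infty) \simeq K^\ast(\bx^\infty)$. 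Indeed $K^\ast(\bx^\infty)$ is a colimit of the finite complexes $K^\ast(\bx^i)$, each a perfect complex, and one checks on each Koszul factor $K^\ast(u^\infty) = (R \to R[u^{-1}])$ that $K^\ast(u^\infty)\otimes^L_R K^\ast(u^\infty) \simeq K^\ast(u^\infty)$ — this is the elementary fact that $R[u^{-1}]\otimes_R R[u^{-1}] = R[u^{-1}]$ combined with a short five-lemma/telescope argument — and then tensor the $n$ factors. Combining with Greenlees--May duality $L(M)^\cmpl_I \simeq R\hom_R(K^\ast(\bx^\infty),R) \otimes^L_R M$ (or more directly $R\Gamma_I L(M)^\cmpl_I \simeq R\Gamma_I R\hom_R(R\Gamma_I R, M) \simeq R\hom_R(R\Gamma_I R, R\Gamma_I M) \simeq \cdots$, using Theorem \ref{compl-hom-gen} twice) yields $R\Gamma_I L(M)^\cmpl_I \simeq R\Gamma_I M$, which is what is needed.

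Actually, the most economical route avoids the local reduction entirely and runs through Theorem \ref{compl-hom-gen} directly. Using $R\Gamma_{\cN(n)}\cG \simeq \cO/\cI_n^\infty[-n]\otimes^L \cG$ and $L(\cG)^\cmpl_{\cN(n)} \simeq R\hom(\cO/\cI_n^\infty[-n],\cG)$, one has
$$
R\Gamma_{\cN(n)} L(\cF)^\cmpl_{\cN(n)} \simeq \cO/\cI_n^\infty[-n]\otimes^L R\hom(\cO/\cI_n^\infty[-n],\cF),
$$
and the claim is that the evaluation map from $\cO/\cI_n^\infty[-n]\otimes^L\cF$ is an equivalence; this is precisely the idempotence of $R\Gamma_{\cN(n)}$, which I would prove by the local Koszul argument above. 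I expect the main obstacle to be bookkeeping the derived categorical idempotence carefully — in particular verifying that the natural map in question really is the one induced by evaluation, and that the telescope colimit defining $K^\ast(\bx^\infty)$ is compatible with the tensor and $R\hom$ in the way claimed — rather than any deep new input. Everything rests on the finiteness built into the Koszul complexes (each $K^\ast(\bx^i)$ perfect), exactly as in the Greenlees--May discussion, so the argument will parallel Corollary 5.1.1 of \cite{all}.
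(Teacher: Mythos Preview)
Your approach is correct in outline but takes a different route from the paper's. You invoke Greenlees--May duality (Proposition~\ref{compl-hom}/Theorem~\ref{compl-hom-gen}) and then argue via the idempotence $K^\ast(\bx^\infty)\otimes^L K^\ast(\bx^\infty)\simeq K^\ast(\bx^\infty)$. This can be made to work, though your sketch is a little loose in two places: the formula $L(M)^\cmpl_I \simeq R\hom_R(K^\ast(\bx^\infty),R)\otimes^L_R M$ is not correct in general since $K^\ast(\bx^\infty)$ is not perfect, and deducing $K^\ast(\bx^\infty)\otimes^L R\hom(C,M)=0$ from $K^\ast(\bx^\infty)\otimes^L C=0$ alone needs an extra step --- for instance, observe that $C$ is a finite complex built from localizations $R[u_{i_1}^{-1}\cdots u_{i_k}^{-1}]$, so $R\hom(C,M)$ is built from pieces on which some $u_i$ already acts invertibly. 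A cleaner variant of your idea: by Theorem~\ref{compl-hom-gen} applied with $\cE$ any $\cI_n$-torsion complex, the map $\cF\to L(\cF)^\cmpl_{\cN(n)}$ becomes an equivalence after $R\hom(\cE,-)$; take $\cE=\cO/\cI_n^k$ and pass to the colimit using Lemma~\ref{local-to-ext}.

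The paper's proof is more direct and does not invoke Theorem~\ref{compl-hom-gen} at all. It reduces locally and replaces $\cF$ by a projective $R$-module $P$ (via the resolution of Proposition~\ref{hovey-results}), so that the derived completion is the honest $I$-adic completion $(P)^\cmpl_I$. Then it checks at each finite Koszul stage that $K^\ast(\bx^i)\otimes P \to K^\ast(\bx^i)\otimes (P)^\cmpl_I$ is a homology isomorphism by identifying both sides with $\Ext_R^\ast(R/(\bx^i),-)$: this is concentrated in degree $n$ by Equation~\ref{homology-dual}, and in that degree the map is $P/(\bx^i)\to (P)^\cmpl_I/(\bx^i)$, an isomorphism. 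Your route is more structural and fits the general Bousfield-localization picture; the paper's is a short explicit Koszul computation once the local reduction is made.
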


\begin{proof} The question is local (again) and, therefore, reduces to
the following assertion. Let $\bx = (u_0,\ldots,u_{n-1})$ be
a regular sequence in $R$, let $I$ be the ideal generated by
this regular sequence, and let $P$ be a projective $R$-module. Then
$$
K^\ast(\bx^\infty) \otimes P \longr K^\ast(\bx^\infty) \otimes (P)^\cmpl_I
$$
is an equivalence. Indeed, if we apply homology to the map
$$
K^\ast(\bx^i) \otimes P \longr K^\ast(\bx^i) \otimes (P)^\cmpl_I
$$
then, by Equation \ref{homology-dual} we obtain the maps
$$
\Ext_R^s(R/(u_0^i,\ldots,u_{n-1}^i),P) \longr
\Ext_R^s(R/(u_0^i,\ldots,u_{n-1}^i),(P)^\cmpl_I).
$$
Both source and target are zero if $s\ne n$ and if $s=n$ we have
the map
$$
P/(u_0^i,\ldots,u_{n-1}^i) \to (P)^\cmpl_I/(u_0^i,\ldots,u_{n-1}^i)
$$
which is an isomorphism.
\end{proof}

This result has the following fracture square as a consequence. Recall that 
the open inclusion $i_{n-1}:\cV(n-1)\to \cN$ is complementary
to the closed inclusion $j_n:\cN(n) \to \cN$.

\begin{thm}[{\bf The fracture squares}]\label{fracture-square}\index{fracture
square}Let $\cF$ be a quasi-coherent 
sheaf on $\cN$. Then there is a homotopy cartesian square in the
derived category
$$
\xymatrix{
\cF \rto \dto &L(\cF)^\wedge_{\cN(n)}\dto\\
R(i_{n-1})_\ast i_{n-1}^\ast \cF \rto & R(i_{n-1})_\ast i_{n-1}^\ast 
L(\cF)^\wedge_{\cN(n)}.
}
$$
\end{thm}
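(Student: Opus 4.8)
The plan is to deduce the fracture square from the local cohomology triangle together with the fact, just established in Theorem \ref{local-doesnt-see-compl}, that local cohomology at $\cN(n)$ does not distinguish $\cF$ from its derived completion. Recall from Definition \ref{loc-coh-def} that for any quasi-coherent sheaf $\cE$ on $\cN$ there is a natural distinguished triangle
$$
R\Ga_{\cN(n)}\cE \longr \cE \longr R(i_{n-1})_\ast i_{n-1}^\ast \cE \longr R\Ga_{\cN(n)}\cE[1],
$$
since $\cV(n-1)$ is the open complement of $\cN(n)$. First I would apply this triangle to $\cE = \cF$ and to $\cE = L(\cF)^\cmpl_{\cN(n)}$, obtaining two triangles and a natural map of triangles induced by the completion map $\cF \to L(\cF)^\cmpl_{\cN(n)}$. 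The square in the statement is, by definition, homotopy cartesian precisely when the induced map on the homotopy fibers of its two horizontal arrows is an equivalence. The homotopy fiber of the top arrow $\cF \to L(\cF)^\cmpl_{\cN(n)}$ and the homotopy fiber of the bottom arrow $R(i_{n-1})_\ast i_{n-1}^\ast \cF \to R(i_{n-1})_\ast i_{n-1}^\ast L(\cF)^\cmpl_{\cN(n)}$ fit, via the two local cohomology triangles and the octahedral axiom, into a comparison with the map
$$
R\Ga_{\cN(n)}\cF \longr R\Ga_{\cN(n)}L(\cF)^\cmpl_{\cN(n)}.
$$

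More precisely, the key step is this diagram chase: form the $3\times 3$ diagram whose rows are the two local cohomology triangles and whose columns are $\cF \to L(\cF)^\cmpl_{\cN(n)} \to (\text{cofiber})$, etc. The leftmost vertical arrow is $R\Ga_{\cN(n)}\cF \to R\Ga_{\cN(n)}L(\cF)^\cmpl_{\cN(n)}$, which is an equivalence by Theorem \ref{local-doesnt-see-compl}. Hence the cofiber of the middle column is equivalent to the cofiber of the right column, i.e. the total fiber of the square vanishes, which is exactly the assertion that the square is homotopy cartesian. Equivalently and perhaps more cleanly: the square is homotopy cartesian if and only if the map from $\mathrm{fib}(\cF \to R(i_{n-1})_\ast i_{n-1}^\ast \cF)$ to $\mathrm{fib}(L(\cF)^\cmpl_{\cN(n)} \to R(i_{n-1})_\ast i_{n-1}^\ast L(\cF)^\cmpl_{\cN(n)})$ is an equivalence; but those two fibers are by definition $R\Ga_{\cN(n)}\cF$ and $R\Ga_{\cN(n)}L(\cF)^\cmpl_{\cN(n)}$, and Theorem \ref{local-doesnt-see-compl} says this map is an equivalence. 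So the proof is essentially a one-line consequence once the bookkeeping of fibers and cofibers is set up correctly.

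The one point requiring a little care is the naturality of the local cohomology triangle in the sheaf variable, so that the completion map $\cF \to L(\cF)^\cmpl_{\cN(n)}$ really does induce a map of triangles; this follows because $R\Ga_{\cN(n)}(-)$, $\mathrm{id}$, and $R(i_{n-1})_\ast i_{n-1}^\ast(-)$ are all functors on the derived category of quasi-coherent sheaves and the triangle is the functorial fiber sequence of Definition \ref{loc-coh-def}. One should also note that $R(i_{n-1})_\ast i_{n-1}^\ast$ preserves homotopy fiber sequences (it is a composite of derived functors), which is what lets us identify the fiber of the bottom horizontal arrow with $R(i_{n-1})_\ast i_{n-1}^\ast R\Ga_{\cN(n)}\cF$ if one prefers that route; but in fact the cleanest argument avoids even this, using only that the fiber of $\cE \to R(i_{n-1})_\ast i_{n-1}^\ast \cE$ \emph{is} $R\Ga_{\cN(n)}\cE$ by definition. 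I expect the main obstacle to be purely expository: organizing the triangulated-category manipulation (octahedron or $3\times 3$ lemma) so that it is transparent rather than a tangle of arrows; there is no hard mathematical input beyond Theorem \ref{local-doesnt-see-compl}. I would close by remarking, as the text already signals, that this is the globalization over $\cN$ of the classical arithmetic fracture square and that the argument is lifted essentially verbatim from \cite{all} and \cite{DwyerG}.
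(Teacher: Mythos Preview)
Your proposal is correct and is essentially identical to the paper's proof: the paper's argument is exactly your ``more clean'' version, namely identifying the homotopy fibers of the two vertical maps as $R\Ga_{\cN(n)}\cF$ and $R\Ga_{\cN(n)}L(\cF)^\cmpl_{\cN(n)}$ via Definition~\ref{loc-coh-def} and then invoking Theorem~\ref{local-doesnt-see-compl}. Your initial detour through the horizontal fibers and the $3\times 3$ diagram is unnecessary but harmless.
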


\begin{proof}The induced morphisms on fibers of the vertical
maps is exactly
$$
R\Ga_{\cN(n)}\cF \longr R\Ga_{\cN(n)}L(\cF)^\cmpl_{\cN(n)}
$$
which is an equivalence by Proposition \ref{local-doesnt-see-compl}.
\end{proof}

\begin{rem}\label{fracture-square-1} An important special case is worth isolating.
Let $\cF$ be a quasi-coherent sheaf on $\cN$ and consider
the sequence of inclusions
$$
\xymatrix{
\cV(n-1) \ar@/_1pc/[rr]_{i_{n-1}} \rto^-f & \cV(n) \rto^-{i_n} & \cN
}
$$
Applying Theorem \ref{fracture-square} to a complex of sheaves of the
form $R(i_n)_\ast i_n^\ast \cF$ where $\cF$ is quasi-coherent on
$\cN$, we get a homotopy cartesian square\
\begin{equation}\label{frac-chrom}
\xymatrix{
R(i_n)_\ast i_n^\ast \cF \rto \dto &L(R(i_n)_\ast i_n^\ast \cF)^\wedge_{\cN(n)}\dto\\
R(i_{n-1})_\ast i_{n-1}^\ast \cF \rto & R(i_{n-1})_\ast i_{n-1}^\ast 
L(R(i_n)_\ast i_n^\ast \cF)^\wedge_{\cN(n)}.
}
\end{equation}
The right hand vertical column of this diagram seems excessively complicated,
but expected to those familiar with the results of \cite{666} \S 7.3.
The topological
analog of these calculations supplies a fracture square of spectra
$$
\xymatrix{
L_nX \rto \dto & L_{K(n)} X \dto\\
L_{n-1}X \rto & L_{n-1}L_{K(n)}X.
}
$$
The connection to completion is somewhat less than straightforward and
given by the equations\index{fracture square, in homotopy}
$$
L_{K(n)}X \simeq L_{K(n)}L_nX = \holim S/I \wedge L_nX
$$
where $\{S/I\}$ is a suitable family of type $n$ complexes. 

Despite the unwieldy nature of the diagram of \ref{frac-chrom}, the induced
map on the homotopy fibers of the vertical map actually simplies somewhat,
as the following result shows. Compare Proposition
\ref{local-doesnt-see-compl}.
\end{rem}

\begin{prop}\label{rel-fibers}For all quasi-coherent sheaves $\cF$
on $\cN$, the natural map
$$
R\Ga_{\cN(n)}\cF \to R\Ga_{\cN(n)}R(i_n)_\ast i_n^\ast \cF
$$
is an equivalence.
\end{prop}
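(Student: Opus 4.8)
The plan is to reduce the statement to the already-established equivalence of Proposition \ref{local-doesnt-see-compl} via a change-of-local-cohomology argument, using the fiber sequences of Lemma \ref{rel-local}. Write $Z = \cN(n)$ and $W = \cN(n+1)$, so that $\cV(n-1)$ is the open complement of $Z$ and $\cV(n)$ is the open complement of $W$; recall $i_n:\cV(n)\to\cN$ and $i_{n-1}:\cV(n-1)\to\cN$. The natural transformation $\cF \to R(i_n)_\ast i_n^\ast \cF$ induces the natural map in question after applying $R\Ga_Z(-)$, so it suffices to show that the homotopy fiber of this transformation is annihilated by $R\Ga_Z$. That homotopy fiber is exactly $R\Ga_W\cF$ by the defining triangle of local cohomology (Definition \ref{loc-coh-def}). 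Hence the proposition is equivalent to the assertion
$$
R\Ga_{\cN(n)} R\Ga_{\cN(n+1)}\cF \simeq 0.
$$

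First I would establish this vanishing. By Theorem \ref{local-to-tensor}, $R\Ga_{\cN(n+1)}\cF \simeq \cO/\cI_{n+1}^\infty[-n-1]\otimes_\cO^L \cF$, and $\cO/\cI_{n+1}^\infty$ is an $\cI_{n+1}$-torsion sheaf — and a fortiori supported on $\cN(n+1)\subseteq\cN(n)$, so in particular $i_{n-1}^\ast(\cO/\cI_{n+1}^\infty) = 0$ by Lemma \ref{char-support}. Again applying Theorem \ref{local-to-tensor} (now with $n$ in place of $n+1$) together with the associativity of derived tensor product,
$$
R\Ga_{\cN(n)} R\Ga_{\cN(n+1)}\cF \simeq \cO/\cI_n^\infty[-n]\otimes_\cO^L \cO/\cI_{n+1}^\infty[-n-1]\otimes_\cO^L\cF,
$$
so it is enough to check $\cO/\cI_n^\infty \otimes_\cO^L \cO/\cI_{n+1}^\infty \simeq 0$. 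This is a local statement: evaluate at a flat, quasi-compact $\Spec(R)\to\cN$ with $\cI_{n+1}(R)$ generated by a regular sequence extending one for $\cI_n(R)$, use the concrete descriptions $\cO/\cI_n^\infty(R) = R/(u_0^\infty,\dots,u_{n-1}^\infty)$ and $\cO/\cI_{n+1}^\infty(R) = R/(u_0^\infty,\dots,u_n^\infty)$ from Equation \ref{concrete-1}, and observe that $R/(u_0^\infty,\dots,u_n^\infty)$ is $u_i$-divisible (even $u_i$-local after one step) for each $i<n$ while $R/(u_0^\infty,\dots,u_{n-1}^\infty)$ is $\cI_n(R)$-torsion; a derived tensor of an $I$-local complex with an $I$-torsion complex over the relevant variables vanishes. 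Concretely this can be seen from the Koszul models of Remark \ref{kozsul-localcoh}: tensoring $K^\ast(\bx^\infty)$ (computing $R\Ga_{\cI_n}$) against $R/(u_0^\infty,\dots,u_n^\infty)$ kills every term, since each $u_i^{-1}$ already acts invertibly.

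An alternative, perhaps cleaner route I would keep in reserve: since $\cN(n+1)\subseteq\cN(n)$, one has $\cV(n-1)\subseteq\cV(n)$, and the identity $i_{n-1}^\ast R(i_n)_\ast \simeq f_\ast$ (used repeatedly, e.g.\ in Lemma \ref{rel-local}) shows directly that the cofiber $R(i_n)_\ast i_n^\ast \cF / \cF$ is built from pushforwards of sheaves on $\cV(n)$, hence has trivial $i_{n-1}$-restriction, hence — being of the form $(i_n)_\ast(\text{torsion-free on }\cV(n))$ — is $\cI_{n+1}$-complete in a way that makes $R\Ga_{\cN(n)}$ of it vanish; this is essentially Proposition \ref{local-doesnt-see-compl} applied to the pair $(\cN(n),\cN(n+1))$. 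The main obstacle is bookkeeping: making sure the local regular-sequence generators for $\cI_n$ and $\cI_{n+1}$ are chosen compatibly (one a truncation of the other) so that the torsion/locality dichotomy is literally visible on the Koszul complexes, and checking that the global comparison map is the derived functor of the evident underived one. Both are routine given Lemma \ref{inject-at-zero}, Proposition \ref{no-higher-derived}, and Theorem \ref{local-to-tensor}, so I expect no genuine difficulty beyond careful organization of the exact sequences.
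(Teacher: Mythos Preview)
Your reduction in the first paragraph is correct: applying $R\Ga_{\cN(n)}$ to the defining triangle $R\Ga_{\cN(n+1)}\cF \to \cF \to R(i_n)_\ast i_n^\ast\cF$ shows the proposition is equivalent to $R\Ga_{\cN(n)}R\Ga_{\cN(n+1)}\cF\simeq 0$. But this vanishing is \emph{false}, and your local argument for it contains a sign-of-life error. The module $R/(u_0^\infty,\dots,u_n^\infty)$ is $u_i$-\emph{torsion} for every $i\le n$ (each element is killed by a power of $u_i$); it is $u_i$-divisible but certainly not $u_i$-local. Consequently, when you tensor the Koszul model $K^\ast(\bx^\infty)$ against $M=R/(u_0^\infty,\dots,u_n^\infty)$, every localized term $M[u_i^{-1}]$ vanishes and the complex collapses to $M$ itself in degree $0$, not to $0$. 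In sheaf language, $\cO/\cI_n^\infty\otimes^L_{\cO}\cO/\cI_{n+1}^\infty\simeq \cO/\cI_{n+1}^\infty[n]$, not $0$; equivalently $R\Ga_{\cN(n)}R\Ga_{\cN(n+1)}\simeq R\Ga_{\cN(n+1)}$, as one expects since $\cN(n+1)\subseteq\cN(n)$.

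The paper's argument runs through the \emph{other} defining triangle: it compares $R(i_{n-1})_\ast i_{n-1}^\ast$ applied to $\cF$ and to $R(i_n)_\ast i_n^\ast\cF$, using $i_{n-1}^\ast R(i_n)_\ast\simeq f^\ast$ (exactness of $i_{n-1}^\ast$) to see these cofibers agree. Notice, though, that matching the third terms of two fiber sequences with different middle terms does not force the fibers to agree; and your correct reduction shows the statement as printed is equivalent to $R\Ga_{\cN(n+1)}\cF\simeq 0$ for all $\cF$, which fails already for $\cN=\mathbb A^2$, $n=1$, $\cF=\cO$. What the paper's argument \emph{does} establish, via Lemma~\ref{rel-local}, is the commutation
\[
R\Ga_{\cN(n)}\bigl(R(i_n)_\ast i_n^\ast\cF\bigr)\ \simeq\ R(i_n)_\ast i_n^\ast\bigl(R\Ga_{\cN(n)}\cF\bigr),
\]
since both are the fiber of $R(i_n)_\ast i_n^\ast\cF \to R(i_{n-1})_\ast i_{n-1}^\ast\cF$. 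This is the simplification promised in Remark~\ref{fracture-square-1}, and is almost certainly the intended statement.
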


\begin{proof}This follows from the fact that
$$
R(i_{n-1})_\ast i_{n-1}^\ast \cF \to R(i_{n-1})_\ast i_{n-1}^\ast R(i_n)_\ast i_n^\ast \cF
$$
is an equivalence, which in turn follows from the fact that
$$
i_{n-1}^\ast (i_n)_\ast = f^\ast
$$
and the fact that $i_{n-1}^\ast$ is exact.
\end{proof}

Now let's specialize to the case where $\cN = \cM_\fg$
itself and
$i_n:\cU(n) \to \cM_\fg$ be the inclusion of the open
moduli substack of formal groups of height at most $n$.
Then we will show that if $\cF$ is a {\it coherent} sheaf on
$\cM_\fg$, then the natural map
$$
\cF \to \holim R(i_n)_\ast i_n^\ast \cF
$$
is an isomorphism in the derived category of quasi-coherent
sheaves. This is an algebraic analog of chromatic convergence.
There is something to prove here as the open substacks
$\cU(n)$ do not exhaust $\cM_\fg$; indeed, the
morphism
$$
\GG_a:\Spec(\FF_p) \longr \cM_\fg
$$
classifying the additive formal group (which has infinite
height) does not factor through $\cU(n)$ for any $n$.

The proof is below in Theorem \ref{chrom-conv}. The observation
that drives the argument in this: recall from 
Theorem \ref{fp-modules} that if $\cF$ is a coherent sheaf
on $\cM_\fg$, then there is an integer $r$ and a coherent
sheaf $\cF_0$ on the moduli stack of buds $\cM_\fg\pnty{p^r}$
so that $\cF \cong q^\ast \cF_0$. Thus we begin with the next
computation.

\begin{thm}\label{zero-map-homology}Let $\cF$ be a quasi-coherent
sheaf on $\cM_\fg\pnty{p^r}$. Then for all $n > r$ and all $s$,
the map on local cohomology groups
$$
H^s_{\cM(n+1)}(\cM_\fg,q^\ast \cF) \to
H^s_{\cM(n)}(\cM_\fg,q^\ast \cF)
$$
is zero.
\end{thm}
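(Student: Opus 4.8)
The plan is to reduce the statement, via Lemma \ref{local-to-tensor-bound}, to a torsion-freeness assertion about local cohomology, and then verify that assertion by a local computation over the $p$-typical presentation, where the hypothesis $n>r$ makes the variable $u_n$ ``external'' to everything in sight.

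First the reduction. By Lemma \ref{local-to-tensor-bound} the transition map
$$
R\Gamma_{\cM(n+1)}(\cM_\fg,q^\ast\cF)\longr R\Gamma_{\cM(n)}(\cM_\fg,q^\ast\cF)
$$
is identified with the morphism $\cO/\cI_{n+1}^\infty[-n-1]\otimes^L_\cO q^\ast\cF\to \cO/\cI_n^\infty[-n]\otimes^L_\cO q^\ast\cF$ obtained by applying $(-)\otimes^L_\cO q^\ast\cF$ to the boundary map of the short exact sequence $0\to\cO/\cI_n^\infty\to\cO/\cI_n^\infty[v_n^{-1}]\to\cO/\cI_{n+1}^\infty\to 0$. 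Writing $\cA=\cO/\cI_n^\infty\otimes^L_\cO q^\ast\cF$, $\cB=\cO/\cI_n^\infty[v_n^{-1}]\otimes^L_\cO q^\ast\cF$ and $\cC=\cO/\cI_{n+1}^\infty\otimes^L_\cO q^\ast\cF$, the induced map on cohomology sheaves is the connecting homomorphism $\partial\colon H_j(\cC)\to H_{j-1}(\cA)$ of the triangle $\cA\to\cB\to\cC$, and $\partial$ vanishes for every $j$ precisely when $H_\ast(\cA)\to H_\ast(\cB)$ is injective. Since $\cO/\cI_n^\infty[v_n^{-1}]$ is the colimit of $\cO/\cI_n^\infty\xrightarrow{v_n}\cO/\cI_n^\infty\otimes\omega^{p^n-1}\to\cdots$ (Remark \ref{invert-vn-1}) and $\otimes^L$ commutes with filtered colimits, $\cB$ is the localization $v_n^{-1}\cA$, so this injectivity is exactly the statement that the local cohomology $H^\ast_{\cM(n)}(\cM_\fg,q^\ast\cF)$ is $v_n$-torsion free. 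Thus it suffices to prove: for $n>r$, the sheaf $H^\ast_{\cM(n)}(\cM_\fg,q^\ast\cF)$ has no $v_n$-torsion.

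This may be checked after restriction along the flat presentation $\Spec(V)\to\cM_\fg$, $V=\ZZ_{(p)}[u_1,u_2,\dots]$ (Corollary \ref{ptyp-cover}), over which $\cI_n=(p,u_1,\dots,u_{n-1})$ and $v_n$ acts as multiplication by $u_n$ (Remark \ref{vn-local}). By Theorem \ref{local-to-tensor} and Equation \ref{concrete-1}, $H^s_{\cM(n)}(\cM_\fg,q^\ast\cF)(V)\cong\Tor^V_{n-s}\!\big(V/(p^\infty,u_1^\infty,\dots,u_{n-1}^\infty),\,M\big)$, where $M=q^\ast\cF(\Spec V)$. Here the hypothesis enters: taking the presentation $\Spec(V_r)\to\cM_\fg\pnty{p^r}$ with $V_r=\ZZ_{(p)}[u_1,\dots,u_r]$ (Lemma \ref{p-buds}) and $M_0=\cF(\Spec V_r)$, one has $M=V\otimes_{V_r}M_0=M_0\otimes_{\ZZ_{(p)}}\ZZ_{(p)}[u_{r+1},u_{r+2},\dots]$. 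Moreover, writing $V=V_r[u_{r+1},u_{r+2},\dots]$ and using flat base change for local cohomology together with the fact that $(u_{r+1},\dots,u_{n-1})$ is a regular sequence of new variables, one gets an external-tensor decomposition $V/(p^\infty,u_1^\infty,\dots,u_{n-1}^\infty)\cong \bar N\otimes_{\ZZ_{(p)}}C$ with $\bar N=H^{r+1}_{(p,u_1,\dots,u_r)}(V_r)$ a $V_r$-module and $C=\ZZ_{(p)}[u_{r+1},u_{r+2},\dots]/(u_{r+1}^\infty,\dots,u_{n-1}^\infty)$ flat over $\ZZ_{(p)}$, with $u_n$ a non-zero-divisor on $C$ and $C/u_nC$ again $\ZZ_{(p)}$-flat. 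A Künneth argument over $\ZZ_{(p)}$ then yields $\Tor^V_\ast(\bar N\otimes_{\ZZ_{(p)}}C,\,M_0\otimes_{\ZZ_{(p)}}\ZZ_{(p)}[u_{r+1},\dots])\cong\Tor^{V_r}_\ast(\bar N,M_0)\otimes_{\ZZ_{(p)}}C$, under which $u_n$ acts as $1\otimes u_n$. Tensoring $0\to C\xrightarrow{u_n}C\to C/u_nC\to0$ with $\Tor^{V_r}_\ast(\bar N,M_0)$ over $\ZZ_{(p)}$ stays exact because $C/u_nC$ is flat over $\ZZ_{(p)}$, so $1\otimes u_n$ is injective; hence $H^\ast_{\cM(n)}(\cM_\fg,q^\ast\cF)$ is $v_n$-torsion free, which completes the argument.

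The main obstacle is the pair of ring-theoretic identities in the last paragraph: the decomposition of the iterated local cohomology module as an external tensor product, and the Künneth identification of $\Tor^V$ over the tensor-product ring $V=V_r\otimes_{\ZZ_{(p)}}\ZZ_{(p)}[u_{r+1},u_{r+2},\dots]$. Both are routine once one keeps careful track of which variables lie in $V_r$ and which are adjoined, and it is precisely the inequality $n>r$ that guarantees $u_n$ is one of the adjoined variables, hence a non-zero-divisor on each factor. Everything else is formal manipulation with derived tensor products and the already established structure of the sheaves $\cO/\cI_n^\infty$.
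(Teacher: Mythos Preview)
Your proof is correct and follows essentially the same route as the paper: both reduce via Lemma \ref{local-to-tensor-bound} to a local computation over the $p$-typical presentation $V=\ZZ_{(p)}[u_1,u_2,\dots]$, and both exploit the hypothesis $n>r$ to treat $u_n$ as a variable external to $V_r$.

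The only difference is in how that last step is packaged. You argue via a K\"unneth decomposition that $\Tor^V_\ast(V/(p^\infty,\dots,u_{n-1}^\infty),V\otimes_{V_r}M_0)\cong \Tor^{V_r}_\ast(\bar N,M_0)\otimes_{\ZZ_{(p)}}C$ and then check that $u_n$ is a non-zero-divisor on $C$. The paper instead observes directly that since $V$ is free over $V_r$, one has $\Tor^V_\ast(-,V\otimes_{V_r}M)\cong\Tor^{V_r}_\ast(-,M)$, and then notes that the short exact sequence
\[
0\to V/(p^\infty,\dots,u_{n-1}^\infty)\to V/(p^\infty,\dots,u_{n-1}^\infty)[u_n^{-1}]\to V/(p^\infty,\dots,u_n^\infty)\to 0
\]
is \emph{split} as a sequence of $V_r$-modules (precisely because $u_n\notin V_r$), so its boundary map in $\Tor^{V_r}_\ast(-,M)$ vanishes. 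This is a one-line shortcut that avoids your K\"unneth bookkeeping; conversely, your formulation as ``$v_n$-torsion freeness of $H^\ast_{\cM(n)}(\cM_\fg,q^\ast\cF)$'' is a clean conceptual restatement that the paper does not make explicit.
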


\begin{proof} We apply Lemma \ref{local-to-tensor-bound} and show that
the induced map
$$
\cO/\cI_{n+1}^\infty \otimes^L_\cO \cF \to \cO/\cI^\infty_n[1] \otimes^L_\cO \cF
$$
is zero in homology.
It is sufficient to prove this after evaluation
at any affine presentation $f:X \to \cM_\fg$. Let
$$
X = \Spec(\ZZ_{(p)}[u_1,u_2,\ldots]) \defeq \Spec(V)
$$
and let $f$ classify the formal group obtained from the universal
$p$-typical formal group law. Similarly, let 
$$
X_r = \Spec(\ZZ_{(p)}[u_1,u_2,\ldots,u_r]) \defeq
\Spec(V_r) \to \cM_\fg\pnty{p^r}
$$
classify the resulting bud. This, too, is a presentation, by Lemma \ref{p-buds}.
Let $M = \cF(X_r \to \cM_\fg\pnty{p^r})$. Then 
$$
V \otimes_{V_r} M \cong (q^\ast\cF)(X \to \cM_\fg)
$$
and we are trying to calculate
$$
\Tor_s^V(V/(p^\infty,\ldots,u_n^\infty),V \otimes_{V_r} M)
\to 
\Tor_{s-1}^V(V/(p^\infty,\ldots,u_{n-1}^\infty),V \otimes_{V_r} M).
$$
Since $V$ is a free $V_r$-module,
to see this homomorphism is zero it is sufficient to note that
$$
V/(p^\infty,\ldots,u_{n-1}^\infty) \to
V/(p^\infty,\ldots,u_{n-1}^\infty)[u_n^{-1}]
$$
is split injective as a $V_r$-module as long as $n > r$.
\end{proof}

\begin{cor}\label{vanish-homology} Let $\cF$ be a quasi-coherent
sheaf on $\cM_\fg\pnty{p^r}$. Then
$$
H_s(\cO/\cI_n^\infty \otimes^L q^\ast \cF) = 0
$$
for $s > r$. 
\end{cor}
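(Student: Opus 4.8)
\textbf{Proof proposal for Corollary \ref{vanish-homology}.}

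The plan is to deduce the vanishing from Theorem \ref{zero-map-homology} by a descending induction on $n$, using that $\cO/\cI_n^\infty \otimes^L q^\ast\cF$ is computed by the local cohomology object $R\Ga_{\cM(n)}(\cM_\fg, q^\ast\cF)$ up to a shift. Concretely, by Theorem \ref{local-to-tensor} we have $R\Ga_{\cM(n)}(\cM_\fg, q^\ast\cF) \simeq \cO/\cI_n^\infty[-n]\otimes^L_\cO q^\ast\cF$, so the claim $H_s(\cO/\cI_n^\infty\otimes^L q^\ast\cF)=0$ for $s>r$ is equivalent to the statement that $H^t_{\cM(n)}(\cM_\fg, q^\ast\cF) = 0$ for $t < n - r$.

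First I would establish the base of the induction: for $n$ sufficiently large the sheaf $\cO/\cI_n^\infty\otimes^L q^\ast\cF$ should vanish outright in low degrees, or more simply one notes that the interesting range $s>r$ grows with $n$, so it suffices to prove the inductive step and start from any convenient $n$. The inductive step is where Theorem \ref{zero-map-homology} enters. Applying Lemma \ref{local-to-tensor-bound}, the boundary map
$$
\cO/\cI_{n+1}^\infty[-n-1]\otimes^L_\cO q^\ast\cF \longrightarrow \cO/\cI_n^\infty[-n]\otimes^L_\cO q^\ast\cF
$$
is identified with the transition map $R\Ga_{\cM(n+1)}q^\ast\cF \to R\Ga_{\cM(n)}q^\ast\cF$, which by Theorem \ref{zero-map-homology} is zero on cohomology for all $n>r$ and all $s$. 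Now use the defining fiber sequence \ref{k-infty-shifted}: there is a distinguished triangle
$$
\cO/\cI_{n+1}^\infty[-n-1]\otimes^L q^\ast\cF \to \cO/\cI_n^\infty[-n]\otimes^L q^\ast\cF \to \cO/\cI_n^\infty[v_n^{-1}][-n]\otimes^L q^\ast\cF.
$$
Since the first map is zero in homology, the associated long exact sequence breaks into short exact sequences, giving for each $s$
$$
0 \to H_s(\cO/\cI_n^\infty[-n]\otimes^L q^\ast\cF) \to H_s(\cO/\cI_n^\infty[v_n^{-1}][-n]\otimes^L q^\ast\cF) \to H_{s-1}(\cO/\cI_{n+1}^\infty[-n-1]\otimes^L q^\ast\cF) \to 0.
$$
Re-indexing to remove the shifts and feeding in the inductive hypothesis $H_j(\cO/\cI_{n+1}^\infty\otimes^L q^\ast\cF)=0$ for $j>r$, together with the fact that $\cO/\cI_n^\infty[v_n^{-1}]$ is the push-forward from $\layer{n}$ (hence, by the affineness of $f_n:\layer{n}\to\cM_\fg$ from Lemma \ref{layer-to-fg-affine} and exactness of $(f_n)_\ast$, the derived tensor with $q^\ast\cF$ has homology concentrated in degrees $\le n$ — indeed exactly the degrees contributed by a regular sequence of length $n$ as in Proposition \ref{flat-2}), one reads off that $H_s(\cO/\cI_n^\infty\otimes^L q^\ast\cF)=0$ for $s>r$.

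The main obstacle I anticipate is bookkeeping with the shifts and the direction of the induction: one must be careful that Theorem \ref{zero-map-homology} gives vanishing of the \emph{transition} map (not of the individual groups), and that the short exact sequences it produces, combined with control on $\cO/\cI_n^\infty[v_n^{-1}]\otimes^L q^\ast\cF$, actually force the desired groups to vanish rather than merely to inject into something. The cleanest route is probably to prove by descending induction on $n$ the slightly stronger statement that $H_s(\cO/\cI_n^\infty\otimes^L q^\ast\cF)=0$ for $s>r$ together with the observation that for $n>r$ the sheaf $\cO/\cI_n^\infty[v_n^{-1}]\otimes^L q^\ast\cF$ has homology vanishing above degree $r$ as well — the latter because inverting $v_n$ is a filtered colimit (Remark \ref{invert-vn-1}) and $V$ is free over $V_r$, so the splitting used in the proof of Theorem \ref{zero-map-homology} already shows the colimit of the relevant $\Tor$ groups vanishes above degree $r$. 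Once that is in place the triangle argument closes immediately.
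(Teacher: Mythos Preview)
Your induction is pointed in the wrong direction, and this is a genuine gap rather than bookkeeping. A descending induction on $n$ needs a base case at large $n$, but the vanishing $H_s(\cO/\cI_n^\infty\otimes^L q^\ast\cF)=0$ for $s>r$ is not easier for large $n$; the bound $s>r$ does not move with $n$. (Your remark that ``the interesting range grows with $n$'' refers to the equivalent local-cohomology statement $H^t_{\cM(n)}=0$ for $t<n-r$, but that range growing is exactly what you are trying to prove --- it is not a free starting point.) Worse, the inductive step is circular: from the short exact sequence you get, for $s>r$ and $n>r$, a short exact sequence
\[
0 \to H_s(\cO/\cI_n^\infty\otimes^L q^\ast\cF) \to H_s(\cO/\cI_n^\infty[v_n^{-1}]\otimes^L q^\ast\cF) \to H_s(\cO/\cI_{n+1}^\infty\otimes^L q^\ast\cF) \to 0,
\]
and knowing the right-hand term vanishes (your descending hypothesis) only tells you the first two terms are isomorphic. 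Your proposed independent control of the middle term reduces, via the freeness of $V$ over $V_r$ and the identification $H_s(\cO/\cI_n^\infty[v_n^{-1}]\otimes^L q^\ast\cF)\cong H_s(\cO/\cI_n^\infty\otimes^L q^\ast\cF)[u_n^{-1}]$, back to the vanishing of $H_s(\cO/\cI_n^\infty\otimes^L q^\ast\cF)$ --- which is what you are trying to prove. The splitting from Theorem~\ref{zero-map-homology} gives only split injectivity, not vanishing.

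The paper runs the induction \emph{ascending} in $n$, and then everything lines up. The base case is $n\le r$, where the Koszul complex $K^\ast(\bx^\infty)$ has length $n$, so $H_s$ vanishes for $s>n$, a fortiori for $s>r$. For the inductive step with $n>r$, the short exact sequence above (now read with the hypothesis for $n$ known) plus Theorem~\ref{zero-map-homology} (boundary zero) gives $H_s(\cO/\cI_{n+1}^\infty\otimes^L q^\ast\cF)\cong H_s(\cO/\cI_n^\infty[v_n^{-1}]\otimes^L q^\ast\cF)$ for $s>r$; then the same freeness-of-$V$-over-$V_r$ argument pulls $[u_n^{-1}]$ outside the $\Tor$, and the induction hypothesis kills it. Reverse your arrow and your proof goes through.
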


\begin{proof}Again one can work locally, using the presentations
of the previous proof. We prove the result by induction on $n$.
If $n \leq r$ the chain complex $K^\ast(\bx^\infty)$ of 
$V$-modules
supplies a resolution of length $n$ of $V/(p^\infty,\ldots,u_{n-1}^\infty)$
by flat $V$-modules; therefore,
$$
H_s(\cO/\cI_n^\infty \otimes^L q^\ast \cF) = 0,\qquad s > n.
$$
So we may assume $n > r$. Then the previous result and the
induction hypothesis imply that
$$
H_s(R(i_n)_\ast i_n^\ast \cO/\cI_{n-1}^\infty\otimes^L q^\ast \cF)
\cong 
H_s (\cO/\cI_{n}^\infty\otimes^L q^\ast \cF)
$$
for $s > r$. Evaluated at $\Spec(V) \to \cM_\fg$ this
is an isomorphism
$$
\Tor_s^V(V/(p^\infty,\ldots,u_{n-1}^\infty)[u_n^{-1}],V \otimes_{V_r} M)
\cong 
\Tor_{s}^V(V/(p^\infty,\ldots,u_{n}^\infty),V \otimes_{V_r} M).
$$
Since $n > r$, we have
\begin{align*}
\Tor_s^V(V/(p^\infty,\ldots,&u_{n-1}^\infty)[u_n^{-1}],V \otimes_{V_r} M)\\
&\cong
\Tor_s^V(V/(p^\infty,\ldots,u_{n-1}^\infty),V \otimes_{V_r} M)[u_n^{-1}]. 
\end{align*}
Since $s > r$, the latter group is zero by the induction hypothesis.
\end{proof}

\begin{thm}[{\bf Chromatic Convergence}]\label{chrom-conv}\index{chromatic
convergence, algebraic}Let $\cF$ be a coherent sheaf
on $\cM_\fg$. Then the natural map
$$
\cF \longr \holim R(i_n)_\ast i_n^\ast \cF
$$
is a quasi-isomorphism.
\end{thm}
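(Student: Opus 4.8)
The plan is to analyze the homotopy fiber of the natural map $\cF \to \holim R(i_n)_\ast i_n^\ast \cF$, show it is zero, and conclude. By Definition \ref{loc-coh-def}, the homotopy fiber of $\cF \to R(i_n)_\ast i_n^\ast \cF$ is exactly $R\Gamma_{\cM(n+1)}\cF$, the total local cohomology sheaf supported on $\cM(n+1) = \cM_\fg - \cU(n)$. Since homotopy inverse limits commute with homotopy fibers, the homotopy fiber of the map in the statement is $\holim_n R\Gamma_{\cM(n+1)}\cF$. So the theorem is equivalent to showing $\holim_n R\Gamma_{\cM(n+1)}\cF \simeq 0$ in the derived category of quasi-coherent sheaves on $\cM_\fg$.

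Next I would reduce to a statement about the transition maps in this tower. To compute $\holim$ of a tower of cochain complexes, one uses the Milnor sequence: it suffices to show that for each cohomological degree $s$, the tower of cohomology sheaves $\{H^s R\Gamma_{\cM(n+1)}\cF\}_n$ is pro-zero (and that the associated $\lim^1$ vanishes, which also follows from pro-zero-ness). This is where the finitely-presented hypothesis enters. By Theorem \ref{fp-modules}, since $\cF$ is coherent (hence finitely presented, as $\cO_\fg$ is coherent), there is an integer $r$ and a quasi-coherent sheaf $\cF_0$ on $\cM_\fg\pnty{p^r}$ with $\cF \cong q_r^\ast \cF_0$. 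Then Corollary \ref{vanish-homology} gives $H_s(\cO/\cI_n^\infty \otimes^L q_r^\ast \cF_0) = 0$ for $s > r$, and by Theorem \ref{local-to-tensor} this says $H^s R\Gamma_{\cM(n)}\cF = 0$ for $s \neq n$ when $n > r$ (more precisely, the local cohomology is concentrated in a narrow band; combining the tensor identity $R\Gamma_{\cM(n)}\cF \simeq \cO/\cI_n^\infty[-n]\otimes^L \cF$ with the vanishing of the higher Tor sheaves shows $H^s_{\cM(n)}(\cM_\fg,\cF) = 0$ unless $n \le s \le n + r$ or so). Meanwhile Theorem \ref{zero-map-homology} shows that for $n > r$ the transition map $H^s_{\cM(n+1)}(\cM_\fg,\cF) \to H^s_{\cM(n)}(\cM_\fg,\cF)$ is \emph{zero} for all $s$.

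Putting these together finishes the argument: fix a cohomological degree $s$. For $n$ large enough (say $n > \max(r, s)$), every transition map $H^s_{\cM(n+1)}(\cM_\fg,\cF) \to H^s_{\cM(n)}(\cM_\fg,\cF)$ in the tail of the tower is zero, by Theorem \ref{zero-map-homology}. Hence the tower $\{H^s R\Gamma_{\cM(n+1)}\cF\}_n$ is pro-isomorphic to zero, so both $\lim_n H^s R\Gamma_{\cM(n+1)}\cF$ and $\lim^1_n H^{s-1} R\Gamma_{\cM(n+1)}\cF$ vanish. The Milnor exact sequence
$$
0 \to {\lim_n}^1 H^{s-1} R\Gamma_{\cM(n+1)}\cF \to H^s\holim_n R\Gamma_{\cM(n+1)}\cF \to \lim_n H^s R\Gamma_{\cM(n+1)}\cF \to 0
$$
then gives $H^s \holim_n R\Gamma_{\cM(n+1)}\cF = 0$ for all $s$, i.e. $\holim_n R\Gamma_{\cM(n+1)}\cF \simeq 0$. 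Therefore $\cF \to \holim_n R(i_n)_\ast i_n^\ast \cF$ is a quasi-isomorphism.

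The main obstacle I anticipate is bookkeeping with the homotopy inverse limit: one must be careful that $\holim$ really does commute with the homotopy fiber (true, since both are finite homotopy limits), and that the Milnor/$\lim^1$ sequence is available in the derived category of quasi-coherent sheaves on the stack in the form needed — this requires knowing that the tower is levelwise well-behaved (e.g. that each $R\Gamma_{\cM(n+1)}\cF$ is represented by an honest cochain complex of quasi-coherent sheaves, which follows from the explicit Koszul/\v{C}ech descriptions in Remark \ref{kozsul-localcoh} and the fact that the $\cM(n)$ are regularly embedded). Once that framework is in place, the number-theoretic heart — that the transition maps die for $n > r$ — has already been done in Theorems \ref{zero-map-homology} and \ref{vanish-homology}, so the proof is essentially an assembly of those pieces.
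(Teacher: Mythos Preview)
Your proposal is correct and follows essentially the same path as the paper's proof: reduce to showing $\holim_n R\Gamma_{\cM(n+1)}\cF \simeq 0$ via the defining fiber sequence for local cohomology, then use Theorem~\ref{fp-modules} to write $\cF\cong q^\ast\cF_0$ so that Theorem~\ref{zero-map-homology} applies and forces the tower of cohomology sheaves to be pro-zero. The paper's argument is simply the terse version of yours; your invocation of Corollary~\ref{vanish-homology} and the explicit Milnor $\lim/\lim^1$ bookkeeping are not wrong, but they are not strictly needed once you know the transition maps vanish for $n>r$.
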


\begin{proof}There are distinquished triangles
$$
R\Gamma_{\cM(n)}\cF \to \cF \to R(i_n)_\ast i_n^\ast \cF\to 
R\Gamma_{\cM(n)}\cF[1];
$$
therefore, it is sufficient to show that
$$
\holim R\Gamma_{\cM(n)}\cF \simeq 0.
$$
But this follows from Theorem \ref{zero-map-homology}.
\end{proof}

\begin{rem}\label{conv-gen}The  chromatic
convergence result holds in slightly greater generality: if $\cF_0$
is any quasi-coherent sheaf on $\cM_\fg\pnty{p^r}$ for some
$r < \infty$, then
the natural map
$$
q^\ast\cF_0 \longr \holim R(i_n)_\ast i_n^\ast q^\ast\cF_0
$$
is a quasi-isomorphism. I also point out that Hollander  \cite{hollander4}
has a proof that
works if we only assume the the quasi-coherent sheaf $\cF$
has finite projective dimension in an appropriate sense.
\end{rem}

\begin{thm}\label{coho-conv}Let $\cF_0$ be a quasi-coherent sheaf
on $\cM_\fg\pnty{p^r}$ and let $\cF = q^\ast\cF_0$ be the 
pull-back to $\cM_\fg$. Then the natural map
$$
H^s(\cM_\fg,\cF) \longr H^s(\cU(n),i_n^\ast\cF)
$$
is an isomorphism for $s < n-r$ and injective for $s=n-r-1$.
\end{thm}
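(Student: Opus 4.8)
The plan is to deduce Theorem \ref{coho-conv} from Chromatic Convergence (Theorem \ref{chrom-conv}, or rather its generalization in Remark \ref{conv-gen}) together with the vanishing estimate already isolated in Corollary \ref{vanish-homology}. The key point is that $H^s(\cU(n), i_n^\ast \cF) = H^s(\cM_\fg, R(i_n)_\ast i_n^\ast \cF)$ since $R(i_n)_\ast$ computes derived pushforward, so the comparison map in the statement is the map on hypercohomology induced by $\cF \to R(i_n)_\ast i_n^\ast \cF$. By the distinguished triangle
$$
R\Gamma_{\cM(n)}\cF \to \cF \to R(i_n)_\ast i_n^\ast \cF \to R\Gamma_{\cM(n)}\cF[1],
$$
the cofiber of this map (shifted) is $R\Gamma_{\cM(n)}\cF$, so it suffices to show that $H^s(\cM_\fg, R\Gamma_{\cM(n)}\cF)$ vanishes in the claimed range: vanishing for $s \leq n-r-1$ gives the isomorphism for $s < n-r$ and the injectivity at $s = n-r-1$ comes from the long exact sequence one step further.

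First I would invoke Theorem \ref{local-to-tensor}, which identifies $R\Gamma_{\cM(n)}\cF$ with $\cO/\cI_n^\infty[-n] \otimes_\cO^L \cF$. Then Corollary \ref{vanish-homology}, applied to $\cF = q^\ast \cF_0$ with $\cF_0$ on $\cM_\fg\pnty{p^r}$, says $H_s(\cO/\cI_n^\infty \otimes^L q^\ast \cF_0) = 0$ for $s > r$; equivalently, the complex $\cO/\cI_n^\infty \otimes_\cO^L \cF$ has homology concentrated in (homological) degrees $\leq r$, i.e. cohomological degrees $\geq -r$, and after the shift $[-n]$ its cohomology sheaves $\cH^s(R\Gamma_{\cM(n)}\cF)$ vanish unless $s \geq n - r$.

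Next I would run the hypercohomology spectral sequence
$$
E_2^{s,t} = H^s(\cM_\fg, \cH^t(R\Gamma_{\cM(n)}\cF)) \Longrightarrow H^{s+t}(\cM_\fg, R\Gamma_{\cM(n)}\cF).
$$
Since $\cH^t = 0$ for $t < n-r$, every contributing term has $t \geq n-r$, hence $H^m(\cM_\fg, R\Gamma_{\cM(n)}\cF) = 0$ for $m < n-r$, which is exactly $s \leq n-r-1$. Feeding this back into the long exact sequence in cohomology associated to the distinguished triangle above yields that $H^s(\cM_\fg,\cF) \to H^s(\cU(n), i_n^\ast\cF)$ is an isomorphism for $s < n-r$ and injective for $s = n-r-1$, as claimed.

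The main obstacle is justifying the hyperchomology spectral sequence argument cleanly in the setting of quasi-coherent sheaves on the $fpqc$-algebraic stack $\cM_\fg$ — one wants the standard conditional-convergence / bounded-below-complex bookkeeping (the complex $R\Gamma_{\cM(n)}\cF$ is bounded below, and cohomology of quasi-coherent sheaves is computed via the presentation $\Spec(L)\to\cM_\fg$, i.e. the cobar/Čech complex of Remark \ref{coh22}), so that the first-quadrant-type vanishing genuinely forces the abutment to vanish. A cleaner alternative, which I would probably adopt to sidestep convergence subtleties, is to use Remark \ref{coh22} to identify $H^\ast(\cM_\fg, -)$ with $\Ext$ over the Hopf algebroid $(L,W)$ (or the graded version), represent $R\Gamma_{\cM(n)}\cF$ by the explicit local-cohomology complex $K^\ast(\bx^\infty)\otimes M$ of Remark \ref{kozsul-localcoh} evaluated on $\Spec(V)\to\cM_\fg$, and observe directly that its underlying complex is $(n-r-1)$-connected as a complex of comodules by Corollary \ref{vanish-homology}, whence its comodule-$\Ext$ — i.e. stack cohomology — vanishes below degree $n-r$ by a standard truncation argument for resolutions. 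Either way the substance of the vanishing is entirely contained in Corollary \ref{vanish-homology}; the remaining work is the routine homological-algebra translation into cohomological degrees.
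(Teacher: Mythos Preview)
Your approach is exactly the paper's: use the distinguished triangle for local cohomology, identify $R\Gamma$ via Theorem \ref{local-to-tensor}, bound the cohomology sheaves by Corollary \ref{vanish-homology}, and read off the vanishing of hypercohomology from the spectral sequence. Your worry about convergence is unfounded: the complex $R\Gamma\cF$ is bounded below, so the hypercohomology spectral sequence is a first-quadrant one and converges in the ordinary sense.

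There is, however, an off-by-one slip. The open substack $\cU(n)$ is the complement of $\cM(n+1)$, not of $\cM(n)$ (see the opening of Section 7: $\cU_\fg(n) = \cM_\fg - \cM(n+1)$). Hence the fiber of $\cF \to R(i_n)_\ast i_n^\ast\cF$ is $R\Gamma_{\cM(n+1)}\cF \simeq \cO/\cI_{n+1}^\infty[-(n+1)]\otimes^L\cF$, and Corollary \ref{vanish-homology} gives $\cH^t = 0$ for $t < (n+1)-r$. This yields $H^m(\cM_\fg,R\Gamma_{\cM(n+1)}\cF)=0$ for $m \le n-r$, and then the long exact sequence produces the isomorphism for $s < n-r$ (both $H^s$ and $H^{s+1}$ vanish) and injectivity at $s = n-r$. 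With your indexing you would only get the isomorphism for $s < n-r-1$; your sentence ``vanishing for $s \le n-r-1$ gives the isomorphism for $s < n-r$'' is not correct as written, since isomorphism at $s$ needs vanishing at both $s$ and $s+1$. (Note also that the statement's ``injective for $s = n-r-1$'' is already implied by the isomorphism range and is presumably a typo for $s = n-r$.)
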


\begin{proof}The failure of this map to be an isomorphism is
measured by the long exact sequence
\begin{align*}
\cdots \to H^s(\cM_\fg,R\Gamma_{\cM(n+1)}\cF) \to
H^s(\cM_\fg,&\cF) \\
\to H^s(\cU(n),i_n^\ast\cF)
&\to H^{s+1}(\cM_\fg,R\Gamma_{\cM(n+1)}\cF) \to \cdots
\end{align*}
where $H^\ast(\cM_\fg,R\Gamma_{\cM(n+1)}\cF)$ is the
hyper-cohomology of the derived local cohomology
sheaf $R\Gamma_{\cM(n+1)}\cF$. This can be computed
via the spectral sequence
$$
H^p(\cM_\fg,H^qR\Gamma_{\cM(n+1)}\cF)
\Longrightarrow 
H^{p+q}(\cM_\fg,R\Gamma_{\cM(n+1)}\cF).
$$
The isomorphism of Theorem \ref{local-to-tensor}
$$
H^qR\Gamma_{\cM(n+1)}\cF \cong H_{n+1-q}
(\cO/\cL_{n+1}^\infty \otimes^L \cF)
$$
and Corollary \ref{vanish-homology} now give the result.
\end{proof}

\begin{rem}\label{compare-hom-cc}\index{chromatic convergence, in homotopy}
The Hopkins-Ravenel chromatic
convergence results of \cite{orange} says that if $X$ is a $p$-local
finite complex, then there is a natural weak equivalence
$$
X \mathop{\longr}^{\simeq} \holim L_nX
$$
where $L_nX$ is the localization at the Johnson-Wilson theory
$E(n)_\ast$. For such $X$, $BP_\ast X$ is a finitely presented
comodule and, as in \cite{HS2}, we can interpret Theorem \ref{chrom-conv} as
saying that there is an isomorphism
$$
BP_\ast X \cong R\lim BP_\ast L_nX
$$
where $R\lim$ is an appropriate total derived functor of inverse
limit in comodules. Because homology and inverse limits
do not necessarily commute, this is not, in itself, enough to 
prove the Hopkins-Ravenel result; some more homotopy theoretic
data is needed. 
\end{rem}

\printindex

\bigskip

\noindent Department of Mathematics, Northwestern University, Evanston IL
60208

\noindent {\sl pgoerss@math.northwestern.edu}
\enddocument